\theoremstyle{plain} 
\numberwithin{equation}{section}
\newtheorem{theorem}{Theorem}[section]
\newtheorem{prop}[theorem]{Proposition} 
\newtheorem{lemma}[theorem]{Lemma}
\newtheorem{cor}[theorem]{Corollary} 
\newtheorem{defi}[theorem]{Definition} 
\newtheorem{remark}[theorem]{Remark}
\newtheorem{pb}[theorem]{Problem}
\newtheorem{fact}[theorem]{Fact}
\newcommand{\re}{\begin{remark}\rm}
\newcommand{\mar}{\end{remark}}
\newcommand{\al}{\alpha}
\newcommand{\s}{\sigma}
\newcommand{\si}{\sigma}
\newcommand{\Si}{\Sigma}
\newcommand{\eps}{\varepsilon}
\newcommand{\nz}{{\mathbb N}}
\newcommand{\rz}{{\mathbb R}}
\newcommand{\Z}{{\mathbb Z}}
\newcommand{\R}{\mathbb{R}}
\newcommand{\N}{{\mathcal N}}
\newcommand{\F}{{\mathcal F}}
\newcommand{\E}{{\mathcal E}}
\newcommand{\A}{{\mathcal A}}
\newcommand{\V}{{\mathcal V}}
\newcommand{\D}{{\mathcal D}}
\newcommand{\M}{{\mathcal M}}
\newcommand{\PP}{{\mathcal P}}
\newcommand{\Nc}{{\mathcal N}}
\newcommand{\I}{{\mathcal I}}
\newcommand{\U}{{\mathcal U}}
\newcommand{\B}{{\mathcal B}}
\newcommand{\T}{{\mathcal T}}
\newcommand{\W}{{\mathcal W}}
\newcommand{\Q}{{\mathcal Q}}
\renewcommand{\H}{{\mathcal H}}
\renewcommand{\P}{\mathcal{P}} 
\newcommand{\Rcal}{{\mathcal R}}
\renewcommand{\tilde}{\widetilde}
\renewcommand{\hat}{\widehat}
\newcommand{\h}{\mathsf{h}} 
\newcommand{\lh}{\mathsf{Lh}} 
\renewcommand{\th}{\tilde{\mathsf{h}}} 
\newcommand{\hh}{\hat{\h}}
\newcommand{\hH}{\hat{\mathcal{H}}}
\newcommand{\BMO}{\mathcal{BMO}}
\newcommand{\MO}{\mathcal{MO}}
\newcommand{\K}{\mathrm{K}}
\newcommand{\J}{\mathrm{J}}
\newcommand{\cond}{\mathrm{cond}}
\newcommand{\bmo}{\mathsf{bmo}}
\newcommand{\lbmo}{\mathsf{Lbmo}}
\newcommand{\mo}{\mathsf{mo}}
\newcommand{\x}{\tilde{x}}
\newcommand{\tp}{\tilde{p}}
\newcommand{\tq}{\tilde{q}}
\newcommand{\fin}{\mathrm{fin}}
\renewcommand{\1}{\mathds{1}} 
\newcommand{\oten}{\overline{\otimes}}
\newcommand{\ten}{\otimes}
\newcommand{\prodd}{\prod\nolimits}
\newcommand{\wst}{w^*\mbox{-}}
\newcommand{\w}{w\mbox{-}}
\newcommand{\qd}{\end{proof}}
\newcommand{\tr}{\mathrm{tr}}
\renewcommand{\AA}{\mathsf{A}}
\newcommand{\Acirc}{\stackrel{\circ}{\mathsf{A}}}
\newcommand{\st}{\mathrm{st}}
\renewcommand{\and}{\quad \mbox{ and } \quad }
\author[Marius Junge]{Marius Junge$^\dag$}
\address{Department of Mathematics\\
University of Illinois at  Urbana-Champaign, Urbana, IL 61801}
\email{junge@math.uiuc.edu}
\author[Mathilde Perrin]{Mathilde Perrin$^\star$}
\address{Instituto de Ciencias Matem\'{a}ticas\\
CSIC-UAM-UC3M-UCM\\
Consejo Superior de Investigaciones Cient\'{i}ficas\\
C/ Nicol\'{a}s Cabrera 13-15. 28049, Madrid, Spain}
\email{mathilde.perrin@icmat.es}
\title[Theory of $\H_p$-spaces for continuous filtrations] {Theory of $\H_p$-spaces for continuous filtrations in von Neumann algebras}
\begin{document}





\frontmatter
\begin{abstract}
We introduce Hardy spaces for martingales with respect to continuous filtration for von
Neumann algebras. In particular we prove the analogues of the Burkholder/Gundy and
Burkholder/Rosenthal inequalities in this setting. The usual arguments using stopping
times in the commutative case are replaced by tools from noncommutative function
theory and  allow us to obtain the analogue of the Feffermann-Stein duality and prove a noncommutative Davis decomposition.
\end{abstract}

\begin{altabstract}
Nous introduisons des espaces de Hardy pour des martingales relatives \`{a} des filtrations continues d'alg\`{e}bres de von Neumann. 
En particulier, nous d\'emontrons les in\'egalit\'es de Burkholder/Gundy et de Burkholder/Rosenthal dans ce cadre. 
Les arguments usuels bas\'es sur des temps d'arr\^et dans le cas commutatif sont remplac\'es par des outils de la th\'eorie des fonctions non commutatives, 
qui nous permettent d'obtenir l'analogue de la dualit\'e de Fefferman-Stein et de prouver une d\'ecomposition de Davis non commutative.   
\end{altabstract}

\subjclass{Primary 46L53,46L52; secondary 46L51,60G44}
\keywords{noncommutative $L_p$-spaces, noncommutative martingales, Hardy spaces, continuous filtration}
\altkeywords{espaces $L_p$ non commutatifs, martingales non commutatives, espaces de Hardy, filtration continue}
\thanks{$^\dag$ Partially supported by National Science Foundation grants DMS-0901457 and DMS-1201886}
\thanks{$^\star$ Partially supported by the Agence Nationale de Recherche, ERC Grant StG-256997-CZOSQP and NSF grant DMS-1201886}

\maketitle

\tableofcontents 

\mainmatter

\section*{Introduction}

\setcounter{theorem}{0}

The theory of stochastic integrals and martingales with continuous time is a well-known theory with many applications. 
Quantum stochastic calculus is also well developed with  applications reaching into fields such as quantum optics. 
In the setting of von Neumann algebras, 
many classical martingale inequalities have been reformulated for noncommutative martingales with respect to discrete filtrations, 
see e.g. \cite{PX,JX,JD,jx-ros}.  
The aim of this paper is to study martingales with respect to continuous filtrations in von Neumann algebras. 
Our long term goal is to develop a satisfactory theory for semimartingales, including the convergence of the stochastic integrals. 
In the noncommutative setting, we cannot construct the stochastic integrals pathwise as in \cite{DM}. 
It is unimaginable to consider the path of a process of operators in a von Neumann algebra. 
However, it is well-known that in the classical case, the convergence of the stochastic integrals is closely related 
to the existence of the quadratic variation bracket $[\cdot,\cdot]$ 
via the formula 
$$X_tY_t=\int^tX_{s^-}dY_s+\int^tY_{s^-}dX_s+[X,Y]_t.$$
Here the quadratic variation bracket can be characterized as the limit in probability of the following dyadic square functions
$$[X,Y]_t=X_0Y_0+\lim_{n\to \infty} \sum_{k=0}^{2^n-1}(X_{t\frac{k+1}{2^n}}-X_{t\frac{k}{2^n}})(Y_{t\frac{k+1}{2^n}}-Y_{t\frac{k}{2^n}}).$$
Hence we will first study this quadratic variation bracket in the setting of von Neumann algebras, 
and then deal with stochastic integrals in a forthcoming paper based on the theory developed here. 
More precisely, we will focus on the $L_{p/2}$-norm of this bracket by considering the Hardy spaces $H_p$ defined in the classical case by the norm
$$\|x\|_{H_p}=\|[x,x]\|_{p/2}^{1/2}.$$
This paper develops a theory of the Hardy spaces of noncommutative martingales with respect to a continuous filtration.
One fundamental application is an interpolation theory for these noncommutative function spaces
which has already found applications in the theory of semigroups (see e.g. \cite{jm-riesz}).

Let us consider a von Neumann algebra $\M$. For simplicity, we assume that $\M$ is finite and equipped with a normal faithful normalized trace $\tau$. 
Fortunately, the theory of noncommutative $H_p$-spaces is now very well understood in the discrete setting, i.e., 
when dealing with an increasing sequence $(\M_n)_{n\geq 0}$ of von Neumann subalgebras of $\M$, whose union is weak$^*$-dense in $\M$. 
We consider the associated conditional expectations $\E_n:\M\to \M_n$. 
In the noncommutative setting it is well-known that we always encounter two different objects,
the row and column versions of the Hardy spaces:
$$\|x\|_{H_p^c} = \Big\|\Big(\sum_n |d_n(x)|^2\Big)^{1/2}\Big\|_p\quad \mbox{and} \quad
  \|x\|_{H_p^r} = \Big\|\Big(\sum_n |d_n(x^*)|^2\Big)^{1/2}\Big\|_p ,$$
where $d_n(x)=\E_n(x)-\E_{n-1}(x)$. 
Here $\|x\|_p=(\tau(|x|^p))^{1/p}$ refers to the norm in the noncommutative $L_p$-space. 
The noncommutative Burkholder-Gundy inequalities from \cite{PX} say that
\begin{equation}\label{BGI}
  L_p(\M) = H_p \quad \mbox{with equivalent norms for }  1<p<\infty,
\end{equation}
where the $H_p$-space is defined by
$$H_p=\left\{\begin{array}{cl}
H_p^c+H_p^r& \quad \mbox{for} \quad 1\leq p<2 \\
H_p^c\cap H_p^r& \quad \mbox{for}\quad 2\leq p<\infty
\end{array}\right..$$
Following the commutative theory, we should expect to define the bracket $[x,x]$ for a martingale $x$ and then define
$$ \|x\|_{\hat{\H}_p^c} = \|[x,x]\|_{p/2}^{1/2} \quad \mbox{and}
 \quad \|x\|_{\hat{\H}_p^r} = \| [x^*,x^*]\|_{p/2}^{1/2} .$$
Armed with the definition we may then attempt to prove \eqref{BGI} for a continuous filtration $(\M_t)_{t\geq 0}$. 
For simplicity, we assume that the continuous parameter set is given by the interval $[0,1]$. 
We define a candidate for the noncommutative bracket following a nonstandard analysis approach. 
For a finite partition $\s=\{0=t_0<t_1<\cdots <t_n=1\}$ of the interval $[0,1]$ and $x\in \M$, we consider the finite bracket
$$[x,x]_\si=\sum_{t\in \si} |d_t^{\si}(x)|^2,$$
where $d_t^\s(x)=\E_t(x)-\E_{t^-(\s)}(x)$.  
Then for $p>2$, \eqref{BGI} gives an a-priori bound \\
$\|[x,x]_\si]\|_{p/2}^{1/2}\leq \alpha_p\|x\|_p$. 
Hence, for a fixed ultrafilter $\U$ refining the general net of finite partitions of $[0,1]$, we may simply define
$$[x,x]_\U=\w L_{p/2} \mbox{-}\lim_{\s,\U}[x,x]_\s.$$ 
In fact, in nonstandard analysis, the weak-limit corresponds to the standard part 
and is known to coincide with the classical definition of the bracket for commutative martingales. 
However, the norm is only lower semi-continuous with respect to the weak topology and we should not expect Burkholder/Gundy inequalities for continuous
filtrations to be a simple consequence of the discrete theory of $H_p$-spaces. 
Yet, using the crucial observation that the $L_{p/2}$-norms of the discrete brackets $[x,x]_\s$ are monotonous up to a constant, 
we may show the following result.

\begin{theorem}\label{eqnormHH}
Let $1\leq p <\infty$ and $x\in \M$. Then 
$$\|[x,x]_\U\|_{p/2}\simeq \lim_{\s,\U}\|[x,x]_\s\|_{p/2}
\simeq \left\{\begin{array}{cl}
\sup_\s\|[x,x]_\s\|_{p/2}& \quad \mbox{for} \quad 1\leq p< 2 \\
\inf_\s\|[x,x]_\s\|_{p/2}& \quad \mbox{for}\quad 2\leq p<\infty
\end{array}\right..$$
\end{theorem}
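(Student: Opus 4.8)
The plan is to make the announced monotonicity of the net $\s\mapsto\|[x,x]_\s\|_{p/2}$ the engine of the proof and then read off the three equivalences as formal consequences of it together with the weak lower semicontinuity of the norm. First I would isolate the following claim, which I take to be the crucial observation: there is a constant $c_p$ such that for any two finite partitions with $\s\subseteq\s'$ one has
\[
\|[x,x]_{\s'}\|_{p/2}\le c_p\|[x,x]_{\s}\|_{p/2}\ \ (2\le p<\infty),\qquad \|[x,x]_{\s}\|_{p/2}\le c_p\|[x,x]_{\s'}\|_{p/2}\ \ (1\le p<2).
\]
In words, refining a partition decreases the $L_{p/2}$-norm of the bracket up to $c_p$ when $p\ge2$, and increases it up to $c_p$ when $p<2$.

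To prove this I would exploit that $(\M_t)_{t\in\s}$ is a subfiltration of $(\M_t)_{t\in\s'}$: grouping the fine increments $d^{\s'}_s(x)$ over the block of $\s'$ lying between two consecutive points of $\s$ recovers the coarse increment $d^{\s}_t(x)$, and on each such block $(d^{\s'}_s(x))_s$ is exactly the martingale difference sequence of $d^{\s}_t(x)$ for the induced mini-filtration. The discrete column Burkholder--Gundy inequality behind \eqref{BGI} then bounds, blockwise, the column square function of this mini-martingale by its final value. The genuine difficulty --- and what I expect to be the main obstacle --- is to \emph{globalize} these blockwise estimates into the one-shot comparison above, because for $p/2\ne1$ a family of blockwise $L_{p/2}$-bounds does not sum to a bound on $\|\sum_t(\cdots)\|_{p/2}$. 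Here I would route the argument through the conditioned square functions, using the orthogonality identity $\E_{t^-(\s)}\big(\sum_{s}|d^{\s'}_s(x)|^2\big)=\E_{t^-(\s)}\big(|d^{\s}_t(x)|^2\big)$ to trade the fine bracket for a conditioned coarse one, and then invoke the noncommutative Stein inequality to compare conditioned and unconditioned coarse square functions in $L_{p/2}$. The range $1\le p<2$ I would then deduce from the case $p\ge2$ by the column duality $(H_p^c)^*=H_{p'}^c$, which interchanges the two directions of the inequality.

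Granting the monotonicity, the equivalence $\lim_{\s,\U}\|[x,x]_\s\|_{p/2}\simeq\inf_\s\|[x,x]_\s\|_{p/2}$ (resp. $\sup_\s$) is soft: a net that under refinement is decreasing (resp. increasing) up to the factor $c_p$, evaluated along an ultrafilter $\U$ refining the net of partitions, has its limit squeezed between its infimum (resp. supremum) and $c_p$ times it. Indeed, choosing a near-optimal $\s_0$ and using that $\{\s:\s\supseteq\s_0\}\in\U$ gives both bounds at once; in the case $1\le p<2$ this also shows $\sup_\s\|[x,x]_\s\|_{p/2}<\infty$, which is not otherwise clear, while for $p\ge2$ boundedness of the net comes from the a-priori estimate $\|[x,x]_\s\|_{p/2}^{1/2}\le\alpha_p\|x\|_p$. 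It remains to compare $\|[x,x]_\U\|_{p/2}$ with $\lim_{\s,\U}\|[x,x]_\s\|_{p/2}$. One inequality is immediate: since $[x,x]_\U$ is by definition the weak $L_{p/2}$-limit of the positive operators $[x,x]_\s$, weak lower semicontinuity of the norm (for $p\ge2$, where $L_{p/2}$ is Banach) yields $\|[x,x]_\U\|_{p/2}\le\lim_{\s,\U}\|[x,x]_\s\|_{p/2}$.

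The reverse estimate $\|[x,x]_\U\|_{p/2}\ge c\lim_{\s,\U}\|[x,x]_\s\|_{p/2}$ is the delicate point, since norm can be lost in a weak limit and a naive pairing of the norming functionals against the brackets is not continuous for weak$\times$weak limits. I would recover it precisely from the monotonicity, which prevents the brackets from escaping in the limit: for $p\ge2$ the net is essentially decreasing, so a fixed coarse bracket $[x,x]_{\s_0}$ controls the tail from below up to $c_p$, and this must be transferred to $[x,x]_\U$ by a careful comparison rather than by direct testing. For $1\le p<2$ the space $L_{p/2}$ is only quasi-Banach, so the weak limit has to be read in $\sigma(L_1,\M)$ on the positive cone --- the brackets being uniformly bounded in $L_1$, with $\tau([x,x]_\s)=\|x-\E_0(x)\|_2^2$ independent of $\s$ --- and again the monotone structure is what forces the $L_{p/2}$-quasinorm to pass to the limit up to a constant. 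This last step, together with the globalization inside the monotonicity proof, is where I expect the real work to lie; the two outer equivalences of the theorem are then simply chained.
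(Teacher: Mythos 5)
Your overall architecture matches the paper's: the monotonicity of $\s\mapsto\|[x,x]_\s\|_{p/2}$ is indeed the engine, the equivalence with $\sup_\s$ (resp.\ $\inf_\s$) follows softly from it, and one inequality between $\|[x,x]_\U\|_{p/2}$ and $\lim_{\s,\U}\|[x,x]_\s\|_{p/2}$ is lower semicontinuity. But the two steps you flag as ``where the real work lies'' are exactly where your sketch does not close, and the routes you propose for them would fail. For the monotonicity, the detour through conditioned square functions cannot work: the identity $\E_{t^-(\s)}\big(\sum_{s\in I_t}|d_s^{\s'}(x)|^2\big)=\E_{t^-(\s)}|d_t^\s(x)|^2$ replaces the fine bracket by the \emph{conditioned} coarse bracket $\langle x,x\rangle_\s$, and $\|\langle x,x\rangle_\s\|_{p/2}$ is not equivalent to $\|[x,x]_\s\|_{p/2}$ --- that is precisely the content of the Davis decomposition ($H_p^c=h_p^d\cap h_p^c$ for $p\ge2$), and neither Stein's inequality nor the dual Doob inequality bridges the gap in the direction you need (dual Doob gives conditioned $\lesssim$ unconditioned, whereas you would need the fine unconditioned bracket $\lesssim$ the conditioned coarse one). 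The paper's resolution of your ``globalization'' obstacle is a matrix amplification: apply the discrete Burkholder--Gundy inequality \emph{once} to $y=\sum_{t\in\s}e_{t,0}\otimes d_t^\s(x)$ in $L_p(B(\ell_2(\s))\overline{\otimes}\M)$ with respect to the filtration $(B(\ell_2(\s))\overline{\otimes}\M_u)_{u\in\s'}$, noting $d_u^{\s'}(d_t^\s(x))=d_u^{\s'}(x)$ when $t^-(\s)\le u^-(\s')<u\le t$ and $0$ otherwise, so that $\|y\|_{L_p}=\|x\|_{H_p^c(\s)}$ and $\|y\|_{H_p^c(\s')}=\|x\|_{H_p^c(\s')}$. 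This gives both ranges of $p$ directly with constants $\alpha_p$, $\beta_p$; your proposed reduction of $1\le p<2$ to $p\ge2$ by the duality $(H_p^c)^*=H_{p'}^c$ would in any case miss $p=1$, which the theorem requires.

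The second gap is the reverse estimate $\|[x,x]_\U\|_{p/2}\gtrsim\lim_{\s,\U}\|[x,x]_\s\|_{p/2}$, which you assert ``must be transferred by a careful comparison'' without supplying one. For $2\le p<\infty$ the paper needs a strictly stronger, \emph{convex} form of the monotonicity: for a convex combination $x=\sum_m\alpha_m x^m$ and partitions $\s^m\subset\s$ one has $\|x\|_{H_p^c(\s)}\le\alpha_p\|\sum_m\alpha_m[x^m,x^m]_{\s^m}\|_{p/2}^{1/2}$ (via operator convexity of $|\cdot|^2$ inside the same amplification); this is applied to Mazur convex combinations approximating the weak limit $[x,x]_\U$ in norm, and plain monotonicity of the net does not suffice. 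For $1\le p<2$ the paper's proof is not a limiting argument on the positive cone at all but a genuine duality argument: $\hH_p^c$ is embedded isometrically into an ultraproduct $\prod_\V L_p(\M;\H_i^c)$ over a refined index set, and every functional there is shown to restrict to a bounded functional on $\H_p^c$ by constructing martingales $z(i)$ with uniformly bounded $L_{p'}^cMO(\s_i')$-norms (using the noncommutative Doob inequality) and invoking the discrete Fefferman--Stein duality. Your appeal to ``the monotone structure forcing the quasinorm to pass to the limit'' does not substitute for either of these mechanisms, so as written the proposal establishes only the soft equivalence $\lim_{\s,\U}\simeq\sup_\s$ (resp.\ $\inf_\s$) and the easy half of the first equivalence.
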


In particular, this implies that the $L_{p/2}$-norm of the bracket $[x,x]_\U$ does not depend on the choice of the ultrafilter $\U$, up to equivalent norm. 
We will discuss the independence of the bracket $[x,x]_\U$ itself from the choice of $\U$ in a forthcoming paper. 
Hence for $1\leq p <\infty$ and $x \in \M$ we define the norms
$$\|x\|_{\hH_p^c}=\|[x,x]_\U\|_{p/2}^{1/2} \quad \mbox{and} \quad 
\|x\|_{\H_p^c}=\lim_{\s,\U}\|[x,x]_\s\|_{p/2}^{1/2}=\lim_{\s,\U}\|x\|_{H_p^c(\s)}.$$
We denote by $\hH_p^c$ and $\H_p^c$ respectively the corresponding completions. 
Using Theorem \ref{eqnormHH} we may show that actually 
\begin{equation}\label{introhH=H}
\hH_p^c=\H_p^c \quad \mbox{with equivalent norms for } 1\leq p <\infty.
\end{equation}
Hence this defines a good candidate for the Hardy space of noncommutative martingales with respect to the continuous filtration $(\M_t)_{0\leq t \leq 1}$. 
We now want to establish for this space the analogues of many well-known results in the discrete setting. 
For doing this, we will use the definition of the space $\H_p^c$, which will be more practical to work with. 
In particular, we may embed $\H_p^c$ into some ultraproduct space, which has an $L_p$-module structure and a $p$-equiintegrability property. 
This allows us to consider $\H_p^c$ as an intermediate space of operators between $L_2(\M)$ and $L_p(\M)$. 
Then, by complementation, we can show the following duality result. 

\begin{theorem}\label{duality}
Let $1<p<\infty$ and $\frac{1}{p}+\frac{1}{p'}=1$. Then 
$$(\H_p^c)^*=\H_{p'}^c\quad \mbox{with equivalent norms}.$$
\end{theorem}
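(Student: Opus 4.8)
The plan is to lift the discrete duality $(H_p^c(\s))^*=H_{p'}^c(\s)$ --- valid with constants depending only on $p$ by the noncommutative Burkholder/Pisier--Xu theory --- through the ultraproduct description of $\H_p^c$. First I would pin down the pairing. For $x,y\in\M$ the finite bilinear forms $\langle x,y\rangle_\s=\sum_{t\in\s}\tau\big(d_t^\s(y)^*d_t^\s(x)\big)$ telescope, by orthogonality of the martingale differences, to $\tau(y^*x)-\tau\big(\E_0(y)^*\E_0(x)\big)$; in particular they do not depend on $\s$, so the pairing is essentially the trace pairing. The discrete duality gives $|\langle x,y\rangle_\s|\le C_p\,\|x\|_{H_p^c(\s)}\|y\|_{H_{p'}^c(\s)}$, and taking the limit along $\U$ produces a bounded pairing on $\H_p^c\times\H_{p'}^c$ with $|\langle x,y\rangle|\le C_p\,\|x\|_{\H_p^c}\|y\|_{\H_{p'}^c}$. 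This already yields a bounded injection $\H_{p'}^c\hookrightarrow(\H_p^c)^*$, and the whole content of the theorem is that it is onto with a matching lower bound.

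For the converse I would exploit the ultraproduct module structure. By definition of the norm the diagonal map $x\mapsto(x)_\s$ embeds $\H_p^c$ isometrically into $\mathfrak{X}_p:=\big(\prod_\s H_p^c(\s)\big)_\U$, and the column embedding $x\mapsto(d_t^\s(x))_{t\in\s}$ places $\mathfrak{X}_p$ inside $\mathfrak{L}_p:=\big(\prod_\s L_p(\M;\ell_2^c)\big)_\U$. Since $L_p(\M;\ell_2^c)$ is uniformly convex for $1<p<\infty$ with modulus depending only on $p$, the space $\mathfrak{L}_p$ is uniformly convex, hence reflexive, and $\mathfrak{L}_p^*=\mathfrak{L}_{p'}$; consequently $\H_p^c$, as a closed subspace, is reflexive too. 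The slotwise map $P_\s:(a_t)_t\mapsto(\E_t(a_t)-\E_{t^-(\s)}(a_t))_t$ is a trace-symmetric projection of $L_p(\M;\ell_2^c)$ onto the column copy of $H_p^c(\s)$, bounded uniformly in $\s$ by the discrete theory; its ultraproduct $P_\U$ is then a bounded, trace-symmetric projection of $\mathfrak{L}_p$ onto $\mathfrak{X}_p$. Given $\phi\in(\H_p^c)^*$, I would extend it by Hahn--Banach to $\mathfrak{L}_p$, represent the extension by some $[(y_\s)]_\U\in\mathfrak{L}_{p'}$, and apply $P_\U^*=P_\U$ to obtain a representative lying over the spaces $H_{p'}^c(\s)$.

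The hard part is the final descent: the representative $[(y_\s)]_\U$ is an \emph{a priori} floating ultraproduct vector, with no reason for the $y_\s$ to be the images of a single operator $y$, whereas the statement demands a genuine element of $\H_{p'}^c$. This is exactly where the $p$-equiintegrability of the embedding must enter: it rules out concentration or escape of mass along $\U$ and allows one to extract from $[(y_\s)]_\U$ a bona fide $y$ in the completion $\H_{p'}^c$ inducing $\phi$ through $\langle\cdot,y\rangle$, with $\|y\|_{\H_{p'}^c}\le C_p\|\phi\|$. I expect this equiintegrability-to-diagonalization step, together with the uniformity in $\s$ of the discrete constants (so that nothing partition-dependent survives the limit), to be the technical heart of the proof; once it is in place the two-sided norm equivalence is immediate, and the reflexivity of $\H_p^c$ can be used as a shortcut to promote the norming estimate into surjectivity. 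The ranges $1<p<2$ and $2\le p<\infty$ may have to be treated in turn, reading the $\sup$ and $\inf$ descriptions of Theorem \ref{eqnormHH} on the two sides of the duality.
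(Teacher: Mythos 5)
Your architecture is essentially the paper's: embed $\H_p^c$ isometrically into an ultraproduct of column spaces, project onto the martingale-difference part by an ultraproduct of discrete Stein projections, and pull a Hahn--Banach extension of a functional back down. (The paper packages the ambient space as the regularized ultraproduct $K_p^c(\U)$, an $L_p$-module over the finite algebra $\M_\U$, and gets $(K_p^c(\U))^*=K_{p'}^c(\U)$ from the $L_p$-module machinery; your unregularized $\mathfrak{L}_p$ with reflexivity works just as well for $1<p<\infty$, so that variation is harmless.) A small slip first: with the paper's convention $d_0^\s(x)=\E_0(x)$, the pairing $\sum_{t\in\s}\tau(d_t^\s(y)^*d_t^\s(x))$ telescopes to $\tau(y^*x)$ exactly, not to $\tau(y^*x)-\tau(\E_0(y)^*\E_0(x))$.

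The genuine gap is the descent step, which you explicitly leave as an expectation, and for which you name the wrong tool. Passing from the ultraproduct representative $(y_\s)^\bullet$ to an honest element of $\H_{p'}^c$ is not accomplished by $p$-equiintegrability (in the paper that notion serves only to identify the regularized part $K_p^c(\U)$ and make the module duality work); it is accomplished by the monotonicity of the discrete norms, Lemma \ref{convexityHpc}, combined with weak limits. Concretely, one sets $x_\s=\sum_{t\in\s}d_t^\s(y_\s(t))$ and defines $y=\w L_2\mbox{-}\lim_{\s,\U}x_\s$; the whole issue is then that the norm is only weakly lower semicontinuous, so a priori one cannot bound $\|y\|_{\H_{p'}^c}$ by $\lim_{\s,\U}\|x_\s\|_{H_{p'}^c(\s)}$. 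For $2\le p'<\infty$ the paper uses $\|x_\s\|_{\H_{p'}^c}\le\alpha_{p'}\|x_\s\|_{H_{p'}^c(\s)}$ (convexity/monotonicity) plus reflexivity of $\H_{p'}^c$ to take the weak limit directly in $\H_{p'}^c$; for $1<p'<2$ it fixes $\s_0$ and uses $\|x_\s\|_{H_{p'}^c(\s_0)}\le\beta_{p'}\|x_\s\|_{H_{p'}^c(\s)}$ for $\s\supset\s_0$, bounds the weak limit in each $H_{p'}^c(\s_0)$ uniformly, and then lets $\s_0$ run. Without invoking Lemma \ref{convexityHpc} at exactly this point, the argument does not close: nothing prevents the $H_{p'}^c(\s)$-norms of the weak limit from blowing up relative to those of the $x_\s$. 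You should make this the centerpiece of the converse inclusion rather than a footnote about Theorem \ref{eqnormHH}.
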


Note that throughout this paper, following \cite{PX} we will consider the anti-linear duality, given by the duality bracket $(x|y)=\tau(x^*y)$. 
Since no confusion is possible, we will denote it by $(\H_p^c)^*$. 
With this convention, the dual space of a column space is still a column space. 
For $p=1$, we also establish the analogue of the Fefferman-Stein duality in this setting:
$$(\H_1^c)^*=\BMO^c\quad \mbox{with equivalent norms}.$$
We have to be careful when defining the space $\BMO^c$. 
A naive candidate for the $\BMO^c$ norm is given by
$$ \|x\|_{\tilde{\BMO^c}} = \lim_{\si,\U} \|x\|_{BMO^c(\si)},\quad \mbox{where} \quad 
\|x\|_{BMO^c(\si)}=  \sup_{t\in \si} \|\E_t(|x-x_{t^-}|^2)\|_\infty^{1/2}.$$
However, here our restriction to finite partitions (instead of random partitions in the classical case) is restrictive. 
Indeed, if one of the $\|x\|_{BMO^c(\si)}$'s is finite, then $x$ is already in $\M$. 
Definitively, we expect $\BMO^c$ to be larger than $\M$. 
We will therefore say that an element $x\in L_2(\M)$ belongs to the unit ball of $\BMO^c$ if it
can be approximated in $L_2$-norm by elements of the form
$$ \w L_2 \mbox{-}\lim_{\s,\U} x_\s \quad \mbox{with} \quad \lim_{\s,\U} \|x_{\s}\|_{BMO^c(\s)}\leq 1.$$
This definition gives the expected interpolation result
$$\H_p^c=[\BMO^c,\H_1^c]_{1/p} \quad \mbox{ with equivalent norms for } 1<p<\infty.$$ 
We may define the Hardy space $\H_p$ as in the discrete setting by considering the sum of the column and row Hardy spaces in $L_p(\M)$ 
for $1< p <2$, and their intersection in $L_2(\M)$ for $2\leq p<\infty$. 
The continuous analogue of \eqref{BGI} is then obtained by taking the weak limit of the discrete decompositions for $1<p<2$. 
However, the usual duality argument used to deduce the case $2<p<\infty$ may not be directly applied in this case. 
We first need to extend a stronger Burkholder-Gundy decomposition introduced by Randrianantoanina to the continuous setting. 
More precisely, we need a Burkholder-Gundy decomposition with a simultaneous control of $\H_p$ and $L_2$ norms. 
This is one of the delicate and key points of this paper. 
In fact, such decompositions with simultaneous control of norms turn out to be essential when dealing with duality in the continuous setting. 
In particular, this was one of the motivations of the recent paper \cite{RandX}. 
In this paper, we introduce another version of a sum, the $\boxplus$-sum of two spaces, which is obtained as the completion of a normed space equipped with a quotient norm. 
In classical probability, stopping time arguments allow to show that there is no ``virtual kernel" when trying to embed this abstract space in $L_1$. 
However, in functional analysis and in particular through Grothendieck's formulation of the approximation property, 
we know that hard analysis may be required to decide whether for such completions the kernel is automatically trivial. 
The same remains true in our situation, and we have to rely on Randrianantoanina's work to control these kernels in some cases. 
We show that for the Hardy space $\H_p$ we may use either the new $\boxplus$-sum or the usual sum in the definition, and we deduce the continuous analogue of \eqref{BGI}

\begin{theorem}\label{introBG}
Let $1<p<\infty$. Then
$$L_p(\M)=\H_p \quad \mbox{with equivalent norms.}$$
\end{theorem}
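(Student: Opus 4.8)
The plan is to prove the two continuous ranges $1 < p < 2$ and $2 \le p < \infty$ separately, reducing the second to the first by duality. Throughout, the strategy is to transfer the discrete Burkholder-Gundy inequalities \eqref{BGI}, valid for each finite partition $\s$, to the continuous filtration by passing to weak limits along $\U$.

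For $1 < p < 2$, where $\H_p = \H_p^c + \H_p^r$, the easy inclusion $\H_p \hookrightarrow L_p(\M)$ follows by taking a martingale $y$, applying the discrete bound $\|y\|_p \le c_p \|y\|_{H_p^c(\s)}$ coming from \eqref{BGI} for each $\s$, and passing to the limit to get $\|y\|_p \lesssim \|y\|_{\H_p^c}$, with the analogous estimate for the row part. For the reverse inclusion, given $x \in L_p(\M)$ I would decompose $x = y_\s + z_\s$ for each $\s$ using the discrete Burkholder-Gundy theorem, with $\|y_\s\|_{H_p^c(\s)} + \|z_\s\|_{H_p^r(\s)} \le c_p\|x\|_p$, and then take weak-$L_p$ limits $y = \w L_{p} \mbox{-}\lim_{\s,\U} y_\s$ and $z = \w L_{p} \mbox{-}\lim_{\s,\U} z_\s$. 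Since $y_\s + z_\s = x$ for every $\s$ one gets $x = y + z$; the $p$-equiintegrability and $L_p$-module structure of the ambient ultraproduct identify $y \in \H_p^c$ and $z \in \H_p^r$, and lower semicontinuity of the norms under weak limits yields $\|x\|_{\H_p} \lesssim \|x\|_p$.

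For $2 \le p < \infty$, where $\H_p = \H_p^c \cap \H_p^r$, the inclusion $L_p(\M) \hookrightarrow \H_p$ is again obtained by passing the discrete inequalities $\|x\|_{H_p^c(\s)} \le c_p\|x\|_p$ (valid for $p \ge 2$) to the limit. The hard direction is $\H_p \hookrightarrow L_p(\M)$, i.e. $\|x\|_p \lesssim \|x\|_{\H_p}$. The natural route is duality: write $\|x\|_p = \sup\{|\tau(x^*w)| : \|w\|_{p'} \le 1\}$, decompose $w = a + b$ with $a \in \H_{p'}^c$, $b \in \H_{p'}^r$ via the case $1 < p' < 2$, and estimate $|\tau(x^*w)| \le \|x\|_{\H_p^c}\|a\|_{\H_{p'}^c} + \|x\|_{\H_p^r}\|b\|_{\H_{p'}^r} \lesssim \|x\|_{\H_p}\|w\|_{p'}$ using Theorem \ref{duality}. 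The difficulty is that this requires the decomposition of $w$ to be compatible with the duality pairings as an honest identity in $L_{p'}(\M)$, whereas the weak-limit construction only produces $a$ and $b$ as abstract elements of the completions, so a priori a ``virtual kernel'' could make the abstract sum differ from $w$.

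The technical heart of the proof is therefore to exclude this virtual kernel. The plan is to extend Randrianantoanina's refined Burkholder-Gundy decomposition, which provides simultaneous control of the $\H_p$ and $L_2$ norms of the two pieces, to the continuous setting, and to use it to show that the $\boxplus$-sum $\H_{p'}^c \boxplus \H_{p'}^r$---for which the duality with $\H_p^c \cap \H_p^r$ holds by abstract quotient and completion arguments combined with Theorem \ref{duality}---coincides with the usual sum $\H_{p'}^c + \H_{p'}^r$ inside $L_{p'}(\M)$. Once this identification of the two sums is in place, every $w \in L_{p'}(\M)$ admits a genuine $L_{p'}$-decomposition with controlled norm, the duality estimate goes through, and we conclude $\|x\|_p \lesssim \|x\|_{\H_p}$. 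I expect this step---controlling the kernel via Randrianantoanina's simultaneous-norm decomposition---to be the main obstacle, exactly as flagged in the introduction.
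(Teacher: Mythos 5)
Your proposal is correct and follows essentially the same route as the paper: weak limits of the discrete Burkholder--Gundy decompositions combined with the monotonicity of the $H_p^c(\s)$-norms for $1<p<2$, and for $2\le p<\infty$ a duality argument whose key ingredient is precisely the continuous extension of Randrianantoanina's decomposition with simultaneous control of the $\H_{p'}$ and $L_2$ norms, formalized via the identification $\H_{p'}^c\boxplus\H_{p'}^r=\H_{p'}^c+\H_{p'}^r$. The only minor imprecision is that for $1<p<2$ the identification of the weak limits as elements of $\H_p^c$ and $\H_p^r$ rests on the monotonicity property (Lemma \ref{convexityHpc}) and the characterization $\H_p^c=\{x\in L_p(\M):\|x\|_{\H_p^c}<\infty\}$ rather than on $p$-equiintegrability per se, but this does not affect the argument.
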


We are also interested in the conditioned Hardy spaces $h_p$, defined in the discrete setting by the norms
$$\|x\|_{h_p^c}=\Big\|\Big(\sum_{n}\E_{n-1}|d_n(x)|^2\Big)^{1/2}\Big\|_p,\quad 
\|x\|_{h_p^r}=\|x^*\|_{h_p^c} \quad \mbox{and} \quad
\|x\|_{h_p^d}=\Big(\sum_n\|d_n(x)\|_p^p\Big)^{1/p}.$$
Then the noncommutative Burkholder inequalities proved in \cite{JX} state that
\begin{equation}\label{BI}
L_p(\M)=h_p \quad \mbox{with equivalent norms for } 1<p<\infty,
\end{equation}
where the $h_p$-space is defined by
$$h_p=\left\{\begin{array}{cl}
h_p^d+h_p^c+h_p^r& \quad \mbox{for} \quad 1\leq p<2 \\
h_p^d\cap h_p^c\cap h_p^r& \quad \mbox{for}\quad 2\leq p<\infty
\end{array}\right..$$ 
A column version of these inequalities, which also holds true for $p=1$, 
have been discovered independently in \cite{jm-riesz} and \cite{Per}:
\begin{equation}\label{D}
H_p^c= \left\{\begin{array}{cl}
h_p^d+h_p^c& \quad \mbox{for} \quad 1\leq p<2 \\
h_p^d\cap h_p^c& \quad \mbox{for}\quad 2\leq p<\infty
\end{array}\right.. 
\end{equation}
In the commutative theory the decomposition for $1\leq p <2$ corresponds to a version of the Davis decomposition into jump part and conditioned square function. 
In the conditioned case, we still have a crucial monotonicity property, and considering the conditioned bracket
$$\langle x,x\rangle_\si=\sum_{t\in \si} \E_{t^-(\s)}|d_t^{\si}(x)|^2$$
for a finite partition $\s$, we define the conditioned Hardy spaces $\hh_p^c$ and $\h_p^c$ of noncommutative martingales 
with respect to the filtration $(\M_t)_{0\leq t \leq 1}$. 
Then we may adapt the theory developed for the $\H_p^c$-spaces to $\hh_p^c$ and $\h_p^c$ and obtain that 
\begin{equation}\label{hh=h-intro}
\hh_p^c=\h_p^c \quad \mbox{with equivalent norms for } 1\leq p <\infty.
\end{equation}
Sometimes we have to resort the theory of noncommutative functions spaces, in particular $L_p$-modules over finite von Neumann algebras 
for comparing different candidates for the $\h_p$-norms. 
Indeed, in \eqref{hh=h-intro} the construction is based 
on free amalgamated products and use the free analogue of Rosenthal inequalities.  
This complementation result implies the conditioned analogue of Theorem \ref{duality} and injectivity results for $1<p<\infty$. 
At the time of this writing we do not know if the injectivity result still holds true for $p=1$, i.e., if $\h_1^c$ embeds into $L_1(\M)$. 
We will need to consider the corresponding subspace of $L_1(\M)$, denoted by $\lh_1^c$. 
Note that in this case the space $\bmo^c$ is easier to describe. 
It is defined as the set of operators $x\in L_2(\M)$ such that 
$$\sup_{0\leq t\leq 1} \|\E_t|x-x_{t}|^2\|_\infty <\infty.$$  
We also prove the expected interpolation result. 
To obtain the continuous analogue of the decompositions \eqref{BI} and \eqref{D} for $1<p<2$ and $1\leq p <2$ respectively, 
we need to introduce another diagonal space $h_p^{1_c}\subset h_p^d$, which yields a stronger Davis decomposition, closer to the classical one. 
Then we deduce the continuous analogues of \eqref{BI} and \eqref{D} for $2\leq p<\infty$ by a dual approach. 
Unfortunately, we cannot directly describe the dual space of our continuous analogue of the diagonal space $\h_p^d$. 
We introduce a variant of the Davis decomposition for $1<p<2$ with simultaneous control of $\h_p$ and $L_2$ norms, based on a deep result of Randrianantoanina. 
Here we use again the $\boxplus$-sum and we need to show that the kernel is trivial in this situation. 
As a payoff, we find a nice description of the space $\H_1$, and the continuity of the maps defined on it can be checked on atoms. 
For open problems in this direction we refer to the appendix. 
We obtain that for the conditioned Hardy spaces, the two sums coincide. 
Moreover, it is very easy to see that in the Davis decomposition we may replace the diagonal space $\h_p^d$ by a larger, $L_2$-regularized space $K_p^d=\h_p^d+L_2(\M)$. 
That leads to a satisfactory description of the duality for the conditioned Hardy space 
$$\h_p=\left\{\begin{array}{cl}
\h_p^d+\h_p^c+\h_p^r& \quad \mbox{for} \quad 1< p<2 \\
\J_p^d\cap \h_p^c\cap \h_p^r& \quad \mbox{for}\quad 2\leq p<\infty
\end{array}\right.,$$ 
where $\J_{p}^d$ denotes the dual space of $K_{p'}^d$. 
We obtain the continuous analogue of \eqref{D} and \eqref{BI} respectively:

\begin{theorem}
Let $1\leq p<\infty$. Then
\begin{enumerate}
\item[(i)] $\H_p^c= \left\{\begin{array}{ll}
\h_1^d+\lh_1^c& \quad \mbox{for} \quad p=1 \\
\h_p^d+\h_p^c& \quad \mbox{for} \quad 1< p<2 \\
\J_p^d\cap \h_p^c& \quad \mbox{for}\quad 2\leq p<\infty
\end{array}\right.$ with equivalent norms.
\item[(ii)] For $1<p<\infty$, $$L_p(\M)=\h_p \quad \mbox{with equivalent norms.}$$
\end{enumerate}
\end{theorem}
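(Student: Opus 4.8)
The plan is to prove the column decomposition (i) first and then deduce the full decomposition (ii) by combining (i) with its row analogue (via adjoints) and the Burkholder--Gundy equality $L_p(\M)=\H_p$ of Theorem~\ref{introBG}. For (i) I would treat the ranges $1\le p<2$ and $2\le p<\infty$ separately: the former by transporting the discrete Davis decomposition \eqref{D} to the continuous filtration, the latter by duality.

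For $1\le p<2$ I would start from the discrete decomposition $H_p^c(\s)=h_p^d(\s)+h_p^c(\s)$ of \eqref{D}, valid for every finite partition $\s$ with constants independent of $\s$. Since the norm is only weakly lower semicontinuous, one cannot pass to the limit along $\U$ term by term. Instead I would use the ultraproduct embedding of $\H_p^c$ (from the construction following Theorem~\ref{eqnormHH}) and apply, coordinatewise, the stronger Davis decomposition with simultaneous control of the $\h_p$ and $L_2$ norms --- the variant built on the smaller diagonal space $h_p^{1_c}\subset h_p^d$ --- so that the weak-$L_2$ limits of the diagonal and column pieces exist and land in $\h_p^d$ and $\h_p^c$ respectively. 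To organize this limiting procedure I would realize the target as the $\boxplus$-sum $\h_p^d\boxplus\h_p^c$, i.e. the completion of the algebraic sum equipped with the quotient norm inherited from $\h_p^d\oplus\h_p^c$, and then compare it with the honest sum inside $L_p(\M)$. For $p=1$, where it is not known that $\h_1^c$ embeds into $L_1(\M)$, I would run the same argument with $\h_1^c$ replaced by its $L_1$-realization $\lh_1^c$, obtaining $\H_1^c=\h_1^d+\lh_1^c$.

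The genuinely hard part is to show that the natural map from the abstract $\boxplus$-sum into $L_p(\M)$ has trivial kernel, so that the $\boxplus$-sum and the honest sum carry equivalent norms and the decomposition is a decomposition of operators. This ``virtual kernel'' cannot be removed by soft functional analysis alone; I would establish its triviality by importing Randrianantoanina's decomposition with simultaneous control of the $\h_p$ and $L_2$ norms and transferring it through the ultraproduct embedding and the $p$-equiintegrability of the ambient module. This is the step I expect to be the main obstacle, and it is exactly where the continuous theory departs from a routine limiting argument.

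For $2\le p<\infty$ I would argue by duality. The diagonal summand $\h_{p'}^d$ has no directly describable dual, so I would first enlarge it to the $L_2$-regularized space $K_{p'}^d=\h_{p'}^d+L_2(\M)$ --- which leaves $\H_{p'}^c$ unchanged, as is readily checked --- giving $\H_{p'}^c=K_{p'}^d+\h_{p'}^c$ for $1<p'\le 2$. Dualizing with the conditioned analogue $(\h_{p'}^c)^*=\h_p^c$ of Theorem~\ref{duality} (a consequence of the complementation behind \eqref{hh=h-intro}) and with $\H_p^c=(\H_{p'}^c)^*$ turns the sum into an intersection, and setting $\J_p^d:=(K_{p'}^d)^*$ yields $\H_p^c=\J_p^d\cap\h_p^c$. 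Finally (ii) follows by assembling (i) with its row version: for $2\le p<\infty$, $L_p(\M)=\H_p=\H_p^c\cap\H_p^r=(\J_p^d\cap\h_p^c)\cap(\J_p^d\cap\h_p^r)=\J_p^d\cap\h_p^c\cap\h_p^r=\h_p$, while for $1<p<2$, $L_p(\M)=\H_p=\H_p^c+\H_p^r=\h_p^d+\h_p^c+\h_p^r=\h_p$, all with equivalent norms by Theorems~\ref{eqnormHH} and~\ref{introBG} together with the equivalences \eqref{introhH=H} and \eqref{hh=h-intro}.
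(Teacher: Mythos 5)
Your overall architecture coincides with the paper's: discrete Davis decomposition plus a limiting argument for $1\le p<2$, identification of the $\boxplus$-sum with the honest sum via Randrianantoanina's $L_2$-controlled decomposition, the $L_2$-regularized diagonal $\K_{p'}^d$ and duality for $2\le p<\infty$, and finally assembly with Burkholder--Gundy for (ii). The duality half and the deduction of (ii) are exactly the paper's second proof of Theorem~\ref{Dp>2} and the final theorem of Section~\ref{sectD-BR}.

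There is, however, one genuine gap in the range $1\le p<2$, concentrated at $p=1$. You propose to apply ``coordinatewise'' a discrete Davis decomposition with \emph{simultaneous} control of the $\h_p$ and $L_2$ norms built on the small diagonal space $h_p^{1_c}$, and then take weak-$L_2$ limits. No such exact decomposition exists in the paper: the Randrianantoanina-type decomposition with $L_2$-control (Corollary~\ref{RaDdiscr}, Proposition~\ref{RaD}) lives on $h_p^d$, not $h_p^{1_c}$, and is only established for $1<p<2$ with a constant blowing up like $(p-1)^{-1}$; the $h_p^{1_c}$ variant admits only an \emph{approximate} $L_2$-controlled decomposition (Proposition~\ref{BRDexpldicr-hp1c}), and its $\boxplus$-identity (Theorem~\ref{BRDboxplus-hp1c}) is proved by a duality argument on the kernel, not by an exact splitting. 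So your mechanism produces nothing at $p=1$. What the paper actually does there (proof of Theorem~\ref{Davishp1c}) is different and is the whole point of introducing $h_p^{1_c}$: one first passes to some $\tp$ with $p<\tp<2$ using Lemma~\ref{normHpc}, applies the plain discrete decomposition $H_{\tp}^c(\s)=h_{\tp}^{1_c}(\s)+h_{\tp}^c(\s)$ with constants bounded as $\tp\to 1$, takes weak limits of the pieces in the \emph{reflexive} spaces $L_{\tp}(\M)$ and $\h_{\tp}^c$ (so no $L_2$-control is needed for the limits to exist), controls the limits by the monotonicity/lower-semicontinuity Lemma~\ref{limdiag}, and finally uses the regularity $\tilde\h_{\tp}^{1_c}\subset\h_p^{1_c}$ and $\h_{\tp}^c\subset\lh_1^c$ to land in the right spaces at $p=1$. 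You should either adopt this $\tp$-regularization, or restrict your $L_2$-controlled route to $1<p<2$ and supply a separate argument for $p=1$.
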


By approximation, we deduce a new characterization of $\BMO^c$. 

\begin{theorem}
Let $1<p<\infty$. Then 
$$L_p(\M)=[\BMO,\H_1]_{\frac{1}{p}}\quad \mbox{with equivalent norms. }$$
\end{theorem}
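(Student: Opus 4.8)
The plan is to deduce this interpolation identity from the Burkholder--Gundy identification $L_p(\M)=\H_p$ of Theorem \ref{introBG} together with the column interpolation identity $\H_p^c=[\BMO^c,\H_1^c]_{1/p}$ established above (and its row analogue, obtained by applying the trace-preserving $*$-involution $x\mapsto x^*$, which exchanges the column and row scales). Since $L_p(\M)=\H_p$ with equivalent norms, it suffices to prove $\H_p=[\BMO,\H_1]_{1/p}$, where we recall $\BMO=\BMO^c\cap\BMO^r$, $\H_1=\H_1^c+\H_1^r$, and $\H_p=\H_p^c\cap\H_p^r$ for $2\le p<\infty$ while $\H_p=\H_p^c+\H_p^r$ for $1<p\le 2$. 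Concretely I would show $[\BMO,\H_1]_{1/p}=\H_p^c\cap\H_p^r$ for $2\le p<\infty$ and $[\BMO,\H_1]_{1/p}=\H_p^c+\H_p^r$ for $1<p\le 2$, the two ranges matching at the crossover $p=2$ where $\H_2^c=\H_2^r=L_2$.

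The subtle point is that the interpolation couple $(\BMO,\H_1)=(\BMO^c\cap\BMO^r,\ \H_1^c+\H_1^r)$ is \emph{mismatched}: one endpoint is an intersection and the other a sum, so the column and row identities cannot be combined by bare functoriality — neither the inclusion $\BMO\hookrightarrow\BMO^{c}$ nor $\H_1^{c}\hookrightarrow\H_1$ extends to a couple morphism $(\BMO,\H_1)\to(\BMO^c,\H_1^c)$. Instead I would argue directly with Calder\'on's space of analytic functions and the concrete structure of the Hardy spaces. For $1<p\le 2$, given $x=a+b$ with $a\in\H_p^c$ and $b\in\H_p^r$, choose analytic families $f_a,f_b$ realizing $a,b$ for the column and row identities and set $f=f_a+f_b$; then $f(1/p)=x$ and the boundary at $\Re z=1$ lands in $\H_1^c+\H_1^r=\H_1$, while at $\Re z=0$ it lands a priori only in $\BMO^c+\BMO^r$. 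The work is then to correct $f$ into a family whose $\Re z=0$ boundary lies in the intersection $\BMO=\BMO^c\cap\BMO^r$; this is where the complementation and the $L_p$-module embedding of $\H_p^c$ proved earlier enter, together with the $\boxplus$-sum description of $\H_1$ whose kernel we already know to be trivial. The range $2\le p<\infty$ I would then obtain by duality, using $\H_p=L_p(\M)$ so that $(\H_p)^*=L_{p'}(\M)=\H_{p'}$, the Fefferman--Stein duality $(\H_1)^*=\BMO$, and Calder\'on's duality theorem for complex interpolation, which applies since $L_p(\M)$ is reflexive for $1<p<\infty$ and $\BMO\cap\H_1$ is dense in each endpoint.

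The main obstacle is exactly this reconciliation of the intersection endpoint with the sum endpoint: producing interpolating analytic families whose boundary values simultaneously respect $\BMO=\BMO^c\cap\BMO^r$ and $\H_1=\H_1^c+\H_1^r$. Abstract interpolation yields the mismatched sum $\BMO^c+\BMO^r$ precisely where the intersection is needed, so the argument must exploit the fine structure of the spaces — the complementation results and the absence of a virtual kernel in the $\boxplus$-sum, relying on Randrianantoanina's theorem — rather than formal properties of the complex method. Finally, the approximation built into the very definition of $\BMO$ (as $L_2$-limits of $\w L_2$-limits of bounded families) is what lets one pass from the analytic families constructed above to genuine $\BMO$-valued boundary data, and thereby identify the abstract interpolation space with $\H_p=L_p(\M)$ on the nose.
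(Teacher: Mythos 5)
There is a genuine gap: the step you yourself flag as ``the main obstacle'' --- correcting the analytic family $f=f_a+f_b$ so that its boundary values at $\Re z=0$ land in $\BMO=\BMO^c\cap\BMO^r$ rather than merely in $\BMO^c+\BMO^r$ --- is never actually carried out. Invoking ``the complementation results and the absence of a virtual kernel in the $\boxplus$-sum'' is a placeholder, not an argument; no mechanism is given that takes a decomposition $x=a+b$ in $\H_p^c+\H_p^r$ and produces a single analytic function whose boundary data simultaneously respect the intersection at one endpoint and the sum at the other. Since the whole case $1<p\le 2$ rests on this, and your case $2\le p<\infty$ is then obtained from it by duality, the proof as proposed does not close.

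The paper sidesteps the mismatch entirely by never interpolating the couple $(\BMO,\H_1)$ head-on. It proves instead that $[\BMO,L_2(\M)]_{2/p}=L_p(\M)$ for $2<p<\infty$, where both directions are cheap: the inclusion $[\BMO,L_2(\M)]_{2/p}\subset[\BMO^c,L_2(\M)]_{2/p}\cap[\BMO^r,L_2(\M)]_{2/p}=\H_p^c\cap\H_p^r=L_p(\M)$ follows from bare functoriality (the identity is a couple morphism $(\BMO,L_2)\to(\BMO^c,L_2)$, since the second coordinate is unchanged), Theorem \ref{intH1cBMOc} and Burkholder--Gundy; and the reverse inclusion is free from the continuous embedding $\M\subset\BMO$, which gives $L_p(\M)=[\M,L_2(\M)]_{2/p}\subset[\BMO,L_2(\M)]_{2/p}$. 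One then dualizes via $(\H_1)^*=\BMO$ (Theorem \ref{dualityH1BMO}) to handle $1<p<2$ against the endpoint $L_2(\M)$, and glues the two half-scales with reiteration and Wolff's theorem. The lesson is that choosing $L_2(\M)$ --- which lies in \emph{both} the intersection scale and the sum scale --- as the auxiliary endpoint dissolves the intersection-versus-sum conflict that your construction runs into; no bespoke analytic families and no appeal to Randrianantoanina-type decompositions are needed for this theorem.
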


The paper is organized as follows. 
In Section \ref{sectprel} we recall some necessary preliminaries on ultraproduct of Banach spaces in general, 
and on ultraproduct of von Neumann algebras in particular. 
We also discuss the finite case, and give some background on $L_p$-modules and free Rosenthal inequalities. 
The main part of this paper is developed in Section \ref{sectHp}, 
where we define the Hardy spaces $\hH_p^c$ and $\H_p^c$ of noncommutative martingales with respect to a continuous filtration 
and prove Theorem \ref{eqnormHH} and \eqref{introhH=H}. 
We also transfer injectivity, complementation, duality and interpolation results from the discrete setting to this case. 
The continuous analogue of the noncommutative Burkholder-Gundy inequalities (Theorem \ref{introBG}) is proved in Section \ref{sectBG}, 
where we introduce a variant way of considering the sum of two Banach spaces. 
In our setting this corresponds in some sense to focus on the decomposition at the level of $L_2(\M)$, 
and with the help of Randrianantoanina's results we extend our continuous Burkholder-Gundy decomposition to this stronger sum.  
Section \ref{secthp} is devoted to the study of the conditioned Hardy spaces $\h_p^c$. 
The Davis and Burkholder-Rosenthal inequalities are presented in Section \ref{sectD-BR}, 
in which the diagonal spaces $\h_p^d,\h_p^{1_c}, \K_p^d$ and $\J_{p'}^d$ for $1\leq p <2$ are defined.  
At the beginning of each section, we recall the discrete results that we want to reformulate in the continuous setting, 
and add some details on the discrete proofs. 
At the end of this paper, some open problems are collected in the Appendix. 

Throughout this paper, the notation $a_p\simeq b_p$ means that there exist two positive constants $c$ and $C$ such that
$$c\leq \frac{a_p}{b_p}\leq C  .$$

\vspace{1cm}

\textbf{Aknowledgments}

\vspace{0.5cm}

The second named author would like to thank the Math Department of the University of Illinois, 
where a first version of this paper was done, for its warm hospitality. 
We are grateful to Eric Ricard and Quanhua Xu for numerous fruitful discussions and useful comments, which led to many corrections and improvements.

\section{Preliminaries}\label{sectprel}

\subsection{Noncommutative $L_p$-spaces and martingales with respect to continuous filtrations} 

We use standard notation in operator algebras. We refer to \cite{kar-I,tak-I} for background on von Neumann algebra theory, 
to the survey \cite{px-survey} for details on noncommutative $L_p$-spaces, and to \cite{Haag,Te} in particular for the Haagerup noncommutative $L_p$-spaces. 
In the sequel, even if we will define some $L_p$-spaces in the type III case, 
we will mainly work with noncommutative $L_p$-spaces associated to semifinite von Neumann algebras. 
Let us briefly recall this construction. 
Let $\M$ be a semifinite von Neumann algebra equipped with a normal faithful semifinite trace $\tau$. 
For $0 < p \leq \infty$, we denote by $L_p(\M,\tau)$ or simply $L_p(\M)$ the noncommutative $L_p$-space associated with $(\M,\tau)$. 
Note that if $p=\infty$, $L_p(\M)$ is just $\M$ itself with the operator norm; also recall that for $0<p<\infty$ 
the (quasi) norm on $L_p(\M)$ is defined by
$$\|x\|_p=(\tau(|x|^p))^{1/p}, \quad x\in L_p(\M)$$
where $|x|=(x^*x)^{1/2}$ is the usual modulus of $x$. 

Following \cite{PX}, for $1\leq p <\infty$ and a finite sequence $a=(a_n)_{n\geq 0}$ in $L_p(\M)$ we set 
$$\|a\|_{L_p(\M;\ell_2^c)}=\Big\|\Big(\sum_{n\geq 0} |a_n|^2\Big)^{1/2}\Big\|_p
\quad \mbox{and} \quad 
\|a\|_{L_p(\M;\ell_2^r)}=\|a^*\|_{L_p(\M;\ell_2^c)}.$$
Then $\|\cdot\|_{L_p(\M;\ell_2^c)}$ (resp. $\|\cdot\|_{L_p(\M;\ell_2^r)}$) defines a norm on the family of finite sequences of $L_p(\M)$. 
The corresponding completion is a Banach space, denoted by $L_p(\M;\ell_2^c)$ (resp. $L_p(\M;\ell_2^r)$). 
For $p=\infty$, we define $L_{\infty}(\M;\ell_2^c)$ (respectively $L_{\infty}(\M;\ell_2^r)$) 
as the Banach space of the sequences in $L_{\infty}(\M)$ such that $\sum_{n \geq 0} x_n^*x_n$ 
(respectively $ \sum_{n \geq 0} x_nx_n^*$) converges for the weak-operator topology. 
These spaces will be denoted by  $L_p(\M;\ell_2^c(I))$ and $L_p(\M;\ell_2^r(I))$ when the considered sequences are indexed by $I$. 

Let $(\M_t)_{t\geq 0}$ be an increasing family of von Neumann subalgebras of $\M$ whose union is weak$^*$-dense in $\M$. 
Moreover, we assume that for all $t\geq 0$ there exist normal faithful conditional expectations $\E_t:\M\to \M_t$.
Throughout this paper, we assume that the filtration $(\M_t)_{t\geq 0}$ is right continuous, i.e., 
$\M_t=\bigcap_{s>t}\M_s$ for all $t\geq 0$. 
A family $x=(x_t)_{t\geq 0}$ in $L_1(\M)$ is called a noncommutative martingale with respect to $(\M_t)_{t\geq 0}$ if
$$\E_s(x_t)=x_s , \quad \forall 0\leq s \leq t.$$
If in addition all $x_t$'s are in $L_p(\M)$ for some $1\leq p\leq \infty$, then $x$ is called an $L_p$-martingale. 
In this case we set 
$$\|x\|_p=\sup_{t\geq 0} \|x_t\|_p.$$
If $\|x\|_p <\infty$, we say that $x$ is a bounded $L_p$-martingale. 

Let $x=(x_t)_{t\geq 0}$ be a noncommutative martingale with respect to $(\M_t)_{t\geq 0}$. 
We say that $x$ is a finite martingale if there exists a finite time $T \geq 0$ such that $x_t=x_T$ for all $t\geq T$. 
In this paper, we will only consider finite martingales on $[0,1]$, i.e., $T=1$. 
In this case, for a finite partition $\s=\{0=t_0<t_1<t_2<\cdots<t_n=1\}$ of $[0,1]$ we denote 
$t^+(\s)=t_{j+1}$ the successor of $t=t_j$ and $t^-(\s)=t_{j-1}$ its predecessor, 
and for $t\geq 0$ we define
$$d_t^\s(x)=
\left\{\begin{array}{ll}
x_t-x_{t^-(\s)} & \quad \mbox{for } t>0\\
x_0& \quad \mbox{for } t = 0
\end{array}\right..$$ 
In the sequel, for any operator $x\in L_1(\M)$ we denote $x_t=\E_t(x)$ for all $t\geq 0$.

\subsection{Ultraproduct techniques}\label{ultra}

\subsubsection{Ultraproduct of Banach spaces}

Our approach will be mainly based on ultraproduct constructions. 
Let us first recall the definition and some well-known results on the ultraproducts of Banach spaces.   
Let $\U$ be an ultrafilter on a directed set $\I$. 
They are fixed throughout all this subsection. 
Recall that $\U$ is a collection of subsets of $\I$ such that
\begin{enumerate}
\item[(i)] $\emptyset \notin \U$;
\item[(ii)] If $A,B \subset \I$ such that $A\subset B$ and $A\in \U$, then $B\in \U$;
\item[(iii)] If $A,B\in \U$ then $A\cap B\in \U$;
\item[(iv)] If $A\subset \I$, then either $A\in \U$ or $\I \setminus A \in \U$.
\end{enumerate} 
Let $X$ be a normed vector space. For a family $(x_i)_{i\in \I}$ indexed by $\I$ in $X$, 
we say that $x=\lim_{i,\U} x_i$ is the limit of the $x_i$'s along the ultrafilter $\U$ if 
$$\{i\in \I : \|x-x_i\|<\eps \} \in \U \quad \mbox{for all }  \eps>0.$$
Recall that this limit always exists whenever the family $(x_i)_{i\in \I}$ is in a compact space. 
If $X$ is a dual space, then its unit ball is weak$^*$-compact, 
and any bounded family in $X$ admits a weak$^*$-limit along the ultrafilter $\U$. 
If $X$ is reflexive, since the weak-topology coincide with the weak$^*$-topology, 
we deduce that any bounded family in $X$ admits a weak-limit along the ultrafilter $\U$. 

We now turn to the ultraproduct construction. 
Let us start with the ultraproduct of a family $(X_i)_{i\in \I}$ of Banach spaces. 
Let $\ell_\infty(\{X_i: i\in \I\})$ be the space of bounded families $(x_i)_{i\in\I}\in \prodd_i X_i$ equipped with the supremum norm. 
We define the ultraproduct $\prodd_\U X_i$, also denoted by $\prodd_i X_i/\U$, as the quotient space $\ell_\infty(\{X_i: i\in \I\})/\Nc^\U$, 
where $\Nc^\U$ denotes the (closed) subspace of $\U$-vanishing families, i.e.,
$$\Nc^\U=\{(x_i)_{i\in\I} \in \ell_\infty(\{X_i: i\in \I\}):\lim_{i,\U} \|x_i\|_{X_i}=0\}.$$
We will denote by $(x_i)^\bullet$ the element of $\prodd_\U X_i$ represented by the family $(x_i)_{i\in\I}$. 
Recall that the quotient norm is simply given by 
$$\|(x_i)^\bullet\|=\lim_{i,\U} \|x_i\|_{X_i}.$$
If $X_i=X$ for all $i$, then we denote by $\ell_\infty(\I;X)$ the space of bounded $X$-valued families 
and by $\prodd_\U X$ the quotient space $\ell_\infty(\I;X)/\Nc^\U$, called ultrapower in this case. 
We refer to \cite{Hein,Si} for basic facts about ultraproducts of Banach spaces.
If $(X_i)_{i\in \I},(Y_i)_{i\in \I}$ are two families of Banach spaces and $T_i:X_i\to Y_i$ are linear operators  
uniformly bounded in $i\in \I$, 
we can define canonically the ultraproduct map $T_\U=(T_i)^\bullet$ as 
$$T_\U:\left\{\begin{array}{ccc}
\prodd_\U X_i&\longrightarrow &\prodd_\U Y_i\\
(x_i)^\bullet&\longmapsto &(T_ix_i)^\bullet
\end{array}\right..$$
In the sequel we will often use the following useful fact without any further reference.

\begin{lemma}
Let $(X_i)_{i\in \I}$ be a family of Banach spaces and let $x=(x_i)^\bullet \in \prodd_\U X_i$ be such that 
$\|x\|_{\prodd_\U X_i}=\lim_{i,\U} \|x_i\|_{X_i}<1$. 
Then there exists a family $(\x_i)_{i\in \I} \in  \ell_\infty(\{X_i: i\in \I\})$ such that
$$x=(\x_i)^\bullet \quad \mbox{and} \quad  \|\x_i\|_{X_i}<1 , \forall i\in \I.$$
\end{lemma}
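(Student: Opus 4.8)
The plan is to exploit the two basic features of the ultraproduct: the quotient norm is computed as a limit along $\U$, and two bounded families that agree on a set belonging to $\U$ represent the same element of $\prodd_\U X_i$. First I would record that $r := \lim_{i,\U} \|x_i\|_{X_i} < 1$ and fix any real number $\delta$ with $r < \delta < 1$. By the very definition of the limit along the ultrafilter, taking $\eps = \delta - r > 0$ shows that the set
$$A = \{i\in \I : \|x_i\|_{X_i} < \delta\}$$
belongs to $\U$.

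Next I would define the corrected family by truncating $(x_i)$ outside $A$: set $\x_i = x_i$ for $i\in A$ and $\x_i = 0$ for $i\notin A$. For $i\in A$ we then have $\|\x_i\|_{X_i} = \|x_i\|_{X_i} < \delta < 1$, while for $i\notin A$ we have $\|\x_i\|_{X_i} = 0 < 1$. Hence $\|\x_i\|_{X_i} < 1$ holds for every $i\in \I$, which in particular gives $(\x_i)_{i\in\I} \in \ell_\infty(\{X_i : i\in \I\})$ as required.

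Finally I would check that this new family still represents $x$. The difference family $(\x_i - x_i)_{i\in\I}$ vanishes identically on $A$, so $\{i : \|\x_i - x_i\|_{X_i} = 0\} \supseteq A \in \U$; by property (ii) of the ultrafilter every superset of a member of $\U$ is again in $\U$, and it follows that $\lim_{i,\U} \|\x_i - x_i\|_{X_i} = 0$, i.e. $(\x_i - x_i)_{i\in\I} \in \Nc^\U$. Therefore $(\x_i)^\bullet = (x_i)^\bullet = x$, completing the argument. I do not expect a genuine obstacle here; the only point that needs care is inserting the strict gap $\delta$ strictly between $r$ and $1$, since it is precisely this gap that upgrades the inequality from ``$\|\x_i\|_{X_i} < 1$ for $\U$-almost every $i$'' to the pointwise bound ``$\|\x_i\|_{X_i} < 1$ for \emph{all} $i$'' demanded by the statement.
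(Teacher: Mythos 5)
Your argument is correct and is essentially the same as the paper's: both truncate the family to zero off a set in $\U$ on which the norms are strictly below $1$ (the paper uses the set $\{i:\|x_i\|_{X_i}<1\}$ directly and shows it contains $A_{(1-\ell)/2}\in\U$, while you work with the slightly smaller set $\{i:\|x_i\|_{X_i}<\delta\}$), and then observe that the difference family is $\U$-vanishing. No gap.
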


\begin{proof}
Setting 
$$\x_i=\left\{\begin{array}{cc}
x_i&\quad \mbox{if }  \|x_i\|_{X_i}<1\\
0& \quad \mbox{otherwise}
\end{array}\right.,$$
we get a family verifying $\|\x_i\|_{X_i}<1$ for all $i\in \I$. 
Moreover, by the definition of the limit along the ultrafilter $\U$, we have $\lim_{i,\U} \|x_i-\x_i\|_{X_i}=0$. 
Indeed, if we denote $\ell=\lim_{i,\U} \|x_i\|_{X_i}<1$, then for any $\delta>0$ we have
$$A_\delta= \{i\in \I : |\ell-\|x_i\|_{X_i}|<\delta \} \in \U .$$
Observe that for $\delta=\frac{1-\ell}{2}>0$, each $i\in A_\delta$ satisfies $\|x_i\|_{X_i}<\ell+\delta=\frac{1+\ell}{2}<1$. 
Hence for all $\eps>0$, the condition (ii) in the definition of an ultrafilter implies
$$A_{\frac{1-\ell}{2}}\subset \{i\in \I : \|x_i\|_{X_i}<1\}
\subset \{i\in \I : \|x_i-\x_i\|_{X_i}<\eps \}  \in\U .$$
This shows that $(x_i)^\bullet=(\x_i)^\bullet$ and ends the proof. 
\qd

We will need to study the dual space of an ultraproduct. 
For a family of Banach spaces $(X_i)_{i\in \I}$, 
there is a canonical isometric embedding $J$ of $\prodd_{\U}X_i^*$ into $\Big(\prodd_{\U}X_i\Big)^*$ 
defined by
$$(Jx^*|x)=\lim_{i,\U} (x_i^*|x_i)$$
for $x^*=(x_i^*)^\bullet \in \prodd_{\U}X_i^*$ and $x=(x_i)^\bullet \in \prodd_{\U}X_i$. 
Hence we may identify $\prodd_{\U}X_i^*$ with a subspace of $\Big(\prodd_{\U}X_i\Big)^*$. 
These two spaces coincide in the following case. 

\begin{lemma}[\cite{HMo}]\label{dualultrapdct}
 Let $(X_i)_{i\in \I}$ be a family of Banach spaces. 
Then $\Big(\prodd_{\U}X_i\Big)^*= \prodd_{\U}X_i^*$ if and only if $\prodd_{\U}X_i$ is reflexive.
\end{lemma}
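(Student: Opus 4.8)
The plan is to use that, as recalled just above the statement, the natural map $J\colon \prodd_{\U}X_i^*\to\big(\prodd_{\U}X_i\big)^*$ is an isometric embedding; its range is therefore a norm-closed subspace of $X^*$, where I abbreviate $X:=\prodd_{\U}X_i$ and $Y:=\prodd_{\U}X_i^*$. Consequently $J$ is onto if and only if $J(Y)$ is dense in $X^*$, i.e. if and only if $J(Y)^\perp=\{0\}$ inside $X^{**}$, equivalently the adjoint $J^*\colon X^{**}\to Y^*$ is injective. I would prove the two implications separately, the delicate one being that the coincidence of the duals forces reflexivity.

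For the implication ``reflexive $\Rightarrow$ equality'', assume $X$ is reflexive and take $\Phi\in J(Y)^\perp\subset X^{**}$. By reflexivity $\Phi=\kappa_X(x)$ for some $x=(x_i)^\bullet\in X$, where $\kappa_X$ is the canonical embedding. For each $i$, Hahn--Banach provides $y_i\in X_i^*$ with $\|y_i\|\le 1$ and $(y_i|x_i)=\|x_i\|$; the family $(y_i)$ is bounded, so $y:=(y_i)^\bullet\in Y$, and by the very definition of $J$ one computes
$$0=\Phi(Jy)=(Jy\,|\,x)=\lim_{i,\U}(y_i|x_i)=\lim_{i,\U}\|x_i\|=\|x\| .$$
Hence $\Phi=0$, so $J(Y)^\perp=\{0\}$ and $J$ is onto. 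This direction only uses exact norming of a fixed vector, which is granted by Hahn--Banach, and so is essentially routine.

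For the converse, which I expect to be the \emph{main obstacle}, assume $J$ is onto, so that every $f\in X^*$ has the form $f=J\big((y_i)^\bullet\big)$. The key point is that such an $f$ attains its norm on the unit ball $B_X$. Indeed, choosing a family of scalars $\delta_i>0$ with $\lim_{i,\U}\delta_i=0$ (this is where I use that $\U$ is countably incomplete, which is the case for the free ultrafilters considered in this paper), I pick $x_i\in B_{X_i}$ with $\mathrm{Re}\,(y_i|x_i)>\|y_i\|-\delta_i$ and set $x=(x_i)^\bullet\in B_X$; then $\mathrm{Re}\,(f|x)=\lim_{i,\U}\mathrm{Re}\,(y_i|x_i)\ge\lim_{i,\U}\|y_i\|=\|f\|$, forcing $(f|x)=\|f\|$. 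Thus every functional on $X$ attains its norm, and James' theorem then yields that $X$ is reflexive. The two subtle ingredients are the appeal to James' theorem (valid for arbitrary Banach spaces) and the production of a strictly positive family vanishing along $\U$; it is precisely the passage to the ultrafilter limit that upgrades the approximate norming on each fiber into \emph{exact} norm attainment on the ultraproduct, and this is the heart of the argument.
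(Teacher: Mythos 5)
The paper does not prove this lemma: it is quoted from Henson--Moore \cite{HMo}, so there is no internal argument to compare against. Your proof is correct and is essentially the standard one from the ultraproduct literature. Two remarks. First, for the direction ``reflexive $\Rightarrow$ equality'' you could argue more in the spirit of the paper: Lemma \ref{le:ultrapdct}, proved just below the statement, says that the unit ball of $J\big(\prodd_\U X_i^*\big)$ is always weak$^*$-dense in $B_{(\prodd_\U X_i)^*}$ because $\prodd_\U X_i^*$ is norming; since $J$ is isometric that ball is norm-closed and convex, and in the dual of a reflexive space norm-closed convex sets are weak$^*$-closed, so the two balls coincide. Your annihilator argument is an equivalent repackaging of the same norming property and is equally valid. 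Second, your appeal to a family $(\delta_i)$ with $\delta_i>0$ and $\lim_{i,\U}\delta_i=0$ is not a defect but a genuine implicit hypothesis: the statement is false for principal ultrafilters (take $X_{i_0}=c_0$, for which the canonical identification of duals holds while $c_0$ is not reflexive), so countable incompleteness must be assumed, and it does hold for every ultrafilter used in this paper, since each refines a countable decreasing family of members with empty intersection (for instance the sets $U_{\s_n}$ attached to a strictly increasing sequence of partitions). With that understood, the upgrade from approximate norming on each fiber to exact norm attainment on the ultraproduct, followed by James' theorem, is precisely the heart of the Henson--Moore/Heinrich argument, and your write-up of it is sound.
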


Even in the non reflexive case, the subspace $\prodd_{\U}X_i^*$ is ``big'' in $\Big(\prodd_{\U}X_i\Big)^*$ in the sense of the following Lemma. 
This is also a well-known fact of the theory of ultraproducts (see \cite{Si}, Section $11$), 
we include a proof for the convenience of the reader.

\begin{lemma}\label{le:ultrapdct}
 Let $(X_i)_{i\in \I}$ be a family of Banach spaces.
Then the unit ball of $\prodd_{\U}X_i^*$ is weak$^*$-dense in the unit ball of $\Big(\prodd_{\U}X_i\Big)^*$.
\end{lemma}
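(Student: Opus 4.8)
The plan is to identify, via the bipolar theorem, the weak$^*$-closure of the image of the unit ball of $\prodd_\U X_i^*$ under the canonical isometric embedding $J$. Write $E=\prodd_\U X_i$, so that $E^*=\big(\prodd_\U X_i\big)^*$, and set $B^*=\{x^*\in \prodd_\U X_i^*:\|x^*\|\le 1\}$. Since $J$ is isometric, $J(B^*)$ is a convex, balanced subset of the closed unit ball $B_{E^*}$ of $E^*$, and $B_{E^*}$ is weak$^*$-closed; hence the weak$^*$-closure $K:=\overline{J(B^*)}^{\,w^*}$ satisfies $K\subseteq B_{E^*}$. It remains to prove the reverse inclusion $B_{E^*}\subseteq K$, and for this I would compute $K$ as a bipolar in the dual pair $(E,E^*)$ with pairing $(x^*|x)$ and weak$^*$-topology $\sigma(E^*,E)$.

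The key step is the norming identity: for every $y=(y_i)^\bullet\in E$,
\begin{equation*}
\sup\{\,|(Jx^*|y)| : x^*\in B^*\,\}=\|y\|=\lim_{i,\U}\|y_i\|_{X_i}.
\end{equation*}
The inequality $\le$ is immediate from $\|Jx^*\|=\|x^*\|\le 1$. For $\ge$ I would use Hahn--Banach pointwise: for each $i$ choose $x_i^*\in X_i^*$ with $\|x_i^*\|\le 1$ and $(x_i^*|y_i)=\|y_i\|_{X_i}$. The family $(x_i^*)_{i\in\I}$ is bounded, so it defines $x^*=(x_i^*)^\bullet\in \prodd_\U X_i^*$ with $\|x^*\|=\lim_{i,\U}\|x_i^*\|\le 1$, and by the definition of $J$ one has $(Jx^*|y)=\lim_{i,\U}(x_i^*|y_i)=\lim_{i,\U}\|y_i\|_{X_i}=\|y\|$. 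This realizes the supremum and is the heart of the argument; everything else is formal.

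Granting the norming identity, the polar of $J(B^*)$ taken in $E$ is
$$\big(J(B^*)\big)^\circ=\{\,y\in E:|(Jx^*|y)|\le 1\ \ \forall x^*\in B^*\,\}=\{\,y\in E:\|y\|\le 1\,\}=B_E .$$
By the bipolar theorem in the duality $(E,E^*)$, the bipolar $\big(J(B^*)\big)^{\circ\circ}$ equals the weak$^*$-closed absolutely convex hull of $J(B^*)$; since $J(B^*)$ is already convex, balanced and contains $0$, this hull is exactly $K$. On the other hand $\big(B_E\big)^\circ=\{\,\psi\in E^*:|(\psi|y)|\le 1\ \forall y\in B_E\,\}=B_{E^*}$ by definition of the dual norm. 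Combining, $K=\big(J(B^*)\big)^{\circ\circ}=\big(B_E\big)^\circ=B_{E^*}$, which is the assertion.

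The only genuine content is thus the pointwise Hahn--Banach selection, which forces the supremum above to be attained by an element of the \emph{smaller} space $\prodd_\U X_i^*$; the remainder is the bipolar theorem. I note that, by Lemma~\ref{dualultrapdct}, the embedding $J$ need not be surjective, so the conclusion is genuinely one of weak$^*$-density and cannot in general be promoted to equality of the two spaces. As an alternative to the bipolar formalism, one may argue directly: given $\psi\in B_{E^*}$ and a basic weak$^*$-neighborhood determined by $y^{(1)},\dots,y^{(n)}\in E$ and $\eps>0$, the convex balanced compact sets $S_i=\{((x_i^*|y_i^{(1)}),\dots,(x_i^*|y_i^{(n)})):\|x_i^*\|\le 1\}\subset\C^n$ have support functions given (up to the usual conjugations) by $\lambda\mapsto\|\sum_k\lambda_k y_i^{(k)}\|_{X_i}$, whose $\U$-limit is $\|\sum_k\lambda_k y^{(k)}\|_E$; since $|(\psi|\sum_k\lambda_k y^{(k)})|\le\|\sum_k\lambda_k y^{(k)}\|_E$, a Hahn--Banach separation in $\C^n$ places the target $((\psi|y^{(1)}),\dots,(\psi|y^{(n)}))$ in $\lim_{i,\U}S_i$, and selecting realizing functionals $x_i^*$ and forming $(x_i^*)^\bullet$ produces the required element of $B^*$. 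The main obstacle, in either route, is simply to recognize that the a priori ``large'' supremum defining the dual norm is already attained within the ultraproduct of duals.
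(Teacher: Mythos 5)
Your proof is correct and follows essentially the same route as the paper: the heart of both arguments is the norming identity, obtained by a pointwise Hahn--Banach selection of norming functionals $x_i^*\in B_{X_i^*}$ and forming $(x_i^*)^\bullet\in\prodd_\U X_i^*$. The paper concludes by a direct Hahn--Banach separation argument (showing a norming subspace always has weak$^*$-dense unit ball), which is just a repackaging of your bipolar-theorem step.
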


\begin{proof}
We first prove that for two normed vector spaces $X$ and $Y$ such that $Y$ is a norming subspace of $X^*$, 
the unit ball of $Y$ is weak$^*$-dense in the unit ball of $X^*$. 
Suppose that $B_Y$ is not weak$^*$-dense in $B_{X^*}$, then by the Hahn-Banach Theorem there exist $x^*\in B_{X^*}$ and $x\in X$ such that
$|(x^*|x)|=1$ and for all $y\in B_Y$,  $|( y|x)|<\delta$, $0<\delta<1$.
Since $Y$ is a norming subspace of $X^*$ we have
$$\|x\|_X=\sup_{y\in B_Y}|(y|x)|<\delta.$$
Then
$$1=|(x^*|x)|\leq \|x^*\|_{X^*}\|x\|_X<\delta,$$
which contradicts  $\delta<1$. 
It remains to apply this general result to $X=\prodd_{\U}X_i$ and $Y=\prodd_{\U}X_i^*$. 
It suffices to see that $\prodd_{\U}X_i^*$ is a norming subspace of $\Big(\prodd_{\U}X_i\Big)^*$.
Let $x=(x_i)^\bullet \in \prodd_{\U}X_i$.
For each $i\in \I$, there exists $z_i^*\in B_{X_i^*}$ such that $\|x_i\|_{X_i}=|(z_i^*|x_i)|$.
Multiplying by a complex number of modulus $1$, we can assume that $\|x_i\|_{X_i}=(z_i^*|x_i)$.
Thus
 \begin{align*}
 \|x\|_{\prodd_{\U}X_i} &= \lim_{i,\U} \|x_i\|_{X_i} 
= \lim_{i,\U} (z_i^*|x_i)\\
&\leq \sup_{y^*=(y_i^*)^\bullet \in B_{\prodd_{\U}X_i^*}} |\lim_{i,\U}( y_i^*|x_i)|
 = \sup_{y^*\in B_{\prodd_{\U}X_i^*}}|(y^*|x)|  . 
 \end{align*}
\qd

\subsubsection{Ultraproduct of von Neumann algebras : the general case}

We now consider the ultraproduct construction for von Neumann algebras. 
For convenience we will simply consider ultrapowers, but all the following discussion remains valid for ultraproducts. 
It is well-known that if $\A$ is a C$^*$-algebra, then $\prodd_\U \A$ is still a C$^*$-algebra. 
On the other hand, the class of von Neumann algebras is not closed under ultrapowers. 
However, according to Groh's work \cite{Groh}, 
we know that the class of the preduals of von Neumann algebras is closed under ultrapowers. 
Let $\M$ be a von Neumann algebra. 
Then $\prodd_\U \M_*$ is the predual of a von Neumann algebra denoted by
$$\tilde{\M}_\U=\Big(\prodd_\U \M_*\Big)^*.$$
Moreover, $\prodd_\U \M$ identifies naturally to a weak$^*$-dense subalgebra of $\tilde{\M}_\U$. 
As detailed in \cite{Ray}, we can also see $\tilde{\M}_\U$ as the von Neumann algebra generated by $\prodd_\U \M$ in $B(\prodd_\U \H)$, 
where we have a standard $*$-representation of $\M$ over the Hilbert space $\H$.  
Following Raynaud's work \cite{Ray}, for all $p>0$ we can construct an isometric isomorphism 
$$\Lambda_p:\prodd_\U L_p(\M) \to L_p(\tilde{\M}_\U),$$
which preserves the following structures
\begin{itemize}
\item conjugation: $\Lambda_p( (x_i^*)^\bullet)=\Lambda_p((x_i)^\bullet)^*$,
\item absolute values: $\Lambda_p( (|x_i|)^\bullet)=|\Lambda_p((x_i)^\bullet)|$,
\item $\prodd_\U\M$-bimodule structure: 
$\Lambda_p( (a_i)^\bullet\cdot (x_i)^\bullet \cdot (b_i)^\bullet)
=(a_i)^\bullet\cdot \Lambda_p((x_i)^\bullet) \cdot (b_i)^\bullet$,
\item external product: 
$\Lambda_r( (x_i)^\bullet\cdot (y_i)^\bullet)=\Lambda_p((x_i)^\bullet)\cdot \Lambda_q((y_i)^\bullet)$ for $\frac{1}{r}=\frac{1}{p}+\frac{1}{q}$,
\end{itemize}
for all $(x_i)^\bullet \in \prodd_\U L_p(\M)$, $(y_i)^\bullet \in \prodd_\U L_q(\M)$ and $(a_i)^\bullet,(b_i)^\bullet \in \prodd_\U\M$. 
In the sequel we will identify the spaces $\prodd_\U L_p(\M)$ and $L_p(\tilde{\M}_\U)$ without any further reference. 

\subsubsection{Ultraproduct of von Neumann algebras : the finite case}

We now discuss the finite situation. Let $\M$ be a finite von Neumann algebra equipped with a normal faithful normalized trace $\tau$. 
In this case the usual von Neumann algebra ultrapower is $\M_\U=\ell_\infty(\I;\M)/\I^\U$, where 
$$\I^\U=\{(x_i)_{i\in\I} \in \ell_\infty(\I;\M) : \lim_{i,\U} \tau(x_i^*x_i)=0\}.$$
According to Sakai (\cite{Sakai}), $\M_\U$ is a finite von Neumann algebra when equipped with the ultrapower map of the trace $\tau$, denoted by 
$\tau_\U$ and defined by 
$$\tau_\U((x_i)^\bullet)=\lim_{i,\U} \tau(x_i).$$
Note that this definition is compatible with $\I_\U$, and defines a normal faithful normalized trace on $\M_\U$. 
We may identify $\M_\U$ as a dense subspace of $L_1(\M_\U)$ via the map $x\in \M_\U \mapsto \tau_\U(x \cdot)\in L_1(\M_\U)$. 
Then for $x=(x_i)^\bullet \in \M_\U$, we have $\|x\|_1=\lim_{i,\U} \|x_i\|_1$. 
Observe that this does not depend on the representing family $(x_i)$ of $x$. 
Let us define the map
$$\iota:\left\{\begin{array}{ccc}
\M_\U&\longrightarrow &L_1(\tilde{\M}_\U)\\
(x_i)^\bullet&\longmapsto &(\tau(x_i \cdot))^\bullet
\end{array}\right..$$
We see that this map is well-defined, and it is clear that $\|\iota((x_i)^\bullet)\|_1=\lim_{i,\U} \|x_i\|_1$. 
Hence by density we can extend $\iota$ to an isometry from $L_1(\M_\U)$ into $L_1(\tilde{\M}_\U)$. 
Since $L_1(\M_\U)$ is stable under $\tilde{\M}_\U$ actions, Theorem III.$2.7$ of  \cite{tak-I} gives a central projection $e_\U$ in $\tilde{\M}_\U$ such that 
$L_1(\M_\U)=L_1(\tilde{\M}_\U)e_\U$. 
We can see that $e_\U$ is the support projection of the trace $\tau_\U$. 
In the sequel we will identify $\M_\U$ as a subalgebra of $\tilde{\M}_\U$, by considering $\M_\U=\tilde{\M}_\U e_\U$. 
More generally we have
\begin{equation}\label{LpMU}
L_p(\M_\U)=L_p(\tilde{\M}_\U) e_\U \quad \mbox{for all }  0<p\leq \infty.
\end{equation}
The subspace $L_p(\M_\U)$ can be characterized by using the notion of $p$-equiintegrability as follows. 
Let us recall the definition of a $p$-equiintegrable subset of a noncommutative $L_p$-space introduced 
in \cite{tak-I} for $p=1$ and by Randrianantoanina in \cite{ran-kadec} for any $p$.

\begin{defi}
Let $0<p<\infty$. 
A bounded subset $K$ of $L_p(\M)$ is called $p$-equiintegrable if 
$$\lim_{n\to\infty} \sup_{x\in K} \|e_nxe_n\|_p=0$$
for every decreasing sequences $(e_n)_n$ of projections of $\M$ which weak$^*$-converges to $0$. \\
If $p=1$, we say that $K$ is uniformly integrable.
\end{defi}

Recall that finite subsets of $L_p(\M)$ are $p$-equiintegrable. 
We will use the following characterization coming from \cite[Corollary 2.7]{HRS} in the case $1\leq p <\infty$, and \cite[Lemma 1.3]{SX} for $0<p<1$. 

\begin{lemma}\label{pequiintegr}
Let $0< p<\infty$ and $(x_i)_{i\in\I}$ be a bounded family in $L_p(\M)$. 
Then the following assertions are equivalent.
\begin{enumerate}
\item[(i)] $(x_i)_{i\in\I}$ is $p$-equiintegrable;
\item[(ii)] $\displaystyle\lim_{T\to\infty} \sup_i \mathrm{dist}_{L_p}(x_i,TB_\M)=0$;
\item[(iii)] $\displaystyle\lim_{T\to\infty} \lim_{i,\U} \|x_i\1(|x_i|>T)\|_p=0$, 
\end{enumerate}
where for $a\geq 0$, $\1(a>T)$ denotes the spectral projection of $a$ corresponding to the interval $(T,\infty)$.  
\end{lemma}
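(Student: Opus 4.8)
The plan is to route all three conditions through the single operator-theoretic identity
\[ \mathrm{dist}_{L_p}(x,TB_\M)=\|(|x|-T)\1(|x|>T)\|_p, \]
valid for every $x\in L_p(\M)$, and then to compare this truncated quantity with the spectral tail $\|x\1(|x|>T)\|_p$ of (iii). For the identity I would use the polar decomposition $x=u|x|$: the operator $u\min(|x|,T)$ lies in $TB_\M$, and it realizes the distance because, writing $\mu_t$ for the generalized singular value function, any $y$ with $\|y\|_\infty\le T$ satisfies $\mu_t(x-y)\ge(\mu_t(x)-T)_+$, whence $\|x-y\|_p\ge\|(|x|-T)_+\|_p$. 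A routine functional-calculus comparison then gives, for each $i$,
\[ \|(|x_i|-T)\1(|x_i|>T)\|_p\le \|x_i\1(|x_i|>T)\|_p\le 2\|(|x_i|-T/2)\1(|x_i|>T/2)\|_p, \]
so that, after letting $T\to\infty$, condition (ii) is equivalent to $\lim_T\sup_i\|x_i\1(|x_i|>T)\|_p=0$. Since the ultralimit is dominated by the supremum, this already yields (ii)$\Rightarrow$(iii).

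Next I would prove (ii)$\Rightarrow$(i) directly. Given $\eps>0$, choose $T$ with $\sup_i\mathrm{dist}_{L_p}(x_i,TB_\M)<\eps$ and split $x_i=y_i+z_i$ with $\|y_i\|_\infty\le T$ and $\|z_i\|_p<\eps$. For a decreasing sequence $e_n\downarrow0$ of projections, Hölder gives $\|e_ny_ie_n\|_p\le\|y_i\|_\infty\,\tau(e_n)^{1/p}\le T\,\tau(e_n)^{1/p}$, and normality of the (finite, normalized) trace forces $\tau(e_n)\to0$. Hence $\limsup_n\sup_i\|e_nx_ie_n\|_p\le\eps$, and letting $\eps\to0$ establishes $p$-equiintegrability.

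The heart of the matter is the converse (i)$\Rightarrow$(ii), which I would prove contrapositively. If (ii) fails there are $\delta>0$, $T_k\uparrow\infty$ and indices $i_k$ with $\|x_{i_k}\1(|x_{i_k}|>T_k)\|_p\ge\delta$. Writing $x_{i_k}=u_k|x_{i_k}|$, I set $q_k=\1(|x_{i_k}|>T_k)$ (the domain projection) and $r_k=u_kq_ku_k^*$ (the range projection), so that Chebyshev gives $\tau(q_k)=\tau(r_k)\le(\sup_i\|x_i\|_p/T_k)^p$; choosing the $T_k$ to grow fast enough makes these traces summable. Then $e_m=\bigvee_{k\ge m}(q_k\vee r_k)$ is decreasing with $\tau(e_m)\to0$, hence $e_m\downarrow0$ by faithfulness. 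The decisive point is the lower bound: since $r_m\le e_m$ and $q_m\le e_m$, asymmetric compression (contractive on $L_p$) yields
\[ \|e_mx_{i_m}e_m\|_p\ge\|r_m(e_mx_{i_m}e_m)q_m\|_p=\|x_{i_m}q_m\|_p\ge\delta, \]
so $\sup_i\|e_mx_ie_m\|_p\not\to0$ and (i) fails. Compressing on the left by the range projection and on the right by the domain projection is what repairs the naive two-sided compression $q_mx_{i_m}q_m$, which can vanish identically even when the tail is large (e.g.\ for a nilpotent $x$).

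I expect the main obstacle to be reconciling the $\sup_i$ in (i)--(ii) with the ultralimit $\lim_{i,\U}$ in (iii) for the remaining implication (iii)$\Rightarrow$(ii): mass carried on a $\U$-negligible set of indices is invisible to $\lim_{i,\U}$ but not to $\sup_i$, so the conditions match only once one passes to a $\U$-representative $(\tilde x_i)$ of the ultraproduct element, with $\lim_{i,\U}\|x_i-\tilde x_i\|_p=0$, and re-runs the set-level equivalence (i)$\Leftrightarrow$(ii) for it. This truncation of the $\U$-small indices, together with the left/right support bookkeeping above, is the delicate step, and is precisely the content we import from Corollary~$2.7$ of \cite{HRS}.
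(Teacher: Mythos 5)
First, a point of reference: the paper does not actually prove this lemma --- it is imported verbatim from Corollary $2.7$ of \cite{HRS} --- so there is no in-paper argument to compare yours against; what follows assesses your proof on its own terms. The parts you prove are correct. The identity $\mathrm{dist}_{L_p}(x,TB_\M)=\|(|x|-T)\1(|x|>T)\|_p$, the two-sided comparison with $\|x\1(|x|>T)\|_p$, the splitting argument for (ii)$\Rightarrow$(i), and above all the contrapositive (i)$\Rightarrow$(ii) are sound. In particular, compressing on the left by the range projection $r_m=u_mq_mu_m^*$ and on the right by the domain projection $q_m$ is exactly what makes the lower bound work: $r_me_m=r_m$, $e_mq_m=q_m$ and $q_m\leq u_m^*u_m$ give $r_m(e_mx_{i_m}e_m)q_m=x_{i_m}q_m$, while $y\mapsto r_myq_m$ is an $L_p$-contraction, so $\|e_mx_{i_m}e_m\|_p\geq\|x_{i_m}q_m\|_p\geq\delta$. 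Your remark that the naive two-sided compression $q_mx_{i_m}q_m$ fails is also right.

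The gap is (iii)$\Rightarrow$(ii), which you do not prove, and your own diagnosis shows it \emph{cannot} be proved as literally stated: (i) and (ii) are properties of the whole family, while (iii) only sees a $\U$-large set of indices. Concretely, take $\I=\nz$, $\U$ an ultrafilter containing the (cofinal) set of even integers, $\M=L_\infty[0,1]$, $x_i=0$ for $i$ even and $x_i=i^{1/p}\1_{[0,1/i]}$ for $i$ odd: this family is bounded in $L_p$, (iii) holds trivially, yet (i) fails (test against $e_n=\1_{[0,1/n]}$) and $\sup_i\mathrm{dist}_{L_p}(x_i,TB_\M)=1$ for every $T$. So the lemma must be read, consistently with Theorem \ref{LpMUpequi} and with Raynaud--Xu, as a statement about the $\U$-equivalence class: (iii) holds iff \emph{some} representative of $(x_i)^\bullet$ satisfies (i)/(ii). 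You name the right repair --- pass to a representative $(\x_i)$ with $\lim_{i,\U}\|x_i-\x_i\|_p=0$ --- but then defer precisely this step back to \cite{HRS}, which leaves the one nontrivial implication unproved. It can be closed in a few lines: given (iii), choose $T_k\uparrow\infty$ and decreasing sets $A_k\in\U$ with $\|x_i\1(|x_i|>T_k)\|_p<2^{-k}$ for $i\in A_k$, and set $\x_i=x_i\1(|x_i|\leq T_{k(i)})$ where $k(i)$ is the largest $k$ with $i\in A_k$ (with $\x_i=x_i\1(|x_i|\leq T_1)$ off $A_1$ and $\x_i=x_i$ on $\bigcap_kA_k$). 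Then $\|x_i-\x_i\|_p<2^{-k}$ on $A_k$, so the two families are $\U$-equivalent, and for $T\geq T_k$ one checks $\sup_i\|\x_i\1(|\x_i|>T)\|_p\leq 2^{-k}$, so $(\x_i)$ satisfies (ii) and your (ii)$\Rightarrow$(i) applies. Note finally that the paper's actual uses of (iii)$\Rightarrow$(i) (e.g.\ in Lemma \ref{L_p(NU)}) come with tail estimates that are uniform in $i$, so the corrected statement is all that is ever needed downstream.
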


Observe that \eqref{LpMU} implies that for $0<p<\infty$ and $x\in L_p(\tilde{\M}_\U)$ 
$$x\in L_p(\M_\U) \Leftrightarrow x=xe_\U.$$
Moreover, in the finite case, $e_\U$ corresponds to the projection denoted by $s_e$ in \cite{RX}.  
Hence Theorem $4.6$ of \cite{RX} yields the following characterization of $L_p(\M_\U)$. 

\begin{theorem}\label{LpMUpequi}
Let $0<p<\infty$ and $x\in L_p(\tilde{\M}_\U)$. 
Then the following assertions are equivalent.
\begin{enumerate}
\item[(i)] $x\in L_p(\M_\U)$;
\item[(ii)] $x$ admits a $p$-equiintegrable representing family $(x_i)_{i\in\I}$.
\end{enumerate}
\end{theorem}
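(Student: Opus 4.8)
The plan is to translate both assertions into concrete statements about a representing family and then to reduce the substantive part to the structure theory of $\tilde{\M}_\U$ established in \cite{RX}. Using the isometric identification $L_p(\tilde{\M}_\U)=\prodd_\U L_p(\M)$ (the map $\Lambda_p$), write $x=(x_i)^\bullet$ for some bounded family $(x_i)_{i\in\I}$ in $L_p(\M)$, and recall from \eqref{LpMU} and the observation following Lemma \ref{pequiintegr} that assertion (i) is equivalent to $x=xe_\U$, where $e_\U$ is the central support projection of $\tau_\U$.

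For the implication (ii)$\Rightarrow$(i) I would argue directly, without invoking \cite{RX}, from the distance criterion in Lemma \ref{pequiintegr}(ii). Given $\eps>0$, $p$-equiintegrability of $(x_i)$ produces $T>0$ with $\sup_i \mathrm{dist}_{L_p}(x_i,TB_\M)<\eps$, so I may split $x_i=a_i+r_i$ with $\|a_i\|_\infty\leq T$ and $\|r_i\|_p<\eps$ uniformly in $i$. The bounded family $(a_i)$ represents an element of $\M_\U\subset L_p(\M_\U)$, this compatibility being built into the identifications of \eqref{LpMU} and $\Lambda_p$, while $\|(r_i)^\bullet\|_p=\lim_{i,\U}\|r_i\|_p\leq\eps$. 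Hence $\mathrm{dist}_{L_p(\tilde{\M}_\U)}(x,L_p(\M_\U))\leq\eps$, and since $L_p(\M_\U)$ is norm-closed and $\eps$ is arbitrary, $x\in L_p(\M_\U)$.

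The delicate direction is (i)$\Rightarrow$(ii): starting from $x=xe_\U$ alone, one must manufacture a representing family that is genuinely $p$-equiintegrable, rather than merely one whose class happens to land in $L_p(\M_\U)$. Here I would appeal to the structure of the Raynaud algebra: the very reason for identifying the central projection $e_\U$ with the projection $s_e$ of \cite{RX} is that $s_e$ cuts out precisely the $p$-equiintegrable part of $L_p(\tilde{\M}_\U)$, so that \cite[Theorem 4.6]{RX} supplies a $p$-equiintegrable representative for every $x$ with $x=xs_e$. I expect the identification $e_\U=s_e$ to be the main obstacle, since it is exactly the point where the tracial (Sakai) ultrapower $\M_\U=\tilde{\M}_\U e_\U$ must be matched against the equiintegrable decomposition of the abstract predual ultraproduct $\tilde{\M}_\U$.

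A self-contained alternative for (i)$\Rightarrow$(ii) would be to approximate $x$ in $L_p(\M_\U)$ by bounded elements $y^{(k)}=(y^{(k)}_i)^\bullet\in\M_\U$, each represented by an $\|\cdot\|_\infty$-bounded (hence $p$-equiintegrable) family, and then to diagonalize along $\U$ using the sets $\{i\in\I:\|y^{(k)}_i-x_i\|_p<2^{-k}\}\in\U$ to assemble a single representative satisfying the criterion of Lemma \ref{pequiintegr}(ii). However, this diagonalization is essentially a reproof of \cite[Theorem 4.6]{RX}, so I would instead quote that theorem once $e_\U=s_e$ is in hand.
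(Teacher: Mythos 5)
Your proposal is correct and follows essentially the same route as the paper: the paper's entire argument is the observation that (i) is equivalent to $x=xe_\U$ via \eqref{LpMU}, the identification of $e_\U$ with the projection $s_e$ of \cite{RX}, and an appeal to Theorem $4.6$ of \cite{RX}, which is exactly the skeleton you describe. Your additional self-contained argument for (ii)$\Rightarrow$(i) via the distance criterion is a harmless refinement (note only that Lemma \ref{pequiintegr} is stated for $1\leq p<\infty$, so for $0<p<1$ you would truncate spectrally as in criterion (iii) instead), and your correct instinct that the substantive content lies in matching the tracial ultrapower against the equiintegrable part of $L_p(\tilde{\M}_\U)$ is precisely what the citation of \cite{RX} is carrying.
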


For $0<p<\tp\leq \infty$, since $\M$ is finite we have a contractive inclusion $L_{\tp}(\M)\subset L_p(\M)$. 
Let us denote by $I_{\tp,p}:\prodd_\U L_{\tp}(\M) \to \prodd_\U L_p(\M)$ the contractive ultraproduct map of the componentwise inclusion maps.  
Note that although the componentwise inclusion maps are injective, the ultraproduct map $I_{\tp,p}$ is not. 
However, its restriction to $L_{\tp}(\M_\U)$ is injective. 
Indeed, using the weak$^*$-density of $\prodd_\U \M$ in $\tilde{\M}_\U$, 
we see that $I_{\tp,p}$ is bimodular under the action of $\tilde{\M}_\U$. 
Hence, if $x\in L_{\tilde{p}}(\tilde{\M}_\U)$ satisfies $x=xe_\U$, then $I_{\tp,p}(x)=I_{\tp,p}(xe_\U)=I_{\tp,p}(x)e_\U \in L_p(\M_\U)$. 
This shows that $I_{\tp,p}:L_{\tp}(\M_\U)\to L_{p}(\M_\U)$. 
Moreover, since $\M_\U$ is finite, the map $I_{\tp,p}$ coincides on $L_{\tp}(\M_\U)$ with the natural inclusion $L_{\tp}(\M_\U)\subset L_{p}(\M_\U)$. \\
We deduce from Theorem \ref{LpMUpequi} the following description of the space $L_p(\M_\U)$, viewed as a subspace of $L_p(\tilde{\M_\U})$. 

\begin{lemma}\label{L_p(NU)}
Let $0< p <\infty$. Then 
$$L_p(\M_\U) = \overline{\bigcup_{\tilde{p}>p}I_{\tp,p}(L_{\tilde{p}}(\tilde{\M}_\U))}^{\|\cdot\|_{L_p(\tilde{\M}_\U) }}.$$
\end{lemma}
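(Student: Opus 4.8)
The plan is to prove the two inclusions separately, in each case using the characterization of $L_p(\M_\U)$ as the ``$p$-equiintegrable part'' of $L_p(\tilde{\M}_\U)$ supplied by Theorem~\ref{LpMUpequi}, together with the fact that, by \eqref{LpMU}, $L_p(\M_\U)=L_p(\tilde{\M}_\U)e_\U$ is a \emph{closed} subspace of $L_p(\tilde{\M}_\U)$.

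For the inclusion $\supseteq$, I would start from an arbitrary $I_{\tp,p}(y)$ with $\tp>p$ and $y=(y_i)^\bullet\in L_{\tp}(\tilde{\M}_\U)=\prodd_\U L_{\tp}(\M)$, normalized so that $\sup_i\|y_i\|_{\tp}\leq C<\infty$. The key point is that an $L_{\tp}$-bounded family is automatically $p$-equiintegrable: for any decreasing sequence $(f_n)_n$ of projections of $\M$ with $f_n\to 0$ weak$^*$, the three-term Hölder inequality with $\frac1p=\frac1{2s}+\frac1{\tp}+\frac1{2s}$, where $\frac1s=\frac1p-\frac1{\tp}>0$, gives
\[
\sup_i\|f_n y_i f_n\|_p\leq \tau(f_n)^{1/s}\,\sup_i\|y_i\|_{\tp}\leq C\,\tau(f_n)^{1/s},
\]
and $\tau(f_n)\downarrow 0$ because $\tau\in\M_*$. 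Hence $(y_i)_i$ is a $p$-equiintegrable representing family of $I_{\tp,p}(y)$, so Theorem~\ref{LpMUpequi} gives $I_{\tp,p}(y)\in L_p(\M_\U)$. Since $L_p(\M_\U)$ is closed, the whole closure $\overline{\bigcup_{\tp>p}I_{\tp,p}(L_{\tp}(\tilde{\M}_\U))}$ lies in $L_p(\M_\U)$.

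For the reverse inclusion $\subseteq$, I would fix $x\in L_p(\M_\U)$ and invoke Theorem~\ref{LpMUpequi} to pick a $p$-equiintegrable representing family $(x_i)_{i\in\I}$. The idea is to truncate: for large $T$ set $y_i=x_i\1(|x_i|\leq T)$, a spectral cut-off of $|x_i|$, so that $y_i\in\M$ with $\|y_i\|_\infty\leq T$. Then $(y_i)_i$ is bounded in every $L_{\tp}(\M)$, so $(y_i)^\bullet\in L_{\tp}(\tilde{\M}_\U)$ for each finite $\tp>p$, and by definition of $I_{\tp,p}$ we have $I_{\tp,p}((y_i)^\bullet)=(y_i)^\bullet$ in $L_p(\tilde{\M}_\U)$. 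Using the quotient-norm formula and the tail characterization of $p$-equiintegrability in Lemma~\ref{pequiintegr}(iii),
\[
\big\|x-I_{\tp,p}((y_i)^\bullet)\big\|_{L_p(\tilde{\M}_\U)}=\lim_{i,\U}\|x_i\1(|x_i|>T)\|_p\xrightarrow[T\to\infty]{}0 ,
\]
so $x$ is approximated in $L_p(\tilde{\M}_\U)$-norm by elements of $\bigcup_{\tp>p}I_{\tp,p}(L_{\tp}(\tilde{\M}_\U))$, as desired.

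The routine bookkeeping is to verify that the truncated family genuinely represents an element of $L_{\tp}(\tilde{\M}_\U)$ and that the non-injective map $I_{\tp,p}$ carries it to the expected element of $L_p(\tilde{\M}_\U)$; this is where the bimodularity of $I_{\tp,p}$ under $\tilde{\M}_\U$ and the identification $L_p(\M_\U)=L_p(\tilde{\M}_\U)e_\U$ are used. The real content, and hence the main obstacle, is the equiintegrability input at both ends: in direction $\supseteq$ one must know that $L_{\tp}$-boundedness forces $p$-equiintegrability, handled by the Hölder estimate above, while in direction $\subseteq$ one must convert $p$-equiintegrability of $(x_i)$ into the uniform tail smallness $\lim_{T\to\infty}\lim_{i,\U}\|x_i\1(|x_i|>T)\|_p=0$. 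For $1\leq p<\infty$ this is exactly Lemma~\ref{pequiintegr}; for $0<p<1$ the same truncation scheme applies once the corresponding tail estimate is checked directly from the definition of $p$-equiintegrability.
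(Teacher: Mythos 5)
Your proof is correct, and both inclusions go through, but the converse inclusion takes a genuinely different route from the paper. For $\supseteq$ the substance is the same: you both show that an $L_{\tp}$-bounded family is $p$-equiintegrable and then invoke Theorem~\ref{LpMUpequi}; the paper does this via the tail estimate $\|x_i\1(|x_i|>T)\|_p\leq T^{1-\tp/p}\|x_i\|_{\tp}^{\tp/p}$ and criterion (iii) of Lemma~\ref{pequiintegr}, whereas you verify the definition directly with the three-term H\"older bound $\|f_ny_if_n\|_p\leq \tau(f_n)^{1/s}\|y_i\|_{\tp}$; both are fine, and your version has the small advantage of not passing through Lemma~\ref{pequiintegr}, which is only stated for $1\leq p<\infty$. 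For $\subseteq$ the paper argues in one line: since $\M_\U$ is finite, $L_{\tp}(\M_\U)$ is dense in $L_p(\M_\U)$, and any $y\in L_{\tp}(\M_\U)$ satisfies $y=I_{\tp,p}(y)$; this leans on the prior identification $L_p(\M_\U)=L_p(\tilde{\M}_\U)e_\U$ of $L_p(\M_\U)$ as the $L_p$-space of a finite von Neumann algebra. You instead truncate a $p$-equiintegrable representing family, $y_i=x_i\1(|x_i|\leq T)$, and use $\lim_{T\to\infty}\lim_{i,\U}\|x_i\1(|x_i|>T)\|_p=0$ to approximate $x$ by elements of $\M_\U\subset I_{\tp,p}(L_{\tp}(\tilde{\M}_\U))$. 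Your argument is longer but more constructive, approximating by uniformly bounded families and making explicit which dense subset does the work; the paper's is shorter but uses more of the ambient structure. Your closing caveat about $0<p<1$ is well taken, and applies equally to the paper's own use of Lemma~\ref{pequiintegr} in the forward direction.
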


\begin{proof}  
Let us first show that 
$I_{\tp,p}(L_{\tilde{p}}(\tilde{\M}_\U)) \subset L_p(\M_\U)$ 
for $\tilde{p}>p$.  
Let $x=(x_i)^\bullet \in L_{\tilde{p}}(\tilde{\M}_\U)$. 
By Theorem \ref{LpMUpequi}, it suffices to prove that the family $(x_i)_{i\in\I}$ is $p$-equiintegrable. 
For $T >0$ and each $i\in \I$ we have
$$ \|x_i \1(|x_i|>T)\|_p
\leq \|x_i |x_i|^{\frac{\tp}{p}-1}T^{1-\frac{\tp}{p}}\|_p\leq T^{1-\frac{\tp}{p}}\|x_i\|_{\tp}^{\frac{\tp}{p}}.$$
Taking the limit along the ultrafilter $\U$ we obtain
$$ \lim_{i,\U} \|x_i \1(|x_i|>T)\|_p
\leq T^{1-\frac{\tp}{p}}\|x\|_{L_{\tp}(\tilde{\M}_\U)}^{\frac{\tp}{p}}.$$
Since $1-\frac{\tp}{p} <0$, this tends to $0$ as $T$ goes to $\infty$. 
We conclude that $(x_i)_{i\in\I}$ is $p$-equiintegrable by using Lemma \ref{pequiintegr}. 
Conversely, let $x\in L_p(\M_\U)$. Since $\M_\U$ is finite, $L_{\tilde{p}}(\M_\U)$ is dense in $L_p(\M_\U)$ for all $\tilde{p}>p$. 
Hence for all $\eps>0$ there exists $y\in L_{\tilde{p}}(\M_\U)$ such that $\|x-y\|_{L_p(\M_\U)} <\eps$. 
Since $L_p(\M_\U)$ is isometrically embedded into $L_p(\tilde{\M}_\U)$ and $y=I_{\tp,p}(y)\in I_{\tp,p}(L_{\tilde{p}}(\tilde{\M}_\U))$, this ends the proof. 
\qd

For $p=1$, we can translate the notion of uniform integrability in terms of compactness as follows. 

\begin{theorem}[\cite{tak-I}]\label{weakcompact}
Let $K$ be a bounded subset of the predual $\M_*$ of $\M$. 
Then the following assertions are equivalent.
\begin{enumerate}
\item[(i)] $K$ is uniformly integrable;
\item[(ii)]$K$ is weakly relatively compact.
\end{enumerate}
\end{theorem}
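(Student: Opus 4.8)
The plan is to treat this as the noncommutative Dunford--Pettis--Grothendieck theorem and to reduce both implications to the structure of the weak$^*$-closure of $K$ inside the bidual $\M^{*}=(\M_*)^{**}$. The starting observation is that since $\M=(\M_*)^{*}$, the weak topology on $\M_*$ is exactly $\sigma(\M_*,\M)$, and this coincides with the restriction to $\M_*$ of the weak$^*$ topology $\sigma(\M^{*},\M)$ on $\M^{*}$. Combining this with Banach--Alaoglu (the weak$^*$-closure $\overline{K}^{w^*}$ in $\M^{*}$ is always $\sigma(\M^{*},\M)$-compact) yields the clean criterion: $K$ is relatively weakly compact if and only if $\overline{K}^{w^*}\subseteq \M_*$, i.e. every weak$^*$-cluster point of $K$ is normal. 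I would establish this equivalence first, as it converts the compactness question into a question about normality of limit functionals. I would also reduce to positive functionals, since for $\varphi\geq 0$ one has the convenient identity $\|e\varphi e\|=\varphi(e)$ for a projection $e$, so that uniform integrability becomes the assertion that $\sup_{\varphi\in K}\varphi(e_n)\to 0$ for every decreasing sequence of projections $e_n\downarrow 0$.

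For (i) $\Rightarrow$ (ii), I would argue by contraposition at the level of cluster points. Suppose $\omega\in\overline{K}^{w^*}$ is not normal, and write $\omega=\omega_n+\omega_s$ for its decomposition into normal and singular parts (see \cite{tak-I}). A nonzero singular $\omega_s$ is concentrated ``at infinity'': there is a decreasing sequence of projections $e_m\downarrow 0$ along which $\omega_n(e_m)\to 0$ while the mass of $|\omega_s|$ survives, so that $\|e_m\omega e_m\|\geq\delta$ for some $\delta>0$ and all $m$. Since $\omega$ is a weak$^*$-limit of elements of $K$ and, after reduction to positive functionals, $\|e_m\varphi e_m\|=\varphi(e_m)$ is evaluation against the single element $e_m\in\M$, for each $m$ I can find $\varphi^{(m)}\in K$ with $\varphi^{(m)}(e_m)\geq\delta/2$. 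Hence $\sup_{\varphi\in K}\|e_m\varphi e_m\|\geq\delta/2$ does not tend to $0$, contradicting uniform integrability.

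For (ii) $\Rightarrow$ (i), again by contraposition, assume $K$ fails to be uniformly integrable: there are a decreasing sequence $e_m\downarrow 0$, a $\delta>0$, and $\varphi_m\in K$ (positive, after reduction) with $\varphi_m(e_m)\geq\delta$. Setting $f_j=e_j-e_{j+1}$ gives mutually orthogonal projections with $e_m=\sum_{j\geq m}f_j$, and complete additivity of each normal $\varphi_m$ lets me discard tails. The technical heart is a gliding-hump construction: passing to subsequences and grouping the $f_j$ into disjoint blocks $g_k$ (again mutually orthogonal projections), I can arrange $\varphi^{(k)}(g_k)\geq\delta/2$ for a subsequence $\varphi^{(k)}\in K$. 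Then any weak$^*$-cluster point $\omega$ of $(\varphi^{(k)})$ carries mass $\geq\delta/2$ spread along the orthogonal sequence $(g_k)$, so it fails complete additivity on $\{g_k\}$; hence $\omega$ has a nonzero singular part and lies outside $\M_*$. By the criterion above, $K$ is not relatively weakly compact.

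The step I expect to be the main obstacle is the gliding-hump disjointification in (ii) $\Rightarrow$ (i): turning persistent mass on the decreasing sequence $(e_m)$ into persistent mass on a genuinely orthogonal sequence $(g_k)$ while simultaneously extracting a subsequence of the $\varphi_m$, and then reading off a nonzero singular part of the resulting cluster point. Two further points require care and I would address them explicitly: the reduction from general functionals to positive ones, which must be justified through the normal/singular decomposition since $\varphi\mapsto|\varphi|$ is not weakly continuous; and the sequence-versus-net issue in relating singular functionals to decreasing sequences of projections, which is harmless here because the relevant supports are $\sigma$-finite.
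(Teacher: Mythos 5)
First, a point of reference: the paper does not prove this statement at all --- it is quoted from \cite{tak-I} (Theorem~III.5.4, essentially Akemann's noncommutative Dunford--Pettis theorem) --- so there is no in-paper argument to compare against. Your architecture is the standard one for that theorem: reduce relative weak compactness to the inclusion $\overline{K}^{w^*}\subseteq\M_*$ via Banach--Alaoglu and the coincidence of $\sigma(\M_*,\M)$ with the restriction of $\sigma(\M^*,\M)$; use the normal/singular decomposition of cluster points in one direction; run a gliding hump over an orthogonal sequence of projections in the other. For $K$ consisting of \emph{positive} functionals both implications go through as you describe, and your treatment of the $\sigma$-finiteness issue and of the detection of the singular part via $\omega(h_K)\geq\delta/2$ for $h_K=\sum_{k\geq K}g_k\searrow 0$ is sound. (Incidentally, in (i)$\Rightarrow$(ii) you do not even need positivity to pass from $\|e_m\omega e_m\|\geq\delta$ to some $\varphi\in K$ with $\|e_m\varphi e_m\|\geq\delta/2$: the seminorm $\varphi\mapsto\|e_m\varphi e_m\|=\sup_{\|x\|\leq1}|\varphi(e_mxe_m)|$ is weak$^*$-lower semicontinuous as a supremum of weak$^*$-continuous functions.)

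The genuine gap is the reduction to positive functionals, which you flag but defer, and which is precisely the technical heart of the noncommutative statement. The paper's definition of uniform integrability is in terms of the two-sided compressions $\|e_n\varphi e_n\|$; for non-positive $\varphi$ the identity $\|e\varphi e\|=\varphi(e)$ fails, the gliding hump cannot be run on the projections $g_k$ alone (one must track elements $e_mx_me_m$ realizing $\|e_m\varphi_me_m\|$, and these are not block-diagonal with respect to the $f_j$), and uniform integrability of $K$ does not formally transfer to $\{|\varphi|:\varphi\in K\}$ because $\varphi\mapsto|\varphi|$ does not commute with compression by projections and is not weakly continuous. What is needed is an Akemann-type square-root estimate controlling $|\varphi|(e)$ in terms of $\|\varphi\|$ and $\|e\varphi e\|$ (equivalently, the fact that $K$ is relatively weakly compact if and only if $\{|\varphi|,\,|\varphi^*|:\varphi\in K\}$ is, which is one of the equivalent conditions in Takesaki's Theorem~III.5.4). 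Without some such estimate, your argument establishes the equivalence only for sets of positive functionals; with it, the rest of your sketch assembles into a complete proof.
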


Let us consider
$$i_\U:\left\{\begin{array}{ccc}
(\M,\tau) &\longrightarrow  &(\M_\U,\tau_\U) \\
x&\longmapsto &(x)^\bullet
\end{array}\right..$$
Since $i_\U$ is trace preserving, this yields an isometric embedding of $L_1(\M)$ into $L_1(\M_\U)$. 
Hence we get natural inclusions
$$L_1(\M)\subset L_1(\M_\U)\subset L_1(\tilde{\M}_\U),$$
where $L_1(\tilde{\M}_\U)$ represents the bounded families in $L_1(\M)$, $L_1(\M_\U)$ corresponds to the weakly converging families along $\U$ 
and $L_1(\M)$ consists of the collection of the constants families. 

We end this subsection with the introduction of a conditional expectation. 
We set 
$$\E_\U=(i_\U)^*:\M_\U \to \M.$$
Then $\E_\U$ is a normal faithful conditional expectation on $\M_\U$. 
Since $\E_\U$ is trace preserving, for all $1\leq p \leq \infty$ 
we can extend $\E_\U$ to a contraction from $L_p(\M_\U)$ onto $L_p(\M)$, still denoted by $\E_\U$. 
Moreover, for $1<p\leq \infty$ and $x=(x_i)^\bullet \in L_p(\M_\U)$ we have 
$$\E_\U(x)=\wst L_p \mbox{-} \lim_{i,\U} x_i.$$
Indeed, for $y\in L_{p'}(\M)$ and $\frac{1}{p}+\frac{1}{p'}=1$ we can write
\begin{equation}\label{EU}
\tau(\E_\U(x)^*y)= \tau_\U (x^*i_\U(y))=\lim_{i,\U}\tau(x_i^*y).
\end{equation}
Note that since in this case $L_p(\M)$ is a dual space, the weak$^*$-limit of the $x_i$'s exists for any bounded family $(x_i)$. 
Hence we may extend $\E_\U$ to $L_p(\tilde{\M}_\U)$ for $1<p\leq \infty$. 
However this extension, still denoted by $\E_\U$ in the sequel, is no longer faithful.  
For $1<p<\infty$, since $L_p(\M)$ is reflexive, the weak$^*$-limit corresponds to the weak-limit. 
Recall that by Theorem \ref{weakcompact}, $L_1(\M_\U)$ corresponds to the weakly converging families. 
Thus \eqref{EU} implies that for $1\leq p <\infty$ and $x=(x_i)^\bullet \in L_p(\M_\U)$ we have 
$$\E_\U(x)=\w  L_p \mbox{-} \lim_{i,\U} x_i.$$

\subsection{$L_p$ $\M$-modules}

We will use the theory of $L_{p}$-modules
introduced in \cite{JS}. This structure will help us to prove
duality and interpolation results for different $\H_p$-spaces. 
We may say that $L_{p}$-modules are $L_p$-versions of Hilbert $W^*$-modules. 
Let $\M$ be a von Neumann algebra. 

\begin{defi}\label{defLpmodule}
Let $1\leq p<\infty$. 
A right $\M$-module $X$ is called a right $L_p$ $\M$-module if 
it has an $L_{p/2}(\M)$-valued inner product, i.e. there is a sesquilinear map $\langle \cdot ,\cdot \rangle:X\times X\to L_{p/2}(\M)$, conjugate linear in the first variable, such that for all $x,y \in X$ and all $a\in \M$ 
 \begin{enumerate}
 \item[(i)] $\langle x,x\rangle \geq 0$, and $\langle x,x\rangle = 0 \Leftrightarrow x=0$, 
 \item[(ii)] $\langle x,y\rangle^*= \langle y,x\rangle$,
 \item[(iii)] $\langle x,ya\rangle  = \langle x,y\rangle a $,
 \end{enumerate}
and $X$ is complete in the inherited (quasi)norm 
$$\|x\| = \|\langle x,x\rangle\|_{p/2}^{1/2} .$$
We call $X$ a right $L_\infty$ $\M$-module if it has an $L_{\infty}(\M)$-valued inner product 
and is complete with respect to the strong operator topology, i.e. the topology arising from the seminorms
$$\|x\|_\varphi=(\varphi(\langle x,x\rangle))^{1/2}, \quad \varphi \in \M_*^+.$$
\end{defi}

The basic example of such a right $L_p$ $\M$-module is given by the column $L_p$-space $L_p(\M;\ell_2^c)$. 
Here for $a\in \M$ and $x=\sum_{n\geq 0} e_{n,0} \ten x_n, y=\sum_{n\geq 0} e_{n,0} \ten y_n \in L_p(\M;\ell_2^c)$ we define the right $\M$-module action by
$$x\cdot a=\sum_{n\geq 0} e_{n,0} \ten (x_n a).$$
Then we define the following  $L_{p/2}(\M)$-valued inner product 
$$\langle x,y\rangle_{L_p(\M;\ell_2^c)}=\sum_{n\geq 0} x_n^*y_n \in L_{p/2}(\M).$$
Let us highlight another important example of $L_p$-module introduced in \cite{JD}. 
Let $\E:\M\to \N$ be a normal faithful conditional expectation, where $\N$ is a von Neumann subalgebra of the finite von Neumann algebra $\M$. 
Then for $0<p\leq \infty$ and $x,y\in L_p(\M)$ we may consider the bracket
$$\langle x,y\rangle_{L_p^c(\M;\E)}=\E(x^*y) \in L_{p/2}(\N),$$
where $\E$ denotes the extension of $\E$ to $L_{p/2}(\M)$ (see \cite{JX} for details on conditional expectations). 
It is clear that this defines an $L_{p/2}(\N)$-valued inner product, 
and the associated $L_p$ $\N$-module is denoted by $L_p^c(\M;\E)$. 
This means that $L_p^c(\M;\E)$ is the completion of $\M$ with respect to the quasi-norm 
$$\|x\|_{L_p^c(\M;\E)}=\|\E(x^*x)\|_{p/2}^{1/2}.$$
For $p=\infty$ we denote by $L_\infty^{c,\st}(\M;\E)$ the closure with respect to the strong operator topology. 
Recall that for $p\geq 2$ the space $L_p^c(\M;\E)$ can also be defined as the closure of $L_p(\M)$. 
It is proved in Proposition $2.8$ of \cite{JD} that this latter example is similar to the former one.  
More precisely, this Proposition shows that $L_p^c(\M;\E)$ is isometrically isomorphic, as a module, to a complemented subspace of $L_p(\N;\ell_2^c)$. 
As a consequence, we obtain that $\|\cdot\|_{L_p^c(\M;\E)}$ is a norm.
We also deduce from the well-known duality and interpolation results for the column $L_p$-space $L_p(\N;\ell_2^c)$ 
the same results for $L_p^c(\M;\E)$.

\begin{prop}\label{dualintpolLpc(M,E)}
Let $1<p<\infty$.   
\begin{enumerate}
\item[(i)] Let $\frac{1}{p}+\frac{1}{p'}=1$. Then $(L_p^c(\M;\E))^*=L_{p'}^c(\M;\E)$ isometrically.
\item[(ii)] We have $(L_1^c(\M;\E))^*=L_{\infty}^{c,\st}(\M;\E)$ isometrically.
\item[(iii)] Let $1\leq p_1 < p_2 \leq \infty$ and $0<\theta<1$ be such that 
$\frac{1}{p}=\frac{1-\theta}{p_1}+\frac{\theta}{p_2}$.  Then
$$L_p^c(\M;\E)=[L_{p_1}^c(\M;\E),L_{p_2}^c(\M;\E)]_{\theta}\quad \mbox{with equivalent norms}.$$
\end{enumerate}
\end{prop}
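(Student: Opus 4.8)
The plan is to transfer all three assertions from the corresponding classical properties of the column space $L_p(\N;\ell_2^c)$, via the module embedding recalled just above. By Proposition $2.8$ of \cite{JD} there is a single module map, defined on the dense subalgebra $\M$, inducing for every $1\leq p\leq\infty$ an isometric module embedding $u_p:L_p^c(\M;\E)\to L_p(\N;\ell_2^c)$ whose range is the image of a norm-one projection $P_p$. The whole argument rests on the mutual compatibility of these data across the scale in $p$: that the $u_p$ coincide on $\M$, and that under the antilinear self-duality $(L_p(\N;\ell_2^c))^*=L_{p'}(\N;\ell_2^c)$ one has $P_p^*=P_{p'}$. Once this is recorded, (i)--(iii) become instances of standard functional-analytic transference.

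Assertions (i) and (ii) then follow from the duality of (norm-one) complemented subspaces. Since $u_p$ realizes $L_p^c(\M;\E)$ as the range of the contractive projection $P_p$, its dual is isometric to the range of the adjoint projection $P_p^*$ acting on $(L_p(\N;\ell_2^c))^*$. Using the self-duality of the column space — namely $(L_p(\N;\ell_2^c))^*=L_{p'}(\N;\ell_2^c)$ for $1\leq p<\infty$, with the strong-operator closure appearing when $p'=\infty$ — together with the identity $P_p^*=P_{p'}$, the range of $P_p^*$ is exactly $u_{p'}(L_{p'}^c(\M;\E))$. Unwinding the identification of the module inner product with the bracket $(x|y)=\tau(x^*y)$ gives $(L_p^c(\M;\E))^*=L_{p'}^c(\M;\E)$ for $1<p<\infty$, and in the limiting case $(L_1^c(\M;\E))^*=L_\infty^{c,\st}(\M;\E)$, the strong-operator closure being forced exactly as it is for the column space.

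For the interpolation identity (iii) I would invoke the retraction principle for complex interpolation. The pair $(u_{p_1},u_{p_2})$ is a bounded morphism of the compatible couple $(L_{p_1}^c(\M;\E),L_{p_2}^c(\M;\E))$ into $(L_{p_1}(\N;\ell_2^c),L_{p_2}(\N;\ell_2^c))$, and $(P_{p_1},P_{p_2})$ is a bounded morphism back satisfying $P_{p_j}\circ u_{p_j}=\mathrm{id}$. Hence $L_p^c(\M;\E)$ is uniformly a retract of $L_p(\N;\ell_2^c)$ along the couple, and the classical identity $[L_{p_1}(\N;\ell_2^c),L_{p_2}(\N;\ell_2^c)]_\theta=L_p(\N;\ell_2^c)$ transfers to $[L_{p_1}^c(\M;\E),L_{p_2}^c(\M;\E)]_\theta=L_p^c(\M;\E)$, with the equivalence (rather than isometry) of norms reflecting only the constants in the column-space interpolation.

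The one point deserving genuine care, and the step I expect to be the main obstacle, is precisely this compatibility across the whole scale $1\leq p\leq\infty$: that a single module map on $\M$ induces every $u_p$, that its range is complemented by projections with $P_p^*=P_{p'}$, and that these projections form morphisms of the interpolation couple. All of this is implicit in the construction of Proposition $2.8$ of \cite{JD}, where the embedding is given by a fixed formula in terms of $\E$ and is therefore consistent on the common dense core $\M$; once it is made explicit, the three transference arguments above are routine.
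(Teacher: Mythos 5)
Your proposal is correct and follows essentially the same route as the paper: the paper gives no detailed proof of this proposition, deducing it in one sentence from Proposition $2.8$ of \cite{JD} (the isometric module embedding of $L_p^c(\M;\E)$ onto a complemented subspace of $L_p(\N;\ell_2^c)$) together with the standard duality and interpolation results for column $L_p$-spaces, which is exactly the transference argument you spell out.
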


\re\label{embeddingL1c(M,E)}
Since $L_p(\M)$ is dense in $L_p^c(\M;\E)$ for $p\geq 2$, 
Proposition \ref{dualintpolLpc(M,E)} (i) implies that for $1<p\leq 2$, $L_p^c(\M;\E)$ embeds into $L_p(\M)$. 
This still holds true for $p=1$. 
Indeed, $L_1^c(\M;\E)$ is described as a subspace of $L_1(\M)$ in \cite{JPa}, (c) p. 28, as follows
$$L_1^c(\M;\E)=L_2(\M)L_2(\N)\quad \mbox{with equivalent norms}.$$
Recall that $L_2(\M)L_2(\N)$ is defined as the subset of elements $x\in L_1(\M)$ which factorizes as $x=ya$ with $y\in L_2(\M)$ and $a \in L_2(\N)$. 
The norm is given by 
$$\|x\|_{L_2(\M)L_2(\N)}=\inf_{x=ya} \|y\|_{L_2(\M)}\|a\|_{L_2(\N)}.$$
\mar

Proposition $2.8$ of \cite{JD} has been extended in \cite{JS} for any $L_p$ $\M$-module. 
By Theorem $3.6$ of \cite{JS}, a right $\M$-module $X$ is a right $L_p$ $\M$-module if and only if $X$ is a "column sum of $L_p$-spaces" in the following sense.

\begin{theorem}[\cite{JS}]\label{pricipalLpmodule}
Let $X$ be a right $L_p$ $\M$-module. 
Then $X$ is isometrically isomorphic, as an $L_p$-module, to a principal $L_p$-module, i.e., 
there exists a set $(q_\alpha)_{\alpha\in I}$ of projections in $\M$ such that 
$$X\cong \Big\{(\xi_\alpha)_{\alpha\in I} : \xi_\alpha \in q_\alpha L_p(\M), \sum_\alpha \xi_\alpha^*\xi_\alpha \in L_{p/2}(\M)\Big\}.$$
\end{theorem}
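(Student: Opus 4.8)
The plan is to realize $X$ as a column sum of cyclic submodules, each of the claimed form $q_\alpha L_p(\M)$, and to use a maximality argument to exhaust $X$. First I would record the model: given projections $(q_\alpha)_{\alpha\in I}$ in $\M$, set $Y=\{(\xi_\alpha)_\alpha : \xi_\alpha\in q_\alpha L_p(\M),\ \sum_\alpha \xi_\alpha^*\xi_\alpha\in L_{p/2}(\M)\}$ with inner product $\langle(\xi_\alpha),(\eta_\alpha)\rangle=\sum_\alpha \xi_\alpha^*\eta_\alpha$ and the coordinatewise right action. Checking that this satisfies Definition \ref{defLpmodule} (positivity, the three module identities, and completeness of the resulting $\ell_2^c$-type norm) is routine, so the content is to produce, for a suitable $(q_\alpha)$, an isometric module isomorphism $X\cong Y$.

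Next, the cyclic case. For $x\in X$ put $h=\langle x,x\rangle\in L_{p/2}(\M)^+$ and let $q=s(h)\in\M$ be its support projection. Since $\langle xa,xb\rangle=a^*hb$ for $a,b\in\M$, the identity $\|xa\|^2=\|a^*ha\|_{p/2}=\|h^{1/2}a\|_p^2$ shows that $xa\mapsto h^{1/2}a$ extends to an isometric module map from the closed cyclic submodule $\overline{x\M}$ onto $\overline{h^{1/2}\M}^{\,\|\cdot\|_p}=qL_p(\M)$. Thus every singly generated submodule is already of the desired form. I would then orthogonalize: call $x,y$ orthogonal when $\langle x,y\rangle=0$, so that the associated cyclic submodules are orthogonal and the inner product is additive across them. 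By Zorn's Lemma choose a maximal family $(x_\alpha)_{\alpha\in I}$ of nonzero pairwise orthogonal elements, put $q_\alpha=s(\langle x_\alpha,x_\alpha\rangle)$ and $X_\alpha=\overline{x_\alpha\M}\cong q_\alpha L_p(\M)$. Orthogonality gives $\langle\sum_\alpha \xi_\alpha,\sum_\beta\eta_\beta\rangle=\sum_\alpha\langle\xi_\alpha,\eta_\alpha\rangle$, so the closed submodule $Z$ generated by the $X_\alpha$ is isometrically the model $Y$ attached to $(q_\alpha)$.

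It remains to prove $Z=X$, which is the delicate point. The formal finish is to split off, for $z\in X$, the residual $z'=z-\sum_\alpha x_\alpha\,h_\alpha^{-1}\langle x_\alpha,z\rangle$ (with $h_\alpha=\langle x_\alpha,x_\alpha\rangle$) and to compute $\langle x_\beta,z'\rangle=\langle x_\beta,z\rangle-q_\beta\langle x_\beta,z\rangle=0$, the last step using that module Cauchy--Schwarz forces the left support of $\langle x_\beta,z\rangle$ to lie under $q_\beta$; maximality then yields $z'=0$, hence $z\in Z$. Two features of the $L_p$ (as opposed to Hilbert) setting obstruct this and constitute the real work. First, the coefficients $h_\alpha$ need not be invertible, so the normalization $h_\alpha^{-1}$ must be interpreted through the support projection and a spectral-truncation limit inside the $L_p$-completion. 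Second, for $p\neq2$ there is no inner-product orthogonal projection onto $Z$, so the existence and convergence of the ``Fourier series'' cannot be taken for granted; instead I would establish a Bessel inequality, namely that the partial sums of $\sum_\alpha \langle x_\alpha,z\rangle^*h_\alpha^{-1}\langle x_\alpha,z\rangle$ form an increasing net in $L_{p/2}(\M)^+$ dominated by $\langle z,z\rangle$, which forces convergence of the series in the module norm.

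The hard part is therefore this last paragraph: justifying the normalization when $h_\alpha$ fails to be invertible and proving the Bessel bound and convergence. This is exactly where the column ($\ell_2^c$) structure of $L_p$-modules is used decisively, since it is the summability of the positive quantities $\xi_\alpha^*\xi_\alpha$ in $L_{p/2}(\M)$ — rather than any row or mixed behaviour — that both defines the model $Y$ and drives the Bessel estimate that makes the exhaustion argument go through.
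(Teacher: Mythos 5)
First, a point of reference: the paper does not prove this statement at all --- it is quoted from \cite{JS} (Theorem $3.6$ there), where Junge and Sherman obtain it through their ``change of density'' machinery relating $L_p$-modules to $W^*$-modules, and then invoke the structure theorem for self-dual $W^*$-modules, rather than by a direct orthogonalization inside $X$. So your proposal is measured against that external argument. Your skeleton --- the model space $Y$, the cyclic case via $xa\mapsto h^{1/2}a$ and $\overline{h^{1/2}\M}^{\|\cdot\|_p}=qL_p(\M)$, orthogonal column sums, Zorn's lemma --- is sound, and one of your steps is actually easier than you suggest: $q_\beta\langle x_\beta,z\rangle=\langle x_\beta,z\rangle$ needs no Cauchy--Schwarz, since $\langle x_\beta(1-q_\beta),x_\beta(1-q_\beta)\rangle=(1-q_\beta)h_\beta(1-q_\beta)=0$ forces $x_\beta(1-q_\beta)=0$, whence $(1-q_\beta)\langle x_\beta,z\rangle=\langle x_\beta(1-q_\beta),z\rangle=0$.

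The genuine gap is the exhaustion step, and your proposed repair does not close it. The expression $x_\alpha\, h_\alpha^{-1}\langle x_\alpha,z\rangle$ is not an instance of the module structure: $X$ is a right $\M$-module, so $x_\alpha\cdot b$ is defined only for $b\in\M$, whereas $h_\alpha^{-1}\langle x_\alpha,z\rangle$ --- and every spectral truncation $\1(h_\alpha>1/n)\,h_\alpha^{-1}\langle x_\alpha,z\rangle$ of it --- lies in $L_{p/2}(\M)$ (or among the measurable operators), never in $\M$. Thus ``interpreting $h_\alpha^{-1}$ through a spectral-truncation limit inside the $L_p$-completion'' still multiplies $x_\alpha$ by coefficients outside $\M$ and is simply undefined. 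To make the component $P_\alpha z$ meaningful you must work inside the cyclic picture: show that $h_\alpha^{-1/2}\langle x_\alpha,z\rangle$ is a genuine element of $q_\alpha L_p(\M)$ and pull it back through the isomorphism $X_\alpha\cong q_\alpha L_p(\M)$. That requires the one-term Bessel bound $\langle x_\alpha,z\rangle^*h_\alpha^{-1}\langle x_\alpha,z\rangle\le\langle z,z\rangle$, and the standard proof (expand $\langle z-x_\alpha b,z-x_\alpha b\rangle\ge 0$ and optimize over $b$) hits exactly the same wall: the optimizing coefficient is not in $\M$, so one needs a nontrivial approximation argument with $b$ ranging over $\M$ and passage to the limit in $L_{p/2}$. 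None of this is carried out. Finally, even granting the Bessel bound, norm convergence of $\sum_\alpha P_\alpha z$ needs the fact that an increasing net in $L_{p/2}(\M)^+$ dominated above converges in (quasi-)norm --- an order-continuity statement requiring justification, especially for $1\le p<2$ where $L_{p/2}$ is only quasi-Banach. These are precisely the points where the direct Paschke-style route becomes hard, and where \cite{JS} instead detours through $W^*$-modules; as written, your argument establishes the theorem only modulo its hardest steps.
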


This latter set is denoted by $\oplus_I q_\alpha L_p(\M)$ and endowed with the norm 
$\|(\xi_\alpha)_{\alpha}\|=\Big\|\sum_\alpha \xi_\alpha^*\xi_\alpha\Big\|_{p/2}^{1/2}$. 
In the finite case, if we have a projective system of $L_p$ $\M$-modules in the sense of the following Corollary with some density property, 
then we may represent this family by using the same set of projections. 

\begin{cor}\label{familyLpmodule}
Let $\M$ be a finite von Neumann algebra. 
Let $(X_p)_{1\leq p \leq \infty}$ be a family of right $\M$-modules such that  
\begin{enumerate}
 \item[(i)] $X_p$ is an $L_p$ $\M$-module for all $1\leq p \leq \infty$.
 \item[(ii)] There exists a family of modular maps 
$I_{q,p}:X_{q}\to X_p$ for $p\leq q$ satisfying $I_{p,p}=id_{X_p}$ and $I_{q,p}\circ I_{r,q}=I_{r,p}$ for $p\leq q \leq r$.
 \item[(iii)] The inner products are compatible with the maps $I_{q,p}$, i.e., 
$$\langle x,y\rangle_{X_q}=\langle I_{q,p}(x),I_{q,p}(y)\rangle_{X_p}$$
for $p\leq q$ and $x,y\in X_q$.
 \item[(iv)] $I_{\infty,p}(X_\infty)$ is dense in $X_p$ for all $1\leq p \leq \infty$. 
\end{enumerate}
Then there exists a set $(q_\alpha)_{\alpha\in I}$ of projections in $\M$ such that for all $1\leq p \leq \infty$, 
$X_p$ is isometrically isomorphic, as an $L_p$-module, to $\oplus_I q_\alpha L_p(\M)$. 
\end{cor}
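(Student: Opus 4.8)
The plan is to bootstrap everything from the top space $X_\infty$, invoking Theorem \ref{pricipalLpmodule} exactly once and then transporting the resulting principal structure down to each $X_p$ along the maps $I_{\infty,p}$. First I would apply Theorem \ref{pricipalLpmodule} to the right $L_\infty$ $\M$-module $X_\infty$. This produces a family $(q_\alpha)_{\alpha\in I}$ of projections in $\M$ together with an isometric module isomorphism $u_\infty:X_\infty\to\oplus_I q_\alpha L_\infty(\M)=\oplus_I q_\alpha\M$. Writing $u_\infty(x)=(\xi_\alpha(x))_\alpha$ with $\xi_\alpha(x)\in q_\alpha\M$, the defining property of the principal module reads $\langle x,y\rangle_{X_\infty}=\sum_\alpha\xi_\alpha(x)^*\xi_\alpha(y)$. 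These are the projections I claim work for every $p$.

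Next, fix $1\le p<\infty$ and define a map $v_p$ on the dense subspace $I_{\infty,p}(X_\infty)\subset X_p$ (dense by (iv)) by $v_p(I_{\infty,p}(x))=(\xi_\alpha(x))_\alpha$, now viewed inside $\oplus_I q_\alpha L_p(\M)$ via the contractive inclusions $q_\alpha\M\subset q_\alpha L_p(\M)$ and $\M\subset L_{p/2}(\M)$ afforded by finiteness of $\M$. The computation at the heart of the argument uses the compatibility (iii) with $q=\infty$, read through the inclusion $\M=L_\infty(\M)\subset L_{p/2}(\M)$:
$$\|I_{\infty,p}(x)\|_{X_p}^2=\big\|\langle I_{\infty,p}(x),I_{\infty,p}(x)\rangle_{X_p}\big\|_{p/2}=\big\|\langle x,x\rangle_{X_\infty}\big\|_{p/2}=\Big\|\sum_\alpha\xi_\alpha(x)^*\xi_\alpha(x)\Big\|_{p/2}=\|v_p(I_{\infty,p}(x))\|^2.$$
Hence $v_p$ is isometric, which in particular makes it well defined: if $I_{\infty,p}(x)=I_{\infty,p}(x')$ then $I_{\infty,p}(x-x')$ has zero $X_p$-norm, so its $v_p$-image vanishes. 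It is modular because both $I_{\infty,p}$ and $u_\infty$ are, and the $\M$-action is contractive on $L_p$-modules.

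Then I would extend $v_p$ by density to an isometric module map $u_p:X_p\to\oplus_I q_\alpha L_p(\M)$ and verify surjectivity. Its image is closed, being the isometric image of a complete space, and it contains $v_p(I_{\infty,p}(X_\infty))=u_\infty(X_\infty)=\oplus_I q_\alpha\M$. Since $\M$ is finite and $p<\infty$, the bounded principal module $\oplus_I q_\alpha\M$ is dense in $\oplus_I q_\alpha L_p(\M)$: one first reduces to finitely supported families using convergence of $\sum_\alpha\xi_\alpha^*\xi_\alpha$ in $L_{p/2}(\M)$, then approximates each entry in $q_\alpha L_p(\M)$ by elements of $q_\alpha\M$. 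Therefore $u_p$ is onto, yielding the desired isometric $L_p$-module isomorphism $X_p\cong\oplus_I q_\alpha L_p(\M)$ for every $1\le p<\infty$ with the same projection family; the endpoint $p=\infty$ is $u_\infty$ itself.

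I expect the main obstacle to be twofold: ensuring that Theorem \ref{pricipalLpmodule} may legitimately be invoked at the endpoint $p=\infty$ (with the strong-operator completion of Definition \ref{defLpmodule}), and then the surjectivity step, namely the density of $\oplus_I q_\alpha\M$ in $\oplus_I q_\alpha L_p(\M)$, which is exactly where finiteness of $\M$ is genuinely used. By contrast the conceptual crux is the short isometry identity above, where hypothesis (iii) transports the $\M$-valued inner product of $X_\infty$ onto each $X_p$; note that only the maps $I_{\infty,p}$ and hypotheses (iii)--(iv) enter the construction, the full projective system (ii) serving only to make the statement coherent.
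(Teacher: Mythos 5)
Your proposal is correct and follows essentially the same route as the paper: apply Theorem \ref{pricipalLpmodule} once to $X_\infty$, transport the principal structure to each $X_p$ along $I_{\infty,p}$ using the inner-product compatibility (iii) to get an isometry, and extend by the density hypothesis (iv); your closed-image-plus-dense-range argument for surjectivity is just a repackaging of the paper's construction of $\phi_p^{-1}$ from the density of $\oplus_I q_\alpha L_\infty(\M)$ in $\oplus_I q_\alpha L_p(\M)$. The only cosmetic difference is that the paper first records that (iii) makes the $I_{q,p}$ contractive and injective (which settles well-definedness directly), whereas you deduce well-definedness from the isometry identity itself.
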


\begin{proof}
Observe that (iii) implies that the maps $I_{q,p}$ are contractive and injective. 
Indeed, for $p\leq q$ and $x\in X_q$, since $\M$ is finite we have
\begin{align*}
 \|I_{q,p}(x)\|_{X_p}
&=\|\langle I_{q,p}(x),I_{q,p}(x)\rangle_{X_p}\|_{p/2}^{1/2}\\
&=\|\langle x,x\rangle_{X_q}\|_{p/2}^{1/2}\leq \|\langle x,x\rangle_{X_q}\|_{q/2}^{1/2}\\
&=\|x\|_{X_q}.
\end{align*}
For the injectivity, if $I_{q,p}(x)=0$ then $\langle I_{q,p}(x),I_{q,p}(x)\rangle_{X_p}=0$ in $L_{p/2}(\M)$. 
By (iii), this implies that $\langle x,x\rangle_{X_q}=0$ in $L_{q/2}(\M)$, hence $x=0$ in $X_q$ by (i) of Definition \ref{defLpmodule}. 
We now turn to the proof of the Corollary. 
We first apply Theorem \ref{pricipalLpmodule} to the $L_\infty$ $\M$-module $X_\infty$ and obtain 
a set $(q_\alpha)_{\alpha\in I}$ of projections in $\M$ and an isometric isomorphism of $L_p$-modules 
$\phi_\infty:X_\infty \to \oplus_I q_\alpha L_\infty(\M)$. 
We may extend this isomorphism to $X_p$ by density as follows. 
For $1\leq p <\infty$ and $x=I_{\infty,p}(y) \in I_{\infty,p}(X_\infty)$ we set 
$$\phi_p(x)=\phi_\infty(y) \in \oplus_I q_\alpha L_\infty(\M).$$
Since $\M$ is finite, we have a contractive inclusion $\oplus q_\alpha L_\infty(\M) \subset \oplus q_\alpha L_p(\M)$ 
and $\phi_p$ preserves the $L_{p/2}(\M)$-valued inner product. 
Indeed, for $x_1=I_{\infty,p}(y_1), x_2=I_{\infty,p}(y_2) \in I_{\infty,p}(X_\infty)$, 
the modularity of $\phi_\infty$ implies
\begin{align*}
\langle \phi_p(x_1),\phi_p(x_2)\rangle_{\oplus q_\alpha L_p(\M)}
&= \langle \phi_\infty(y_1),\phi_\infty(y_2)\rangle_{\oplus_I q_\alpha L_\infty(\M)}
=\langle y_1,y_2\rangle_{X_\infty}\\
&= \langle I_{\infty,p}(y_1),I_{\infty,p}(y_2)\rangle_{X_p} \quad \mbox{ by (iii)}\\
&=\langle x_1,x_2\rangle_{X_p}.
\end{align*}
Hence by the density assumption (iv) we can extend $\phi_p$ to an isometric homomorphism of $L_p$-modules on $X_p$ to $\oplus q_\alpha L_p(\M)$. 
Since $\oplus q_\alpha L_\infty(\M)$ is dense in $\oplus q_\alpha L_p(\M)$, by the same way we can construct $\phi_p^{-1}$. 
Thus we obtain an isometric isomorphism of $L_p$-modules $\phi_p$ which makes the following diagram commuting
$$
\xymatrix{
    X_\infty \ar@{<->}[r]^{\phi_\infty}  \ar@{->}[d]_{I_{\infty,p}}   &\oplus q_\alpha L_\infty(\M) \ar[d]^{id} \\
    X_p \ar@{<->}[r]_{\phi_p} & \oplus q_\alpha L_p(\M)
  }.
$$
\qd

In this situation, we may deduce the following results from some well-known facts on the column $L_p$-spaces $\oplus q_\alpha L_p(\M)$. 

\begin{cor}\label{Lpmodule}
Let $\M$ be a finite von Neumann algebra. 
Let $(X_p)_{1\leq p \leq \infty}$ be a family of right $\M$-modules as in Corollary \ref{familyLpmodule}.  
\begin{enumerate}
\item[(i)] Let $1\leq p < \infty$ and $\frac{1}{p}+\frac{1}{p'}=1$. Then $(X_p)^*=X_{p'}$ isometrically.
\item[(ii)] Let $1\le p_1 <p< p_2 \leq \infty$ and $0<\theta<1$ be such that 
$\frac{1}{p}=\frac{1-\theta}{p_1}+\frac{\theta}{p_2}$.  Then
$$X_p=[X_{p_1},X_{p_2}]_{\theta}.$$
\end{enumerate}
\end{cor}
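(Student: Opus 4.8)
The statement to prove is Corollary~\ref{Lpmodule}: given the family $(X_p)_{1\le p\le\infty}$ satisfying the hypotheses of Corollary~\ref{familyLpmodule}, we have the duality $(X_p)^*=X_{p'}$ isometrically for $1\le p<\infty$, and the interpolation identity $X_p=[X_{p_1},X_{p_2}]_\theta$.

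Let me think about how I would prove this.

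The whole point of the setup is that Corollary~\ref{familyLpmodule} has already done the heavy lifting: it produces a single fixed family of projections $(q_\alpha)_{\alpha\in I}$ in $\M$ such that $X_p \cong \oplus_I q_\alpha L_p(\M)$ isometrically as $L_p$-modules, for *all* $p$ simultaneously, with the identifications compatible via the commuting diagram (the vertical maps $I_{\infty,p}$ correspond to the natural inclusions on the right-hand side). So the strategy is simply to transport the known duality and interpolation statements for the column spaces $\oplus_I q_\alpha L_p(\M)$ through these isomorphisms.

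So the plan is: first invoke Corollary~\ref{familyLpmodule} to fix the projections $(q_\alpha)$ and the isometric module isomorphisms $\phi_p: X_p \to \oplus_I q_\alpha L_p(\M)$. The key observation is that these $\phi_p$ are compatible: the diagram at the end of the proof of Corollary~\ref{familyLpmodule} shows $\phi_p \circ I_{\infty,p} = \mathrm{id}\circ \phi_\infty$ where $\mathrm{id}$ is the natural inclusion, and by the same argument the $\phi_p$ intertwine all the $I_{q,p}$ with the natural inclusions $\oplus_I q_\alpha L_q(\M)\subset \oplus_I q_\alpha L_p(\M)$. Once this compatibility is in hand, both assertions reduce to the corresponding statements for the concrete spaces $\oplus_I q_\alpha L_p(\M)$.

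For part (i), I would note that $\oplus_I q_\alpha L_p(\M)$ is a direct column sum of the spaces $q_\alpha L_p(\M)$. These are complemented subspaces of $L_p(\M;\ell_2^c)$ (indeed $\oplus_I q_\alpha L_p(\M)$ sits inside $L_p(\M;\ell_2^c(I))$ as the range of the right multiplication projections $\xi\mapsto \xi q_\alpha$), and the duality $(L_p(\M;\ell_2^c))^*=L_{p'}(\M;\ell_2^c)$ under the anti-linear bracket is classical (from \cite{PX}). Restricting to the complemented subspace and checking that the bracket $(\xi|\eta)=\tau(\langle \xi,\eta\rangle_{\oplus q_\alpha L_p})$ realizes the duality pairing gives $(\oplus_I q_\alpha L_p(\M))^*=\oplus_I q_\alpha L_{p'}(\M)$ isometrically; transporting via $\phi_p$ yields $(X_p)^*=X_{p'}$. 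For part (ii), interpolation of $L_p(\M;\ell_2^c)$ is likewise classical, and the complemented-subspace property is preserved under complex interpolation (the projections are a compatible family bounded on the endpoint spaces), so $[\oplus_I q_\alpha L_{p_1}(\M),\oplus_I q_\alpha L_{p_2}(\M)]_\theta=\oplus_I q_\alpha L_p(\M)$; transporting again gives the result. In fact both are exactly the module version of Proposition~\ref{dualintpolLpc(M,E)}, which already records these facts for the single module $L_p^c(\M;\E)$.

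The main obstacle, and the only genuinely substantive point, is verifying that the identifications $\phi_p$ are mutually compatible so that the dual and interpolation functors applied in the category of Banach spaces respect the module structure and the concrete presentation --- i.e. that the isomorphisms for different $p$ are not chosen independently but fit into the commuting diagram of Corollary~\ref{familyLpmodule}. Once that compatibility is granted (it is essentially built into the construction of the $\phi_p$ by density from the single map $\phi_\infty$), everything else is a routine transfer of the well-known duality and interpolation properties of column $L_p$-spaces through an isometric isomorphism, and there is nothing further to prove.
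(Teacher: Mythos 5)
Your proposal is correct and follows exactly the route the paper intends: the paper offers no further proof beyond the remark that the corollary is "deduced from some well-known facts on the column $L_p$-spaces $\oplus q_\alpha L_p(\M)$," i.e. one fixes the compatible isometric module isomorphisms $\phi_p$ produced by Corollary \ref{familyLpmodule} and transports the standard duality and interpolation of the principal modules $\oplus_I q_\alpha L_p(\M)$ (as in Proposition \ref{dualintpolLpc(M,E)}). Your additional emphasis on the compatibility of the $\phi_p$'s via the commuting diagram is precisely the point the construction in Corollary \ref{familyLpmodule} is designed to secure.
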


\subsection{Free Rosenthal inequalities}\label{subsectfreeRos}

Amalgamated free products and the free analogue of Rosenthal inequalities (\cite{JPX}) 
will be key tools needed for the study of the continuous analogue of the conditioned Hardy spaces $h_p^c$. 
After briefly recalling the notations, we will present the Rosenthal/Voiculescu type inequality stated in \cite{JPX} in the amalgamated free product case.  
Then we will extend it by duality to the case $1\leq p<2$. 

Voiculescu introduced the notion of amalgamated free product of C$^*$-algebras in \cite{Voi}, and we refer to \cite{JPX} and \cite{JPa}
for the construction in the von Neumann setting. 
Let $\AA_0,\AA_1, \cdots, \AA_N$ be a finite family of von Neumann algebras having $\M$ as a common von Neumann subalgebra. 
Suppose that $\M$ is finite and equipped with a normal faithful normalized trace $\tau$. 
We also assume that $\E_n=\E_{\M|\AA_n}$ are faithful conditional expectations.  
Recall that the amalgamated free product $\N=\ast_\M \AA_n$ can be seen as 
$$ \ast_\M \AA_n =\Big(\M \oplus \bigoplus_{m\geq 1} \bigoplus_{j_1 \neq j_2 \neq \cdots \neq j_m} 
\Acirc_{j_1}\Acirc_{j_2}\cdots \Acirc_{j_m}\Big)'',$$
where $\Acirc_n$ denotes the mean-zero subspace
$$\Acirc_n=\{a_n \in \AA_n : \E_n(a_n)=0\}.$$
We denote by $\rho:\M \to \N$ the $*$-homomorphism which sends $\M$ to the amalgamated copy, 
and by $\E_\M:\N \to \M$ the normal faithful conditional expectation onto the amalgamated copy. 
The von Neumann algebra $\AA_n$ can be identified as von Neumann subalgebra of $\N$ via the $*$-homomorphism
$$\rho_n:\AA_n \to \N,$$
which sends $\AA_n$ to the $n$-th copy of $\N=\ast_\M \AA_n$. 
With this identification, we may use either $\E_\M$ or $\E_n$ (=$\E_\M \circ \rho_n$ rigorously) indistinctively over $\AA_n$. 
In the sequel we will always use the notation $\E_\M$. 
Moreover, we may equip the von Neumann algebras $\AA_0,\AA_1, \cdots, \AA_N$ and $\N$ with the normal faithful normalized trace defined by
$$\tr=\tau \circ \E_\M.$$
We denote by $\E_{\AA_n}:\N \to \AA_n$ the conditional expectation onto $\AA_n$. 
It turns out that $\AA_0, \cdots, \AA_N$ are freely independent over $\E_\M$. 
For a given nonnegative integer $d$, we denote the homogeneous part of degree $d$ of the algebraic free product by $\Sigma_d$
$$\Sigma_d=\bigoplus_{j_1 \neq j_2 \neq \cdots \neq j_d} 
\Acirc_{j_1}\Acirc_{j_2}\cdots \Acirc_{j_d}.$$
For $d=0$, $\Sigma_d$ is simply $\M$. 
We define $\N_d$ as the weak$^*$-closure of $\Sigma_d$ in $\N$. 
This means that $\N_d$ is the subspace of $\N$ of homogeneous free polynomials of degree $d$. 
We also define $X_p^d$ as the closure of $\Sigma_d$ in $L_p(\N)$, 
and $Y_{p,c}^d$ (resp. $Y_{p,r}^d$) as the closure of $\Sigma_d$ in $L_p^c(\N;\E_\M)$ (resp. $L_p^r(\N;\E_\M)$). 
We will need the complementation result below. 

\begin{prop}\label{XpYpcompl}
Let $1\leq p \leq \infty$ and $d$ be a nonnegative integer. 
Let 
$$\PP_d:\M \oplus \bigoplus_{m\geq 1} \bigoplus_{j_1 \neq j_2 \neq \cdots \neq j_m}\Acirc_{j_1}\Acirc_{j_2}\cdots \Acirc_{j_m} \to \Sigma_d$$
be the natural projection. 
Then
\begin{enumerate}
\item[(i)] $\PP_d$ extends to a bounded projection (of norm less than $\max(4d,1)$) from $L_p(\N)$ onto $X_p^d$.
\item[(ii)] $\PP_d$ extends to a contractive projection from $L_p^c(\N;\E_\M)$ (resp. $L_p^r(\N;\E_\M)$) onto $Y_{p,c}^d$ (resp. $Y_{p,r}^d$).
\end{enumerate}
\end{prop}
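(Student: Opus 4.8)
The plan is to handle the two parts by different mechanisms: part (ii) will follow cleanly from the mutual $\E_\M$-orthogonality of the homogeneous components $\Sigma_m$, whereas part (i) rests on a Haagerup/Buchholz-type estimate for the length-$d$ projection, which I expect to be the genuinely hard input.

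I would begin with the orthogonality lemma underlying everything: if $\xi\in\Sigma_k$ and $\eta\in\Sigma_l$ with $k\neq l$, then $\E_\M(\xi^*\eta)=0$. Writing $\xi=b_1\cdots b_k$ and $\eta=c_1\cdots c_l$ with mean-zero letters, I would reduce the concatenated word $\xi^*\eta=b_k^*\cdots b_1^*c_1\cdots c_l$: if $b_1$ and $c_1$ lie in different $\AA_n$'s the word is already reduced of positive length, so its $\E_\M$ vanishes; if they lie in the same $\AA_{j_1}$, one absorbs the $\M$-valued part $\E_{j_1}(b_1^*c_1)$ and recurses on the pair $(k-1,l-1)$, the leftover mean-zero part again giving a reduced word of positive length. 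An induction on $k+l$ forces $\E_\M(\xi^*\eta)=0$ whenever $k\neq l$. Since $\tr=\tau\circ\E_\M$, this also shows that the $\Sigma_m$ are pairwise orthogonal in $L_2(\N)$, so $\PP_d$ is the $\tr$-orthogonal projection onto the $L_2$-closure of $\Sigma_d$; in particular $\PP_d$ is self-adjoint for trace duality and is idempotent with the correct range.

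For part (ii) I would use the orthogonality directly in the module norm. For a finite sum $x=\sum_m x_m$ with $x_m\in\Sigma_m$ the cross terms vanish, so $\E_\M(x^*x)=\sum_m\E_\M(x_m^*x_m)\geq\E_\M(x_d^*x_d)=\E_\M((\PP_d x)^*(\PP_d x))\geq 0$. Because $0\leq A\leq B$ in $L_{p/2}(\M)$ gives $\|A\|_{p/2}\leq\|B\|_{p/2}$ (via generalized singular values), this yields $\|\PP_d x\|_{L_p^c(\N;\E_\M)}\leq\|x\|_{L_p^c(\N;\E_\M)}$, i.e.\ a contractive projection onto $Y_{p,c}^d$. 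The row statement follows at once because $\Sigma_d^*=\Sigma_d$ gives $\PP_d(x)^*=\PP_d(x^*)$, and $\|\cdot\|_{L_p^r(\N;\E_\M)}$ is the $*$-image of $\|\cdot\|_{L_p^c(\N;\E_\M)}$.

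Part (i) is where the work lies, and I expect it to be the main obstacle. On $L_2(\N)$, $\PP_d$ is an orthogonal projection and hence contractive; the difficulty is the full operator norm. The plan is to establish the endpoint bound $\|\PP_d\|_{\N\to\N}\leq 4d$ for $d\geq 1$ (with the trivial case $\PP_0=\E_\M$) and then interpolate. For the endpoint I would realize $L_2(\N)$ as the amalgamated free Fock space, where $\Sigma_d$ sits at depth $d$ and the one-letter creation operators form a row contraction over $\M$; the operator-valued Haagerup/Buchholz inequality from \cite{JPX} then controls the depth-$d$ projection linearly in $d$. Since $\PP_d$ is self-adjoint with respect to $\tr$, duality gives the identical bound on $L_1(\N)$, and complex interpolation of the fixed map on the couple $(L_\infty(\N),L_1(\N))$ produces $\|\PP_d\|_{L_p(\N)\to L_p(\N)}\leq\max(4d,1)$ for every $1\leq p\leq\infty$, with range the closure $X_p^d$ of $\Sigma_d$. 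The hard point is precisely this linear-in-$d$ control of the length-$d$ projection at the von Neumann algebra level; everything else reduces to orthogonality, trace duality and interpolation.
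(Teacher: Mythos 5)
Your proposal is correct and follows essentially the same route as the paper: part (ii) is proved there exactly as you do it, via the mutual $\E_\M$-orthogonality of the homogeneous components and the monotonicity $0\leq A\leq B\Rightarrow\|A\|_{p/2}\leq\|B\|_{p/2}$, while part (i) is obtained, as you propose, from the $p=\infty$ endpoint bound on the length-$d$ projection (which the paper does not reprove but cites from the Ricard--Xu Khintchine-type inequalities for reduced free products, the same Haagerup/Buchholz-type input you identify) combined with transposition and complex interpolation. The only additions on your side are the explicit induction proving the orthogonality lemma and the adjoint argument for the row case, both of which the paper leaves implicit.
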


\begin{proof}
Assertion (i) is stated in \cite{JPX}. 
It can be deduced from the case $p=\infty$ proved in \cite{RiX} by transposition and complex interpolation. 
The second point follows easily from orthogonality. 
Indeed, let $x\in \M \oplus \bigoplus_{m\geq 1} \bigoplus_{j_1 \neq j_2 \neq \cdots \neq j_m}\Acirc_{j_1}\Acirc_{j_2}\cdots \Acirc_{j_m}$. 
We can write $x=\sum_{m\geq 0} \PP_m(x)$. Then by orthogonality we get
$$\E_\M(x^*x)=\sum_{m\geq 0} \E_\M\big(\PP_m(x)^*\PP_m(x)\big) \geq \E_\M\big(\PP_d(x)^*\PP_d(x)\big).$$
Hence
$$\|\PP_d(x)\|_{L_p^c(\N;\E_\M)}=\|\E_\M\big(\PP_d(x)^*\PP_d(x)\big)\|_{p/2}^{1/2}
\leq \|\E_\M(x^*x)\|_{p/2}^{1/2}=\|x\|_{L_p^c(\N;\E_\M)},$$
and (ii) is proved. 
\qd

In the sequel we will only consider the case of words of length $1$, i.e., $d=1$.
We also introduce the space $Z_p$, defined for $1\leq p \leq \infty$ as the completion of $\Sigma_1$ with respect to the norm
$$\Big\|\sum_{n=0}^N a_n\Big\|_{Z_p}=\Big(\sum_n \|a_n\|_p^p\Big)^{1/p}.$$
We may naturally define the map
$$\sum_{n=0}^N a_n \in \Sigma_1 \mapsto \sum_{n=0}^N e_{n,n} \ten a_n \in B(\ell_2^{N+1}) \overline{\ten} \N.$$
This map extends to an isometry from $Z_p$ to $L_p(B(\ell_2^{N+1}) \overline{\ten} \N)$, 
and allows to consider $Z_p$ as a subspace of $L_p(B(\ell_2^{N+1}) \overline{\ten} \N)$. 
Moreover, this inclusion is complemented. 

\begin{lemma}\label{Zpcompl}
Let $1\leq p \leq \infty$. 
Then $Z_p$ is $2$-complemented into $L_p(B(\ell_2^{N+1}) \overline{\ten} \N)$.
\end{lemma}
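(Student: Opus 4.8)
The goal is to exhibit a bounded projection from $L_p(B(\ell_2^{N+1}) \oten \N)$ onto the copy of $Z_p$ sitting inside it, with norm at most $2$, for all $1\leq p \leq \infty$. Recall that $Z_p$ embeds isometrically via $\sum_n a_n \mapsto \sum_n e_{n,n}\ten a_n$, so the image consists of block-diagonal elements whose $n$-th diagonal entry lies in the $n$-th free copy $\rho_n(\AA_n)$ (more precisely in the closure of $\Acirc_n$ plus the amalgamated part, i.e. in $L_p(\AA_n)$). So I must map an arbitrary element of $L_p(B(\ell_2^{N+1})\oten \N)$ to a diagonal element whose entries live in the right subalgebras.

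The plan is to build the projection as a composition of two maps. First I would apply the canonical trace-preserving conditional expectation onto the diagonal subalgebra $\ell_\infty^{N+1}\oten \N$, namely $Q(x) = \sum_n e_{n,n}\ten \E_{n,n}(x)$ where $\E_{n,n}$ picks out the $(n,n)$ matrix entry. Since this is a normal faithful conditional expectation, its $L_p$-extension is a contraction on $L_p(B(\ell_2^{N+1})\oten \N)$ for every $1\leq p\leq \infty$. This lands me in $\bigoplus_n L_p(\N)$ (the diagonal), but the $n$-th entry is a general element of $L_p(\N)$, not an element of $L_p(\AA_n)$. Second, I would apply on each diagonal slot the conditional expectation $\E_{\AA_n}:\N \to \AA_n$ onto the $n$-th free copy, whose $L_p$-extension is again contractive; this forces the $n$-th entry into $L_p(\AA_n)$ and hence the composed image into $Z_p$. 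The composed map $\PP = (\oplus_n \E_{\AA_n})\circ Q$ is therefore bounded with norm at most $1\cdot 1=1$ on each $L_p$; I would then check it acts as the identity on $\Sigma_1$, so it is genuinely a projection onto $Z_p$.

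The one subtlety — and where the claimed constant $2$ rather than $1$ presumably comes from — is that applying $\E_{\AA_n}$ to the diagonal does not obviously preserve the $Z_p$-norm exactly, because the embedded element $\sum_n e_{n,n}\ten a_n$ with $a_n\in \AA_n$ already sits on the diagonal, and $\E_{\AA_n}$ fixes $a_n$, so in fact $\PP$ restricted to $Z_p$ is the identity and everything is isometric there. Thus the genuine check is that $\PP$ is well-defined and idempotent: I would verify $\PP^2 = \PP$ directly from the module/conditional-expectation relations, using that $\E_{\AA_n}\circ \E_{\AA_n} = \E_{\AA_n}$. The factor $2$ is harmless slack, and in fact this argument yields a contractive projection; I would state the conclusion with constant at most $2$ to match the lemma, noting the cruder bound suffices and costs nothing.

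The step I expect to be the main obstacle is verifying that the $L_p$-extension of the diagonal conditional expectation $Q$ is compatible with the free-product trace $\tr = \tau\circ \E_\M$ and that $\E_{\AA_n}$ is trace-preserving for $\tr$, so that both maps extend contractively to all $L_p$ simultaneously by the standard interpolation/duality argument for conditional expectations (as in \cite{JX}). Once trace-preservation is in place, contractivity on $L_\infty$ together with trace-preservation gives contractivity on $L_1$ by duality, and interpolation covers $1<p<\infty$. The only real care is that the two conditional expectations commute appropriately with the embedding $\Sigma_1 \hookrightarrow L_p(B(\ell_2^{N+1})\oten \N)$, which follows because the image of $\Sigma_1$ is exactly the set of diagonal elements with $n$-th entry in $\AA_n$, precisely the fixed-point set of $\PP$.
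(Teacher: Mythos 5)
There is a genuine gap: your composed map does not land in $Z_p$. Recall that $Z_p$ is the completion of $\Sigma_1=\bigoplus_{n}\Acirc_n$, so the $n$-th entry of an element of $Z_p$ must lie in (the closure of) the \emph{mean-zero} subspace $\Acirc_n=\{a_n\in\AA_n:\E_\M(a_n)=0\}$, not in all of $L_p(\AA_n)$. Your map $\PP=(\oplus_n\E_{\AA_n})\circ Q$ is indeed a contractive idempotent, and it does fix $Z_p$, but its range is the set of diagonal elements with $n$-th entry in $L_p(\AA_n)$, which strictly contains $Z_p$ (it contains, for instance, diagonal elements with entries in the amalgamated copy of $\M$). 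So $\PP$ is a projection onto a larger space, not onto $Z_p$. The missing step is to compose with $\mathrm{id}-\E_\M$ on each diagonal slot: the paper's projection is
$$\Q\Big(\sum_{n,k}e_{n,k}\ten x_{n,k}\Big)=\sum_{n}\big(\E_{\AA_n}(x_{n,n})-\E_\M(\E_{\AA_n}(x_{n,n}))\big),$$
and the triangle inequality $\|\E_{\AA_n}(x_{n,n})-\E_\M(\E_{\AA_n}(x_{n,n}))\|_p\le 2\|x_{n,n}\|_p$ is precisely where the constant $2$ in the statement comes from. Your remark that the factor $2$ is ``harmless slack'' and that the argument actually yields a contractive projection is therefore the symptom of the error: the $2$ is forced by the mean-zero correction, which your construction omits.

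Everything else in your argument is sound and matches the paper: the diagonal conditional expectation is contractive on $L_p(B(\ell_2^{N+1})\oten\N)$ for all $1\le p\le\infty$, the conditional expectations $\E_{\AA_n}$ and $\E_\M$ are trace-preserving for $\tr=\tau\circ\E_\M$ and hence extend contractively to every $L_p$, and the restriction of the corrected map to $\Sigma_1$ is the identity since $\E_{\AA_n}$ fixes $\Acirc_n$ and $\E_\M$ kills it. With the subtraction of $\E_\M\circ\E_{\AA_n}$ inserted, your proof becomes the paper's proof.
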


\begin{proof}
We consider the projection $\Q:L_p(B(\ell_2^{N+1}) \overline{\ten} \N)\to Z_p$ defined by
$$\Q\Big(\sum_{n,k=0}^N e_{n,k}\ten x_{n,k}\Big)= \sum_{n=0}^N \E_{\AA_n}(x_{n,n})-\E_\M(\E_{\AA_n}(x_{n,n})).$$
The contractivity of the conditional expectations in $L_p$ yields
\begin{align*}
\Big\|\Q\Big(\sum_{n,k=0}^N e_{n,k}\ten x_{n,k}\Big)\Big\|_{Z_p}
&=\Big(\sum_{n=0}^N \|\E_{\AA_n}(x_{n,n})-\E_\M(\E_{\AA_n}(x_{n,n}))\|_p^p\Big)^{1/p}\\
&\leq 2 \Big(\sum_{n=0}^N \|x_{n,n}\|_p^p\Big)^{1/p}
=2 \Big\|\sum_{n=0}^N e_{n,n} \ten x_{n,n}\Big\|_{L_p(B(\ell_2^{N+1}) \overline{\ten} \N)}\\
&\leq 2\|\sum_{n,k=0}^N e_{n,k}\ten x_{n,k}\|_{L_p(B(\ell_2^{N+1}) \overline{\ten} \N)}.
\end{align*}
The last inequality comes from the boundedness of the diagonal projection in $L_p(B(\ell_2^{N+1}) \overline{\ten} \N)$.
\qd
 
We now recall the Rosenthal/Voiculescu type inequality in the amalgamated free product case proved in \cite{JPX}. 
We present these inequalities as they are stated in \cite{JPa} for $d=1$ and $2\leq p \leq \infty$, and extend them by duality to the case $1\leq p<2$. 

\begin{theorem}\label{freeRos} 
Let $a_0, a_1 \cdots, a_N \in L_p(\ast_\M \AA_n)$. 
Then the following equivalence of norms holds with relevant constants independent of $p$ or $N$.
\begin{enumerate}
\item[i)] For $2\leq p\leq \infty$, if $a_n \in L_p(\Acirc_n)$ for $0\leq n\leq N$ then
$$\Big\|\sum_{n=0}^N a_n\Big\|_p \simeq 
\Big(\sum_{n=0}^N \|a_n\|_p^p\Big)^{1/p}
+ \Big\|\Big(\sum_{n=0}^N \E_\M(a_n^*a_n)\Big)^{1/2}\Big\|_p +
\Big\|\Big(\sum_{n=0}^N \E_\M(a_na_n^*)\Big)^{1/2}\Big\|_p .$$
\item[ii)] For $1\leq p \leq 2$, if $a_n \in \Acirc_n$ for $0\leq n\leq N$ then
$$\Big\|\sum_{n=0}^N a_n\Big\|_p \simeq 
\inf
\Big(\sum_{n=0}^N\|d_n\|_p^p\Big)^{1/p} +
\Big\|\Big(\sum_{n=0}^N\E_\M(c_n^*c_n)\Big)^{1/2}\Big\|_p +
\Big\|\Big(\sum_{n=0}^N\E_\M(r_nr_n^*)\Big)^{1/2}\Big\|_p ,$$
where the infimum is taken over all the decompositions $a_n=d_n+c_n+r_n$ with $d_n, c_n, r_n \in \Acirc_n$. 
\end{enumerate}
\end{theorem}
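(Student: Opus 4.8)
The plan is to deduce assertion (ii) from assertion (i) by a sum--intersection duality argument, so that only (i)---which is the Rosenthal/Voiculescu inequality of \cite{JPX} recalled above---needs to be quoted. Reformulating (i) in terms of the spaces introduced before, it states that for $2\leq p'\leq \infty$ the map $\sum_n a_n \mapsto \sum_n a_n$ identifies $X_{p'}^1$ with the intersection $Z_{p'}\cap Y_{p',c}^1 \cap Y_{p',r}^1$, with constants independent of $p'$ and $N$. Dually, the content of (ii) for the conjugate exponent $p$ is exactly that $X_p^1$ coincides with the sum $Z_p+Y_{p,c}^1+Y_{p,r}^1$, the infimum in the statement being the associated quotient norm. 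The ``easy'' inequality $\|\sum_n a_n\|_p\lesssim \inf(\cdots)$ then records that each of $Z_p$, $Y_{p,c}^1$, $Y_{p,r}^1$ embeds into $L_p(\N)$ for $p\leq 2$ (for the column and row summands this is Remark \ref{embeddingL1c(M,E)}), while the reverse inequality, i.e. the existence of the decomposition $a_n=d_n+c_n+r_n$, is the nontrivial part supplied by duality.

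First I would fix the single anti-linear pairing $(x|y)=\tr(x^*y)$ with $\tr=\tau\circ\E_\M$, and check that all four relevant dualities are realised with respect to it. For $X$ this uses Proposition \ref{XpYpcompl}(i): the projection $\PP_1$ is self-adjoint for $\tr$, since homogeneous parts of different degrees are $\tr$-orthogonal, so $(X_{p'}^1)^*=\PP_1(L_p(\N))=X_p^1$. For the column and row spaces, $(Y_{p',c}^1)^*=Y_{p,c}^1$ and $(Y_{p',r}^1)^*=Y_{p,r}^1$ follow from the duality $(L_{p'}^c(\N;\E_\M))^*=L_p^c(\N;\E_\M)$ of Proposition \ref{dualintpolLpc(M,E)}(i) together with the contractive, $\tr$-self-adjoint projection $\PP_1$ of Proposition \ref{XpYpcompl}(ii). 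For $Z$, Lemma \ref{Zpcompl} realises $Z_{p'}$ as a $2$-complemented subspace of $L_{p'}(B(\ell_2^{N+1})\oten\N)$, whence $(Z_{p'})^*=Z_p$; here one uses that mean-zero elements $a_m\in\Acirc_m$, $a_n\in\Acirc_n$ with $m\neq n$ are $\tr$-orthogonal, so that $\tr\big((\sum_m b_m)^*\sum_n a_n\big)=\sum_n\tr(b_n^*a_n)$ and the $Z$-pairing is the restriction of $(\cdot|\cdot)$.

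Next I would invoke the standard duality between intersections and sums of a compatible family. The algebraic homogeneous part $\Sigma_1$ (with each component in $\Acirc_n$) is a common dense core of $Z_{p'}$, $Y_{p',c}^1$, $Y_{p',r}^1$, and for fixed $N$ and $2\leq p'<\infty$ each of these three norms dominates a multiple of $\|\cdot\|_{L_2(\N)}$, so the triple is compatible inside the Hausdorff space $L_2(\N)$. The duality theorem then gives $(Z_{p'}\cap Y_{p',c}^1\cap Y_{p',r}^1)^*=(Z_{p'})^*+(Y_{p',c}^1)^*+(Y_{p',r}^1)^*$ isometrically. Combining this with assertion (i) and the four identifications of the previous step yields $X_p^1=Z_p+Y_{p,c}^1+Y_{p,r}^1$ for $1<p<2$, i.e. the infimum formula of (ii); the constants are controlled by the projection norms (all $\leq 4$ since $d=1$) and by the constant in (i), hence remain independent of $p$ and $N$.

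The main obstacle is the endpoint $p=1$, where the conjugate exponent is $p'=\infty$ and the naive dualisation of $X_\infty^1$ fails because $L_\infty$ is not a dual of $L_1$. Here I would instead identify $X_1^1$ as the \emph{predual} of $X_\infty^1$: one replaces $Y_{\infty,c}^1$ and $Y_{\infty,r}^1$ by their strong-operator closures, for which Proposition \ref{dualintpolLpc(M,E)}(ii) provides $(L_1^c(\N;\E_\M))^*=L_\infty^{c,\st}(\N;\E_\M)$, and checks that the weak$^*$-compatible version of the sum--intersection duality applies. The delicate points are precisely that the common pairing stays compatible after passing to the strong-operator closures and that the resulting kernel is trivial, so that $X_1^1$ is genuinely $Z_1+Y_{1,c}^1+Y_{1,r}^1$; this non-reflexive endpoint, rather than the reflexive range $1<p<2$, is where the real care is required.
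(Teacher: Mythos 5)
Your proposal follows exactly the route the paper takes: assertion (i) is quoted from \cite{JPX}/\cite{JPa}, and the case $1\leq p<2$ is obtained by dualizing the intersection into a sum, using precisely the complementation results (Proposition \ref{XpYpcompl}, Lemma \ref{Zpcompl}, Proposition \ref{dualintpolLpc(M,E)}) that the paper sets up immediately before the theorem for this purpose. The paper supplies no further detail beyond announcing this duality, and your outline --- including the correct identification of the endpoint $p=1$ as the only place where reflexive duality must be replaced by a predual/weak$^*$ argument with the strong-operator closures of Proposition \ref{dualintpolLpc(M,E)}(ii) --- is a faithful expansion of that same argument.
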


Throughout all this paper, we consider a finite von Neumann algebra $\M$ equipped with a normal faithful normalized trace $\tau$  
and we restrict ourselves to finite martingales on the interval $[0,1]$.

\section{The $\H_p^c$-spaces}\label{sectHp}

In this section we study the column Hardy space $\H_p^c$ associated to the continuous filtration $(\M_t)_{0\leq t\leq 1}$.  
We start by defining the two candidates $\hH_p^c$ and $\H_p^c$. 
The crucial monotonicity property will imply that these two candidates for the Hardy space in the continuous setting are in fact equivalent.  
In the sequel we will focus on $\H_p^c$, and embed this space into a regularized version of an ultraproduct space, called $K_p^c(\U)$. 
This larger space satisfies a $p$-equiintegrability property which gives it a structure of $L_p$-module over a finite von Neumann algebra. 
We then check that $\H_p^c$ is an intermediate space between $L_2(\M)$ and $L_p(\M)$, 
to ensure that we are well dealing with operators. 
By complementing the continuous Hardy space $\H_p^c$ in $K_p^c(\U)$, we deduce the expected duality and interpolation results for $1<p<\infty$. 
We will then describe the associated BMO spaces, and establish the analogue of the Fefferman-Stein duality in this setting. 
The end of this section is devoted to the expected interpolation result involving the column spaces $\H_1^c$ and $\BMO^c$. 

\subsection{The discrete case}

Let us first recall the definitions of the Hardy spaces of noncommutative martingales in the discrete case and some well-known results. 
Let $(\M_n)_{n\geq 0}$ be a discrete filtration of $\M$. 
Following \cite{PX}, we introduce the column and row versions of square functions 
relative to a (finite) martingale $x = (x_n)_{n\geq 0}$:
$$
S_c (x) = \Big ( \sum^{\infty}_{n = 0} |d_n(x) |^2 \Big )^{1/2}\quad \mbox{and} \quad 
S_r (x) = \Big ( \sum^{\infty}_{n = 0} | d_n(x)^* |^2 \Big)^{1/2},$$
where 
$$d_n(x)=
\left\{\begin{array}{ll}
x_n-x_{n-1} & \quad \mbox{for } n\geq 1\\
x_0& \quad \mbox{for } n = 0
\end{array}\right.$$ 
denotes the martingale difference sequence. 
For $1 \leq p < \infty$ we define $H_p^c $ (resp. $H_p^r $) as the completion of all
finite $L_p$-martingales under the norm $\| x \|_{H_p^c}=\| S_c (x) \|_p$
(resp. $\| x \|_{H_p^r}=\| S_r (x) \|_p $). 
The Hardy space of noncommutative martingales is defined by 
$$H_p=\left\{\begin{array}{cl}
H_p^c+H_p^r& \quad \mbox{for} \quad 1\leq p< 2 \\
H_p^c\cap H_p^r& \quad \mbox{for}\quad 2\leq p<\infty
\end{array}\right..$$

We now recall some known facts on the column Hardy spaces. 
For $1\leq p <\infty$, $H_p^c$ embeds isometrically into $L_p(\M;\ell_2^c)$ 
and the noncommutative Stein inequality (see \cite{PX}) implies the following complementation result. 

\begin{prop}\label{compldiscr}
Let $1<p<\infty$. Then the discrete space $H_p^c$ is $\gamma_p$-complemented in $L_p(\M;\ell_2^c)$.
\end{prop}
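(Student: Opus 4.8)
The plan is to present $H_p^c$ as the range of an explicit bounded idempotent on $L_p(\M;\ell_2^c)$ and to bound the norm of that idempotent by the noncommutative Stein inequality. The isometric embedding referred to just above sends $x\in H_p^c$ to its sequence of martingale differences $(d_n(x))_{n\ge 0}$, so its image is the closed subspace of \emph{martingale difference sequences}, i.e. those $(a_n)_n\in L_p(\M;\ell_2^c)$ with $a_n\in L_p(\M_n)$ and $\E_{n-1}(a_n)=0$ for every $n$ (with the convention $\E_{-1}=0$, so that $d_0=\E_0$). It therefore suffices to build a bounded projection of $L_p(\M;\ell_2^c)$ onto this subspace.

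First I would define $P$ on $L_p(\M;\ell_2^c)$ coordinatewise by $P\big((a_n)_n\big)=\big(\E_n(a_n)-\E_{n-1}(a_n)\big)_n=\big(d_n(a_n)\big)_n$. Using $\E_{n-1}\E_n=\E_n\E_{n-1}=\E_{n-1}$ one checks that each $d_n=\E_n-\E_{n-1}$ is idempotent, hence so is $P$. Moreover $d_n(a_n)=\E_n(a_n)-\E_{n-1}(a_n)$ lies in $L_p(\M_n)$ and satisfies $\E_{n-1}\big(d_n(a_n)\big)=0$, so the range of $P$ is contained in the martingale difference sequences, while $P$ clearly fixes every such sequence. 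Thus $P$ is an algebraic projection onto the image of $H_p^c$, and it only remains to estimate $\|P\|$.

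For the norm bound I would write $P=\E-\E'$ with $\E\big((a_n)_n\big)=(\E_n(a_n))_n$ and $\E'\big((a_n)_n\big)=(\E_{n-1}(a_n))_n$. The noncommutative Stein inequality of \cite{PX} gives $\|\E\|\le \gamma_p$ directly. For $\E'$ the zeroth coordinate vanishes, and reindexing by $m=n-1$ identifies the remaining coordinates $(\E_{n-1}(a_n))_{n\ge 1}$ with $\E\big((a_{m+1})_{m\ge 0}\big)$; since the forward shift $(a_n)_n\mapsto(a_{n+1})_n$ is contractive on $L_p(\M;\ell_2^c)$, a second application of Stein yields $\|\E'\|\le \gamma_p$. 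Hence $\|P\|\le 2\gamma_p$, so $H_p^c$ is complemented in $L_p(\M;\ell_2^c)$ with a constant depending only on $p$, as claimed.

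The only substantial ingredient is the noncommutative Stein inequality, which is the genuine ``hard part'' but is exactly the result we are allowed to quote; everything else is bookkeeping. The two points deserving care are the convention $\E_{-1}=0$, which makes the $n=0$ coordinate behave correctly, and the passage to completions: since $P$ is bounded and restricts to the identity on the dense set of finite martingale difference sequences, its closed range is precisely the image of $H_p^c$, as needed.
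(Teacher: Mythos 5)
Your proof is correct and is exactly the argument the paper intends: Proposition \ref{compldiscr} is stated as a direct consequence of the noncommutative Stein inequality of \cite{PX}, and your Stein projection $(a_n)_n\mapsto (d_n(a_n))_n$, split as $\E-\E'$ with a shift to handle $\E'$, is the standard deduction. The only cosmetic discrepancy is that the triangle inequality yields the complementation constant $2\gamma_p$ rather than $\gamma_p$; since the paper only ever uses $\gamma_p\approx\max(p,p')$ (and effectively takes $\gamma_p$ to \emph{be} the norm of this projection), this does not affect anything.
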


\re
Recall that 
$$\gamma_p\approx \max(p,p') \mbox{ as } p\to 1 \mbox{ or } p\to \infty,$$ 
where $p'$ denotes the conjugate index of $p$. 
\mar

Since $(L_p(\M;\ell_2^c))^*=L_{p'}(\M;\ell_2^c)$ isometrically for $\frac{1}{p}+\frac{1}{p'}=1$  
and the family of column $L_p$-spaces forms an interpolation scale, 
we deduce the similar duality and interpolation results for $H_p^c$. 

\begin{cor}\label{discrdual}
Let $1<p<\infty$. Then the discrete spaces satisfy
\begin{enumerate}
\item[(i)] Let $\frac{1}{p}+\frac{1}{p'}=1$. Then $$(H_p^c)^*=H_{p'}^c \quad \mbox{with equivalent norms.}$$
\item[(ii)]  Let $1<p_1,p_2<\infty$ and $0<\theta<1$ be such that
$\frac{1}{p}=\frac{1-\theta}{p_1}+\frac{\theta}{p_2}$. Then
 $$H_p^c  = [H_{p_1}^c,H_{p_2}^c]_{\theta}  \quad \quad \mbox{with equivalent norms}  .$$
\end{enumerate}
\end{cor}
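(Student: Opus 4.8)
The plan is to derive both statements directly from Proposition \ref{compldiscr} together with the standard properties of the column $L_p$-spaces $L_p(\M;\ell_2^c)$, using the complemented embedding $H_p^c \hookrightarrow L_p(\M;\ell_2^c)$ as the transport mechanism. The key structural fact is that complemented subspaces inherit both duality and interpolation from the ambient scale, provided the relevant projections are compatible across the scale. So first I would fix, for each $p$, the bounded projection $Q_p:L_p(\M;\ell_2^c)\to H_p^c$ furnished by Proposition \ref{compldiscr} (the one coming from the noncommutative Stein inequality). The crucial point to check is that these projections are \emph{consistent}, i.e. $Q_p$ and $Q_{p'}$ (and, for interpolation, $Q_{p_1}$ and $Q_{p_2}$) agree on the intersection of the respective $L_p(\M;\ell_2^c)$-spaces, which holds because all of them restrict to the same algebraic projection onto finite martingale difference sequences.

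For the duality statement (i), I would proceed as follows. Since $Q_p$ is a bounded projection, $H_p^c$ is the range of $Q_p$ and hence isomorphic to a complemented subspace of $L_p(\M;\ell_2^c)$; dually $H_{p'}^c$ is the range of $Q_{p'}^*$ acting on $(L_p(\M;\ell_2^c))^* = L_{p'}(\M;\ell_2^c)$. Using the given isometric identification $(L_p(\M;\ell_2^c))^*=L_{p'}(\M;\ell_2^c)$ under the anti-linear duality bracket $(a|b)=\tau(\langle a,b\rangle)$, I would verify that the adjoint projection $Q_p^*$ coincides with $Q_{p'}$. This identification $Q_p^*=Q_{p'}$ is exactly where the consistency of the projections is used: both are determined by their action on the dense subspace of finite martingales, where they agree. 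Once this is established, the general functional-analytic fact that $(\operatorname{ran} Q_p)^* = \operatorname{ran} Q_p^*$ isomorphically (for a bounded projection $Q_p$ on a space $X$, one has $(Q_p X)^* \cong Q_p^* X^*$) yields $(H_p^c)^* = H_{p'}^c$ with equivalent norms, the norm constants being controlled by $\gamma_p$.

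For the interpolation statement (ii), the argument is parallel and relies on the fact, recalled in the excerpt, that the column $L_p$-spaces form an interpolation scale: $[L_{p_1}(\M;\ell_2^c),L_{p_2}(\M;\ell_2^c)]_\theta = L_p(\M;\ell_2^c)$ with equivalent norms. I would invoke the standard retract/complementation principle for complex interpolation: if $H_{p_j}^c$ is complemented in $L_{p_j}(\M;\ell_2^c)$ by a consistent family of projections $Q_{p_j}$ ($j=1,2$), then $[H_{p_1}^c,H_{p_2}^c]_\theta = Q_p\big([L_{p_1}(\M;\ell_2^c),L_{p_2}(\M;\ell_2^c)]_\theta\big) = Q_p(L_p(\M;\ell_2^c)) = H_p^c$, again with equivalent norms. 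Concretely, the inclusions $H_{p_j}^c\hookrightarrow L_{p_j}(\M;\ell_2^c)$ and the projections $Q_{p_j}$ form an interpolation couple morphism, so applying the interpolation functor to $Q_{p_2}\circ\iota$ and using the retraction $\iota\circ Q_p = \mathrm{id}$ on $H_p^c$ gives the identification.

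The main obstacle, and the step deserving the most care, is the \emph{consistency} of the projections across the scale---that the $Q_p$ are genuinely compatible (equal on overlaps) and that $Q_p^*=Q_{p'}$ under the chosen anti-linear duality. This is what makes both the abstract duality-of-complemented-subspaces lemma and the interpolation-retract lemma applicable. The verification itself is routine once one checks the action on the dense subspace of finite $L_p$-martingales, where the projection is the fixed algebraic map $a\mapsto (d_n(x))_n$ independent of $p$; but it is the hinge on which the whole corollary turns, so I would state it explicitly before invoking the two functional-analytic principles. Everything else---the isometric dualities and the interpolation of the ambient column spaces---is quoted from standard references and from the statements already recalled in the excerpt.
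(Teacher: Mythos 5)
Your proposal is correct and follows essentially the same route as the paper: the paper deduces the corollary exactly from Proposition \ref{compldiscr} together with the isometric duality $(L_p(\M;\ell_2^c))^*=L_{p'}(\M;\ell_2^c)$ and the fact that the column $L_p$-spaces form an interpolation scale, transporting both properties to the complemented subspaces $H_p^c$ via the Stein projection. Your explicit verification that the projections are consistent across the scale and that $Q_p^*=Q_{p'}$ is the (routine) detail the paper leaves implicit.
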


In the sequel, we will always denote the conjugate of $p$ by $p'$.

For the case $p=1$, in \cite{PX} Pisier and Xu described the dual space of $H_1^c$ as a $BMO^c$-space. 
This noncommutative analogue of the Fefferman-Stein duality has been extended by the first author and Xu in \cite{JX} to the case $1< p <2$ as follows. 
Recall that for $1\leq p\leq\infty$, we say that a sequence $(x_n)_{n\geq 0}$ in $L_p(\M)$ belongs to $L_p(\M;\ell_\infty)$ 
if  $(x_n)_{n\geq 0}$ admits a factorization $x_n=ay_nb$ with $a,b \in L_{2p}(\M)$ and $(y_n)_{n\geq 0}\in \ell_\infty(L_\infty(\M))$.  
The norm of $(x_n)_{n\geq 0}$ is then defined as
$$\|(x_n)_{n\geq 0}\|_{L_p(\M;\ell_\infty)}=\inf_{x_n=ay_nb}\|a\|_{2p}\sup_{n\geq 0}\|y_n\|_\infty\|b\|_{2p}.$$
It was proved in \cite{JD,jx-erg} that if $(x_n)_{n\geq 0}$ is a positive sequence in $L_p(\M;\ell_\infty)$, then
$$\|(x_n)_{n\geq 0}\|_{L_p(\M;\ell_\infty)}
=\sup\Big\{\sum_{n\geq 0}\tau(x_ny_n) : y_n\in L_{p'}^+(\M) ,\Big\|\sum_{n\geq 0}y_n\Big\|_{p'}\leq 1\Big\}.$$
The norm of $L_p(\M;\ell_\infty)$ will be denoted by $\|\sup^+_n x_n\|_p$. 
We should warn the reader that $\|\sup^+_n x_n\|_p$ is just a notation since $\sup_n x_n$ does not make any sense in the noncommutative setting. 
For $2<p\leq \infty$ we define
$$L_p^cMO=\{x\in L_2(\M): \|x\|_{L_p^cMO} <\infty\},$$
where 
$$\|x\|_{L_p^cMO}=\|{\sup_{n\geq 0}}^+ \E_n|x-x_{n-1}|^2\|_{p/2}^{1/2}.$$
Here we use the convention $x_{-1}=0$. 
For $p=\infty$ we denote this space by $BMO^c$. 

\begin{theorem}[\cite{PX,JX}]\label{FSdiscr}
Let $1\leq p <2$. Then the discrete spaces satisfy
$$(H_p^c)^*=L_{p'}^cMO \quad \mbox{with equivalent norms. }$$
Moreover, 
$$\lambda_p^{-1}\|x\|_{L_{p'}^cMO }\leq \|x\|_{(H_p^c)^*}\leq \sqrt{2} \|x\|_{L_{p'}^cMO },$$
where $\lambda_p$ remains bounded as $p\to 1$.
\end{theorem}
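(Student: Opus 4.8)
The plan is to realize both spaces inside the column space $L_p(\M;\ell_2^c)$ and to compare the antilinear bracket $(x|y)=\tau(x^*y)$ with the $L_{p'}^cMO$-norm. Recall that the martingale difference map $D:x\mapsto(d_n(x))_{n\ge0}$ embeds $H_p^c$ isometrically into $L_p(\M;\ell_2^c)$, and that $(L_p(\M;\ell_2^c))^*=L_{p'}(\M;\ell_2^c)$ isometrically. The starting observation is the orthogonality relation: for a finite martingale $x$ and $y\in L_2(\M)$ one has $\tau(x^*y)=\sum_{n\ge0}\tau(d_n(x)^*d_n(y))$, because $\tau(d_m(x)^*d_n(y))=0$ whenever $m\neq n$. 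Thus every $y$ induces a functional on the (dense) space of finite martingales, and the whole problem is to match the resulting dual norm with $\|y\|_{L_{p'}^cMO}$.

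First I would prove the inclusion $\|y\|_{(H_p^c)^*}\le\sqrt2\,\|y\|_{L_{p'}^cMO}$. The key algebraic fact is the splitting $\E_n|y-y_{n-1}|^2=\E_n|y-y_n|^2+|d_n(y)|^2$ (the cross terms vanish since $\E_n(y-y_n)=0$ and $d_n(y)\in\M_n$), which yields in particular the pointwise domination $|d_n(y)|^2\le\E_n|y-y_{n-1}|^2$. Writing $q=p'>2$ and combining this domination with a Cauchy--Schwarz/Hölder estimate pairing $S_c(x)=(\sum_n|d_n(x)|^2)^{1/2}\in L_p(\M)$ against the conditioned maximal function, evaluated through the dual formula $\big\|{\sup_n}^+ w_n\big\|_{q/2}=\sup\{\sum_n\tau(w_nz_n):z_n\ge0,\ \|\sum_n z_n\|_{(q/2)'}\le1\}$, gives $|\tau(x^*y)|\le\sqrt2\,\|S_c(x)\|_p\,\|y\|_{L_{p'}^cMO}$, the factor $\sqrt2$ reflecting the above splitting into a diagonal term $|d_n(y)|^2$ and a tail term $\E_n|y-y_n|^2$.

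For the reverse inequality and surjectivity I would start from an arbitrary $\phi\in(H_p^c)^*$. Since $D$ is an isometric embedding, Hahn--Banach extends $\phi$ to $L_p(\M;\ell_2^c)$, so there is $(a_n)_n\in L_{p'}(\M;\ell_2^c)$ with $\|(a_n)_n\|_{p'}=\|\phi\|$ and $\phi(x)=\sum_n\tau(d_n(x)^*a_n)$. Because $d_n(x)$ is a martingale difference, one may replace $a_n$ by its martingale-difference part $\E_n(a_n)-\E_{n-1}(a_n)$ without altering $\phi$ on $H_p^c$; this produces $y=\sum_n(\E_n(a_n)-\E_{n-1}(a_n))\in L_2(\M)$ with $d_n(y)=\E_n(a_n)-\E_{n-1}(a_n)$ and $\phi(x)=\tau(x^*y)$. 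It then remains to check that $y\in L_{p'}^cMO$ with $\|y\|_{L_{p'}^cMO}\le\lambda_p\,\|\phi\|$. This I would establish by testing $\phi$ against an explicit family of test martingales built from the dual weights $(z_n)$ in the $\|{\sup}^+\|_{q/2}$-formula — the noncommutative analogue of the classical Carleson/atomic argument — choosing predictable operator weights so that $\tau(x^*y)$ recovers $\sum_n\tau(\E_n|y-y_{n-1}|^2\,z_n)$ while $\|S_c(x)\|_p$ stays uniformly controlled.

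The main obstacle is exactly this last step. A naive route would bound $\|y\|_{L_{p'}^cMO}$ through the noncommutative Stein inequality underlying Proposition \ref{compldiscr}, but its constant $\gamma_{p'}$ blows up as $p\to1$ (that is, $p'\to\infty$), whereas the statement demands $\lambda_p$ \emph{bounded} as $p\to1$ so as to meet the classical Fefferman--Stein endpoint $(H_1^c)^*=BMO^c$. The delicate point is therefore to construct the test martingales and estimate their square functions $S_c$ by hand, so that the resulting constant $\lambda_p$ matches the $p=1$ endpoint and stays bounded there; this is precisely where the genuine analysis carried out in \cite{PX} for $p=1$ and extended in \cite{JX} to $1<p<2$ is needed.
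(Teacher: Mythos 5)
The paper does not actually prove this statement: Theorem \ref{FSdiscr} is quoted from \cite{PX,JX} and used as a black box, so there is no in-text proof to compare against. The closest thing in the text is the computation inside the proof of Lemma \ref{dualhatHpc}, where precisely the discrete argument for the converse inclusion is reproduced in ultraproduct clothing. Measured against that, your skeleton is the right one: orthogonality of martingale differences, the $\sqrt{2}$ estimate for $L_{p'}^cMO\subset (H_p^c)^*$, and Hahn--Banach extension to $L_p(\M;\ell_2^c)$ followed by projection of the representing sequence $(a_n)$ onto martingale differences to produce $y$ with $d_n(y)=\E_n(a_n)-\E_{n-1}(a_n)$.

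The gap is in the converse direction, and your ``main obstacle'' paragraph mislocates the difficulty. One never needs to control the square function of $y$ (which is where the Stein inequality and its unbounded constant $\gamma_{p'}$ would enter), and no construction of test martingales is required. Instead one estimates the $MO$-quantity of $y$ directly from $(a_n)$: operator convexity gives $|d_k(y)|^2=|\E_k(a_k)-\E_{k-1}(a_k)|^2\le 2\E_k|a_k|^2+2\E_{k-1}|a_k|^2$, hence
\begin{equation*}
\E_n|y-y_{n-1}|^2=\E_n\Big(\sum_{k\ge n}|d_k(y)|^2\Big)\le 4\,\E_n\Big(\sum_{k}|a_k|^2\Big)+2\,\E_{n-1}\Big(\sum_{k}|a_k|^2\Big),
\end{equation*}
and the noncommutative Doob inequality in $L_{p'/2}$ yields $\|y\|_{L_{p'}^cMO}\le (6\delta_{p'/2})^{1/2}\|(a_n)\|_{L_{p'}(\M;\ell_2^c)}$. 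Since $\delta_q\approx 1$ as $q\to\infty$ (and at $p'=\infty$ the maximal inequality is trivial, $\E_n(A)\le\|A\|_\infty$ for $A\geq 0$), the constant $\lambda_p$ stays bounded as $p\to 1$ exactly as claimed. This is essentially the estimate carried out for $z(i)$ in the proof of Lemma \ref{dualhatHpc}. As written, your proposal proves only the easy direction and defers the other to the references, so it is not yet a proof; but the missing step is this short Doob-inequality computation, not the delicate endpoint analysis you anticipate.
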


Combining Corollary \ref{discrdual} (i) with Theorem \ref{FSdiscr} we obtain

\begin{prop}
Let $2<p<\infty$. Then the discrete spaces satisfy
$$H_p^c=L_{p}^cMO \quad \mbox{with equivalent norms. }$$
\end{prop}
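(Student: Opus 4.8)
The plan is to derive the identity purely by duality, combining the two results stated immediately above with no new computation. First I would fix $2<p<\infty$ and introduce its conjugate exponent $p'$, observing that $1<p'<2$, so that both Corollary \ref{discrdual} (i) and Theorem \ref{FSdiscr} are applicable at the exponent $p'$, and that $(p')'=p$.

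Next I would apply Theorem \ref{FSdiscr} with $p$ replaced by $p'$: since $1\leq p'<2$, it yields $(H_{p'}^c)^*=L_p^cMO$ with equivalent norms, the pairing being the anti-linear trace duality $(x|y)=\tau(x^*y)$ adopted throughout following \cite{PX}. Independently, I would apply Corollary \ref{discrdual} (i) at the exponent $p'$ (legitimate since $1<p'<\infty$), obtaining $(H_{p'}^c)^*=H_p^c$ with equivalent norms, again realized through the same trace pairing. Having thus represented the single space $(H_{p'}^c)^*$ concretely in two ways, I would chain the resulting norm equivalences $\|\cdot\|_{H_p^c}\simeq\|\cdot\|_{(H_{p'}^c)^*}\simeq\|\cdot\|_{L_p^cMO}$ to conclude $H_p^c=L_p^cMO$ with equivalent norms.

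The step I expect to be the true, if mild, obstacle is verifying that the two identifications of $(H_{p'}^c)^*$ are implemented by one and the same duality bracket. Once one checks that both Corollary \ref{discrdual} (i) and Theorem \ref{FSdiscr} realize the dual via the anti-linear trace duality $(x|y)=\tau(x^*y)$, the two concrete realizations coincide \emph{as spaces of operators}: a functional on $H_{p'}^c$ corresponds to a unique $y\in H_p^c$ and a unique $z\in L_p^cMO$ with $\tau(x^*y)=\tau(x^*z)$ for all $x$ in a dense set of finite martingales, forcing $y=z$ inside $L_2(\M)$. This is what turns the abstract coincidence of dual norms into the claimed equality of operator spaces with equivalent norms, and no additional argument is needed to see that the elements of $L_p^cMO$ are genuine operators, since that is built into its definition as a subspace of $L_2(\M)$.
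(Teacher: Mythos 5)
Your proposal is correct and is exactly the argument the paper intends: the Proposition is stated there as an immediate consequence of "combining Corollary \ref{discrdual} (i) with Theorem \ref{FSdiscr}", i.e.\ identifying $(H_{p'}^c)^*$ both with $H_p^c$ and with $L_p^cMO$ through the same anti-linear trace pairing. Your extra remark about checking that the two realizations of the dual use one and the same bracket is a worthwhile point the paper leaves implicit, but it does not change the route.
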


The Burkholder-Gundy inequalities have been extended to the noncommutative setting by Pisier and Xu in \cite{PX}.

\begin{theorem}\label{BGdiscr}
Let $1<p<\infty$. Then the discrete spaces satisfy
$$L_p(\M)=H_p \quad \mbox{with equivalent norms. }$$
Moreover, 
$$\alpha_p^{-1}\|x\|_{H_p}\leq \|x\|_p\leq \beta_p \|x\|_{H_p}.$$
\end{theorem}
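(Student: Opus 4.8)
The plan is to settle $p=2$ by orthogonality, to obtain the two \emph{easy} inequalities for all $1<p<\infty$ from the noncommutative Stein inequality already encoded in Proposition~\ref{compldiscr}, to reduce the two remaining \emph{hard} inequalities to a single one by duality, and only then to confront the genuine analytic core. At $p=2$ the martingale differences $d_n(x)$ are pairwise orthogonal in $L_2(\M)$, so $\|x\|_2^2=\sum_n\|d_n(x)\|_2^2=\tau(S_c(x)^2)=\|x\|_{H_2^c}^2$ and symmetrically $\|x\|_2=\|x\|_{H_2^r}$; thus $L_2(\M)=H_2^c=H_2^r$ isometrically and the statement holds with constant $1$.

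For the easy directions, fix $2\le p<\infty$. The isometric embedding $x\mapsto(d_n(x))_n$ of $H_p^c$ into $L_p(\M;\ell_2^c)$, combined with the boundedness of the martingale projection from Proposition~\ref{compldiscr}, yields $\|x\|_{H_p^c}\le\gamma_p\|x\|_p$ and, symmetrically, $\|x\|_{H_p^r}\le\gamma_p\|x\|_p$; taking the maximum gives $\|x\|_{H_p}\le\alpha_p\|x\|_p$ for $p\ge2$. Transposing this bound through the identifications $(H_p^c)^*=H_{p'}^c$ of Corollary~\ref{discrdual} and $(L_p)^*=L_{p'}$, the identity map $L_p\to H_p^c$ dualizes to a bounded identity map $H_{p'}^c\to L_{p'}$, so $\|y\|_{p'}\le\gamma_p\|y\|_{H_{p'}^c}$ and likewise for the row norm whenever $1<p'\le2$. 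For such $p'$ and any splitting $y=a+b$ one then has $\|y\|_{p'}\le\|a\|_{p'}+\|b\|_{p'}\le\gamma_p(\|a\|_{H_{p'}^c}+\|b\|_{H_{p'}^r})$, and the infimum over splittings gives $\|y\|_{p'}\le\gamma_p\|y\|_{H_{p'}}$, i.e. the easy direction for $p'<2$.

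There remain the two hard inequalities: the \emph{reconstruction} $\|x\|_p\le\beta_p\|x\|_{H_p}$ for $2\le p<\infty$ and the \emph{decomposition} $\|x\|_{H_p}\le\alpha_p\|x\|_p$ for $1<p\le2$. These are mutually dual: from $(H_p^c)^*=H_{p'}^c$ one gets $(H_p)^*=(H_p^c\cap H_p^r)^*=H_{p'}^c+H_{p'}^r=H_{p'}$ for $p\ge2$ (the couple being reflexive), so ``identity $L_p\to H_p$ bounded for $p\le2$'' transposes precisely to ``identity $H_{p'}\to L_{p'}$ bounded for $p'\ge2$''. Hence it is enough to prove the reconstruction on $[2,\infty)$.

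To this end I would expand $x^*x=\sum_n|d_n(x)|^2+a+a^*$ with $a=\sum_n x_{n-1}^*d_n(x)$, giving $\|x\|_p^2=\|x^*x\|_{p/2}\le\|x\|_{H_p^c}^2+2\|a\|_{p/2}$, so everything comes down to estimating the off-diagonal term $a$, itself a martingale transform of $x$, by the column and row square-function norms. This is the main obstacle: the crude Cauchy--Schwarz bound for $\|a\|_{p/2}$ introduces a row maximal function rather than $\|x\|_{H_p^r}$, and closing the estimate genuinely requires hard analysis rather than the soft duality used above --- most transparently by first proving the reconstruction for even integers $p=2k$, where $\|x\|_p^p=\tau((x^*x)^k)$ can be expanded and each term dominated via the noncommutative H\"older and Cauchy--Schwarz inequalities by products of $\|x\|_{H_p^c}$ and $\|x\|_{H_p^r}$, and then passing to all $2\le p<\infty$ by the interpolation of Corollary~\ref{discrdual}(ii). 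Once the reconstruction holds on $[2,\infty)$, duality furnishes the decomposition on $(1,2]$ and interpolation fills the remaining exponents, which completes the equivalence $L_p(\M)=H_p$.
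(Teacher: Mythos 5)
The paper does not actually prove Theorem~\ref{BGdiscr}: it is recalled verbatim from Pisier and Xu \cite{PX}, so the only benchmark is that known proof, and against it your proposal has one genuine gap. The inequality $\|x\|_{H_p}\le\alpha_p\|x\|_p$ for $2\le p<\infty$ is \emph{not} a consequence of the Stein inequality of Proposition~\ref{compldiscr}. That proposition bounds the projection $(y_n)_n\mapsto(d_n(y_n))_n$ on $L_p(\M;\ell_2^c)$, i.e. it controls $\|(d_n(y_n))_n\|_{L_p(\M;\ell_2^c)}$ by $\|(y_n)_n\|_{L_p(\M;\ell_2^c)}$; to extract $\|S_c(x)\|_p\le\gamma_p\|x\|_p$ from it you would have to exhibit a sequence of $L_p(\M;\ell_2^c)$-norm comparable to $\|x\|_p$ whose Stein projection is the full difference sequence of $x$, and no such sequence exists (the constant sequence $(x,\dots,x)$ of length $N$ has column norm $\sqrt{N}\,\|x\|_p$). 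This inequality is in fact one of the two genuinely nontrivial halves of the theorem: by the very duality you set up, it is equivalent to the bounded inclusion $H_{p'}\subset L_{p'}$ for $1<p'\le 2$, which in \cite{PX} requires a separate argument (a Fefferman--Stein type estimate in the $H_1$--$BMO$ duality, interpolated up to $p'=2$) and is responsible for the boundedness of $\beta_p$ as $p\to1$ recorded in the remark after the theorem. Since your derivation of $\|y\|_{p'}\le\gamma_p\|y\|_{H_{p'}}$ for $p'\le2$ rests entirely on this false step, half of the equivalence is left unproved.

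The other half of your plan is architecturally correct and is essentially the Pisier--Xu route: reduce to the reconstruction $\|x\|_p\le\beta_p\|x\|_{H_p}$ on $[2,\infty)$, prove it for even integers $p=2k$ by expanding $\tau((x^*x)^k)$ and exploiting the vanishing of the terms in which the largest index occurs only once, interpolate (note that this requires interpolation of the intersections $H_p^c\cap H_p^r$, not merely of $H_p^c$ as in Corollary~\ref{discrdual}(ii), although the same complementation yields it), and then dualize to obtain the decomposition $L_{p'}=H_{p'}^c+H_{p'}^r$ for $1<p'\le2$. However, the combinatorial estimate at the even integers --- the analytic core of the whole theorem --- is only announced, not carried out. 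So the proposal is an accurate map of roughly half of the known proof with the hard work deferred, and an incorrect shortcut standing in for the other half.
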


\re
According to \cite{Pis2}, \cite{jx-const} and \cite{ran-weak} we know that 
$$\begin{array}{lcc}
\alpha_p\approx(p-1)^{-1} & \mbox{ as } p\to 1\quad , &\quad \alpha_p \approx p \mbox{ as } p\to \infty \\  
\beta_p\approx 1 &\mbox{ as } p\to 1 \quad , &\quad \beta_p \approx p \mbox{ as } p\to \infty.  
\end{array}$$
In particular, for $p=1$ we have a bounded inclusion $H_1\subset L_1(\M)$. 
Throughout this paper we will always denote by $\gamma_p, \lambda_p, \alpha_p$ and $\beta_p$ the constants introduced previously. 
We will also frequently use the noncommutative Doob inequality 
$$\|{\sup_n}^+ \E_n(a)\|_p\leq \delta_p\|a\|_p \quad \mbox{for } 1<p\leq \infty, a\in L_p(\M), a\geq 0,$$
and its dual form 
$$\Big\|\sum_n\E_n(a_n)\Big\|_p\leq \delta'_p \Big\|\sum_n a_n\Big\|_p \mbox{for } 1\leq p< \infty,$$
for any finite sequence $(a_n)_n$ of positive elements in $L_p(\M)$. 
These inequalities were proved in \cite{JD}, and 
we will always denote by $\delta_p$ and $\delta'_p$ respectively the constants involved there. 
Recall that $\delta'_p=\delta_{p'}$ for $1\leq p<\infty$. Moreover, we have  
$$\delta_p \approx (p-1)^{-2} \quad \mbox{ as }  p\to 1 \quad  \mbox{and} \quad  
\delta_p \approx 1 \quad \mbox{ as }  p\to \infty .$$
\mar

We end this collection of results with the interpolation theorem due to Musat in \cite{M} 
(see also \cite{JM} for a different proof with better constants). 

\begin{theorem}\label{intH1cBMOc-discr}
Let $1<p<\infty$. Then the discrete spaces satisfy
\begin{enumerate}
\item [(i)] $H_p^c=[BMO^c,H^c_1]_{\frac{1}{p}}\quad \mbox{with equivalent norms}.$
\item [(ii)] $L_p(\M)=[BMO,H_1]_{\frac{1}{p}}\quad \mbox{with equivalent norms}.$
\end{enumerate}
\end{theorem}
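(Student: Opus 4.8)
The plan is to prove (i) by identifying the interpolation space through the column-module structure, and then to obtain (ii) by combining the column and row versions. First I would use that for $1\le r<\infty$ the map $x\mapsto(d_n(x))_{n\ge0}$ embeds $H_r^c$ isometrically into $L_r(\M;\ell_2^c)$, and that by Proposition \ref{compldiscr} this embedding is complemented by a single consistent projection in the open range $1<r<\infty$. Since complemented subspaces of an interpolation scale interpolate, this already gives the interior identity $H_p^c=[H_{p_0}^c,H_{p_1}^c]_\theta$ of Corollary \ref{discrdual}(ii) for $1<p_0,p_1<\infty$, and reduces the whole problem to understanding the two endpoints $BMO^c$ at $r=\infty$ and $H_1^c$ at $r=1$, where the projection constant $\gamma_r$ of Proposition \ref{compldiscr} degenerates.

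To reach the $BMO^c$ endpoint I would invoke the noncommutative John--Nirenberg inequality, which gives $\|x\|_{L_q^cMO}\simeq\|x\|_{BMO^c}$ with constants independent of $q$. Together with the identification $H_q^c=L_q^cMO$ for $2<q<\infty$, this lets one replace the singular $\infty$-endpoint by finite exponents: interpolating in the scale and letting $q\to\infty$ (so that the interior parameter tends to $\tfrac1p$) upgrades the finite-exponent identities to $[BMO^c,H_{p_1}^c]_\eta=H_p^c$ for a fixed $1<p_1<\infty$. The $H_1^c$ endpoint is then supplied by the Calderón duality theorem for complex interpolation, using the pairings $(H_r^c)^*=H_{r'}^c$ for $1<r<\infty$ from Corollary \ref{discrdual}(i) and $(H_1^c)^*=BMO^c$ from Theorem \ref{FSdiscr}: dualizing the $BMO^c$-endpoint identity for the conjugate exponent transforms it into the $H_1^c$-endpoint identity, the interior spaces being reflexive and the couples regular enough for the duality to apply. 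Finally a reiteration (Wolff) argument stitches the two half-results $[BMO^c,H_{p_1}^c]$ and $[H_{p_1}^c,H_1^c]$ together into the claimed $H_p^c=[BMO^c,H_1^c]_{1/p}$, and the two continuous inclusions close up by the exactness of the complex method.

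For (ii) I would write $BMO=BMO^c\cap BMO^r$ and $H_1=H_1^c+H_1^r$, and recall from Theorem \ref{BGdiscr} the Burkholder--Gundy identification $L_p(\M)=H_p$, where $H_p=H_p^c\cap H_p^r$ for $2\le p<\infty$ and $H_p=H_p^c+H_p^r$ for $1<p<2$. Applying (i) to both the column and the row scales (the row case being the adjoint of the column one via $\|x\|_{H_r^r}=\|x^*\|_{H_r^c}$) and using the standard behaviour of the complex method under the intersection, resp. sum, of the two compatible couples sitting inside $L_p(\M)$, one computes $[BMO,H_1]_{\frac1p}=[BMO^c,H_1^c]_{\frac1p}\cap[BMO^r,H_1^r]_{\frac1p}=H_p^c\cap H_p^r=L_p(\M)$ for $2\le p<\infty$, and the parallel computation with sums for $1<p<2$.

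The decisive obstacle is the $BMO^c$ endpoint in (i): the uniform-in-$q$ John--Nirenberg equivalence $L_q^cMO\simeq BMO^c$ is exactly the deep analytic input that the complementation argument cannot produce past the reflexive range, and controlling the interpolation constants as $q\to\infty$ is the technical core of the statement. By comparison, the reiteration, duality and column/row bookkeeping are routine once this equivalence is in hand.
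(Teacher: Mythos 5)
The paper does not actually prove this theorem: it is quoted from Musat \cite{M} (see also \cite{JM}), so there is no internal proof to compare against, and your proposal must stand on its own. It does not, because of a genuine gap at exactly the point you single out as the decisive analytic input. The equivalence $\|x\|_{L_q^cMO}\simeq\|x\|_{BMO^c}$ with constants independent of $q$ --- indeed with any constants, for a fixed finite $q$ --- is false. By Theorem \ref{FSdiscr} and Corollary \ref{discrdual}, $L_q^cMO=H_q^c$ for $2<q<\infty$, and $H_q^c$ contains $L_q(\M)$ densely, whereas $BMO^c$ is a much smaller subspace of $L_2(\M)$; only the trivial contractive inclusion $BMO^c\subset L_q^cMO$ holds (because $\tau$ is normalized). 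The noncommutative John--Nirenberg theorem of Junge--Musat concerns a different scale of norms, namely the conditional $q$-th moments $\sup_n\|\E_n|x-x_{n-1}|^q\|_\infty^{1/q}$ with the \emph{outer} norm kept in $L_\infty$, not the maximal norms $\|{\sup_n}^+\E_n|x-x_{n-1}|^2\|_{q/2}^{1/2}$ that define $L_q^cMO$ here. Once the false equivalence is removed, your limiting argument collapses: the inclusion $BMO^c\subset L_q^cMO$ only gives $[BMO^c,H_{p_1}^c]_\eta\subset[H_q^c,H_{p_1}^c]_\eta$, the constants in the identification $H_q^c=L_q^cMO$ themselves grow like $\gamma_q\approx q$, and complex interpolation is not continuous in the endpoint space, so no passage $q\to\infty$ is available. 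This $BMO$ endpoint is precisely where all the work of Musat's theorem lies; it is established either by a direct analysis of the interpolation functional together with the $H_1$--$BMO$ duality, or via the John--Nirenberg theorem in its correct form.

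A secondary gap is in your deduction of (ii): the identities $[A_0\cap B_0,A_1\cap B_1]_\theta=[A_0,A_1]_\theta\cap[B_0,B_1]_\theta$ and the analogous one for sums are not general properties of the complex method; only one inclusion is automatic in each case, and for $1<p<2$ the couple $(BMO^c\cap BMO^r,\,H_1^c+H_1^r)$ mixes an intersection with a sum, so neither inclusion comes for free. The workable route (the one the paper uses for the continuous analogue, Theorem \ref{intH1BMO}) is to take the automatic inclusion $[BMO,H_1]_{1/p}\subset[BMO^c,H_1^c]_{1/p}\cap[BMO^r,H_1^r]_{1/p}=H_p^c\cap H_p^r=L_p(\M)$ for $2\le p<\infty$, obtain the reverse inclusion from $\M\subset BMO$, and then treat $1<p<2$ by duality and reiteration. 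Your complementation and reiteration bookkeeping in the reflexive range $1<r<\infty$ is fine, but it never touches the two singular endpoints, which are the substance of the theorem.
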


\re
Observe that if we consider a finite filtration $(\M_n)_{n=0}^N$, then the $H_p^c$-norm is equivalent to the $L_p$-norm for $1\leq p <\infty$. 
This comes directly from the triangle inequality in $L_p(\M)$ for $2\leq p <\infty$, and from the fact that $\|\cdot\|_{p/2}$ is a $p/2$-norm for $1\leq p <2$.  
\mar

\subsection{Definitions of $\hH_p^c$ and $\H_p^c$}\label{subsectdefHpc}

We fix an ultrafilter $\U$ over the set of all finite partitions of the interval $[0,1]$, denoted by $\PP_{\mathrm{fin}}([0,1])$, 
such that for each finite partition $\s$ of $[0,1]$ 
the set 
$$ U_{\si}= \{\si' \in \PP_{\mathrm{fin}}([0,1]) : \si\subset \si'\} \in \U .$$
Let us point out that in what follows, all considered partitions will be finite. 
We start by introducing a candidate for the bracket $[\cdot,\cdot]$ in the noncommutative setting. 
For $\si \in \PP_{\mathrm{fin}}([0,1])$ fixed and $x\in \M$, we define the finite bracket
$$[x,x]_\si=\sum_{t\in \si} |d_t^{\si}(x)|^2.$$
Observe that $ \|[x,x]_\si\|_{p/2}^{1/2} = \|x\|_{H_p^c(\si)} $, 
where $H_p^c(\si)$ denotes the noncommutative Hardy space with respect to the discrete filtration $(\M_t)_{t\in \s}$. 
Hence the noncommutative Burkholder-Gundy inequalities recalled in Theorem \ref{BGdiscr} and the H\"{o}lder inequality imply for each finite partition $\s$ and $x\in \M$
\begin{equation}\label{estimateH_pcdiscr} 
\begin{array}{cccccl}
\beta_p^{-1}\|x\|_p &\leq & \|[x,x]_\si\|_{p/2}^{1/2}&\leq &\|x\|_2 & \quad \mbox{ for } 1\leq p < 2 \\
\|x\|_2&\leq& \|[x,x]_\si\|_{p/2}^{1/2} &\leq& \alpha_p\|x\|_p & \quad  \mbox{ for } 2\leq p <\infty 
\end{array}.
\end{equation}
We deduce that for $1\leq p <\infty$, $([x,x]_\s)^\bullet \in L_{p/2}(\M_\U)$. 
Indeed, we see that the family $([x,x]_\s)_\s$ is uniformly bounded in $L_{p/2}(\M)$ and in $L_{\tilde{p}/2}(\M)$ for any $\tp >\max(p,2)$ 
(by $\alpha_{\tp}\|x\|_{\tp} \leq \alpha_{\tp} \|x\|_\infty$). 
Hence by Lemma \ref{L_p(NU)} this means that the associated element in the ultraproduct is in the regularized part. 
In particular for $x\in \M$ and $1\leq p <\infty$, we have $([x,x]_\s)^\bullet \in L_{\tp/2}(\M_\U)$ for any $\tp >\max(p,2)$. 
Thus we can apply the conditional expectation $\E_\U$ to this element and set 
$$[x,x]_\U = \E_\U( ([x,x]_\s)^\bullet).$$
Since this bracket is in $L_{\tp/2}(\M)$ for any $\tp >\max(p,2)$, it is also in $L_{p/2}(\M)$ and we may define 
$$ \|x\|_{\hat{\H}_p^{c}} = \|[x,x]_\U\|_{p/2}^{1/2}.$$
Note that for any $\tp> \max(p,2)$, this coincides with the weak-limit in $L_{\tp/2}(\M)$,  
and we can write 
$$\|x\|_{\hat{\H}_p^{c}} = \|\w L_{\tp/2} \mbox{-}\lim_{\s,\U} [x,x]_\s \|_{p/2}^{1/2}.$$
In particular, for $2<p<\infty$ we simply have
$$[x,x]_\U=\w L_{p/2} \mbox{-}\lim_{\s,\U} [x,x]_\s \quad \mbox{and} \quad 
\|x\|_{\hat{\H}_p^{c}} = \|\w L_{p/2} \mbox{-}\lim_{\s,\U} [x,x]_\s \|_{p/2}^{1/2}.$$
This definition depends a priori on the choice of the ultrafilter $\U$, and we should write $\|\cdot\|_{\hat{\H}_p^{c,\U}}$. 
However, we will show in the sequel that in fact this quantity does not depend on $\U$ up to equivalent norm. 
Hence for the sake of simplicity we will omit the power $\U$ and simply denote $\|\cdot\|_{\hat{\H}_p^c}$.  

We also introduce the following natural candidate for the norm of the Hardy space in the continuous setting. 
For $x\in \M$ and $1\leq p <\infty$ we define 
$$\|x\|_{\H_p^{c}}= \lim_{\si,\U} \|[x,x]_\si\|_{p/2}^{1/2}=\lim_{\si,\U} \|x\|_{H_p^c(\si)}.$$
The family $(\|[x,x]_\si\|_{p/2}^{1/2})_\s$ is uniformly bounded by \eqref{estimateH_pcdiscr}, hence the limit with respect to the ultrafilter $\U$ exists. 
Taking the limit in \eqref{estimateH_pcdiscr} we get for $x\in \M$
\begin{equation}\label{estimateH_pc} 
\begin{array}{cccccl}
\beta_p^{-1}\|x\|_p &\leq& \|x\|_{\H_p^{c}} &\leq &\|x\|_2 & \quad  \mbox{ for } 1\leq p < 2 \\
\|x\|_2 &\leq &\|x\|_{\H_p^{c}} &\leq &\alpha_p\|x\|_p & \quad  \mbox{ for } 2\leq p <\infty 
\end{array}.
\end{equation}
This shows that $\|\cdot\|_{\H_p^{c}}$ defines a norm on $\M$. 
As for $\|\cdot\|_{\hat{\H}_p^c}$, the norm $\|\cdot\|_{\H_p^{c}}$ depends a priori on the choice of the ultrafilter $\U$, 
but we will show that it does not (up to a constant) and hence simply denote $\|\cdot\|_{\H_p^c}$. 
Moreover, the properties of the conditional expectation $\E_\U$ imply the following estimates for $x\in \M$
\begin{equation}\label{estimatehatH_pc} 
\begin{array}{cccccccl}
\beta_p^{-1}\|x\|_p &\leq & \|x\|_{\H_p^{c}} &\leq &\|x\|_{\hat{\H}_p^{c}} &\leq &\|x\|_2 & \quad  \mbox{ for } 1\leq p < 2 \\
\|x\|_2 &\leq &\|x\|_{{\hH}_p^{c}} &\leq & \|x\|_{\H_p^{c}}  &\leq &\alpha_p\|x\|_p & \quad  \mbox{ for } 2\leq p <\infty 
\end{array}.
\end{equation}

Here for $2\leq p <\infty$ we used the contractivity of $\E_\U$ for the $L_{p/2}$-norm, 
and for $1\leq p< 2$ we need the following well-known result due to Hansen.

\begin{lemma}\label{le:contr}
Let $\A$ be a semifinite von Neumann algebra and 
$T:\A\rightarrow \A$ be a trace preserving, completely positive linear contraction.  
Let $0< p\leq 1$. Then
$$T(x^p)\leq (T(x))^p\quad \mbox{and} \quad \|x\|_p\leq \|T(x)\|_p$$
for each positive element $x\in \A$.
\end{lemma}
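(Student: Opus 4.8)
The plan is to obtain the two assertions in sequence: first the operator inequality $T(x^p)\leq (T(x))^p$ from a Jensen-type operator inequality, and then the $L_p$-norm estimate by integrating the first one against the trace. The only genuine input is an operator-concavity argument; the norm bound is a formal consequence.

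First I would record that $T$ is subunital. Since $\A$ is a von Neumann algebra it has a unit, and for a positive linear map on a unital C$^*$-algebra the operator norm is attained at the identity, so $\|T(1)\|_\infty=\|T\|\leq 1$. As $T(1)\geq 0$ this yields $0\leq T(1)\leq 1$. Next, for $0<p\leq 1$ the function $f(t)=t^p$ is operator concave on $[0,\infty)$ and satisfies $f(0)=0\geq 0$. The Hansen--Pedersen form of the Jensen operator inequality states that for a completely positive map $T$ with $T(1)\leq 1$ and an operator concave $f$ with $f(0)\geq 0$ one has $T(f(x))\leq f(T(x))$ for every positive $x$. Applied to $f(t)=t^p$ this gives exactly
$$T(x^p)\leq (T(x))^p,$$
which is the first claimed inequality. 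Here $x\in\A$ is bounded and positive, so $x^p$ and $(T(x))^p$ are defined by bounded functional calculus and all terms lie in $\A$.

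Finally I would apply the trace. Since $\tau$ is monotone on positive elements, the operator inequality $T(x^p)\leq (T(x))^p$ gives $\tau(T(x^p))\leq \tau((T(x))^p)$, an inequality valid in $[0,+\infty]$ even when the quantities are infinite. Trace preservation gives $\tau(T(x^p))=\tau(x^p)=\|x\|_p^p$, while by definition $\tau((T(x))^p)=\|T(x)\|_p^p$. Hence $\|x\|_p^p\leq \|T(x)\|_p^p$, and taking $p$-th roots yields $\|x\|_p\leq \|T(x)\|_p$.

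The main obstacle is the justification of the Jensen operator inequality in the non-unital (subunital) setting, rather than for the usual unital completely positive maps; this is precisely the content of Hansen's result and is the essential ingredient, reducing to the operator concavity of $t\mapsto t^p$ together with $f(0)=0$. A secondary point requiring a word of care is that $\A$ is only assumed semifinite, so $\tau(x^p)$ may be $+\infty$; but since the monotonicity step is carried out in the extended half-line $[0,+\infty]$ and trace preservation is an identity, no integrability hypothesis is needed.
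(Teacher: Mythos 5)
Your proof is correct. The paper states this lemma without proof, attributing it to Hansen, and your argument is exactly the standard derivation behind that attribution: the subunitality $T(1)\leq 1$ from positivity plus contractivity, the Hansen--Pedersen Jensen operator inequality applied to the operator concave function $t\mapsto t^p$ with $f(0)=0$, and then monotonicity of the trace together with trace preservation (valid in $[0,+\infty]$) to pass to the $L_p$-norm estimate.
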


Then \eqref{estimatehatH_pc} shows that $\|\cdot\|_{\hat{\H}_p^{c}}$ defines a quasinorm on $\M$. 

\begin{defi}
 Let $1\leq p <\infty$. 
We define the spaces $\hH_p^{c}$ and $\H_p^{c}$ as the completion of $\M$ with respect to the (quasi)norm 
$\|\cdot\|_{\hat{\H}_p^{c}}$ and $\|\cdot\|_{\H_p^{c}}$ respectively. 
\end{defi}

We may check that for $x\in \M$ and $1\leq p <\infty$, 
$\langle x,x \rangle_{\hat{\H}_p^{c}}=[x,x]_\U$ extends to an $L_{p/2}(\M)$-valued inner product on $\hat{\H}_p^{c}$, 
which endows $\hat{\H}_p^{c}$ with an $L_p$ $\M$-module structure. 
Hence Theorem \ref{pricipalLpmodule} implies that $\|\cdot\|_{\hat{\H}_p^{c}}$ is a norm for $1\leq p <\infty$. 

\re
Note that thanks to \eqref{estimatehatH_pc}, 
$L_{\max(p,2)}(\M)$ is dense in $\H_p^c$ and $\hH_p^c$ for $1\leq p <\infty$. 
\mar

By definition, we deduce from the discrete case the following

\begin{lemma}\label{reflexivityHpc}
Let $1<p<\infty$. Then $\H_p^c$ is reflexive.
\end{lemma}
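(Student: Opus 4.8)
The plan is to realize $\H_p^c$ as a closed subspace of an ultraproduct of the discrete Hardy spaces $H_p^c(\s)$, and to transfer reflexivity from the discrete level through a \emph{uniform} complementation argument.

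First I would observe that, by \eqref{estimateH_pc}, for every $x\in\M$ the family $(\|x\|_{H_p^c(\s)})_\s$ is uniformly bounded, so the constant family $(x)_\s$ defines an element of the ultraproduct $\prodd_\U H_p^c(\s)$, and by the very definition of the $\H_p^c$-norm the assignment $x\mapsto (x)^\bullet$ is isometric:
$$\|(x)^\bullet\| = \lim_{\s,\U}\|x\|_{H_p^c(\s)} = \|x\|_{\H_p^c}.$$
Since $\prodd_\U H_p^c(\s)$ is a Banach space, this extends to an isometric embedding of $\H_p^c$ onto a closed subspace of $\prodd_\U H_p^c(\s)$. As closed subspaces of reflexive spaces are reflexive, it then suffices to prove that this ultraproduct is reflexive.

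To see this, I would invoke Proposition \ref{compldiscr}: each $H_p^c(\s)$ is $\gamma_p$-complemented in $L_p(\M;\ell_2^c)$, with complementation constant $\gamma_p$ \emph{independent of} $\s$. Writing $\iota_\s$ for the isometric inclusion and $P_\s$ for the associated projection (both uniformly bounded in $\s$), the ultraproduct maps $\iota_\U=(\iota_\s)^\bullet$ and $P_\U=(P_\s)^\bullet$ satisfy $P_\U\iota_\U=\mathrm{id}$ on $\prodd_\U H_p^c(\s)$. Hence $\prodd_\U H_p^c(\s)$ is isomorphic to a complemented subspace of the ultrapower $\prodd_\U L_p(\M;\ell_2^c)$. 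Now $L_p(\M;\ell_2^c)$ is a closed subspace of the noncommutative space $L_p(B(\ell_2)\oten\M)$, which is uniformly convex for $1<p<\infty$; thus $L_p(\M;\ell_2^c)$ is uniformly convex as well. Since uniform convexity is preserved under ultrapowers, $\prodd_\U L_p(\M;\ell_2^c)$ is again uniformly convex, hence reflexive. A complemented subspace of a reflexive space is reflexive, so $\prodd_\U H_p^c(\s)$ is reflexive, and we conclude.

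The only genuinely nontrivial input is the uniform convexity (equivalently superreflexivity) of $L_p(\M;\ell_2^c)$ for $1<p<\infty$ together with its stability under ultrapowers; everything else is routine bookkeeping with ultraproduct maps. The point that makes the argument work is the \emph{uniformity in $\s$} of the complementation constant $\gamma_p$ in Proposition \ref{compldiscr}, which is exactly what allows the discrete projections to be assembled into a single bounded projection at the ultraproduct level. Note that we deliberately avoid using the duality $(\H_p^c)^*=\H_{p'}^c$ of Theorem \ref{duality}, since that result will rely on the present lemma.
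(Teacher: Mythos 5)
Your argument is correct, and at bottom it rests on the same fact as the paper's proof: the discrete norms $\|\cdot\|_{H_p^c(\s)}$ are realized as $L_p$-norms in $L_p(B(\ell_2(\s))\oten\M)$ and therefore satisfy the Clarkson inequalities with constants depending only on $p$, not on $\s$. The paper simply takes the limit of these inequalities over $\s$ to conclude that the $\H_p^c$-norm itself satisfies the Clarkson inequalities, hence that $\H_p^c$ is uniformly convex and reflexive; you instead package the same uniformity through the isometric embedding $x\mapsto (x)^\bullet$ into $\prodd_\U H_p^c(\s)$ and the stability of uniform convexity under ultraproducts. Two remarks on your version. First, the complementation step is superfluous: since each $H_p^c(\s)$ sits isometrically inside $L_p(\M;\ell_2^c(\s))$, the ultraproduct $\prodd_\U H_p^c(\s)$ is already a \emph{closed subspace} of the uniformly convex space $\prodd_\U L_p(\M;\ell_2^c(\s))$, and a closed subspace of a reflexive space is reflexive; no projection is needed. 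Consequently the decisive uniformity is not that of the Stein constant $\gamma_p$ but that of the modulus of convexity (equivalently, of the Clarkson constants) over $\s$ — which is exactly the ingredient the paper isolates. Second, a small notational point: the target spaces $L_p(\M;\ell_2^c(\s))$ vary with $\s$, so one forms an ultraproduct rather than an ultrapower; this changes nothing in the argument. Your care in avoiding the duality $(\H_p^c)^*=\H_{p'}^c$ is well placed, since the complementation of $\H_p^c$ in $K_p^c(\U)$ (and hence the duality) does invoke the present lemma for $2\leq p<\infty$.
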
 

\begin{proof}
It suffices to observe that the $\H_p^c$-norm satisfies the Clarkson inequalities. 
Then we will deduce that $\H_p^c$ is uniformly convex, so reflexive. 
Note that for each $\si$, the $H_p^c(\si)$-norm satisfies the Clarkson inequalities with relevant constants depending only on $p$. 
This comes from the fact that the noncommutative $L_p$-spaces do (see \cite{px-survey}), and recall that for $x\in \M$ we have 
$$\|x\|_{H_p^c(\si)}=\Big\|\sum_{t\in \s} e_{t,0} \ten d_t^\si (x)\Big\|_{L_p(B(\ell_2(\si))\oten \M)}.$$
Taking the limit  over $\s$ yields the desired Clarkson inequalities for the $\H_p^c$-norm.
\qd

\subsection{Monotonicity and convexity properties}

The crucial observation for the study of the spaces $\hH_p^{c}$ and $\H_p^{c}$ is that 
the $H_p^c(\si)$-norms verify some monotonicity properties. 

\begin{lemma}\label{convexityHpc}
Let $1\leq p <\infty$ and $\s \in \PP_{\mathrm{fin}}([0,1])$. 
 \begin{enumerate}
  \item[(i)] Let $1\leq p \leq 2$, $x\in L_2(\M)$ and $\s' \supset \s$. Then
$$\|x\|_{H_p^c(\s)}\leq \beta_p \|x\|_{H_p^c(\s')}.$$
Hence 
$$\|x\|_{\H_p^{c}} \leq  \sup_\s \|x\|_{H_p^c(\s)} \leq \beta_p\|x\|_{\H_p^{c}} .$$
 \item[(ii)] Let $2\leq p<\infty$. 
Let $\s^1, \cdots, \s^M$ be partitions contained in $\s$, 
let $(a_m)_{1\leq m\leq M}$ be a sequence of positive numbers such that $\sum_m a_m=1$, and let $x^1, \cdots, x^M \in L_p(\M)$. 
Then for $x =\sum_m a_m x^m$ we have 
$$\|x\|_{H_p^c(\s)}\leq \alpha_p \Big\|\sum_{m=1}^M a_m [ x^m,x^m]_{\s^m}\Big\|_{p/2}^{1/2}.$$
In particular for $x\in L_p(\M)$ and $\s \subset \s'$ we have 
$$\|x\|_{H_p^c(\s')}\leq \alpha_p \|x\|_{H_p^c(\s)}.$$
Hence 
$$ \alpha_p^{-1} \|x\|_{\H_p^{c}}\leq \inf_\s \|x\|_{H_p^c(\s)} \leq \|x\|_{\H_p^{c}}.$$
 \end{enumerate}
\end{lemma}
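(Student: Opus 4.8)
The plan is to reduce both assertions to a single \emph{refinement comparison}: for nested partitions $\s\subset\s'$, compare $\|x\|_{H_p^c(\s)}$ (coarse) with $\|x\|_{H_p^c(\s')}$ (fine). Once such a comparison is available with the stated constants, the inequalities for $\|\cdot\|_{\H_p^c}$ follow from the property $U_\s\in\U$. Fixing $\s$ and letting $\s'$ run through $U_\s=\{\s':\s'\supset\s\}\in\U$, one takes the limit along $\U$ inside the refinement inequality: for (i) this gives $\|x\|_{H_p^c(\s)}\le\beta_p\lim_{\s',\U}\|x\|_{H_p^c(\s')}=\beta_p\|x\|_{\H_p^c}$ for every $\s$, whence $\sup_\s\|x\|_{H_p^c(\s)}\le\beta_p\|x\|_{\H_p^c}$, while $\|x\|_{\H_p^c}\le\sup_\s\|x\|_{H_p^c(\s)}$ is trivial; symmetrically (ii) gives $\alpha_p^{-1}\|x\|_{\H_p^c}\le\inf_\s\|x\|_{H_p^c(\s)}\le\|x\|_{\H_p^c}$.

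The heart of the matter is therefore the refinement comparison, and the key device I would use is an auxiliary matrix-valued martingale. Write $\s=\{a_0<\dots<a_n\}$ and group the times of $\s'$ into blocks $B_k=\{u\in\s':a_{k-1}<u\le a_k\}$, so that each coarse difference $f_k=d_{a_k}^\s(x)$ equals $\sum_{u\in B_k}d_u^{\s'}(x)$. In the finite von Neumann algebra $\Rcal=B(\ell_2(\s))\oten\M$ consider $Z=\sum_k e_{k,0}\ten f_k$ together with the martingale $Z_u=\sum_k e_{k,0}\ten\E_u(f_k)$ relative to the fine filtration $(B(\ell_2(\s))\oten\M_u)_{u\in\s'}$. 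A direct computation shows $dZ_u=e_{k(u),0}\ten d_u^{\s'}(x)$, where $k(u)$ is the block index of $u$; hence the fine column bracket of $Z$ is $\sum_u|dZ_u|^2=e_{0,0}\ten[x,x]_{\s'}$, so that $\|Z\|_{H_p^c(\Rcal,\s')}=\|x\|_{H_p^c(\s')}$, whereas the terminal modulus satisfies $|Z|^2=e_{0,0}\ten[x,x]_\s$, i.e. $\|Z\|_{L_p(\Rcal)}=\|x\|_{H_p^c(\s)}$. Now Burkholder--Gundy (Theorem \ref{BGdiscr}) applied to $Z$ in $\Rcal$ gives both directions at once: for $1\le p\le 2$ the bounded inclusion $H_p\subset L_p$ (valid including $p=1$) together with $\|\cdot\|_{H_p}\le\|\cdot\|_{H_p^c}$ yields $\|Z\|_{L_p}\le\beta_p\|Z\|_{H_p^c}$, which is exactly part (i); for $2\le p<\infty$, $\|\cdot\|_{H_p^c}\le\|\cdot\|_{H_p}\le\alpha_p\|\cdot\|_{L_p}$ yields the single-refinement inequality $\|x\|_{H_p^c(\s')}\le\alpha_p\|x\|_{H_p^c(\s)}$, i.e. the ``in particular'' of (ii).

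For the full statement of (ii) I would first invoke operator convexity. Since $d_t^\s$ is linear, $d_t^\s(x)=\sum_m\alpha_m d_t^\s(x^m)$, and the identity $\sum_m\alpha_m|a_m|^2-\big|\sum_m\alpha_m a_m\big|^2=\tfrac12\sum_{m,m'}\alpha_m\alpha_{m'}(a_m-a_{m'})^*(a_m-a_{m'})\ge0$ gives $[x,x]_\s\le\sum_m\alpha_m[x^m,x^m]_\s$; by monotonicity of $\|\cdot\|_{p/2}$ (with $p/2\ge1$) this bounds $\|x\|_{H_p^c(\s)}^2$ by $\big\|\sum_m\alpha_m[x^m,x^m]_\s\big\|_{p/2}$. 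It then remains to pass from the fine bracket $[x^m,x^m]_\s$ to the coarse bracket $[x^m,x^m]_{\s^m}$ simultaneously in $m$, which is achieved by the same device applied to the single column $W=\sum_{m,k}e_{(m,k),0}\ten\sqrt{\alpha_m}\,f_k^m$ in $B(\ell_2(\bigsqcup_m\s^m))\oten\M$, where $f_k^m$ denotes the $\s^m$-differences of $x^m$. Here $|W|^2=e_{0,0}\ten\sum_m\alpha_m[x^m,x^m]_{\s^m}$ while the fine ($\s$) column bracket of the associated martingale is $e_{0,0}\ten\sum_m\alpha_m[x^m,x^m]_\s$ (cross terms in $m,k$ vanish by orthogonality of the matrix units). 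Applying the $p\ge2$ inequality $\|\cdot\|_{H_p^c}\le\alpha_p\|\cdot\|_{L_p}$ to $W$ gives $\big\|\sum_m\alpha_m[x^m,x^m]_\s\big\|_{p/2}^{1/2}\le\alpha_p\big\|\sum_m\alpha_m[x^m,x^m]_{\s^m}\big\|_{p/2}^{1/2}$, and combining with the convexity step completes (ii).

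I expect the main obstacle to be the correct design and verification of the auxiliary martingale $Z$ (and its analogue $W$): one must check that its fine martingale differences collapse to $e_{k(u),0}\ten d_u^{\s'}(x)$, so that the fine column bracket reproduces $[x,x]_{\s'}$ while the terminal modulus reproduces $[x,x]_\s$. Once this bookkeeping is in place, the two monotonicity directions are merely the two halves of Burkholder--Gundy read off the same operator, and the remaining ingredients---operator convexity, the single big-column construction over $\bigsqcup_m\s^m$, and the ultrafilter limits using $U_\s\in\U$---are routine.
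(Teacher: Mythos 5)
Your proof is correct and follows essentially the same route as the paper: both parts reduce to applying the discrete Burkholder--Gundy inequalities to the column operator $\sum_{t}e_{t,0}\ten d_t^{\s}(x)$ (resp.\ its weighted analogue built from the $\s^m$-differences) viewed as a martingale for the finer filtration in $B(\ell_2)\oten\M$, after checking that the fine martingale differences collapse to single terms while the terminal modulus reproduces the coarse bracket. The only cosmetic difference is that by indexing the columns of $W$ by the disjoint union $\bigsqcup_m\s^m$ you sidestep the paper's extra approximation step (perturbing common points of the $\s^m$ via right continuity of the filtration), which the paper needs because it indexes by the set-theoretic union $\cup_m\s^m$ and must first assume the $\s^m$ disjoint.
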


\begin{proof}
Let $1\leq p \leq 2$, $x\in L_2(\M)$ and $\s \subset \s'$.  
Applying the noncommutative Burkholder-Gundy inequalities to 
$$y=\sum_{t\in \s} e_{t,0} \ten d_t^\s(x)$$
in $L_p(B(\ell_2(\s))\oten \M)$ for the finite partition $\s'$, we get
$$\|y\|_{L_p(B(\ell_2(\s))\oten \M)}\leq \beta_p \|y\|_{H_p^c(\s')(B(\ell_2(\s))\oten \M)}.$$
Here we consider the discrete filtration of $B(\ell_2(\s))\oten \M$ given by $(B(\ell_2(\s))\oten \M_t)_{t\in \s'}$. 
Note that 
$$\|y\|_{H_p^c(\s')(B(\ell_2(\s))\oten \M)}
=\Big\|\sum_{s\in \s'} \sum_{t\in \s} e_{s,0} \ten e_{t,0} \ten  d_s^{\s'}(d_t^\s (x)) \Big\|_{L_p(B(\ell_2(\s'))\oten B(\ell_2(\s))\oten \M)}.
$$
An easy computation gives that for $s\in \s'$, $t\in \s$ 
$$d_{s}^{\s'}(d_{t}^\s (x))=
\left\{\begin{array}{cc}
d_{s}^{\s'} (x)&\quad \mbox{if} \quad t^-(\s)\leq s^-(\s')<s\leq  t\\
0&  \quad \mbox{otherwise}
\end{array}\right..$$
Hence for $s\in \s'$ fixed, only one term does not vanish in the sum over $t\in \s$ and we get 
$$\|y\|_{H_p^c(\s')(B(\ell_2(\s))\oten \M)}=
\Big\|\sum_{s\in \s'} e_{s,0} \ten  d_s^{\s'} (x) \Big\|_{L_p( B(\ell_2(\s'))\oten \M)}
=\|x\|_{H_p^c(\s')}.$$
The result follows from the fact that $\|y\|_{L_p(B(\ell_2(\s))\oten \M)}=\|x\|_{H_p^c(\s)}$.

We now consider $2\leq p <\infty$. Let us first assume that the partitions $\s^m$ are disjoint.  
Denote $\s'$ the union of $\s^1, \cdots, \s^M$. 
As above, we apply the noncommutative Burkholder-Gundy inequalities to 
$$y=\sum_{m=1}^M \sum_{t\in \s^m}  e_{t,0} \ten \sqrt{a_m}  d_t^{\s^m}(x^m)$$
in $L_p(B(\ell_2(\s'))\oten \M)$ for the finite partition $\s$. 
We get 
$$ \|y\|_{H_p^c(\s)(B(\ell_2(\s'))\oten \M)} \leq \alpha_p \|y\|_{L_p(B(\ell_2(\s'))\oten \M)}.$$
On the one hand, since the partitions $\s^m$ are disjoint we have 
$$\|y\|_{L_p(B(\ell_2(\s'))\oten \M)}=\Big\|\sum_{m=1}^M \sum_{t\in \s^m} a_m |d_t^{\s^m}(x^m)|^2\Big\|_{p/2}^{1/2}
=\Big\|\sum_{m=1}^M a_m [ x^m,x^m]_{\s^m}\Big\|_{p/2}^{1/2}.$$
On the other hand, 
$$\|y\|_{H_p^c(\s)(B(\ell_2(\s'))\oten \M)}
=\Big\|\sum_{s\in \s} \sum_{m=1}^M \sum_{t\in \s^m}  e_{s,0} \ten e_{t,0} \ten \sqrt{a_m} d_s^{\s}(d_t^{\s^m} (x^m)) \Big\|_{L_p(B(\ell_2(\s))\oten B(\ell_2(\s'))\oten \M)}.
$$
Again, for $s\in \s$ and $m\in \{1, \cdots, M\}$ fixed, since $\s^m \subset \s$, 
only one term does not vanish in the sum over $t\in \s^m$, and it is equal to $d_s^{\s}(x^m)$. 
Hence 
$$\|y\|_{H_p^c(\s)(B(\ell_2(\s'))\oten \M)} = \Big\|\sum_{s\in \s} \sum_{m=1}^M a_m |d_s^{\s}(x^m)|^2\Big\|_{p/2}^{1/2}.$$
By the operator convexity of $|\cdot|^2$ we obtain 
$$\|x\|_{H_p^c(\s)} = \Big\|\sum_{s\in \s} \Big| \sum_{m=1}^M a_m d_s^{\s}(x^m)\Big|^2\Big\|_{p/2}^{1/2}
\leq  \alpha_p \|y\|_{H_p^c(\s)(B(\ell_2(\s'))\oten \M)},$$
which yields the required inequality. 
In the general case, when the partitions are not disjoint, the result still holds by approximation, thanks to the fact that the filtration is right continuous. 
Indeed, if there exists a common point $t$ which is both in $\s^m$ and $\s^n$ (for $n\neq m$), 
then we can replace $t$ by $t+\varepsilon$ in $\s^m$ (for $\varepsilon$ small enough), 
which does not change the considered norms when passing to the limit as $\varepsilon \to 0$.
\qd

This monotonicity property immediately implies the following crucial result, mentioned previously.

\begin{theorem}\label{indpdtU}
For $1\leq p <\infty$ the space $\H_p^{c}$ is independent of the choice of the ultrafilter $\U$, 
up to equivalent norm. 
\end{theorem}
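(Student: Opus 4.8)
The plan is to exploit the monotonicity estimates of Lemma \ref{convexityHpc}, whose crucial feature is that the bounding quantities $\sup_\s \|x\|_{H_p^c(\s)}$ and $\inf_\s \|x\|_{H_p^c(\s)}$ are \emph{intrinsic}: each is a supremum (respectively infimum) over the whole directed set $\PP_{\mathrm{fin}}([0,1])$ and hence carries no reference to any ultrafilter. The idea is that for any admissible ultrafilter the limit $\lim_{\s,\U}\|[x,x]_\s\|_{p/2}^{1/2}$ defining $\|x\|_{\H_p^c}$ is automatically trapped between these two ultrafilter-free quantities, so that two different ultrafilters can only produce norms differing by at most the monotonicity constants.

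Concretely, I would fix two ultrafilters $\U_1,\U_2$ on $\PP_{\mathrm{fin}}([0,1])$, each satisfying $U_\s\in\U_i$ for every finite partition $\s$, and write $\|\cdot\|_{\H_p^c,\U_1}$ and $\|\cdot\|_{\H_p^c,\U_2}$ for the corresponding norms on $\M$. For $1\leq p<2$, Lemma \ref{convexityHpc}(i) gives, for each $x\in\M$ and for $i=1,2$,
$$\|x\|_{\H_p^c,\U_i}\leq \sup_\s\|x\|_{H_p^c(\s)}\leq \beta_p\,\|x\|_{\H_p^c,\U_i}.$$
Combining the upper bound for $i=1$ with the lower bound for $i=2$ yields $\|x\|_{\H_p^c,\U_1}\leq \beta_p\,\|x\|_{\H_p^c,\U_2}$; exchanging the roles of $\U_1$ and $\U_2$ gives the reverse inequality, so the two norms are equivalent with constant $\beta_p$. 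For $2\leq p<\infty$ the same argument applied to Lemma \ref{convexityHpc}(ii),
$$\alpha_p^{-1}\|x\|_{\H_p^c,\U_i}\leq \inf_\s\|x\|_{H_p^c(\s)}\leq \|x\|_{\H_p^c,\U_i},$$
produces equivalence with constant $\alpha_p$. Since these equivalences hold on the dense subspace $\M$, the identity on $\M$ extends to a Banach space isomorphism between the two completions, so the spaces $\H_p^c$ built from $\U_1$ and $\U_2$ coincide up to equivalent norm.

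There is in truth no serious obstacle here: the entire content has been front-loaded into the monotonicity Lemma \ref{convexityHpc}, and this theorem amounts to the observation that its bounds do not see the ultrafilter. The only point deserving a word of care is that the estimates \eqref{estimateH_pc} guarantee all the quantities involved are finite, so that the sandwiching is meaningful and the passage to completions is legitimate.
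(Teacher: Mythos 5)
Your argument is correct and is exactly what the paper intends: the paper derives Theorem \ref{indpdtU} as an immediate consequence of the monotonicity Lemma \ref{convexityHpc}, precisely because the sandwiching quantities $\sup_\s\|x\|_{H_p^c(\s)}$ and $\inf_\s\|x\|_{H_p^c(\s)}$ are ultrafilter-free. Your explicit two-ultrafilter comparison, with constants $\beta_p$ and $\alpha_p$ respectively, is the intended (and correct) way to spell this out.
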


\subsection{$\hH_p^c=\H_p^c$}\label{secthat}

In this subsection we show that the two candidates $\hH_p^{c}$ and $\H_p^c$ introduced previously for the Hardy space of noncommutative martingales 
with respect to the continuous filtration $(\M_t)_{0\leq t \leq 1}$ actually coincide. 
In particular we will deduce that, up to an equivalent constant, these spaces do not depend on the choice of the ultrafilter $\U$. 

\begin{theorem}\label{hat}
Let $1 \leq p <\infty$. Then 
$$\H_p^c=\hH_p^{c} \quad \mbox{with equivalent norms.}$$
\end{theorem}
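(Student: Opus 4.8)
The plan is to observe that the estimates \eqref{estimatehatH_pc} already give one of the two inequalities in each range: $\|x\|_{\hH_p^c}\le\|x\|_{\H_p^c}$ for $2\le p<\infty$, and $\|x\|_{\H_p^c}\le\|x\|_{\hH_p^c}$ for $1\le p<2$. So on the dense subalgebra $\M$ it only remains to prove the reverse inequality in each case. The case $p=2$ is trivial, since by orthogonality of the martingale differences both norms equal $\|x\|_2$; hence I would treat the ranges $2<p<\infty$ and $1\le p<2$ separately.

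For $2<p<\infty$ the engine is the operator convexity of Lemma \ref{convexityHpc}(ii) combined with Mazur's theorem. Here $[x,x]_\U$ is the weak limit of $([x,x]_\s)_\s$ in the reflexive space $L_{p/2}(\M)$. Fixing $\s_0$ and using $U_{\s_0}\in\U$, the element $[x,x]_\U$ lies in the weak closure, hence by Mazur in the $\|\cdot\|_{p/2}$-closed convex hull, of $\{[x,x]_\s:\s\supseteq\s_0\}$. Thus for every $\eps>0$ I can choose a finite convex combination $z=\sum_{m=1}^M\alpha_m[x,x]_{\s^m}$ with each $\s^m\supseteq\s_0$ and $\|z-[x,x]_\U\|_{p/2}<\eps$. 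Taking $\s_*=\bigcup_m\s^m$, so that each $\s^m\subseteq\s_*$, and applying Lemma \ref{convexityHpc}(ii) with $x^m=x$ yields $\|x\|_{H_p^c(\s_*)}\le\alpha_p\|z\|_{p/2}^{1/2}\le\alpha_p(\|x\|_{\hH_p^c}^2+\eps)^{1/2}$. The ``in particular'' part of the same lemma then gives $\|x\|_{\H_p^c}\le\alpha_p\|x\|_{H_p^c(\s_*)}\le\alpha_p^2(\|x\|_{\hH_p^c}^2+\eps)^{1/2}$, and letting $\eps\to0$ proves $\|x\|_{\H_p^c}\le\alpha_p^2\|x\|_{\hH_p^c}$.

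For $1<p<2$ I would argue by duality rather than directly, precisely to avoid the inflation of the $L_{p/2}$-quasinorm (where $p/2<1$) under $\E_\U$, which makes the naive lower-semicontinuity argument fail. Since each $\hH_p^c$ is an $L_p$ $\M$-module and the family $(\hH_p^c)_{1\le p\le\infty}$ is a compatible family of $L_p$-modules in the sense of Corollary \ref{familyLpmodule} (all are completions of $\M$, the connecting maps are the identity on $\M$, and the inner products $[x,y]_\U$ are compatible), Corollary \ref{Lpmodule}(i) gives $(\hH_{p'}^c)^*=\hH_p^c$ for the anti-linear pairing $\tau(\langle x,y\rangle_{\hH})=\tau([x,y]_\U)=\tau(x^*y)$, the last equality following from martingale orthogonality in each $[x,y]_\s$ and $\lim_{\s,\U}\tau([x,y]_\s)=\tau(x^*y)$. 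Hence $\|x\|_{\hH_p^c}=\sup_{\|y\|_{\hH_{p'}^c}\le1}|\tau(x^*y)|$. On the other hand, the column Cauchy--Schwarz inequality applied partitionwise and passed to the limit along $\U$ gives the pairing bound $|\tau(x^*y)|\le\|x\|_{\H_p^c}\|y\|_{\H_{p'}^c}$. Since $p'>2$, the case already settled yields $\|y\|_{\H_{p'}^c}\le\alpha_{p'}^2\|y\|_{\hH_{p'}^c}$, so the $\hH_{p'}^c$-unit ball sits inside $\alpha_{p'}^2$ times the $\H_{p'}^c$-unit ball, whence
\[
\|x\|_{\hH_p^c}=\sup_{\|y\|_{\hH_{p'}^c}\le1}|\tau(x^*y)|\le\alpha_{p'}^2\sup_{\|y\|_{\H_{p'}^c}\le1}|\tau(x^*y)|\le\alpha_{p'}^2\|x\|_{\H_p^c}.
\]

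The delicate point, which I expect to be the main obstacle, is the endpoint $p=1$, where the dual exponent is $\infty$. The duality computation above then requires the equivalence $\H_\infty^c\simeq\hH_\infty^c$ together with the pairing bound at $(1,\infty)$, and the Mazur/convexity step of the second paragraph must be carried out at $p=\infty$ with weak$^*$-limits replacing weak limits, Lemma \ref{convexityHpc}(ii) being stated only for finite $p$. This $BMO^c$-level endpoint is where the real work lies; once it is established, the $p=1$ case follows by exactly the same module-duality argument as for $1<p<2$, now using the $p=1$ module duality $(\hH_1^c)^*=\hH_\infty^c$ provided by Proposition \ref{dualintpolLpc(M,E)}(ii).
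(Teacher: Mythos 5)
Your treatment of $2\le p<\infty$ is exactly the paper's argument (Mazur plus Lemma \ref{convexityHpc}(ii)) and is correct. The problems are in the range $1\le p<2$, and the most serious one is that the endpoint $p=1$ — which is part of the statement — is not proved: you explicitly defer it, and the route you sketch for it cannot work. Your plan requires an $L_\infty$-member $\hH_\infty^c$ of the module family (both to invoke Corollary \ref{familyLpmodule}/\ref{Lpmodule}, whose hypotheses include an $L_\infty$ $\M$-module $X_\infty$ with $I_{\infty,p}(X_\infty)$ dense in each $X_p$, and to run the duality at the pair $(1,\infty)$). But there is no such object containing $\M$ densely: for $x\in\M$ the brackets $[x,x]_\s$ are uniformly bounded in $L_{q/2}(\M)$ for every finite $q$ (by Burkholder--Gundy) but not in $L_\infty(\M)$ — this is precisely the $H_\infty\neq L_\infty$ / BMO phenomenon — so $[x,x]_\U$ is in general an unbounded operator for $x\in\M$ and condition (iv) of Corollary \ref{familyLpmodule} fails. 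For $1<p<2$ your idea is salvageable: one can apply Theorem \ref{pricipalLpmodule} to $\hH_{p'}^c$ alone and transport the principal decomposition down to $\hH_p^c$ by density of $\M$, anchoring at the finite exponent $p'$ instead of $\infty$; with that repair, the pairing identity $\tau([x,y]_\U)=\tau(x^*y)$ and the reduction to the already-settled case $p'>2$ give a correct and genuinely different proof for $1<p<2$. But as written the citation is not applicable, and no analogous repair exists at $p=1$.

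For comparison, the paper's proof of the range $1\le p<2$ avoids $\hH_\infty^c$ entirely and treats $p=1$ on the same footing as $1<p<2$: it embeds $\hH_p^c$ isometrically into an ultraproduct $\prodd_\V L_p(\M;\H_i^c)$ over an index set that encodes the convex combinations approximating $[x,x]_\U$ in a fixed $L_{q/2}$ with $q>2$ (Lemma \ref{embedhatHpc}), and then shows $(\hH_p^c)^*\subset(\H_p^c)^*$ by producing, from each dual element, a martingale $z$ with $\|z\|_{L_{p'}^cMO(\s')}$ controlled via the noncommutative Doob inequality — an estimate that survives at $p'/2=\infty$ (Lemma \ref{dualhatHpc}). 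If you want to complete your proof you must either adopt an argument of this type for $p=1$ or find another mechanism that does not pass through a bounded bracket at infinity; the $\BMO^c$-type endpoint you allude to is defined in the paper only later and by a quite different (closure of weak limits) construction, not as a module completion of $\M$.
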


Theorem \ref{indpdtU} yields immediately 

\begin{cor}\label{hatindpdtU}
For $1\leq p <\infty$ the space $\hH_p^{c}$ is independent of the choice of the ultrafilter $\U$, 
up to equivalent norm. 
\end{cor}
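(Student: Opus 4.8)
The inequalities \eqref{estimatehatH_pc} already provide one half of each equivalence on the dense subspace $\M$: we have $\|x\|_{\H_p^c}\le \|x\|_{\hH_p^c}$ for $1\le p<2$ and $\|x\|_{\hH_p^c}\le \|x\|_{\H_p^c}$ for $2\le p<\infty$. Thus the whole content of the theorem is the reverse estimate in each range, and the plan is to obtain the two ranges by dual methods, both pivoting on the monotonicity and convexity of Lemma \ref{convexityHpc}. The case $p=2$ is trivial, since \eqref{estimatehatH_pc} squeezes both norms to $\|x\|_2$.

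For $2<p<\infty$, I would use that by definition $[x,x]_\U=\w L_{p/2}\mbox{-}\lim_{\s,\U}[x,x]_\s$ inside the reflexive space $L_{p/2}(\M)$. By Mazur's theorem the weak closure and the norm closure of the convex hull of $\{[x,x]_\s:\s\in\PP_{\fin}([0,1])\}$ agree, so for every $\eps>0$ there is a convex combination with $\big\|[x,x]_\U-\sum_{m=1}^M\alpha_m[x,x]_{\s_m}\big\|_{p/2}<\eps$. Choosing a partition $\s\supseteq\bigcup_m\s_m$ and applying Lemma \ref{convexityHpc}(ii) with $x^m=x$ for all $m$ (so $\sum_m\alpha_m x^m=x$), followed by the monotonicity $\|x\|_{\H_p^c}\le\alpha_p\|x\|_{H_p^c(\s)}$ of the same lemma, gives $\|x\|_{\H_p^c}\le\alpha_p^2\big\|\sum_m\alpha_m[x,x]_{\s_m}\big\|_{p/2}^{1/2}$. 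Letting $\eps\to0$ the right-hand side tends to $\alpha_p^2\|[x,x]_\U\|_{p/2}^{1/2}=\alpha_p^2\|x\|_{\hH_p^c}$, which is the desired bound $\|x\|_{\H_p^c}\le\alpha_p^2\|x\|_{\hH_p^c}$.

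For $1<p<2$ I would argue by duality against the range $p'>2$ just settled. First, a trace-pairing estimate valid for all $x,y\in\M$ is obtained by passing the discrete column duality to the limit: at a fixed $\s$, orthogonality of martingale differences gives $\tau(y^*x)=\sum_{t\in\s}\tau(d_t^\s(y)^*d_t^\s(x))$, whence H\"older for $L_p(\M;\ell_2^c)$--$L_{p'}(\M;\ell_2^c)$ yields $|\tau(y^*x)|\le\|x\|_{H_p^c(\s)}\|y\|_{H_{p'}^c(\s)}$, and taking $\lim_{\s,\U}$ produces $|\tau(y^*x)|\le\|x\|_{\H_p^c}\|y\|_{\H_{p'}^c}$. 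On the other hand, the $L_p$ $\M$-module structure of $\hH_p^c$ (inner product $[x,x]_\U$, for which $\tau([y,x]_\U)=\tau_\U(([y,x]_\s)^\bullet)=\tau(y^*x)$ by the same orthogonality) identifies $(\hH_p^c)^*=\hH_{p'}^c$ isometrically through this trace pairing, by Corollary \ref{Lpmodule}(i). Hence $\|x\|_{\hH_p^c}=\sup\{|\tau(y^*x)|:\|y\|_{\hH_{p'}^c}\le1\}$. Since $p'>2$, the case already proved gives $\|y\|_{\H_{p'}^c}\le\alpha_{p'}^2\|y\|_{\hH_{p'}^c}$, so the $\hH_{p'}^c$-unit ball sits inside $\alpha_{p'}^2$ times the $\H_{p'}^c$-unit ball; combining with the trace-pairing estimate yields $\|x\|_{\hH_p^c}\le\alpha_{p'}^2\|x\|_{\H_p^c}$.

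The delicate point, which I expect to be the main obstacle, is $p=1$: the dual exponent is $\infty$ and the argument above runs into the endpoint module $\hH_\infty^c$. The trace-pairing estimate must now come from the discrete Fefferman--Stein duality (Theorem \ref{FSdiscr}), giving only $|\tau(y^*x)|\le\sqrt2\,\|x\|_{\H_1^c}\lim_{\s,\U}\|y\|_{BMO^c(\s)}$, and one is forced to compare the $(\hH_1^c)^*$-norm, i.e.\ the $L_\infty$-module norm $\|y\|_{\hH_\infty^c}$ supplied by Corollary \ref{Lpmodule}, with this naive $BMO$ quantity — precisely the subtle identification of the continuous $BMO^c$ space flagged in the introduction. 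I would resolve it using the $L_\infty$ $\M$-module description of $\hH_\infty^c$ from Theorem \ref{pricipalLpmodule} together with the boundedness (as $p\to1$) of the constants in Theorem \ref{FSdiscr}, reducing the $p=1$ equivalence to the module duality $(\hH_1^c)^*=\hH_\infty^c$.
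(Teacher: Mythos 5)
Your argument for $2\le p<\infty$ is exactly the paper's: Mazur's theorem to approximate $[x,x]_\U$ by convex combinations $\sum_m\alpha_m[x,x]_{\s^m}$, then Lemma \ref{convexityHpc}(ii) with $x^m=x$ on a common refinement $\s\supset\bigcup_m\s^m$, then the monotonicity $\|x\|_{\H_p^c}\le\alpha_p\|x\|_{H_p^c(\s)}$, giving $\|x\|_{\H_p^c}\le\alpha_p^2\|x\|_{\hH_p^c}$. (You leave implicit the final deduction of ultrafilter-independence from the norm equivalence together with Theorem \ref{indpdtU}, but that step is immediate and is also how the paper states it.)

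For $1\le p<2$, however, your route has a genuine gap. You invoke Corollary \ref{Lpmodule}(i) to get $(\hH_p^c)^*=\hH_{p'}^c$ isometrically, but that corollary applies to a projective system of $L_p$ $\M$-modules indexed by \emph{all} of $[1,\infty]$ with $I_{\infty,p}(X_\infty)$ dense in each $X_p$; no space $\hH_\infty^c$ is available here, and it cannot be defined as the completion of $\M$ under $\|[x,x]_\U\|_\infty^{1/2}$, since the square function of a bounded operator is generically unbounded (already $\|[x,x]_\s\|_\infty$ blows up with $|\s|$). For $1<p<2$ one could plausibly repair this by reproving Corollary \ref{familyLpmodule} for a system indexed by $[1,q_0]$ with $q_0>p'$ finite, using that $\M$ is a common dense subspace with compatible brackets; but you do not do this, and as written the duality $(\hH_p^c)^*=\hH_{p'}^c$ is unjustified. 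At $p=1$ the argument collapses entirely: the reduction to ``$(\hH_1^c)^*=\hH_\infty^c$'' rests on the undefined endpoint module, and the comparison you propose against $\lim_{\s,\U}\|y\|_{BMO^c(\s)}$ is precisely the trap the introduction warns about — a functional in the unit ball of $(\hH_1^c)^*$ need not be represented by an operator for which that limit is finite or controlled. What is actually needed, and what your sketch does not supply, is a device that assigns to \emph{every} norm-one functional on $\hH_p^c$ a concrete operator $z\in L_2(\M)$ with $\|z\|_{(\H_p^c)^*}\le k_p$. The paper produces this in Lemmas \ref{embedhatHpc} and \ref{dualhatHpc}: it embeds $\hH_p^c$ isometrically into the auxiliary ultraproduct $\prodd_\V L_p(\M;\H_i^c)$ built from near-optimal convex combinations of brackets, extends the functional by Hahn--Banach, approximates it weak$^*$ by elements $\xi$ of the unit ball of $\prodd_\V L_{p'}(\M;\H_i^c)$ (Lemma \ref{le:ultrapdct}), and then explicitly builds $z(i)=\sum_m\alpha_m(i)\sum_{t\in\s_i^m}d_t^{\s_i^m}(\xi_m(i))$ with $\|z(i)\|_{L_{p'}^cMO(\s_i')}\le 3\delta_{p'/2}^{1/2}$ via the noncommutative Doob inequality, finally taking $z=\w L_2\mbox{-}\lim_{i,\V}z(i)$ and using the discrete Fefferman--Stein duality together with the monotonicity of the $H_p^c(\s)$-norms. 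This construction, uniform down to $p=1$, is the essential content missing from your proposal.
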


The case $2\leq p<\infty$ is an easy consequence of the convexity property proved in Lemma \ref{convexityHpc}, as detailed below. 

\begin{proof}[Proof of Theorem \ref{hat} for $2\leq p<\infty$]
It suffices to show that the $\H_p^c$-norm and the $\hH_p^c$-norm are equivalent on $\M$. 
Let $x\in \M$, by \eqref{estimatehatH_pc}  we have $\|x\|_{\hH_p^c}\leq \|x\|_{\H_p^c}$. 
Now assume that $\|x\|_{\hH_p^c}=\|[x,x]_\U\|_{p/2}^{1/2}<1$. 
Since the two spaces coincide with $L_2(\M)$ for $p=2$, we consider $2<p<\infty$. 
In that case we have $[x,x]_\U=\w L_{p/2} \mbox{-} \lim_{\s,\U}[x,x]_\s$. 
We can find a sequence of positive numbers $(\alpha_m)_{m=1}^{M}$ such that $\sum_m \alpha_m=1$ and 
finite partitions $\s^1, \cdots, \s^{M}$ satisfying 
$$\Big\|\sum_{m=1}^{M} \alpha_m  [ x,x]_{\s^m} \Big\|_{p/2}<1.$$
Applying Lemma \ref{convexityHpc} (ii) to $\s=\cup_m \s^m$ we get
$$\|x\|_{H_p^c(\s)}\leq \alpha_p.$$
Then 
$$\|x\|_{\H_p^c}\leq \alpha_p\|x\|_{H_p^c(\s)}\leq \alpha_p^2.$$
\qd

For $1\leq p<2$, it is more complicated to explicit the bracket $[x,x]_\U$. 
This is why we will use a dual approach. 
The trick is to embed $\hat{\H}_p^{c}$ into a larger ultraproduct space defined as follows. 
Let us fix $q>2$. 
We define the set 
$$\I=\PP_{\fin}(\M) \times \PP_{\fin}([0,1]) \times \rz_+^*,$$
where $\PP_{\fin}(\M)$ denotes the set of all finite families in $\M$. 
Then $\I$ is a partially ordered set by the natural order.  
We define an ultrafilter $\V$ on $\I$ as follows. 
For $G \in \PP_{\fin}(\M)$ we define 
$$S_G=\{ F \in \PP_{\fin}(\M)  : G \subseteq F\}$$
and consider the filter base on $\PP_{\fin}(\M)$
$$\T=\{ S_G : G \in \PP_{\fin}(\M)\}.$$
On $\rz_+^*$ we consider the filter base given by
$$\W=\{ ]0,\delta] : \delta >0\}.$$
Then the product $\V'=\T\times \U \times \W$ is a filter base on $\I$, and we consider $\V$ an ultrafilter on $\I$ refining $\V'$. 
Let us now fix an element $i=(F, \s_i,\varepsilon) \in \I$. 
For each $x\in F$, the Burkholder-Gundy inequalities applied to each $\s$ for $q>2$ yields 
that the family $([ x,x]_\s)_\s$ is uniformly bounded in $L_{q/2}(\M)$. Since $L_{q/2}(\M)$ is reflexive, the weak-limit exists and 
$$[x,x]_\U=\w L_{q/2}\mbox{-}\lim_{\s,\U} [ x,x]_\s.$$ 
The same holds for the finite family $F$, i.e., 
the family $([ x,x]_\s)_{x\in F}$ is uniformly bounded in $L_{q/2}(\M)\oplus \cdots  \oplus L_{q/2}(\M)$. 
By reflexivity, the weak-limit exists and can be approximated by convex combinations in $L_{q/2}$-norm. 
Hence we can find a sequence of positive numbers $(\alpha_m(i))_{m=1}^{M(i)}$ such that $\sum_m \alpha_m(i)=1$ and 
finite partitions $\s_i^1, \cdots, \s_i^{M(i)}$ satisfying for all $x\in F$
\begin{equation}\label{coeff}
\Big\|[x,x]_\U- \sum_{m=1}^{M(i)} \alpha_m(i)  [ x,x]_{\s_i^m} \Big\|_{q/2}<\varepsilon.
\end{equation}
We may assume in addition that $\s_i$ is contained in $\s_i^m$ for all $m$. 
We consider the Hilbert space $\H_i=\ell_2\Big(\bigcup_{m,t\in \s_i^m}\{t\}\Big)$
equipped with the norm
$$\|(\xi_{m,t})_{1\leq m \leq M(i), t \in \s_i^m}\|_{\H_i}=\Big(\sum_{m=1}^{M(i)}\alpha_m(i)\sum_{t\in \s_i^m}|\xi_{m,t}|^2\Big)^{1/2}.$$
For $1\leq p \leq \infty$ and $i\in \I$ we consider the column space $L_p(\M;\H_i^c)$.
Recall that for any sequence $(\xi_{m,t})_{1\leq m \leq M(i),t\in \s_i^m}$ in $L_p(\M)$ we have
$$\Big\|\sum_{m=1}^{M(i)} \sum_{t\in \s_i^m}  e_{m,0}\ten e_{t,0}  \ten \xi_{m,t}\Big\|_{L_p(\M;\H_i^c)}
=\Big\|\Big(\sum_{m=1}^{M(i)}\alpha_m(i)\sum_{t\in \s_i^m}|\xi_{m,t}|^2\Big)^{1/2}\Big\|_p.$$
Then for $1\leq p <\infty$ we have 
$$(L_p(\M;\H_i^c))^*=L_{p'}(\M;\H_i^c) \quad \mbox{isometrically},$$
via the duality bracket
$$(\xi|\eta)_{L_p(\M;\H_i^c),L_{p'}(\M;\H_i^c)}
=\sum_{m=1}^{M(i)}\sum_{t\in \s_i^m}\alpha_m(i)\tau(\xi_{m,t}^*\eta_{m,t}).$$

\begin{lemma}\label{embedhatHpc}
Let $1\leq p <2$. Then $\hH_p^{c}$ embeds isometrically into $\prodd_\V  L_p(\M;\H_i^c)$.
\end{lemma}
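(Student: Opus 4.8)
The plan is to construct, for each index $i=(F,\s_i,\varepsilon)\in\I$, an explicit isometric copy of the ``discretized'' bracket data inside $L_p(\M;\H_i^c)$, and then to pass to the ultraproduct along $\V$. For $x\in\M$ and $i\in\I$ I would define the element
$$\Phi_i(x)=\sum_{m=1}^{M(i)}\sum_{t\in\s_i^m}e_{m,0}\ten e_{t,0}\ten d_t^{\s_i^m}(x)\in L_p(\M;\H_i^c).$$
The point of the weighted Hilbert space $\H_i$ is precisely that, by the displayed formula for the $L_p(\M;\H_i^c)$-norm,
$$\|\Phi_i(x)\|_{L_p(\M;\H_i^c)}^2=\Big\|\sum_{m=1}^{M(i)}\alpha_m(i)\sum_{t\in\s_i^m}|d_t^{\s_i^m}(x)|^2\Big\|_{p/2}=\Big\|\sum_{m=1}^{M(i)}\alpha_m(i)[x,x]_{\s_i^m}\Big\|_{p/2}.$$
Thus $\|\Phi_i(x)\|_{L_p(\M;\H_i^c)}^2$ is exactly the norm of the convex combination of finite brackets that appears in \eqref{coeff}.

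\textbf{Identifying the limiting norm.}
The heart of the argument is then to compute $\lim_{i,\V}\|\Phi_i(x)\|_{L_p(\M;\H_i^c)}$ and show it equals $\|x\|_{\hH_p^c}$. By construction of $\V$ refining $\T\times\U\times\W$, the family $F$ eventually contains any fixed $x$ and $\varepsilon$ is eventually as small as desired, so \eqref{coeff} gives
$$\Big\|\sum_{m=1}^{M(i)}\alpha_m(i)[x,x]_{\s_i^m}-[x,x]_\U\Big\|_{q/2}<\varepsilon$$
for $i$ in a set belonging to $\V$. Since $q>2\geq$ the exponent governing $p/2\leq 1$, I would use the contractive inclusion $L_{q/2}(\M)\subset L_{p/2}(\M)$ to transfer this convergence to the $\|\cdot\|_{p/2}$-norm; this yields
$$\lim_{i,\V}\Big\|\sum_{m=1}^{M(i)}\alpha_m(i)[x,x]_{\s_i^m}\Big\|_{p/2}=\|[x,x]_\U\|_{p/2}=\|x\|_{\hH_p^c}^2.$$
Combining with the norm identity above shows $\lim_{i,\V}\|\Phi_i(x)\|_{L_p(\M;\H_i^c)}=\|x\|_{\hH_p^c}$, so the induced map $x\mapsto(\Phi_i(x))^\bullet\in\prodd_\V L_p(\M;\H_i^c)$ is isometric on $\M$, hence extends to the completion $\hH_p^c$.

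\textbf{The main obstacle.}
I expect the delicate point to be the passage from $L_{q/2}$-convergence in \eqref{coeff} to the $L_{p/2}$-norm that actually defines the embedding, because for $1\leq p<2$ the exponent $p/2$ is below $1$ and $\|\cdot\|_{p/2}$ is only a quasi-norm. One must check that the convex-combination approximation chosen in $L_{q/2}$ (where reflexivity and Mazur's lemma were used to produce the $\alpha_m(i)$) is simultaneously good in $L_{p/2}$; the contractive inclusion $L_{q/2}\subset L_{p/2}$ on the finite algebra $\M$ handles exactly this, turning an $\varepsilon$-bound in the larger norm into a controlled bound in the smaller one. A secondary point is that $\hH_p^c$ carries the genuine bracket $[x,x]_\U=\E_\U(([x,x]_\s)^\bullet)$ rather than a raw weak limit, so I would make sure the defining inner product $\langle x,x\rangle_{\hH_p^c}=[x,x]_\U$ from the $L_p$-module structure matches the quantity computed above; once the isometry is verified on the dense subspace $\M$, density (noted in the remark that $L_{\max(p,2)}(\M)$ is dense in $\hH_p^c$) extends it to all of $\hH_p^c$.
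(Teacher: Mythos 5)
Your proposal is correct and follows essentially the same route as the paper: the same map $x\mapsto\bigl(\sum_{m,t}e_{m,0}\ten e_{t,0}\ten d_t^{\s_i^m}(x)\bigr)^\bullet$, the same identification of $\|\Phi_i(x)\|^2_{L_p(\M;\H_i^c)}$ with $\|\sum_m\alpha_m(i)[x,x]_{\s_i^m}\|_{p/2}$, and the same use of \eqref{coeff} together with the contractive inclusion $L_{q/2}(\M)\subset L_{p/2}(\M)$ to pass to the limit along $\V$. The one point you flag but do not spell out — how an $L_{p/2}$-smallness of the difference of two operators controls the difference of their quasi-norms — is handled in the paper by the $p/2$-triangle inequality for $\|\cdot\|_{p/2}^{p/2}$, which is exactly the mechanism your remark anticipates.
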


\begin{proof}
By density it suffices to consider an element $x\in \M$. 
We associate $x$ with $\x=(\x(i))^\bullet \in \prodd_\V  L_p(\M;\H_i^c)$ defined as follows. 
For each index $i=(F,\s_i,\varepsilon)\in \I$ such that $x\in F$ we set 
$$\x(i)=\sum_{m=1}^{M(i)} \sum_{t\in \s_i^m} e_{m,0}\ten e_{t,0} \ten d_t^{\s_i^m}(x),$$
and $\x(i)=0$ otherwise. 
Then we claim that
\begin{equation}\label{isometry}
\|\x\|_{\prodd_\V  L_p(\M;\H_i^c)} = \lim_{i,\V} \|\x(i)\|_{ L_p(\M;\H_i^c)} = \|[x,x]_\U\|_{p/2}^{1/2}= \|x\|_{\hH_p^c}.
\end{equation}
Indeed, 
for $\delta >0$, we observe that for $i=(F,\s_i,\varepsilon)$ such that $x\in F$ and $\varepsilon^{p/2} \leq \delta$ 
we have by the triangle inequality applied to the norm $\|\cdot\|_{p/2}^{p/2}$ and \eqref{coeff}
\begin{align*}
\Big|\|[x,x]_\U\|_{p/2}^{p/2}- \|\x(i)\|_{ L_p(\M;\H_i^c)}^p\Big| 
&= \Big|\|[x,x]_\U\|_{p/2}^{p/2} -
\Big\|\sum_{m=1}^{M(i)}\alpha_m(i)\sum_{t\in \s_i^m}|d_t^{\s_i^m}(x)|^2\Big\|_{p/2}^{p/2}\Big|\\
&=\Big|\|[x,x]_\U\|_{p/2}^{p/2}
-\Big\|\sum_{m=1}^{M(i)}\alpha_m(i)[ x,x]_{\s_i^m}\Big\|_{p/2}^{p/2}\Big| \\
&\leq \Big\|[x,x]_\U- \sum_{m=1}^{M(i)} \alpha_m(i)  [ x,x]_{\s_i^m} \Big\|_{p/2}^{p/2}\\
&\leq \Big\|[x,x]_\U- \sum_{m=1}^{M(i)} \alpha_m(i)  [ x,x]_{\s_i^m} \Big\|_{q/2}^{p/2}\\
&< \varepsilon^{p/2} \leq \delta.
\end{align*}
This means that 
$$S_{\{x\}}\times  \PP_{\fin}([0,1]) \times ]0,\delta^{2/p}] 
 \subset 
\{i \in \I  :  
 \Big|\|[x,x]_\U\|_{p/2}^{p/2}- \|\x(i)\|_{ L_p(\M;\H_i^c)}^p\Big| <\delta\}.$$
Since by construction, the set $S_{\{x\}}\times  \PP_{\fin}([0,1]) \times ]0,\delta^{2/p}] \in \T\times \U \times \W$ is in the ultrafilter $\V$, 
we deduce that the set in the right hand side is also in $\V$ for all $\delta>0$. 
Thus by the definition of the limit with respect to an ultrafilter we get
$$\lim_{i,\V} \|\x(i)\|^p_{ L_p(\M;\H_i^c)} = \|[x,x]_\U\|_{p/2}^{p/2}.$$
This concludes the proof of \eqref{isometry} and 
shows that the map $x\in \M\mapsto \x$ extends to an isometric embedding of $\hH_p^{c}$ into $\prod_\V L_p(\M;\H_i^c)$. 
\qd

This embedding will be useful to describe the dual space of $\hH_p^{c}$. 

\begin{lemma}\label{dualhatHpc}
Let $1\leq p <2$.
Then
$$(\hH_p^{c})^*\subset (\H_p^c)^*.$$
\end{lemma}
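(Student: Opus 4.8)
The plan is to combine the isometric embedding of Lemma \ref{embedhatHpc} with the weak$^*$-density of Lemma \ref{le:ultrapdct}, and then to transport the resulting functional back to the discrete level, where the convexity estimate of Lemma \ref{convexityHpc}(ii), applied at the conjugate exponent $p'>2$, does the real work. Fix $\phi\in(\hH_p^c)^*$ with $\|\phi\|\le 1$; it suffices to show $|\phi(x)|\le C\|x\|_{\H_p^c}$ for every $x\in\M$, since then $\phi$ extends to $\H_p^c$. Using Lemma \ref{embedhatHpc} I view $\hH_p^c$ isometrically inside $\prodd_\V L_p(\M;\H_i^c)$, denote by $\x=(\x(i))^\bullet$ the image of $x$, and extend $\phi$ by Hahn--Banach to a norm-one functional $\Phi$ on $\prodd_\V L_p(\M;\H_i^c)$. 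By Lemma \ref{le:ultrapdct} the unit ball of $\prodd_\V L_{p'}(\M;\H_i^c)$ is weak$^*$-dense in the unit ball of the dual, so there is a net $y^\alpha=(y^\alpha_i)^\bullet$ in that ball with $y^\alpha\to\Phi$ weak$^*$; hence $\phi(x)=\Phi(\x)=\lim_\alpha\lim_{i,\V}(y^\alpha_i\,|\,\x(i))$.

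The core computation is to bound each pairing $(y_i^\alpha|\x(i))$ by $\|x\|_{\H_p^c}$, uniformly in $i$ and $\alpha$. Writing $y_i^\alpha=\sum_{m,t}e_{m,0}\ten e_{t,0}\ten y_{i,m,t}$, the defining duality bracket gives
$$(y_i^\alpha|\x(i))=\sum_{m=1}^{M(i)}\alpha_m(i)\sum_{t\in\s_i^m}\tau\big(y_{i,m,t}^*\,d_t^{\s_i^m}(x)\big).$$
Since $d_t^{\s_i^m}(x)$ is a martingale difference, the self-adjointness of the conditional expectations lets me replace, for each $m$, the family $(y_{i,m,t})_t$ by its martingale-difference part $v_{i,m}=\big((\E_t-\E_{t^-(\s_i^m)})y_{i,m,t}\big)_t$, the image under the (Stein) projection onto $H_{p'}^c(\s_i^m)$; this costs a factor $\gamma_{p'}$ (Proposition \ref{compldiscr}) and produces a martingale $w_{i,m}\in H_{p'}^c(\s_i^m)$ with $d_t^{\s_i^m}(w_{i,m})=(v_{i,m})_t$. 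Putting $W_i=\sum_m\alpha_m(i)\,w_{i,m}$, orthogonality of martingale differences yields $(y_i^\alpha|\x(i))=\tau(W_i^*x)$. Now take $\s_i'=\bigcup_m\s_i^m$, finer than each $\s_i^m$, and apply Lemma \ref{convexityHpc}(ii) at the exponent $p'\in(2,\infty)$ to the convex combination $W_i=\sum_m\alpha_m(i)w_{i,m}$:
$$\|W_i\|_{H_{p'}^c(\s_i')}\le \alpha_{p'}\Big\|\sum_{m}\alpha_m(i)\,[w_{i,m},w_{i,m}]_{\s_i^m}\Big\|_{p'/2}^{1/2}=\alpha_{p'}\,\|(v_{i,m})_m\|_{L_{p'}(\M;\H_i^c)}\le \alpha_{p'}\gamma_{p'}.$$
This is exactly the point where a naive triangle inequality on $\sum_m\alpha_m\|w_{i,m}\|$ would fail (it controls the wrong direction): the right-hand side is the single module norm $\|(v_{i,m})_m\|_{L_{p'}(\M;\H_i^c)}\le\gamma_{p'}\|y_i^\alpha\|\le\gamma_{p'}$, not a sum of individual norms, so the convexity lemma is indispensable here.

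It remains to pair $W_i$ with $x$ and invoke the monotonicity of the $\H_p^c$-norm. By the discrete duality $(H_p^c(\s_i'))^*=H_{p'}^c(\s_i')$ (Corollary \ref{discrdual}),
$$|(y_i^\alpha|\x(i))|=|\tau(W_i^*x)|\le \|W_i\|_{H_{p'}^c(\s_i')}\,\|x\|_{H_p^c(\s_i')}\le \alpha_{p'}\gamma_{p'}\,\|x\|_{H_p^c(\s_i')},$$
and since $1\le p\le 2$, Lemma \ref{convexityHpc}(i) gives $\|x\|_{H_p^c(\s_i')}\le\sup_\s\|x\|_{H_p^c(\s)}\le\beta_p\|x\|_{\H_p^c}$. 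Thus $|(y_i^\alpha|\x(i))|\le\alpha_{p'}\gamma_{p'}\beta_p\|x\|_{\H_p^c}$ uniformly in $i$ and $\alpha$, and taking $\lim_{i,\V}$ then $\lim_\alpha$ yields $|\phi(x)|\le \alpha_{p'}\gamma_{p'}\beta_p\,\|x\|_{\H_p^c}$. Hence $\phi\in(\H_p^c)^*$, which proves the inclusion for $1<p<2$.

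The main obstacle is the endpoint $p=1$, where $p'=\infty$: there the Burkholder--Gundy constant $\alpha_{p'}$ blows up and Lemma \ref{convexityHpc}(ii) is no longer available, so the uniform control of $\|W_i\|$ cannot be obtained that way. In this case I would instead estimate $W_i$ in $BMO^c(\s_i')$, pairing it with $x$ through the Fefferman--Stein duality $(H_1^c(\s_i'))^*=BMO^c(\s_i')$ (Theorem \ref{FSdiscr}) and using the operator convexity of the conditioned square function in place of (ii). This endpoint is the delicate step and should be treated separately; the rest of the argument ($1<p<2$) goes through verbatim as above.
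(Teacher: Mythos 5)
Your argument for $1<p<2$ is correct and is a genuine variant of the paper's proof: after the same Hahn--Banach extension and the weak$^*$-density of $\prodd_\V L_{p'}(\M;\H_i^c)$ in the dual (Lemma \ref{le:ultrapdct}), you pass to the martingale $W_i=\sum_m\alpha_m(i)w_{i,m}$ via the Stein projection and control $\|W_i\|_{H_{p'}^c(\s_i')}$ through Lemma \ref{convexityHpc}(ii) at the conjugate exponent, then pair through the discrete $H_p^c$--$H_{p'}^c$ duality. The paper instead estimates the same element (its $z(i)$ is exactly your $W_i$) in $L_{p'}^cMO(\s_i')$ \emph{directly}, using only the operator convexity of $|\cdot|^2$ and the noncommutative Doob inequality, and then pairs through the discrete Fefferman--Stein duality $(H_p^c(\s))^*=L_{p'}^cMO(\s)$ with its uniform constant $\sqrt2$. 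The payoff of that choice is decisive at the endpoint, which is where your proof has a genuine gap.

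The lemma is stated for $1\le p<2$, and $p=1$ is the case the paper actually needs most (it feeds into Theorem \ref{hat} and ultimately the $\H_1^c$--$\BMO^c$ duality). At $p=1$ your chain breaks in two places, as you partly acknowledge: the Stein projection is not bounded on $L_\infty(\M;\ell_2^c)$, so the step producing $w_{i,m}$ with $\|\cdot\|$-control costs an unbounded factor $\gamma_{p'}$, and Lemma \ref{convexityHpc}(ii) is only available for $p'<\infty$ with constant $\alpha_{p'}\approx p'$. Your closing remark that one "would instead estimate $W_i$ in $BMO^c(\s_i')$" is the right idea but is left as a sketch, and it is precisely the nontrivial step. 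The paper's computation closes it for \emph{all} $1\le p<2$ at once: writing $z_m(i)=\sum_{t\in\s_i^m}d_t^{\s_i^m}(\xi_m(i))$ and $z(i)=\sum_m\alpha_m(i)z_m(i)$, one bounds, for each $s\in\s_i'$,
$$\E_s|z(i)-\E_{s^-(\s_i')}(z(i))|^2\le 4\,\E_s\Big(\sum_{m,t}\alpha_m(i)|\xi_{m,t}(i)|^2\Big)+2\,\E_{s^-(\s_i')}\Big(\sum_{m,t}\alpha_m(i)|\xi_{m,t}(i)|^2\Big)$$
by operator convexity and telescoping, and then applies Doob's inequality in $L_{p'/2}$ to get $\|z(i)\|_{L_{p'}^cMO(\s_i')}\le 3\delta_{p'/2}^{1/2}$; since $\delta_{q}$ stays bounded as $q\to\infty$ and the Fefferman--Stein upper constant is $\sqrt2$ for all $1\le p<2$, no exponent degenerates. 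You should either carry out this $BMO^c$ estimate explicitly for $p=1$, or simply run the $L_{p'}^cMO$ estimate for all $1\le p<2$ and drop the case split; as written, the proposal does not prove the statement on its full stated range.
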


\begin{proof}
Let $\varphi \in (\hH_p^{c})^*$ be a functional of norm less than one.
By Lemma \ref{embedhatHpc} and the Hahn-Banach Theorem we can extend $\varphi$ to a linear functional on $\prod_\V L_p(\M;\H_i^c)$ of norm less than one, 
also denoted by $\varphi$. 
Lemma \ref{le:ultrapdct} implies that $\varphi$ is the weak$^*$-limit of elements $\xi_\lambda$
in the unit ball of $\prodd_\V (L_p(\M;\H_i^c))^*=\prodd_\V L_{p'}(\M;\H_i^c)$. 
For each $\lambda$, we will prove that there exists $z_\lambda\in L_2(\M)$ such that
$$(\xi_\lambda|\x)=\tau(z_\lambda^*x), \forall  x \in \M
\quad \mbox{and} \quad \|z_\lambda\|_{(\H_p^c)^*}\leq k_{p},$$
where $\x$ denotes the element in $\prodd_\V L_p(\M;\H_i^c)$ corresponding to $x$ via the embedding given by Lemma \ref{embedhatHpc}.
Then we will set $z=\w L_2\mbox{-} \lim_\lambda z_\lambda$ and get an element $z\in L_2(\M)$ such that
$$\varphi(x)=\lim_\lambda (\xi_\lambda|\x)
=\lim_\lambda\tau(z_\lambda^*x)
=\tau(z^*x),  \forall x\in \M 
\quad \mbox{and} \quad \|z\|_{(\H_p^c)^*}\leq k_{p}.$$
Finally we will conclude the proof using the density of $\M$ in $\hH_p^{c}$.

We now consider an element $\xi=(\xi(i))^\bullet\in \prod_\V L_{p'}(\M;\H_i^c)$ of norm less than one, with
$$\xi(i)=\sum_{m=1}^{M(i)} \sum_{t\in \s_i^m} e_{m,0}\otimes e_{t,0} \otimes \xi_{m,t}(i).$$
Fix $i=(F,\s_i, \varepsilon)\in \I$ and $1\leq m\leq M(i)$. 
Then $ \xi_m(i):=\sum_{t\in \s_i^m} e_{m,0}\otimes e_{t,0} \otimes \xi_{m,t}(i) \in L_{p'}(\M; \ell_2^c(\s_i^m))$. 
We set 
$$z_m(i)=\sum_{t\in \s_i^m} d_t^{\s_i^m}(\xi_{m,t}(i)),$$
where $d_t^{\s_i^m}(\xi_{m,t}(i))=\E_t(\xi_{m,t}(i))-\E_{t^-(\s_i^m)}(\xi_{m,t}(i))$ for $t>0$ and $d_0^{\s_i^m}(\xi_{m,0}(i))=\E_0(\xi_{m,0}(i))$.  
Note that since the partition $\s_i^m$ is finite, we have $z_m(i)\in L_{p'}(\M)$. 
Then we consider
$$z(i)=\sum_m \alpha_m(i) z_m(i) \in L_{p'}(\M).$$ 
We first show that $\|z(i)\|_{L_{p'}^cMO(\s'_i)}\leq k_{p}$ for $\s'_i=\s_i^1\cup \cdots \cup \s_i^{M(i)}$. 
Let $s\in \s'_i$. Then for $m$ fixed, we denote by $t_m(s)$ the unique element in $\s_i^m$ satisfying $t_m(s)^-(\s_i^m)\leq s^-(\s'_i)<s\leq t_m(s)$.  
The operator convexity of the square function $|\cdot|^2$ yields 
\begin{equation}\label{Heq1}
 \begin{array}{ccl}
\E_s|z(i)-\E_{s^-(\s'_i)}(z(i))|^2
&=\E_s\Big|\displaystyle\sum_m\alpha_m(i)(z_m(i)-\E_{s^-(\s'_i)}(z_{m}(i)))\Big|^2\\
&  \leq\displaystyle\sum_m\alpha_m(i)\E_s|z_m(i)-\E_{s^-(\s'_i)}(z_{m}(i))|^2. 
  \end{array}
\end{equation}
On the other hand we can write
\begin{align*}
&\E_s|z_m(i)-\E_{s^-(\s'_i)}(z_{m}(i))|^2 \\
&=\E_s\Big(\sum_{t>t_m(s), t\in \s_i^m} |d_t^{\s_i^m}(z_m(i))|^2 + |\E_{t_m(s)}(z_m(i))-\E_{s^-(\s'_i)}(z_m(i))|^2\Big)\\
&=\E_s\Big(\sum_{t>t_m(s), t\in \s_i^m} |d_t^{\s_i^m}(\xi_{m,t}(i))|^2 + |\E_{t_m(s)}(\xi_{m,t_m(s)}(i))-\E_{s^-(\s'_i)}(\xi_{m,t_m(s)}(i))|^2\Big)\\
&\leq  4 \E_s\Big(\sum_{t>t_m(s), t\in \s_i^m} |\xi_{m,t}(i)|^2\Big) 
+2\E_s|\xi_{m,t_m(s)}(i)|^2 +2\E_{s^-(\s'_i)}|\xi_{m,t_m(s)}(i)|^2 \\
&\leq  4 \E_s\Big(\sum_{t\in \s_i^m} |\xi_{m,t}(i)|^2\Big)+ 2\E_{s^-(\s'_i)}\Big(\sum_{t\in \s_i^m} |\xi_{m,t}(i)|^2\Big).
\end{align*}
Here the second identity comes from the fact that for $t \in \s_i^m$
$$
\E_{t_m(s)}\big(d_t^{\s_i^m}(\xi_{m,t}(i))\big)=
\left\{\begin{array}{ll}
d_t^{\s_i^m}(\xi_{m,t}(i)) & \mbox{ if } t\leq t_m(s) \\
0& \mbox{ if } t > t_m(s) 
\end{array}\right.
$$
and
$$
\E_{s^-(\s'_i)}\big(d_t^{\s_i^m}\big(\xi_{m,t}(i))\big)=
\left\{\begin{array}{ll}
d_t^{\s_i^m}(\xi_{m,t}(i)) & \mbox{ if } t < t_m(s) \\
\E_{s^-(\s'_i)}(\xi_{m,t}(i))-\E_{t_m(s)^-(\s_i^m)}(\xi_{m,t}(i))& \mbox{ if } t = t_m(s) \\
0& \mbox{ if } t > t_m(s) 
\end{array}\right..
$$
Then \eqref{Heq1} gives
$$\E_s|z(i)-\E_{s^-(\s'_i)}(z(i))|^2 \leq  
4 \E_s\Big(\sum_{m,t \in \s_i^m}\alpha_m(i)|\xi_{m,t}(i)|^2\Big)+ 2\E_{s^-}\Big(\sum_{m,t \in \s_i^m}\alpha_m(i)|\xi_{m,t}(i)|^2\Big).$$
By the noncommutative Doob inequality we obtain
\begin{align*}
&\|z(i)\|_{L_{p'}^cMO(\s'_i)}^2=\|{\sup_{s\in \s'_i}}^+\E_s|z(i)-\E_{s^-(\s'_i)}(z(i))|^2\|_{p'/2}\\
&\leq 4\Big\|{\sup_{s\in \s'_i}}^+ \E_s\Big(\sum_{m,t\in \s^m_i}\alpha_m(i)|\xi_{m,t}(i)|^2\Big)\Big\|_{p'/2}
+2 \Big\|{\sup_{s\in \s'_i}}^+ \E_{s^-(\s'_i)}\Big(\sum_{m,t\in \s^m_i}\alpha_m(i)|\xi_{m,t}(i)|^2\Big)\Big\|_{p'/2}\\
&\leq 6\delta_{p'/2}\Big\|\displaystyle\sum_{m,t\in \s_i^m}\alpha_m(i)|\xi_{m,t}(i)|^2\Big\|_{p'/2}
=6\delta_{p'/2}\|\xi(i)\|_{L_{p'}(\M;\H_i^c)}^2.
  \end{align*}
Hence 
\begin{equation}\label{z(i)}
\|z(i)\|_{L_{p'}^cMO(\s'_i)}\leq 3\delta_{p'/2}^{1/2}\|\xi(i)\|_{L_{p'}(\M;\H_i^c)}\leq 3\delta_{p'/2}^{1/2}.
\end{equation}
In particular, we see that the family $(z(i))_i$ is uniformly bounded in $L_2(\M)$. 
We set \\
$z=\w L_2\mbox{-}\lim_{i,\V} z(i)$. 
By the density of $L_2(\M)$ in $\H_p^c$ we have 
$$\|z\|_{(\H_p^c)^*}
=\sup_{x\in L_2(\M), \|x\|_{\H_p^c}\leq 1}|\tau(z^*y)|.$$
Then for $x\in L_2(\M), \|x\|_{\H_p^c}\leq 1$, Lemma \ref{convexityHpc} and \eqref{z(i)} imply
\begin{align*}
|\tau(z^*x)|&\leq \lim_{i,\V} |\tau(z(i)^*x)|
\leq  \sqrt{2}\lim_{i,\V}\|z(i)\|_{L_{p'}^cMO(\s'_i)}\|x\|_{H_p^c(\s'_i)} \\
&\leq 3\sqrt{2}\delta_{p'/2}^{1/2}\beta_p\|x\|_{\H_p^c}\leq 3\sqrt{2}\delta_{p'/2}^{1/2}\beta_p.
\end{align*}
Hence we get $\|z\|_{(\H_p^c)^*}\leq k_p$ with $k_p=3\sqrt{2}\delta_{p'/2}^{1/2}\beta_p$. 
Finally, it remains to check that for all $x\in \M$, $z$ satisfies
\begin{equation}\label{eqduality}
(\xi|\tilde{x})_{\prodd_\V L_{p'}(\M;\H_i^c),\prodd_\V L_{p}(\M;\H_i^c)}=\tau(z^*x).
\end{equation}
We first verify that for each $i=(F,\s_i,\varepsilon)\in \I$ such that $x\in F$ we have 
$$(\xi(i)|\tilde{x}(i))_{L_{p'}(\M;\H_i^c),L_{p}(\M;\H_i^c)}=\tau(z(i)^*x).$$
For each $m$ we have
$$\tau(z_m(i)^*x) = \sum_{t\in \s_i^m} \tau\big(d_t^{\s_i^m}(\xi_{m,t}(i))^*x\big)
=\sum_{t\in \s_i^m}\tau\big(\xi_{m,t}(i)^*d_{t}^{\s_i^m}(x)\big).$$
Then 
$$\tau(z(i)^*x)=\sum_{m=1}^{M(i)} \alpha_m(i) \tau(z_m(i)^*x)
=\sum_{m=1}^{M(i)}\sum_{t\in \s_i^m}\alpha_m(i)\tau\big(\xi_{m,t}(i)^*d_{t}^{\s_i^m}(x)\big)
=(\xi(i)|\tilde{x}(i)).$$
By the construction of the ultrafilter $\V$ this is sufficient to show that the limits along $\V$ coincide, 
and \eqref{eqduality} follows. 
This concludes the proof of the Lemma.
\qd

\begin{proof}[Proof of Theorem \ref{hat} for $1\leq p <2$]
By density, it suffices to prove the equivalence of the norms on $\M$. 
This follows from \eqref{estimatehatH_pc}, and we prove the reverse inequality by duality by using Lemma \ref{dualhatHpc}. 
\qd 

In the sequel, we will use the definition of $\H_p^c$ to transfer the results from the discrete case to the continuous setting. 
Indeed, this construction seems more natural for taking the limit in the classical results. 

\subsection{Ultraproduct spaces and $L_p$-modules}\label{Kp}

In this subsection we introduce the ultraproduct of the column $L_p$-spaces and its regularized version, 
into which we will isometrically embed the Hardy space $\H_p^c$. 
We will equip these ultraproduct spaces with some $L_p$-module structure. 


\begin{defi}
Let $1\leq p < \infty$. We define  
$$\tilde{K}_p^c(\U)=\prodd_\U L_p(\M;\ell_2^c(\s)) \quad \mbox{and} \quad  K_p^c(\U)=\tilde{K}_p^c(\U)\cdot e_\U,$$
where $\cdot$ denotes the right modular action of $\tilde{\M}_\U$ on $\tilde{K}_p^c(\U)$. \\
For $p=\infty$ we set
$$\tilde{K}_\infty^c(\U)=\overline{\prodd_\U L_\infty(\M;\ell_2^c(\s))}^{ so} \quad \mbox{and} \quad  K_\infty^c(\U)=\tilde{K}_\infty^c(\U)\cdot e_\U,$$
where the strong operator topology is taken in the von Neumann algebra generated by $\prodd_\U B(\ell_2(\s))\oten \M$, 
and coincides with the topology arising from the seminorms
$$\|\xi\|_\eta=\lim_{\s,\U} \tau\Big(\eta_\s\sum_{t\in\s}|\xi_\s(t)|^2\Big)^{1/2},\quad \mbox{for }
\eta=(\eta_\s)^\bullet \in (\tilde{\M}_\U)^+_*=\Big(\prodd_\U L_1(\M)\Big)^+.$$
\end{defi}

The right $\tilde{\M}_\U$-module structure of $\tilde{K}_p^c(\U)$ is given 
for $x=(x_\s)^\bullet\in \prodd_\U \M$ and $\xi=(\xi_\s)^\bullet \in \tilde{K}_p^c(\U)$ by 
$$\xi \cdot x=(\xi_\s\cdot x_\s)^\bullet.$$
It is easy to see that this does not depend on the chosen representing families. 
Moreover, by Proposition $5.2$ of \cite{JS}, this module action extends naturally from $\prodd_\U \M$ to $\tilde{\M}_\U$. 
Similarly, for $\xi=(\xi_\s)^\bullet, \eta=(\eta_\s)^\bullet \in \tilde{K}_p^c(\U)$ we consider the componentwise bracket
$$ \langle \xi,\eta\rangle_{\tilde{K}_p^c(\U)} 
= (\langle  \xi_\s,\eta_\s\rangle_{L_p(\M;\ell_2^c(\s))})^{\bullet}
= \Big(\sum_{t\in \s} \xi_\s(t)^*\eta_\s(t)\Big)^\bullet \in \prod_{\U} L_{p/2}(\M)\cong L_{p/2}(\tilde{\M}_\U),$$
where $\xi_\s=\sum_{t\in \s} e_{t,0}\ten \xi_\s(t), \eta_\s=\sum_{t\in \s} e_{t,0}\ten \eta_\s(t) \in L_p(\M;\ell_2^c(\s))$. 
This defines an $L_{p/2}(\tilde{\M}_\U)$-valued inner product which generates the norm of $\tilde{K}_p^c(\U)$ and is compatible with the module action. 
Hence $\tilde{K}_p^c(\U)$ is a right $L_p$ $\tilde{\M}_\U$-module for $1\leq p \leq \infty$. 
In the sequel, the regularized spaces will be crucial tools to study $\H_p^c$. 
We may equip $K_p^c(\U)$ with an $L_p$-module structure over the finite von Neumann $\M_\U$ thanks to the following observation. 

\begin{lemma}\label{Kpc}
Let $1\leq p \leq \infty$. Let $\xi \in \tilde{K}_p^c(\U)$. 
Then the following assertions are equivalent. 
\begin{enumerate}
\item[(i)] $\xi \in K_p^c(\U)$;
\item[(ii)] $\langle \xi,\xi \rangle_{\tilde{K}_p^c(\U)}  \in L_{p/2}(\M_\U)$;
\end{enumerate}
\end{lemma}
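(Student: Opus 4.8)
The plan is to reduce the equivalence to two facts already available: the characterization $x \in L_{p/2}(\M_\U) \Leftrightarrow x = x e_\U$ recorded after \eqref{LpMU}, and the centrality of the projection $e_\U$ in $\tilde{\M}_\U$, combined with the module structure of $\tilde{K}_p^c(\U)$. Throughout I abbreviate $\langle\cdot,\cdot\rangle = \langle\cdot,\cdot\rangle_{\tilde{K}_p^c(\U)}$. The starting observation is that membership in the two spaces is governed by right multiplication by $e_\U$: since $e_\U$ is a projection, $\xi \in K_p^c(\U) = \tilde{K}_p^c(\U)\cdot e_\U$ if and only if $\xi = \xi\cdot e_\U$, and similarly $\langle\xi,\xi\rangle \in L_{p/2}(\M_\U)$ if and only if $\langle\xi,\xi\rangle = \langle\xi,\xi\rangle e_\U$.

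First I would record the bracket identities furnished by the right $L_p$ $\tilde{\M}_\U$-module structure. Axioms (ii) and (iii) of Definition \ref{defLpmodule} give, for $a \in \tilde{\M}_\U$, the relations $\langle\xi\cdot a,\eta\rangle = a^*\langle\xi,\eta\rangle$ and $\langle\xi,\eta\cdot a\rangle = \langle\xi,\eta\rangle a$. Taking $a = e_\U = e_\U^*$ and using that $e_\U$ is central in $\tilde{\M}_\U$, so that it commutes with every element of $L_{p/2}(\tilde{\M}_\U)$, I obtain
$$\langle\xi\cdot e_\U,\xi\cdot e_\U\rangle = e_\U\langle\xi,\xi\rangle e_\U = \langle\xi,\xi\rangle e_\U.$$

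For (i) $\Rightarrow$ (ii), if $\xi \in K_p^c(\U)$ then $\xi = \xi\cdot e_\U$, and the displayed identity yields $\langle\xi,\xi\rangle = \langle\xi,\xi\rangle e_\U$, i.e. $\langle\xi,\xi\rangle \in L_{p/2}(\M_\U)$. For (ii) $\Rightarrow$ (i), assuming $\langle\xi,\xi\rangle = \langle\xi,\xi\rangle e_\U$ I would set $\eta = \xi\cdot(1-e_\U)$ and compute, again by the bracket rules and centrality,
$$\langle\eta,\eta\rangle = (1-e_\U)\langle\xi,\xi\rangle(1-e_\U) = \langle\xi,\xi\rangle(1-e_\U) = 0,$$
the final equality because $\langle\xi,\xi\rangle(1-e_\U) = \langle\xi,\xi\rangle e_\U(1-e_\U) = 0$. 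The definiteness axiom (i) of Definition \ref{defLpmodule} then forces $\eta = 0$, that is $\xi = \xi\cdot e_\U \in K_p^c(\U)$.

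The argument is purely algebraic and uniform in $1 \leq p \leq \infty$; for $p = \infty$ one works inside the strong-operator closure defining $\tilde{K}_\infty^c(\U)$, where the same identities persist by continuity and definiteness still holds since the seminorms $\|\cdot\|_\eta$ separate points. I do not expect a serious obstacle: the only point needing care is that the module action and the bracket be extended from $\prodd_\U\M$ to all of $\tilde{\M}_\U$, and in particular applied to the central projection $e_\U$, which is precisely the content of Proposition $5.2$ of \cite{JS} used when equipping $\tilde{K}_p^c(\U)$ with its $L_p$ $\tilde{\M}_\U$-module structure.
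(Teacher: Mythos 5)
Your proof is correct and follows essentially the same route as the paper's: both reduce the statement to the equivalence $\xi = \xi\cdot e_\U \Leftrightarrow \langle\xi,\xi\rangle = \langle\xi,\xi\rangle e_\U$ via \eqref{LpMU}, and both establish it using the module bracket identities, the centrality of $e_\U$, and the definiteness axiom of Definition \ref{defLpmodule}. The only cosmetic difference is that the paper writes a single chain of equivalences where you split the argument into two implications.
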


\begin{proof}
By \eqref{LpMU}, it suffices to show that for $\xi \in \tilde{K}_p^c(\U)$ we have 
$$\xi=\xi\cdot e_\U\Leftrightarrow \langle \xi,\xi \rangle_{\tilde{K}_p^c(\U)}=\langle \xi,\xi \rangle_{\tilde{K}_p^c(\U)}e_\U.$$
This comes from Definition \ref{defLpmodule} and the fact that $e_\U$ is a central projection. 
Indeed, we can write 
\begin{align*}
\xi=\xi\cdot e_\U
&\Leftrightarrow \xi\cdot (1-e_\U)=0 \\
&\Leftrightarrow \langle \xi\cdot (1-e_\U),\xi\cdot (1-e_\U)\rangle_{\tilde{K}_p^c(\U)} =0 \\
&\Leftrightarrow  (1-e_\U)^*\langle \xi,\xi\rangle_{\tilde{K}_p^c(\U)}(1-e_\U)=0 \\
&\Leftrightarrow \langle \xi,\xi\rangle_{\tilde{K}_p^c(\U)}(1-e_\U)=0\\
& \Leftrightarrow  \langle \xi,\xi \rangle_{\tilde{K}_p^c(\U)}=\langle \xi,\xi \rangle_{\tilde{K}_p^c(\U)}e_\U.
\end{align*}
\qd

Lemma \ref{Kpc} implies that $K_p^c(\U)$ is an $L_p$ $\M_\U$-module. 
Moreover, the family \\
$(K_{p}^c(\U))_{1\leq p\leq \infty}$ forms a projective system of $L_p$ $\M_\U$-modules. 
Indeed, for $1\leq p \leq q \leq \infty$ we may consider the contractive ultraproduct of the componentwise inclusion maps 
$I_{q,p}: \tilde{K}_{q}^c(\U) \to \tilde{K}_{p}^c(\U)$. 
By modularity, this map preserves the regularized spaces, i.e., $I_{q,p}:K_{q}^c(\U) \to K_{p}^c(\U)$. 
Then we observe that the assumptions (i)-(iii) of Corollary \ref{familyLpmodule} are satisfied. 
In particular, we deduce that the map $I_{q,p}$ is injective on $K_q^c(\U)$. 
Hence for $1\leq p \leq q \leq \infty$ we may identify $K_{q}^c(\U)$ with a subspace of $K_{p}^c(\U)$. 
We can prove the density assumption (iv) of Corollary \ref{familyLpmodule} by using the $p$-equiintegrability as follows. 

\begin{lemma}\label{densityKpc}
 Let $1\leq p <\infty$. Then $K_\infty^c(\U)$ is dense in $K_p^c(\U)$. 
\end{lemma}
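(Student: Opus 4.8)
The plan is to approximate an arbitrary $\xi \in K_p^c(\U)$ by truncating it with respect to the spectral projections of its pointwise column length, and to feed in the $p/2$-equiintegrability of $\langle\xi,\xi\rangle$ that membership in $K_p^c(\U)$ forces. First I would fix a representing family $\xi=(\xi_\s)^\bullet$ and set
$a_\s=\langle \xi_\s,\xi_\s\rangle^{1/2}=\big(\sum_{t\in\s}\xi_\s(t)^*\xi_\s(t)\big)^{1/2}\in L_p(\M)_+$, so that $\langle \xi,\xi\rangle_{\tilde K_p^c(\U)}=(a_\s^2)^\bullet$. By Lemma \ref{Kpc}, the hypothesis $\xi\in K_p^c(\U)$ is exactly the statement $(a_\s^2)^\bullet\in L_{p/2}(\M_\U)$, and Theorem \ref{LpMUpequi} then guarantees that the family $(a_\s^2)_\s$ is $p/2$-equiintegrable. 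This is the one structural input the argument relies on.

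Next, for each $T>0$ I would introduce the truncation $\xi^T=\xi\cdot(\1(a_\s\le T))^\bullet$ via the right module action of $\tilde\M_\U$. Since $\1(a_\s\le T)$ commutes with $a_\s$, a one-line computation gives
$\langle \xi_\s\1(a_\s\le T),\xi_\s\1(a_\s\le T)\rangle=a_\s^2\1(a_\s\le T)\le T^2$, so the truncated components are uniformly bounded in $L_\infty(\M;\ell_2^c(\s))$ with norm at most $T$; hence $\xi^T\in\tilde K_\infty^c(\U)$. Because $\xi=\xi e_\U$ and $e_\U$ is central, $\xi^T=\xi^T e_\U$, so in fact $\xi^T\in K_\infty^c(\U)$. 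Applying the same commutation identity to the complementary projection $\1(a_\s>T)$ yields
\[
\|\xi-\xi^T\|_{K_p^c(\U)}^2=\lim_{\s,\U}\big\|a_\s^2\1(a_\s>T)\big\|_{p/2}.
\]

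The final step is to let $T\to\infty$. Here I would invoke criterion (iii) of Lemma \ref{pequiintegr}, applied to the positive, $p/2$-equiintegrable family $(a_\s^2)_\s$ in $L_{p/2}(\M)$: since $\1(a_\s>T)=\1(a_\s^2>T^2)$, that criterion delivers precisely $\lim_{T\to\infty}\lim_{\s,\U}\|a_\s^2\1(a_\s>T)\|_{p/2}=0$, whence $\xi^T\to\xi$ in $K_p^c(\U)$ and the asserted density follows. I expect the only delicate points to be bookkeeping rather than analysis: first, checking that the truncations genuinely land in the regularized space $K_\infty^c(\U)$ and not merely in $\tilde K_\infty^c(\U)$, which is exactly where the centrality of $e_\U$ and the identity $\xi=\xi e_\U$ enter; and second, the verification that the $K_p^c(\U)$-norm of the tail $\xi-\xi^T$ coincides with the equiintegrability quantity, for which one must keep the spectral projections commuting past $a_\s$ throughout the computation.
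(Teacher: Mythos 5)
Your strategy---spectral truncation of the column norm plus the equiintegrability forced by membership in $K_p^c(\U)$---is exactly the paper's, but your componentwise implementation of the truncation opens a gap at the crucial step. Theorem \ref{LpMUpequi} says that $\langle\xi,\xi\rangle\in L_{p/2}(\M_\U)$ \emph{admits} a $p/2$-equiintegrable representing family; it does not say that \emph{every} representing family is $p/2$-equiintegrable, and in particular it does not apply to the family $(a_\s^2)_\s$ you built from an arbitrary representative $(\xi_\s)$ of $\xi$ (one can perturb $(\xi_\s)$ on a $\U$-small set of indices without changing $\xi$ while destroying the uniform condition in the definition of equiintegrability). So the appeal to criterion (iii) of Lemma \ref{pequiintegr} for $(a_\s^2)_\s$ is not justified as written. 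The needed limit $\lim_{T\to\infty}\lim_{\s,\U}\|a_\s^2\1(a_\s^2>T^2)\|_{p/2}=0$ is true, but to obtain it you must either transfer criterion (iii) from an equiintegrable representative $(c_\s)$ of $\langle\xi,\xi\rangle$ to $(a_\s^2)_\s$ (using $\lim_{\s,\U}\|a_\s^2-c_\s\|_{p/2}=0$, the trace bound $\tau(\1(a_\s^2>T^2))\leq T^{-p}\sup_\s\|a_\s\|_p^{p}$, and criterion (ii) for $(c_\s)$), or---as the paper does---truncate with the spectral projections of the single element $\langle\xi,\xi\rangle_{K_p^c(\U)}$ computed in $\M_\U$ rather than componentwise. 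The latter choice makes the tail estimate $\|\langle\xi,\xi\rangle\1(\langle\xi,\xi\rangle>T)\|_{L_{p/2}(\M_\U)}\to 0$ essentially automatic and, more importantly, makes $\langle\eta_T,\eta_T\rangle=\langle\xi,\xi\rangle\1(\langle\xi,\xi\rangle\leq T)$ visibly a bounded element of $\M_\U$, so that $\eta_T\in K_\infty^c(\U)$ follows at once from Lemma \ref{Kpc}.

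A second, more minor point: your identity $\xi^T=\xi^T e_\U$ is derived in $\tilde{K}_p^c(\U)$ (where $\xi=\xi\cdot e_\U$ holds), whereas membership in $K_\infty^c(\U)=\tilde{K}_\infty^c(\U)\cdot e_\U$ requires that identity in $\tilde{K}_\infty^c(\U)$, and $I_{\infty,p}$ is not injective on all of $\tilde{K}_\infty^c(\U)$. This is harmless---replace $\xi^T$ by $\xi^T\cdot e_\U$ computed in $\tilde{K}_\infty^c(\U)$, which lies in $K_\infty^c(\U)$ by definition and has the same image in $K_p^c(\U)$---but it should be flagged, since the whole point of the regularized spaces is that such identifications across the different ultraproducts are not automatic.
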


\begin{proof}
Let $\xi \in K_p^c(\U)$, then Lemma \ref{Kpc} yields that $\langle \xi,\xi \rangle_{K_p^c(\U)}  \in L_{p/2}(\M_\U)$. 
Combining Theorem \ref{LpMUpequi} with Lemma \ref{pequiintegr} we deduce that 
\begin{equation}\label{pequibraket}
\lim_{T\to \infty}\|\langle \xi,\xi \rangle_{K_p^c(\U)}\1(\langle \xi,\xi \rangle_{K_p^c(\U)}>T)\|_{L_{p/2}(\M_\U)}=0.
\end{equation}
We set $\eta_T=\xi \cdot \1(\langle \xi,\xi \rangle_{K_p^c(\U)}\leq T)$. 
Then
$$\langle \eta_T,\eta_T\rangle_{K_p^c(\U)} = 
\langle \xi,\xi \rangle_{K_p^c(\U)}\1(\langle \xi,\xi \rangle_{K_p^c(\U)}\leq T) \in \M_\U,$$
and $\eta_T \in K_\infty^c(\U)$. 
Moreover, by \eqref{pequibraket} we have
\begin{align*}
 \|\xi-\eta_T\|_{K_p^c(\U)} &
= \|\xi \cdot \1(\langle \xi,\xi \rangle_{K_p^c(\U)}> T)\|_{K_p^c(\U)} \\
&=\|\langle\xi \cdot \1(\langle \xi,\xi \rangle_{K_p^c(\U)}> T),
\xi \cdot \1(\langle \xi,\xi \rangle_{K_p^c(\U)}> T)\rangle_{K_p^c(\U)}\|_{p/2}^{1/2}\\
&=\|\langle \xi,\xi \rangle_{K_p^c(\U)} \1(\langle \xi,\xi \rangle_{K_p^c(\U)}> T)\|_{p/2}^{1/2} 
\stackrel{T\to \infty}{\longrightarrow} 0.
\end{align*}
This ends the proof of the Lemma. 
\qd

Since $\M_\U$ is finite, we deduce duality and interpolation results from Corollary \ref{Lpmodule}.

\begin{cor}\label{dualintpolKpc}
Let $1\leq p <\infty$. Then 
\begin{enumerate}
\item[(i)] $(K_p^c(\U))^*=K_{p'}^c(\U)$ isometrically.
\item[(ii)]Let $1\le p_1 <p< p_2 \leq \infty$ and $0<\theta<1$ be such that
$\frac{1}{p}=\frac{1-\theta}{p_1}+\frac{\theta}{p_2}$.  Then
$$K_p^{c}(\U)=[K_{p_1}^{c}(\U),K_{p_2}^{c}(\U)]_{\theta}\quad \mbox{isometrically}.$$
\item[(iii)] $K_p^c(\U)=\overline{\bigcup_{\tilde{p}>p}I_{\tilde{p},p}(\tilde{K}_{\tilde{p}}^c(\U))}^{\|\cdot\|_{\tilde{K}_p^c(\U)}}.$
\end{enumerate}
\end{cor}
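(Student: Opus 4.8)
The plan is to obtain parts (i) and (ii) essentially for free from the abstract module machinery already set up, and to establish (iii) by transporting the regularization description of $L_p(\M_\U)$ from Lemma \ref{L_p(NU)} through the $L_{p/2}(\tilde{\M}_\U)$-valued inner product.

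For (i) and (ii), I would simply invoke Corollary \ref{Lpmodule} applied to the family $(K_p^c(\U))_{1\leq p \leq \infty}$ over the finite von Neumann algebra $\M_\U$. The hypotheses (i)--(iii) of Corollary \ref{familyLpmodule} have already been verified above: each $K_p^c(\U)$ is an $L_p$ $\M_\U$-module by Lemma \ref{Kpc}, the maps $I_{q,p}$ are modular, compose correctly, and restrict to the regularized spaces, and the inner products are compatible with them since the bracket is computed componentwise. The remaining density assumption (iv) is exactly Lemma \ref{densityKpc}. Thus Corollary \ref{Lpmodule}(i) yields $(K_p^c(\U))^*=K_{p'}^c(\U)$ isometrically and Corollary \ref{Lpmodule}(ii) yields the interpolation identity, with no further work.

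For (iii), I would prove the two inclusions separately, mimicking the proof of Lemma \ref{L_p(NU)} but at the level of the inner product. For the inclusion $I_{\tilde{p},p}(\tilde{K}_{\tilde{p}}^c(\U)) \subset K_p^c(\U)$ with $\tilde{p}>p$, take $\xi=(\xi_\s)^\bullet \in \tilde{K}_{\tilde{p}}^c(\U)$ and observe that the componentwise bracket of its image satisfies
$$\langle I_{\tilde{p},p}(\xi),I_{\tilde{p},p}(\xi)\rangle_{\tilde{K}_p^c(\U)} = \Big(\sum_{t\in\s}|\xi_\s(t)|^2\Big)^\bullet = I_{\tilde{p}/2,\,p/2}\big(\langle \xi,\xi\rangle_{\tilde{K}_{\tilde{p}}^c(\U)}\big),$$
where $\langle \xi,\xi\rangle_{\tilde{K}_{\tilde{p}}^c(\U)} \in L_{\tilde{p}/2}(\tilde{\M}_\U)$. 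Since $\tilde{p}/2>p/2$, the forward inclusion established in the proof of Lemma \ref{L_p(NU)} shows that this element lies in $L_{p/2}(\M_\U)$, and hence Lemma \ref{Kpc} gives $I_{\tilde{p},p}(\xi) \in K_p^c(\U)$. As $K_p^c(\U)$ is closed in $\tilde{K}_p^c(\U)$, the closure of the union is contained in it.

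For the reverse inclusion, I would use the density of $K_\infty^c(\U)$ in $K_p^c(\U)$ from Lemma \ref{densityKpc}. For every $\tilde{p}>p$ the map $I_{\infty,\tilde{p}}$ sends $K_\infty^c(\U)$ into $K_{\tilde{p}}^c(\U)\subset \tilde{K}_{\tilde{p}}^c(\U)$, and factoring $I_{\infty,p}=I_{\tilde{p},p}\circ I_{\infty,\tilde{p}}$ yields $I_{\infty,p}(K_\infty^c(\U)) \subset \bigcup_{\tilde{p}>p}I_{\tilde{p},p}(\tilde{K}_{\tilde{p}}^c(\U))$. Taking closures and invoking Lemma \ref{densityKpc} gives $K_p^c(\U) \subset \overline{\bigcup_{\tilde{p}>p}I_{\tilde{p},p}(\tilde{K}_{\tilde{p}}^c(\U))}$, which completes the equality. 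I expect the only genuinely delicate point to be the bookkeeping in the forward inclusion, namely correctly identifying the componentwise bracket of $I_{\tilde{p},p}(\xi)$ with $I_{\tilde{p}/2,\,p/2}$ applied to the bracket of $\xi$; once this identification is in place, everything reduces to the already-proven Lemmas \ref{Kpc}, \ref{densityKpc} and \ref{L_p(NU)}.
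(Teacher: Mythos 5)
Your proposal is correct and follows essentially the same route as the paper: (i) and (ii) are obtained by invoking Corollary \ref{Lpmodule} for the projective system $(K_p^c(\U))_p$ with density supplied by Lemma \ref{densityKpc}, and (iii) is proved by pushing the bracket through $I_{\tp/2,p/2}$ and applying Lemmas \ref{L_p(NU)} and \ref{Kpc} for the forward inclusion, then using the density of $K_\infty^c(\U)$ for the reverse one. Your explicit identification of the bracket of $I_{\tp,p}(\xi)$ with $I_{\tp/2,p/2}$ of the bracket of $\xi$, and the remark that $K_p^c(\U)$ is closed, are just slightly more careful renderings of steps the paper leaves implicit.
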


\begin{proof}
The assertions (i) and (ii) follow directly from Corollary \ref{Lpmodule}. 
For (iii), let $\tp>p$ and $\xi \in I_{\tp,p}(\tilde{K}_{\tilde{p}}^c(\U))$. 
There exists $\eta \in \tilde{K}_{\tilde{p}}^c(\U)$ such that $\xi=I_{\tp,p}(\eta)$. 
Then by Lemma \ref{L_p(NU)} we have 
$$\langle \xi,\xi\rangle_{\tilde{K}_{p}^c(\U)}=I_{\tp,p}(\langle \eta,\eta\rangle_{\tilde{K}_{\tp}^c(\U)})
\in  I_{\tp,p}(L_{\tp/2}(\tilde{\M}_\U)) \subset L_{p/2}(\M_\U),$$
and Lemma \ref{Kpc} yields $\xi \in K_p^c(\U)$. 
Conversely, let $\xi \in K_p^c(\U)$. 
Then by Lemma \ref{densityKpc} we can approximate $\xi$ in $K_{p}^c(\U)$-norm by an element $\eta \in K_\infty^c(\U)$, 
which is in $I_{\tp,p}(\tilde{K}_{\tilde{p}}^c(\U))$ for all $\tilde{p}>p$. This concludes the proof of the Corollary.
\qd

The finiteness of $\M_\U$ also implies the following useful result. 

\begin{lemma}\label{normKpc}
Let $1\leq p <\infty$ and $\xi \in K_p^c(\U)$. 
Then 
$$\|\xi\|_{K_p^c(\U)}=\lim_{q\to p } \|\xi\|_{K_q^c(\U)}.$$
\end{lemma}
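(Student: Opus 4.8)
The plan is to reduce the statement to the continuity of the $L_r$-norms on the finite algebra $(\M_\U,\tau_\U)$. First I would fix $\xi\in K_p^c(\U)$ and set $a=\langle \xi,\xi\rangle_{K_p^c(\U)}$, which by Lemma \ref{Kpc} is a positive element of $L_{p/2}(\M_\U)$. Using the compatibility of the inner products with the inclusion maps $I_{q,p}$ (property (iii) of Corollary \ref{familyLpmodule}), whenever $\xi\in K_q^c(\U)$ the bracket $\langle \xi,\xi\rangle_{K_q^c(\U)}$ coincides, as an element of $L_{p/2}(\M_\U)$, with $a$; hence $\|\xi\|_{K_q^c(\U)}^2=\|a\|_{L_{q/2}(\M_\U)}$ for every such $q$. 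Thus the lemma amounts to the continuity of the single function $r\mapsto \|a\|_{L_r(\M_\U)}$ at $r=p/2$, for the fixed positive operator $a$.

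Next I would pass to the spectral distribution: let $\nu_a$ denote the distribution of $a$ with respect to $\tau_\U$, which is a probability measure on $[0,\infty)$ since $\tau_\U$ is normalized, so that $\|a\|_r^r=\int_0^\infty \lambda^r\,d\nu_a(\lambda)$. For $q\leq p$ the inclusion $K_p^c(\U)\subset K_q^c(\U)$ holds by finiteness of $\M_\U$, and the elementary bound $\lambda^{q/2}\leq 1+\lambda^{p/2}$ provides a $\nu_a$-integrable majorant; dominated convergence then gives $\int \lambda^{q/2}\,d\nu_a\to \int \lambda^{p/2}\,d\nu_a$ as $q\uparrow p$, and taking $(1/q)$-th powers yields $\|\xi\|_{K_q^c(\U)}\to\|\xi\|_{K_p^c(\U)}$. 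For $q$ slightly larger than $p$ one argues identically, dominating $\lambda^{q/2}$ by $\lambda^{p/2}+\lambda^{q_1/2}$ for a fixed $q_1>q$ inside the range where the norm is finite. Alternatively, the interpolation identity of Corollary \ref{dualintpolKpc}(ii) shows that $q\mapsto \log\|\xi\|_{K_q^c(\U)}$ is convex in $1/q$, so continuity is automatic on the interior of the interval on which this norm is finite, the monotonicity $\|a\|_{L_{q/2}(\M_\U)}\leq \|a\|_{L_{p/2}(\M_\U)}$ for $q\leq p$ being immediate from normalization of $\tau_\U$.

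The delicate point is precisely the direction $q\to p^+$: it requires controlling $\|a\|_{L_{q/2}(\M_\U)}$, equivalently knowing that $a\in L_{q/2}(\M_\U)$ for $q$ just above $p$. This is where the regularized nature of the spaces $K_q^c(\U)$ and the $p$-equiintegrability become essential: by Corollary \ref{dualintpolKpc}(iii), $K_p^c(\U)=\overline{\bigcup_{\tp>p}I_{\tp,p}(\tilde{K}^c_{\tp}(\U))}$, so $\xi$ is approximated in $K_p^c(\U)$-norm by elements lying in $\tilde{K}^c_{\tp}(\U)$ for some $\tp>p$, whose $K_q^c(\U)$-norms are finite and continuous in $q$ near $p$ by the first part. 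Combining this approximation with the uniform bounds furnished by Lemma \ref{pequiintegr} and Theorem \ref{LpMUpequi} is what secures continuity from above. I expect this control of $\|a\|_{L_{q/2}(\M_\U)}$ for $q>p$ to be the main obstacle, the side $q\leq p$ being a routine application of dominated convergence once the reduction to a single positive operator has been made.
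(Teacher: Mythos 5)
Your reduction is exactly the paper's: the printed proof consists of the single observation that $\|\xi\|_{K_q^c(\U)}^2=\|\langle \xi,\xi\rangle_{K_p^c(\U)}\|_{L_{q/2}(\M_\U)}$ together with the continuity of $r\mapsto\|a\|_{L_r(\M_\U)}$ for a fixed positive $a$, justified only by the words ``since $\M_\U$ is finite''. Your first two paragraphs are this argument with the standard details (spectral distribution, dominated convergence, log-convexity) written out, and the left-hand limit $q\to p^-$ is complete.

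Where you go beyond the paper, namely the direction $q\to p^+$, you have correctly located the real subtlety, but the fix you sketch does not close. Approximating $\xi$ in $K_p^c(\U)$-norm by $\eta\in I_{\tp,p}(\tilde{K}_{\tp}^c(\U))$ gives $\|\xi\|_{K_q^c(\U)}\le\|\xi-\eta\|_{K_q^c(\U)}+\|\eta\|_{K_q^c(\U)}$ for $q>p$, and the error term $\|\xi-\eta\|_{K_q^c(\U)}$ is \emph{not} controlled by $\|\xi-\eta\|_{K_p^c(\U)}$: the contractive inclusion between these modules goes the other way. Likewise, membership in $K_p^c(\U)$ (i.e.\ $p/2$-equiintegrability of the bracket via Lemma \ref{Kpc} and Theorem \ref{LpMUpequi}) does not place $\langle\xi,\xi\rangle$ in $L_{q/2}(\M_\U)$ for any $q>p$, so for a general $\xi\in K_p^c(\U)$ the norms $\|\xi\|_{K_q^c(\U)}$, $q>p$, may all be infinite and the two-sided limit fails as stated. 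The paper does not address this either; the lemma is only ever applied (in Lemma \ref{normHpc}) to $\xi=\iota(x)$ with $x\in\M$, for which the bracket lies in $L_{q/2}(\M_\U)$ for every $q$ and your log-convexity argument applies verbatim. So: same approach and correct where it matters, but the approximation step you propose for $q\to p^+$ should be replaced by the extra hypothesis that $\xi\in K_{\tp}^c(\U)$ for some $\tp>p$ (or the limit restricted to $q\uparrow p$).
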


\begin{proof}
For $\xi \in K_p^c(\U)$, we have $\langle \xi, \xi \rangle_{K_p^c(\U)} \in L_{p/2}(\M_\U)$ by Lemma \ref{Kpc}. 
Since $\M_\U$ is finite we may write
$$\|\xi\|_{K_p^c(\U)}=\|\langle \xi, \xi \rangle_{K_p^c(\U)} \|_{L_{p/2}(\M_\U)}^{1/2}
=\lim_{q\to p} \|\langle \xi, \xi \rangle_{K_p^c(\U)} \|_{L_{q/2}(\M_\U)}^{1/2}
= \lim_{q\to p} \|\xi\|_{K_q^c(\U)}.$$
\qd

We may isometrically embed $\H_p^c$ into $K_p^c(\U)$ for every $1\leq p <\infty$ via the map
$$\iota : 
\H_p^c \to K_p^c(\U)$$
defined for $x\in \M$ by
\begin{equation}\label{iota}
\iota(x)=\Big(\sum_{t\in\s}e_{t,0}\ten d_t^\s(x)\Big)^\bullet.
\end{equation}
Indeed, for $x\in \M$ there exists $\tp>p$ such that 
$$\langle \iota(x),\iota(x)\rangle_{\tilde{K}_p^c(\U)}=\Big(\sum_{t\in\s}| d_t^\s(x)|^2\Big)^\bullet \in I_{\tp/2,p/2}(L_{\tp/2}(\tilde{\M}_\U)).$$
Then $\langle \iota(x),\iota(x)\rangle_{\tilde{K}_p^c(\U)} \in L_{p/2}(\M_\U)$ by Lemma \ref{L_p(NU)}, 
which ensures that $\iota(x)\in K_p^c(\U)$ by Lemma \ref{Kpc}. 
Observe that Lemma \ref{normKpc} still holds true for the $\H_p^c$-norm. 

\begin{lemma}\label{normHpc}
Let $1\leq p <\infty$  and $x\in \M$. 
Then 
$$\|x\|_{\H_p^c}=\lim_{q\to p} \|x\|_{\H_q^c}.$$
\end{lemma}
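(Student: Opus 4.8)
The plan is to deduce this from Lemma \ref{normKpc} via the isometric embedding $\iota$ of \eqref{iota}. Since for every $1\leq q<\infty$ the map $\iota:\H_q^c\to K_q^c(\U)$ is isometric, we have $\|x\|_{\H_q^c}=\|\iota(x)\|_{K_q^c(\U)}$ for all $x\in\M$. The essential observation is that for a fixed $x\in\M$ the representing family $\big(\sum_{t\in\s}e_{t,0}\ten d_t^\s(x)\big)_\s$ defining $\iota(x)$ does not depend on $q$, and that $\iota(x)$ lies in $K_q^c(\U)$ \emph{simultaneously} for every finite $q$. Indeed, $\langle\iota(x),\iota(x)\rangle=([x,x]_\s)^\bullet$, and by \eqref{estimateH_pcdiscr} the family $([x,x]_\s)_\s$ is uniformly bounded in $L_{\tq/2}(\M)$ for every $\tq>\max(q,2)$; hence it is $(q/2)$-equiintegrable, so $\langle\iota(x),\iota(x)\rangle\in L_{q/2}(\M_\U)$ by Lemma \ref{L_p(NU)}, and therefore $\iota(x)\in K_q^c(\U)$ by Lemma \ref{Kpc}. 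Since these copies are compatible under the inclusion maps $I_{q,p}$, the element $\xi:=\iota(x)$ is a single well-defined element belonging to $K_q^c(\U)$ for all $1\leq q<\infty$.

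With this in hand I would simply apply Lemma \ref{normKpc} to $\xi$, which is legitimate precisely because $\xi$ lies in $K_q^c(\U)$ for all $q$ near $p$ (on both sides of $p$). This gives
$$\|\iota(x)\|_{K_p^c(\U)}=\lim_{q\to p}\|\iota(x)\|_{K_q^c(\U)},$$
and combining with the isometry $\|x\|_{\H_q^c}=\|\iota(x)\|_{K_q^c(\U)}$ yields $\|x\|_{\H_p^c}=\lim_{q\to p}\|x\|_{\H_q^c}$, as desired.

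The only point requiring genuine care is exactly this two-sided membership: for $q>p$ one must know $\iota(x)\in K_q^c(\U)$, i.e.\ that $\langle\iota(x),\iota(x)\rangle$ is regularized in $L_{q/2}(\M_\U)$, rather than merely belonging to $L_{q/2}(\tilde{\M}_\U)$. This is what the uniform $L_{\tq/2}$-bounds of \eqref{estimateH_pcdiscr} provide through the equiintegrability argument above, and it is the reason the limit $q\to p$ makes sense from above as well as below. Should one prefer to avoid invoking Lemma \ref{normKpc} as a black box, the same conclusion follows directly: setting $a:=\langle\iota(x),\iota(x)\rangle$, which is a positive element of $\bigcap_{q<\infty}L_{q/2}(\M_\U)$, the finiteness of $\M_\U$ makes $s\mapsto\|a\|_s$ continuous, whence $\|x\|_{\H_q^c}^2=\|a\|_{L_{q/2}(\M_\U)}\to\|a\|_{L_{p/2}(\M_\U)}=\|x\|_{\H_p^c}^2$ as $q\to p$, and one concludes by continuity of the square root.
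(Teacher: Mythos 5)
Your proof is correct and follows essentially the same route as the paper: the paper's proof is precisely to write $\|x\|_{\H_q^c}=\|\iota(x)\|_{K_q^c(\U)}$ via the isometric embedding \eqref{iota} and then invoke Lemma \ref{normKpc}. The extra care you take to justify that $\iota(x)\in K_q^c(\U)$ for all finite $q$ is the same verification the paper performs just after defining $\iota$, so nothing is genuinely different.
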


\begin{proof}
For $x\in \M$, $\iota (x) \in K_p^c(\U)$ and by Lemma \ref{normKpc} we can write
$$\|x\|_{\H_p^c}=\|\iota (x)\|_{\K_p^c(\U)}
=\lim_{q\to p} \|\iota (x)\|_{\K_q^c(\U)}
=\lim_{q\to p}  \|x\|_{\H_q^c}.$$
\qd

\subsection{Injectivity results}

In this subsection we check that the Hardy spaces defined above are well intermediate spaces between $L_2(\M)$ and $L_p(\M)$ as expected. 
The inequalities \eqref{estimateH_pc} allow to define by density natural bounded maps from $\H_p^c$ to $L_{\min(p,2)}(\M)$ for $1\leq p <\infty$. 
Since it is not clear a priori, we need to prove that these maps are injective. 

\begin{prop}\label{injHpc2}
Let $1\leq p <\infty$. Then
$$L_{\max(p,2)}(\M)  \subset \H_p^c \subset L_{\min(p,2)}(\M),$$
i.e., $\H_p^c$ embeds into $L_{\min(p,2)}(\M)$. 
\end{prop}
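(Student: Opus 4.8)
The plan is to reduce both inclusions to a single injectivity statement for the canonical map $T\colon\H_p^c\to L_{\min(p,2)}(\M)$ extending $\mathrm{id}_\M$. Its boundedness is exactly \eqref{estimateH_pc}, so $T$ is well defined by density. Granting that $T$ is injective, the inclusion $L_{\max(p,2)}(\M)\subset\H_p^c$ follows too: \eqref{estimateH_pc} also produces a bounded map $S\colon L_{\max(p,2)}(\M)\to\H_p^c$, and since $T\circ S=\mathrm{id}$ on the dense subspace $\M$, the composition $TS$ is the standard injective inclusion $L_{\max(p,2)}(\M)\hookrightarrow L_{\min(p,2)}(\M)$, whence $S$ is injective. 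Thus the whole statement rests on injectivity of $T$.

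For $2\le p<\infty$ I would use the isometric embeddings $\iota_p\colon\H_p^c\to K_p^c(\U)$ of \eqref{iota}. Because $\|\cdot\|_{\H_2^c}=\|\cdot\|_2$, the map $\iota_2\colon L_2(\M)=\H_2^c\to K_2^c(\U)$ is isometric, hence injective. Comparing the defining formulas on $\M$ gives the factorization $I_{p,2}\circ\iota_p=\iota_2\circ T$, where $I_{p,2}$ is the contractive ultraproduct of the componentwise inclusions $L_p(\M;\ell_2^c(\s))\hookrightarrow L_2(\M;\ell_2^c(\s))$; this identity extends to $\H_p^c$ by density since all four maps are bounded. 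The decisive point is that, although $I_{p,2}$ is not injective on $\tilde K_p^c(\U)$, its restriction to the regularized space $K_p^c(\U)$ \emph{is} injective, by the modularity argument of Subsection \ref{Kp}. As $\iota_p\xi\in K_p^c(\U)$, from $T\xi=0$ we obtain $I_{p,2}(\iota_p\xi)=\iota_2(T\xi)=0$, hence $\iota_p\xi=0$ and finally $\xi=0$. This yields $\H_p^c\hookrightarrow L_2(\M)$.

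For $1\le p<2$ there is no ambient $L_2$ to exploit, and the argument is more delicate. I would first note that \eqref{estimateH_pcdiscr} makes the summation maps $\Sigma_\s\colon\sum_{t}e_{t,0}\ten d_t^\s(x)\mapsto x$ uniformly bounded (by $\beta_p$) from $H_p^c(\s)$ into $L_p(\M)$, so their ultraproduct $\Sigma_\U$ is bounded on $\prodd_\U H_p^c(\s)\supset\iota_p(\H_p^c)$ and satisfies $T=\E_\U\circ\Sigma_\U\circ\iota_p$. If $T\xi=0$ and $x_n\to\xi$ in $\H_p^c$ with $x_n\in\M$, then $x_n=Tx_n\to0$ in $L_p(\M)$; representing $\iota_p\xi$ by a diagonal family $\big(\sum_t e_{t,0}\ten d_t^\s(x_{k(\s)})\big)^\bullet$ with $k(\s)\to\infty$ along $\U$, one gets $\Sigma_\U(\iota_p\xi)=(x_{k(\s)})^\bullet=0$ because $\lim_{\s,\U}\|x_{k(\s)}\|_p=0$. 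Injectivity of $T$ is therefore reduced to the injectivity of $\Sigma_\U$ on the \emph{coherent} subspace $\iota_p(\H_p^c)$, even though $\Sigma_\U$ has a large kernel on all of $\prodd_\U H_p^c(\s)$.

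This reduction isolates what I expect to be the main obstacle: one must separate $\iota_p(\H_p^c)$ from $\ker\Sigma_\U$ inside the ultraproduct. I would do this by duality, testing $\iota_p\xi$ against the full dual $(K_p^c(\U))^*=K_{p'}^c(\U)$ of Corollary \ref{dualintpolKpc}, since the functionals $\iota_{p'}(w)$, $w\in\M$, only reproduce $\tau(w^*T\xi)$ and thus carry no new information. To each $\eta=(\eta_\s)^\bullet\in K_{p'}^c(\U)$ I would attach the martingale $z_\s=\sum_t d_t^\s(\eta_\s(t))$ exactly as in Lemma \ref{dualhatHpc}, giving $(\eta|\iota_p\xi)=\lim_{\s,\U}\tau(z_\s^*x_{k(\s)})$ together with the uniform bound $\|z_\s\|_{L_{p'}^cMO(\s)}\lesssim\delta_{p'/2}^{1/2}\|\eta_\s\|_{p'}$. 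The crux is that $z_\s$ lies only in $L_{p'}^cMO(\s)$, not uniformly in $L_{p'}(\M)$, so $\tau(z_\s^*x_{k(\s)})\to0$ cannot be read off from $\|x_{k(\s)}\|_p\to0$ by H\"older alone; resolving this requires using the $p$-equiintegrability of the regularized element $\iota_p\xi$ (Theorem \ref{LpMUpequi}) to truncate the test martingales into $L_{p'}(\M)$ up to a controllable error. Once this is achieved one concludes $(\eta|\iota_p\xi)=0$ for all $\eta$, hence $\iota_p\xi=0$ and $\xi=0$.
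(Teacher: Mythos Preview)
Your treatment of $2\le p<\infty$ is correct and coincides with the paper's: both use the commuting square with $\iota_p$, $\iota_2$ and the injectivity of $I_{p,2}$ on the regularized module $K_p^c(\U)$.

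For $1\le p<2$, however, you have overcomplicated matters and left a genuine gap. The paper's route is far more elementary and rests on a fact you never invoke: the monotonicity Lemma~\ref{convexityHpc}(i) gives $\|y\|_{H_p^c(\s)}\le\beta_p\|y\|_{\H_p^c}$ for every $\s$, and for a \emph{finite} $\s$ the norm $\|\cdot\|_{H_p^c(\s)}$ is equivalent to $\|\cdot\|_p$. With these two ingredients one proves directly (this is Lemma~\ref{injHpc}) that $\{x\in L_p(\M):\|x\|_{\H_p^c}<\infty\}$ is complete for $\|\cdot\|_{\H_p^c}$: a Cauchy sequence $(x_n)$ in $\H_p^c$ is Cauchy in $L_p$, hence has an $L_p$-limit $x$; for each fixed $\s$ one passes to the limit in $\|x_m-x_n\|_{H_p^c(\s)}\le\beta_p\|x_m-x_n\|_{\H_p^c}$ to get $\|x-x_n\|_{H_p^c(\s)}\le\beta_p\eps$ with $n_0$ independent of $\s$, and then takes the limit over $\s$. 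Injectivity of $T$ follows at once, since $\H_p^c$ now embeds isometrically into this complete subspace of $L_p(\M)$.

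By contrast, your ultraproduct/duality scheme for $1\le p<2$ is not complete. You correctly identify the obstruction---$z_\s$ is only controlled in $L_{p'}^cMO(\s)$, not in $L_{p'}(\M)$, so $\|x_{k(\s)}\|_p\to0$ does not by itself force $\tau(z_\s^*x_{k(\s)})\to0$---but the proposed fix via ``$p$-equiintegrability of $\iota_p\xi$ to truncate the test martingales'' is only a hope, not an argument: equiintegrability concerns the bracket $\langle\iota_p\xi,\iota_p\xi\rangle\in L_{p/2}(\M_\U)$ and gives no obvious control on the pairing with an arbitrary $\eta\in K_{p'}^c(\U)$ after replacing $z_\s$ by a truncation. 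Even your reduction step (representing $\iota_p\xi$ by a diagonal family and concluding $\Sigma_\U(\iota_p\xi)=0$) is not used afterwards, so the logical thread from $\Sigma_\U(\iota_p\xi)=0$ to $(\eta\mid\iota_p\xi)=0$ is missing. The monotonicity argument bypasses all of this.
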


We first prove the following direct consequence of the monotonicity property. 

\begin{lemma}\label{injHpc}
Let $1\leq p <2$. 
Then the space $\{x\in L_p(\M) : \|x\|_{\H_p^c}<\infty \}$ is complete with respect to the norm $\|\cdot\|_{\H_p^c}$. 
\end{lemma}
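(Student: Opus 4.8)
The plan is to exploit the monotonicity of Lemma \ref{convexityHpc} together with the lower semicontinuity, with respect to $L_p$-convergence, of the quantity $N(x):=\sup_\s\|x\|_{H_p^c(\s)}$. The starting point is that for a \emph{fixed} finite partition $\s$ the map $x\mapsto\|x\|_{H_p^c(\s)}$ is continuous on $L_p(\M)$: since each $\E_t$ is a contraction on $L_p(\M)$, the assignment $x\mapsto\sum_{t\in\s}e_{t,0}\ten d_t^\s(x)$ is a bounded linear map from $L_p(\M)$ into $L_p(B(\ell_2(\s))\oten\M)$, and $\|x\|_{H_p^c(\s)}$ is exactly the norm of its image. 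Moreover the estimates \eqref{estimateH_pc} and the monotonicity inequality of Lemma \ref{convexityHpc}(i) remain valid for every $x\in L_p(\M)$, since their proofs only invoke the discrete Burkholder-Gundy inequalities applied to $L_p$-martingales over finite filtrations; in particular $\beta_p^{-1}\|x\|_p\leq\|x\|_{\H_p^c}$ and $\|x\|_{\H_p^c}\leq N(x)\leq\beta_p\|x\|_{\H_p^c}$ for all $x\in L_p(\M)$.

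Now let $(x_n)_n$ be a Cauchy sequence in $\{x\in L_p(\M):\|x\|_{\H_p^c}<\infty\}$ for $\|\cdot\|_{\H_p^c}$. By the lower bound in \eqref{estimateH_pc} we have $\|x_n-x_m\|_p\leq\beta_p\|x_n-x_m\|_{\H_p^c}$, so $(x_n)_n$ is Cauchy in the complete space $L_p(\M)$ and converges there to some $x\in L_p(\M)$. It remains to check that $x$ lies in the space and that $x_n\to x$ for $\|\cdot\|_{\H_p^c}$.

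For the first point I invoke the lower semicontinuity of $N$. For each $\s$ the continuity above gives $\|x\|_{H_p^c(\s)}=\lim_n\|x_n\|_{H_p^c(\s)}$, whence
$$N(x)=\sup_\s\lim_n\|x_n\|_{H_p^c(\s)}\leq\liminf_n\sup_\s\|x_n\|_{H_p^c(\s)}=\liminf_n N(x_n)\leq\beta_p\liminf_n\|x_n\|_{\H_p^c}.$$
Since a Cauchy sequence is bounded, the right-hand side is finite, so $\|x\|_{\H_p^c}\leq N(x)<\infty$ and $x$ belongs to the space. For the convergence I apply the same argument to the differences: fixing $n$ and letting $m\to\infty$, continuity gives $\|x-x_n\|_{H_p^c(\s)}=\lim_m\|x_m-x_n\|_{H_p^c(\s)}$ for each $\s$, hence
$$\|x-x_n\|_{\H_p^c}\leq N(x-x_n)\leq\liminf_m N(x_m-x_n)\leq\beta_p\liminf_m\|x_m-x_n\|_{\H_p^c}.$$
Given $\eps>0$, the Cauchy property furnishes $N_\eps$ with $\|x_m-x_n\|_{\H_p^c}<\eps$ for $m,n\geq N_\eps$, so $\|x-x_n\|_{\H_p^c}\leq\beta_p\eps$ for $n\geq N_\eps$. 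Thus $x_n\to x$ in $\|\cdot\|_{\H_p^c}$ and the space is complete.

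The main obstacle is not the completeness scheme itself, which is the standard coercivity-plus-lower-semicontinuity argument, but the two structural facts feeding it: that the equivalence $\|x\|_{\H_p^c}\simeq N(x)$ and the lower bound $\beta_p^{-1}\|x\|_p\leq\|x\|_{\H_p^c}$ of Lemma \ref{convexityHpc}(i) and \eqref{estimateH_pc} persist after passing from $\M$ (or $L_2(\M)$) to the larger space $L_p(\M)$, and that each seminorm $\|\cdot\|_{H_p^c(\s)}$ is $L_p$-continuous so that $N$ is genuinely lower semicontinuous. Once these are verified, the argument closes immediately.
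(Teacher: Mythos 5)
Your proof is correct and follows essentially the same route as the paper's: both use the coercivity bound $\beta_p^{-1}\|x\|_p\leq\|x\|_{\H_p^c}$ to extract an $L_p$-limit, the equivalence of $\|\cdot\|_{H_p^c(\s)}$ with $\|\cdot\|_p$ for a fixed finite partition to pass the convergence to each $H_p^c(\s)$, and the monotonicity of Lemma \ref{convexityHpc}(i) to get a bound on $\|x-x_n\|_{H_p^c(\s)}$ that is uniform in $\s$. Your packaging of the last step as lower semicontinuity of $N(x)=\sup_\s\|x\|_{H_p^c(\s)}$ is only a cosmetic reformulation of the paper's "take the limit in $\s$, noting $n_0$ is independent of $\s$" argument.
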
 

\begin{proof}
The argument we will use to prove the completeness of the space $\{x\in L_p(\M) : \|x\|_{\H_p^c}<\infty \}$ 
relies on the fact that the discrete $H_p^c(\s)$-norms are increasing in $\s$ (up to a constant) 
for $1\leq p <2$, and on the completeness of the discrete spaces $H_p^c(\s)$.  
Let $(x_n)_{n\geq 1}\subset \{x\in L_p(\M)  :  \|x\|_{\H_p^c}<\infty \}$ be a Cauchy sequence with respect to $\|\cdot\|_{\H_p^c}$. 
Recall that for $x\in \{x\in L_p(\M)  :  \|x\|_{\H_p^c}<\infty \}$ we have $\|x\|_p\leq \beta_p \|x\|_{\H_p^c}$. 
Then we deduce that $(x_n)_{n\geq 1}$ is also a Cauchy sequence in $L_p(\M)$. 
Hence $(x_n)_{n\geq 1}$ converges in $L_p(\M)$ to an element $x\in L_p(\M)$. 
Since for a finite partition $\s$, the norms $\|\cdot\|_p$ and $\|\cdot\|_{H_p^c(\s)}$ are equivalent, 
the convergence is in $H_p^c(\s)$ for each $\s$. 
It remains to prove that the convergence is also with respect to the $\H_p^c$-norm, 
and then we will conclude that $x\in \{x\in L_p(\M) :  \|x\|_{\H_p^c}<\infty \}$.
Fix $\varepsilon >0$. By the Cauchy property with respect to the $\H_p^c$-norm, there exists $n_0 \in \nz$ such that for all $n\geq n_0$,
$$\lim_{m \to \infty} \|x_m-x_n\|_{\H_p^c} < \varepsilon.$$ 
For a fixed partition $\s$, since $x_n \to x$ in $H_p^c(\s)$ we have
$$\|x-x_n\|_{H_p^c(\s)}= \lim_{m \to \infty}\|x_m-x_n\|_{H_p^c(\s)} \leq \beta_p  \lim_{m \to \infty}\|x_m-x_n\|_{\H_p^c} < \varepsilon.$$ 
Note that here $n_0$ does not depend on the partition $\s$, hence taking the limit in $\s$ we obtain the required convergence in $\H_p^c$-norm.
\qd

\begin{proof}[Proof of Proposition \ref{injHpc2}]
For $1\leq p <2$, by Lemma \ref{injHpc} and density we can isometrically embed $\H_p^c$ into $\{x\in L_p(\M) : \|x\|_{\H_p^c}<\infty \}$, 
which is clearly a subspace of $L_p(\M)$. Hence the natural map which sends $\H_p^c$ to $L_p(\M)$ is injective. 
For $2\leq p <\infty$, 
the injectivity of $\H_p^c$ into $L_2(\M)$ directly comes from the $L_p$-module structure of the spaces $K_p^c(\U)$ introduced in subsection \ref{Kp}. 
Indeed, if $2\leq p <\infty$, this structure implies the injectivity of the map $I_{p,2}:K_p^c(\U) \to K_2^c(\U)$. 
Hence the following commuting diagram yields the required injectivity result:
$$\xymatrix{
    \H_p^c \ar@{^{(}->}[d]^{\iota} \ar@{->}[r]     & \H_2^c=L_2(\M) \ar@{^{(}->}^{\iota}[d]  \\
    K_p^c(\U) \ar@{^{(}->}[r]^{I_{p,2}} & K_2^c(\U)
  }$$
 
\qd

\subsection{Complementation results}

The aim of this subsection is to obtain the analoguous results of Proposition \ref{compldiscr} and Corollary \ref{discrdual} in the continuous setting. 
Here the space $K_p^c(\U)$ will play the role of the space $L_p(\M;\ell_2^c)$ in the discrete case. 

\begin{prop}\label{HpccomplKpc}
Let $1<p<\infty$. Then $\H_p^c$ is complemented in $K_p^c(\U)$.  
\end{prop}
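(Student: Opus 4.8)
The plan is to realize the projection as the ultraproduct of the discrete Stein projections, followed by a weak-limit (conditional-expectation) step that collapses the dependence on the partition back onto a single martingale. For each finite partition $\s$, recall from Proposition \ref{compldiscr} that the Stein projection
$$P_\s\Big(\sum_{t\in\s}e_{t,0}\ten a_t\Big)=\sum_{t\in\s}e_{t,0}\ten d_t^\s(a_t)$$
is a bounded idempotent on $L_p(\M;\ell_2^c(\s))$ of norm $\leq\gamma_p$ whose range is the image of $H_p^c(\s)$; it is idempotent because $d_t^\s\circ d_t^\s=d_t^\s$. First I would form the ultraproduct $P_\U=(P_\s)^\bullet$, a bounded idempotent of norm $\leq\gamma_p$ on $\tilde{K}_p^c(\U)$. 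Since the $P_\s$ are uniformly bounded on each $L_{\tp}(\M;\ell_2^c(\s))$ for $\tp>p$ and commute with the inclusions $I_{\tp,p}$, the map $P_\U$ carries $I_{\tp,p}(\tilde{K}_{\tp}^c(\U))$ into itself, hence restricts to $K_p^c(\U)$ by Corollary \ref{dualintpolKpc} (iii).

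Next, for $\xi=(\xi_\s)^\bullet\in K_p^c(\U)$, writing $\xi_\s=\sum_{t\in\s}e_{t,0}\ten\xi_\s(t)$, I would attach to $P_\s(\xi_\s)$ its martingale $x_\s=\sum_{t\in\s}d_t^\s(\xi_\s(t))\in H_p^c(\s)\subset L_p(\M)$, so that $\|x_\s\|_{H_p^c(\s)}=\|P_\s(\xi_\s)\|\leq\gamma_p\|\xi_\s\|$. As $1<p<\infty$, the space $L_p(\M)$ is reflexive, so $y=\E_\U((x_\s)^\bullet)=\w L_p\mbox{-}\lim_{\s,\U}x_\s$ exists in $L_p(\M)$ and, since a $\U$-null family of $\xi_\s$ produces a $\U$-null family of $x_\s$, depends only on $P_\U(\xi)$. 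I would then set $\Pi(\xi)=\iota(y)$, with $\iota$ as in \eqref{iota}. If $\xi=\iota(x)$ for $x\in\M$, then $P_\s$ fixes each component and the telescoping sum gives $x_\s=x$, whence $y=x$ and $\Pi(\iota(x))=\iota(x)$; by density this shows $\Pi$ restricts to the identity on $\iota(\H_p^c)$. Consequently, once $\Pi$ is shown to be bounded with values in $\iota(\H_p^c)$, it is automatically idempotent and is a bounded projection of $K_p^c(\U)$ onto $\iota(\H_p^c)$.

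The decisive point, and the main obstacle, is to bound $\|y\|_{\H_p^c}$ by $\lim_{\s,\U}\|x_\s\|_{H_p^c(\s)}$; that is, to control the $\H_p^c$-norm of the weak limit by the limit of the \emph{pairwise different} $H_p^c(\s)$-norms, for which ordinary weak lower semicontinuity does not suffice on its own. Here I would fix a coarse partition $\s'$ and use that $x\mapsto\|x\|_{H_p^c(\s')}$ is a norm on $L_p(\M)$ equivalent to $\|\cdot\|_p$, hence convex and weakly lower semicontinuous; since every closed ball for this norm is weakly closed and $x_\s$ lies $\U$-eventually in such a ball, one gets $\|y\|_{H_p^c(\s')}\leq\lim_{\s,\U}\|x_\s\|_{H_p^c(\s')}$. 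As $\s\supseteq\s'$ holds $\U$-eventually, the monotonicity of Lemma \ref{convexityHpc} then converts this bound into $\|y\|_{H_p^c(\s')}\leq\alpha_p\lim_{\s,\U}\|x_\s\|_{H_p^c(\s)}$ for $2\leq p<\infty$, and into $\|y\|_{H_p^c(\s')}\leq\beta_p\lim_{\s,\U}\|x_\s\|_{H_p^c(\s)}$ for $1<p<2$. Taking the limit over $\s'$ yields $\|y\|_{\H_p^c}\leq\alpha_p\gamma_p\|\xi\|$ (resp. $\beta_p\gamma_p\|\xi\|$), which in particular guarantees $y\in\H_p^c$, invoking the completeness statement of Lemma \ref{injHpc} when $p<2$.

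This makes $\Pi$ a bounded projection of $K_p^c(\U)$ onto the closed subspace $\iota(\H_p^c)$, establishing the complementation. I would note at the end that the hypothesis $1<p<\infty$ is used in two essential places: for the finiteness of the Stein constant $\gamma_p$, and for the reflexivity of $L_p(\M)$ needed to form the weak limit $y=\E_\U((x_\s)^\bullet)$. The whole construction is thus the composition of $P_\U$, the identification of its output with martingales, the conditional expectation $\E_\U$ onto $L_p(\M)$, and $\iota$, with the monotonicity lemma supplying the one nontrivial norm estimate.
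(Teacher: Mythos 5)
Your overall strategy is the same as the paper's: apply the discrete Stein projection componentwise to get martingales $x_\s$ with $\|x_\s\|_{H_p^c(\s)}\leq\gamma_p\|\xi_\s\|$, take a weak limit along $\U$, and control its $\H_p^c$-norm via Lemma \ref{convexityHpc} and weak lower semicontinuity. However, there is a genuine gap at the weak-limit step. For $2<p<\infty$ the bound $\|x_\s\|_{H_p^c(\s)}\leq\gamma_p\|\xi_\s\|$ only dominates $\|x_\s\|_2$ (by \eqref{estimateH_pcdiscr}); it does \emph{not} dominate $\|x_\s\|_p$ with a constant independent of $\s$ (the crude bound is $\|x_\s\|_p\leq|\s|\,\|x_\s\|_{H_p^c(\s)}$, and no uniform constant can exist since $H_p^c\neq L_p$ for $p>2$). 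So the family $(x_\s)$ is not known to be bounded in $L_p(\M)$ and the weak $L_p$-limit $y=\w L_p\mbox{-}\lim_{\s,\U}x_\s$ on which your whole construction rests need not exist. Reflexivity of $L_p(\M)$ does not save you without a uniform $L_p$-bound.

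A second, related problem occurs for $1<p<2$: your estimate shows $y\in L_p(\M)$ with $\sup_{\s'}\|y\|_{H_p^c(\s')}<\infty$, but membership in $\H_p^c$ — which is by definition the abstract completion of $\M$ — does not follow from Lemma \ref{injHpc} alone; that lemma only asserts completeness of $\{x\in L_p(\M):\|x\|_{\H_p^c}<\infty\}$, and identifying this space with $\H_p^c$ is Corollary \ref{HpXp}, which is proved \emph{downstream} of the present proposition (via Corollary \ref{dual-intpolHpc}), so invoking it here would be circular. Both defects are repaired by the paper's one extra move: first take $\xi$ in the dense subspace $K_\infty^c(\U)$ (Lemma \ref{densityKpc}), so that each $x_\s$ lies in $\M$ and the family is uniformly bounded in $L_2(\M)$; then the weak $L_2$-limit exists, lies in $L_2(\M)\subset\H_p^c$ automatically for $p<2$, and for $p\geq2$ one instead uses that $(x_\s)$ is bounded in the reflexive space $\H_p^c$ itself (via Lemma \ref{convexityHpc}(ii)) and that $\H_p^c$ injects into $L_2(\M)$, so the two weak limits coincide. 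Boundedness on the dense subspace then extends to all of $K_p^c(\U)$ by continuity. Your norm estimates via monotonicity are otherwise correct and match the paper's.
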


\begin{proof}
We first describe the projection $\D:K_p^c(\U) \to \H_p^c$ for an element 
$$\xi=(\xi_\s)^\bullet=\Big(\sum_{t\in \s} e_{t,0}\ten \xi_\s(t)\Big)^\bullet \in K_\infty^c(\U).$$
For each $\s$ we set
$$x_\s=\sum_{t\in\s} d_t^\s(\xi_\s(t)),$$
where $d_t^\s(\xi_\s(t))=\E_t(\xi_\s(t))-\E_{t^-(\s)}(\xi_\s(t))$ for $0<t \in \s$ and $d_0^\s(\xi_\s(t))=\E_0(\xi_\s(t))$. 
Since $K_\infty^c(\U)\subset K_2^c(\U)$, we have $x_\s \in L_2(\M)$ and $\|x_\s\|_2=\|\xi_\s\|_{L_2(\M;\ell_2^c(\s))}$ is uniformly bounded. 
Hence we can consider the weak-limit in $L_2(\M)$ of the $x_\s$'s and we set
$$\D(\xi)=\w L_2 \mbox{-}\lim_{\s,\U}x_\s.$$
We will show that this construction defines well an element $\D(\xi) \in \H_p^c$ with $\|\D(\xi)\|_{\H_p^c}\leq k_p \|\xi\|_{K_p^c(\U)}$ for $1<p<\infty$.  
Then we will conclude by density (see Lemma \ref{densityKpc}) that $\H_p^c$ is complemented in $K_p^c(\U)$. 

Let $\xi \in K_\infty^c(\U)$ be such that 
$\|\xi\|_{K_p^c(\U)}=\lim_{\s,\U}\|\xi_\s\|_{L_p(\M;\ell_2^c(\s))}<1$. 
We may assume that $\|\xi_\s\|_{L_p(\M;\ell_2^c(\s))}<1$ for all $\s$. 
Let $1<p<2$ and fix a finite partition $\s_0$. 
In this case we clearly have $\D(\xi) \in L_2(\M) \subset \H_p^c$. 
By Lemma \ref{convexityHpc} and the noncommutative Stein inequality in the discrete case (Proposition \ref{compldiscr}), 
for $\s_0 \subset \s$ we have
$$\|x_\s\|_{H_p^c(\s_0)}
\leq \beta_p \|x_\s\|_{H_p^c(\s)}
\leq \gamma_p \beta_p \|\xi_\s\|_{L_p(\M;\ell_2^c(\s))}
\leq \gamma_p \beta_p.$$
We see that $(x_\s)_{\s\supset \s_0}$ is uniformly bounded in $H_p^c(\s_0)$, 
and we deduce that the weak-limit of the $x_\s$'s (for $\s \supset \s_0$) exists in $H_p^c(\s_0)$. 
This weak-limit coincides with the weak-limit in $L_2$, i.e., 
$$\D(\xi)=\w H_p^c(\s_0) \mbox{-}\lim_{\s\supset \s_0,\U}x_\s.$$
Then we can write
$$\|\D(\xi)\|_{H_p^c(\s_0)}\leq \lim_{\s\supset \s_0,\U}\|x_\s\|_{H_p^c(\s_0)}\leq \gamma_p \beta_p.$$
Since this holds true for every partition $\s_0$, taking the limit we obtain 
$$\|\D(\xi)\|_{\H_p^c}\leq  \gamma_p \beta_p \|\xi\|_{K_p^c(\U)}.$$
Let $2\leq p <\infty$. 
Lemma \ref{convexityHpc} and Proposition \ref{compldiscr} imply that for each $\s$, 
$$\|x_\s\|_{\H_p^c}\leq \alpha_p \|x_\s\|_{H_p^c(\s)}
\leq \gamma_p \alpha_p \|\xi_\s\|_{L_p(\M;\ell_2^c(\s))}
\leq \gamma_p \alpha_p.$$ 
Hence the family $(x_\s)_{\s}$ is uniformly bounded in $\H_p^c$. 
The reflexivity of $\H_p^c$ (Lemma \ref{reflexivityHpc}) yields that the weak-limit of the $x_\s$'s exists in $\H_p^c$. 
Since $\H_p^c$ embeds into $L_2(\M)$ by Proposition \ref{injHpc2}, we deduce that these two weak-limits coincide, i.e., 
$$ \D(\xi)=\w \H_p^c\mbox{-}\lim_{\s,\U} x_\s \in \H_p^c.$$ 
Then we can write
$$\|\D(\xi)\|_{\H_p^c}\leq \lim_{\s,\U}\|x_\s\|_{\H_p^c}\leq \gamma_p \alpha_p \|\xi\|_{K_p^c(\U)}.$$
This concludes the proof. 
\qd

This complementation result allows to transfer the duality and interpolation results proved for $K_p^c(\U)$ in Corollary \ref{dualintpolKpc} to the spaces $\H_p^c$. 

\begin{cor}\label{dual-intpolHpc}
Let $1<p<\infty$. 
\begin{enumerate}
\item[(i)] Then $$(\H_p^c)^*=\H_{p'}^c \quad \mbox{with equivalent norms.}$$
\item[(ii)]  Let $1<p_1,p_2<\infty$ and $0<\theta<1$ be such that
$\frac{1}{p}=\frac{1-\theta}{p_1}+\frac{\theta}{p_2}$. Then
 $$\H_p^c  = [\H_{p_1}^c,\H_{p_2}^c]_{\theta}  \quad \quad \mbox{with equivalent norms}  .$$
\end{enumerate}
\end{cor}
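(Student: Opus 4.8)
The plan is to transfer everything from the ambient module spaces $K_p^c(\U)$ using the two maps already in hand: the isometric embedding $\iota_p:\H_p^c\to K_p^c(\U)$ of \eqref{iota} and the projection $\D_p:K_p^c(\U)\to\H_p^c$ constructed in Proposition \ref{HpccomplKpc}, which satisfy $\D_p\circ\iota_p=\mathrm{id}_{\H_p^c}$. Thus $\H_p^c$ is a retract of $K_p^c(\U)$ for every $1<p<\infty$. The decisive structural point is that $\iota$ is given by the single $p$-independent formula \eqref{iota}, and $\D$ by the $p$-independent recipe $\xi\mapsto \w L_2\text{-}\lim_{\s,\U}\sum_{t\in\s}d_t^\s(\xi_\s(t))$; hence $\iota$ and $\D$ are compatible across the whole scale, and all the content of the corollary will come from Corollary \ref{dualintpolKpc} together with this retract structure.

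For (i) the crux is to identify the abstract adjoint $\iota_p^*$ with the concrete column object $\D_{p'}$ through the anti-linear trace bracket $(x|y)=\tau(x^*y)$. First I would establish, for $x\in L_2(\M)$ and $\eta=(\eta_\s)^\bullet\in K_\infty^c(\U)$ with $\eta_\s=\sum_{t\in\s}e_{t,0}\ten\eta_\s(t)$, the identity $\tau(x^*\D_{p'}(\eta))=(\iota_p(x)\,|\,\eta)_{K_p^c(\U),K_{p'}^c(\U)}$. This follows from the elementary computation $\tau(x^* d_t^\s(\eta_\s(t)))=\tau(d_t^\s(x)^*\eta_\s(t))$, a direct consequence of the trace-preservation and $*$-invariance of each $\E_t$, summed over $t\in\s$ and passed to the limit along $\U$. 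As a special case, taking $\eta=\iota_{p'}(y)$ for $y\in L_2(\M)$ and using orthogonality of the martingale differences $\sum_{t\in\s}\tau(d_t^\s(x)^*d_t^\s(y))=\tau(x^*y)$, one sees that the $K$-pairing restricts to exactly $\tau(x^*y)$ on the images of the Hardy spaces, which is what makes the dual of the column space $\H_p^c$ a genuine column space.

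With this identity the two inclusions are then routine. Given $y\in\H_{p'}^c$, the functional $x\mapsto\tau(x^*y)=(\iota_p(x)\,|\,\iota_{p'}(y))$ is bounded on $\H_p^c$ with norm at most $\|\iota_{p'}(y)\|_{K_{p'}^c(\U)}=\|y\|_{\H_{p'}^c}$, since $K_{p'}^c(\U)=(K_p^c(\U))^*$ by Corollary \ref{dualintpolKpc}(i); this gives $\H_{p'}^c\hookrightarrow(\H_p^c)^*$ with norm control. Conversely, given $\varphi\in(\H_p^c)^*$, the functional $\varphi\circ\D_p$ belongs to $(K_p^c(\U))^*=K_{p'}^c(\U)$, hence is represented by some $\eta\in K_{p'}^c(\U)$ with $\|\eta\|\leq\|\D_p\|\,\|\varphi\|$. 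Setting $y=\D_{p'}(\eta)\in\H_{p'}^c$ and writing $\varphi(x)=\varphi(\D_p\iota_p(x))=(\iota_p(x)\,|\,\eta)=\tau(x^*y)$ for $x\in L_2(\M)$ (using the identity above, extended from $K_\infty^c(\U)$ by density), one gets $(\H_p^c)^*=\H_{p'}^c$ with equivalent norms since $L_2(\M)$ is dense in $\H_p^c$.

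Finally, (ii) will follow from the standard interpolation theorem for retracts of a compatible couple: since $\iota$ and $\D$ are $p$-independent bounded morphisms of the couples $(\H_{p_1}^c,\H_{p_2}^c)\rightleftarrows(K_{p_1}^c(\U),K_{p_2}^c(\U))$ with $\D\circ\iota=\mathrm{id}$, the space $[\H_{p_1}^c,\H_{p_2}^c]_\theta$ is carried isomorphically onto $\D\big([K_{p_1}^c(\U),K_{p_2}^c(\U)]_\theta\big)$; by Corollary \ref{dualintpolKpc}(ii) this bracket is $K_p^c(\U)$, and $\D(K_p^c(\U))=\H_p^c$, which yields $[\H_{p_1}^c,\H_{p_2}^c]_\theta=\H_p^c$ with equivalent norms. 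The hard part is really the first step of (i): verifying that the abstract duality inherited from the ambient $L_p$-module $K_p^c(\U)$ coincides with the concrete trace bracket, so that one obtains $\H_{p'}^c$ on the nose and not merely an abstract isomorphic dual; once that adjoint identification is in place, both the duality and the interpolation are formal consequences of the corresponding facts for $K_p^c(\U)$.
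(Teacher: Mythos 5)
Your proof is correct and follows essentially the same route as the paper, which derives the corollary directly from the complementation of $\H_p^c$ in $K_p^c(\U)$ (Proposition \ref{HpccomplKpc}) together with Corollary \ref{dualintpolKpc}, the identification of the module pairing with the trace bracket being exactly the content of Remark \ref{dualitybracket}. The only cosmetic adjustment needed is that for $p>2$ one should test functionals on the dense subspace $L_{\max(p,2)}(\M)=L_p(\M)$ of $\H_p^c$ rather than on all of $L_2(\M)$, since not every element of $L_2(\M)$ lies in $\H_p^c$ in that range.
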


A direct approach of the duality between $\H_p^c$ and $\H_{p'}^c$, by simply using the discrete duality, 
yields instead the following duality result. 

\begin{lemma}\label{dualHpc-direct}
Let $1<p<2$. Then 
$$(\H_{p'}^c)^*=\{x\in L_p(\M) : \|x\|_{\H_p^c}<\infty \}\quad \mbox{with equivalent norms.}$$
\end{lemma}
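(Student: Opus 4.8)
The plan is to realize the duality through the single map $\Phi(z)=\tau(z^*\,\cdot\,)$, regarded as a map from $\{x\in L_p(\M):\|x\|_{\H_p^c}<\infty\}$ into $(\H_{p'}^c)^*$ (recall $1<p<2$, so $p'>2$), and to show it is an isomorphism by working partition by partition and passing to the limit along $\U$. The only inputs are the discrete column duality $(H_{p'}^c(\s))^*=H_p^c(\s)$ of Corollary \ref{discrdual}(i) and the monotonicity of the discrete norms in Lemma \ref{convexityHpc}; injectivity of $\Phi$ is free, since $\tau(z^*x)=0$ for all $x\in\M$ forces $z=0$ in $L_p(\M)$ by density of $\M$ in $L_{p'}(\M)=(L_p(\M))^*$.

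For the bounded direction I would fix $z$ with $\|z\|_{\H_p^c}<\infty$ and $x\in\M$, and use orthogonality of the martingale differences together with Hölder in the column spaces to get, for every $\s$,
$$|\tau(z^*x)| = \Big|\sum_{t\in\s}\tau\big(d_t^{\s}(z)^*\,d_t^{\s}(x)\big)\Big| \le \|z\|_{H_p^c(\s)}\,\|x\|_{H_{p'}^c(\s)}.$$
Since the left side is constant in $\s$, taking $\lim_{\s,\U}$ of the (bounded, convergent) right side yields $|\tau(z^*x)|\le \|z\|_{\H_p^c}\|x\|_{\H_{p'}^c}$, so $\Phi(z)\in(\H_{p'}^c)^*$ with $\|\Phi(z)\|\le\|z\|_{\H_p^c}$.

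For surjectivity I would start from $\varphi\in(\H_{p'}^c)^*$ with $\|\varphi\|\le 1$. Because $p'\ge 2$, Lemma \ref{convexityHpc}(ii) gives $\|x\|_{\H_{p'}^c}\le\alpha_{p'}\|x\|_{H_{p'}^c(\s)}$ for every $\s$ and $x\in\M$, so $\varphi$ is $\alpha_{p'}$-bounded for each discrete norm $\|\cdot\|_{H_{p'}^c(\s)}$; as $\M$ is dense in $H_{p'}^c(\s)$, Corollary \ref{discrdual}(i) furnishes the $\s$-martingale $z_\s\in L_p(\M)$ with $\|z_\s\|_{H_p^c(\s)}\le c_p\alpha_{p'}$ and $\varphi(x)=\tau(z_\s^*x)$ for all $x\in\M$. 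The discrete Burkholder--Gundy bound \eqref{estimateH_pcdiscr} makes $(z_\s)_\s$ uniformly bounded in $L_p(\M)$, which is reflexive for $1<p<2$, so I would set $z=\w L_p\mbox{-}\lim_{\s,\U}z_\s\in L_p(\M)$; then $\tau(z^*x)=\lim_{\s,\U}\tau(z_\s^*x)=\varphi(x)$ on $\M$.

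The hard part will be bounding $\|z\|_{\H_p^c}$, since the $z_\s$ live naturally only in the discrete spaces $H_p^c(\s)$ and their uniform control must be transferred to the continuous norm of the weak limit. The key is that for $p<2$ the discrete norms increase under refinement (up to $\beta_p$): fixing $\s_0$, for $\s\supseteq\s_0$ Lemma \ref{convexityHpc}(i) gives $\|z_\s\|_{H_p^c(\s_0)}\le\beta_p\|z_\s\|_{H_p^c(\s)}\le\beta_p c_p\alpha_{p'}$. The map $y\mapsto(d_t^{\s_0}(y))_{t\in\s_0}$ is bounded, hence weak-to-weak continuous from $L_p(\M)$ into $L_p(\M;\ell_2^c(\s_0))$, so $z$ maps to the corresponding weak limit; since $U_{\s_0}=\{\s:\s\supseteq\s_0\}\in\U$ and the norm is weakly lower semicontinuous, $\|z\|_{H_p^c(\s_0)}\le\lim_{\s,\U}\|z_\s\|_{H_p^c(\s_0)}\le\beta_p c_p\alpha_{p'}$. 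As $\s_0$ is arbitrary, $\lim_{\s_0,\U}$ gives $\|z\|_{\H_p^c}\le\beta_p c_p\alpha_{p'}<\infty$, placing $z$ in the target space and completing the norm equivalence. The whole difficulty is thus concentrated in interchanging the weak $L_p$-limit with the $\H_p^c$-norm, which the monotonicity lets us do one fixed $\s_0$ at a time.
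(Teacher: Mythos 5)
Your proof is correct and follows essentially the same route as the paper's: the easy direction is identical (H\"older on each partition plus the $\U$-limit), and the hard direction rests on the same two ingredients, namely the discrete duality of Corollary \ref{discrdual}(i) and the comparison $\|y\|_{\H_{p'}^c}\le\alpha_{p'}\|y\|_{H_{p'}^c(\s)}$ from Lemma \ref{convexityHpc}(ii). The only difference is cosmetic: since all your $z_\s$ represent the same functional on the dense subspace $\M$ of $L_{p'}(\M)$, they all coincide with the single element $x\in L_p(\M)$ furnished by the $L_{p'}$--$L_p$ duality, so the weak-limit and refinement-monotonicity machinery at the end can be bypassed by estimating $\|x\|_{H_p^c(\s)}$ of that one element directly for each $\s$, which is what the paper does.
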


\begin{proof}
Let $x \in \{x\in L_p(\M)  : \|x\|_{\H_p^c}<\infty \}$ and $y\in \M$. 
For a fixed partition $\s$, the H\"{o}lder inequality implies
\begin{align*}
|\tau(x^*y)|=\Big|\sum_{t\in \s} \tau(d_t^\s(x)^*d_t^\s(y))\Big| 
& \leq \Big\|\Big(\sum_{t\in \s} |d_t^\s(x)|^2\Big)^{1/2}\Big\|_p  \Big\|\Big(\sum_{t\in \s} |d_t^\s(y)|^2\Big)^{1/2}\Big\|_{p'}\\
&=\|x\|_{H_p^c(\s)}\|y\|_{H_{p'}^c(\s)}.
\end{align*}
Passing to the limit yields
$$|\tau(x^*y)|\leq \|x\|_{\H_p^c}\|y\|_{\H_{p'}^c}.$$
Since $\M$ is dense in $\H_{p'}^c$, this shows that $x\in (\H_{p'}^c)^*$ and 
$$\|x\|_{(\H_{p'}^c)^*} \leq  \|x\|_{\H_p^c}.$$
Conversely, let $\varphi \in (\H_{p'}^c)^*$ be of norm less than one. 
Since $L_{p'}(\M)$ is dense in $\H_{p'}^c$, $\varphi$ is represented by an element $x\in L_p(\M)$ such that 
$\varphi(y)=\tau(x^*y)$ for all $y\in L_{p'}(\M)$. It remains to show that $\|x\|_{\H_p^c} <\infty$. 
For a fixed partition $\s$, by Corollary \ref{discrdual} and the density of $L_{p'}(\M)$ in $H_{p'}^c(\s)$, we get
$$\|x\|_{H_p^c(\s)} \leq \sqrt{2}\gamma_p \|x\|_{(H_{p'}^c(\s))^*} 
= \sqrt{2}\gamma_p \sup_{y\in L_{p'}(\M), \|y\|_{H_{p'}^c(\s)}\leq 1} |\tau(x^*y)|.$$
For $y\in L_{p'}(\M)$ with $\|y\|_{H_{p'}^c(\s)}\leq 1$ we have 
$$|\tau(x^*y)| =|\varphi(y)| \leq \|y\|_{\H_{p'}^c} \leq \alpha_{p'} \|y\|_{H_{p'}^c(\s)}\leq  \alpha_{p'} .$$
Hence we get 
$$\|x\|_{\H_p^c}\leq  \sqrt{2}\gamma_p \alpha_{p'} \|x\|_{(\H_{p'}^c)^*},$$
and deduce that $x \in \{x\in L_p(\M) : \|x\|_{\H_p^c}<\infty \}$. 
\qd

Hence, combining Lemma \ref{dualHpc-direct} with assertion (i) of Corollary \ref{dual-intpolHpc} we obtain 

\begin{cor}\label{HpXp}
Let $1<p<2$. Then $\H_{p}^c=\{x\in L_p(\M) : \|x\|_{\H_p^c}<\infty \}$.
\end{cor}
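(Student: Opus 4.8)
The plan is to read the statement off the two duality results already at hand, by observing that both $\H_p^c$ and the space $\{x\in L_p(\M):\|x\|_{\H_p^c}<\infty\}$ are concrete realizations of the single dual space $(\H_{p'}^c)^*$, realized through one and the same pairing. First I record the easy inclusion. By Proposition~\ref{injHpc2} (together with Lemma~\ref{injHpc}) the space $\{x\in L_p(\M):\|x\|_{\H_p^c}<\infty\}$ is a genuine space of operators sitting inside $L_p(\M)$ for $1\leq p<2$, and the embedding constructed there identifies $\H_p^c$ with a subspace of it, carrying the very same norm $\|\cdot\|_{\H_p^c}$. Thus $\H_p^c\subseteq\{x\in L_p(\M):\|x\|_{\H_p^c}<\infty\}$ isometrically, and only the reverse inclusion requires an argument.

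For the reverse inclusion I would invoke the two dualities. Applying assertion~(i) of Corollary~\ref{dual-intpolHpc} with the conjugate index (legitimate since $1<p'<\infty$) gives $(\H_{p'}^c)^*=\H_p^c$ with equivalent norms, while Lemma~\ref{dualHpc-direct} gives $(\H_{p'}^c)^*=\{x\in L_p(\M):\|x\|_{\H_p^c}<\infty\}$ with equivalent norms. The crucial point is that both identifications realize a functional $\varphi\in(\H_{p'}^c)^*$ through the same anti-linear bracket, namely $\varphi(y)=\tau(z^*y)$ for $y$ ranging over the subspace $L_{p'}(\M)$, which is dense in $\H_{p'}^c$ by the density remark following the definition of $\H_{p'}^c$ (here $p'>2$, so $L_{\max(p',2)}(\M)=L_{p'}(\M)$).

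I would then conclude by uniqueness of the representing operator. Let $x\in\{x\in L_p(\M):\|x\|_{\H_p^c}<\infty\}$ and let $\varphi=\tau(x^*\,\cdot\,)$ be the associated functional on $\H_{p'}^c$. By Corollary~\ref{dual-intpolHpc}(i) this same $\varphi$ is represented by some $z\in\H_p^c\subseteq L_p(\M)$, so that $\tau((x-z)^*y)=0$ for every $y\in L_{p'}(\M)$; since $x-z\in L_p(\M)$ and $L_{p'}(\M)$ separates the points of $L_p(\M)$ under the $L_p$--$L_{p'}$ duality, we obtain $x=z\in\H_p^c$, hence $\{x\in L_p(\M):\|x\|_{\H_p^c}<\infty\}\subseteq\H_p^c$ and the desired equality. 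The only genuinely delicate point is the compatibility of the two dualities: one must make sure that the isomorphism of Lemma~\ref{dualHpc-direct} and that of Corollary~\ref{dual-intpolHpc}(i) are induced by one and the same pairing, so that uniqueness of the $L_p$-representative applies; everything else is routine bookkeeping.
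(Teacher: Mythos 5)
Your proof is correct and follows the same route as the paper: the paper obtains this corollary precisely by combining Lemma \ref{dualHpc-direct} with Corollary \ref{dual-intpolHpc}(i), identifying both spaces with $(\H_{p'}^c)^*$. The compatibility of the two duality pairings that you single out as the delicate point is exactly what Remark \ref{dualitybracket} supplies (the bracket is $\tau(z^*y)$ whenever $y\in L_{p'}(\M)=L_{\max(p',2)}(\M)$), so your uniqueness argument via the $L_p$--$L_{p'}$ pairing closes the gap the paper leaves implicit.
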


\re\label{dualitybracket}
We have to be careful when considering the duality bracket between $\H_p^c$ and $\H_{p'}^c$. 
Indeed, we can not always write it explicitly. 
For $1<p<\infty$ and $x\in \H_p^c, y\in \H_{p'}^c$, we can write 
$$(x|y)_{\H_p^c,\H_{p'}^c}=\tau(x^*y)$$
when either $x\in L_{\max(p,2)}(\M)$ or $y\in L_{\max(p',2)}(\M)$. 
We first suppose that $x\in L_{\max(p,2)}(\M)$ and $y\in L_{\max(p',2)}(\M)$. 
Then 
\begin{align*}
(x|y)_{\H_p^c,\H_{p'}^c}
&= (\iota(x)|\iota(y))_{K_p^c(\U),K_{p'}^c(\U)}\\
&=\Big(\Big(\sum_{t\in \s}e_{t,0}\ten d_t^\s(x)\Big)^\bullet\Big|\Big(\sum_{t\in \s}e_{t,0}\ten d_t^\s(y)\Big)^\bullet\Big)_{K_p^c(\U),K_{p'}^c(\U)}\\
&=\lim_{\s,\U} \sum_{t\in \s}\tau( d_t^\s(x)^*d_t^\s(y)) =\lim_{\s,\U} \tau(x^*y)\\
&=\tau(x^*y).
\end{align*}
We now consider $x\in L_{\max(p,2)}(\M)$ and $y=\H_{p'}^c\mbox{-}\lim_n y_n$ with $y_n\in L_{\max(p',2)}(\M)$, the other case being similar. 
Then 
$$(x|y)_{\H_p^c,\H_{p'}^c}=\lim_n(x|y_n)_{\H_p^c,\H_{p'}^c}
=\lim_n\tau(x^*y_n).$$
Since $\H_{p'}^c\subset L_{\min(p',2)}(\M)$, the sequence $(y_n)_n$ also converges in $L_{\min(p',2)}$ to $y$ and we get $(x|y)_{\H_p^c,\H_{p'}^c}=\tau(x^*y)$. 
\mar

\subsection{Fefferman-Stein duality}\label{sectFSc}

In this subsection we establish the analogue of the Fefferman-Stein duality in the continuous setting. 
The difficulty here is to find the right description of the spaces $L_p^cMO$ to get the expected duality. 

\begin{defi}
\begin{enumerate}
\item[(i)] Let $2 < p <\infty$. We define $L_p^c\MO$ as the space of all elements $x\in L_2(\M)$ of the form 
$x=\w L_2\mbox{-}\lim_{\s,\U} x_\s$ with $\lim_{\s,\U} \|x_{\si}\|_{L_p^cMO(\s)}<\infty$. 
We equip $L_p^c\MO$ with the norm 
$$\|x\|_{L_p^c\MO} 
=\inf \lim_{\s,\U} \|x_{\si}\|_{L_p^cMO(\s)}, $$
where the infimum is taken over all the descriptions $x=\w L_2\mbox{-}\lim_{\s,\U} x_\s$. 
\item[(ii)] We define the space $\BMO^c$ as the space whose closed unit ball is given by the absolute convex set
$$B_{\BMO^c}=\overline{\{x=\w L_2\mbox{-}\lim_{\si,\U} x_\s  : \lim_{\s,\U} \|x_{\si}\|_{BMO^c(\s)} \leq 1\}}^{\|\cdot\|_2}.$$
We equip $\BMO^c$ with the norm  
$$\|x\|_{\BMO^c}=\inf\{ C \geq 0 :  x \in CB_{\BMO^c}\}.$$
\end{enumerate}
\end{defi}

\begin{lemma}\label{LpMObanach}
The spaces $L_p^c\MO$ for $2<p<\infty$ and $\BMO^c$ are Banach spaces.  
\end{lemma}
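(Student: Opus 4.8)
The plan is to build each of these spaces out of the discrete spaces $L_p^cMO(\si)$ and $BMO^c(\si)$ attached to the finite filtrations $(\M_t)_{t\in\si}$, which are already Banach spaces: for $2<p<\infty$ each $L_p^cMO(\si)$ is equivalent to the reflexive space $H_p^c(\si)$ by the discrete Fefferman--Stein duality (Theorem~\ref{FSdiscr}), and $BMO^c(\si)=(H_1^c(\si))^*$ is a dual space, hence complete. The one computation I would isolate at the outset is the elementary domination $\|x\|_2\le\|x\|_{L_p^cMO(\si)}$ and $\|x\|_2\le\|x\|_{BMO^c(\si)}$, valid for every finite partition $\si$. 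Indeed, keeping only the term $t=0$ (with the convention $x_{0^-}=0$) bounds $\|\E_0|x|^2\|_{p/2}$ (resp.\ $\|\E_0|x|^2\|_\infty$) by the square of the corresponding $\MO$-norm, while normalization of $\tau$ gives $\|x\|_2^2=\tau(\E_0|x|^2)\le\|\E_0|x|^2\|_{p/2}$ (resp.\ $\le\|\E_0|x|^2\|_\infty$). Taking $\lim_{\si,\U}$ and using lower semicontinuity of $\|\cdot\|_2$ for the weak topology, this propagates to $\|x\|_2\le\|x\|_{L_p^c\MO}$ and $\|x\|_2\le\|x\|_{\BMO^c}$; in particular both gauges are definite and both spaces embed continuously into $L_2(\M)$.

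For $L_p^c\MO$ with $2<p<\infty$ I would argue by a direct quotient construction. Form the Banach-space ultraproduct $\prodd_\U L_p^cMO(\si)$ and define $\Phi\colon\prodd_\U L_p^cMO(\si)\to L_2(\M)$ by $\Phi((x_\si)^\bullet)=\w L_2\mbox{-}\lim_{\si,\U}x_\si$. The domination makes any bounded representing family $L_2$-bounded, so the weak limit exists; it also shows $\Phi$ is well defined on the quotient (a $\U$-null family in the $\MO$-norm is $\U$-null in $L_2$) and that $\|\Phi(\xi)\|_2\le\|\xi\|$, so $\Phi$ is bounded and $\ker\Phi$ is norm-closed. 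After a routine truncation of a representing family (as in the truncation lemma of Subsection~\ref{ultra}), every $x\in L_p^c\MO$ equals $\Phi(\xi)$ for some $\xi$, and the defining infimum over descriptions of $x$ is exactly the quotient norm $\inf\{\|\xi\|:\Phi(\xi)=x\}$. Hence $L_p^c\MO\cong\prodd_\U L_p^cMO(\si)/\ker\Phi$ isometrically, which is a Banach space.

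For $\BMO^c$ the quotient recipe does not apply directly, since the $L_2$-closure in the definition genuinely enlarges the image of the analogous map (the spaces $BMO^c(\si)$ being non-reflexive). Here I would instead use the abstract completeness criterion: if a normed space $(X,\|\cdot\|_X)$ embeds continuously into a Banach space $Z$ and its closed unit ball is closed in $Z$, then $X$ is complete --- given a $\|\cdot\|_X$-Cauchy sequence, pass to its limit $z$ in $Z$, and for fixed large $n$ let $m\to\infty$ in $x_n-x_m\in\eps B_X$, using closedness of $\eps B_X$ in $Z$ to get $x_n-z\in\eps B_X$. I would take $Z=L_2(\M)$. First I would check that $B_{\BMO^c}$ is absolutely convex: the set before closure is absolutely convex by subadditivity and homogeneity of $\|\cdot\|_{BMO^c(\si)}$ together with linearity of the weak $L_2$-limit, and the $L_2$-closure preserves this. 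The domination gives $B_{\BMO^c}\subseteq B_{L_2}$, so $B_{\BMO^c}$ is $L_2$-bounded; combined with its $L_2$-closedness (by definition) and absolute convexity, letting $C\to1^+$ shows $\{x:\|x\|_{\BMO^c}\le1\}=\bigcap_{C>1}CB_{\BMO^c}=B_{\BMO^c}$, so the gauge unit ball is exactly $B_{\BMO^c}$ and is $L_2$-closed. The criterion then yields that $\BMO^c$ is a Banach space.

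The main obstacle is the $\BMO^c$ case, precisely because its norm is the Minkowski gauge of an $L_2$-closed absolutely convex set rather than a quotient norm: the delicate points are verifying that this set is $L_2$-bounded (so that the gauge is genuinely a norm) and that it coincides with the gauge unit ball, which is what lets the abstract completeness lemma apply. For $L_p^c\MO$ everything is automatic once the $L_2$-domination is in hand.
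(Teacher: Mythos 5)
Your proposal is correct and follows essentially the same route as the paper: the $L_p^c\MO$ case is realized as the quotient of the ultraproduct $\prodd_\U L_p^cMO(\si)$ by the kernel of the weak-$L_2$-limit map, and the $\BMO^c$ case is handled by the abstract criterion (the paper's Lemma~\ref{banach}) that the gauge of an $L_2$-closed, absolutely convex, $L_2$-bounded set defines a Banach space. The only cosmetic differences are your sharper constant $1$ (versus $\sqrt{2}$) in the $L_2$-domination and your Cauchy-sequence formulation of the completeness criterion in place of the paper's absolutely-convergent-series argument.
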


To prove that $\BMO^c$ is complete we will need the following general fact. 

\begin{lemma}\label{banach}
Let $X$ be a Banach space and $B$ be an absolutely convex subset of $X$ satisfying
\begin{enumerate}
\item[(i)] $B$ is continuously embedded into the unit ball of $X$, i.e., there exists $D>0$ such that $$B\subset DB_X;$$
\item[(ii)] $B$ is closed with respect to the norm $\|\cdot\|_X$. 
\end{enumerate}
Then the space $Y$ whose unit ball is $B$, equipped with the norm 
$$\|x\|_{Y}=\inf\{C \geq 0 : x \in CB\}$$
is a Banach space.
\end{lemma}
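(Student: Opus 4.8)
The plan is to realize $Y$ as the linear span $\bigcup_{C>0}CB$ of $B$ inside $X$, equipped with the Minkowski gauge $\|x\|_Y=\inf\{C\geq 0:x\in CB\}$, and then to establish completeness through the standard criterion that a normed space is complete if and only if every absolutely summable series converges. First I would record the two elementary facts that make the whole argument work. Since $B$ is absolutely convex, its gauge is a seminorm on $\bigcup_{C>0}CB$; and hypothesis (i) upgrades it to a norm, because $x\in CB\subset CDB_X$ gives $\|x\|_X\leq CD$ and hence the comparison $\|x\|_X\leq D\|x\|_Y$ for all $x\in Y$, so $\|x\|_Y=0$ forces $x=0$. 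This comparison is the bridge between the two norms and will be invoked repeatedly. (As a consistency check, closedness of $B$ together with $\bigcap_{\varepsilon>1}\varepsilon B=B$ shows that the closed unit ball of $(Y,\|\cdot\|_Y)$ is exactly $B$.)

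Next I would take a series $\sum_n x_n$ in $Y$ with $\sum_n\|x_n\|_Y<\infty$, set $a_n=\|x_n\|_Y$ and $t_N=\sum_{n\geq N}a_n$, so that $t_N\to 0$. The comparison $\|x_n\|_X\leq Da_n$ makes $\sum_n x_n$ absolutely convergent in $X$, hence convergent in $X$ by completeness of $X$, to some limit $x$ with well-defined tails $r_N=\sum_{n\geq N}x_n$. Everything then reduces to the single estimate $\|r_N\|_Y\leq t_N$: granting it, the partial sums $S_{N-1}=\sum_{n<N}x_n$ satisfy $x-S_{N-1}=r_N$, so $\|x-S_{N-1}\|_Y\leq t_N\to 0$, which shows simultaneously that $x\in Y$ and that $\sum_n x_n$ converges to $x$ in $Y$, completing the completeness criterion.

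To prove $r_N\in t_NB$ I would argue through the finite partial sums. Fix $\delta>0$ and write each $x_n=c_ny_n$ with $y_n\in B$ and $c_n=a_n+\delta_n$, choosing $\delta_n>0$ with $\sum_n\delta_n\leq\delta$ (and $x_n=0=y_n$ when $a_n=0$). Absolute convexity of $B$ then gives, for every $M\geq N$,
$$\sum_{n=N}^{M}x_n=\Big(\sum_{n=N}^{M}c_n\Big)\sum_{n=N}^{M}\frac{c_n}{\sum_{k=N}^{M}c_k}\,y_n\in\Big(\sum_{n=N}^{M}c_n\Big)B\subset(t_N+\delta)B,$$
the containment being \emph{uniform in $M$} because the coefficient sum is bounded by $t_N+\delta$. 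Letting $M\to\infty$, the partial sums converge to $r_N$ in $X$, and since $(t_N+\delta)B$ is norm-closed by hypothesis (ii), we get $r_N\in(t_N+\delta)B$. As this holds for every $\delta>0$ and $\bigcap_{\varepsilon>1}\varepsilon K=K$ for any norm-closed absolutely convex $K\ni 0$ (taking $K=t_NB$), we conclude $r_N\in t_NB$, i.e.\ $\|r_N\|_Y\leq t_N$.

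The main obstacle is precisely this passage from finite partial sums to the infinite tail: one must obtain a containment in a fixed dilate $(t_N+\delta)B$ independent of $M$ so that the closedness from (ii) can be applied to the $X$-limit, and then shrink $\delta\to 0$ using that norm-closed absolutely convex sets satisfy $\bigcap_{\varepsilon>1}\varepsilon K=K$. Both of these rely essentially on the two hypotheses — absolute convexity controls the coefficients uniformly, and norm-closedness licenses both limit steps — so the heart of the proof is simply making these two interactions precise; the remaining gauge bookkeeping is routine.
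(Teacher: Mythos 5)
Your proof is correct and follows essentially the same route as the paper's: reduce completeness to the absolutely summable series criterion, use (i) to get convergence of the series in $X$, place the (tail) partial sums in a fixed dilate of $B$ via absolute convexity, and invoke the norm-closedness (ii) to pass to the $X$-limit. The only differences are cosmetic (the paper normalizes $\|x_n\|_Y\leq 2^{-n}$ and rescales by $\varepsilon$ instead of working with the gauge and the tails $t_N$ directly).
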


\begin{proof}
It is a well-known fact that $\|\cdot\|_Y$ defines a norm. 
Let $\sum_n x_{n\geq 1}$ be an absolutely converging series in $(Y,\|\cdot\|_Y)$. We may assume that $\|x_n\|_Y\leq \frac{1}{2^n}$ for all $n\geq 1$.  
We want to show that this series converges in $Y$. 
We first remark that the series $\sum_n x_n$ is absolutely converging, and hence converging, in $X$. 
Then there exists $x\in X$ such that $x=\sum_n x_n$, where the convergence is with respect to $\|\cdot\|_X$.  
Thus 
$$\sum_{n=1}^N x_n \stackrel{N\to \infty}{\longrightarrow } x \quad \mbox{in } X 
\quad \mbox{and} \quad \sum_{n=1}^N x_n \in B.$$
Indeed, we have
$$\Big\|\sum_{n=1}^N x_n\Big\|_Y\leq \sum_{n=1}^N \|x_n\|_Y\leq \sum_{n=1}^N \frac{1}{2^n}\leq 1.$$
Using (ii), this shows that $x\in B$. 
It remains to see that the convergence also holds for the norm $\|\cdot\|_Y$. 
Let $\eps>0$. Let $N_0$ be such that $2^{N_0}\geq \eps^{-1}$. 
We claim that for all $M > N\geq N_0$
$$y_{N,M}=\frac{1}{\eps}\Big(\sum_{n=1}^M x_n - \sum_{n=1}^N x_n\Big) \in B.$$
Indeed, we have
$$\|y_{N,M}\|_Y=\frac{1}{\eps}\Big\|\sum_{n=N+1}^M x_n\Big\|_Y
\leq \frac{1}{\eps}\sum_{n=N+1}^M \|x_n\|_Y
\leq \frac{1}{\eps}\sum_{n=N+1}^M \frac{1}{2^n}
\leq \frac{1}{\eps 2^N}
\leq \frac{1}{\eps 2^{N_0}}\leq 1.$$
Moreover, for $N\geq N_0$ fixed we have 
$$y_{N,M}\stackrel{M\to \infty}{\longrightarrow } \frac{1}{\eps}\Big(x - \sum_{n=1}^N x_n\Big) \quad \mbox{in } X .$$
Hence (ii) yields that 
$\frac{1}{\eps}\Big(x - \displaystyle\sum_{n=1}^N x_n\Big) \in B$, i.e., 
$$\Big\|x - \sum_{n=1}^N x_n\Big\|_Y\leq \eps \quad \mbox{for all  } N\geq N_0.$$
This proves that the series converges with respect to $\|\cdot\|_Y$ and ends the proof.
\qd

\begin{proof}[Proof of Lemma \ref{LpMObanach}]
We start with the case $p=\infty$. 
We apply Lemma \ref{banach} to $X=L_2(\M)$ and $B=B_{\BMO^c}$. 
Then by the definition of $B_{\BMO^c}$, it is clear that the condition (ii) of Lemma \ref{banach} is satisfied.  
Moreover, since for $x\in L_2(\M)$ and each $\s$ we have $\|x\|_2\leq \sqrt{2}\|x\|_{BMO^c(\s)}$, the condition (i) holds for $D=\sqrt{2}$. 
Hence the construction of the space $\BMO^c$ defines a Banach space. \\
For $2<p<\infty$, we observe that $L_p^c\MO$ is the range of the bounded map 
$$\phi:\left\{\begin{array}{ccc}
\prodd_\U L_p^cMO(\s) & \longrightarrow &L_2(\M) \\
(x_\s)^\bullet & \longmapsto & \w L_2\mbox{-}\lim_{\s,\U} x_\s
\end{array}\right..$$
Indeed, since for each $\s$ we have 
$\|x_\s\|_2\leq \sqrt{2}\|x_\s\|_{L_p^cMO(\s)}$, the family $(x_\s)_\s$ is uniformly bounded in $L_2$ and hence the weak-limit in $L_2$ exists. 
Moreover, it is easily checked that the map $\phi$ is well-defined, i.e., if $ (x_\s)^\bullet=(y_\s)^\bullet \in \prodd_\U L_p^cMO(\s)$, then 
$\w L_2\mbox{-}\lim_{\s,\U} x_\s=\w L_2\mbox{-}\lim_{\s,\U} y_\s$. 
Since $\prodd_\U L_p^cMO(\s)$ is a Banach space, 
the boundedness of $\phi$ implies that $L_p^c\MO=\phi(\prodd_\U L_p^cMO(\s))$ equipped with the quotient norm 
$$\|x\|_{L_p^c\MO}=\inf_{x=\phi((x_\s)^\bullet)} \|(x_\s)^\bullet\|_{\prodd_\U L_p^cMO(\s)}$$ 
is a Banach space.
\qd

We can now state the continuous analogue of the noncommuative Fefferman-Stein duality. 

\begin{theorem}\label{fsduality}
Let $1\leq p <2$. 
Then 
$$(\H_p^c)^*=L_{p'}^c\MO \quad \mbox{with equivalent norms}.$$
Moreover, 
$$\lambda_p^{-1} \|x\|_{L_{p'}^c\MO } \leq \|x\|_{(\H_p^c)^*} \leq \sqrt{2} \|x\|_{L_{p'}^c\MO }.$$
\end{theorem}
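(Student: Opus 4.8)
The plan is to prove the two inequalities separately. The upper bound $\|\cdot\|_{(\H_p^c)^*}\le\sqrt2\|\cdot\|_{L_{p'}^c\MO}$ comes straight from the discrete Fefferman--Stein duality (Theorem \ref{FSdiscr}) by an ultrafilter limit, while the lower bound requires the complementation/ultraproduct machinery of the spaces $K_p^c(\U)$. For the first inclusion, given $x\in L_{p'}^c\MO$ I would fix a representation $x=\w L_2\mbox{-}\lim_{\s,\U}x_\s$ with $\lim_{\s,\U}\|x_\s\|_{L_{p'}^cMO(\s)}<\infty$ and set $\varphi_x(y)=\tau(x^*y)$ for $y\in\M$. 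For each fixed $\s$ the martingale differences are orthogonal, so $\tau(x_\s^*y)=\sum_{t\in\s}\tau(d_t^\s(x_\s)^*d_t^\s(y))$ and Theorem \ref{FSdiscr} gives $|\tau(x_\s^*y)|\le\sqrt2\|x_\s\|_{L_{p'}^cMO(\s)}\|y\|_{H_p^c(\s)}$. Since $\tau(x^*y)=\lim_{\s,\U}\tau(x_\s^*y)$ and ultrafilter limits are multiplicative on convergent scalar nets, passing to the limit and taking the infimum over representations yields $|\tau(x^*y)|\le\sqrt2\|x\|_{L_{p'}^c\MO}\|y\|_{\H_p^c}$; density of $\M$ in $\H_p^c$ then gives $\varphi_x\in(\H_p^c)^*$ with the desired bound.

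For the reverse inclusion $(\H_p^c)^*\subset L_{p'}^c\MO$ I would argue as in Lemma \ref{dualhatHpc}. Given $\varphi\in(\H_p^c)^*$, I would first extend it through the isometric embedding $\iota:\H_p^c\to K_p^c(\U)$ of \eqref{iota}: by Hahn--Banach together with Corollary \ref{dualintpolKpc}(i), which identifies $(K_p^c(\U))^*=K_{p'}^c(\U)$ isometrically, there is $\eta=(\eta_\s)^\bullet\in K_{p'}^c(\U)$, $\eta_\s=\sum_{t\in\s}e_{t,0}\ten\eta_\s(t)$, representing the extension with $\lim_{\s,\U}\|\eta_\s\|_{L_{p'}(\M;\ell_2^c(\s))}=\|\varphi\|$. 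Setting $x_\s=\sum_{t\in\s}d_t^\s(\eta_\s(t))$, the Doob-inequality computation of Lemma \ref{dualhatHpc}, now for a single family instead of a convex combination, gives $\|x_\s\|_{L_{p'}^cMO(\s)}\le 3\delta_{p'/2}^{1/2}\|\eta_\s\|_{L_{p'}(\M;\ell_2^c(\s))}$, so that $\lim_{\s,\U}\|x_\s\|_{L_{p'}^cMO(\s)}\le\lambda_p\|\varphi\|$ with $\lambda_p=3\delta_{p'/2}^{1/2}$. The estimate $\|x_\s\|_2\le\sqrt2\|x_\s\|_{L_{p'}^cMO(\s)}$ makes $(x_\s)$ uniformly bounded in $L_2$, so $x=\w L_2\mbox{-}\lim_{\s,\U}x_\s$ exists; and for $y\in\M$ the selfadjointness of the conditional expectations gives $\tau(x_\s^*y)=\sum_{t\in\s}\tau(\eta_\s(t)^*d_t^\s(y))=(\eta_\s|\iota(y)_\s)$, whence $\tau(x^*y)=(\eta|\iota(y))=\varphi(y)$ after taking $\lim_\U$. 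Thus $\varphi$ is represented by $x\in L_{p'}^c\MO$ with $\|x\|_{L_{p'}^c\MO}\le\lambda_p\|\varphi\|$; for $p=1$ one reads $L_\infty^c\MO=\BMO^c$, and since $\delta_{p'/2}\to1$ as $p\to1$ the constant $\lambda_p$ stays bounded.

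The hard part is this second inclusion, and the obstacle is exactly the one flagged in the introduction: a fixed $L_2$-representative of $\varphi$ will in general have $\|x\|_{L_{p'}^cMO(\s)}=\infty$ along the net (a finite value would force $x\in\M$), so one cannot simply take $x_\s=x$. The whole point is to route $\varphi$ through the module duality $(K_p^c(\U))^*=K_{p'}^c(\U)$ in order to produce a genuinely $\s$-dependent family $(x_\s)$ whose discrete $L_{p'}^cMO(\s)$-norms are tamed by Doob's inequality while the weak $L_2$-limit still recovers the functional. I expect the only points needing care to be the verification that the scalar pairings $\tau(x_\s^*y)$ converge to $\tau(x^*y)$ along $\U$ (handled by the weak$^*$ description of $\E_\U$ and the explicit duality bracket) and the bookkeeping of constants so that $\lambda_p$ remains bounded as $p\to1$.
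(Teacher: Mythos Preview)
Your first inclusion matches the paper's argument. One omission: for $p=1$ the unit ball of $\BMO^c$ is defined with an extra $\|\cdot\|_2$-closure, so a general $x\in B_{\BMO^c}$ is only an $L_2$-limit of elements admitting a representation $\w L_2\text{-}\lim_{\s,\U}x_\s$; the paper inserts this additional limit explicitly.

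For the reverse inclusion your route is genuinely different from the paper's. The paper does \emph{not} go through $K_p^c(\U)$: it embeds $\H_p^c$ into the raw ultraproduct $\prod_\U H_p^c(\s)$ via $i_\U(x)=(x)^\bullet$, extends $\varphi$ by Hahn--Banach, and then reads off an element of $\prod_\U L_{p'}^cMO(\s)$ directly from the discrete Fefferman--Stein duality (Theorem~\ref{FSdiscr}). For $1<p<2$ reflexivity gives $(\prod_\U H_p^c(\s))^*=\prod_\U L_{p'}^cMO(\s)$; for $p=1$ one invokes Lemma~\ref{le:ultrapdct} (weak$^*$-density of $\prod_\U BMO^c(\s)$) and takes a further weak-$L_2$ limit, which is precisely why $\BMO^c$ is defined with an $L_2$-closure. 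This delivers the constant $\lambda_p$ of Theorem~\ref{FSdiscr} on the nose. Your approach through $(K_p^c(\U))^*=K_{p'}^c(\U)$ followed by a Doob estimate on the Stein projection is valid for $1<p<2$ and pleasantly reuses the $L_p$-module machinery, but it produces a different constant (of Doob type, $\sim\delta_{p'/2}^{1/2}$) rather than the discrete $\lambda_p$ announced in the statement.

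There is a genuine gap at $p=1$. The identification $(K_1^c(\U))^*=K_\infty^c(\U)$ does hold, but by definition $K_\infty^c(\U)$ is the \emph{strong operator closure} of $\prod_\U L_\infty(\M;\ell_2^c(\s))$ (cut down by $e_\U$). Hence the Hahn--Banach extension $\eta\in K_\infty^c(\U)$ need not be of the form $(\eta_\s)^\bullet$ with $\eta_\s\in L_\infty(\M;\ell_2^c(\s))$, and your formula $x_\s=\sum_t d_t^\s(\eta_\s(t))$ is unavailable as written. One can repair this by embedding further into $\tilde K_1^c(\U)=\prod_\U L_1(\M;\ell_2^c(\s))$ and using Lemma~\ref{le:ultrapdct} to approximate the extension weak$^*$ by a bounded net in $\prod_\U L_\infty(\M;\ell_2^c(\s))$ --- but that is exactly the paper's mechanism transported to a different ultraproduct, and you then need the $L_2$-closure in the definition of $\BMO^c$ to absorb the extra limit.
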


\begin{proof}
We first prove the inclusion $L_{p'}^c\MO  \subset (\H_p^c)^*$ for $1<p<2$. 
We consider $x \in L_{p'}^c\MO$ with $\|x\|_{L_{p'}^c\MO}<1$. 
Then there exists a sequence $(x_\s)_\s$ such that  
$\lim_{\s,\U} \|x_\s\|_{L_{p'}^cMO(\s)} <1$ 
and $x=\w L_2 \mbox{-}\lim_{\s,\U} x_\s$. 
Hence for $y\in \M$ we have $\tau(x^*y) = \lim_{\s,\U} \tau(x_\s^*y)$. 
Recall that the discrete Fefferman-Stein duality for a fixed partition $\s$ implies
$$|\tau(x_\s^*y)|\leq \sqrt{2}\|x_\s\|_{L_{p'}^cMO(\s)}\|y\|_{H_{p}^c(\s)}.$$
Taking the limit we get
$$|\tau(x^*y)| 
\leq \sqrt{2}\lim_{\s,\U} (\|x_\s\|_{L_{p'}^cMO(\s)} \|y\|_{H_p^c(\s)})=  \sqrt{2}(\lim_{\s,\U} \|x_\s\|_{L_{p'}^cMO(\s)} )(\lim_{\s,\U} \|y\|_{H_p^c(\s)})
\leq \sqrt{2}\|y\|_{\H_p^c}.
$$
Since $\M$ is dense in $\H_p^c$, this shows that $x\in (\H_p^c)^*$ and 
$$\|x\|_{(\H_p^c)^*} \leq \sqrt{2}\|x\|_{L_{p'}^c\MO }.$$
The proof of this inclusion in the case $p=1$ is similar. 
Indeed, let $x\in B_{\BMO^c}$. Then there exists a sequence $(x^\lambda)_\lambda$ such that $x=L_2\mbox{-}\lim_\lambda x^\lambda$ and 
$x^\lambda=\w L_2 \mbox{-}\lim_{\s,\U} x_\s^\lambda$, with $\lim_{\s,\U} \|x_\s^\lambda\|_{\BMO^c(\s)} \leq 1$ for all $\lambda$. 
For $y\in \M$ we have
$$|\tau(x^*y)|=|\lim_\lambda  \tau((x^\lambda)^*y)|
\leq \sqrt{2} \lim_\lambda (\lim_{\s,\U} \|x_\s^\lambda\|_{BMO^c(\s)} )(\lim_{\s,\U} \|y\|_{H_1^c(\s)})
\leq \sqrt{2}\|y\|_{\H_1^c}.$$
We deduce that $\BMO^c  \subset (\H_1^c)^*$ by density as above. \\
To prove the converse inclusion, we first embed $\H_p^c$ into an ultraproduct space as follows. 
Note that the map $i_\U:x \in \M \mapsto (x)^\bullet \in \prodd_\U H_p^c(\s)$ is isometric 
with respect to the norms $\|\cdot\|_{\H_p^c}$ and $\|\cdot\|_{\prodd_\U H_p^c(\s)}$. 
Hence by the density of $\M$ in $\H_p^c$ we can extend $i_\U$ to an isometric embedding of $\H_p^c$ into $\prodd_\U H_p^c(\s)$. 
Let $\varphi \in (\H_{p}^c)^*$ be a functional of norm less than one. 
By the Hahn-Banach Theorem we can extend $\varphi$ to a functional on $\prodd_\U H_p^c(\s)$ of norm less than one. 
We now need to consider the dual space of an ultraproduct. 
Recall that the situation is much easier in the reflexive case (see subsection \ref{ultra}), 
hence we start with the case $1<p<2$. 
In this situation $\prodd_\U H_p^c(\s)$ is reflexive, and Lemma \ref{dualultrapdct} gives
$$\Big(\prodd_\U H_p^c(\s)\Big)^*=\prodd_\U (H_p^c(\s))^*\cong\prodd_\U L_{p'}^cMO(\s),$$
where the constants in the equivalence of the norms come from the discrete case (see Theorem \ref{FSdiscr}). 
Then there exists $z=(z_\s)^\bullet \in \prodd_\U L_{p'}^cMO(\s)$ of norm $\leq \lambda_p$ such that 
$$\varphi(y)=(z|i_\U(y)) , \quad \forall y \in \H_p^c.$$
Applying this to $y\in \M$ we get 
$$\varphi(y)= ( z|(y)^\bullet)
=\lim_{\s,\U} \tau(z_\s^*y)
=\tau(x^*y),$$
where $x=\w L_2\mbox{-}\lim_{\s,\U}z_\s$ is in $L_{p'}^c\MO$. By the density of $\M$ in $\H_p^c$ this proves that $\varphi$ is represented by $x$ and 
$$\|x\|_{L_{p'}^c\MO} \leq \lim_{\s,\U}\|z_\s\|_{L_{p'}^cMO(\s)} \leq \lambda_p \|x\|_{(\H_{p}^c)^*}.$$
For $p=1$, Lemma \ref{le:ultrapdct} says that the unit ball of $\prodd_\U (H_1^c(\s))^* \cong \prodd_\U BMO^c(\s)$ 
is weak$^*$-dense in the unit ball of $(\prodd_\U H_1^c(\s))^*$. 
Then there exists a sequence $(z^\lambda)_\lambda$, 
where $z^\lambda=(z^\lambda_\s)^\bullet \in \prodd_\U BMO^c(\s)$ is of norm less than $\sqrt{3}$ for all $\lambda$, such that 
$$\varphi(y)=\lim_\lambda (z^\lambda|i_\U(y)), \quad \forall y \in \H_1^c.$$
Applying this to $y\in \M$ we get 
$$\varphi(y)= \lim_\lambda (z^\lambda |(y)^\bullet)
=\lim_\lambda  \lim_{\s,\U} \tau((z^\lambda_\s)^*y)
=\lim_\lambda  \tau((x^\lambda)^*y),$$
where $x^\lambda=\w L_2\mbox{-}\lim_{\s,\U} z^\lambda_\s$ is in $\BMO^c$ of norm less than $\sqrt{3}$. 
Since for all $\lambda$ we have $\|z^\lambda \|_{\prodd_\U L_2(\M)} \leq \sqrt{2} \|z^\lambda \|_{\prodd_\U BMO^c(\s)} \leq \sqrt{6}$, 
the sequence $(x^\lambda)_\lambda$ is uniformly bounded in $L_2(\M)$. 
Setting $x=\w L_2\mbox{-}\lim_\lambda x^\lambda$ we obtain $\varphi(y)= \tau(x^*y)$ for all $y\in \M$. 
We can approximate the weak-limit $x$ by convex combinations of the $x^\lambda$'s in the $L_2$-norm. 
Since $x^\lambda \in \sqrt{3}B_{\BMO^c}$ for all $\lambda$, the convexity of the unit ball of $\BMO^c$ implies that 
any convex combination $\sum_m\alpha_m x^{\lambda_m}$ is still in $\sqrt{3}B_{\BMO^c}$. 
Thus by the definition of $B_{\BMO^c}$, we obtain that $x \in \sqrt{3}B_{\BMO^c}$. 
By the density of $\M$ in $\H_1^c$ this proves that $\varphi$ is represented by $x$ and 
$$\|x\|_{\BMO^c} \leq \sqrt{3} \|x\|_{(\H_{1}^c)^*}.$$
\qd

This duality implies the following property.

\begin{cor}\label{LpcMOclosed}
Let $2<p\leq \infty$. 
Let $(x_\lambda)_\lambda$ be a sequence in $L_2(\M)$ such that $\|x_\lambda\|_{L_{p}^c\MO}\leq 1$ for all $\lambda$ and
$x=\w L_2 \mbox{-} \lim_\lambda x_\lambda$.
Then $x\in L_{p}^c\MO$ with $\|x\|_{L_{p}^c\MO} \leq \sqrt{2}\lambda_{p}$.
\end{cor}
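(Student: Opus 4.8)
The plan is to read Corollary \ref{LpcMOclosed} as a closedness statement for the dual space $(\H_{p'}^c)^*$ and to obtain it directly from the Fefferman--Stein duality of Theorem \ref{fsduality}. Applying that theorem with the conjugate exponent $p'$ (legitimate since $2<p\leq\infty$ forces $1\leq p'<2$, and $(p')'=p$) identifies $L_p^c\MO$ with $(\H_{p'}^c)^*$, with comparison constant $\sqrt{2}$ in one direction and $\lambda_{p'}$ in the other. Morally, the unit ball of a dual space is weak$^*$-closed; the only thing to verify is that the $L_2$-weak topology, in which $x$ is given as a limit, matches the weak$^*$ topology of $(\H_{p'}^c)^*$ on a sufficiently rich set of test elements, namely on the dense subalgebra $\M\subset\H_{p'}^c$, where both pairings reduce to the single trace pairing $\tau(\cdot^{*}\cdot)$.

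First I would record that, through Theorem \ref{fsduality}, each $x_\lambda$ defines a functional $\varphi_\lambda\in(\H_{p'}^c)^*$ acting on $y\in\M$ by $\varphi_\lambda(y)=\tau(x_\lambda^{*}y)$, with $\|\varphi_\lambda\|_{(\H_{p'}^c)^*}\leq\sqrt{2}\,\|x_\lambda\|_{L_p^c\MO}\leq\sqrt{2}$. Next, for a fixed $y\in\M\subset L_2(\M)$, the assumption $x=\w L_2\mbox{-}\lim_\lambda x_\lambda$ gives $\tau(x_\lambda^{*}y)\to\tau(x^{*}y)$. Combining this convergence with the uniform bound yields
\[
|\tau(x^{*}y)|=\lim_\lambda|\tau(x_\lambda^{*}y)|\leq\sqrt{2}\,\|y\|_{\H_{p'}^c},\qquad y\in\M .
\]
Since $\M$ is dense in $\H_{p'}^c$, the functional $y\mapsto\tau(x^{*}y)$ extends to some $\varphi\in(\H_{p'}^c)^*$ with $\|\varphi\|_{(\H_{p'}^c)^*}\leq\sqrt{2}$.

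Then I would feed $\varphi$ back into Theorem \ref{fsduality}, which produces an element $w\in L_p^c\MO=(\H_{p'}^c)^*$ representing $\varphi$, with $\varphi(y)=\tau(w^{*}y)$ for $y\in\M$ and $\|w\|_{L_p^c\MO}\leq\lambda_{p'}\|\varphi\|_{(\H_{p'}^c)^*}\leq\sqrt{2}\,\lambda_{p'}$. The last step is to identify $w$ with the prescribed limit $x$: from $\tau(w^{*}y)=\varphi(y)=\tau(x^{*}y)$ for all $y\in\M$ and the density of $\M$ in $L_2(\M)$, one concludes $w=x$ in $L_2(\M)$. Hence $x\in L_p^c\MO$ with $\|x\|_{L_p^c\MO}\leq\sqrt{2}\,\lambda_{p'}$, which is the asserted bound (here $\lambda_{p'}$ is the duality constant for the conjugate exponent, so that at $p=\infty$ this is the $\BMO^c$ case).

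The hard part, and really the only delicate point, is the interplay between the two topologies: $x$ is handed to us as an $L_2$-weak limit, whereas the norm control lives in the duality with $\H_{p'}^c$. What makes the argument go through is that $\M$ is simultaneously dense in $L_2(\M)$ and in $\H_{p'}^c$, and that on $\M$ the two dual pairings coincide with the trace pairing; this is precisely what allows the uniform $(\H_{p'}^c)^*$-bound to be transported along the $L_2$-weak limit, and what lets one recognize the abstract representative $w$ furnished by the duality as the concrete operator $x$.
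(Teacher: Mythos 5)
Your proof is correct and follows essentially the same route as the paper: both directions of the Fefferman--Stein duality (Theorem \ref{fsduality}) combined with the density of $\M$ (resp.\ $L_2(\M)$) in $\H_{p'}^c$, with your version merely making the extension and identification steps explicit. The only discrepancy is notational: you write the constant as $\lambda_{p'}$ (consistent with the literal indexing in Theorem \ref{fsduality}), while the paper's statement records it as $\lambda_p$.
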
 

\begin{proof}
Using Theorem \ref{fsduality} 
and the density of $L_2(\M)$ in $\H_{p'}^c$, we can write 
$$\|x\|_{L_{p}^c\MO} \leq \lambda_{p} \sup_{y\in L_2(\M), \|y\|_{\H_{p'}^c}\leq 1} |\tau(x^*y)|.$$
Note that for all $y\in L_2(\M), \|y\|_{\H_{p'}^c}\leq 1$ we have
$$|\tau(x^*y)|\leq \lim \sup_\lambda |\tau(x_\lambda^*y)|\leq \sqrt{2}\lim \sup_\lambda \|x_\lambda\|_{L_{p}^c\MO}\|y\|_{\H_{p'}^c}\leq \sqrt{2}.$$
Thus $x\in L_{p}^c\MO$ with $\|x\|_{L_{p}^c\MO} \leq \sqrt{2}\lambda_{p}$.
\qd

Combining Theorem \ref{fsduality} with Corollary \ref{dual-intpolHpc} (i) we immediately get the

\begin{cor}\label{LpcMOHpc}
Let $2<p<\infty$. Then 
$$L_p^c\MO=\H_p^c \quad \mbox{with equivalent norms}.$$
\end{cor}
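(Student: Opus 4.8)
The plan is to deduce the identity by computing the dual of a single space $\H_{p'}^c$ in two different ways. Fix $2<p<\infty$ and let $p'$ be the conjugate exponent, so that $1<p'<2$ lies in the range covered by Theorem \ref{fsduality}. First I would apply Theorem \ref{fsduality} with $p$ replaced by $p'$: since $(p')'=p$, this gives $(\H_{p'}^c)^*=L_p^c\MO$ with equivalent norms, and more precisely $\lambda_{p'}^{-1}\|x\|_{L_p^c\MO}\leq \|x\|_{(\H_{p'}^c)^*}\leq \sqrt{2}\,\|x\|_{L_p^c\MO}$. Next I would apply Corollary \ref{dual-intpolHpc}~(i), again with $p$ replaced by $p'$ (legitimate since $1<p'<\infty$), obtaining $(\H_{p'}^c)^*=\H_p^c$ with equivalent norms. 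Combining the two identifications of $(\H_{p'}^c)^*$ yields $L_p^c\MO=\H_p^c$ with equivalent norms.

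The one point that requires care is that the two descriptions of $(\H_{p'}^c)^*$ must be realized through the \emph{same} duality bracket, so that the resulting identity is a genuine equality of subspaces of $L_2(\M)$ and not merely an abstract isomorphism. Both identifications represent a functional $\varphi\in(\H_{p'}^c)^*$ by an element $x\in L_2(\M)$ via $\varphi(y)=\tau(x^*y)$: for the Fefferman--Stein duality this is exactly how the representing element is produced in the proof of Theorem \ref{fsduality}, while for the duality $(\H_{p'}^c)^*=\H_p^c$ it follows from Remark \ref{dualitybracket}, according to which $(x|y)_{\H_p^c,\H_{p'}^c}=\tau(x^*y)$ whenever one of the two entries is sufficiently integrable. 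Since $\M$ is dense in $\H_{p'}^c$ and its elements belong to $L_{\max(p',2)}(\M)=L_2(\M)$, it suffices to compare the two pairings on $y\in\M$, where they visibly agree. Hence for a fixed $x\in L_2(\M)$ the conditions $x\in\H_p^c$ and $x\in L_p^c\MO$ are equivalent, with comparable norms, the constants being the products of those appearing in Theorem \ref{fsduality} and Corollary \ref{dual-intpolHpc}.

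I do not expect a serious obstacle here: the content of the corollary is entirely contained in the two quoted results, and the only genuine verification is the bookkeeping of the conjugate exponent together with the compatibility of the two trace pairings described above.
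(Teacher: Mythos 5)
Your argument is exactly the paper's: the corollary is stated as an immediate consequence of combining Theorem \ref{fsduality} (applied to $p'$) with Corollary \ref{dual-intpolHpc}~(i), identifying the two descriptions of $(\H_{p'}^c)^*$. Your additional remark on the compatibility of the two duality pairings via Remark \ref{dualitybracket} is a correct and worthwhile verification that the paper leaves implicit.
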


\re\label{normLpcMO}
In particular, we deduce the following properties for $L_p^c\MO$, $2<p<\infty$:
\begin{enumerate}
 \item[(i)] $L_{p}^c\MO$ is independent of the choice of the ultrafilter $\U$, up to equivalent norm.
 \item[(ii)] $L_{p}(\M)$ is norm dense in $L_{p}^c\MO$. 
 \item[(iii)] For $x\in L_{p}(\M)$,
$$ \|x\|_{L_{p}^c\MO} = \lim_{q\to p}  \|x\|_{L_{q}^c\MO} \simeq  \lim_{\si,\U} \|x\|_{L_{p}^cMO(\si)} $$
for every ultrafilter $\U$. 
In particular, up to equivalent norms, $L_{p}^c\MO$ is the completion of  $L_{p}(\M)$ with respect to the norm $\lim_{\s,\U} \|\cdot\|_{L_{p}^cMO(\s)}$.
 \item[(iv)] The $\|\cdot \|_{L_{p}^cMO(\si)}$-norm is decreasing in $\s$ (up to a constant). 
\end{enumerate}
Note that for (iii), if $x \in L_p(\M)$ the fact that $\|x\|_{L_{p}^c\MO}\simeq \|x\|_{\H_p^c}$ combined with Lemma \ref{normHpc} ensures that 
$\lim_{q\to p}  \|x\|_{L_{q}^c\MO}$ exists. Since for $2<q<p<r$ we have $\|x\|_{L_{q}^c\MO}\leq \|x\|_{L_{p}^c\MO} \leq \|x\|_{L_{r}^c\MO}$, 
sending $q$ and $r$ to $p$ we obtain that the limit is in fact equal to $\|x\|_{L_{p}^c\MO}$. 
\mar 

Concerning the case $p=\infty$, we can also deduce some nice properties for $\BMO^c$ from Theorem \ref{fsduality}. 

\begin{cor}\label{normBMOc}
\begin{enumerate}
 \item[(i)] $\BMO^c$ is independent of the choice of the ultrafilter $\U$, up to equivalent norm.
 \item[(ii)] $\M$ is weak$^*$-dense in $\BMO^c$. 
 \item[(iii)] For $x\in \M$,
$$ \|x\|_{\BMO^c} \simeq  \sup_{2<p<\infty}  \|x\|_{L_{p}^c\MO} \leq \lim_{\si,\U} \|x\|_{BMO^c(\si)} $$
for every ultrafilter $\U$. 
 \item[(iv)] The $\|\cdot \|_{BMO^c(\si)}$-norm is decreasing in $\s$ (up to a constant). \\
 More precisely, for $x\in \M$ and $\s\subset \s'$ we have  
$$\|x\|_{BMO^c(\s')}\leq 2 \|x\|_{BMO^c(\s)}.$$
\end{enumerate}
\end{cor}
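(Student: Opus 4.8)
The plan is to handle the four assertions in the order (iv), (iii), then (i) and (ii): (iv) is a self-contained pointwise estimate, (iii) identifies the $\BMO^c$-norm on $\M$ through duality, and (i)--(ii) then follow quickly from the Fefferman--Stein identification $\BMO^c=(\H_1^c)^*$ of Theorem \ref{fsduality}. For (iv), I would fix $\s\subset\s'$ and $s\in\s'$, and let $t$ be the least element of $\s$ with $t\geq s$, so that $t^-(\s)\leq s^-(\s')<s\leq t$. Writing
$$x-x_{s^-(\s')}=\big(x-x_{t^-(\s)}\big)-\E_{s^-(\s')}\big(x-x_{t^-(\s)}\big),$$
the operator inequality $|a-b|^2\leq 2|a|^2+2|b|^2$ yields
$$\E_s|x-x_{s^-(\s')}|^2\leq 2\,\E_s|x-x_{t^-(\s)}|^2+2\,\E_s\big|\E_{s^-(\s')}(x-x_{t^-(\s)})\big|^2.$$
Using the tower property of the conditional expectations ($\E_s\E_t=\E_s$ since $s\leq t$, and $\E_s\E_{s^-(\s')}=\E_{s^-(\s')}=\E_{s^-(\s')}\E_t$ since $s^-(\s')<s\leq t$) together with the Kadison--Schwarz inequality, both terms on the right are dominated in $L_\infty$ by $\|\E_t|x-x_{t^-(\s)}|^2\|_\infty\leq\|x\|_{BMO^c(\s)}^2$. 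Taking the supremum over $s\in\s'$ (the extremal index $s=0$ being trivial) gives $\|x\|_{BMO^c(\s')}\leq 2\|x\|_{BMO^c(\s)}$.

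For the inequality in (iii), I would observe that for $x\in\M$ the constant family yields the admissible description $x=\w L_2\mbox{-}\lim_{\s,\U}x$, so by definition $\|x\|_{L_p^c\MO}\leq\lim_{\s,\U}\|x\|_{L_p^cMO(\s)}$; combined with the elementary bound $\|x\|_{L_p^cMO(\s)}\leq\|x\|_{BMO^c(\s)}$ (the $L_{p/2}(\M;\ell_\infty)$-norm of a positive family is dominated by the supremum of the $L_\infty$-norms of its entries, as $\M$ is finite), a supremum over $p$ gives $\sup_{2<p<\infty}\|x\|_{L_p^c\MO}\leq\lim_{\s,\U}\|x\|_{BMO^c(\s)}$. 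For the equivalence $\|x\|_{\BMO^c}\simeq\sup_{2<p<\infty}\|x\|_{L_p^c\MO}$ I would pass to preduals: Theorem \ref{fsduality} gives $\|x\|_{\BMO^c}\simeq\|x\|_{(\H_1^c)^*}$ and $\|x\|_{L_p^c\MO}\simeq\|x\|_{(\H_{p'}^c)^*}$, with equivalence constants that stay bounded as $p\to\infty$, i.e.\ $p'\to1$. It then suffices to prove
$$\|x\|_{(\H_1^c)^*}=\sup_{1<q<2}\|x\|_{(\H_q^c)^*}.$$
The inclusion $\{\,\|y\|_{\H_q^c}\leq1\,\}\subseteq\{\,\|y\|_{\H_1^c}\leq1\,\}$, coming from the monotonicity of $q\mapsto\|y\|_{\H_q^c}$, gives ``$\geq$''; for ``$\leq$'' one uses $|\tau(x^*y)|\leq\|x\|_{(\H_q^c)^*}\|y\|_{\H_q^c}$ and lets $q\to1$, invoking $\lim_{q\to1}\|y\|_{\H_q^c}=\|y\|_{\H_1^c}$ from Lemma \ref{normHpc}.

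Finally, (i) follows directly from $\BMO^c=(\H_1^c)^*$ (Theorem \ref{fsduality}) together with the $\U$-independence of $\H_1^c$ (Theorem \ref{indpdtU}): the dual of a space that is $\U$-independent up to equivalent norm is again $\U$-independent up to equivalent norm. For (ii), I would show that the pre-annihilator of $\M$ in $\H_1^c$ is trivial: since $\H_1^c\hookrightarrow L_1(\M)$ by Proposition \ref{injHpc2} and $\M$ norms $L_1(\M)$, any $y\in\H_1^c$ with $\tau(x^*y)=0$ for every $x\in\M$ must vanish, whence $\M$ is weak$^*$-dense in $\BMO^c=(\H_1^c)^*$. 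I expect the main obstacle to be the constant bookkeeping in the duality step of (iii): one has to verify that the Fefferman--Stein constants $\lambda_q$ remain bounded as $q\to1$ (as recorded in Theorem \ref{fsduality}) and that the monotonicity in $q$ is uniform enough to exchange the supremum over $p$ with the limit $q\to1$ at a controlled cost.
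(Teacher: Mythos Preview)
Your proposal is correct and follows essentially the same route as the paper: (i) and (ii) are deduced from the Fefferman--Stein duality $\BMO^c=(\H_1^c)^*$ together with Theorem~\ref{indpdtU} and Proposition~\ref{injHpc2}, and (iii) is obtained by passing to preduals via Theorem~\ref{fsduality} and using $\lim_{q\to 1}\|y\|_{\H_q^c}=\|y\|_{\H_1^c}$ from Lemma~\ref{normHpc}, exactly as the paper does. For (iv) the paper uses the same underlying inequality $|a-b|^2\le 2|a|^2+2|b|^2$, but packages it through an auxiliary column matrix $b=\sum_{s\ge s(u)}e_{s,0}\otimes d_s^\s(x)$; your direct computation on $x$ is a cleaner execution of the same idea and yields the same constant~$2$.
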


\begin{proof}
Assertions (i) and (ii) follow directly from Theorem \ref{fsduality} and Theorem \ref{indpdtU}, Proposition \ref{injHpc2} respectively.
For $x\in \M$ and $2<p<\infty$, Corollary \ref{LpcMOclosed} gives $\|x\|_{L_{p}^c\MO} \leq \sqrt{2}\lambda_p \|x\|_{\BMO^c}$. 
Conversely, by the density of $\M$ in $\H_1^c$ we have  
$$\|x\|_{\BMO^c} \leq \sqrt{3} \|x\|_{(\H_1^c)^*}=\sqrt{3} \sup_{y\in \M, \|y\|_{\H_1^c}\leq 1}|\tau(x^*y)|.$$
Let $\varepsilon >0$. 
By Lemma \ref{normHpc}, for each $y\in \M, \|y\|_{\H_1^c}\leq 1$ there exists $1<p(y)<2$ such that $\|y\|_{\H_{p(y)}^c}\leq 1 +\varepsilon$. 
Applying Theorem \ref{fsduality} to $\frac{1}{p(y)}+\frac{1}{p(y)'}=1$ we get 
$$|\tau(x^*y)| \leq \sqrt{2}\|x\|_{L_{p(y)'}^c\MO}\|y\|_{\H_{p(y)}^c} 
\leq \sqrt{2}(1+\varepsilon)  \sup_{2<p<\infty}  \|x\|_{L_{p}^c\MO}. $$
Sending $\eps$ to $0$, we obtain 
$$ (\sqrt{2}\lambda_p)^{-1}\sup_{2<p<\infty}  \|x\|_{L_{p}^c\MO} \leq \|x\|_{\BMO^c} \leq \sqrt{6}  \sup_{2<p<\infty}  \|x\|_{L_{p}^c\MO} .$$
Then by Remark \ref{normLpcMO} (iii) we deduce 
$$ \|x\|_{\BMO^c} \simeq \sup_{2<p<\infty}  \|x\|_{L_{p}^c\MO} \simeq \sup_{2<p<\infty}  \lim_{\si,\U} \|x\|_{L_{p}^cMO(\si)} 
\leq \lim_{\si,\U} \|x\|_{BMO^c(\si)}.$$
Finally, (iv) comes from the reversed monotonicity result for the $H_1^c(\s)$-norms by duality. 
But this approach yields a constant $\sqrt{12}$, which can be improved by a direct proof that we include below. 
Let $x\in \M$ and $\s\subset \s'$. 
Fix $u\in \s'$, there exists a unique element $s(u)\in \s$, satisfying $s(u)^-(\s)\leq u^-(\s')<u\leq s(u)$. 
Observe that for $b \in \B(\ell_2(\s))\oten \M$ we have by contractivity of the conditional expectation
$$\|\E_u|b-\E_{u^-(\s')}(b)|^2\|_\infty 
\leq 2   (\|\E_u|b|^2\|_\infty +\|\E_u|\E_{u^-(\s')}(b)|^2\|_\infty) 
\leq 4 \|\E_u(\E_{s(u)}|b|^2)\|_\infty
\leq 4 \|\E_{s(u)}|b|^2\|_\infty. $$
Applying this to 
$$b=\sum_{s \in \s, s\geq {s(u)}} e_{s,0}\ten d_s^\s(x) \in \B(\ell_2(\s))\oten \M$$
we get
\begin{align*}
\|\E_u|b-\E_{u^-(\s')}(b)|^2\|_{\B(\ell_2(\s))\oten \M} 
&=\Big\|\E_u\Big(\sum_{v \in \s', v\geq u } |d_v^{\s'}(b)|^2\Big)\Big\|_{\B(\ell_2(\s))\oten \M} \\
&\leq 4 \|\E_{s(u)}|b|^2\|_{\B(\ell_2(\s))\oten \M}
=4\Big\|\E_{s(u)}\Big(\sum_{s \in \s, s\geq {s(u)}}|d_s^\s(x)|^2\Big)\Big\|_\infty.
\end{align*}
Recall that 
$$d_{v}^{\s'}(d_{s}^\s (x))=
\left\{\begin{array}{cc}
d_{v}^{\s'} (x)&\quad \mbox{if} \quad s^-(\s)\leq v^-(\s')<v\leq  s\\
0&  \quad \mbox{otherwise}
\end{array}\right..$$
Note that $s(\cdot)$ is monotonous, i.e., for $u,v \in \s'$, $v\geq u$ implies $s(v)\geq s(u)$. 
Hence 
$$d_v^{\s'}(b)=\sum_{s \in \s, s\geq {s(u)}} e_{s,0}\ten d_v^{\s'}(d_s^\s(x))=e_{s(v),0}\ten d_v^{\s'}(x),$$
and
$$\Big\|\E_u\Big(\sum_{v \in \s', v\geq u } |d_v^{\s'}(b)|^2\Big)\Big\|_{\B(\ell_2(\s))\oten \M}
=\Big\|\E_u\Big(\sum_{v \in \s', v\geq u } |d_v^{\s'}(x)|^2\Big)\Big\|_\infty .$$
At the end we showed that for each $u\in \s'$, 
$$\|\E_u|x-\E_{u^-(\s')}(x)|^2\|_\infty^{1/2} \leq 2 \|x\|_{BMO^c(\s)},$$
which yields the required result by taking the supremum over $u\in \s'$. 
\qd

We end this subsection with the following characterization of the $L_p^c\MO$-spaces. 
Observe that this characterization also holds true for $p=\infty$, 
hence this allows us to consider the spaces $L_p^c\MO$ and $\BMO^c$ in a similar way. 

\begin{prop}\label{ballLpcMO}
Let $2<p\leq \infty$. 
Then the unit ball of $L_p^c\MO$ is equivalent to 
$$\mathcal{B}_p=\overline{\{x \in L_2(\M) : \lim_{\s,\U} \|x\|_{L_p^cMO(\s)}\leq 1\}}^{\|\cdot\|_2}.$$
\end{prop}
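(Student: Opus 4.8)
The plan is to show that the two convex sets are comparable, i.e.\ to prove inclusions $\mathcal{B}_p\subset C\,B$ and $B\subset C'\,\mathcal{B}_p$, where $B=B_{L_p^c\MO}$ denotes the closed unit ball of $L_p^c\MO$ (for $p=\infty$ read $L_\infty^c\MO=\BMO^c$ and $\|\cdot\|_{L_\infty^cMO(\s)}=\|\cdot\|_{BMO^c(\s)}$). Throughout put $S_p=\{x\in L_2(\M):\lim_{\s,\U}\|x\|_{L_p^cMO(\s)}\leq 1\}$, so that $\mathcal{B}_p=\overline{S_p}^{\|\cdot\|_2}$. The inclusion $\mathcal{B}_p\subset C\,B$ is the easy one: for $2<p<\infty$, testing the infimum defining $\|\cdot\|_{L_p^c\MO}$ against the constant family $x_\s=x$ shows $S_p\subset B$, and since by Corollary \ref{LpcMOclosed} the ball $B$ is $L_2$-closed up to the factor $\sqrt2\lambda_p$, taking the $L_2$-closure gives $\mathcal{B}_p\subset\sqrt2\lambda_p\,B$; for $p=\infty$ the constant families already lie in the set whose $L_2$-closure is $B_{\BMO^c}$, so $\mathcal{B}_\infty\subset B_{\BMO^c}$ directly.

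For the reverse inclusion, let $x$ satisfy $\|x\|_{L_p^c\MO}\leq 1$ and fix $\delta>0$ (the case $p=\infty$ carries one harmless extra outer $L_2$-limit, dealt with last). First I would choose a representation $x=\w L_2\mbox{-}\lim_{\s,\U} x_\s$ with $\lim_{\s,\U}\|x_\s\|_{L_p^cMO(\s)}\leq 1+\delta$ and set $A=\{\s:\|x_\s\|_{L_p^cMO(\s)}\leq 1+\delta\}\in\U$. The family $(x_\s)_{\s\in A}$ is bounded in $L_2(\M)$ because $\|x_\s\|_2\leq\sqrt2\,\|x_\s\|_{L_p^cMO(\s)}$, and its weak limit along $\U$ is still $x$; by Mazur's lemma the weak and norm closures of $\mathrm{conv}\{x_\s:\s\in A\}$ agree, so there is a finite convex combination $y=\sum_{m=1}^M\alpha_m x_{\s_m}$ with $\s_m\in A$ and $\|x-y\|_2<\delta$. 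Writing $\s_0=\bigcup_m\s_m$, the triangle inequality for $\|\cdot\|_{L_p^cMO(\s)}$ together with its monotonicity in the partition (Remark \ref{normLpcMO}(iv), resp.\ Corollary \ref{normBMOc}(iv) with constant $2$ when $p=\infty$) yields, for every $\s\supset\s_0$,
$$\|y\|_{L_p^cMO(\s)}\leq\sum_m\alpha_m\|x_{\s_m}\|_{L_p^cMO(\s)}\leq c_p\sum_m\alpha_m\|x_{\s_m}\|_{L_p^cMO(\s_m)}\leq c_p(1+\delta),$$
with $c_p$ the monotonicity constant. Since $U_{\s_0}\in\U$ this gives $\lim_{\s,\U}\|y\|_{L_p^cMO(\s)}\leq c_p(1+\delta)$, so $y\in c_p(1+\delta)\,S_p$; as $\|x-y\|_2<\delta$ and $\mathcal{B}_p$ is $L_2$-closed, letting $\delta\to0$ produces $x\in c_p\,\mathcal{B}_p$. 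For $p=\infty$ one applies this to each term of an outer limit $x=L_2\mbox{-}\lim_\lambda x^\lambda$ representing an element of $B_{\BMO^c}$ and closes once more in $L_2$.

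The hard part is this reverse inclusion, and the obstruction is precisely that $x$ is only a \emph{weak} $L_2$-limit of the $x_\s$, while the individual $x_\s$ need not sit in any dilate of $S_p$. The resolution is to trade weak convergence for norm convergence via convex combinations (Mazur), but this is useful only if the discrete $L_p^cMO(\s)$-norm of such a combination can be controlled \emph{uniformly along $\U$}. This is exactly where the two structural inputs are essential: the triangle inequality controls the convex combination at each individual partition, while the monotonicity of $\|\cdot\|_{L_p^cMO(\s)}$ in $\s$ propagates the bound obtained at the finitely many $\s_m$ to all larger partitions $\s\supset\s_0$ — which is the only regime the ultrafilter $\U$ actually sees.
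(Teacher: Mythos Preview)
Your proof is correct and follows essentially the same approach as the paper: both directions use Corollary~\ref{LpcMOclosed} for the easy inclusion, and for the reverse inclusion both combine Mazur's lemma (to pass from weak to norm $L_2$-convergence via convex combinations) with the monotonicity of the discrete $L_p^cMO(\s)$-norms (Remark~\ref{normLpcMO}(iv), resp.\ Corollary~\ref{normBMOc}(iv)) to place each convex combination in a fixed dilate of $S_p$. The only cosmetic difference is the order of steps: the paper first observes that each individual $x_{\s'}$ already lies in $k_p(1+\eps)S_p$ and then takes convex combinations, whereas you form the convex combination first and then bound it for $\s\supset\s_0$.
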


\begin{proof}
For $p=\infty$, it is obvious that $\mathcal{B}_\infty \subset B_{\BMO^c}$. 
For $2<p<\infty$, Corollary \ref{LpcMOclosed} implies that $ \mathcal{B}_p\subset \sqrt{2}\lambda_p B_{L_p^c\MO}$. 
Conversely, let $x\in B_{L_p^c\MO}$ for $2<p\leq \infty$. 
It suffices to consider $x=\w L_2\mbox{-}\lim_{\s,\U} x_\s$ with $\lim_{\s,\U} \|x_\s\|_{L_p^cMO(\s)}\leq 1$. 
Let $\eps>0$. We may assume that $\|x_\s\|_{L_p^cMO(\s)}\leq 1+\eps$ for each $\s$. 
For a fixed partition $\s'$, since the $L_p^cMO(\s)$-norms are decreasing we have 
$$\lim_{\s,\U} \|x_{\s'}\|_{L_p^cMO(\s)}\leq k_p  \|x_{\s'}\|_{L_p^cMO(\s')}\leq k_p(1+\eps).$$
Moreover, the family $(x_\s)_\s$ is uniformly bounded in $L_2(\M)$. 
Then $x$ is the limit in $L_2$-norm of convex combinations of the $x_\s$'s. 
Let $y=\sum_m \alpha_m x_{\s^m}$ be such a convex combination, then 
$$\lim_{\s,\U} \|y\|_{L_p^cMO(\s)}
\leq \sum_m \alpha_m  \lim_{\s,\U} \|x_{\s^m}\|_{L_p^cMO(\s)}\leq k_p(1+\eps).$$
Sending $\eps$ to $0$ we get $x\in k_p\mathcal{B}_p$. 
\qd

\subsection{Interpolation}

We end the study of the $\H_p^c$ spaces with the continuous analogue of the interpolation Theorem \ref{intH1cBMOc-discr} (i). 
We will deal with the complex method of interpolation, and we refer to \cite{BL} for informations on interpolation. 
This interpolation result has already been used in the literature and is particularly important in abstract semigroup theory. 

\begin{theorem}\label{intH1cBMOc}
Let $1<p<\infty$. Then
$$\H_p^c=[\BMO^c,\H^c_1]_{\frac{1}{p}}\quad \mbox{with equivalent norms}.$$
\end{theorem}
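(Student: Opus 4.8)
The plan is to transfer the discrete interpolation identity $H_p^c(\s)=[BMO^c(\s),H_1^c(\s)]_{1/p}$ (Theorem \ref{intH1cBMOc-discr}(i)), which holds with constants uniform in the partition $\s$, to the continuous setting through the ultraproduct embeddings already used in the proof of Theorem \ref{fsduality}. Recall that $i_\U\colon x\mapsto (x)^\bullet$ embeds $\H_q^c$ into $\prodd_\U H_q^c(\s)$ (isometrically up to constants) for every $1\le q<\infty$, and that the complex method always admits the contractive inclusion $\prodd_\U[BMO^c(\s),H_1^c(\s)]_{1/p}\hookrightarrow[\prodd_\U BMO^c(\s),\prodd_\U H_1^c(\s)]_{1/p}$, obtained by forming the ultraproduct of uniformly bounded componentwise analytic functions. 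I would prove the two inclusions separately, the first by functoriality and the second by duality; see \cite{BL} for the interpolation facts used.

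For $\H_p^c\subseteq[\BMO^c,\H_1^c]_{1/p}$, the key object is a reduction morphism of couples $R\colon(\prodd_\U BMO^c(\s),\prodd_\U H_1^c(\s))\to(\BMO^c,\H_1^c)$. On the $\H_1^c$ slot, $R$ is the weak-$L_2$ limit $\w L_2\mbox{-}\lim_{\s,\U}$ (the adjoint of $i_\U$), which is bounded since $\|\w L_2\mbox{-}\lim_\s z_\s\|_{\H_1^c}\le\lim_{\s,\U}\|z_\s\|_{H_1^c(\s)}$; on the $\BMO^c$ slot, $R$ is the weak-$L_2$ limit map appearing in the proof of the Fefferman--Stein duality, whose boundedness onto $\BMO^c$ is exactly the content of the definition of $B_{\BMO^c}$ together with Corollary \ref{normBMOc}. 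Since these maps agree on the common part, $R$ is a bounded couple morphism and hence interpolates to a bounded map on $[\prodd_\U BMO^c(\s),\prodd_\U H_1^c(\s)]_{1/p}$. Given $x\in\H_p^c$, the element $i_\U(x)$ lies in $\prodd_\U H_p^c(\s)$, which by the uniform discrete identity coincides with $\prodd_\U[BMO^c(\s),H_1^c(\s)]_{1/p}$; the contractive inclusion above places it in the interpolation space of the ultraproduct couple, and applying $R$ together with $R\circ i_\U=\mathrm{id}$ (which follows from the description of the duality bracket in Remark \ref{dualitybracket}) yields $\|x\|_{[\BMO^c,\H_1^c]_{1/p}}\le C\|x\|_{\H_p^c}$.

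For the reverse inclusion $[\BMO^c,\H_1^c]_{1/p}\subseteq\H_p^c$, I would argue by duality rather than repeat the functoriality argument, because $i_\U$ is not a bounded morphism into $\prodd_\U BMO^c(\s)$. Applying the inclusion just proved at the conjugate exponent gives a bounded identity $\H_{p'}^c\hookrightarrow[\BMO^c,\H_1^c]_{1/p'}$ for all $1<p<\infty$. Dualizing and invoking Fefferman--Stein duality $(\H_1^c)^*=\BMO^c$ and $(\H_p^c)^*=\H_{p'}^c$ (Theorem \ref{fsduality} and Corollary \ref{dual-intpolHpc}(i)), together with Calderón's duality theorem for the complex method, one identifies $[\BMO^c,\H_1^c]_{1/p}$ with a subspace of $([\BMO^c,\H_1^c]_{1/p'})^*$ and transports the bounded inclusion to a bounded identity $[\BMO^c,\H_1^c]_{1/p}\to(\H_{p'}^c)^*=\H_p^c$, which is the identity on the dense common subspace $L_2(\M)$. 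The reflexivity of the interior spaces $\H_q^c$, $1<q<\infty$ (Lemma \ref{reflexivityHpc}), is what makes this passage to adjoints legitimate.

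The main obstacle is precisely this asymmetry of the endpoint $\BMO^c$: it is defined as a weak-$L_2$ closure strictly larger than $\M$, so $\M$ is only weak-$*$ (not norm) dense in it and the constant-family embedding is bounded in only one direction. This both forces the duality detour for the second inclusion and means that the hypotheses of Calderón's theorem --- norm density of $\BMO^c\cap\H_1^c$ in each endpoint --- must be checked with care, exactly as in the discrete theorem of Musat. I would handle this by reducing, via reiteration through the reflexive interior scale $\{\H_q^c\}_{1<q<\infty}$ and Corollary \ref{dual-intpolHpc}(ii), to statements involving only the reflexive spaces, where duality of the complex method is unproblematic.
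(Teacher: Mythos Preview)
Your plan has a genuine gap in the construction of the reduction morphism $R$ on the $\H_1^c$ slot. You claim $R$ is the weak-$L_2$ limit, but a general element $(z_\s)^\bullet\in\prodd_\U H_1^c(\s)$ is only uniformly bounded in $L_1(\M)$ (via $\|z_\s\|_1\le\beta_1\|z_\s\|_{H_1^c(\s)}$), not in $L_2(\M)$, so $\w L_2\mbox{-}\lim_{\s,\U}z_\s$ is in general undefined. Calling it ``the adjoint of $i_\U$'' does not help: the adjoint of $i_\U:\H_1^c\to\prodd_\U H_1^c(\s)$ maps into $(\H_1^c)^*=\BMO^c$, not into $\H_1^c$. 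Without a bounded map $\prodd_\U H_1^c(\s)\to\H_1^c$ agreeing with the weak-$L_2$ limit on the $BMO^c$ side, $R$ is not a morphism of couples and the functoriality step for the inclusion $\H_p^c\subset[\BMO^c,\H_1^c]_{1/p}$ fails. The same obstruction blocks the duality half of your argument, since that relies on the first inclusion at the conjugate exponent; and as you note yourself, Calder\'on's duality theorem needs norm density of the intersection in $\BMO^c$, which is only weak$^*$-dense (Corollary~\ref{normBMOc}(ii)).

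The paper circumvents exactly this difficulty by replacing the endpoint $\H_1^c$ with $L_2(\M)$. On the couple $(\tilde\BMO^c,L_2(\M))$, where $\tilde\BMO^c$ is the completion of $\M$ for $\lim_{\s,\U}\|\cdot\|_{BMO^c(\s)}$, the constant embedding $i_\U$ \emph{is} an isometric couple morphism into $(\prodd_\U BMO^c(\s),\prodd_\U L_2(\M))$; interpolating and using the discrete theorem places $i_\U(x)$ in $\prodd_\U L_p^cMO(\s)$, where the weak-$L_2$ limit $\phi$ is legitimately defined and lands in $L_p^c\MO=\H_p^c$. A closure lemma from \cite{BL} then passes from $\tilde\BMO^c$ to $\BMO^c$. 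The companion inclusion $[\H_1^c,L_2(\M)]_\theta\subset\H_p^c$ comes from the complementation of $\H_p^c$ in $K_p^c(\U)$ (Proposition~\ref{HpccomplKpc}). Equality for both half-couples then follows by duality through $(\H_1^c)^*=\BMO^c$, and the full statement is assembled via reiteration, Wolff's theorem and Corollary~\ref{dual-intpolHpc}(ii). The essential idea you are missing is that one should interpolate against the reflexive endpoint $L_2(\M)$, where all the limit maps are well-defined, and only afterwards recover $\H_1^c$ by duality.
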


\begin{proof} 
By definition $\BMO^c \subset L_2(\M) \subset \H_1^c$, hence the couple $[\BMO^c,\H^c_1]$ is compatible. 
Recall that $\H_p^c$ embeds isometrically into $K_p^c(\U)$ via the map $\iota$ defined by \eqref{iota} for $1\leq p <\infty$, 
and this inclusion is complemented for $1<p<\infty$ by Proposition \ref{HpccomplKpc}. 
Then the fact that the spaces $K_p^c(\U)$ form an interpolation scale for $1\leq p\leq \infty$ 
(by Corollary \ref{dualintpolKpc} (ii)) clearly implies the inclusion 
\begin{equation}\label{intpolH1cL2}
[\H_1^c,L_2(\M)]_{\theta} \subset \H_p^c
\end{equation} 
for $1<p<2$ and $\frac{1}{p}=1-\frac{\theta}{2}$. 
Conversely, we will prove that 
\begin{equation}\label{intpolBMOL2}
[\BMO^c,L_2(\M)]_{2/p}\subset \H_p^c \quad \mbox{ with equivalent norms for } 2<p<\infty.
\end{equation}
In fact, we will first show that 
\begin{equation}\label{intpoltildeBMOL2}
[\tilde{\BMO}^c,L_2(\M)]_{2/p}\subset \H_p^c \quad \mbox{ with equivalent norms for } 2<p<\infty,
\end{equation}
where 
$$\tilde{\BMO}^c=\overline{\{x\in L_2(\M) : \|x\|_{\tilde{\BMO}^c}=\lim_{\s,\U} \|x\|_{BMO^c(\s)}<\infty\}}^{\|\cdot\|_{\tilde{\BMO}^c}}
=\overline{\M}^{\|\cdot\|_{\tilde{\BMO}^c}}.$$
Then we will use the following fact from \cite{BL}.
\begin{fact}
Let $A_0,A_1$ be a compatible couple such that $A_0\cap A_1$ is dense in $A_0$ and $A_1$. 
Let $\overline{A}_0$ be such that $B_{\overline{A}_0}=\overline{B_{A_0}}^{\|\cdot\|_{A_0+A_1}}$. Then for any $0\leq \theta \leq 1$ we have
$$[A_0,A_1]_{\theta}=[\overline{A}_0,A_1]_{\theta} \quad \mbox{ isometrically}.$$
\end{fact}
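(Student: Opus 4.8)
The plan is to work directly from Calderón's description of the complex method. For a compatible couple $(B_0,B_1)$ let $\mathcal{F}(B_0,B_1)$ denote the space of bounded continuous functions $f$ on the closed strip $\overline{S}=\{0\le \mathrm{Re}\,z\le 1\}$, analytic in the interior and with values in $B_0+B_1$, such that $t\mapsto f(j+it)$ is continuous and bounded into $B_j$; recall $\|f\|_{\mathcal{F}}=\max_{j}\sup_t\|f(j+it)\|_{B_j}$ and $\|a\|_{[B_0,B_1]_\theta}=\inf\{\|f\|_{\mathcal{F}}:f(\theta)=a\}$. First I would record two elementary facts about $\overline{A}_0$: the inclusion $A_0\hookrightarrow\overline{A}_0$ is contractive (since $B_{A_0}\subset B_{\overline{A}_0}$), and $\overline{A}_0+A_1=A_0+A_1$ isometrically. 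The latter holds because $A_0\subset\overline{A}_0\subset A_0+A_1$ contractively, so for $x=x_0+x_1$ with $x_0\in\overline{A}_0$ one has $\|x\|_{A_0+A_1}\le\|x_0\|_{A_0+A_1}+\|x_1\|_{A_1}\le\|x_0\|_{\overline{A}_0}+\|x_1\|_{A_1}$, while the reverse estimate is immediate. In particular both interpolation spaces sit inside the same ambient space $A_0+A_1$.

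The easy inclusion is then immediate: any $f\in\mathcal{F}(A_0,A_1)$ also belongs to $\mathcal{F}(\overline{A}_0,A_1)$, and since $\|f(it)\|_{\overline{A}_0}\le\|f(it)\|_{A_0}$ we get $\|f\|_{\mathcal{F}(\overline{A}_0,A_1)}\le\|f\|_{\mathcal{F}(A_0,A_1)}$. Taking the infimum over $f$ with $f(\theta)=a$ yields the contractive inclusion $[A_0,A_1]_\theta\hookrightarrow[\overline{A}_0,A_1]_\theta$. The whole content is therefore the reverse norm inequality $\|a\|_{[A_0,A_1]_\theta}\le\|a\|_{[\overline{A}_0,A_1]_\theta}$, which I would establish for $0<\theta<1$ (the endpoints being the defining closures).

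For the reverse inequality I would fix $a$ with $\|a\|_{[\overline{A}_0,A_1]_\theta}<1$ and choose $f\in\mathcal{F}(\overline{A}_0,A_1)$ with $f(\theta)=a$ and $\|f\|<1$. After the standard preparation (multiplying by a Gaussian factor $e^{\delta(z-\theta)^2}$ to force decay of the boundary traces as $|t|\to\infty$, and mollifying in $t$ by the translation action), one may use the Poisson representation of the strip to write $a=\int_{\mathbb{R}}f(it)\,d\mu_0(t)+\int_{\mathbb{R}}f(1+it)\,d\mu_1(t)$ with harmonic measures $\mu_0,\mu_1$. Now I would exploit the defining property $B_{\overline{A}_0}=\overline{B_{A_0}}^{\,\|\cdot\|_{A_0+A_1}}$: the trace $t\mapsto f(it)$, taking values in the open unit ball of $\overline{A}_0$, can be approximated by an $A_0$-valued trace $\psi$ with $\sup_t\|\psi(t)\|_{A_0}<1$ and $\sup_t\|\psi(t)-f(it)\|_{A_0+A_1}<\eta$. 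The harmonic extension $g$ of the pair $(\psi,f(1+i\,\cdot))$ then lies in $\mathcal{F}(A_0,A_1)$ with $\|g\|_{\mathcal{F}(A_0,A_1)}<1$, so $b:=g(\theta)\in[A_0,A_1]_\theta$ has $\|b\|_{[A_0,A_1]_\theta}<1$, while $\|a-b\|_{A_0+A_1}\le(1-\theta)\eta$.

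At this point the construction produces, for every $\eta>0$, a point of the open unit ball of $[A_0,A_1]_\theta$ that is $\eta$-close to $a$ in the norm of $A_0+A_1$. The main obstacle is precisely the final passage from this $A_0+A_1$-approximation to the norm statement $\|a\|_{[A_0,A_1]_\theta}\le 1$: the replacement of $\overline{A}_0$-values by $A_0$-values only converges in the weak ambient norm, never in the $\overline{A}_0$-norm, so no naive telescoping inside $[\overline{A}_0,A_1]_\theta$ is available. I would close the argument by invoking the lower semicontinuity (Fatou) property of the complex method: for a regular couple --- which is exactly our hypothesis that $A_0\cap A_1$ be dense in $A_0$ and in $A_1$ --- the closed unit ball of $[A_0,A_1]_\theta$ is closed in $A_0+A_1$, equivalently $[A_0,A_1]_\theta$ coincides with Calderón's upper method. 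Granting this, $a$ belongs to the closed unit ball of $[A_0,A_1]_\theta$, whence $\|a\|_{[A_0,A_1]_\theta}\le 1$; letting the threshold decrease to $\|a\|_{[\overline{A}_0,A_1]_\theta}$ gives the reverse inequality and hence the isometry. Verifying this semicontinuity step cleanly is the delicate heart of the proof, and is where the density hypothesis is genuinely used.
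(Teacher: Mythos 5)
The paper offers no proof of this Fact at all --- it is quoted from \cite{BL} inside the proof of Theorem \ref{intH1cBMOc} --- so your argument has to stand on its own. Your preliminary reductions do: $\overline{A}_0+A_1=A_0+A_1$ isometrically, the contractive inclusion $[A_0,A_1]_\theta\subset[\overline{A}_0,A_1]_\theta$, the Gaussian damping and mollification, and the Poisson representation $a=\int f(it)\,d\mu_0(t)+\int f(1+it)\,d\mu_1(t)$ are all correct. The two steps that carry the actual content, however, both fail.

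First, the assertion that the harmonic extension $g$ of the pair $(\psi,f(1+i\cdot))$ lies in $\mathcal{F}(A_0,A_1)$ is false: elements of $\mathcal{F}$ must be \emph{analytic}, and the two boundary traces of an analytic function on the strip are not independent. Indeed, if $h$ is analytic on the strip, continuous up to the boundary, and $h(1+it)=0$ for all $t$, then $h\equiv 0$ by Schwarz reflection; so the only analytic perturbation of $f$ that leaves the right-hand boundary values untouched is the zero perturbation, and no analytic $g$ with $g(it)=\psi(t)$, $g(1+it)=f(1+it)$ exists unless $\psi=f(i\cdot)$. The harmonic extension of the modified data is harmonic but not analytic, so $b=g(\theta)$ carries no $[A_0,A_1]_\theta$-norm bound whatsoever. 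What your Poisson averaging genuinely yields is only $b\in(1-\theta)\,\overline{\mathrm{conv}}\,B_{A_0}+\theta\,\overline{\mathrm{conv}}\,B_{A_1}$, and this does not place $b$ in the unit ball of $[A_0,A_1]_\theta$: the space $A_0$ need not embed into $[A_0,A_1]_\theta$ at all (take $(A_0,A_1)=(L_1,L_\infty)$, where $L_1\not\subset L_p$). So even the intermediate claim --- that $a$ is an $(A_0+A_1)$-limit of elements of the open unit ball of $[A_0,A_1]_\theta$ --- is not delivered by your construction.

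Second, the closing appeal to a ``Fatou property'' of the lower complex method --- that regularity of the couple forces the closed unit ball of $[A_0,A_1]_\theta$ to be closed in $A_0+A_1$, equivalently that the lower method coincides with Calder\'on's upper method --- is not a theorem you may invoke. Calder\'on obtained the equality of the two methods under additional hypotheses such as reflexivity of one of the spaces, and Bergh proved that the two norms agree on elements of the lower space; but mere density of $A_0\cap A_1$ in $A_0$ and $A_1$ does not give closedness of the ball in the sum --- this failure of semicontinuity is precisely the known defect of the lower method. You flag this step yourself as ``the delicate heart'' and then simply grant it; stated without proof or reference it is at least as deep as the Fact being proved, so the argument is incomplete even if the approximation step were repaired. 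A viable route (and presumably the one behind the citation to \cite{BL}) exploits instead that $B_{A_0}$ and its closure in $A_0+A_1$ have the same polar, so the passage from $A_0$ to $\overline{A}_0$ is invisible to duality; boundary-value surgery on $\mathcal{F}$-functions, as you attempt, cannot work because one boundary trace rigidly determines the other.
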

We will apply this fact to $A_0=\tilde{\BMO}^c$, $A_1=L_2(\M)$, and \eqref{intpolBMOL2} will follow from \eqref{intpoltildeBMOL2}. 
Indeed, $\tilde{\BMO}^c\cap L_2(\M)\supset \M$ is clearly dense in $\tilde{\BMO}^c$ and in $L_2(\M)$. 
Moreover, $\overline{A}_0=\BMO^c$ with equivalent norm. More precisely, we have
\begin{equation}\label{BMOc-tildeBMOc}
\overline{B_{\tilde{\BMO}^c}}^{\|\cdot\|_2}\subset B_{\BMO^c}\subset 2\overline{B_{\tilde{\BMO}^c}}^{\|\cdot\|_2}.
\end{equation}
Indeed, since $\overline{B_{\tilde{\BMO}^c}}^{\|\cdot\|_2}=\overline{\{x\in L_2(\M)  :  \|x\|_{\tilde{\BMO}^c} \leq 1\}}^{\|\cdot\|_2}$, 
the first inclusion of \eqref{BMOc-tildeBMOc} is obvious by the definition of $\BMO^c$. 
Conversely, if $x=\w L_2\mbox{-}\lim_{\si,\U} x_\s$ with $\lim_{\s,\U} \|x_{\si}\|_{BMO^c(\s)} < 1$ 
(we may assume that $\|x_{\si}\|_{BMO^c(\s)} < 1$ for every $\s$), 
then for all $\eps>0$ we can find a convex combination $x_\eps=\sum_m\alpha_m x_{\s^m} \in L_2(\M)$ such that $\|x-x_\eps\|_2<\eps$. 
By Corollary \ref{normBMOc}, for $\s \supset \cup_m \s^m$ we may write
$$\|x_\eps\|_{BMO^c(\s)}
\leq \sum_m \alpha_m \|x_{\s_m}\|_{BMO^c(\s)}
\leq 2 \sum_m \alpha_m \|x_{\s_m}\|_{BMO^c(\s^m)} <2.$$ 
Then $\|x_\eps\|_{\tilde{\BMO}^c}<2$ and $x\in 2\overline{B_{\tilde{\BMO}^c}}^{\|\cdot\|_2}$, which proves the second inclusion of \eqref{BMOc-tildeBMOc}. 
It remains to prove \eqref{intpoltildeBMOL2}. 
Observe that we have an isometric embedding 
$i_\U:\tilde{\BMO}^c \rightarrow \prodd_\U BMO^c(\s)$
given by $i_\U(x)=(x)^\bullet$ for $x\in \M$. 
This map satisfies $\phi \circ i_\U(x)=x$ for all $x\in \M$, where $\phi$ is defined by 
$$\phi:\left\{\begin{array}{ccc}
\prodd_\U BMO^c(\s) & \longrightarrow &L_2(\M) \\
(x_\s)^\bullet & \longmapsto & \w L_2\mbox{-}\lim_{\s,\U} x_\s
\end{array}\right..$$ 
Let $x\in \M$ be such that $\|x\|_{[\tilde{\BMO}^c,L_2(\M)]_{2/p}}\leq 1$. 
Then there exists an analytic function $f\in \mathcal{F}(\tilde{\BMO}^c,L_2(\M))$ 
such that $x=f\big(\frac{2}{p}\big)$ and 
$$\|f\|_{\mathcal{F}(\tilde{\BMO}^c,L_2(\M))}
=\max \big\{ \sup_{t\in \R} \|f(it)\|_{\tilde{\BMO}^c} , \sup_{t\in \R} \|f(1+it)\|_{2}\big\}
\leq 1.$$ 
By setting $g=i_\U \circ f$, since $i_\U$ is also isometric from $L_2(\M)$ to $\prodd_\U L_2(\M)$, 
we get a function $g\in \mathcal{F}(\prodd_\U BMO^c(\s),\prodd_\U L_2(\M))$ of norm $\leq 1$ such that $i_\U(x)=g\big(\frac{2}{p}\big)$. 
Hence by using ultraproduct techniques and the discrete case we may write
$$i_\U(x) \in \Big[\prodd_\U BMO^c(\s),\prodd_\U L_2(\M)\Big]_{2/p}\subset \prodd_\U [BMO^c(\s),L_2(\M)]_{2/p}=\prodd_\U L_p^cMO(\s),$$
with $\|i_\U(x)\|_{\prodd_\U L_p^cMO(\s)}\leq C_p$.
Recall that in the proof of Lemma \ref{LpMObanach} we have seen that $L_p^c\MO=\phi(\prodd_\U L_p^cMO(\s))$, hence 
$x=\phi \circ i_\U(x) \in L_p^c\MO=\H_p^c$ by Corollary \ref{LpcMOHpc}. 
Moreover we obtain 
$$\|x\|_{\H_p^c}\simeq \|\phi \circ i_\U(x)\|_{L_p^c\MO}
\leq \|i_\U(x)\|_{\prodd_\U L_p^cMO(\s)}
\leq C_p \|x\|_{[\tilde{\BMO}^c,L_2(\M)]_{2/p}}.$$
By density this shows \eqref{intpoltildeBMOL2} and ends the proof of \eqref{intpolBMOL2}. 
By duality, since $L_2(\M)$ is reflexive and $(\H_1^c)^*=\BMO^c$ by Theorem \ref{fsduality}, 
by combining \eqref{intpolH1cL2} and \eqref{intpolBMOL2} we obtain that $[\H_1^c,L_2(\M)]_{\theta} = \H_p^c$ with equivalent norms 
for $1<p<2$ and $\frac{1}{p}=1-\frac{\theta}{2}$. 
Finally, by using the reiteration theorem, Wolff's theorem, duality and Corollary \ref{dual-intpolHpc} (ii), 
we conclude the proof with the usual interpolation techniques. 
\qd

\section{Burkholder-Gundy inequalities}\label{sectBG}

The aim of this section is to establish the analogue of the noncommutative Burkholder-Gundy inequalities in the continuous setting. 
The theory developed previously for the column spaces still holds true for the row spaces. 
Indeed, by considering the adjoint we may define the row Hardy space $\H_p^r$ and obtain the analoguous results.  
By Proposition \ref{injHpc2} we can naturally define the Hardy space for continuous filtrations $\H_p$ as follows. 

\begin{defi}
Let $1 < p<\infty$. We define
$$\H_p=\left\{\begin{array}{cl}
\H_p^c+\H_p^r& \quad \mbox{for} \quad 1< p< 2 \\
\H_p^c\cap \H_p^r& \quad \mbox{for}\quad 2\leq p<\infty
\end{array}\right.,$$
where the sum is taken in $L_p(\M)$ and the intersection in $L_2(\M)$.  
\end{defi}

Observe that for $2\leq p<\infty$, by applying the noncommutative Burkholder-Gundy inequalities in the discrete case for each partition $\s$ and taking the limit in $\s$ 
we immediately obtain 
$$\|x\|_p \simeq \max (\|x\|_{\H_p^c},\|x\|_{\H_p^r}) \quad \mbox{for } x\in L_p(\M).$$
This means that 
$$L_p(\M)=\overline{L_p(\M)}^{\|\cdot\|_{\H_p^c\cap \H_p^r}} \quad \mbox{for } 2 \leq p<\infty.$$
However this result is too weak, we would like to prove that $L_p(\M)= \H_p^c\cap \H_p^r$ for $2\leq p<\infty$. 
To obtain this stronger result, we use a dual approach and first consider the case $1<p<2$. 
The discrete noncommutative Burkholder-Gundy inequalities (Theorem \ref{BGdiscr}) applied to each partition and the monotonicity Lemma \ref{convexityHpc} 
immediately imply the required Burkholder-Gundy inequalities in the continuous setting, as detailed in subsection \ref{subsectBGdirect}. 
However, this result won't be sufficient to apply the classical duality argument 
and get the continuous analogue of the noncommutative Burkholder-Gundy inequalities for $2\leq p <\infty$. 
We will need a stronger decomposition introduced by Randrianantoanina and recalled in subsection \ref{subsectBG-Rand}, 
which will be formalized in subsection \ref{sectsums} by defining another construction for the sum of Banach spaces. 
After extending Randrianantoanina's result for $1<p<2$ to the continuous setting, 
we will be able to deduce by duality the continuous analogue of the noncommutative Burkholder-Gundy inequalities for $2\leq p <\infty$. 
We then discuss the case $p=1$, and establish a Fefferman-Stein duality result for $\H_1$. 
We end this section with the expected interpolation result involving our spaces $\H_1$ and $\BMO$. 

\subsection{Burkholder-Gundy inequalities for $1<p<2$}\label{subsectBGdirect}

We may obtain the Burkholder-Gundy inequalities for $1<p<2$ by a direct approach, as we will detail below. 
Indeed, the proof presented here only uses the discrete Burkholder-Gundy inequalities and the crucial monotonicity property proved in Lemma \ref{convexityHpc}.   
Let us first state the result in this case. 

\begin{theorem}\label{BG1}
Let $1<p<2$. Then
$$L_p(\M)=\H_p=\H_p^c+\H_p^r \quad \mbox{with equivalent norms}.$$
\end{theorem}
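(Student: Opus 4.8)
The plan is to prove the norm equivalence $\|x\|_{\H_p}\simeq\|x\|_p$; since the second equality $\H_p=\H_p^c+\H_p^r$ is the definition of $\H_p$ for $1<p<2$, only $L_p(\M)=\H_p$ needs an argument, and I will establish the two bounds separately. The inclusion $\H_p\subset L_p(\M)$ is the easy one: writing $x=y+z$ with $y\in\H_p^c$ and $z\in\H_p^r$, Proposition \ref{injHpc2} ensures $y,z\in L_p(\M)$, and the left-hand estimates in \eqref{estimateH_pc} give $\|y\|_p\le\beta_p\|y\|_{\H_p^c}$ and $\|z\|_p\le\beta_p\|z\|_{\H_p^r}$; taking the infimum over all such decompositions yields $\|x\|_p\le\beta_p\|x\|_{\H_p}$. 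For the converse it suffices, by density of $L_2(\M)$ in both $L_p(\M)$ and $\H_p$, to produce for each $x\in L_2(\M)$ a decomposition $x=y+z$ with $y\in\H_p^c$, $z\in\H_p^r$ and $\|y\|_{\H_p^c}+\|z\|_{\H_p^r}$ controlled by a multiple of $\|x\|_p$.

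To build this decomposition I would first invoke the discrete theory partitionwise. For each finite partition $\s$, applying the discrete Burkholder--Gundy inequalities (Theorem \ref{BGdiscr}) to the finite filtration $(\M_t)_{t\in\s}$ gives $x=y_\s+z_\s$ with $\|y_\s\|_{H_p^c(\s)}+\|z_\s\|_{H_p^r(\s)}\le 2\alpha_p\|x\|_p$. The lower bounds in \eqref{estimateH_pcdiscr} then control the ambient norms, $\|y_\s\|_p\le\beta_p\|y_\s\|_{H_p^c(\s)}$ and likewise for $z_\s$, so both families $(y_\s)_\s$ and $(z_\s)_\s$ are uniformly bounded in $L_p(\M)$. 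Since $1<p<2$, $L_p(\M)$ is reflexive, so I may set $y=\w L_p\mbox{-}\lim_{\s,\U}y_\s$ and $z=\w L_p\mbox{-}\lim_{\s,\U}z_\s$. Because $y_\s+z_\s=x$ for every $\s$ and weak limits are additive, $y+z=x$.

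The heart of the argument is to show $y\in\H_p^c$ with $\|y\|_{\H_p^c}$ controlled (the case of $z$ being identical after passing to adjoints), and this is where the monotonicity property is indispensable. Fix a partition $\s'$. For every finer partition $\s\supset\s'$ --- and such $\s$ form a set in $\U$ by the defining property of $\U$ --- Lemma \ref{convexityHpc}(i) gives $\|y_\s\|_{H_p^c(\s')}\le\beta_p\|y_\s\|_{H_p^c(\s)}\le 2\alpha_p\beta_p\|x\|_p$. Now the finite square-function map $w\mapsto\sum_{t\in\s'}e_{t,0}\ten d_t^{\s'}(w)$ is a bounded linear operator from $L_p(\M)$ into the reflexive space $L_p(\M;\ell_2^c(\s'))$, hence weakly continuous; applying it to the weakly convergent family $(y_\s)_{\s\supset\s'}$ and using weak lower semicontinuity of the norm, I obtain $\|y\|_{H_p^c(\s')}\le\lim_{\s,\U}\|y_\s\|_{H_p^c(\s')}\le 2\alpha_p\beta_p\|x\|_p$. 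This bound is uniform in $\s'$, so letting $\s'$ run along $\U$ gives $\|y\|_{\H_p^c}=\lim_{\s',\U}\|y\|_{H_p^c(\s')}\le 2\alpha_p\beta_p\|x\|_p<\infty$; by Corollary \ref{HpXp} this places $y$ genuinely in $\H_p^c$ with the stated bound. Adding the two estimates, $\|x\|_{\H_p}\le\|y\|_{\H_p^c}+\|z\|_{\H_p^r}\le 4\alpha_p\beta_p\|x\|_p$, which together with the first paragraph gives the equivalence.

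The main obstacle is exactly the passage from the discrete control $\|y_\s\|_{H_p^c(\s)}$, in which both the element and the norm depend on $\s$, to a single operator $y$ whose continuous $\H_p^c$-norm is finite. Reflexivity produces the candidate $y$ as a weak $L_p$-limit, but without the monotonicity Lemma \ref{convexityHpc}(i) there would be no way to bound $\|y\|_{H_p^c(\s')}$ uniformly over the fixed test partition $\s'$: it is precisely the increase of the $H_p^c(\s)$-norms under refinement that allows the discrete estimate to survive in the limit. The final identification of $\{w\in L_p(\M):\|w\|_{\H_p^c}<\infty\}$ with $\H_p^c$ (Corollary \ref{HpXp}) is then what guarantees that $y$ is an honest element of the Hardy space, and not merely an $L_p$-operator of finite formal square-function norm.
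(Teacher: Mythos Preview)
Your proof is correct and follows essentially the same strategy as the paper: apply the discrete Burkholder--Gundy decomposition partitionwise, pass to weak $L_p$-limits using reflexivity, and control the $\H_p^c$-norm of the limit via the monotonicity Lemma \ref{convexityHpc}(i) and Corollary \ref{HpXp}. The only technical difference is how you pass the discrete $H_p^c(\s')$-estimate through the weak limit: the paper approximates the weak limit by convex combinations in $L_p$ and uses the crude bound $\|\cdot\|_{H_p^c(\s')}\le 2|\s'|^{1/p}\|\cdot\|_p$ on the error, whereas you invoke weak-to-weak continuity of the finite square-function map $w\mapsto(d_t^{\s'}(w))_{t\in\s'}$ together with weak lower semicontinuity of the norm --- a slightly cleaner route to the same inequality.
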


\begin{proof}
The inclusion $\H_p\subset L_p(\M)$ is obvious, and for $x\in \H_p$ we have $\|x\|_p\leq \beta_p \|x\|_{\H_p}$. 
Now let $x\in L_p(\M)$. 
Then by the discrete Burkholder-Gundy inequalities, for each $\s$ we may decompose $x=a_\s+b_\s$ where $a_\s\in H_p^c(\s), b_\s\in H_p^r(\s)$ and 
$$\|a_\s\|_{H_p^c(\s)}+\|b_\s\|_{H_p^r(\s)} \leq \alpha_p \|x\|_p.$$
Moreover, for each $\s$ we have 
$$\|a_\s\|_{p}\leq \beta_p\|a_\s\|_{H_p(\s)} \leq  \beta_p\|a_\s\|_{H_p^c(\s)} \leq \beta_p \alpha_p \|x\|_p.$$ 
Hence the family $(a_\s)_\s$ is uniformly bounded in $L_p(\M)$, and since $1<p<2$ the weak-limit of the $a_\s$'s exists in $L_p$. 
The same holds for $(b_\s)$, and we set
$$a=\w L_p \mbox-\lim_{\s,\U} a_\s \quad \mbox{and} \quad b=\w L_p \mbox-\lim_{\s,\U} b_\s.$$
Then $x=a+b$. 
It remains to prove that $a\in \H_p^c$ and $b\in \H_p^r$. 
Recall that by Corollary \ref{HpXp}, since $a\in L_p(\M)$ it suffices to estimate $\|a\|_{\H_p^c}=\lim_{\s,\U}\|a\|_{H_p^c(\s)}$ 
and $\|b\|_{\H_p^r}=\lim_{\s,\U}\|b\|_{H_p^r(\s)}$. 
Fix $\eps>0$ and a finite partition $\s$ of $[0,1]$. 
We can find positive numbers $(\alpha_m)_{m=1}^M$ verifying $\sum_m \alpha_m=1$ 
and partitions $\s^1, \cdots, \s^M$ containing $\s$ such that
\begin{equation}\label{approxa}
\Big\|a-\sum_{m=1}^M \alpha_m a_{\s^m}\Big\|_p<\eps
\quad \mbox{and} \quad 
\Big\|b-\sum_{m=1}^M \alpha_m b_{\s^m}\Big\|_p<\eps
.
\end{equation}
On the one hand, note that for $y\in L_p(\M)$ we have
\begin{equation}\label{HpcLp}
\|y\|_{H_p^c(\s)}\leq 2 |\s|^{1/p}\|y\|_p.
\end{equation}
Indeed, we may write
\begin{align*}
\|y\|_{H_p^c(\s)}^p
&=\Big\|\sum_{t\in \s} |d_t^{\s}(y)|^2\Big\|_{p/2}^{p/2}\\
&\leq \sum_{t\in \s}\||d_t^{\s}(y)|^2\|_{p/2}^{p/2}
=\sum_{t\in \s}\|d_t^{\s}(y)\|_{p}^{p}\\
&\leq \sum_{t\in \s} (2\|y\|_p)^p
\leq 2^p|\s| \|y\|_p^p.
\end{align*}
Taking the adjoint we obtain 
\begin{equation}\label{HprLp}
\|y\|_{H_p^r(\s)}\leq 2 |\s|^{1/p}\|y\|_p.
\end{equation}
Then combining \eqref{approxa} with \eqref{HpcLp} and \eqref{HprLp} we get
\begin{equation}\label{BGeq1}
\Big\|a-\sum_{m=1}^M \alpha_m a_{\s^m}\Big\|_{H_p^c(\s)}\leq 2 |\s|^{1/p}\eps
\quad \mbox{and} \quad 
\Big\|b-\sum_{m=1}^M \alpha_m b_{\s^m}\Big\|_{H_p^r(\s)}\leq 2 |\s|^{1/p}\eps.
\end{equation}
On the other hand, since $\s\subset \s^m$ for all $m$, Lemma \ref{convexityHpc} yields
\begin{equation}\label{BGeq2}
\begin{array}{cl}
&\Big\|\displaystyle\sum_{m=1}^M \alpha_m a_{\s^m}\Big\|_{H_p^c(\s)}
+
\Big\|\displaystyle\sum_{m=1}^M \alpha_m b_{\s^m}\Big\|_{H_p^r(\s)}\\
\leq & \displaystyle\sum_{m=1}^M \alpha_m (\|a_{\s^m}\|_{H_p^c(\s)}
+ \|b_{\s^m}\|_{H_p^r(\s)}) \\
 \leq & \beta_p \displaystyle\sum_{m=1}^M \alpha_m (\|a_{\s^m}\|_{H_p^c(\s^m)}+\|b_{\s^m}\|_{H_p^r(\s^m)})
\leq \beta_p \alpha_p \|x\|_p.
\end{array}
\end{equation}
Finally by \eqref{BGeq1} and \eqref{BGeq2} we get
\begin{align*}
&\|a\|_{H_p^c(\s)}+\|b\|_{H_p^r(\s)} \\
\leq & \Big\|a-\sum_{m=1}^M \alpha_m a_{\s^m}\Big\|_{H_p^c(\s)}+ \Big\|\sum_{m=1}^M \alpha_m a_{\s^m}\Big\|_{H_p^c(\s)}
+\Big\|b-\sum_{m=1}^M \alpha_m b_{\s^m}\Big\|_{H_p^r(\s)}+ \Big\|\sum_{m=1}^M \alpha_m b_{\s^m}\Big\|_{H_p^r(\s)}\\
\leq & 4 |\s|^{1/p}\eps + \beta_p \alpha_p \|x\|_p.
\end{align*}
Sending $\eps$ to $0$ we obtain $\|a\|_{H_p^c(\s)}+\|b\|_{H_p^r(\s)}\leq\beta_p \alpha_p \|x\|_p$ for all $\s$. 
Taking the limit over $\s$ we get
$$\|a\|_{\H_p^c}+\|b\|_{\H_p^r} \leq \beta_p \alpha_p \|x\|_p.$$
\qd

Recalling that $(\H_p^c)^*=\H_{p'}^c$ by Corollary \ref{dual-intpolHpc} (i), 
we would like to deduce, as usual, the Burkholder-Gundy inequalities for $2<p'<\infty$ by duality from the case $1<p<2$. 
However, as detailed in Remark \ref{dualitybracket}, the duality bracket between $\H_p^c$ and $\H_{p'}^c$ is not always explicit. 
At one point we will need that the elements in the decomposition $L_p(\M)=\H_p^c+\H_p^r $ lie in $L_2(\M)$ when $x\in L_2(\M)$. 
This is why we need a stronger result due to Randrianantoanina in the discrete setting.

\subsection{Randrianantoanina's result in the discrete case}\label{subsectBG-Rand}

Let $(\M_n)_{n\geq 0}$ be a discrete filtration. 
In \cite{ran-weak}, Randrianantoanina gives another proof of the Burkholder-Gundy inequalities based on weak-type $(1,1)$ estimates. 
This approach yields a better decomposition at the $L_2$-level in the sense of the following Theorem. 

\begin{theorem}\label{RaBGdiscr} 
Let $1<p<2$ and $x\in L_2(\M)$. 
Then there exist $a,b \in L_2(\M)$ such that 
\begin{enumerate}
\item[(i)] $x=a+b$,
\item[(ii)]$ \|a\|_{H_p^c}+\|b\|_{H_p^r} \leq C(p) \|x\|_p $,
\item[(iii)] $\max\{\|a\|_2,\|b\|_2\} \leq f(p,\|x\|_p,\|x\|_2)$.
\end{enumerate}
Here $C(p)\leq C(p-1)^{-1}$ as $p\to 1$. 
\end{theorem}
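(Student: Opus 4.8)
The plan is to avoid the abstract duality argument $(H_{p'}^c\cap H_{p'}^r)^*=H_p^c+H_p^r$, which produces a decomposition with no control whatsoever on the individual $L_2$-norms, and instead to construct $a$ and $b$ explicitly from a weak-type $(1,1)$ version of the Burkholder-Gundy splitting. First I would reduce to the case of a self-adjoint $x$ by treating the real and imaginary parts separately, and fix a level parameter $\lambda>0$.

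The heart of the argument is the noncommutative weak-type estimate. Since the martingale has no pathwise structure, the classical stopping time is replaced by Cuculescu's construction of a decreasing family of projections $(q_\lambda)$ adapted to the discrete filtration $(\M_n)$, cutting the martingale differences $d_n(x)$ at level $\lambda$. This yields, for each $\lambda$, a splitting of $x$ into a piece whose column square function $S_c$ lives below $\lambda$ and a complementary piece controlled, in a distributional sense, by $\lambda\,\tau(\1(S_c(x)>\lambda))$ and $\lambda\,\tau(\1(S_r(x)>\lambda))$; the column part is absorbed into $a$ and the row part into $b$. Summing these pieces over the dyadic levels $\lambda=2^k$ and integrating the distributional bounds gives the strong-type estimate (ii) with $C(p)\sim(p-1)^{-1}$, exactly as in a Marcinkiewicz-type interpolation between the weak endpoint and the trivial identity $L_2(\M)=H_2^c=H_2^r$ valid at $p=2$ by orthogonality of the martingale differences.

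For the additional $L_2$-control (iii) I would exploit that $x\in L_2(\M)$: the dyadic sum defining $a$ and $b$ is truncated at a level $k_0$ chosen in terms of the ratio $\|x\|_2/\|x\|_p$, the high-level tail being harmless because at $p=2$ the decomposition is the trivial one $x=x+0$ with no loss, while the contributions below the cutoff are estimated in $L_2$ directly, producing a bound of the form $f(p,\|x\|_p,\|x\|_2)$. I expect the main obstacle to be precisely this simultaneous control: black-box real interpolation yields (ii) but forgets the $L_2$ size of the pieces, so one must keep the construction explicit, and the fact that the column and row Cuculescu projections do not commute forces $a$ and $b$ to be built from a single common family of projections rather than independently. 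Reconciling the mixed $H_p^c+H_p^r$ bound with the separate $L_2$ bounds through one cutoff is the delicate point of Randrianantoanina's weak-type machinery.
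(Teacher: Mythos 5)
Your high-level architecture (a weak-type splitting, an interpolation step, and a cutoff tuned to $\|x\|_2/\|x\|_p$ for the $L_2$-control) matches the paper's, but two steps as you describe them would not go through. First, the level-by-level splitting you attribute to Cuculescu's projections --- a piece whose column square function lives below $\lambda$ and a remainder with distributional control by $\lambda\,\tau(\1(S_c(x)>\lambda))$ and $\lambda\,\tau(\1(S_r(x)>\lambda))$ --- is not a routine consequence of Cuculescu's construction; it is essentially the full content of Theorem~3.1 of \cite{ran-weak}, which produces a \emph{single} decomposition $x=a+b$ satisfying $\|S_c(a)\|_{1,\infty}+\|S_r(b)\|_{1,\infty}\leq K\|x\|_1$ together with the $L_2$-bound $\|da\|_{L_2(\M;\ell_2^c)}+\|db\|_{L_2(\M;\ell_2^r)}\leq 2\|x\|_2$, not a family of level-$\lambda$ splittings. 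Second, Marcinkiewicz interpolation applies to (quasi-)linear operators, whereas $x\mapsto(a,b)$ is a nonlinear decomposition; summing splittings over dyadic levels of the square function does not by itself produce a decomposition of $x$ into pieces of $L_1\cap L_2$ to which the two endpoint bounds can be applied and then recombined.

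The paper resolves both points differently. It first decomposes $x=\sum_\nu x\tilde e_\nu$ using spectral projections of $|x|$ built from the generalized singular numbers, $e_\nu=\1(\mu_{4^\nu}(x)<|x|\le\mu_{4^{\nu-1}}(x))$, truncated below a level $\nu_0$ determined by $\|x\|_p/\|x\|_2$; it verifies by hand the $J$-functional condition for $L_p(\M)=[L_1(\M),L_2(\M)]_{\theta,p;J}$ \emph{and} the extra summability $\sum_\nu\|u_\nu\|_2\le f(p,\|x\|_p,\|x\|_2)$. It then applies Randrianantoanina's theorem as a black box to each piece $u_\nu$ and reassembles via $L_p(\Nc)=[L_{1,\infty}(\Nc),L_2(\Nc)]_{\theta,p;J}$, the $L_2$-estimate (iii) falling out of the $\ell_1(L_2)$-summability of the pieces. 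If you want to keep your route, you must replace the Marcinkiewicz step by this $J$-method argument and make precise which object your dyadic levels decompose: the pieces have to be level sets of $|x|$ itself (so that their $L_1$- and $L_2$-norms are controlled by the distribution of $x$), not level sets of $S_c(x)$ or $S_r(x)$.
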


\begin{proof} 
We derive the estimate of the $L_2$-norms (iii) from Randrianantoanina's construction. 
The main tool is the real interpolation, more precisely the J-method, to deduce this decomposition from a weak type $(1,1)$-inequality. 
We refer to \cite{BL} for details on interpolation. 
Let $x\in L_2(\M)$ and $1<p<2$. 
Let $0<\theta<1$ be such that $\frac1p=1-\theta +\frac{\theta}{2}$. 
We know that $L_p(\M)=[L_1(\M), L_2(\M)]_{\theta,p;J}$, hence we may write
\begin{equation}\label{dec0}
x = \sum_{\nu \in \Z} u_\nu
\end{equation}
where
\begin{equation}\label{Jmethod}
\Big(\sum_{\nu \in \Z} (2^{-\nu \theta} \max\{\|u_\nu\|_1,2^\nu\|u_\nu\|_2\})^p\Big)^{1/p}\leq C(p)\|x\|_p.
\end{equation}
We claim that we may in addition suppose that 
\begin{equation}\label{L2estimate}
\sum_{\nu \in \Z} \|u_\nu\|_2\leq f(p,\|x\|_p,\|x\|_2).
\end{equation}
For each $\nu \in \Z$ we set
$$e_\nu=\1(\mu_{4^\nu}(x)<|x|\leq \mu_{4^{\nu-1}}(x)),$$
where for $t>0$, $\mu_t(x)$ denote the generalized singular numbers of $x$. 
We refer to \cite{FK} for details on these generalized numbers. 
Since $\mu_t(x)\to \|x\|$ as $t\to 0$ and  $\mu_t(x)\to 0$ as $t\to \infty$, we see that $\sum_{\nu\in \Z}e_\nu=s(|x|)$, 
where $s(|x|)$ denotes the support projection of $x$. 
Hence we can write
\begin{equation}\label{dec1}
x = \sum_{\nu\in \Z} xe_{\nu}.
\end{equation}
Let us first show that the sequence $u_\nu=xe_{\nu}$ satisfy \eqref{Jmethod} with $C(p)=\Big(\frac{16}{3}\Big)^{1/p}$. 
Note that by the definition of $\mu_t(x)$ we have for all $\nu$
\begin{equation}\label{tauenu}
\tau(e_\nu)\leq \tau \big(\1(\mu_{4^\nu}(x)<|x|)\big) \leq 4^\nu.
\end{equation}
On the other hand, since $\mu_t(x)$ is decreasing we have
\begin{align*}
\|x\|_p^p
&=\int_0^\infty \mu_t(x)^p dt 
=\sum_{\nu\in \Z} \int_{4^{\nu-2}}^{4^{\nu-1}} \mu_t(x)^p dt \\
&\geq \sum_{\nu\in \Z} (4^{\nu-1}-4^{\nu-2})\mu_{4^{\nu-1}}(x)^p 
= \sum_{\nu\in \Z} 3.4^{\nu-2}\mu_{4^{\nu-1}}(x)^p.
\end{align*}
The inequality \eqref{tauenu} gives
$$\|xe_\nu\|_1=\tau\big(|x|\1(\mu_{4^\nu}(x)<|x|\leq \mu_{4^{\nu-1}}(x))\big)
\leq\mu_{4^{\nu-1}}(x) \tau(e_\nu) \leq \mu_{4^{\nu-1}}(x)4^\nu.$$
Using $p(2- \theta)=2$ we get
$$\sum_{\nu \in \Z} (2^{-\nu \theta} \|x e_\nu\|_1)^p
\leq \sum_{\nu \in \Z} 2^{\nu p(2- \theta)}\mu_{4^{\nu-1}}(x)^p
=\sum_{\nu \in \Z} 4^{\nu }\mu_{4^{\nu-1}}(x)^p
\leq \frac{16}{3} \|x\|_p^p.$$
The $L_2$-norm can be estimated by 
$$\|xe_\nu\|_2
=\tau\big(|x|^2\1(\mu_{4^\nu}(x)<|x|\leq \mu_{4^{\nu-1}}(x))\big)^{1/2}
\leq\mu_{4^{\nu-1}}(x) \tau(e_\nu)^{1/2} \leq \mu_{4^{\nu-1}}(x)2^\nu,$$
hence
$$\sum_{\nu \in \Z} (2^{\nu(1- \theta)} \|x e_\nu\|_2)^p
\leq \sum_{\nu \in \Z} 2^{\nu p(2- \theta)}\mu_{4^{\nu-1}}(x)^p
\leq \frac{16}{3} \|x\|_p^p.$$
Let us now consider $\nu_0 \in \Z$. 
Then, to obtain \eqref{L2estimate}, we replace decomposition \eqref{dec1} by 
$$x = \sum_{\nu\geq \nu_0} x\tilde{e}_{\nu},$$
where $\tilde{e}_{\nu}=e_{\nu}$ for $\nu>\nu_0$ and $\tilde{e}_{\nu_0}=\sum_{\nu\le \nu_0} e_{\nu}=\1(\mu_{4^{\nu_0}}(x)<|x|)$.
For a good choice of $\nu_0$, we can show that this decomposition still satisfy \eqref{Jmethod} with $C(p)=\Big(\frac{19}{3}\Big)^{1/p}$. 
Note that
 $$ 2^{-\nu_0\theta}\|x\tilde{e}_{\nu_0}\|_1=2^{-\nu_0\theta}\|x\1(\mu_{4^{\nu_0}}(x)<|x|)\|_1
 \leq 2^{-\nu_0\theta}\|x\|_2 \tau(\1(\mu_{4^{\nu_0}}(x)<|x|))^{1/2}
 \leq  2^{\nu_0(1-\theta)} \|x\|_2 $$
and 
$$2^{\nu_0(1-\theta)}\|x\tilde{e}_{\nu_0}\|_2 \leq 2^{\nu_0(1-\theta)}\|x\|_2.$$ 
We can find $\nu_0=\nu_0(p,\|x\|_p,\|x\|_2)$ such that
$$2^{\nu_0(1-\theta)}\|x\|_2 \leq \|x\|_p \Leftrightarrow \nu_0\leq (1-\theta)^{-1}\ln\Big(\frac12\Big)\ln\Big(\frac{\|x\|_p}{\|x\|_2}\Big).$$
We then obtain 
$$\Big(\sum_{\nu \geq \nu_0} (2^{-\nu \theta} \max\{\|x\tilde{e}_{\nu}\|_1,2^\nu\|x\tilde{e}_{\nu}\|_2\})^p\Big)^{1/p}\leq \Big(\frac{19}{3}\Big)^{1/p}\|x\|_p.$$
The inequality \eqref{L2estimate} follows from the H\"older inequality 
$$\sum_{\nu \geq \nu_0} \|x\tilde{e}_{\nu}\|_2
 \leq \Big(\sum_{\nu \geq \nu_0} (2^{\nu(1-\theta)}\|x\tilde{e}_{\nu}\|_2)^p\Big)^{1/p}
 \Big(\sum_{\nu \geq \nu_0} 2^{-\nu(1-\theta)p'}\Big)^{1/p'}
 \leq  f(p,\|x\|_p,\|x\|_2),$$
where 
$$f(p,\|x\|_p,\|x\|_2)=\frac{2^{-\nu_0(1-\theta)}}{(1-2^{-(1-\theta)p'})^{1/p'}}\Big(\frac{19}{3}\Big)^{1/p}\|x\|_p.$$
Now we apply Randrianantoanina's decomposition to the sequence $(u_\nu)_\nu$ satisfying \eqref{dec0}, \eqref{Jmethod} and \eqref{L2estimate}. 
For each $\nu \in \Z$, by Theorem $3.1$ of \cite{ran-weak}, 
we may find an absolute constant $K>0$ and two martingales $a^{(\nu)}=(a_n^{(\nu)})_n$ and $b^{(\nu)}=(b_n^{(\nu)})_n$ such that 
$$\E_n(u_\nu)=a_n^{(\nu)}+ b_n^{(\nu)}, \quad \forall  n\geq 0$$
and
$$
\|da^{(\nu)}\|_{L_2(\M;\ell_2^c)}+\|db^{(\nu)}\|_{L_2(\M;\ell_2^r)}
\leq 2 \|u_\nu\|_2,$$
$$\Big\|\Big(\sum_{n\geq 0}|d_n(a^{(\nu)})|^2\Big)^{1/2}\Big\|_{1,\infty} 
+\Big\|\Big(\sum_{n\geq 0}|d_n(b_n^{(\nu)})^*|^2\Big)^{1/2}\Big\|_{1,\infty} 
\leq K \|u_\nu\|_1.
$$
Recall that $\|x\|_{1,\infty}=\sup_{t>0} t\mu_t(x)$. 
Then we set
$$a=\sum_{\nu\in\Z} a^{(\nu)} \quad  \mbox{and} \quad b=\sum_{\nu\in\Z} b^{(\nu)},$$
and obtain two martingales $a$ and $b$ with $x=a+b$.  
Using the following interpolation result of noncommutative $L_p$-spaces associated to a semifinite von Neumann algebra $\Nc$
$$L_p(\Nc)=[L_{1,\infty}(\Nc), L_2(\Nc)]_{\theta,p;J},$$
and \eqref{Jmethod} we can show that
$$\Big\|\Big(\sum_{n\geq 0}|d_n(a)|^2\Big)^{1/2}\Big\|_p
+\Big\|\Big(\sum_{n\geq 0}|d_n(b)^*|^2\Big)^{1/2}\Big\|_p\leq C(p-1)^{-1}\|x\|_p.$$
It remains to prove the $L_2$-estimate (iii). 
This comes from \eqref{L2estimate} as follows
$$\|a\|_2\leq  \sum_{\nu\in \Z} \|a^{(\nu)}\|_{2} =\sum_{\nu\in \Z}  \|da^{(\nu)}\|_{L_2(\M;\ell_2^c)} 
\leq 2\sum_{\nu\in \Z} \|u_{\nu}\|_2 \leq 2f(p,\|x\|_p,\|x\|_2).$$
The estimate for $b$ is similar.
\qd

\subsection{Sums of Banach spaces}\label{sectsums}

In this subsection we introduce a notation to formalize the notion of ``decomposition at the $L_2$-level" mentioned previously. 
To do this, we discuss two competing constructions of the sum of Banach spaces in a general case. 
Let $X$ and $Y$ be two Banach spaces both embedded into a Banach space $A_1$, i.e., 
the inclusion maps $X\subset A_1$ and $Y\subset A_1$ are continuous and injective. 
In interpolation theory one considers the sum
 $$ X+Y = \{z\in A_1: \exists x\in X, y\in Y\mbox{ such that } z=x+y\}$$
equipped with the norm
 $$ \|z\|_{X+Y}= \inf_{z=x+y}\|x\|_X+\|y\|_Y .$$
The second method we will consider depends on a fourth space $A_0$, which is also injectively embedded into $A_1$. 
We assume that 
\begin{equation}\label{density}
A_0\cap X \mbox{ is dense in } X \mbox{ and } A_0 \mbox{ is dense in } Y.
\end{equation}
For $z\in A_0$ we set
$$\|z \|_{X\boxplus_{A_0} Y}=\inf_{\substack{z=x+y, \\ x\in A_0\cap X, \\ y\in A_0}}\|x\|_X+\|y\|_Y .$$
We clearly have 
\begin{equation}\label{eqsums}
\|z\|_{X+Y} \leq \|z \|_{X\boxplus_{A_0} Y} \quad \mbox{for} \quad z\in A_0,
\end{equation}
and $\|\cdot\|_{X\boxplus_{A_0} Y}$ defines a norm on $A_0$. 
We define the $A_0$-sum 
$$ X\boxplus_{A_0}Y $$
as the completion of $A_0$ with respect to the norm $\|\cdot\|_{X\boxplus_{A_0} Y}$. 
In our context we will always consider $A_0=L_2(\M)$, and simply denote $X\boxplus Y $. 
Let us state the following basic fact.

\begin{lemma}\label{quot1}  
Let $A_0,X,Y,A_1$ be four Banach spaces as above. 
Then there exists a surjective quotient map $q:X\boxplus Y\to X+Y$.
\end{lemma}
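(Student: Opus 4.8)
The plan is to realize $q$ as the continuous extension of the formal identity on $A_0$, and then to upgrade this contraction to a metric surjection by combining the density hypothesis \eqref{density} with the standard ``almost open implies open'' argument.

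First I would record that $A_0\subset X+Y$ and that the identity on $A_0$ is a contraction from $(A_0,\|\cdot\|_{X\boxplus Y})$ into $(X+Y,\|\cdot\|_{X+Y})$: this is precisely \eqref{eqsums}, since $\|z\|_{X+Y}\le\|z\|_{X\boxplus Y}$ for every $z\in A_0$. Recall that $X+Y$ is a Banach space and that $A_0$ is dense in $X\boxplus Y$ by the very definition of the latter as a completion. Hence this contraction extends uniquely to a contractive linear map $q:X\boxplus Y\to X+Y$ whose restriction to $A_0$ is the inclusion $z\mapsto z$.

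Next I would show that $q$ is a metric surjection, i.e. $q(B^{\circ}_{X\boxplus Y})=B^{\circ}_{X+Y}$, which in particular yields surjectivity. By the classical successive-approximation argument (valid because $X\boxplus Y$ is complete), it suffices to prove that $q(B^{\circ}_{X\boxplus Y})$ is dense in $B^{\circ}_{X+Y}$ for the norm of $X+Y$. So fix $f\in X+Y$ with $\|f\|_{X+Y}<1$ and $\varepsilon>0$, and choose a decomposition $f=x+y$ with $x\in X$, $y\in Y$ and $\|x\|_X+\|y\|_Y<1$. The density assumption \eqref{density} provides $x'\in A_0\cap X$ and $y'\in A_0$ with $\|x-x'\|_X$ and $\|y-y'\|_Y$ arbitrarily small; taking them small enough we may keep $\|x'\|_X+\|y'\|_Y<1$ while ensuring $\|x-x'\|_X+\|y-y'\|_Y<\varepsilon$. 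Setting $z=x'+y'\in A_0$, the admissible decomposition $z=x'+y'$ (with $x'\in A_0\cap X$, $y'\in A_0$) gives $\|z\|_{X\boxplus Y}\le\|x'\|_X+\|y'\|_Y<1$, so $z\in B^{\circ}_{X\boxplus Y}$, while
$$\|f-q(z)\|_{X+Y}=\|(x-x')+(y-y')\|_{X+Y}\le\|x-x'\|_X+\|y-y'\|_Y<\varepsilon.$$
This establishes the desired density.

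Consequently $q$ is a surjective quotient map, with $\|f\|_{X+Y}=\inf\{\|z\|_{X\boxplus Y}:q(z)=f\}$ for every $f\in X+Y$. The only point requiring a little care is the passage from ``dense image of the open ball'' to ``metric surjection'': one runs the usual iteration, approximating $f$ by $q(z_0)$ with $\|z_0\|_{X\boxplus Y}<1$, then the residual $f-q(z_0)$ (of small $X+Y$-norm) by a suitably scaled $q(z_1)$, and so on, the partial sums $\sum_k z_k$ converging in the complete space $X\boxplus Y$ to a preimage of $f$ of controlled norm. This is where completeness of $X\boxplus Y$ is essential, and it is the only mildly technical step; the heart of the statement is the elementary density computation above. (Note that surjectivity is all that Lemma \ref{quot1} asserts, and the genuinely delicate question, treated later, is whether the kernel of $q$ is trivial.)
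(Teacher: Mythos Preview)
Your proof is correct and follows essentially the same approach as the paper: both use the density assumption \eqref{density} to approximate a decomposition $f=x+y$ by elements of $A_0$, then sum an absolutely convergent series in the complete space $X\boxplus Y$ to produce an exact preimage. The only cosmetic difference is that the paper writes $x=\sum_n x_n$ and $y=\sum_n y_n$ as absolutely convergent series from the outset and sums $z_n=x_n+y_n$ directly, whereas you first establish density of $q(B^\circ_{X\boxplus Y})$ in $B^\circ_{X+Y}$ and then invoke the standard iteration---these are two packagings of the same argument.
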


\begin{proof} 
By \eqref{eqsums} we can consider the contractive map $q:X\boxplus Y\to X+Y$ defined by $q(z)=z$ for $z\in A_0$. 
Let us show that $q$ is a quotient map. 
Let $z\in X+Y$ be of norm $<1$. 
We can find $x\in X$ and $y\in Y$ such that $z=x+y$ and $\|x\|_X=\lambda, \|y\|_Y=\mu$ with $\lambda+\mu<1$. 
Since $A_0\cap X$ is dense in $X$, we can find a sequence $(x_n)_n$ in $A_0\cap X$ 
such that the series is absolutely converging and 
$$\sum_n\|x_n\|_X\leq \lambda + \frac{1-(\lambda+\mu)}{4}\quad , \quad 
x=\sum_n x_n \mbox{ in } X.$$ 
Similarly, there exists a sequence $(y_n)_n$ in $A_0$ 
such that the series is absolutely converging and 
$$\sum_n\|y_n\|_Y\leq \mu+\frac{1-(\lambda+\mu)}{4}\quad , \quad 
y=\sum_n y_n \mbox{ in } Y.$$
Then $z_n=x_n+y_n \in X\boxplus Y$ for all $n$, and 
$$\sum_n \|z_n\|_{ X\boxplus Y}\leq \sum_n \|x_n\|_X+ \|y_n\|_Y \leq \frac{1+\lambda+\mu}{2} <1.$$
Hence the series $(z_n)_n$ is absolutely converging in $X\boxplus Y$ and we have 
$$q\Big(\sum_n z_n \Big)=z.$$
This ends the proof. 
\qd

The two sums coincide in the following cases. 

\begin{lemma}\label{sumeq}  
Let $A_0,X,Y,A_1$ be four Banach spaces as above. 
Then the following assertions are equivalent.
\begin{enumerate} 
\item[(i)] $X+Y=X\boxplus Y$ with equivalent norms;
\item[(ii)] $X+Y=X\boxplus Y$ isometrically;
\item[(iii)]$X\boxplus Y$ embeds injectively into $A_1$.
\end{enumerate}
\end{lemma}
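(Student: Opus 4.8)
The plan is to run the cyclic chain $(ii)\Rightarrow(i)\Rightarrow(iii)\Rightarrow(ii)$, the engine being a sharpening of Lemma~\ref{quot1}: the surjective map $q\colon X\boxplus Y\to X+Y$ constructed there is in fact a \emph{metric surjection}, i.e. for every $z\in X+Y$ with $\|z\|_{X+Y}<1$ one can find $\xi\in X\boxplus Y$ with $q(\xi)=z$ and $\|\xi\|_{X\boxplus Y}<1$. This is exactly what the construction in the proof of Lemma~\ref{quot1} already delivers: starting from a decomposition $z=x+y$ with $\|x\|_X+\|y\|_Y=\lambda+\mu<1$, the absolutely convergent series $\xi=\sum_n z_n$ satisfies $\sum_n\|z_n\|_{X\boxplus Y}\leq\frac{1+\lambda+\mu}{2}<1$ and $q(\xi)=z$. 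Consequently the induced map $\overline{q}\colon (X\boxplus Y)/\ker q\to X+Y$ is an isometric isomorphism. Alongside $q$ I will use the natural continuous map $j\colon X\boxplus Y\to A_1$ extending the inclusion $A_0\hookrightarrow A_1$, which is well defined since on $A_0$ one has $\|z\|_{A_1}\leq C\|z\|_{X+Y}\leq C\|z\|_{X\boxplus Y}$ by \eqref{eqsums} together with the boundedness of $X,Y\hookrightarrow A_1$; I also use the canonical injective continuous inclusion $\iota\colon X+Y\hookrightarrow A_1$. Since $j$ and $\iota\circ q$ agree on the dense subspace $A_0$, by density $j=\iota\circ q$.

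For $(ii)\Rightarrow(i)$ there is nothing to prove. For $(i)\Rightarrow(iii)$, if the two norms are equivalent then $q$, which is contractive by \eqref{eqsums}, is also bounded below, hence injective. As $\iota$ is injective and $j=\iota\circ q$, the map $j$ is injective, which is precisely statement~$(iii)$.

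For the main implication $(iii)\Rightarrow(ii)$ I first translate the hypothesis: $(iii)$ asserts that $j$ is injective, and because $\iota$ is injective and $j=\iota\circ q$ one has $\ker q=\ker j=\{0\}$, so $q$ is injective. Combining this with the metric-surjection property above, $\overline{q}\colon (X\boxplus Y)/\ker q=X\boxplus Y\to X+Y$ is an isometric isomorphism; that is, $q$ itself is an isometry onto $X+Y$, which is~$(ii)$.

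The one genuinely non-formal point, and hence the main obstacle, is verifying that $q$ is a metric surjection rather than merely a bounded surjection; once this is in hand everything else is soft functional analysis and the open mapping theorem is not even needed. This refinement is already encoded in the quantitative estimate of Lemma~\ref{quot1}, so no new idea beyond tracking the constants is required. I expect the truly delicate issue to arise only when \emph{applying} this criterion, namely in checking hypothesis~$(iii)$ in concrete cases—that the completion $X\boxplus Y$ carries no ``virtual kernel'' inside $A_1$—which is exactly where Randrianantoanina's decomposition (Theorem~\ref{RaBGdiscr}) will later be invoked.
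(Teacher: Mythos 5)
Your proof is correct and follows essentially the same route as the paper: both rest on the fact that the map $q$ from Lemma \ref{quot1} is a metric surjection (quotient map), so that equivalence of norms, isometry, and injectivity of $q$ are all the same condition, and then identify injectivity of $q$ with injectivity of the natural map into $A_1$ via the factorization $j=\iota\circ q$ (the paper's commuting diagram with $f$). The only difference is presentational: you spell out the density argument defining $j$ and the chain of implications, while the paper states these steps more tersely.
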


\begin{proof}
By Lemma \ref{quot1}, we see that the two sums coincide with equivalent norms if and only if they coincide isometrically if and only if the quotient map $q$ is injective. 
Let us consider the following commuting diagram
$$\xymatrix{
    X\boxplus Y  \ar@{->}[r]^{q}  \ar@{->}[rd]^{f}   & X+Y \ar@{^{(}->}[d]^{id}  \\
    & A_1
  }.$$ 
It is clear that $q$ is injective if and only if $f$ is injective. This proves the Lemma. 
\qd
 
\re
These two sums may be seen as quotient of Banach spaces. 
Indeed, on the one hand $X+Y$ is isometrically isomorphic to the quotient space $X\oplus_1 Y/L$, where
$$L=\ker \phi =\{(x,-x)\in X\oplus_1 Y : x\in X\cap Y\}$$
and 
$$\phi:\left\{\begin{array}{ccc}
X\oplus_1 Y &\longrightarrow &A_1 \\
(x,y)&\longmapsto &x+y
\end{array}\right..$$
On the other hand, $X\boxplus Y$ is isometrically isomorphic to the completion of the quotient
$$ \big((A_0\cap X)\oplus_1 A_0\big)/L_0,$$
where
$$L_0=\ker (\phi_{|(A_0\cap X)\oplus_1 A_0})
=L \cap \big((A_0\cap X)\oplus_1 A_0\big)
= \{(x,-x)\in X\oplus_1 Y : x\in A_0\cap X\} .$$
The density assumption \eqref{density} then implies that
\begin{equation}\label{boxplusquotient}
X\boxplus Y=\overline{\big((A_0\cap X)\oplus_1 A_0\big)/L_0}
=X\oplus_1 Y / \overline{L_0}.
\end{equation}
Hence we can write
$$X+Y=X\boxplus Y \Leftrightarrow L=\overline{L_0}\Leftrightarrow L_0\mbox{ is dense in } L\Leftrightarrow 
A_0\cap X\mbox{ is dense in } X\cap Y.$$  
\mar

As mentioned previously, the introduction of this $\boxplus$-sum is motivated by some dual arguments. 
It is well known that the dual of the usual sum $X+Y$ is $X^*\cap Y^*$ whenever $X\cap Y$ is dense in $X$ and $Y$, but this is not true in general. 
In some cases, the dual space of $X \boxplus Y$ is easier to describe than the dual space of the usual sum $X+Y$. 
More precisely, the dual spaces of these two constructions are described in the following Lemma. 

\begin{lemma}\label{dualboxplus}
Let $A_0,X,Y$ and $A_1$ be such that $A_0\cap X $ is dense in $X$, $A_0$ is dense in $Y$ and $X,Y$ embed into $A_1$. Then 
\begin{enumerate}
\item[(i)] $(X+Y)^*=\{(x^*,y^*) \in X^* \oplus_\infty Y^* : x^*_{|X\cap Y}=y^*_{|X\cap Y}\}$.
\item[(ii)] $(X\boxplus Y)^*=\{(x^*,y^*) \in X^* \oplus_\infty Y^* : x^*_{|A_0\cap X}=y^*_{|A_0\cap X}\}$.
\end{enumerate}
\end{lemma}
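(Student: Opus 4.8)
The plan is to read off both dual spaces from the quotient representations recorded in the remark just above, combined with two standard facts: that the dual of an $\ell_1$-sum is the corresponding $\ell_\infty$-sum, and that the dual of a quotient $Z/M$ by a closed subspace $M$ is canonically isometric to the annihilator $M^\perp\subset Z^*$. Accordingly I would first set $Z=X\oplus_1 Y$, identify $Z^*=X^*\oplus_\infty Y^*$ via the pairing $\langle(x^*,y^*),(x,y)\rangle=\langle x^*,x\rangle+\langle y^*,y\rangle$, and recall that $X+Y=Z/L$ with $L=\{(x,-x):x\in X\cap Y\}$ (a closed subspace, being the kernel of the bounded map $\phi$), while $X\boxplus Y=Z/\overline{L_0}$ with $L_0=\{(x,-x):x\in A_0\cap X\}$ by \eqref{boxplusquotient}.

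For part (i) I would then compute $(X+Y)^*=(Z/L)^*=L^\perp$, and observe that a pair $(x^*,y^*)\in Z^*$ annihilates $L$ exactly when $\langle x^*,x\rangle-\langle y^*,x\rangle=0$ for all $x\in X\cap Y$, i.e. when $x^*_{|X\cap Y}=y^*_{|X\cap Y}$; this is the asserted description. For part (ii) the same mechanism gives $(X\boxplus Y)^*=(Z/\overline{L_0})^*=(\overline{L_0})^\perp$. The key simplification is that the annihilator of a linear subspace coincides with the annihilator of its closure, so $(\overline{L_0})^\perp=L_0^\perp$; repeating the previous computation with $A_0\cap X$ in place of $X\cap Y$ then yields that $(x^*,y^*)\in L_0^\perp$ iff $x^*_{|A_0\cap X}=y^*_{|A_0\cap X}$, which is (ii).

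The substantive points to secure --- rather than the routine identifications of the duals of an $\ell_1$-sum and of a quotient --- are the two quotient representations feeding the argument. For (i) this is the standard interpolation fact $X+Y\cong Z/L$, and for (ii) it is precisely \eqref{boxplusquotient}, where the density hypothesis \eqref{density} is what guarantees that the completion defining $X\boxplus Y$ is the Banach-space quotient $Z/\overline{L_0}$ rather than something smaller. I expect the only genuine care to lie in the passage from $\overline{L_0}$ to $L_0$ in the annihilator and in keeping track of the restrictions $x^*_{|A_0\cap X}$ as honest functionals; once these are in hand, both identities are immediate.
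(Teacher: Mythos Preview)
Your proposal is correct and follows exactly the paper's approach: both arguments read off $(X+Y)^*=L^\perp$ and $(X\boxplus Y)^*=(\overline{L_0})^\perp=L_0^\perp$ from the quotient representations $X+Y=Z/L$ and $X\boxplus Y=Z/\overline{L_0}$ (the latter via \eqref{boxplusquotient}), together with the identification $Z^*=X^*\oplus_\infty Y^*$. The paper's proof is simply a more compressed version of what you have written.
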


\begin{proof}
Since $X+Y=X\oplus_1 Y/L$, we deduce that $(X+Y)^*=L^\perp\subset (X\oplus_1 Y)^*=X^* \oplus_\infty Y^*$ and we obtain (i). 
By \eqref{boxplusquotient} we can write $(X  \boxplus Y)^*=(\overline{L_0})^\perp=L_0^\perp$ and (ii) follows. 
\end{proof}

\re
Observe that the definition of the sum $X \boxplus_{A_0}Y$ only relies on the space $A_0$ and not on $A_1$. 
In fact, we do not need that $X$ and $Y$ are embedded into a common space $A_1$ to define $X \boxplus_{A_0}Y$. 
However, in that situation we cannot define the usual sum $X+Y$. 
\mar

Theorem \ref{RaBGdiscr} can be reformulated by using the $\boxplus$-sum as follows.
 
\begin{cor}\label{BGboxplusdiscr}
Let $1<p<2$. Then 
$$L_p(\M)=H_p^c \boxplus H_p^r \quad \mbox{with equivalent norms}.$$
\end{cor}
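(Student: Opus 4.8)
The plan is to recognize this Corollary as exactly the reformulation of Theorem \ref{RaBGdiscr} in the language of the $\boxplus$-sum, taken with $A_0=L_2(\M)$, $X=H_p^c$, $Y=H_p^r$ and $A_1=L_p(\M)$. First I would check that this data is admissible. Since $\|z\|_{H_p^c}=\|S_c(z)\|_p\leq \|S_c(z)\|_2=\|z\|_2$ for $1\leq p\leq 2$, and likewise for the row norm, we have $L_2(\M)\subset H_p^c\cap H_p^r$; hence $A_0\cap X=L_2(\M)$, and the decompositions allowed in the definition of $\|\cdot\|_{H_p^c\boxplus H_p^r}$ are precisely those $z=a+b$ with $a,b\in L_2(\M)$. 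The density hypothesis \eqref{density} holds because for a finite martingale the Hardy norm is equivalent to the $L_p$-norm and $L_2(\M)$ is dense in $L_p(\M)$, so $L_2(\M)$ is dense in both $H_p^c$ and $H_p^r$.

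Next I would record the one easy inequality. By the noncommutative Burkholder--Gundy inequalities (Theorem \ref{BGdiscr}) we already know $L_p(\M)=H_p^c+H_p^r$ with equivalent norms for $1<p<2$, so $\|z\|_p\leq \beta_p\|z\|_{H_p^c+H_p^r}$ for $z\in L_2(\M)$. Combined with the elementary comparison \eqref{eqsums} between the two sums this gives
$$\|z\|_p\leq \beta_p\,\|z\|_{H_p^c+H_p^r}\leq \beta_p\,\|z\|_{H_p^c\boxplus H_p^r},\qquad z\in L_2(\M).$$
For the reverse inequality I would invoke Theorem \ref{RaBGdiscr}: applied to $z\in L_2(\M)$ it produces $a,b\in L_2(\M)$ with $z=a+b$ and $\|a\|_{H_p^c}+\|b\|_{H_p^r}\leq C(p)\|z\|_p$. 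Because $a\in L_2(\M)=A_0\cap H_p^c$ and $b\in L_2(\M)=A_0$, this is an \emph{admissible} decomposition for the $\boxplus$-norm, whence $\|z\|_{H_p^c\boxplus H_p^r}\leq C(p)\|z\|_p$. Thus $\|\cdot\|_{H_p^c\boxplus H_p^r}$ and $\|\cdot\|_p$ are equivalent on $L_2(\M)$.

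Finally I would pass to completions. By construction $H_p^c\boxplus H_p^r$ is the completion of $(L_2(\M),\|\cdot\|_{H_p^c\boxplus H_p^r})$, whereas $L_p(\M)$ is the completion of $(L_2(\M),\|\cdot\|_p)$ since $L_2(\M)$ is dense in $L_p(\M)$ for $p<2$; the norm equivalence just obtained then identifies the two completions and yields $L_p(\M)=H_p^c\boxplus H_p^r$ with equivalent norms. The genuinely nontrivial ingredient — and the only hard part — is Theorem \ref{RaBGdiscr}, since the ordinary Burkholder--Gundy decomposition only lands in $L_p(\M)$, which is not admissible for the $\boxplus$-sum. It is exactly the simultaneous membership $a,b\in L_2(\M)$ coming from Randrianantoanina's weak-type construction that forces the abstract kernel to be trivial here (equivalently, via Lemma \ref{sumeq}, that makes $H_p^c\boxplus H_p^r$ embed injectively into $L_p(\M)$), so that the $\boxplus$-sum collapses to the ordinary sum in this case.
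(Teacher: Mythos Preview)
Your proof is correct and follows essentially the same approach as the paper: both set $A_0=L_2(\M)$, $X=H_p^c$, $Y=H_p^r$, $A_1=L_p(\M)$, use Theorem \ref{RaBGdiscr} for the inequality $\|z\|_{H_p^c\boxplus H_p^r}\leq C(p)\|z\|_p$, and use Theorem \ref{BGdiscr} together with \eqref{eqsums} for the reverse, concluding by density of $L_2(\M)$. You supply slightly more detail on the admissibility and density hypotheses, but the argument is the same.
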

 
\begin{proof}
In this application we consider $A_0=L_2(\M), X=H_p^c, Y=H_p^r$ and $A_1=L_p(\M)$. 
The density assumption \eqref{density} is clearly satisfied. 
By the density of $L_2(\M)$, it suffices to see that the norm $\|\cdot\|_p$ is equivalent to the norm $\|\cdot\|_{H_p^c\boxplus H_p^r}$ 
defined for $x \in L_2(\M)$ by 
$$\| x\|_{H_p^c\boxplus H_p^r} = \inf_{\substack{x=a+b, \\ a, b \in L_2(\M)}} \|a\|_{H_p^c}+\|b\|_{H_p^r}.$$ 
Theorem \ref{RaBGdiscr} means that 
$$ \| x\|_{H_p^c\boxplus H_p^r} \leq C(p) \|x\|_p \quad \mbox{for} \quad x\in L_2(\M),$$
and Theorem \ref{BGdiscr} gives the reverse inequality
$$\|x\|_p\leq \beta_p\| x\|_{H_p^c+H_p^r}  \leq \beta_p \| x\|_{H_p^c\boxplus H_p^r} .$$ 
\qd
 
\subsection{Burkholder-Gundy inequalities for $2<p<\infty$}\label{BGp>2} 

As mentioned previously, we need a stronger version of the Burkholder-Gundy inequalities for $1<p<2$ stated in Theorem \ref{BG1} before proving the case $2<p<\infty$ by duality. 
We may extend Randrianantoanina's result recalled in Theorem \ref{RaBGdiscr} to the continuous setting as follows. 

\begin{prop}\label{RaBG}
Let $1<p<2$ and $x\in L_2(\M)$. 
Then there exist $a,b \in L_2(\M)$ such that 
\begin{enumerate}
\item[(i)] $x=a+b$,
\item[(ii)]$ \|a\|_{\H_p^c}+\|b\|_{\H_p^r} \leq C(p) \|x\|_p $,
\item[(iii)] $\max\{\|a\|_2,\|b\|_2\} \leq f(p,\|x\|_p,\|x\|_2)$.
\end{enumerate}
Here $C(p)\leq C(p-1)^{-1}$ as $p\to 1$. 
\end{prop}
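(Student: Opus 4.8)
The plan is to lift Randrianantoanina's discrete decomposition (Theorem \ref{RaBGdiscr}) from each finite partition to the continuous setting by a weak-limit argument along $\U$, in the spirit of the proof of Theorem \ref{BG1}, but keeping track of the additional $L_2$-control. First, for each finite partition $\s \in \PP_{\fin}([0,1])$ I apply Theorem \ref{RaBGdiscr} to the discrete filtration $(\M_t)_{t\in\s}$ and to the fixed $x \in L_2(\M)$. This produces $a_\s, b_\s \in L_2(\M)$ with $x = a_\s + b_\s$, satisfying
$$\|a_\s\|_{H_p^c(\s)} + \|b_\s\|_{H_p^r(\s)} \leq C(p)\|x\|_p \quad \mbox{and} \quad \max\{\|a_\s\|_2, \|b_\s\|_2\} \leq f(p,\|x\|_p,\|x\|_2),$$
with constants independent of $\s$. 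In particular $(a_\s)_\s$ and $(b_\s)_\s$ are uniformly bounded in $L_2(\M)$, so I may set
$$a = \w L_2\mbox{-}\lim_{\s,\U} a_\s \quad \mbox{and} \quad b = \w L_2\mbox{-}\lim_{\s,\U} b_\s.$$
Since $a_\s + b_\s = x$ for every $\s$, passing to the weak limit gives $x = a + b$, which is (i); and by weak lower semicontinuity of the $L_2$-norm, $\max\{\|a\|_2,\|b\|_2\} \leq f(p,\|x\|_p,\|x\|_2)$, which is (iii).

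The only real work is the $\H_p$-estimate (ii), and I will bound $\|a\|_{\H_p^c}$ since the argument for $\|b\|_{\H_p^r}$ is symmetric. One cannot simply invoke lower semicontinuity here, because the discrete bound only controls $\|a_\s\|_{H_p^c(\s)}$ on the single partition $\s$ and not the full $\H_p^c$-norm of $a_\s$. Instead I reproduce the convexity argument of Theorem \ref{BG1}. Fix a partition $\s$ and $\eps > 0$. Since $U_\s = \{\s' : \s \subset \s'\} \in \U$, the weak limit $a$ lies in the weak closure, hence by Mazur's theorem in the norm closure, of the convex hull of $\{a_{\s'} : \s' \supset \s\}$. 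Thus I may choose positive weights $(\alpha_m)_{m=1}^M$ with $\sum_m \alpha_m = 1$ and partitions $\s^1,\dots,\s^M \supset \s$ such that
$$\Big\|a - \sum_{m=1}^M \alpha_m a_{\s^m}\Big\|_2 < \eps.$$

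I then combine two ingredients. Applying the elementary bound $\|y\|_{H_p^c(\s)} \leq \|y\|_2$, valid for $1\leq p<2$ (the first line of \eqref{estimateH_pcdiscr}), to $y = a - \sum_m \alpha_m a_{\s^m}$, together with the triangle inequality in $H_p^c(\s)$ and the monotonicity Lemma \ref{convexityHpc}(i) on each term (legitimate since $\s^m \supset \s$), I obtain
$$\|a\|_{H_p^c(\s)} \leq \eps + \sum_{m=1}^M \alpha_m \|a_{\s^m}\|_{H_p^c(\s)} \leq \eps + \beta_p \sum_{m=1}^M \alpha_m \|a_{\s^m}\|_{H_p^c(\s^m)} \leq \eps + \beta_p C(p)\|x\|_p.$$
Letting $\eps \to 0$ gives $\|a\|_{H_p^c(\s)} \leq \beta_p C(p)\|x\|_p$ for every $\s$, and taking $\lim_{\s,\U}$ yields $\|a\|_{\H_p^c} \leq \beta_p C(p)\|x\|_p$. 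Corollary \ref{HpXp} then ensures $a \in \H_p^c$ (as $a \in L_p(\M)$ has finite $\H_p^c$-norm), and the row analogue gives $b \in \H_p^r$ with the same bound. Since $\beta_p \approx 1$ as $p \to 1$, the constant remains of order $(p-1)^{-1}$, establishing (ii).

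I expect the main obstacle to be exactly the transfer carried out in the third paragraph: the $\H_p^c$-norm is not lower semicontinuous for the weak $L_2$-topology, so controlling $\|a\|_{\H_p^c}$ from the componentwise estimates is not automatic. The device that makes it work is the pairing of Mazur's theorem (replacing the weak limit by convex combinations of the $a_{\s'}$ over partitions refining $\s$) with the monotonicity property of Lemma \ref{convexityHpc}; this is the same mechanism already used in Theorem \ref{BG1}, now performed while preserving the uniform $L_2$-bound inherited from Theorem \ref{RaBGdiscr}.
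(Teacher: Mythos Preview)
Your proof is correct and follows essentially the same approach as the paper: apply Theorem \ref{RaBGdiscr} to each partition, pass to weak $L_2$-limits along $\U$, and recover the $\H_p^c$-estimate via Mazur's theorem combined with the monotonicity Lemma \ref{convexityHpc}(i). The paper's proof is terser but uses exactly this strategy, noting in particular that the $L_2$-bound makes the transfer step simpler than in Theorem \ref{BG1} because one can use $\|y\|_{H_p^c(\s)} \leq \|y\|_2$ directly (as you do) rather than the cruder bound \eqref{HpcLp}.
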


\begin{proof}
The proof is similar to that of Theorem \ref{BG1}. 
In this case, for each $\s$ we apply Theorem \ref{RaBGdiscr} to the discrete Hardy spaces $H_p^c(\s)$ and $H_p^r(\s)$. 
We obtain a decomposition $x=a_\s+b_\s$ with 
$$ \|a_\s\|_{H_p^c(\s)}+\|b_\s\|_{H_p^r(\s)} \leq C(p) \|x\|_p \quad \mbox{and} \quad 
\max\{\|a_\s\|_2,\|b_\s\|_2\} \leq f(p,\|x\|_p,\|x\|_2).$$
Hence the families $(a_\s)_\s$ and $(b_\s)_\s$ are uniformly bounded in $L_2$, and we can consider 
$$a=\w L_2 \mbox-\lim_{\s,\U} a_\s \quad \mbox{and} \quad b=\w L_2 \mbox-\lim_{\s,\U} b_\s.$$
We obtain $x=a+b$ where $a,b \in L_2(\M)$ satisfy (iii). 
The proof of the estimate (ii) is even simpler than in the proof of Theorem \ref{BG1}.  
We use the fact that for $y\in L_2(\M)$, $\|y\|_{H_p^c(\s)}\leq \|y\|_2$ and the result follows similarly. 
\qd

\begin{cor}\label{Hpboxplus}
Let $1<p<2$. Then 
$$\H_p=\H_p^c\boxplus \H_p^r \quad \mbox{isometrically}.$$
\end{cor}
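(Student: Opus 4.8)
The plan is to read off this statement from the abstract criterion of Lemma \ref{sumeq}, using Proposition \ref{RaBG} as the only substantial input. I would set $A_0 = L_2(\M)$, $X = \H_p^c$, $Y = \H_p^r$ and $A_1 = L_p(\M)$, and first check that this is an admissible quadruple. For $1 < p < 2$ Proposition \ref{injHpc2} gives the contractive inclusions $L_2(\M) \subset \H_p^c \subset L_p(\M)$ and $L_2(\M) \subset \H_p^r \subset L_p(\M)$, so that $A_0 \cap X = A_0 \cap Y = L_2(\M)$; moreover, by the density remark following the definition of the Hardy spaces, $L_2(\M) = L_{\max(p,2)}(\M)$ is norm dense in both $\H_p^c$ and $\H_p^r$. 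Hence the density hypothesis \eqref{density} holds, the $\boxplus$-sum $\H_p^c \boxplus \H_p^r$ is the completion of $L_2(\M)$ for
$$\|x\|_{\H_p^c \boxplus \H_p^r} = \inf_{\substack{x=a+b \\ a,b\in L_2(\M)}} \|a\|_{\H_p^c} + \|b\|_{\H_p^r},$$
and the usual sum $\H_p = \H_p^c + \H_p^r$ is defined inside $A_1 = L_p(\M)$.

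By Lemma \ref{sumeq} assertions (i) and (ii) are equivalent, so it suffices to prove that the two sums coincide with \emph{equivalent} norms in order to obtain the \emph{isometric} identity. Since $L_2(\M)$ is dense in $\H_p^c \boxplus \H_p^r$ by construction and in $\H_p = L_p(\M)$ by Theorem \ref{BG1}, I would only compare the two norms on $L_2(\M)$. For the bound $\|\cdot\|_{\H_p^c \boxplus \H_p^r} \leq C(p)\|\cdot\|_p$, the continuous analogue of Randrianantoanina's decomposition does exactly what is required: given $x \in L_2(\M)$, Proposition \ref{RaBG} furnishes $a, b \in L_2(\M)$ with $x = a+b$ and $\|a\|_{\H_p^c} + \|b\|_{\H_p^r} \leq C(p)\|x\|_p$, so that the infimum defining $\|x\|_{\H_p^c \boxplus \H_p^r}$ is at most $C(p)\|x\|_p$. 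For the reverse bound I would combine the general inequality \eqref{eqsums} with Theorem \ref{BG1}: for $x \in L_2(\M)$,
$$\|x\|_p \leq \beta_p \|x\|_{\H_p} = \beta_p \|x\|_{\H_p^c + \H_p^r} \leq \beta_p \|x\|_{\H_p^c \boxplus \H_p^r}.$$
Together these give the equivalence of $\|\cdot\|_p$ and $\|\cdot\|_{\H_p^c \boxplus \H_p^r}$ on the dense subspace $L_2(\M)$, hence assertion (i) of Lemma \ref{sumeq}, and therefore (ii), i.e. $\H_p = \H_p^c \boxplus \H_p^r$ isometrically.

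I do not expect a genuine obstacle inside this corollary: all the hard analysis has been absorbed into Proposition \ref{RaBG}, whose role is precisely to guarantee that for an $L_2$ input the Burkholder--Gundy decomposition can be carried out with both summands kept in $L_2(\M)$ (the ``decomposition at the $L_2$-level''), which is exactly what the infimum in the $\boxplus$-norm demands. The only point needing care is the bookkeeping: verifying \eqref{density} and checking that $L_2(\M)$ is simultaneously dense for the completion defining $\boxplus$ and for the ambient $L_p(\M)$, so that the norm comparison on $L_2(\M)$ transfers, through the quotient map $q$ of Lemma \ref{quot1}, to an identification of the completed spaces.
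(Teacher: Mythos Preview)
Your proposal is correct and follows essentially the same route as the paper: interpret Proposition \ref{RaBG} as the statement that $L_p(\M)=\H_p^c\boxplus\H_p^r$ with equivalent norms, combine this with Theorem \ref{BG1} to see that the $\boxplus$-sum and the ordinary sum agree with equivalent norms, and then invoke Lemma \ref{sumeq} to upgrade this to an isometric identification. Your additional care in verifying the density hypothesis \eqref{density} via Proposition \ref{injHpc2} is welcome but does not change the substance of the argument.
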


\begin{proof}
In terms of $\boxplus$-sum, Proposition \ref{RaBG} means that 
$$L_p(\M)=\H_p^c\boxplus \H_p^r \quad \mbox{with equivalent norms}.$$
Here we consider $A_0=L_2(\M), X=\H_p^c, Y=\H_p^r$ and $A_1=L_p(\M)$. 
Moreover, we know by Theorem \ref{BG1} that 
$$L_p(\M)=\H_p=\H_p^c+\H_p^r \quad \mbox{with equivalent norms}.$$
We deduce that $\H_p^c+\H_p^r=\H_p^c\boxplus \H_p^r$ with equivalent norms, hence the two sums coincide isometrically by Lemma \ref{sumeq}. 
\qd

We can now apply the duality argument to get the remaining case $2<p<\infty$. 

\begin{theorem}\label{BG2}
Let $2<p<\infty$. Then
$$L_p(\M)=\H_p=\H_p^c \cap \H_p^r \quad \mbox{with equivalent norms.}$$
\end{theorem}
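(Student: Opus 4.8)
The plan is to deduce the case $2<p<\infty$ from the already-established case $1<p'<2$ by a duality argument, crucially exploiting the $\boxplus$-sum rather than the ordinary sum. Writing $\frac1p+\frac1{p'}=1$ with $1<p'<2$, I would start from the identity
$$L_{p'}(\M)=\H_{p'}^c\boxplus\H_{p'}^r \quad \mbox{with equivalent norms},$$
which is exactly Corollary \ref{Hpboxplus} together with Theorem \ref{BG1}. The point of the $\boxplus$-formulation is that its dual is computed by Lemma \ref{dualboxplus}(ii), where the agreement condition is imposed on $A_0\cap X=L_2(\M)$ rather than on the intersection $\H_{p'}^c\cap\H_{p'}^r$; this is precisely what will force the two components of a dual element to coincide and collapse to a single operator.

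Concretely, I would take duals on both sides. On one side, $(L_{p'}(\M))^*=L_p(\M)$ for the anti-linear duality. On the other side, Lemma \ref{dualboxplus}(ii) applied with $A_0=L_2(\M)$, $X=\H_{p'}^c$ and $Y=\H_{p'}^r$ gives
$$(\H_{p'}^c\boxplus\H_{p'}^r)^*=\{(u,v)\in(\H_{p'}^c)^*\oplus_\infty(\H_{p'}^r)^* : u_{|L_2(\M)}=v_{|L_2(\M)}\},$$
where I have used $L_2(\M)\cap\H_{p'}^c=L_2(\M)$, valid since $L_2(\M)\subset\H_{p'}^c$ by Proposition \ref{injHpc2}. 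Invoking the column duality $(\H_{p'}^c)^*=\H_p^c$ from Corollary \ref{dual-intpolHpc}(i) and its row analogue $(\H_{p'}^r)^*=\H_p^r$, this dual becomes a space of pairs $(u,v)$ with $u\in\H_p^c$ and $v\in\H_p^r$ agreeing on $L_2(\M)$.

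The key step is then to identify this space with $\H_p^c\cap\H_p^r$. Since $2<p<\infty$, Proposition \ref{injHpc2} gives $\H_p^c\subset L_2(\M)$ and $\H_p^r\subset L_2(\M)$, so both $u$ and $v$ are genuine elements of $L_2(\M)$. Using Remark \ref{dualitybracket}, the pairing of $u$ (resp. $v$) against an element of $L_2(\M)$ is given by $\tau(u^*\cdot)$ (resp. $\tau(v^*\cdot)$); hence the condition $u_{|L_2(\M)}=v_{|L_2(\M)}$ forces $u=v$ in $L_2(\M)$. Therefore the dual is exactly $\{x\in L_2(\M): x\in\H_p^c \mbox{ and } x\in\H_p^r\}=\H_p^c\cap\H_p^r=\H_p$, with the $\oplus_\infty$-norm matching the intersection norm $\max(\|\cdot\|_{\H_p^c},\|\cdot\|_{\H_p^r})$. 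Comparing the two computations of the dual yields $L_p(\M)=\H_p^c\cap\H_p^r$ with equivalent norms.

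I expect the main obstacle to be the bookkeeping in this final identification, namely verifying that agreement of the two dual components on the dense subspace $L_2(\M)$ genuinely collapses the pair into a single element of $L_2(\M)$. This rests on the injectivity results of Proposition \ref{injHpc2} and on the explicit form of the duality bracket recorded in Remark \ref{dualitybracket}. It is exactly here that using the $\boxplus$-sum (agreement on $L_2(\M)$) rather than the usual sum (agreement only on $\H_{p'}^c\cap\H_{p'}^r$, by Lemma \ref{dualboxplus}(i)) is essential, since the ordinary sum would not obviously produce this clean collapse; this is the payoff of having extended Randrianantoanina's decomposition in Proposition \ref{RaBG} and identified the two sums in Corollary \ref{Hpboxplus}.
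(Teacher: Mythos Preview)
Your proof is correct and follows essentially the same approach as the paper: both deduce the case $2<p<\infty$ by duality from Corollary \ref{Hpboxplus} (ultimately Proposition \ref{RaBG}), using Corollary \ref{dual-intpolHpc}(i) and Remark \ref{dualitybracket} to make the pairings explicit. The only difference is packaging: the paper unwinds the duality by hand --- taking $y\in\H_p^c\cap\H_p^r$, decomposing a test element $x\in L_2(\M)$ with $\|x\|_{p'}\le 1$ as $x=a+b$ via Proposition \ref{RaBG}, and estimating $\tau(y^*a)$ and $\tau(y^*b)$ separately --- whereas you invoke Lemma \ref{dualboxplus}(ii) abstractly to compute $(\H_{p'}^c\boxplus\H_{p'}^r)^*$ and then collapse the pair using $\H_p^c,\H_p^r\subset L_2(\M)$; the two arguments are transparently equivalent.
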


\begin{proof}
In this case the non-obvious inclusion is $\H_p \subset L_p(\M)$. 
We detail the argument to highlight the need of the decomposition in $L_2(\M)$. 
Let $y \in \H_p=\H_p^c \cap \H_p^r \subset L_2(\M)$ and $x\in L_{2}(\M)$ be such that $\|x\|_{p'}\leq 1$. 
By Proposition \ref{RaBG}, there exist $a,b\in L_2(\M)$ such that $x=a+b$ and 
$$\|a\|_{\H_{p'}^c}+\|b\|_{\H_{p'}^r} \leq C(p').$$
Then $\tau(y^*x)=\tau(y^*a)+\tau(y^*b).$ 
Moreover, since $y\in \H_p^c$ and $a\in L_2(\M)$ we can write by Corollary \ref{dual-intpolHpc} (i) and Remark \ref{dualitybracket}
$$|\tau(y^*a)|=|(y|a)|\leq c(p)\|y\|_{\H_p^c}\|a\|_{\H_{p'}^c} \leq c(p)\|y\|_{\H_p}\|a\|_{\H_{p'}^c}.$$
The same estimate holds true for $b$ and we get
$$|\tau(y^*x)|\leq c(p)\|y\|_{\H_p}(\|a\|_{\H_{p'}^c}+\|b\|_{\H_{p'}^r}) \leq C(p') c(p)\|y\|_{\H_p}\|x\|_{p'}.$$
By density of $L_2(\M)$ in $L_{p'}(\M)$ we deduce
$$\|y\|_{p}\leq C(p') c(p)\|y\|_{\H_p}.$$
\qd

\re
Observe that in this proof, the fact that the decomposition $x=a+b$ is in $L_2$ is crucial. 
Indeed, if $a$ and $b$ do not lie in $L_2(\M)$, then the quantities $\tau(y^*a)$ and $\tau(y^*b)$ may not exist, and the duality argument does not work. 
\mar

\subsection{The space $\H_1$}\label{subsectH1}

We end this section with a discussion on the case $p=1$. 
Inspired by Lemma \ref{normHpc}, we define for $x\in \M$
$$\|x\|_{\H_1}=\lim_{p\to 1} \|x\|_{\H_p}.$$
Since the $\H_p$-norm is decreasing in $p$, the limit is in fact an infimum, which exists for 
$(\|x\|_{\H_p})_{p>1}$ is then a decreasing family bounded by below. 
Moreover, the inequalities
$$\beta_1^{-1} \|x\|_1\leq \|x\|_{\H_1} \leq \|x\|_2$$
ensure that this defines a norm on $\M$. 

\begin{defi}
We define the space $\H_1$ as the completion of $\M$ with respect to the norm $\|\cdot\|_{\H_1}$.
\end{defi}

By approximation we can extend Corollary \ref{Hpboxplus} to the case $p=1$. 

\begin{prop}\label{boxplusH1}
We have
$$\H_1=\H_1^c\boxplus \H_1^r \quad \mbox{isometrically}.$$
\end{prop}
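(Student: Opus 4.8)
The plan is to reduce the statement to an identity of norms on the dense subspace $L_2(\M)$ and then to evaluate both sides as iterated infima. First I would note that $\|\cdot\|_{\H_1}=\lim_{p\to1}\|\cdot\|_{\H_p}$ extends from $\M$ to all of $L_2(\M)$ (for $x\in L_2(\M)\subset L_p(\M)=\H_p$ the limit still exists as the monotone family is bounded below), and that $\H_1$, being the completion of $\M$ for $\|\cdot\|_{\H_1}$ with $\|\cdot\|_{\H_1}\le\|\cdot\|_2$ and $\M$ dense in $L_2(\M)$ for $\|\cdot\|_2$, coincides with the completion of $L_2(\M)$ for $\|\cdot\|_{\H_1}$. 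Since $\H_1^c\boxplus\H_1^r$ is by definition the completion of $A_0=L_2(\M)$ for $\|\cdot\|_{\H_1^c\boxplus\H_1^r}$, it suffices to show that
\[
\|x\|_{\H_1^c\boxplus\H_1^r}=\|x\|_{\H_1}\qquad\text{for all } x\in L_2(\M),
\]
after which the isometric identification follows from the identity map on $L_2(\M)$.

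The key input is a monotonicity of the column (and row) norms in $p$. For $a\in L_2(\M)$ we have $L_2(\M)\subset\H_p^c$ for every $1\le p<2$ by \eqref{estimateH_pc}, so $\iota(a)$ is a well-defined element of $K_p^c(\U)$ with $\|a\|_{\H_p^c}=\|\iota(a)\|_{K_p^c(\U)}$. Because $\M_\U$ is finite, $p\mapsto\|\iota(a)\|_{K_p^c(\U)}$ is nondecreasing — the inner product $\langle\iota(a),\iota(a)\rangle$ is a fixed positive element of $L_{p/2}(\M_\U)$ and the $L_r(\M_\U)$-norms increase with $r$ — and it is continuous in $p$ by Lemma \ref{normKpc}. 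Hence $\inf_{1<p<2}\|a\|_{\H_p^c}=\lim_{p\to1}\|a\|_{\H_p^c}=\|a\|_{\H_1^c}$, and likewise for $\|\cdot\|_{\H_p^r}$. Consequently, for any fixed decomposition $x=a+b$ with $a,b\in L_2(\M)$, both $\|a\|_{\H_p^c}$ and $\|b\|_{\H_p^r}$ decrease to $\|a\|_{\H_1^c}$ and $\|b\|_{\H_1^r}$ as $p\to1$, so the infimum of their sum is the sum of the infima:
\[
\|a\|_{\H_1^c}+\|b\|_{\H_1^r}=\inf_{1<p<2}\bigl(\|a\|_{\H_p^c}+\|b\|_{\H_p^r}\bigr).
\]

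With this in hand I would simply interchange the two infima. Recalling that at each level $p$ the $\boxplus$-norm runs over decompositions $x=a+b$ with $a,b\in L_2(\M)=A_0\cap\H_p^c$ — the \emph{same} set for every $p$ — one computes for $x\in L_2(\M)$:
\begin{align*}
\|x\|_{\H_1^c\boxplus\H_1^r}
&=\inf_{\substack{x=a+b\\ a,b\in L_2(\M)}}\bigl(\|a\|_{\H_1^c}+\|b\|_{\H_1^r}\bigr)
=\inf_{\substack{x=a+b\\ a,b\in L_2(\M)}}\ \inf_{1<p<2}\bigl(\|a\|_{\H_p^c}+\|b\|_{\H_p^r}\bigr)\\
&=\inf_{1<p<2}\ \inf_{\substack{x=a+b\\ a,b\in L_2(\M)}}\bigl(\|a\|_{\H_p^c}+\|b\|_{\H_p^r}\bigr)
=\inf_{1<p<2}\|x\|_{\H_p^c\boxplus\H_p^r}
=\inf_{1<p<2}\|x\|_{\H_p}=\|x\|_{\H_1},
\end{align*}
where the penultimate equality is Corollary \ref{Hpboxplus} and the last is the definition of $\|\cdot\|_{\H_1}$ together with the monotonicity of $p\mapsto\|x\|_{\H_p}$. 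This gives the desired identity of norms on $L_2(\M)$, hence the isometry.

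The only genuinely delicate point — and the one I would take care over — is the passage from $\M$ to $L_2(\M)$ in the monotone-limit statement $\|a\|_{\H_1^c}=\inf_{1<p<2}\|a\|_{\H_p^c}$, since Lemma \ref{normHpc} is stated only for $a\in\M$; I would route this through the embedding $\iota$ into $K_p^c(\U)$ and invoke Lemma \ref{normKpc} with the finiteness of $\M_\U$, as above. Everything else is elementary: a double infimum may be taken in either order, and the infimum of the sum of two families decreasing to their infima is the sum of the infima. In particular no weak-compactness or Randrianantoanina-type simultaneous $L_2$-control is needed at this stage, that content having already been absorbed into Corollary \ref{Hpboxplus}.
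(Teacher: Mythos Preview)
Your proof is correct and follows essentially the same approach as the paper: reduce to $L_2(\M)$ by density, invoke Corollary \ref{Hpboxplus} for each $1<p<2$, and use the monotonicity and continuity in $p$ of $\|\cdot\|_{\H_p^c}$ on $L_2(\M)$ (the extension of Lemma \ref{normHpc}) to pass to the limit. The only difference is organizational: the paper proves the two inequalities separately (one direction by monotonicity, the other by choosing a near-optimal decomposition and then a $p>1$ via continuity), whereas you compress both directions into a single chain of equalities by interchanging the two infima --- a perfectly legitimate and slightly cleaner packaging of the same argument.
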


\begin{proof}
In this application we consider $A_0=L_2(\M), X=\H_1^c, Y=\H_1^r$ and $A_1=L_1(\M)$. 
The density assumption \eqref{density} is satisfied. 
By the density of $L_2(\M)$, it suffices to see that the norms $\|\cdot\|_{\H_1}$ and $\|\cdot\|_{\H_1^c\boxplus \H_1^r }$ are equivalent on $L_2(\M)$. 
Let $x\in L_2(\M)$. By Corollary \ref{Hpboxplus} we may write
\begin{align*}
\|x\|_{\H_1}
&=\lim_{p\to 1} \|x\|_{\H_p}
=\lim_{p\to 1} \| x\|_{\H_p^c\boxplus \H_p^r}\\
&=\lim_{p\to 1} \inf_{\substack{x=a+b, \\a, b \in L_2(\M)}} \|a\|_{\H_p^c}+\|b\|_{\H_p^r}\\
&\geq \| x\|_{\H_1^c\boxplus \H_1^r }. 
\end{align*}
On the other hand, assume that $\| x\|_{\H_1^c\boxplus \H_1^r }<1$. 
Then there exist $a, b \in L_2(\M)$ such that $x=a+b$ and $\|a\|_{\H_1^c}+\|b\|_{\H_1^r}<1$. 
Observe that Lemma \ref{normHpc} still holds true for $a,b \in L_2(\M)$, thus
$$\|a\|_{\H_1^c}=\lim_{p\to 1} \|a\|_{\H_p^c} \quad \mbox{and} \quad \|b\|_{\H_1^r}=\lim_{p\to 1} \|b\|_{\H_p^r}.$$
Hence we can find $p>1$ such that $\|x\|_{\H_p}\leq \|a\|_{\H_p^c}+\|b\|_{\H_p^r}<1$. 
Since the $\H_p$-norm is decreasing in $p$ we get the reverse inequality $\|x\|_{\H_1}\leq \| x\|_{\H_1^c\boxplus \H_1^r }$. 
\qd


With this decomposition at the $L_2$-level we can describe the dual space of $\H_1$ as follows.

\begin{theorem}\label{dualityH1BMO}
We have 
$$(\H_1)^*=\BMO \quad \mbox{with equivalent norms},$$
where $\BMO=\BMO^c\cap \BMO^r$. 
\end{theorem}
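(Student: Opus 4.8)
The plan is to read the dual off the decomposition $\H_1=\H_1^c\boxplus\H_1^r$ established in Proposition \ref{boxplusH1}. First I would apply Lemma \ref{dualboxplus}~(ii) with $A_0=L_2(\M)$, $X=\H_1^c$, $Y=\H_1^r$ and $A_1=L_1(\M)$, exactly the setting of Proposition \ref{boxplusH1}. Since $L_2(\M)$ sits injectively and densely in both $\H_1^c$ and $\H_1^r$, the test space $A_0\cap X$ is just $L_2(\M)$, and the lemma gives
$$(\H_1)^*=\{(\varphi,\psi)\in(\H_1^c)^*\oplus_\infty(\H_1^r)^* : \varphi_{|L_2(\M)}=\psi_{|L_2(\M)}\}.$$

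Next I would invoke the Fefferman-Stein duality of Theorem \ref{fsduality} for the column space, together with its row analogue, to identify $(\H_1^c)^*=\BMO^c$ and $(\H_1^r)^*=\BMO^r$ with equivalent norms. The crucial point, which is built into the proof of Theorem \ref{fsduality}, is that these identifications are realized through the trace pairing: a functional $\varphi\in(\H_1^c)^*$ is represented by an element $z_c\in\BMO^c\subset L_2(\M)$ via $\varphi(y)=\tau(z_c^*y)$ for $y\in\M$, hence, by the density of $\M$ in $L_2(\M)$ and the continuity of $\varphi$ on $L_2(\M)\subset\H_1^c$, for every $y\in L_2(\M)$; likewise $\psi$ is represented by some $z_r\in\BMO^r\subset L_2(\M)$.

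Then I would translate the compatibility condition. Writing $\varphi=\tau(z_c^*\,\cdot)$ and $\psi=\tau(z_r^*\,\cdot)$ on $L_2(\M)$, the equality $\varphi_{|L_2(\M)}=\psi_{|L_2(\M)}$ reads
$$\tau(z_c^*y)=\tau(z_r^*y)\quad\text{for all } y\in L_2(\M),$$
which by the self-duality of $L_2(\M)$ forces $z_c=z_r$ as elements of $L_2(\M)$. Denoting by $z$ this common element, the constraints $\varphi\in(\H_1^c)^*$ and $\psi\in(\H_1^r)^*$ become precisely $z\in\BMO^c$ and $z\in\BMO^r$, while the $\oplus_\infty$-norm $\max\{\|z\|_{\BMO^c},\|z\|_{\BMO^r}\}$ is, up to the constants of Theorem \ref{fsduality}, the norm of $\BMO=\BMO^c\cap\BMO^r$. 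This yields $(\H_1)^*=\BMO$ with equivalent norms.

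I expect the main obstacle to be not the abstract duality but verifying that the two representing elements $z_c$ and $z_r$ genuinely live in $L_2(\M)$ and implement their functionals through the same bilinear form $\tau(\cdot^*\,\cdot)$, so that the compatibility condition of Lemma \ref{dualboxplus}~(ii) can be upgraded to the honest equality $z_c=z_r$ in $L_2(\M)$. This is exactly why the decomposition at the $L_2$-level encoded in the $\boxplus$-sum (Proposition \ref{boxplusH1}), rather than the ordinary sum $\H_1^c+\H_1^r$, is the right object here: it makes $A_0\cap X=L_2(\M)$ the correct test space on which the two candidate functionals must agree, and it is what permits the use of the $L_2$ pairing throughout the identification.
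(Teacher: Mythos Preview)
Your proposal is correct and takes essentially the same approach as the paper: both proofs combine the $\boxplus$-decomposition $\H_1=\H_1^c\boxplus\H_1^r$ of Proposition \ref{boxplusH1} with the Fefferman--Stein duality of Theorem \ref{fsduality}. The only cosmetic difference is that the paper refers to ``the same argument as in the proof of Theorem \ref{BG2}'' (the explicit $L_2$-level pairing argument), whereas you package this via the abstract description of $(X\boxplus Y)^*$ in Lemma \ref{dualboxplus}(ii); these are two formulations of the same reasoning.
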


\begin{proof}
The proof directly follows from Proposition \ref{boxplusH1} and Theorem \ref{fsduality} 
by using the same argument than the one detailed in the proof of Theorem \ref{BG2}. 
\qd

\subsection{Interpolation}

We end this section with the continuous analogue of the interpolation Theorem \ref{intH1cBMOc-discr} (ii) 
involving the spaces $\H_1$ and $\BMO$ introduced in subsection \ref{subsectH1}. 

\begin{theorem}\label{intH1BMO}
Let $1<p<\infty$. Then
$$L_p(\M)=[\BMO,\H_1]_{\frac{1}{p}}\quad \mbox{with equivalent norms}.$$
\end{theorem}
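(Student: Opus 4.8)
We want to prove that $L_p(\M)=[\BMO,\H_1]_{1/p}$ with equivalent norms for $1<p<\infty$. The statement is the continuous analogue of the discrete interpolation Theorem \ref{intH1cBMOc-discr} (ii), and the natural strategy is to reduce it to the column/row result Theorem \ref{intH1cBMOc} already established, just as the full Burkholder-Gundy Theorem \ref{BG2} was assembled from $\H_p^c$ by combining column and row pieces.

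Let me think about how the pieces fit together.

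The plan is as follows. First I would record the two-sided inclusions we already have available: by Theorem \ref{intH1cBMOc} we know $\H_p^c=[\BMO^c,\H_1^c]_{1/p}$ and by taking adjoints $\H_p^r=[\BMO^r,\H_1^r]_{1/p}$, each with equivalent norms for $1<p<\infty$. The decomposition results Proposition \ref{boxplusH1} and Corollary \ref{Hpboxplus}, together with Theorem \ref{BG2}, identify $\H_1=\H_1^c\boxplus\H_1^r$ and $\BMO=\BMO^c\cap\BMO^r$ (the latter being the definition of $\BMO$ from Theorem \ref{dualityH1BMO}), while $L_p(\M)=\H_p=\H_p^c\cap\H_p^r$ for $2\le p<\infty$ and $L_p(\M)=\H_p^c+\H_p^r$ for $1<p<2$ by Theorems \ref{BG1} and \ref{BG2}.

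Now I would split into the two ranges. For the easy direction, the interpolation of intersections and sums behaves well: one always has a contractive inclusion $[\BMO^c\cap\BMO^r,\H_1^c\boxplus\H_1^r]_{1/p}\hookrightarrow[\BMO^c,\H_1^c]_{1/p}\cap[\BMO^r,\H_1^r]_{1/p}$, since the diagonal inclusions of the couple $(\BMO,\H_1)$ into $(\BMO^c,\H_1^c)$ and into $(\BMO^r,\H_1^r)$ are bounded. Combined with Theorem \ref{intH1cBMOc} this gives $[\BMO,\H_1]_{1/p}\subset\H_p^c\cap\H_p^r$, which for $2\le p<\infty$ is exactly $L_p(\M)$ by Theorem \ref{BG2}; this yields one inclusion for the range $2\le p<\infty$ and, by a duality/reiteration maneuver, controls the other range too. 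The reverse inclusion $L_p(\M)\subset[\BMO,\H_1]_{1/p}$ is where the $L_2$-level decomposition becomes indispensable: given $x\in L_2(\M)$ (dense in both endpoints after regularization) I would split $x=a+b$ using the \emph{simultaneous} control of $\H_p$ and $L_2$ norms from Proposition \ref{RaBG}, apply the column interpolation Theorem \ref{intH1cBMOc} to produce analytic functions $f_a,f_b$ on the strip realizing $a\in[\BMO^c,\H_1^c]_{1/p}$ and $b\in[\BMO^r,\H_1^r]_{1/p}$, and then I would need $f=f_a+f_b$ to be admissible for the couple $(\BMO^c\cap\BMO^r,\H_1^c\boxplus\H_1^r)$. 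The cleanest route to assemble everything, however, is to invoke duality: for $1<p<2$ set $\frac1p+\frac1{p'}=1$; by the reflexivity of $L_2(\M)$ and the duality $(\H_1)^*=\BMO$ (Theorem \ref{dualityH1BMO}), the case $2<p'<\infty$ dualizes to give $L_p(\M)=[\BMO,\H_1]_{1/p}$ for $1<p<2$, and the reiteration/Wolff theorem then fills in the whole scale.

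Concretely I would carry out the steps in this order: (1) establish $L_p(\M)=[\BMO,\H_1]_{1/p}$ for the subrange $1<p<2$ by passing through the intermediate space $L_2(\M)$, using $[\H_1,L_2(\M)]_\theta\subset\H_p$ together with the reverse inclusion obtained from the $L_2$-controlled Burkholder-Gundy decomposition of Proposition \ref{RaBG} and the column/row result of Theorem \ref{intH1cBMOc}; (2) deduce the range $2\le p<\infty$ by duality, since $L_2(\M)$ is reflexive and $(\H_1)^*=\BMO$ makes $[\BMO,\H_1]_{1/p}$ the dual of $[\H_1,\BMO]_{1/p'}=[\BMO,\H_1]_{1/p'}$ up to the usual order conventions; (3) invoke the reiteration theorem and Wolff's theorem exactly as at the end of the proof of Theorem \ref{intH1cBMOc} to close the full range $1<p<\infty$. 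The main obstacle I anticipate is step (1): one must verify that the sum/intersection structure is compatible with the complex interpolation functor, i.e.\ that splitting $x=a+b$ in $L_2$ and interpolating the column and row halves separately actually produces an element of $[\BMO^c\cap\BMO^r,\H_1^c\boxplus\H_1^r]_{1/p}$ rather than merely of the intersection of the separately interpolated spaces. This is precisely the point where the $\boxplus$-sum and the simultaneous $L_2$-control (rather than the naive sum) are forced upon us, since the duality argument of Theorem \ref{BG2} only works when the decomposition lives in $L_2(\M)$; the routine checks (admissibility of the assembled analytic functions, density of $\M$ or $L_2(\M)$ in each endpoint) are then standard and I would not grind through them.
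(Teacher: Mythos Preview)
Your overall architecture matches the paper's: reduce to the column/row result of Theorem \ref{intH1cBMOc}, use the Burkholder--Gundy identification $L_p(\M)=\H_p^c\cap\H_p^r$ for $p\ge 2$, then invoke the duality $(\H_1)^*=\BMO$ and Wolff/reiteration to cover the full range. Your ``easy'' inclusion $[\BMO,\H_1]_{1/p}\subset L_p(\M)$ for $2<p<\infty$, via the diagonal embeddings of $(\BMO,\H_1)$ into the column and row couples separately, is exactly what the paper does (the paper in fact passes through $L_2(\M)$ and shows $[\BMO,L_2(\M)]_{2/p}\subset[\BMO^c,L_2(\M)]_{2/p}\cap[\BMO^r,L_2(\M)]_{2/p}=\H_p^c\cap\H_p^r=L_p(\M)$).

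The gap is in the reverse inclusion. You propose to decompose $x=a+b$ in $L_2(\M)$ via Proposition \ref{RaBG}, realize $a\in[\BMO^c,\H_1^c]_{1/p}$ and $b\in[\BMO^r,\H_1^r]_{1/p}$ by analytic functions $f_a,f_b$, and then hope that $f=f_a+f_b$ is admissible for the couple $(\BMO,\H_1)$. You correctly flag this as the obstacle, but it is a genuine obstruction rather than a routine check: there is no reason $f_a(it)+f_b(it)$ should land in $\BMO^c\cap\BMO^r$ when the summands lie only in $\BMO^c$ and $\BMO^r$ respectively. The decomposition-and-reassembly route does not close as written.

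The paper sidesteps this entirely with a one-line observation you are missing. Since $\M\subset\BMO$ continuously (immediate from $\|x\|_{BMO^c(\s)}\le 2\|x\|_\infty$, or from Corollary \ref{normBMOc}(iii)), monotonicity of complex interpolation in the endpoint gives
\[
L_p(\M)=[\M,L_2(\M)]_{2/p}\subset[\BMO,L_2(\M)]_{2/p}
\]
for $2<p<\infty$. Together with the easy direction this yields $[\BMO,L_2(\M)]_{2/p}=L_p(\M)$; duality with Theorem \ref{dualityH1BMO} and Wolff's theorem then finish exactly as in the proof of Theorem \ref{intH1cBMOc}. No Randrianantoanina-type splitting and no reassembly of analytic functions is needed for this theorem.
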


\begin{proof}
The inclusions $\BMO \subset L_2(\M) \subset \H_1$ ensure that the couple $[\BMO,\H_1]$ is compatible. 
As in the proof of Theorem \ref{intH1cBMOc}, we only need to prove that $[\BMO,L_2(\M)]_{2/p}=L_p(\M)$ for $2<p<\infty$, 
and we will conclude by using the duality $(\H_1)^*=\BMO$ 
established in Theorem \ref{dualityH1BMO}. 
On the one hand, by Theorem \ref{intH1cBMOc} we can write
\begin{align*}
[\BMO,L_2(\M)]_{2/p}
&=[\BMO^c\cap \BMO^r,L_2(\M)]_{2/p}
\subset [\BMO^c,L_2(\M)]_{2/p}\cap [\BMO^r,L_2(\M)]_{2/p}\\
&=\H_p^c\cap \H_p^r =L_p(\M),
\end{align*}
where the last equality is the Burkholder-Gundy Theorem \ref{BG2} for $2<p<\infty$. 
On the other hand, the continuous inclusion $\M \subset \BMO$ yields the reverse inclusion  
$$L_p(\M)=[\M,L_2(\M)]_{2/p}\subset [\BMO,L_2(\M)]_{2/p},$$
and finishes the proof. 
\qd

\section{The $\h_p^c$-spaces}\label{secthp}

In this section we consider the conditioned version of Hardy spaces, and study their continuous analogue. 
Following the case of the $\H_p^c$-spaces studied in Section \ref{sectHp}, 
we define two conditioned column Hardy spaces $\hh_p^c$ and $\h_p^c$ in the continuous setting. 
In this case we still have a crucial monotonicity property, with a reversed monotonicity, 
which will also imply that the two conditioned candidates $\hh_p^c$ and $\h_p^c$ coincide. 
However, the conditioned case is more complicated than the $\H_p^c$-case in the sense that we can not prove the injectivity results directly, 
as we did in Section \ref{sectHp}. 
In particular, the fact that $H_p^c(\s)=L_p(\M)$ with equivalent norms for a finite partition $\s$ is no longer true in the conditioned case. 
This is why we will first need to complement the space $\h_p^c$ into some larger space, 
which also have an $L_p$-module structure over a finite von Neumann algebra. 
The construction will be based on free amalgamated products. 
Then we will deduce duality, injectivity and interpolation results for $1<p<\infty$. 
We also establish the continuous analogue of the Fefferman-Stein duality for $\h_p^c$, 
where the description of the $L_{p}^c\mo$ spaces will be easier than the one of the $L_p^c\MO$ spaces in subsection \ref{sectFSc}. 
The end of this section is devoted to the expected interpolation result involving the column spaces $\h_1^c$ and $\bmo^c$.  

\subsection{The discrete case}\label{discretehpc}

As in Section \ref{sectHp}, we start by recalling the definitions of the conditioned Hardy spaces of noncommutative martingales in the discrete case 
and some well-known results. 
Let $(\M_n)_{n\geq 0}$ be a discrete filtration. 
Following \cite{JX}, we introduce the column and row conditioned square functions 
relative to a (finite) martingale $x = (x_n)_{n\geq 0}$ in $L_\infty(\M)$:
$$
s_c (x) = \Big ( \sum^{\infty}_{n = 0} \E_{n-1}|d_n(x) |^2 \Big )^{1/2}\quad \mbox{and} \quad 
s_r (x) = \Big ( \sum^{\infty}_{n = 0} \E_{n-1}| d_n(x)^* |^2 \Big)^{1/2},$$
where by convention we set $\E_{-1}=\E_0$. 
For $1 \leq p < \infty$ we define $h_p^c $ (resp. $h_p^r $) as the completion of all
finite $L_\infty$-martingales under the norm $\| x \|_{h_p^c}=\| s_c (x) \|_p$
(resp. $\| x \|_{h_p^r}=\| s_r (x) \|_p $). 
Let us also introduce the diagonal space $h_p^d$, defined as the subspace of $\ell_p(L_p(\M))$ consisting of all martingale difference sequences. 
Recall that $\ell_p(L_p(\M))$ is the space of all sequences $a=(a_n)_{n\geq 0}$ in $L_p(\M)$ such that
$$\|a\|_{\ell_p(L_p(\M))}=\Big(\sum_{n= 0}^\infty\|a_n\|_p^p\Big)^{1/p} <\infty ,$$
with the usual modification for $p=\infty$. 
The conditioned Hardy space of noncommutative martingales is defined by 
$$h_p=\left\{\begin{array}{cl}
h_p^d+h_p^c+h_p^r& \quad \mbox{for} \quad 1\leq p< 2 \\
h_p^d\cap h_p^c\cap h_p^r& \quad \mbox{for}\quad 2\leq p<\infty
\end{array}\right..$$

It was proved in \cite{JD} that for each $n$ and $0<p\leq \infty$, there exists an isometric right $\M_n$-module map 
$u_{n,p}:L_p^c(\M ;\E_n) \to L_p(\M_n; \ell_2^c)$ with complemented range such that 
\begin{equation}\label{u}
u_{n,p}(x)^*u_{n,q}(y)=\E_{n}(x^*y),
\end{equation}
for all $x\in L_p^c(\M;\E_n)$ and $y\in L_q^c(\M;\E_n)$. 
More precisely, for $0<p<\infty$ there exists a contractive projection $\Q_{n,p}$ defined from $L_p(\M_n; \ell_2^c)$ onto the image of $u_{n,p}$ 
such that for all $\xi \in L_p(\M_n; \ell_2^c)$
\begin{equation}\label{Q}
\Q_{n,p}(\xi)^*\Q_{n,p}(\xi)\leq \xi^*\xi.
\end{equation}
For $1<p<\infty$ we know that 
\begin{equation}\label{Q2}
\Q_{n,p}^*=\Q_{n,p'}.
\end{equation} 
In the sequel for the sake of simplicity we will drop the subscript $p$ in $u_{n,p}$ and $Q_{n,p}$. 
This proves that $h_p^c$ isometrically embeds into $L_p(\M; \ell_2^c(\nz^2))$ via the map 
$$u:\left\{\begin{array}{ccc}
h_p^c & \longrightarrow & L_p(\M; \ell_2^c(\nz^2)) \\
 x &\longmapsto & \displaystyle\sum_{n\geq 0} e_{n,0} \ten u_{n-1}(d_n(x))
\end{array}\right..$$
Furthermore, $h_p^c$ is a complemented subspace of $L_p(\M; \ell_2^c(\nz^2))$ for $1<p<\infty$. 
Indeed, we can define a projection 
$$P: L_p(\M; \ell_2^c(\nz^2)) \to h_p^c$$ as follows. 
For $\xi=\sum_n e_{n,0}\ten \xi_n \in  L_p(\M; \ell_2^c(\nz^2))$, for all $n\geq 0$ we have 
$\E_{n-1}(\xi_n) \in  L_p(\M_{n-1}; \ell_2^c(\nz))$. 
We may apply the projection $\Q_{n-1}$ and obtain for each $n$ an element $y_n \in L_p^c(\M;\E_{n-1})$ satisfying
\begin{equation}\label{yk}
\Q_{n-1}(\E_{n-1}(\xi_n))=u_{n-1}(y_n).
\end{equation}
Then we set 
$$P(\xi)=\sum_{n\geq 0} d_n(y_n).$$
It is clear that $P \circ u = id_{h_p^c}$, i.e., that $P$ is a projection from $L_p(\M; \ell_2^c(\nz^2))$ onto $h_p^c$. 
Moreover, we can show that this projection is bounded for $1<p<\infty$.  

\begin{lemma}\label{complhpdiscr}
Let $1<p<\infty$. Then the discrete space $h_p^c$ is $\gamma_p$-complemented in $ L_p(\M; \ell_2^c(\nz^2))$. 
\end{lemma}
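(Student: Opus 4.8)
The plan is to show that the projection $P$ already written down is bounded with norm at most $\gamma_p$; since it has been checked that $P\circ u=\mathrm{id}_{h_p^c}$, this is all that remains. First I would fix $\xi=\sum_{n\ge 0}e_{n,0}\ten\xi_n\in L_p(\M;\ell_2^c(\nz^2))$ with $\xi_n=\sum_k e_{k,0}\ten\xi_{n,k}\in L_p(\M;\ell_2^c(\nz))$, let $y_n\in L_p(\M;\E_{n-1})$ be the element determined by $u_{n-1}(y_n)=\Q_{n-1}(\E_{n-1}(\xi_n))$ as in \eqref{yk}, and record that the martingale $P(\xi)=\sum_n d_n(y_n)$ has martingale differences $d_n(P(\xi))=d_n(y_n)$, so that
\[
\|P(\xi)\|_{h_p^c}^2=\Big\|\sum_{n\ge 0}\E_{n-1}|d_n(y_n)|^2\Big\|_{p/2}.
\]
The whole argument then reduces to a chain of positive-operator estimates for the summands.

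The two key pointwise inequalities I would establish are
\[
\E_{n-1}|d_n(y_n)|^2\le \E_{n-1}|y_n|^2\le (\E_{n-1}(\xi_n))^*\,\E_{n-1}(\xi_n)=\sum_k|\E_{n-1}(\xi_{n,k})|^2.
\]
For the first, expanding $d_n(y_n)=\E_n(y_n)-\E_{n-1}(y_n)$ and using $\E_{n-1}\E_n=\E_{n-1}$ gives $\E_{n-1}|d_n(y_n)|^2=\E_{n-1}|\E_n(y_n)|^2-|\E_{n-1}(y_n)|^2$, which is dominated by $\E_{n-1}|\E_n(y_n)|^2\le \E_{n-1}\E_n|y_n|^2=\E_{n-1}|y_n|^2$ via the Kadison--Schwarz inequality $|\E_n(y_n)|^2\le \E_n|y_n|^2$. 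For the second, property \eqref{u} gives $\E_{n-1}|y_n|^2=u_{n-1}(y_n)^*u_{n-1}(y_n)$, and since $u_{n-1}(y_n)=\Q_{n-1}(\E_{n-1}(\xi_n))$, the contractivity estimate \eqref{Q} yields $u_{n-1}(y_n)^*u_{n-1}(y_n)\le (\E_{n-1}(\xi_n))^*\E_{n-1}(\xi_n)$.

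Summing over $n$ and using that $\|\cdot\|_{p/2}$ is monotone on positive operators (it is a norm for $p\ge 2$, and $a\mapsto\tau(a^{p/2})$ is monotone on positives for $1<p<2$), I obtain
\[
\|P(\xi)\|_{h_p^c}^2\le \Big\|\sum_{n,k}|\E_{n-1}(\xi_{n,k})|^2\Big\|_{p/2}=\Big\|\Big(\sum_{n,k}|\E_{n-1}(\xi_{n,k})|^2\Big)^{1/2}\Big\|_p^2.
\]
Finally I would invoke the column noncommutative Stein inequality, the same inequality underlying Proposition \ref{compldiscr} and carrying the constant $\gamma_p$, applied to the family $(\xi_{n,k})$ with the conditional expectations $\E_{n-1}$ (equivalently, Stein in $B(\ell_2)\oten\M$ for the amplified, one-step shifted filtration, so the shift is harmless), to conclude
\[
\Big\|\Big(\sum_{n,k}|\E_{n-1}(\xi_{n,k})|^2\Big)^{1/2}\Big\|_p\le \gamma_p\Big\|\Big(\sum_{n,k}|\xi_{n,k}|^2\Big)^{1/2}\Big\|_p=\gamma_p\|\xi\|_{L_p(\M;\ell_2^c(\nz^2))}.
\]
This gives $\|P(\xi)\|_{h_p^c}\le \gamma_p\|\xi\|$, completing the boundedness. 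The main point requiring care is precisely this reduction to the Stein inequality: one must check both the Kadison--Schwarz step controlling the conditioned bracket of $d_n(y_n)$ and the harmlessness of the one-step index shift in the filtration, after which the constant $\gamma_p$ is exactly the Stein constant already appearing in Proposition \ref{compldiscr}.
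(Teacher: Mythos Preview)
Your proof is correct and follows essentially the same route as the paper: both establish the pointwise chain $\E_{n-1}|d_n(y_n)|^2\le \E_{n-1}|y_n|^2\le |\E_{n-1}(\xi_n)|^2$ via Kadison--Schwarz and the properties \eqref{u}, \eqref{Q}, and then finish with the noncommutative Stein inequality to obtain the constant $\gamma_p$. The only difference is cosmetic---you unpack $\xi_n$ into its coordinates $\xi_{n,k}$, whereas the paper keeps the column-vector notation $|\E_{n-1}(\xi_n)|^2$.
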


\begin{proof}
Let $\xi=\sum_n e_{n,0}\ten \xi_n \in  L_p(\M; \ell_2^c(\nz^2))$. 
First observe that for all $n\geq 0$ we have
\begin{equation}\label{eq1}
\E_{n-1}|d_n(y_n)|^2\leq \E_{n-1}|y_n|^2.
\end{equation}
Indeed, for $n=0$, since by convention $\E_{-1}=\E_0$ and $d_0(y_0)=\E_0(y_0)$, we have
$$\E_{0}|d_0(y_0)|^2=|\E_0(y_0)|^2\leq \E_0|y_0|^2.$$ 
For $n\geq 1$, we can write
\begin{align*}
\E_{n-1}|d_n(y_n)|^2&
=\E_{n-1}(|\E_n(y_n)|^2-|\E_{n-1}(y_n)|^2)\\
&\leq \E_{n-1}(|\E_n(y_n)|^2)
\leq \E_{n-1}(\E_n|y_n|^2)=\E_{n-1}|y_n|^2.
\end{align*}
Moreover by \eqref{yk} and \eqref{Q}, we have for all $n\geq 0$
\begin{equation}\label{eq2}
\E_{n-1}|y_n|^2= |u_{n-1}(y_n)|^2 =  |\Q_{n-1}(\E_{n-1}( \xi_n))|^2
\leq |\E_{n-1}(\xi_n)|^2 .
\end{equation}
Combining \eqref{eq1} with \eqref{eq2} we obtain
\begin{equation}\label{eq3}
\E_{n-1}|d_n(P(\xi))|^2=\E_{n-1}|d_n(y_n)|^2 \leq |\E_{n-1}(\xi_n)|^2, \quad \forall  n\geq 0.
\end{equation}
The noncommutative Stein inequality implies 
\begin{align*}
\|P(\xi)\|_{h_p^c}
&=\Big\| \Big(\sum_{n\geq 0} \E_{n-1}|d_n(y_n)|^2\Big)^{1/2}\|_p 
\leq  \Big\| \Big(\sum_{n\geq 0} |\E_{n-1}(\xi_n)|^2\Big)^{1/2}\Big\|_p \\ 
&\leq \gamma_p  \Big\| \Big(\sum_{n\geq 0} |\xi_n|^2\Big)^{1/2}\Big\|_p =\gamma_p \|\xi\|_{L_p(\M; \ell_2^c(\nz^2))}.
\end{align*}
\qd

We deduce the following duality and interpolation results.  

\begin{cor}\label{discrdualhpc}
Let $1<p<\infty$. Then the discrete spaces satisfy
\begin{enumerate}
\item[(i)] Let $\frac{1}{p}+\frac{1}{p'}=1$. Then $$(h_p^c)^*=h_{p'}^c \quad \mbox{with equivalent norms.}$$
\item[(ii)]  Let $1<p_1,p_2<\infty$ and $0<\theta<1$ be such that
$\frac{1}{p}=\frac{1-\theta}{p_1}+\frac{\theta}{p_2}$. Then
 $$h_p^c  = [h_{p_1}^c,h_{p_2}^c]_{\theta}  \quad \quad \mbox{with equivalent norms}  .$$
\end{enumerate}
\end{cor}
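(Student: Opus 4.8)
The plan is to transfer both results from the column spaces via the complementation of Lemma \ref{complhpdiscr}, exactly as the duality and interpolation of $H_p^c$ were deduced from Proposition \ref{compldiscr} before Corollary \ref{discrdual}. Write $E_p=L_p(\M;\ell_2^c(\nz^2))$ for $1<p<\infty$. By the discussion preceding Lemma \ref{complhpdiscr} we have an isometric embedding $u=u_p:h_p^c\to E_p$ and a bounded projection $P=P_p:E_p\to h_p^c$ with $P_p\circ u_p=\mathrm{id}_{h_p^c}$ and $\|P_p\|\leq\gamma_p$. Hence $R_p:=u_pP_p$ is a bounded idempotent on $E_p$ whose range $u_p(h_p^c)$ is isometric to $h_p^c$. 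The whole point is that these idempotents are compatible with the column-space duality $(E_p)^*=E_{p'}$ and with the column-space interpolation scale $[E_{p_1},E_{p_2}]_\theta=E_p$, both of which are recalled before Corollary \ref{discrdual}.

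For the duality (i) I would first establish the key identity
$$R_p^{\,*}=R_{p'}\quad\text{under the identification }(E_p)^*=E_{p'}.$$
This is where the structural relations \eqref{u} and \eqref{Q2} enter: since $\E_{n-1}$ is a trace-preserving, hence self-adjoint, conditional expectation and $\Q_{n-1,p}^{\,*}=\Q_{n-1,p'}$, one checks by a direct computation on the column spaces that the adjoint idempotent $R_p^{\,*}$ is exactly $R_{p'}$. Granting this, the standard duality of a complemented subspace applies: $(h_p^c)^*\cong(\operatorname{ran}R_p)^*\cong\operatorname{ran}R_p^{\,*}=\operatorname{ran}R_{p'}\cong h_{p'}^c$, the identification being realized by the restriction of the $E_p$--$E_{p'}$ bracket and the norms being equivalent with constants governed by $\gamma_p$ and $\gamma_{p'}$. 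This proves (i).

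For the interpolation (ii) I would invoke the retract (Calder\'on) theorem for the complex method, see \cite{BL}. The operators $u_{n-1}$, $\E_{n-1}$ and $d_n$ are independent of $p$, and the contractive projections $\Q_{n-1,p}$ onto $\operatorname{ran}u_{n-1}$ are mutually compatible on the dense intersections; consequently $R:=uP$ defines a single operator on $E_{p_1}+E_{p_2}$ restricting to the bounded projections $R_{p_1}$ and $R_{p_2}$. Since the column spaces interpolate, $[E_{p_1},E_{p_2}]_\theta=E_p$, the retract theorem yields $[h_{p_1}^c,h_{p_2}^c]_\theta=R\big([E_{p_1},E_{p_2}]_\theta\big)=R\,E_p=h_p^c$ with equivalent norms.

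The main obstacle is the verification of $R_p^{\,*}=R_{p'}$, i.e. that the retract provided by Lemma \ref{complhpdiscr} is genuinely self-dual across conjugate exponents; everything else is the standard ``complemented subspace inherits duality and interpolation'' machinery. This identity is not automatic from boundedness alone: it rests precisely on the self-duality \eqref{Q2} of the projections $\Q_{n,p}$ together with the module relation \eqref{u} and the self-adjointness of $\E_{n-1}$, which is exactly why those relations were recorded. A secondary, routine point for (ii) is to confirm that the projections $\Q_{n,p}$ for varying $p$ patch together into one operator on the interpolation couple, so that the retract theorem is applicable.
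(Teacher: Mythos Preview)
Your proposal is correct and follows exactly the approach the paper has in mind: the corollary is stated without proof as an immediate consequence of the complementation Lemma~\ref{complhpdiscr}, and the self-duality you isolate as the main obstacle is precisely the content of Remark~\ref{adjointPu} immediately following the corollary, where the identity $P=u^*$ (equivalent to your $R_p^{\,*}=R_{p'}$) is verified using \eqref{u}, \eqref{yk} and \eqref{Q2}.
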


\re\label{adjointPu}
Observe that for $1<p\leq \infty$ we have $P=u^*$. 
Indeed, for $x\in h_p^c$ and $\xi \in L_{p'}(\M; \ell_2^c(\nz^2))$ we may write
\begin{align*}
(P(\xi)|x)&=\sum_n\tau(d_n(y_n)^*d_n(x)) 
=\sum_n\tau(y_n^*d_n(x)) \\
&=\sum_n\tau(\E_{n-1}(y_n^*d_n(x))) \\
&=\sum_n\tau(u_{n-1}(y_n)^*u_{n-1}(d_n(x))) \quad  \quad \mbox{by \eqref{u}}\\  
&=\sum_n\tau(\Q_{n-1}(\E_{n-1}(\xi_n))^*u_{n-1}(d_n(x))) \quad \quad \mbox{by \eqref{yk}}\\
&=\sum_n\tau(\E_{n-1}(\xi_n)^*\Q_{n-1}(u_{n-1}(d_n(x))))  \quad \quad \mbox{by \eqref{Q2}}\\  
&=\sum_n\tau(\E_{n-1}(\xi_n)^*u_{n-1}(d_n(x)))
= \sum_n\tau(\xi_n^*u_{n-1}(d_n(x)))\\
&=(\xi|u(x)).
\end{align*}
\mar

The analogue of the Fefferman-Stein duality for the conditioned case was established independently in \cite{jm-riesz} and \cite{bcpy}. 
For $2<p\leq \infty$ we introduce
$$L_p^cmo=\{x\in L_2(\M): \|x\|_{L_p^cmo} <\infty\},$$
where 
$$\|x\|_{L_p^cmo}=\max\big(\|\E_0(x)\|_p ,  \|{\sup_n}^+ \E_n|x-x_{n}|^2\|_{p/2}^{1/2}\big).$$
For $p=\infty$ we denote this space by $bmo^c$. 

\begin{theorem}\label{fsdualityhpcdiscr}
Let $1\leq p <2$. Then the discrete spaces satisfy
$$(h_p^c)^*=L_{p'}^cmo \quad \mbox{with equivalent norms. }$$
Moreover, 
$$\nu_p\|x\|_{L_{p'}^cmo }\leq \|x\|_{(h_p^c)^*}\leq \sqrt{2} \|x\|_{L_{p'}^cmo },$$
where $\nu_p$ remains bounded as $p\to 1$.
\end{theorem}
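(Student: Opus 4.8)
The plan is to prove the two inclusions $L_{p'}^cmo\subset (h_p^c)^*$ and $(h_p^c)^*\subset L_{p'}^cmo$ separately. The conceptual point I want to keep in mind throughout is that the defining norm of $h_p^c$ is a \emph{summed} (i.e. $\ell_1$-type) conditioned square function, whereas $\|\cdot\|_{L_{p'}^cmo}$ involves a \emph{supremum} ($\mathcal L_{p'/2}(\M;\ell_\infty)$-type) of tail conditioned squares; via the duality formula for $\|\sup_n^+(\cdot)\|_{p'}$ recalled just before Theorem \ref{FSdiscr}, these two are dual to one another. This is precisely why the maximal space $mo$ (and not the summed space $h_{p'}^c$) is the correct dual object for $p<2$, and it explains why the argument cannot simply be read off from Corollary \ref{discrdualhpc}.

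For the upper estimate $\|x\|_{(h_p^c)^*}\le\sqrt2\,\|x\|_{L_{p'}^cmo}$, I would fix $x\in L_{p'}^cmo$ and a finite $L_\infty$-martingale $y$, and start from the orthogonality relation $\tau(x^*y)=\sum_{n\ge0}\tau(d_n(x)^*d_n(y))$. The key elementary identity is $\tau(d_n(x)^*d_n(y))=\tau\big((x-x_{n-1})^*d_n(y)\big)$, which follows from $\E_{n-1}(x-x_{n-1})=0$ together with $d_n(y)\in\M_n$. Splitting off the $n=0$ term gives the contribution bounded by $\|\E_0(x)\|_{p'}\|\E_0(y)\|_p\le\|\E_0(x)\|_{p'}\|y\|_{h_p^c}$ (since $\E_0|d_0(y)|^2$ is one summand of $s_c(y)^2$), accounting for one half of the $\sqrt2$. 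For the tail I would estimate $\sum_{n\ge1}\tau\big((x-x_{n-1})^*d_n(y)\big)$ by a conditioned Cauchy--Schwarz inequality furnished by the module maps $u_{n-1}$ of \eqref{u}, noting the telescoping $\E_{n-1}|x-x_{n-1}|^2=\E_n|x-x_n|^2$ shifted in index, so that the positive sequence attached to $x$ is dominated in $\mathcal L_{p'/2}(\M;\ell_\infty)$ by $\|x\|_{L_{p'}^cmo}^2$ while the sequence $\E_{n-1}|d_n(y)|^2$ attached to $y$ sums to $s_c(y)^2$ with $\|s_c(y)^2\|_{p/2}=\|y\|_{h_p^c}^2$. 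Applying the $\sup^+$-duality formula then yields the desired bound with the second factor $\sqrt2$.

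For the reverse inclusion I would represent $\varphi\in(h_p^c)^*$ by an element $x\in L_2(\M)$ with $\varphi(y)=\tau(x^*y)$ (legitimate since finite martingales are dense in $h_p^c$) and show $\|x\|_{L_{p'}^cmo}\lesssim\|\varphi\|$. When $1<p<2$ the space $h_p^c$ is $\gamma_p$-complemented in $L_p(\M;\ell_2^c(\nz^2))$ by Lemma \ref{complhpdiscr} with adjoint projection $P=u^*$ (Remark \ref{adjointPu}); extending $\varphi$ by Hahn--Banach and representing it by an element $\eta\in L_{p'}(\M;\ell_2^c(\nz^2))$, one reconstructs $x$ from $P^*\eta$ and uses the noncommutative Doob/Stein inequalities to pass from the reconstructed datum to the maximal bound $\|\sup_m^+\E_m|x-x_m|^2\|_{p'/2}$. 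For the endpoint $p=1$ there is no reflexivity and $h_1^c$ is only isometrically (not complementably) embedded in $L_1(\M;\ell_2^c(\nz^2))$, so I would instead extract the representing $x\in bmo^c$ by a weak\,$^*$-limit, using Theorem \ref{weakcompact} (uniform integrability $\Leftrightarrow$ weak relative compactness) to control the limit, mirroring the $p=\infty$ treatment in Theorem \ref{FSdiscr}.

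The main obstacle is the reverse inclusion \emph{with constants that do not degenerate as $p\to1$}: the naive route through complementation and Doob introduces the factors $\gamma_p$ and $\delta_p$, both of which blow up as $p\to1$, whereas the theorem asserts a uniform equivalence ($\nu_p$ bounded, upper constant $\sqrt2$). Securing the uniform lower constant $\nu_p$ is exactly where one must replace the soft functional-analytic extraction by the finer estimates underlying Randrianantoanina's weak-type $(1,1)$ approach, and it is the delicate heart of the proof; the $p=1$ representation of $\varphi$ by an honest $bmo^c$ element is the accompanying technical difficulty.
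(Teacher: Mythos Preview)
The paper does not prove this theorem; it is quoted from \cite{jm-riesz} and \cite{bcpy}. Your overall approach is the right one, and in fact the paper already supplies the tool that makes it work uniformly in $p$ --- but you have misdiagnosed the obstacle.

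Your last paragraph contains a real error. You claim that the naive route ``through complementation and Doob introduces the factors $\gamma_p$ and $\delta_p$, both of which blow up as $p\to1$,'' so that Randrianantoanina's weak-type machinery is needed for the uniform $\nu_p$. Neither claim is correct. For the reverse inclusion one extends $\varphi\in(h_p^c)^*$ by Hahn--Banach to $L_p(\M;\ell_2^c(\nz^2))$ \emph{with the same norm} (so the complementation constant $\gamma_p$ never appears), represents it isometrically by $\eta\in L_{p'}(\M;\ell_2^c(\nz^2))$, and sets $x=P(\eta)$; by Remark~\ref{adjointPu} this $x$ represents $\varphi$. The estimate one needs is $\|P(\eta)\|_{L_{p'}^cmo}\le C\|\eta\|$, which is precisely Lemma~\ref{Pbddbmo}, with constant $\max(1,\delta_{p'/2}^{1/2})$. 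The crucial point you missed is that the Doob constant sits at the \emph{dual} index $p'/2$, not at $p$: as $p\to1$ one has $p'/2\to\infty$, and the paper records that $\delta_q\approx1$ as $q\to\infty$. Hence $\nu_p^{-1}=\max(1,\delta_{p'/2}^{1/2})$ stays bounded. The Stein constant $\gamma_p$ governs the different map $P:L_p\to h_p^c$ used in Corollary~\ref{discrdualhpc}; that map is not the one used here.

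The same argument covers $p=1$ with no modification, since Lemma~\ref{Pbddbmo} explicitly includes $p'=\infty$ (with $\delta_\infty=1$). Your proposed detour through weak-$*$ limits and Theorem~\ref{weakcompact} is unnecessary, and Randrianantoanina's results play no role in this theorem. (A minor remark on the upper bound: the ``telescoping'' identity you wrote, $\E_{n-1}|x-x_{n-1}|^2=\E_n|x-x_n|^2$, is not literally true; the correct mechanism is the conditioned Cauchy--Schwarz via the module maps $u_{n-1}$ together with the $L_{p'/2}(\M;\ell_\infty)$--$L_{(p'/2)'}(\M;\ell_1)$ duality, which does give the $\sqrt2$.)
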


Combining these two latter results we obtain

\begin{prop}\label{hpcLpcmo}
Let $2<p<\infty$. Then the discrete spaces satisfy
$$h_p^c=L_{p}^cmo \quad \mbox{with equivalent norms. }$$
\end{prop}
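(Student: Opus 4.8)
The plan is to obtain the identification purely by duality, combining the two preceding results applied to the conjugate index $p'$, which satisfies $1<p'<2$ precisely because $2<p<\infty$. Throughout, the duality is the anti-linear pairing $(x|y)=\tau(x^*y)$, and I would work on the common underlying family of finite $L_\infty$-martingales, which is dense in $h_{p'}^c$.

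First I would apply Corollary \ref{discrdualhpc} (i) to the conjugate pair $(p',p)$. Since $1<p'<\infty$, and the statement is symmetric in $p\leftrightarrow p'$, this yields $(h_{p'}^c)^*=h_p^c$ with equivalent norms. Next I would apply Theorem \ref{fsdualityhpcdiscr} with $p'$ in the role of $p$, which is legitimate because $1\le p'<2$; as $(p')'=p$, this gives the second realization $(h_{p'}^c)^*=L_p^cmo$ with equivalent norms, together with the explicit bounds $\nu_{p'}\|x\|_{L_p^cmo}\le \|x\|_{(h_{p'}^c)^*}\le \sqrt{2}\,\|x\|_{L_p^cmo}$. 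Chaining these two norm equivalences will then give $\|x\|_{h_p^c}\simeq \|x\|_{L_p^cmo}$, with an overall constant controlled by $\nu_{p'}$, $\sqrt 2$ and the equivalence constants of the Corollary.

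The only point that genuinely needs care — and the one I would single out as the main obstacle — is reconciling the two realizations of $(h_{p'}^c)^*$, that is, checking that the composite isomorphism $h_p^c\cong (h_{p'}^c)^*\cong L_p^cmo$ is the identity on a common concrete space and not merely an abstract isomorphism of Banach spaces. For this I would use that both $h_p^c$ and $L_p^cmo$ are described as subspaces of $L_2(\M)$, and that in Corollary \ref{discrdualhpc} and in Theorem \ref{fsdualityhpcdiscr} the functional attached to $x$ acts on $y$ by the same bracket $\tau(x^*y)$ on the dense set of finite $L_\infty$-martingales. Hence a functional represented by $x\in h_p^c$ is represented by the very same operator $x$ under the Fefferman--Stein identification, so the canonical identity map is the required isomorphism and the two norms on $x$ are equivalent. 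This establishes $h_p^c=L_p^cmo$ with equivalent norms for $2<p<\infty$.

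Once this matching of brackets is in place, the argument is routine: the equivalence of norms is just the concatenation of the two displayed estimates, and no further computation is needed.
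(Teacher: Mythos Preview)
Your proposal is correct and follows exactly the approach the paper intends: the proposition is stated immediately after Corollary~\ref{discrdualhpc} and Theorem~\ref{fsdualityhpcdiscr} with the remark ``Combining these two latter results we obtain,'' which is precisely the duality argument you carry out. Your additional care in verifying that both identifications of $(h_{p'}^c)^*$ are implemented by the same bracket $\tau(x^*y)$ on finite $L_\infty$-martingales is a welcome clarification of a point the paper leaves implicit.
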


Observe that we can extend Lemma \ref{complhpdiscr} to the case $p=\infty$ in the following sense.

\begin{lemma}\label{Pbddbmo}
Let $2<p\leq \infty$. 
Then $P : L_p(\M; \ell_2^c(\nz^2)) \to L_p^c mo$ is bounded.
\end{lemma}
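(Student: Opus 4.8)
The plan is to bound the two quantities defining the $L_p^cmo$-norm of $P(\xi)$ separately, recycling the pointwise domination estimates \eqref{eq1} and \eqref{eq2} established in the proof of Lemma \ref{complhpdiscr}. By density it suffices to treat $\xi=\sum_n e_{n,0}\ten\xi_n \in L_p(\M;\ell_2^c(\nz^2))$ with only finitely many nonzero components, so that $x:=P(\xi)=\sum_n d_n(y_n)$ is a finite sum of martingale differences, lies in $L_2(\M)$, and satisfies $d_n(x)=d_n(y_n)$; this makes the $L_p^cmo$-norm meaningful and legitimizes the orthogonality arguments below.

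First I would dispose of the diagonal term. Since $\E_0(x)=d_0(y_0)=\E_0(y_0)$, operator convexity of $|\cdot|^2$ together with \eqref{u} and \eqref{Q} (at the level $n=-1$, with the convention $\E_{-1}=\E_0$) gives $|\E_0(x)|^2\leq \E_0|y_0|^2=|u_{-1}(y_0)|^2=|\Q_{-1}(\E_0(\xi_0))|^2\leq |\E_0(\xi_0)|^2$. Taking $\|\cdot\|_{p/2}$-norms and using contractivity of $\E_0$ yields $\|\E_0(x)\|_p\leq \|\E_0(\xi_0)\|_p\leq \|\xi_0\|_p\leq \|\xi\|_{L_p(\M;\ell_2^c(\nz^2))}$.

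For the main term I would write $x-x_n=\sum_{k>n}d_k(y_k)$, and use orthogonality of the martingale differences under $\E_n$ (the cross terms vanish since $d_\ell(y_\ell)\in\M_{k-1}$ for $\ell<k$) to obtain $\E_n|x-x_n|^2=\sum_{k>n}\E_n|d_k(y_k)|^2$. For $k>n$ one has $\E_n=\E_n\E_{k-1}$, so chaining \eqref{eq1}, \eqref{eq2} and the Kadison--Schwarz inequality $|\E_{k-1}(\xi_k)|^2\leq \E_{k-1}|\xi_k|^2$ gives $\E_n|d_k(y_k)|^2\leq \E_n|\xi_k|^2$. Summing over $k>n$ produces the key pointwise domination $\E_n|x-x_n|^2\leq \E_n\big(\sum_k|\xi_k|^2\big)$, valid for every $n$.

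It then remains to pass to the $L_{p/2}(\M;\ell_\infty)$-level. Because the dual characterization of $\|{\sup}^+\|$ recalled before Theorem \ref{fsdualityhpcdiscr} is monotone on positive sequences, the pointwise bound yields $\|{\sup_n}^+\E_n|x-x_n|^2\|_{p/2}\leq \|{\sup_n}^+\E_n(a)\|_{p/2}$ with $a=\sum_k|\xi_k|^2\geq 0$. For $2<p<\infty$ the noncommutative Doob inequality with index $p/2>1$ bounds the right-hand side by $\delta_{p/2}\|a\|_{p/2}=\delta_{p/2}\|\xi\|_{L_p(\M;\ell_2^c(\nz^2))}^2$; for $p=\infty$ one simply uses $\|\E_n(a)\|_\infty\leq\|a\|_\infty$. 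Combined with the diagonal estimate, this gives $\|P(\xi)\|_{L_p^cmo}\leq \max(1,\delta_{p/2}^{1/2})\|\xi\|_{L_p(\M;\ell_2^c(\nz^2))}$, and the general case follows by density. The only delicate point — and the single place where the case $p=\infty$ must be argued differently from $2<p<\infty$ — is this last passage through $\|{\sup}^+\|$: one has to invoke monotonicity of the functional and, at the endpoint $p=\infty$ where Doob degenerates, replace it by the trivial contractivity of $\E_n$ on $L_\infty(\M)$.
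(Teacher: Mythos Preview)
Your proof is correct and follows essentially the same approach as the paper: the same pointwise domination $\E_n|x-x_n|^2\leq \E_n\big(\sum_k|\xi_k|^2\big)$ obtained via \eqref{eq1}--\eqref{eq2}, followed by Doob's inequality. The only cosmetic difference is that the paper treats $p=\infty$ uniformly (the noncommutative Doob inequality holds for $1<p/2\leq\infty$ with $\delta_\infty=1$), so your separate endpoint discussion is unnecessary.
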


\begin{proof}
Let $\xi=\sum_n e_{n,0}\ten \xi_n \in  L_p(\M; \ell_2^c(\nz^2))$ and $x=P(\xi)$. 
On the one hand, by \eqref{eq3} for $n=0$ we have
$$\|\E_0(x)\|_p\leq \|\E_0(\xi_0)\|_p \leq \|\xi_0\|_p=\|(|\xi_0|^2)^{1/2}\|_p 
\leq \Big\|\Big(\displaystyle\sum_{n\geq 0} |\xi_n|^2\Big)^{1/2}\Big\|_p=\|\xi\|_{L_p(\M; \ell_2^c(\nz^2))}.$$
On the other hand, note that by \eqref{eq3}, for each $n\geq 0$ we have
\begin{equation}\label{eq4}
\begin{array}{cl}
\E_n|x-x_n|^2&=\E_n\Big(\displaystyle\sum_{k>n} \E_{k-1}|d_k(x)|^2\Big)
\leq \E_n\Big(\displaystyle\sum_{k>n} |\E_{k-1}(\xi_k)|^2\Big)\\
&\leq \E_n\Big(\displaystyle\sum_{k>n} \E_{k-1}|\xi_k|^2\Big) =  \E_n\Big(\displaystyle\sum_{k>n} |\xi_k|^2\Big) \\
&\leq   \E_n\Big(\displaystyle\sum_{k\geq 0} |\xi_k|^2\Big).
\end{array}
\end{equation}
Since $1<\frac{p}{2}\leq \infty$, the noncommutative Doob inequality gives
$$ \|{\sup_n}^+\E_n|x-x_n|^2\|_{p/2}
\leq \Big\|{\sup_n}^+\E_n\Big(\displaystyle\sum_{k\geq 0} |\xi_k|^2\Big)\Big\|_{p/2}
\leq \delta_{p/2}  \Big\|\displaystyle\sum_{k\geq 0} |\xi_k|^2\Big\|_{p/2}
=\delta_{p/2} \|\xi\|_{L_p(\M; \ell_2^c(\nz^2))}^2.$$
Thus we get 
$$\|P(\xi)\|_{L_p^cmo}=\|x\|_{L_p^cmo}=\max\big(\|\E_0(x)\|_p ,  \|{\sup_n}^+ \E_n|x-x_{n}|^2\|_{p/2}^{1/2}\big)\leq \max(1,\delta_{p/2}^{1/2})\|\xi\|_{L_p(\M; \ell_2^c(\nz^2))}.$$
\qd

The noncommutative Burkholder-Rosenthal inequalities were obtained by the first named author and Xu in \cite{JX}.

\begin{theorem}\label{Bdiscr}
Let $1<p<\infty$. Then the discrete spaces satisfy
$$L_p(\M)=h_p \quad \mbox{with equivalent norms. }$$
Moreover, 
$$\kappa_p^{-1}\|x\|_{h_p}\leq \|x\|_p\leq \eta_p \|x\|_{h_p}.$$
\end{theorem}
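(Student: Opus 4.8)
The plan is to deduce the Burkholder--Rosenthal equivalence from the Burkholder--Gundy inequalities already recorded in Theorem~\ref{BGdiscr} together with the column Davis decomposition \eqref{D} and its row analogue. Recall that Theorem~\ref{BGdiscr} identifies $L_p(\M)$ with $H_p$, that is, with $H_p^c+H_p^r$ for $1<p<2$ and with $H_p^c\cap H_p^r$ for $2\le p<\infty$, while \eqref{D} identifies $H_p^c$ with $h_p^d+h_p^c$ (resp. $h_p^d\cap h_p^c$) and, after taking adjoints, $H_p^r$ with $h_p^d+h_p^r$ (resp. $h_p^d\cap h_p^r$). Since the role of the diagonal part $h_p^d$ is the same in the column and row pictures, substituting the second family of identities into the first should collapse the two copies of $h_p^d$ into one and produce exactly $h_p$.

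Concretely, for $1<p<2$ I would write
$$L_p(\M)=H_p^c+H_p^r=(h_p^d+h_p^c)+(h_p^d+h_p^r)=h_p^d+h_p^c+h_p^r=h_p,$$
using that $h_p^d+h_p^d=h_p^d$ isometrically and that the $+$ operation is associative up to an absolute constant. For $2\le p<\infty$ the same substitution gives
$$L_p(\M)=H_p^c\cap H_p^r=(h_p^d\cap h_p^c)\cap(h_p^d\cap h_p^r)=h_p^d\cap h_p^c\cap h_p^r=h_p,$$
now using associativity of $\cap$ and $h_p^d\cap h_p^d=h_p^d$. Tracking the constants of Theorem~\ref{BGdiscr} and of \eqref{D} through these two lines yields the quantitative bounds $\kappa_p^{-1}\|x\|_{h_p}\le\|x\|_p\le\eta_p\|x\|_{h_p}$ with the asserted asymptotic behaviour of $\kappa_p$ and $\eta_p$.

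If instead one prefers a route that does not presuppose \eqref{D}, the natural strategy is to establish the range $1<p\le 2$ directly and then pass to $2\le p<\infty$ by duality. For $2\le p<\infty$ the inclusion $L_p(\M)\subset h_p$ is the ``easy'' half: the bounds $\|x\|_{h_p^c}\lesssim\|x\|_p$ and $\|x\|_{h_p^r}\lesssim\|x\|_p$ follow from the noncommutative Stein inequality already used in Lemma~\ref{complhpdiscr}, while $\|x\|_{h_p^d}\lesssim\|x\|_p$ follows by interpolating the orthogonality relation $\sum_n\|d_n(x)\|_2^2=\|x\|_2^2$ at $p=2$ against the trivial estimate at $p=\infty$. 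The opposite range $1<p<2$ requires producing a genuine decomposition $x=x^d+x^c+x^r$ with norm control, and the two ranges are then interchanged through the dualities $(h_p^c)^*=h_{p'}^c$ (Corollary~\ref{discrdualhpc}), their row and diagonal counterparts, and the principle that the dual of a sum is the intersection of the duals (licit because $L_\infty(\M)$, hence $L_2(\M)$, is dense in each summand). The Fefferman--Stein duality of Theorem~\ref{fsdualityhpcdiscr} can be invoked to control the endpoint behaviour of the constants.

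The main obstacle is the genuinely hard input: producing, for $1<p<2$, a decomposition $x=x^d+x^c+x^r$ with $\|x^d\|_{h_p^d}+\|x^c\|_{h_p^c}+\|x^r\|_{h_p^r}\lesssim\|x\|_p$. In the commutative theory this is a stopping-time (Davis) argument; noncommutatively it rests either on the decomposition \eqref{D}, whose proof relies on delicate $L_p(\M;\ell_\infty)$ and weak-type estimates, or on a direct Rosenthal-type construction. A secondary technical point is to check that the ``dual of a sum equals intersection of duals'' step is legitimate, i.e. that the relevant density hypotheses hold so that dualizing the $+$-sum introduces no spurious kernel --- precisely the kind of subtlety addressed by the $\boxplus$-sum of subsection~\ref{sectsums}.
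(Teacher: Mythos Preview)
The paper gives no proof of Theorem~\ref{Bdiscr}: it is quoted as a background result from \cite{JX}. Your first route --- substituting the column Davis identity \eqref{D} and its row analogue into Theorem~\ref{BGdiscr} --- is correct, but it inverts the paper's logical flow: in Section~\ref{subsectDdiscr} the equality $H_p=h_p$ for $1<p<\infty$ is presented as a \emph{consequence} of Theorems~\ref{BGdiscr} and~\ref{Bdiscr}, with the column refinement Theorem~\ref{Davishddisc} ($=$\eqref{D}) only stated afterwards. To make your route non-circular you must therefore establish \eqref{D} without invoking Theorem~\ref{Bdiscr}; the argument sketched at the beginning of subsection~\ref{subsectDregdiscr} --- via the Fefferman--Stein dualities of Theorems~\ref{FSdiscr} and~\ref{fsdualityhpcdiscr} together with the elementary splitting \eqref{decLpcMO} --- does achieve this, so your derivation can be made self-contained within the paper's toolkit, but you should flag the reordering explicitly.

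Your second route is closer in spirit to the original argument of \cite{JX}, except with the two ranges swapped: there the hard step is the Rosenthal bound $\|x\|_p\lesssim\|x\|_{h_p^d\cap h_p^c\cap h_p^r}$ for $2\le p<\infty$, proved by a direct noncommutative argument (not a Davis-type decomposition), and the range $1<p<2$ then follows by the dualities of Corollary~\ref{discrdualhpc} and its diagonal analogue, together with the density of finite martingales in each summand. Either orientation works; the point is simply that one half requires genuine work and the other follows by duality.
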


\re
It is important to note that $\eta_p$ remains bounded as $p\to 1$, i.e., for $p=1$ we have a bounded inclusion $h_1\subset L_1(\M)$. 
\mar

We end this subsection with the conditioned analogue of Theorem \ref{intH1cBMOc-discr} proved in \cite{bcpy}. 

\begin{theorem}\label{inth1cbmoc-discr}
Let $1<p<\infty$. Then the discrete spaces satisfy
$$h_p^c=[bmo^c,h^c_1]_{\frac{1}{p}}\quad \mbox{with equivalent norms}.$$
\end{theorem}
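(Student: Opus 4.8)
The plan is to follow the template of the proof of Theorem~\ref{intH1cBMOc}, establishing two one-sided inclusions into $h_p^c$ and then upgrading them to equalities by duality, before concluding with a reiteration argument. Throughout I use that $h_2^c=L_2(\M)$ isometrically (by orthogonality of the martingale differences), that $bmo^c=(h_1^c)^*$ and $(h_p^c)^*=h_{p'}^c$ (Theorem~\ref{fsdualityhpcdiscr} and Corollary~\ref{discrdualhpc}), and that the embedding $u$ realizes $h_p^c$ as a complemented subspace of the column scale $L_p(\M;\ell_2^c(\nz^2))$ for $1<p<\infty$, via the projection $P$ of Lemma~\ref{complhpdiscr} and Lemma~\ref{Pbddbmo}. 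The inclusions $bmo^c\subset L_2(\M)\subset h_1^c$ make the couple $[bmo^c,h_1^c]$ compatible.

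First I would prove the easy inclusion
\[ [h_1^c,L_2(\M)]_\theta\subset h_p^c,\qquad \tfrac1p=1-\tfrac\theta2,\quad 1<p<2. \]
Since $u$ embeds $h_1^c$ isometrically into $L_1(\M;\ell_2^c(\nz^2))$ and $h_2^c=L_2(\M)$ isometrically into $L_2(\M;\ell_2^c(\nz^2))$, and since the column $L_p$-spaces form an interpolation scale, one gets $[h_1^c,L_2(\M)]_\theta\hookrightarrow [L_1(\M;\ell_2^c(\nz^2)),L_2(\M;\ell_2^c(\nz^2))]_\theta=L_p(\M;\ell_2^c(\nz^2))$. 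Applying the bounded projection $P$ of Lemma~\ref{complhpdiscr} and using $P\circ u=\mathrm{id}_{h_p^c}$ forces the image back into $h_p^c$, which yields the inclusion.

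The main work is the hard inclusion
\[ [bmo^c,L_2(\M)]_{2/p}\subset h_p^c=L_p^cmo,\qquad 2<p<\infty. \]
Here the retraction argument via $u$ breaks down, because $u$ is not bounded on $bmo^c$: $\|u(x)\|_{L_\infty(\M;\ell_2^c(\nz^2))}$ computes the full conditioned square function rather than the $bmo^c$ supremum of tails. The point is instead that the projection $P$ \emph{is} bounded from $L_q(\M;\ell_2^c(\nz^2))$ onto $L_q^cmo$ at both relevant endpoints, namely for $q=\infty$ (Lemma~\ref{Pbddbmo}, with $bmo^c=L_\infty^cmo$) and for $q=2$ (where $L_2^cmo=L_2(\M)$). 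I would therefore construct a section $v$ of $P$ compatible with these endpoints --- built from the triangular array $(\E_n(d_k x))_{n<k}$ rather than the single-diagonal $u$, so that the $L_\infty$-norm of $v(x)$ matches $\|x\|_{bmo^c}$ while still $Pv=\mathrm{id}$ --- and interpolate $P$ and $v$ so as to realize $L_p^cmo$ as a retract of $L_p(\M;\ell_2^c(\nz^2))$ on the whole range $2\le p\le\infty$; then $[bmo^c,L_2(\M)]_{2/p}=[L_\infty^cmo,L_2^cmo]_{2/p}\subset L_p^cmo$. Correctly handling the non-reflexive endpoint $bmo^c$, and verifying $Pv=\mathrm{id}$ for the modified embedding, is the delicate step, and is exactly where the Doob inequality underlying Lemma~\ref{Pbddbmo} is indispensable. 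This is the main obstacle of the whole argument.

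Once both inclusions are available, duality closes the gap. Dualizing the easy inclusion and using $(h_p^c)^*=h_{p'}^c$ together with $([h_1^c,L_2(\M)]_\theta)^*=[bmo^c,L_2(\M)]_\theta$ (valid since $L_2(\M)$ is reflexive and $bmo^c=(h_1^c)^*$), the hard inclusion gives the chain $h_{p'}^c=(h_p^c)^*\subset[bmo^c,L_2(\M)]_{2/p'}\subset h_{p'}^c$, so all three coincide. This proves $[h_1^c,L_2(\M)]_\theta=h_p^c$ for $1<p<2$ and $[bmo^c,L_2(\M)]_{2/q}=h_q^c$ for $2<q<\infty$. Finally, since $L_2(\M)=h_2^c$ is an intermediate space, Wolff's reiteration theorem (see \cite{BL}) glues these two scales through the common point $h_2^c$ into the single scale $[bmo^c,h_1^c]_{1/p}=h_p^c$ for all $1<p<\infty$, with equivalent norms.
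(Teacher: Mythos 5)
First, a point of comparison: the paper does not prove Theorem \ref{inth1cbmoc-discr} at all --- it is quoted from \cite{bcpy} --- and the paper's own contribution is the continuous analogue (Theorem \ref{inthpcbmoc}), whose hard step is reduced back to this discrete statement via monotonicity. Your skeleton (easy inclusion $[h_1^c,L_2(\M)]_\theta\subset h_p^c$ by the retract $(u,P)$ of Lemma \ref{complhpdiscr}; hard inclusion $[bmo^c,L_2(\M)]_{2/p}\subset L_p^cmo$; then dualization and Wolff's reiteration) is indeed the architecture of the known proof, and your easy inclusion and the concluding duality/Wolff steps are correct.

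The gap is in the hard inclusion, which is the actual content of the theorem. The section $v$ you propose does not exist as described. Taken literally, the triangular array $(\E_n(d_k(x)))_{n<k}$ is identically zero, since $\E_n(d_k(x))=\E_n\E_k(x)-\E_n\E_{k-1}(x)=0$ whenever $n<k$; so $Pv=0\neq\mathrm{id}$. Under the natural repair --- taking $\xi_{n,k}=u_n(d_k(x))$ for $n<k$, so that $|\xi_{n,k}|^2=\E_n|d_k(x)|^2$ --- the column norm of the array is
$$\Big\|\sum_{n}\sum_{k>n}|\xi_{n,k}|^2\Big\|_\infty^{1/2}=\Big\|\sum_n \E_n|x-x_n|^2\Big\|_\infty^{1/2},$$
a \emph{sum} over $n$ of the tail expectations, whereas $\|x\|_{bmo^c}$ is the \emph{supremum} over $n$. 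The sum-norm is strictly stronger and generically infinite on $bmo^c$ (already in the scalar dyadic case: $\sup_n\sum_{k>n}a_k^2<\infty$ is compatible with $\sum_n\sum_{k>n}a_k^2=\sum_k k\,a_k^2=\infty$), so this $v$ is unbounded from $bmo^c$ into $L_\infty(\M;\ell_2^c(\nz^2))$. The failure is structural: every column-module norm is a single sum, while the $bmo^c$-norm is a supremum of conditional tail square functions --- this is exactly the reason $u$ itself fails at the endpoint $p=\infty$, as you correctly observed. Exhibiting $bmo^c$ as a retract of a column $L_\infty$-module simultaneously compatible with the $L_2$ endpoint would render Musat-type interpolation trivial; no such retract is known, and the actual proofs (\cite{M,JM} for the $BMO^c$ case, \cite{bcpy} for the conditioned case at hand) establish the upper-endpoint inclusion by genuinely different means (direct estimates on the analytic interpolation function, duality with atomic decompositions of $h_1^c$), for which Lemma \ref{Pbddbmo} alone is not a substitute. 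Without a proof of $[bmo^c,L_2(\M)]_{2/p}\subset L_p^cmo$, the theorem is not established.
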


\subsection{Definitions of $\hh_p^c$, $\h_p^c$ and basic properties}

Following Section \ref{sectHp}, we start by fixing an ultrafilter $\U$. 
For $\si \in \PP_{\mathrm{fin}}([0,1])$ and $x\in \M$, we define the finite conditioned bracket
$$\langle x,x \rangle_\si=\sum_{t\in \si} \E_{t^-(\s)}|d_t^{\si}(x)|^2$$
(recalling our convention that $\E_{0^-(\s)}=\E_0$). 
Observe that $ \|\langle x,x \rangle_\si\|_{p/2}^{1/2} = \|x\|_{h_p^c(\si)} $, 
where $h_p^c(\si)$ denotes the noncommutative conditioned Hardy space with respect to the discrete filtration $(\M_t)_{t\in \s}$. 
Hence the noncommutative Burkholder-Rosenthal inequalities recalled in Theorem \ref{Bdiscr} and the H\"{o}lder inequality imply for each finite partition $\s$ and $x\in \M$
\begin{equation}\label{estimateh_pc} 
\begin{array}{cccccl}
\eta_p^{-1}\|x\|_p &\leq & \|\langle x,x \rangle_\si\|_{p/2}^{1/2}&\leq &\|x\|_2 & \quad \mbox{ for } 1\leq p < 2 \\
\|x\|_2&\leq& \|\langle x,x \rangle_\si\|_{p/2}^{1/2} &\leq& \kappa_p\|x\|_p & \quad  \mbox{ for } 2\leq p <\infty 
\end{array}.
\end{equation}
Then, adapting the discussion detailed in subsection \ref{subsectdefHpc}, for $x\in \M$ and $1\leq p <\infty$ we may define
$$\langle x,x \rangle_\U = \E_\U( (\langle x,x \rangle_\s)^\bullet)  \quad , \quad 
\|x\|_{\hat{\h}_p^{c}} = \|\langle x,x \rangle_\U\|_{p/2}^{1/2}
 \quad \mbox{and} \quad 
\|x\|_{\h_p^{c}}= \lim_{\si,\U} \|x\|_{h_p^c(\si)}.$$
The properties of the conditional expectation $\E_\U$ imply the analogue of \eqref{estimatehatH_pc} 
\begin{equation}\label{estimatehath_pc} 
\begin{array}{cccccccl}
\eta_p^{-1}\|x\|_p &\leq & \|x\|_{\h_p^{c}} &\leq &\|x\|_{\hat{\h}_p^{c}} &\leq &\|x\|_2 & \quad  \mbox{ for } 1\leq p < 2 \\
\|x\|_2 &\leq &\|x\|_{{\hh}_p^{c}} &\leq & \|x\|_{\h_p^{c}}  &\leq &\kappa_p\|x\|_p & \quad  \mbox{ for } 2\leq p <\infty 
\end{array}.
\end{equation}
Hence $\|\cdot\|_{\hh_p^{c}}$ and $\|\cdot\|_{\h_p^{c}}$ define two (quasi)norms on $\M$.  
As for $\hH_p^c$ and $\H_p^c$, these (quasi)norms a priori depend on the choice of the ultrafilter $\U$. 
We will show that they actually do not, up to equivalent norm, and simply denote $\|\cdot\|_{\hat{\h}_p^c}$ and $\|\cdot\|_{\h_p^c}$. 

\begin{defi}
 Let $1\leq p <\infty$. 
We define the spaces $\hh_p^{c}$ and $\h_p^{c}$ as the completion of $\M$ with respect to the (quasi)norms 
$\|\cdot\|_{\hat{\h}_p^{c}}$ and $\|\cdot\|_{\h_p^{c}}$ respectively. 
\end{defi}

As we did for $\hat{\H}_p^c$, we may equip $\hh_p^c$ with an $L_p$ $\M$-module structure 
and show that $\|\cdot\|_{\hat{\h}_p^{c}}$ is a norm for $1\leq p <\infty$. 

\re
In this case we also note that $L_{\max(p,2)}(\M)$ is dense in $\h_p^c$ and $\hh_p^c$ for $1\leq p < \infty$. 
\mar

The conditioned version of Lemma \ref{reflexivityHpc} holds true.  

\begin{lemma}
Let $1<p<\infty$. Then $\h_p^c$ is reflexive.
\end{lemma}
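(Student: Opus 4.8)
The plan is to mirror exactly the argument used for Lemma \ref{reflexivityHpc}: I would show that the $\h_p^c$-norm satisfies the Clarkson inequalities, from which uniform convexity follows for $1<p<\infty$, and hence reflexivity by the Milman--Pettis theorem. The whole point is therefore to transfer the Clarkson inequalities from the discrete level to the continuous norm $\|\cdot\|_{\h_p^c}$ by passing to the limit along $\U$.

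First I would work at the level of finite partitions. Recall from subsection \ref{discretehpc} that for each finite partition $\s$ the discrete conditioned Hardy space $h_p^c(\s)$ embeds isometrically, through the module map $u$, into the noncommutative $L_p$-space $L_p(\M;\ell_2^c(\s^2))$ (with complemented range when $1<p<\infty$). Since the noncommutative $L_p$-spaces satisfy the Clarkson inequalities with constants depending only on $p$ (see \cite{px-survey}), and since these inequalities are inherited by isometric subspaces, I would conclude that for each $\s$ the $h_p^c(\s)$-norm satisfies the Clarkson inequalities, crucially with constants that do not depend on the partition $\s$.

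Then, using that for $x\in \M$ one has $\|x\|_{\h_p^c}=\lim_{\s,\U}\|x\|_{h_p^c(\s)}$, I would write the Clarkson inequalities for each $h_p^c(\s)$-norm and pass to the limit along the ultrafilter $\U$. Since the constants are uniform in $\s$, the ultralimit preserves these inequalities and yields the Clarkson inequalities for the $\h_p^c$-norm on $\M$; by density of $\M$ in $\h_p^c$ this extends to the whole space, which is then uniformly convex and hence reflexive.

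The main obstacle, and in fact the only delicate point, is ensuring that the Clarkson constants are uniform in $\s$: this is precisely what the isometric embedding of $h_p^c(\s)$ into a single scale of noncommutative $L_p$-spaces guarantees, and it is what allows the inequalities to survive the passage to the ultralimit. The remaining implications---uniform convexity from the Clarkson inequalities, and reflexivity from uniform convexity---are classical and require no further work.
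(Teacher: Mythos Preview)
Your proposal is correct and follows exactly the approach the paper intends: the lemma is stated as the conditioned analogue of Lemma \ref{reflexivityHpc} without a separate proof, and your argument---embedding $h_p^c(\s)$ isometrically into a noncommutative $L_p$-space via the module map $u$ from subsection \ref{discretehpc}, inheriting the Clarkson inequalities with constants depending only on $p$, and passing to the ultralimit---is precisely the adaptation the paper has in mind. The only minor imprecision is notational: the target space is $L_p(\M;\ell_2^c(\s\times\nz))$ rather than $\ell_2^c(\s^2)$, but this does not affect the argument.
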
 

\subsection{Monotonicity and convexity properties}

In the conditioned case we still have some monotonicity properties of the discrete norms, but the monotonicity is reversed. 

\begin{lemma}\label{convexityhpc} 
Let $1\leq p <\infty$ and $\s\in\mathcal{P}_{\fin}([0,1])$. 
 \begin{enumerate}
  \item[(i)] Let $1\leq p<2$. 
Let $\s^1, \cdots, \s^M$ be partitions contained in $\s$, 
let $(\alpha_m)_{1\leq m\leq M}$ be a sequence of positive numbers such that $\sum_m\alpha_m=1$, and 
let $x^1, \cdots, x^M\in L_2(\M)$. 
Then for $x=\sum_m \alpha_m x^m$ we have 
$$\|x\|_{h_p^c(\s)}\leq 2^{1/p} \Big\|\sum_{m=1}^M \alpha_m \langle x^m,x^m\rangle_{\s^m}\Big\|_{p/2}^{1/2}.$$
In particular for $x\in L_2(\M)$ and $\s \subset \s'$ we have 
$$\|x\|_{h_p^c(\s')}\leq 2^{1/p} \|x\|_{h_p^c(\s)}.$$
Hence 
$$ \inf_\s \|x\|_{h_p^c(\s)} \leq \|x\|_{\h_p^{c}} \leq 2^{1/p} \inf_\s \|x\|_{h_p^c(\s)}.$$
 \item[(ii)] Let $2\leq p<\infty$. 
Let $\s^1, \cdots, \s^M$ be partitions containing $\s$, 
let $(\alpha_m)_{1\leq m\leq M}$ be a sequence of positive numbers such that $\sum_m\alpha_m=1$, and 
let $x^1, \cdots, x^M\in L_p(\M)$.
Then for $x=\sum_m \alpha_m x^m$ we have 
$$\|x\|_{h_p^c(\s)}\leq {\delta'}_{p/2}^{1/2} \Big\|\sum_{m=1}^M \alpha_m \langle x^m,x^m\rangle_{\s^m}\Big\|_{p/2}^{1/2}.$$
In particular for $x\in L_p(\M)$ and $\s \subset \s'$ we have 
$$\|x\|_{h_p^c(\s)}\leq {\delta'}_{p/2}^{1/2} \|x\|_{h_p^c(\s')}.$$
Hence 
$${\delta'}_{p/2}^{-1/2} \sup_\s \|x\|_{h_p^c(\s)} \leq \|x\|_{\h_p^{c}} \leq \sup_\s \|x\|_{h_p^c(\s)}.$$
 \end{enumerate}
\end{lemma}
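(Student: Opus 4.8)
The plan is to treat the two parts in parallel, reducing both to a comparison between a \emph{fine} conditioned bracket $\langle y,y\rangle_{\s}$ and a \emph{coarse} one $\langle y,y\rangle_{\s^m}$ for a single martingale, and to absorb the convex combination at the very start by operator convexity of $z\mapsto |z|^2$. Since $\sum_m\alpha_m=1$ and $d_s^\s(x)=\sum_m\alpha_m d_s^\s(x^m)$, operator convexity gives the operator inequality
\[
\langle x,x\rangle_\s=\sum_{s\in\s}\E_{s^-(\s)}\Big|\sum_m\alpha_m d_s^\s(x^m)\Big|^2\le\sum_m\alpha_m\langle x^m,x^m\rangle_\s ,
\]
so that $\|x\|_{h_p^c(\s)}^2\le\big\|\sum_m\alpha_m\langle x^m,x^m\rangle_\s\big\|_{p/2}$, and the task reduces to comparing $\langle x^m,x^m\rangle_\s$ with $\langle x^m,x^m\rangle_{\s^m}$. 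The basic identity I would record first is that, whenever $\rho\subset\rho'$, the martingale orthogonality of the increments $d_s^{\rho'}(y)$ under the conditional expectations (the cross terms vanish since $\E_{s^-(\rho')}d_s^{\rho'}(y)=0$), together with the tower property $\E_{t^-(\rho)}\E_{s^-(\rho')}=\E_{t^-(\rho)}$ for $t^-(\rho)\le s^-(\rho')$, yields the operator identity
\[
\langle y,y\rangle_{\rho}=\sum_{t\in\rho}\E_{t^-(\rho)}\big(G_t^{(y)}\big),\qquad \langle y,y\rangle_{\rho'}=\sum_{t\in\rho}G_t^{(y)},
\]
where $G_t^{(y)}=\sum_{s\in\rho',\,t^-(\rho)<s\le t}\E_{s^-(\rho')}|d_s^{\rho'}(y)|^2\ge 0$ is the fine bracket restricted to the coarse interval ending at $t$. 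Thus the coarse bracket is obtained from the fine one by grouping the fine increments over each coarse interval and applying the left–endpoint conditional expectation $\E_{t^-(\rho)}$.

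For part (ii), where $\s\subset\s^m$, I take $\rho=\s$ (coarse) and $\rho'=\s^m$ (fine), so $\langle x^m,x^m\rangle_\s=\sum_{t\in\s}\E_{t^-(\s)}(G_t^{(x^m)})$ while $\langle x^m,x^m\rangle_{\s^m}=\sum_{t\in\s}G_t^{(x^m)}$. Since $p\ge2$ we have $p/2\ge 1$, so the dual Doob inequality applies to the positive family $\{\alpha_m G_t^{(x^m)}\}_{m,t}$ with the conditional expectations $\E_{t^-(\s)}$, giving
\[
\Big\|\sum_m\alpha_m\langle x^m,x^m\rangle_\s\Big\|_{p/2}=\Big\|\sum_{m,t}\E_{t^-(\s)}\big(\alpha_m G_t^{(x^m)}\big)\Big\|_{p/2}\le \delta'_{p/2}\Big\|\sum_m\alpha_m\langle x^m,x^m\rangle_{\s^m}\Big\|_{p/2}.
\]
Combined with the operator-convexity estimate and taking square roots this gives exactly the asserted bound with constant $\delta'^{1/2}_{p/2}$; the special case follows with $M=1$, $\s^1=\s'$, and the non-disjoint case is handled by the right-continuity approximation as in Lemma \ref{convexityHpc}.

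Part (i), $1\le p<2$, is the genuinely delicate one and carries the \emph{reversed} monotonicity: now $\s^m\subset\s$, so with $\rho=\s^m$, $\rho'=\s$ I must bound the fine bracket $\langle x^m,x^m\rangle_\s=\sum_{t\in\s^m}G_t^{(x^m)}$ by the coarse one $\langle x^m,x^m\rangle_{\s^m}=\sum_{t\in\s^m}\E_{t^-(\s^m)}(G_t^{(x^m)})$. This is not an operator inequality, and in $L_{p/2}$ it runs \emph{against} the Doob direction; its validity rests entirely on the regime $p/2\le 1$, through Hansen's inequality (Lemma \ref{le:contr}), which reverses the contractivity of conditional expectations for exponent $\le 1$. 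I would split $G_t^{(x^m)}$ into its first (left–endpoint) increment, which is already $\E_{t^-(\s^m)}$-measurable and hence appears undiminished in the coarse bracket — yielding an operator domination of that part by $\langle x^m,x^m\rangle_{\s^m}$ — and an interior remainder, which I would control by applying Hansen's inequality through a single trace-preserving completely positive map on $B(\ell_2)\oten\M$ simultaneously implementing the interior conditionings. The two-term splitting together with the $p/2$-quasi-triangle inequality $\|A+B\|_{p/2}^{p/2}\le\|A\|_{p/2}^{p/2}+\|B\|_{p/2}^{p/2}$ produces the factor $2^{2/p}$, i.e. $2^{1/p}$ after taking square roots. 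The main obstacle is precisely this reverse estimate: making the interior conditionings fit into one application of Hansen's inequality (the per-term conditionings are predictable but the "future" terms are not measurable at a common level), so the bookkeeping of the filtration levels is the crux, exactly as Hansen's inequality was the essential tool for the $1\le p<2$ range in \eqref{estimatehatH_pc}.
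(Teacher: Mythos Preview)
Your setup and part (ii) are essentially identical to the paper's argument: operator convexity of $|\cdot|^2$ reduces to comparing $\langle x^m,x^m\rangle_\s$ with $\langle x^m,x^m\rangle_{\s^m}$, your bracket identity $\langle y,y\rangle_\rho=\sum_t\E_{t^-(\rho)}(G_t)$ versus $\langle y,y\rangle_{\rho'}=\sum_t G_t$ is exactly the paper's \eqref{eq:cond}, and the dual Doob inequality for $p/2\ge 1$ finishes (ii) just as in the paper.

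The gap is in part (i). After the splitting, the measurable first term is indeed dominated operatorwise by $\langle x^m,x^m\rangle_{\s^m}$, but for the interior remainder you still need
\[
\Big\|\sum_{m,t}\alpha_m R_t^{(x^m)}\Big\|_{p/2}\le\Big\|\sum_{m,t}\alpha_m\,\E_{t^-(\s^m)}\big(R_t^{(x^m)}\big)\Big\|_{p/2},
\]
which is the \emph{same} reverse inequality you started from, with a different conditional expectation $\E_{t^-(\s^m)}$ attached to each term. A single application of Hansen's inequality cannot deliver this: a diagonal map $\bigoplus_{m,t}\E_{t^-(\s^m)}$ on $B(\ell_2)\overline\otimes\M$ yields, via Lemma~\ref{le:contr}, only $\big(\sum_{m,t}\|\alpha_m R_t\|_{p/2}^{p/2}\big)^{2/p}\le\big(\sum_{m,t}\|\alpha_m\E_{t^-(\s^m)}(R_t)\|_{p/2}^{p/2}\big)^{2/p}$, which compares $\ell_{p/2}$-sums of norms, not the $L_{p/2}(\M)$-norm of the sum. (The constant-$1$ version of the reverse inequality is in fact false already for two terms with nested conditionings, so the splitting cannot close by itself.) The paper does not attempt this route: it rearranges the index set so that the subalgebras $\M_{t_m(s)^-(\s^m)}$ form an increasing filtration and then invokes the \emph{dual form of the reverse Doob inequality for $0<q<1$} \cite[Theorem~7.1]{JX}, which gives directly
\[
\Big\|\sum_k b_k\Big\|_{q}\le 2^{1/q}\Big\|\sum_k\E_{\N_k}(b_k)\Big\|_{q}
\]
for positive $b_k$ and an increasing filtration $(\N_k)$. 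This is the missing ingredient; the factor $2^{2/p}$ in the statement is the constant from that theorem, not merely the cost of a two-term quasi-triangle split.
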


\begin{proof}
We first consider $1\leq p <2$. 
On the one hand, the operator convexity of $|\cdot|^2$ yields
$$\|x\|_{h_p^c(\s)}^2 = 
\Big\|\displaystyle\sum_{s\in \s}\E_{s^-(\s)} \Big|\sum_m  \alpha_m d_{s}^\s(x^m)\Big|^2\Big\|_{p/2}
\leq \Big\|\displaystyle\sum_{ m, s\in \s} \alpha_m \E_{s^-(\s)}|d_{s}^\s(x^m)|^2\Big\|_{p/2}.$$
On the other hand, for $1\leq m\leq M$ and $t\in \s^m$ fixed we denote by $I_t$ the collection of $s\in \s$ such that $t^-(\s^m)\leq s^-(\s)<s\leq t$. 
Then for $m$ fixed, $\bigcup_{t\in \s^m} I_t = \s$. 
Note that for $1\leq m\leq M$ and $t\in \s^m$, 
we can split up the interval
$[t^-(\s^m),t]$ in the subintervals $[s^-(\s),s]$ with $s\in I_t$ and by the martingale property (and $t^-(\s^m)\leq s^-(\s)$) we have
\begin{equation}\label{eq:cond}
\E_{t^-(\s^m)}|d_{t}^{\s^m}(x^m)|^2
=\E_{t^-(\s^m)}\Big|\sum_{s\in I_t}d_s^\s(x^m)\Big|^2
=\E_{t^-(\s^m)}\Big(\sum_{s\in I_t}\E_{s^-(\s)}|d_{s}^\s(x^m)|^2\Big).
\end{equation}
Then \eqref{eq:cond} implies
\begin{align*}
\sum_{m} \alpha_m \langle x^m,x^m\rangle_{\s^m}&=
\sum_m \alpha_m \sum_{t\in \s^m}
\E_{t^-(\s^m)}\Big(\sum_{s\in I_t}\E_{s^-(\s)}|d_{s}^\s(x^m)|^2\Big)\\
&=\sum_{m, s\in \s }  \E_{t_m(s)^-(\s^m)}\big(\alpha_m \E_{s^-(\s)}|d_{s}^\s(x^m)|^2\big),
\end{align*}
where $t_m(s)$ denotes the unique $t\in \s^m$ which satisfies $t^-(\s^m)\leq s^-(\s)<s\leq t$. 
We can rearrange the set $\{1, \cdots, M\} \times \s$ 
so that $\Big(\M_{t_m(s)^-(\s^m)}\Big)_{(m,s)}$ becomes an increasing sequence of von Neumann algebras.
Thus we can apply the dual form of the reverse noncommutative Doob inequality for $0<\frac{p}{2}<1$ (Theorem $7.1$ of \cite{JX}), and obtain
\begin{align*}
\|x\|_{h_p^c(\s)}^2&\leq 
\Big\|\displaystyle\sum_{ m, s\in \s} \alpha_m \E_{s^-(\s)}|d_{s}^\s(x^m)|^2\Big\|_{p/2}\\
&\leq  2^{2/p}\Big\|\displaystyle\sum_{m, s\in \s}  \E_{t_m(s)^-(\s^m)}\big(\alpha_m \E_{s^-(\s)}|d_{s}^\s(x^m)|^2\big)\Big\|_{p/2}\\
&= 2^{2/p}\Big\|\displaystyle\sum_m \alpha_m \langle x^m,x^m\rangle_{\s^m}\Big\|_{p/2}.
\end{align*}
We now turn to assertion (ii). 
In this case, since $\s \subset \s^m$, for $t\in \si$ and $m$ fixed we denote by $I_t^m$
the collection of $s\in \si^m$ such that $t^-(\s)\leq s^-(\s^m)<s\leq t$. Then for $m$ fixed, $\bigcup_{t\in \s} I_t = \s^m$.
We observe that
 \begin{align*}
  \E_{t^-(\s)}|d_t^\s(x)|^2
  &= \sum_{m,l=1}^ M\al_m \al_l
  \E_{t^-(\s)}(d_t^\s(x^m)^*d_t^\s(x^l))  .
  \end{align*}
By Cauchy-Schwarz, we deduce  that
 \begin{align*}
\|x\|_{h_p^c(\s)}^2 &= \Big\|\sum_{t\in \s} \E_{t^-(\s)}(|d_t^\s(x)|^2)\Big\|_{p/2}\\
 &\leq  \Big\|\sum_{t\in \s,m,l} \al_{l}\al_m \E_{t^-(\s)}(|d_t^\s(x^m)|^2)\Big\|_{p/2}^{1/2}
    \Big\|\sum_{t\in \s,m,l} \al_{l}\al_m
 \E_{t^-(\s)}(|d_t^\s(x^l)|^2)\Big\|_{p/2}^{1/2} \\
  &= \Big\|\sum_{t\in \s,m} \al_m \E_{t^-(\s)}(|d_t^\s(x^m)|^2)\Big\|_{p/2}.
 \end{align*}
Note that in the first term the summation over $l$ disappears by using
$\sum_l \al_l=1$, and in the second one the summation over $m$
disappears similarly. For $t\in \s$ and $m$ as \eqref{eq:cond} we can write 
 $$ \E_{t^-(\s)}(|d_t^\s(x^m)|^2)
 = \sum_{s\in I_t^m} \E_{t^-(\s)}(|d_s^{\s^m}(x^m)|^2).$$
By the dual version of the noncommutative Doob inequality for $1\leq \frac{p}{2} <\infty$, we deduce
that
 \begin{align*}
\Big\|\sum_{t\in \s,m} \al_m \E_{t^-(\s)}(|d_t^\s(x^m)|^2)\Big\|_{p/2} 
&= \Big\|\sum_{t\in \s,m,s\in I_t^m } \al_m \E_{t^-(\s)}( |d_s^{\s^m}(x^m)|^2)\Big\|_{p/2} \\
&= \Big\|\sum_{t\in \s} \E_{t^-(\s)}\Big(\sum_{m,s\in I_t^m}\al_m \E_{s^-(\s^m)} (|d_s^{\s^m}(x^m)|^2)\Big)\Big\|_{p/2}\\
&\leq  \delta'_{p/2} \Big\|\sum_{t\in \s} \sum_{m,s\in I_t^m} \al_m \E_{s^-(\s^m)} (|d_s^{\s^m}(x^m)|^2) \Big\|_{p/2} \\
&= \Big\|\sum_{m=1}^M \alpha_m \langle x^m,x^m\rangle_{\s^m}\Big\|_{p/2}.
 \end{align*}
This ends the proof.
\qd

The independence (up to a constant) of $\h_p^{c}$ on $\U$ follows immediately. 

\begin{theorem}\label{indpdtUh}
For $1\leq p <\infty$ the space $\h_p^{c}$ is independent of the choice of the ultrafilter $\U$, up to equivalent norm.
\end{theorem}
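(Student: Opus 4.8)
The plan is to deduce the independence directly from the two-sided monotonicity estimates of Lemma \ref{convexityhpc}, exactly as was done for the $\H_p^c$-spaces in Theorem \ref{indpdtU}. The crucial observation is that these estimates sandwich the $\U$-dependent quantity $\|x\|_{\h_p^c}$ between two quantities, namely $\inf_\s \|x\|_{h_p^c(\s)}$ and $\sup_\s \|x\|_{h_p^c(\s)}$, each of which involves only the discrete partitions and makes no reference to $\U$ whatsoever.

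First I would fix $x\in\M$ and two ultrafilters $\U$ and $\U'$, both refining the net of finite partitions of $[0,1]$ in the sense of subsection \ref{subsectdefHpc}, and write $\|x\|_{\h_p^{c},\U}$ and $\|x\|_{\h_p^{c},\U'}$ for the corresponding norms. For $1\leq p<2$, Lemma \ref{convexityhpc}(i) gives, for each of the two ultrafilters separately,
$$\inf_\s \|x\|_{h_p^c(\s)} \leq \|x\|_{\h_p^c} \leq 2^{1/p}\inf_\s \|x\|_{h_p^c(\s)} .$$
Since the middle term depends on the choice of ultrafilter while the outer terms do not, combining the upper bound computed along $\U$ with the lower bound computed along $\U'$ yields $\|x\|_{\h_p^{c},\U} \leq 2^{1/p}\|x\|_{\h_p^{c},\U'}$, and the reverse inequality follows by symmetry with the same constant.

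For $2\leq p<\infty$ I would argue in precisely the same fashion, this time invoking Lemma \ref{convexityhpc}(ii), which sandwiches $\|x\|_{\h_p^c}$ between ${\delta'}_{p/2}^{-1/2}\sup_\s\|x\|_{h_p^c(\s)}$ and $\sup_\s\|x\|_{h_p^c(\s)}$; the identical sandwich argument then delivers the equivalence of the two ultrafilter norms with constant ${\delta'}_{p/2}^{1/2}$. Finally, since $\M$ is dense in $\h_p^c$ for every admissible $\U$ (as recorded in the remark following the definition of $\hh_p^c$ and $\h_p^c$), the identity map on $\M$ extends to an isomorphism between the two completions, which proves independence up to equivalent norm.

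The argument is essentially immediate once Lemma \ref{convexityhpc} is available, so I do not expect any genuine obstacle at this stage: the entire difficulty has already been absorbed into establishing the monotonicity and convexity estimates of that Lemma, whose proof rests on the (reverse) noncommutative Doob inequalities and the operator convexity of $|\cdot|^2$. The only point requiring care in the present statement is to emphasize that the bounding quantities are manifestly $\U$-free, which is what converts the monotonicity into an ultrafilter-independence result.
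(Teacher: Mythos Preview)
Your proposal is correct and is exactly the approach the paper takes: the paper states that the independence follows immediately from Lemma \ref{convexityhpc}, just as Theorem \ref{indpdtU} followed from Lemma \ref{convexityHpc}, and your write-up simply makes this one-line deduction explicit by spelling out the sandwich between the $\U$-free quantities $\inf_\s\|x\|_{h_p^c(\s)}$ (for $1\le p<2$) and $\sup_\s\|x\|_{h_p^c(\s)}$ (for $2\le p<\infty$).
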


\subsection{$\hh_p^c=\h_p^c$}

In this subsection we show that in the conditioned case the two spaces $\hh_p^{c}$ and $\h_p^c$ also coincide.
In particular we will deduce that, up to an equivalent constant, these two spaces do not depend on the choice of the ultrafilter $\U$. 

\begin{theorem}\label{hat2}
Let $1 \leq p <\infty$. Then 
$$\h_p^c=\hh_p^{c}  \quad \mbox{with equivalent norms.}$$
\end{theorem}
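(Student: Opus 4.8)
The plan is to follow the two-case strategy of the proof of Theorem \ref{hat}, replacing the monotonicity Lemma \ref{convexityHpc} by its conditioned counterpart Lemma \ref{convexityhpc}; the reversed direction of the monotonicity in the conditioned setting simply exchanges which regime is elementary. By density it suffices to compare the two (quasi)norms on $\M$, and by \eqref{estimatehath_pc} only one inequality has to be proved in each case: $\|x\|_{\h_p^c}\lesssim\|x\|_{\hh_p^c}$ for $2\le p<\infty$, and $\|x\|_{\hh_p^c}\lesssim\|x\|_{\h_p^c}$ for $1\le p<2$.

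For $2<p<\infty$ (the case $p=2$ being trivial since both spaces equal $L_2(\M)$) I would argue directly. Here $L_{p/2}$ is reflexive and $\langle x,x\rangle_\U=\w L_{p/2}\mbox{-}\lim_{\s,\U}\langle x,x\rangle_\s$. Fix a partition $\s_0$; since $U_{\s_0}\in\U$, the element $\langle x,x\rangle_\U$ lies in the weak closure of $\{\langle x,x\rangle_\s:\s\supset\s_0\}$, hence by Mazur's theorem in the $L_{p/2}$-norm closure of its convex hull. Choosing convex combinations $\sum_m\alpha_m\langle x,x\rangle_{\s^m}$ with $\s^m\supset\s_0$ converging to $\langle x,x\rangle_\U$ and applying Lemma \ref{convexityhpc}(ii) with $\s=\s_0$ and $x^m=x$ gives $\|x\|_{h_p^c(\s_0)}\le{\delta'}_{p/2}^{1/2}\|\langle x,x\rangle_\U\|_{p/2}^{1/2}={\delta'}_{p/2}^{1/2}\|x\|_{\hh_p^c}$. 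Taking the supremum over $\s_0$ and using $\|x\|_{\h_p^c}\le\sup_\s\|x\|_{h_p^c(\s)}$ from Lemma \ref{convexityhpc}(ii) yields the claim.

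The case $1\le p<2$ is the delicate one, and I would treat it by duality exactly as in Lemmas \ref{embedhatHpc} and \ref{dualhatHpc}. Since $L_{p/2}$ is no longer a Banach space, $\langle x,x\rangle_\U$ cannot be handled by a weak-limit argument there, so the goal is to show $(\hh_p^c)^*\subset(\h_p^c)^*$ with control of norms. First I would embed $\hh_p^c$ isometrically into an ultraproduct $\prod_\V L_p(\M;\H_i^c)$ built, as in Lemma \ref{embedhatHpc}, from convex combinations $\sum_m\alpha_m\langle x,x\rangle_{\s_i^m}$ approximating $\langle x,x\rangle_\U$ in a reflexive $L_{q/2}$ ($q>2$); the only change is that the components are now $u_{t^-(\s_i^m)}(d_t^{\s_i^m}(x))$, so that $|u_{t^-(\s_i^m)}(d_t^{\s_i^m}(x))|^2=\E_{t^-(\s_i^m)}|d_t^{\s_i^m}(x)|^2$ by \eqref{u} reproduces the conditioned bracket. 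Then, given a functional of norm $\le1$, extended by Hahn--Banach and approximated weak$^*$ by elements $\xi=(\xi(i))^\bullet$ in the unit ball of $\prod_\V L_{p'}(\M;\H_i^c)$ via Lemma \ref{le:ultrapdct}, I would build $z(i)\in L_{p'}(\M)$ using the projections $\Q_{t^-(\s_i^m)}$ together with \eqref{Q} and \eqref{Q2} (as in the discrete complementation Lemma \ref{complhpdiscr}) and estimate $\|z(i)\|_{L_{p'}^cmo(\s'_i)}\le k_p$ through the noncommutative Doob inequality. Setting $z=\w L_2\mbox{-}\lim_{i,\V}z(i)$ and invoking the discrete conditioned Fefferman--Stein duality (Theorem \ref{fsdualityhpcdiscr}) together with the monotonicity inequality $\|x\|_{h_p^c(\s'_i)}\le2^{1/p}\|x\|_{h_p^c(\s_i)}$ from Lemma \ref{convexityhpc}(i) (valid since $\s_i\subset\s'_i$, and with $\lim_{i,\V}\|x\|_{h_p^c(\s_i)}=\|x\|_{\h_p^c}$), I would obtain $|\tau(z^*x)|\le k_p\|x\|_{\h_p^c}$ for all $x\in\M$, whence $\|x\|_{\hh_p^c}\le k_p\|x\|_{\h_p^c}$ by density of $\M$ in $\hh_p^c$.

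The main obstacle will be the conditioned dual lemma: producing $z(i)$ and bounding its $L_{p'}^cmo(\s'_i)$-norm requires carrying the module maps $u_{n,p},\Q_{n,p}$ and the relations \eqref{u}, \eqref{Q}, \eqref{Q2} through the ultraproduct, which is more involved than the plain martingale-difference computation of Lemma \ref{dualhatHpc} because of the conditional expectations built into the conditioned square function. The reversed monotonicity must also be tracked carefully, so that the correct (coarser) partition $\s_i$ is used to pass from a single $\|x\|_{h_p^c(\s'_i)}$ back to the continuous norm $\|x\|_{\h_p^c}$.
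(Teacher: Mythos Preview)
Your overall strategy matches the paper's: the case $2\le p<\infty$ is handled exactly as you describe via Lemma \ref{convexityhpc}(ii), and for $1\le p<2$ the paper uses the same dual embedding/extension scheme, with $u_{t^-(\s_i^m)}(d_t^{\s_i^m}(x))$ replacing $d_t^{\s_i^m}(x)$ in the ultraproduct column space, and $z(i)=\sum_m\alpha_m(i)P_{\s_i^m}(\xi_m(i))$ (the $P_{\s_i^m}$ package the $\Q_{t^-(\s_i^m)}$ you mention).

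There is, however, one genuine technical point where your plan needs correction: the partition at which you estimate $z(i)$. You propose to bound $\|z(i)\|_{L_{p'}^c mo(\s'_i)}$ with $\s'_i=\bigcup_m\s_i^m$ and then use the monotonicity $\|x\|_{h_p^c(\s'_i)}\le 2^{1/p}\|x\|_{h_p^c(\s_i)}$. The paper instead estimates $\|z(i)\|_{L_{p'}^c mo(\s_i)}$ at the \emph{coarser} partition $\s_i$ and pairs it directly with $\|x\|_{h_p^c(\s_i)}$; no monotonicity step on the $x$-side is needed since $\lim_{i,\V}\|x\|_{h_p^c(\s_i)}=\|x\|_{\h_p^c}$ by construction of $\V$. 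This is not cosmetic. The key bound \eqref{eq4} (equivalently Lemma \ref{Pbddbmo}) yields
\[
\E_s|z_m(i)-\E_s z_m(i)|^2\le \E_s\Big(\sum_{t\in\s_i^m}|\xi_{m,t}(i)|^2\Big)
\]
only for $s\in\s_i^m$, because the relation $\E_{t^-}|y_t|^2\le\E_{t^-}|\xi_{m,t}|^2$ from \eqref{eq2} lives at the level of $\E_{t^-(\s_i^m)}$. Since $\s_i\subset\s_i^m$ for every $m$, this covers all $s\in\s_i$ and gives the $\s_i$-estimate immediately. For $s\in\s'_i\setminus\s_i^m$ there is an extra ``straddling'' term $\E_s|\E_{t_0}(y_{t_0})-\E_s(y_{t_0})|^2$ (with $t_0^-(\s_i^m)<s<t_0$) that one would need to bound by $\E_s|y_{t_0}|^2$, and no control on this quantity is available from \eqref{Q}--\eqref{eq2}. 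So the $\s'_i$-estimate does not follow from the ingredients you list. The reversed monotonicity is thus exploited on the $z$-side (working at a partition coarser than every $\s_i^m$), not on the $x$-side; with that single adjustment your argument coincides with the paper's.
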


Theorem \ref{indpdtUh} immediately yields 

\begin{cor}\label{hatindpdtUh}
For $1\leq p <\infty$ the space $\hh_p^{c}$ is independent of the choice of the ultrafilter $\U$, 
up to equivalent norm. 
\end{cor}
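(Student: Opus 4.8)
The plan is to mirror exactly the strategy used for the unconditioned Hardy spaces in Theorem \ref{hat}, splitting into the two ranges $2\leq p<\infty$ and $1\leq p<2$, and relying on the conditioned monotonicity estimate of Lemma \ref{convexityhpc} in place of Lemma \ref{convexityHpc}. By density it suffices to prove the equivalence of the norms $\|\cdot\|_{\h_p^c}$ and $\|\cdot\|_{\hh_p^c}$ on $\M$, since both spaces are completions of $\M$. The inequality $\|x\|_{\hh_p^c}\leq\|x\|_{\h_p^c}$ (for $2\leq p<\infty$) and $\|x\|_{\h_p^c}\leq\|x\|_{\hh_p^c}$ (for $1\leq p<2$) are already contained in \eqref{estimatehath_pc}, so in each range only one direction requires work.

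For $2\leq p<\infty$ I would argue exactly as in the proof of Theorem \ref{hat} for $2\leq p<\infty$. Fix $x\in\M$ with $\|x\|_{\hh_p^c}=\|\langle x,x\rangle_\U\|_{p/2}^{1/2}<1$; for $p=2$ both spaces equal $L_2(\M)$, so assume $2<p<\infty$. Since $\langle x,x\rangle_\U$ is the weak $L_{p/2}$-limit of the conditioned brackets $\langle x,x\rangle_\s$ along $\U$, and weak limits can be approximated in norm by convex combinations, I can choose positive weights $(\alpha_m)_{m=1}^M$ summing to $1$ and partitions $\s^1,\dots,\s^M$ with $\big\|\sum_m\alpha_m\langle x,x\rangle_{\s^m}\big\|_{p/2}<1$. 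Applying Lemma \ref{convexityhpc}(ii) with $\s=\cup_m\s^m$ (so that each $\s^m\subset\s$, matching the hypothesis of (ii)) gives $\|x\|_{h_p^c(\s)}\leq{\delta'}_{p/2}^{1/2}$, and then the second estimate in the conditioned monotonicity, $\|x\|_{\h_p^c}\leq\sup_\s\|x\|_{h_p^c(\s)}$ together with $\|x\|_{\h_p^c}\leq{\delta'}_{p/2}^{1/2}\|x\|_{h_p^c(\s)}$, yields $\|x\|_{\h_p^c}\leq{\delta'}_{p/2}$, the desired equivalence.

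For $1\leq p<2$ the direct bracket is hard to control, so I would use the dual approach, constructing the conditioned analogue of Lemmas \ref{embedhatHpc} and \ref{dualhatHpc}. First embed $\hh_p^c$ isometrically into an ultraproduct $\prodd_\V L_p(\M;\H_i^c)$ built over the index set $\I=\PP_{\fin}(\M)\times\PP_{\fin}([0,1])\times\rz_+^*$ refining the approximation \eqref{coeff}, but now the representing vector $\x(i)$ uses the \emph{conditioned} martingale differences so that $\|\x(i)\|_{L_p(\M;\H_i^c)}$ computes $\|\langle x,x\rangle_{\s_i^m}\|_{p/2}$-type quantities. Then, given a functional $\varphi\in(\hh_p^c)^*$ of norm $\leq1$, extend it by Hahn--Banach and use Lemma \ref{le:ultrapdct} to write it as a weak$^*$-limit of elements $\xi_\lambda$ in the unit ball of $\prodd_\V L_{p'}(\M;\H_i^c)$; for each component I would build $z(i)=\sum_m\alpha_m(i)z_m(i)$ exactly as in Lemma \ref{dualhatHpc}, but estimate its $L_{p'}^cmo(\s'_i)$-norm (rather than the $L_{p'}^cMO$-norm) using the conditioned Doob-type bounds, thereby obtaining a representative $z\in L_2(\M)$ with $\|z\|_{(\h_p^c)^*}\leq k_p$. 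This shows $(\hh_p^c)^*\subset(\h_p^c)^*$, and by duality and the density of $\M$ this forces $\|x\|_{\hh_p^c}\leq k_p\|x\|_{\h_p^c}$, completing the equivalence.

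The main obstacle will be the conditioned analogue of the BMO estimate \eqref{z(i)} inside the dual argument: controlling $\|z(i)\|_{L_{p'}^cmo(\s'_i)}$ requires tracking the conditioned square functions $\E_{s^-}|z_m(i)-\E_{s^-}(z_m(i))|^2$ rather than the unconditioned ones, and the passage through \eqref{eq:cond}-style reindexing of the nested conditional expectations over the refined partitions $\s'_i$ must be handled carefully so that the correct conditional expectation $\E_{s^-(\s^m_i)}$ appears and the conditioned Doob inequality (its dual form, with constant $\delta'_{p'/2}$) applies cleanly. Once this single estimate is in place, the remainder of the argument transfers verbatim from subsection \ref{secthat} with $L_p^cMO$ replaced by $L_p^cmo$ and the constant $\beta_p$ replaced by the conditioned constant $\eta_p$.
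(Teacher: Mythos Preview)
Your overall strategy is the same as the paper's: the corollary is obtained by establishing Theorem~\ref{hat2} (the equivalence $\hh_p^c=\h_p^c$) and then invoking Theorem~\ref{indpdtUh}. The dual argument you sketch for $1\le p<2$ is essentially what the paper does (with the module maps $u_{t^-}$ used to build the embedding, and the projection $P_{\s_i^m}$ used to produce $z_m(i)$); the relevant $L_{p'}^c mo$ estimate is taken over the base partition $\s_i$ rather than the union $\s_i'$, but this is a minor adjustment.

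However, your argument for $2\le p<\infty$ contains a genuine error. Lemma~\ref{convexityhpc}(ii) requires the partitions $\s^1,\dots,\s^M$ to \emph{contain} $\s$, i.e.\ $\s^m\supset\s$ --- the monotonicity in the conditioned case runs opposite to Lemma~\ref{convexityHpc}(ii). Your choice $\s=\cup_m\s^m$ gives $\s^m\subset\s$, which does not match the hypothesis, and the resulting bound on $\|x\|_{h_p^c(\s)}$ for this single large $\s$ cannot be propagated to finer partitions (conditioned monotonicity only transfers bounds from finer to coarser). The correct implementation, as in the paper, is to fix an arbitrary partition $\s$ \emph{first}, then use the ultrafilter structure ($U_\s\in\U$) to choose the approximating convex combination with each $\s^m\supset\s$; Lemma~\ref{convexityhpc}(ii) then gives $\|x\|_{h_p^c(\s)}\le{\delta'}_{p/2}^{1/2}$ for every $\s$, and taking the limit yields $\|x\|_{\h_p^c}\le{\delta'}_{p/2}^{1/2}$.
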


\begin{proof}[Proof of Theorem \ref{hat2}]
As in proof of Theorem \ref{hat}, we start with the case $2\leq p <\infty$, 
which is an easy consequence of the convexity property proved in Lemma \ref{convexityhpc}. 
It suffices to show that the $\h_p^c$-norm and the $\hh_p^c$-norm are equivalent on $\M$. 
Let $x\in \M$, by \eqref{estimatehath_pc}  we have $\|x\|_{\hh_p^c}\leq \|x\|_{\h_p^c}$. 
Now assume that $\|x\|_{\hh_p^c}=\|\langle x,x\rangle_\U\|_{p/2}^{1/2}<1$ and fix $\s$. 
Since the two spaces coincide with $L_2(\M)$ for $p=2$, we consider $2<p<\infty$. 
In that case we have $\langle x,x\rangle_\U=\w L_{p/2} \mbox{-} \lim_{\s,\U}\langle x,x\rangle_\s$. 
We can find a sequence of positive numbers $(\alpha_m)_{m=1}^{M}$ satisfying $\sum_m \alpha_m=1$ and 
finite partitions $\s^1, \cdots, \s^{M}$ containing $\s$ such that 
$$\Big\|\sum_{m=1}^{M} \alpha_m  \langle x,x\rangle_{\s^m} \Big\|_{p/2}<1.$$
Lemma \ref{convexityhpc} (ii) gives for all $\s$
$$\|x\|_{h_p^c(\s)}\leq {\delta'}_{p/2}^{1/2}.$$
Taking the limit over $\s$ we get
$$\|x\|_{\h_p^c}\leq {\delta'}_{p/2}^{1/2}.$$
We now turn to the case $1\leq p<2$. 
We will use the same trick as in the proof of Lemma \ref{dualhatHpc}. 
Let us adapt this argument for $\hh_p^{c}$. 
We consider the same index set 
$$\I=\PP_{\fin}(\M) \times \PP_{\fin}([0,1]) \times \rz_+^*$$
and construct similarly the ultrafilter $\V$ on $\I$. As in subsection \ref{secthat}, for each $i=(F, \s_i,\varepsilon) \in \I$ 
we can find a sequence of positive numbers $(\alpha_m(i))_{m=1}^{M(i)}$ such that $\sum_m \alpha_m(i)=1$ and 
finite partitions $\s_i^1, \cdots, \s_i^{M(i)}$ containing $\s_i$ and satisfying for all $x\in F$
$$\Big\|\langle x,x\rangle_\U- \sum_{m=1}^{M(i)} \alpha_m(i)  \langle x,x\rangle_{\s_i^m} \Big\|_{q/2}<\varepsilon.$$
In this case we consider the Hilbert space $\H_i=\ell_2\Big(\bigcup_{m,t\in \s_i^m}\{t\} \times \nz \Big)$
equipped with the norm
$$\|(\xi_{m,t,j})_{1\leq m\leq M(i),t\in \s_i^m,j\in \nz}\|_{\H_i}
=\Big(\sum_{m=1}^{M(i)}\alpha_m(i)\sum_{t\in \s_i^m,j\in \nz}|\xi_{m,t,j}|^2\Big)^{1/2}.$$
Then $\hh_p^{c}$ embeds isometrically into $\prodd_\V  L_p(\M;\H_i^c)$ via the map $x\in \M \mapsto \x=(\x(i))^\bullet$, where 
$$\x(i)=\left\{\begin{array}{cc}
\displaystyle\sum_{m=1}^{M(i)} \displaystyle\sum_{t\in \s_i^m} e_{m,0}\ten e_{t,0} \ten u_{t^-(\s_i^m)}(d_t^{\s_i^m}(x))
& \mbox{ if } i=(F,\s_i,\varepsilon) \mbox{ such that } x\in F \\
0& \mbox{otherwise}
\end{array}\right..$$
We will show that 
\begin{equation}\label{hathpc}
(\hh_p^{c})^*\subset (\h_p^c)^*.
\end{equation}
Let $\varphi \in (\hh_p^{c})^*$ be a functional of norm less than one. 
We may assume that $\varphi$ is given by an element $\xi=(\xi(i))^\bullet\in \prod_\V L_{p'}(\M;\H_i^c)$ of norm less than one, 
with
$$\xi(i)=\sum_{m=1}^{M(i)} \sum_{t\in \s_i^m} e_{m,0}\otimes e_{t,0} \otimes \xi_{m,t}(i),$$
where $\xi_{m,t}(i)\in L_{p'}(\M;\ell_2^c(\nz))$.  
Fix $i=(F,\s_i, \varepsilon)\in \I$ and $1\leq m\leq M(i)$.  
We set 
$$z_m(i)=P_{\s_i^m}(\xi_m(i)) \in L_{p'}(\M),$$
where  
$ \xi_m(i):=\sum_{t\in \s_i^m} e_{m,0}\otimes e_{t,0} \otimes \xi_{m,t}(i) \in L_{p'}(\M; \ell_2^c(\s_i^m\times\nz))$ 
and $P_{\s_i^m}$ denotes the projection from $L_{p'}(\M; \ell_2^c(\s_i^m\times\nz))$ onto $h_{p'}^c(\s_i^n)$ described in subsection \ref{discretehpc}. 
Then we consider
$$z(i)=\sum_m \alpha_m(i) z_m(i) \in L_{p'}(\M).$$
We claim that $z(i)$ is a martingale in $L_{p'}^c mo(\s_i)$. 
The crucial point here is that by Lemma \ref{Pbddbmo} the map 
$P_{\s_i^m}:L_{p'}(\M; \ell_2^c(\s_i^m\times\nz)) \to L_{p'}^c mo(\s_i^m)$ is bounded for $2<p'\leq \infty$. 
More precisely, on the one hand, \eqref{eq3} for $n=0$ implies
\begin{equation}\label{eq5.0}
|\E_0(z_m(i))|^2\leq |\E_0(\xi_{m,0}(i))|^2\leq \E_0|\xi_{m,0}(i)|^2.
\end{equation}
On the other hand, by \eqref{eq4} we have for all $s\in \s_i^m$ (and in particular for all $s\in \s_i \subset \s_i^m$)
\begin{equation}\label{eq5}
 \E_s|z_m(i)-\E_s(z_{m}(i))|^2
\leq \E_s\Big(\displaystyle\sum_{t\in \s^m}|\xi_{m,t}(i)|^2\Big).
\end{equation}
The operator convexity of the square function $|\cdot|^2$ yields 
$$|\E_0(z(i))|^2=\Big|\sum_m\alpha_m(i)\E_0(z_m(i))\Big|^2
\leq \sum_m\alpha_m(i)|\E_0(z_m(i))|^2,$$
and for each $s\in \s_i$ we get
\begin{equation}\label{eq6}
 \E_s|z(i)-\E_s(z(i))|^2=\E_s\Big|\sum_m\alpha_m(i)(z_m(i)-\E_s(z_{m}(i)))\Big|^2
  \leq\sum_m\alpha_m(i)\E_s|z_m(i)-\E_s(z_{m}(i))|^2. 
\end{equation}
Then using \eqref{eq5.0} we obtain 
$$|\E_0(z(i))|^2\leq \E_0\Big( \sum_m\alpha_m(i)|\xi_{m,0}(i)|^2\Big),$$
and the contractivity of the conditional expectation $\E_0$ on $L_{p'/2}$ implies
\begin{align*}
\|\E_0(z(i))\|_{p'}
&\leq \Big\|\E_0\Big( \sum_m\alpha_m(i)|\xi_{m,0}(i)|^2\Big)\Big\|_{p'/2}^{1/2}
\leq \Big\|\sum_m\alpha_m(i)|\xi_{m,0}(i)|^2\Big\|_{p'/2}^{1/2}\\
&\leq \Big\|\displaystyle\sum_{m,t\in \s_i^m}\alpha_m(i)|\xi_{m,t}(i)|^2\Big\|_{p'/2}^{1/2}
=\|\xi(i)\|_{L_{p'}(\M;\H_i^c)}.
 \end{align*}
Moreover \eqref{eq5} gives 
$$ \E_s|z(i)-\E_s(z(i))|^2 \leq \E_s\Big(\sum_{m,t \in \s_i^m}\alpha_m(i)|\xi_{m,t}(i)|^2\Big).$$
By the noncommutative Doob inequality we obtain
\begin{align*}
\|{\displaystyle\sup_{s\in \s_i}}^+\E_s|z(i)-\E_s(z(i))|^2\|_{p'/2}
&\leq \Big\|{\displaystyle\sup_{s\in \s_i}}^+ \E_s\Big(\displaystyle\sum_{m,t\in \s^m_i}\alpha_m(i)|\xi_{m,t}(i)|^2\Big)\Big\|_{p'/2}\\
&\leq \delta_{p'/2}\Big\|\displaystyle\sum_{m,t\in \s_i^m}\alpha_m(i)|\xi_{m,t}(i)|^2\Big\|_{p'/2}\\
&=\delta_{p'/2}\|\xi(i)\|_{L_{p'}(\M;\H_i^c)}^2.
  \end{align*}
Hence 
$$\|z(i)\|_{L_{p'}^c mo(\s_i)} \leq \max(1,\delta_{p'/2}^{1/2})\|\xi(i)\|_{L_{p'}(\M;\H_i^c)}.$$
In particular, we see that the family $(z(i))_i$ is uniformly bounded in $L_2(\M)$. 
We set 
$z=\w L_2\mbox{-}\lim_{i,\V} z(i)$. 
We claim that $z\in (\h_p^c)^*$ with 
\begin{equation}\label{z}
 \|z\|_{ (\h_p^c)^*} \leq \sqrt{2}\max(1,\delta_{p'/2}^{1/2})\|\xi\|_{\prodd_\V L_{p'}(\M;\H_i^c)}.
 \end{equation}
By the density of $L_2(\M)$ in $\h_p^c$ it suffices to estimate 
$|\tau(z^*x)|$ for all $x\in L_2(\M)$ with $\|x\|_{\h_p^c}\leq 1$. 
Note that 
\begin{equation}\label{eqx}
\|x\|_{\h_p^c}=\lim_{i,\V}\|x\|_{h_p^c(\s_i)}.
\end{equation}
Indeed, for all $\delta >0$ and $x\in L_2(\M)$, by definition of the $\h_p^c$-norm we have 
$$A_\delta=\{\s \in  \PP_{\fin}([0,1]) :  |\|x\|_{\h_p^c}-\|x\|_{h_p^c(\s)}|<\delta \} \in \U.$$
Hence the set $\PP_{\fin}(\M) \times A_\delta \times \rz_+^* \in \T\times \U \times \W \subset \V$, and since 
$$\PP_{\fin}(\M) \times A_\delta \times \rz_+^* \subset 
\{i \in \I  :  |\|x\|_{\h_p^c}-\|x\|_{h_p^c(\s_i)}|<\delta \}$$
we deduce that the set in the right hand side is also in $\V$ for all $\delta$, which proves  \eqref{eqx}. 
We conclude that for $x\in L_2(\M)$ with $\|x\|_{\h_p^c}\leq 1$ we have 
\begin{align*}
|\tau(z^*x)| &\leq \lim_{i,\V} |\tau(z(i)^*x)| 
\leq \sqrt{2} \lim_{i,\V} (\|z(i)\|_{L_{p'}^c mo(\s_i)}\|x\|_{h_p^c(\s_i)})\\
&=\sqrt{2} (\lim_{i,\V} \|z(i)\|_{L_{p'}^c mo(\s_i)})(\lim_{i,\V}\|x\|_{h_p^c(\s_i)})
\leq \sqrt{2} \max(1,\delta_{p'/2}^{1/2})\|\xi\|_{\prodd_\V L_{p'}(\M;\H_i^c)} \|x\|_{\h_p^c}\\
&\leq \sqrt{2} \max(1,\delta_{p'/2}^{1/2})\|\xi\|_{\prodd_\V L_{p'}(\M;\H_i^c)}.
\end{align*}
This proves \eqref{z}. 
Finally, it remains to check that for all $x\in L_q(\M)$, $z$ satisfies
\begin{equation}\label{eqduality2}
(\xi|\tilde{x})_{\prodd_\V L_{p'}(\M;\H_i^c),\prodd_\V L_{p}(\M;\H_i^c)}=\tau(z^*x).
\end{equation}
We first verify that for each $i=(F,\s_i,\varepsilon)\in \I$ such that $x\in F$ we have 
$$(\xi(i)|\tilde{x}(i))_{L_{p'}(\M;\H_i^c),L_{p}(\M;\H_i^c)}=\tau(z(i)^*x).$$
For all $1\leq m\leq M(i)$, Remark \ref{adjointPu} gives
$$\tau(z_m(i)^*x) = (P_{\s^m_i}(\xi_m(i))|x)
=(\xi_m(i)|u_{\s^m_i}(x))
=\sum_{t\in \s_i^m}\tau\big(\xi_{m,t}(i)^*u_{t^-(\s_i^m)}(d_{t}^{\s_i^m}(x))\big).$$
Then 
$$\tau(z(i)^*x)=\sum_{m=1}^{M(i)} \alpha_m(i) \tau(z_m(i)^*x)
=\sum_{m=1}^{M(i)}\sum_{t\in \s_i^m}\alpha_m(i)\tau\big(\xi_{m,t}(i)^*u_{t^-(\s_i^m)}(d_{t}^{\s_i^m}(x))\big)
=(\xi(i)|\tilde{x}(i)).$$
As in the proof of \eqref{eqduality}, this is sufficient to show \eqref{eqduality2}. 
The end of the proof of Theorem \ref{hat2} is similar to that of Theorem \ref{hat}. 
\qd

In the sequel, we will work with the space $\h_p^c$.

\subsection{Complementation results}\label{sectcomplhpc}

The aim of this subsection is to complement the spaces $\h_p^c$ for $1<p<\infty$ in some nice spaces, 
that means in some spaces which have an $L_p$-module structure over a finite von Neumann algebra.  
We would like to deduce the continuous analogue of Corollary \ref{discrdualhpc}. 
However, in the conditioned case, we can not extend the complementation result stated in Lemma \ref{complhpdiscr} to the continuous setting, 
as we did for the spaces $\H_p^c$. 
Hence we first need to complement $h_p^c$ into another nice space in the discrete case, 
and then we will extend this complementation result to the continuous setting. 
This construction is based on free amalgamated products and will use the Rosenthal/Voiculescu type inequality recalled in subsection \ref{subsectfreeRos}.   

\subsubsection{Complementation of $h_p^c$ in the discrete case}

Let $(\M_n)_{n=0}^N$ be a finite discrete filtration and $(\E_n)_{n=0}^N$ be the associated conditional expectations. 
The idea is to construct a larger finite von Neumann algebra $\N\supset \M$ 
and then complement $h_p^c$ in the space $L_p^c(\N;\E_\M)$. 
We set
$$\AA_0=\M, \; \AA_n=\M \ast_{\M_{n-1}} \M_n \mbox{ for } 1\leq n \leq N 
\quad  \mbox{and} \quad \N=\ast_\M \AA_n,$$
where we amalgamate over the first copy of $\M$ in $\AA_n$. 
Following the notations introduced in subsection \ref{subsectfreeRos} we consider the $*$-homomorphisms
$$\rho:\M \to \N \quad \mbox{and} \quad \rho_n:\AA_n\to \N,$$
which send respectively $\M$ to the amalgamated copy and $\AA_n$ to the $n$-th copy. 
We denote by
$$\E_\M:\N\to \M  \quad \mbox{and} \quad \E_{\AA_n}:\N\to \AA_n$$
the associated normal faithful conditional expectations. 
For each $0\leq n\leq N$ we consider the $*$-homomorphism $\pi_{n,2}:\M_n \to \AA_n$ which sends $\M_n$ to the second copy of $\AA_n$, and 
$\phi_{n,2}:\AA_n \to \M_n$ the associated conditional expectation. 
If $n=0$ then $\pi_{0,2}$ is the natural inclusion $\M_0 \subset \M$ and $\phi_{0,2}$ is simply the conditional expectation $\E_0$. 
As in subsection \ref{subsectfreeRos}, we consider the spaces $\Sigma_1, X_p^1, Y_{p,c}^1, Y_{p,r}^1$ and $Z_p$ associated to the free amalgamated product $\N$. 
We will use the following easy fact.

\begin{lemma}\label{tangentdilation}
For all $0\leq n\leq N$ we have
$$ (\E_{n-1})_{|\M_n}=\E_\M\circ \rho_n \circ \pi_{n,2},$$
where by convention we set $\E_{-1}=\E_0$. 
\end{lemma}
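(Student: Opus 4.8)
The plan is to peel off the two layers of amalgamation one at a time, using at each layer the standard behaviour of conditional expectations in amalgamated free products. Throughout, all conditional expectations in sight are trace preserving for $\tr = \tau\circ \E_\M$, which is what makes them unique and lets the tower property be applied freely.

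First I would treat the outer amalgamation $\N = \ast_\M \AA_n$. Since the $\AA_n$ are freely independent over $\E_\M$ with amalgamated subalgebra the common first copy $\rho(\M)$, the conditional expectation onto the amalgamated subalgebra restricts on each factor $\AA_n$ to the conditional expectation of $\AA_n$ onto $\M$. Denoting by $\phi_{n,1}:\AA_n \to \M$ this conditional expectation onto the first copy of $\M$ inside $\AA_n = \M \ast_{\M_{n-1}} \M_n$, one gets $\E_\M \circ \rho_n = \phi_{n,1}$, so that the asserted identity reduces to
$$\phi_{n,1}\circ \pi_{n,2} = (\E_{n-1})_{|\M_n}.$$

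Next I would treat the inner amalgamation $\AA_n = \M \ast_{\M_{n-1}} \M_n$ for $1\le n\le N$. Here $\phi_{n,1}$ is the conditional expectation onto the first free factor $\M$, while $\pi_{n,2}$ embeds $\M_n$ as the second free factor. I would invoke the standard fact that in an amalgamated free product $A \ast_D B$ the conditional expectation onto the factor $A$, restricted to the factor $B$, coincides with the conditional expectation $B \to D$. Concretely, for $x \in \M_n$ one writes $x = \E_{n-1}(x) + x^\circ$ with $x^\circ$ mean-zero over $\M_{n-1}$, observes that $\phi_{n,1}$ fixes $\E_{n-1}(x)\in \M_{n-1}\subset \M$ and annihilates the length-one reduced word $x^\circ$, and concludes $\phi_{n,1}(\pi_{n,2}(x)) = \E_{n-1}(x)$, using $E^{\M_n}_{\M_{n-1}} = (\E_{n-1})_{|\M_n}$ from the tower $\M_{n-1}\subset \M_n\subset \M$. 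The degenerate case $n=0$ is immediate: then $\AA_0 = \M$, $\rho_0 = \rho$, $\pi_{0,2}$ is the inclusion $\M_0\subset \M$, and $\E_\M\circ \rho = \mathrm{id}_\M$, so both sides restrict to the identity on $\M_0$ once we recall the convention $\E_{-1} = \E_0$.

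The only point requiring care is the justification of the two restriction formulas $\E_\M\circ\rho_n = \phi_{n,1}$ and $\phi_{n,1}\circ\pi_{n,2} = (\E_{n-1})_{|\M_n}$, i.e. the vanishing of $\phi_{n,1}$ on mean-zero words; this is where I would appeal to the explicit construction of $\ast_\M \AA_n$ and $\M\ast_{\M_{n-1}}\M_n$ recalled in subsection \ref{subsectfreeRos} and to the uniqueness of the trace-preserving conditional expectation onto a given von Neumann subalgebra. Everything else is a direct verification.
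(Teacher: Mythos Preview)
Your proof is correct and follows essentially the same route as the paper: decompose $x\in\M_n$ as $\E_{n-1}(x) + (x-\E_{n-1}(x))$, observe that the second term is mean-zero over $\M_{n-1}$ and is therefore killed by freeness, while the first term lies in the amalgamated copy and is fixed. The only difference is organizational: you explicitly factor $\E_\M\circ\rho_n$ through the intermediate conditional expectation $\phi_{n,1}:\AA_n\to\M$ and treat the two amalgamation layers separately, whereas the paper applies the decomposition directly to the composite $\E_\M\circ\rho_n\circ\pi_{n,2}$ in one step.
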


\begin{proof}
The equality is obvious for $n=0$. 
For $1\leq n \leq N$ and $x\in \M_n$ we write $x=\E_{n-1}(x)+(x-\E_{n-1}(x))$. 
Observe that $x-\E_{n-1}(x) \in \stackrel{\circ}{\M}_n$ in $\AA_n$, and hence by freeness
$$\E_\M\circ \rho_n \circ \pi_{n,2}(x-\E_{n-1}(x))=0.$$
We get
$$\E_\M\circ \rho_n \circ \pi_{n,2}(x)= \E_\M\circ \rho_n \circ \pi_{n,2}(\E_{n-1}(x))=\E_{n-1}(x).$$
\qd

\re
This shows that the construction detailed above gives a tangent dilation for $\M$ associated to the filtration $(\M_n)_{n=0}^N$. 
Actually this also holds in the case of any (non necessarily finite) discrete filtration. 
Let us recall the notion of a tangent dilation, which was introduced in \cite{jm-riesz}. 
For a von Neumann algebra $\M$ and a filtration $(\M_n)_{n\geq 0}$, 
a tangent dilation is given by a von Neumann algebra $\N$ and trace-preserving homomorphisms
$\pi_n:\M_n\to \N$, $\rho: \M \to \N$ such that
 \begin{enumerate}
 \item[(i)] The conditional expectation $\E_{\rho}:\N\to \rho(\M)$ satisfies
 $$\rho\circ \E_{n-1}= \E_{\rho}\circ \pi_n \quad \mbox{for all } n\geq 0;$$
 \item[(ii)] The von Neumann algebras $\N_n=\pi_n(\M_n)$ are successively independent over $\rho(\M)$.
 \end{enumerate}
The first named author and Mei constructed a tangent dilation for any group von Neumann algebras. 
More generally the construction described previously gives a tangent dilation for every von Neumann algebra and every filtration.  
Indeed by setting $\pi_n=\rho_n \circ \pi_{n,2}$, we get two trace-preserving homorphisms satisfying (i) by Lemma \ref{tangentdilation}. 
Condition (ii) is also verified by construction, and thus $\N$, $\pi_n$ and $\rho$ give a tangent dilation of $\M$. 
\mar

\begin{lemma}\label{visometry}
For $x\in \M$ we set
$$v(x)=\sum_{n=0}^N \rho_n \circ \pi_{n,2}(d_n(x)).$$
Let $1\leq p <\infty$. 
Then $v$ extends to an isometric embedding 
\begin{enumerate}
\item[(i)] from $h_p^c$ into $Y_{p,c}^1$;
\item[(ii)] from $h_p^d$ into $Z_p$.
\end{enumerate}
We will denote these isometries by $v_p^c$ and $v_p^d$ respectively.
\end{lemma}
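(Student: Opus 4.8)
The plan is to verify both isometries by a direct computation on the dense subspace of finite $L_\infty$-martingales (for (i)) respectively of finite martingale difference sequences (for (ii)), and then to pass to the completions. Writing $a_n=\rho_n\circ\pi_{n,2}(d_n(x))$, so that $v(x)=\sum_{n=0}^N a_n$, everything is driven by two facts. First, for $n\geq 1$ each $a_n$ is mean zero over $\E_\M$, hence lies in $\Acirc_n\subset\Sigma_1$: indeed Lemma \ref{tangentdilation} and the martingale property give $\E_\M(a_n)=\E_\M\circ\rho_n\circ\pi_{n,2}(d_n(x))=\E_{n-1}(d_n(x))=0$, while the $n=0$ term $a_0=\rho(x_0)$ sits in the amalgamated copy $\rho(\M)$ (recall $\AA_0=\M$ and $\pi_{0,2}$ is the inclusion). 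Second, the algebras $\AA_0,\dots,\AA_N$ are freely independent over $\E_\M$.

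For (i), I would compute $\E_\M(v(x)^*v(x))=\sum_{n,m}\E_\M(a_n^*a_m)$. The off-diagonal terms vanish: for $n\neq m$ with $n,m\geq 1$ the word $a_n^*a_m$ is reduced of length two (as $\Acirc_n$ is a $*$-subspace), so freeness over $\E_\M$ forces $\E_\M(a_n^*a_m)=0$; the cross terms involving $a_0$ vanish by the $\M$-bimodularity of $\E_\M$, since $\E_\M(\rho(x_0)^*a_m)=x_0^*\E_\M(a_m)=0$. On the diagonal, using that $\rho_n\circ\pi_{n,2}$ is a $*$-homomorphism together with Lemma \ref{tangentdilation},
$$\E_\M(a_n^*a_n)=\E_\M\big(\rho_n\circ\pi_{n,2}(|d_n(x)|^2)\big)=\E_{n-1}|d_n(x)|^2 \quad (n\geq 1),$$
and $\E_\M(a_0^*a_0)=|x_0|^2=\E_0|d_0(x)|^2$. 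Summing gives $\E_\M(v(x)^*v(x))=\sum_{n=0}^N\E_{n-1}|d_n(x)|^2=s_c(x)^2$, whence $\|v(x)\|_{L_p^c(\N;\E_\M)}=\|s_c(x)\|_p=\|x\|_{h_p^c}$; since the $a_n$ for $n\geq 1$ lie in $\Sigma_1$, this realizes $v$ as an isometry into the closure $Y_{p,c}^1$.

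For (ii), I would instead compute the $Z_p$-norm $\|v(x)\|_{Z_p}=\big(\sum_{n}\|a_n\|_{L_p(\N)}^p\big)^{1/p}$. Here the point is that $\rho_n$ and $\pi_{n,2}$ are trace preserving: $\rho_n$ preserves $\tr=\tau\circ\E_\M$ by construction, and $\pi_{n,2}$ is trace preserving because $\tr\circ\pi_{n,2}=\tau\circ\E_{n-1}=\tau$ on $\M_n$, again by Lemma \ref{tangentdilation} together with the invariance $\tau\circ\E_{n-1}=\tau$. Consequently $\|a_n\|_{L_p(\N)}=\|\pi_{n,2}(d_n(x))\|_{L_p(\AA_n)}=\|d_n(x)\|_p$, so $\|v(x)\|_{Z_p}=\big(\sum_n\|d_n(x)\|_p^p\big)^{1/p}=\|x\|_{h_p^d}$.

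Both computations are valid for bounded $x$, where every object is a genuine element and the identities make sense in $L_{p/2}(\M)$ respectively $L_p(\N)$; by density of finite $L_\infty$-martingales in $h_p^c$ and of finite difference sequences in $h_p^d$, the isometric maps $v_p^c$ and $v_p^d$ extend to the completions, landing in $Y_{p,c}^1$ and $Z_p$. The only point that requires genuine care—the sole departure from bookkeeping—is the orthogonality step: one must check that the free-probabilistic vanishing of cross terms, combined with the correct treatment of the amalgamated $n=0$ contribution, reproduces exactly the orthogonality of martingale differences encoded in the conditioned square function, the diagonal identity resting squarely on the tangent-dilation property of Lemma \ref{tangentdilation}.
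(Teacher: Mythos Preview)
Your proof is correct and takes essentially the same approach as the paper's: compute $\E_\M(v(x)^*v(x))$ via orthogonality together with Lemma~\ref{tangentdilation} for (i), and use that $\rho_n\circ\pi_{n,2}$ is trace-preserving for (ii), then extend by density. You are somewhat more explicit than the paper about the off-diagonal vanishing and the handling of the $n=0$ summand, but the argument is the same.
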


\begin{proof}
Observe that $d_n(x) \in \stackrel{\circ}{\M}_n$ in $\AA_n$, hence $\rho_n \circ \pi_{n,2}(d_n(x)) \in \Acirc_n$. 
This means that if $x\in \M$ then $v(x) \in \Sigma_1$. 
By orthogonality and Lemma \ref{tangentdilation} we have
\begin{align*}
\E_\M(v(x)^*v(x))
=\sum_{n=0}^N \E_\M (|\rho_n \circ \pi_{n,2}(d_n(x))|^2)
=\sum_{n=0}^N \E_\M \circ \rho_n \circ \pi_{n,2} |d_n(x)|^2
=\sum_{n=0}^N \E_{n-1}|d_n(x)|^2.
\end{align*}
This means that for $x\in \M$ and $1\leq p <\infty$
$$\|v(x)\|_{L_p^c(\N;\E_\M)}=\|x\|_{h_p^c},$$
and (i) is proved. 
For the second assertion we write
$$\|v(x)\|_{Z_p}=\Big(\sum_{n=0}^N \|\rho_n \circ \pi_{n,2}(d_n(x))\|_p^p\Big)^{1/p}
=\Big(\sum_{n=0}^N \|d_n(x)\|_p^p\Big)^{1/p}=\|x\|_{h_p^d}.$$
\qd

Considering the adjoint we get the following complementation results. 

\begin{prop}\label{Rbdd}
For $y=\displaystyle\sum_{n=0}^N a_n\in \Sigma_1$ (i.e., $a_n \in \Acirc_n$ for all $0\leq n\leq N$) we set
$$\Rcal(y)=\sum_{n=0}^N \phi_{n,2}(a_n)-\E_{n-1}(\phi_{n,2}(a_n)).$$
Let $1<p<\infty$. 
Then $\Rcal$ extends to a bounded projection
\begin{enumerate}
\item[(i)] from $Y_{p,c}^1$ onto $h_p^c$;
\item[(ii)] from $Z_p$ onto $h_p^d$.
\end{enumerate}
We denote these projections by $\Rcal_p^c$ and $\Rcal_p^d$ respectively.
\end{prop}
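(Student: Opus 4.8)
The plan is to obtain $\Rcal$ as the adjoint of the isometric embedding $v$ constructed in Lemma \ref{visometry}, exactly as Remark \ref{adjointPu} expresses the discrete projection $P$ as $u^*$. First I would check that the stated formula for $\Rcal$ is well-defined on $\Sigma_1$: since each $a_n\in\Acirc_n$, the element $\phi_{n,2}(a_n)$ lies in $\M_n$ and so $\E_{n-1}(\phi_{n,2}(a_n))$ makes sense, and by Lemma \ref{tangentdilation} the $n$-th summand is precisely the mean-zero (with respect to $\E_{n-1}$) part of $\phi_{n,2}(a_n)$, which I recognize as the martingale difference $d_n(z)$ of the martingale $z=\sum_n \phi_{n,2}(a_n)$ suitably interpreted. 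This identifies $\Rcal(y)$ with an element of $\M$ whose martingale differences are $d_n(\Rcal(y))=\phi_{n,2}(a_n)-\E_{n-1}(\phi_{n,2}(a_n))$.

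Next I would verify the projection property $\Rcal\circ v = \mathrm{id}$. Applying $\Rcal$ to $v(x)=\sum_n \rho_n\circ\pi_{n,2}(d_n(x))$, the term $\phi_{n,2}(\rho_n\circ\pi_{n,2}(d_n(x)))$ collapses to $d_n(x)$ because $\phi_{n,2}$ is the conditional expectation onto the second copy $\M_n$ and $\pi_{n,2}$ is the inclusion of $\M_n$ into that copy; since $d_n(x)$ has vanishing $\E_{n-1}$-conditional expectation, subtracting $\E_{n-1}(d_n(x))=0$ changes nothing, so $\Rcal(v(x))=\sum_n d_n(x)=x$. This confirms $\Rcal$ is a left inverse of $v$ on $\Sigma_1$, hence a candidate projection onto $h_p^c$ (resp. $h_p^d$).

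The heart of the argument is boundedness, which I would establish by duality rather than by direct estimation. For $1<p<\infty$ with conjugate $p'$, I would show that for $y=\sum_n a_n\in\Sigma_1$ and $x\in\M$,
$$
(\Rcal(y)\,|\,x)_{h_p^c,\,h_{p'}^c}=(y\,|\,v_{p'}^c(x))_{Y_{p,c}^1,\,Y_{p',c}^1},
$$
by unwinding both sides through Lemma \ref{tangentdilation}, the orthogonality of the free copies over $\E_\M$, and the trace-preserving property of the conditional expectations $\phi_{n,2}$ and $\E_{n-1}$; this is the free analogue of the computation in Remark \ref{adjointPu}. Given this identity, $\Rcal_p^c=(v_{p'}^c)^*$, and since $v_{p'}^c$ is an isometry by Lemma \ref{visometry} with complemented range (Proposition \ref{XpYpcompl}(ii)), its adjoint is a bounded projection onto $h_p^c$. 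The same scheme with $Z_p$ and Lemma \ref{Zpcompl} gives $\Rcal_p^d=(v_{p'}^d)^*$ bounded onto $h_p^d$. The main obstacle I anticipate is carefully justifying the adjunction formula: one must track that the $\E_\M$-valued inner product on $Y_{p,c}^1$ pairs correctly with the $h_{p'}^c$ duality bracket, using that $\AA_0,\dots,\AA_N$ are free over $\E_\M$ so that cross terms between distinct indices vanish, and that the passage between $\phi_{n,2}$ on $\AA_n$ and $\E_{n-1}$ on $\M_n$ is exactly the content of Lemma \ref{tangentdilation}. Once the pairing is pinned down, boundedness is a formal consequence of the isometry and complementation results already available.
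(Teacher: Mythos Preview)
Your proposal is correct and follows essentially the same route as the paper: establish the adjunction identity $(v(x)\,|\,y)=(x\,|\,\Rcal(y))$ for $x\in\M$ and $y\in\Sigma_1$ (the paper carries out this trace computation explicitly via the trace-preserving property of $\pi_{n,2}\circ\phi_{n,2}$ and Lemma \ref{tangentdilation}), and then conclude $\Rcal_p^c=(v_{p'}^c)^*$ and $\Rcal_p^d=(v_{p'}^d)^*$ using the known dualities $(h_{p'}^c)^*=h_p^c$, $(Y_{p',c}^1)^*=Y_{p,c}^1$, $(Z_{p'})^*=Z_p$. One small misattribution: Proposition \ref{XpYpcompl}(ii) does not say that $v$ has complemented range in $Y_{p,c}^1$; it says $Y_{p,c}^1$ is complemented in $L_p^c(\N;\E_\M)$, which is precisely what is needed to identify $(Y_{p',c}^1)^*$ with $Y_{p,c}^1$ and thereby make sense of the adjoint as a map between the right spaces.
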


\begin{proof}
We claim that for $x\in \M$ and $y=\sum_{n=0}^N a_n\in \Sigma_1$ we have
\begin{equation}\label{vRadjoint}
(v(x)|y) =(x|\Rcal(y)).
\end{equation}
Since $\pi_{n,2}\circ \phi_{n,2}:\AA_n\to \pi_{n,2}(\AA_n)$ is a conditional expectation on $(\AA_n,\tr\circ \rho_n)$, it is trace-preserving and we may write
$$\tr\circ \rho_n\circ \pi_{n,2}\circ \phi_{n,2}=\tr\circ \rho_n.$$
Thus
\begin{align*}
\Big(v(x)|\sum_{n=0}^N a_n\Big)
&=\sum_{n=0}^N\tr(\rho_n \circ \pi_{n,2}(d_n(x))\rho_n(a_n)^*)
=\sum_{n=0}^N\tr\circ \rho_n (\pi_{n,2}(d_n(x))a_n^*)\\
&=\sum_{n=0}^N\tr \circ \rho_n \circ \pi_{n,2}\circ \phi_{n,2} (\pi_{n,2}(d_n(x))a_n^*)\\
&=\sum_{n=0}^N\tau\circ \E_\M \circ \rho_n  \circ \pi_{n,2} (d_n(x) \phi_{n,2}(a_n)^*)\\
&=\sum_{n=0}^N\tau \circ \E_{n-1}(d_n(x) \phi_{n,2}(a_n)^*),
\end{align*}
where the last equality comes from Lemma \ref{tangentdilation}. 
Since $\E_{n-1}(\pi_{n,2}(d_n(x)))=0$ and $\E_{n-1}$ is trace-preserving, we obtain
$$\Big(v(x)|\sum_{n=0}^N a_n\Big)
=\sum_{n=0}^N\tau (d_n(x)( \phi_{n,2}(a_n)-\E_{n-1}( \phi_{n,2}(a_n)))^*)
=\Big(x|\Rcal(\sum_{n=0}^N a_n)\Big),$$
and \eqref{vRadjoint} is proved. 
Recall that for $1<p<\infty$ we have $(h_{p'}^c)^*=h_{p}^c$, $(h_{p'}^d)^*=h_{p}^d$, $(Y_{p',c}^1)^*=Y_{p,c}^1$ and $(Z_{p'})^*=Z_{p}$. 
Since $\M$ is dense in $h_p^c, h_p^d$ and $\Sigma_1$ is dense in $Y_{p,c}^1, Z_p$, 
we deduce from Lemma \ref{visometry} that 
$$(v_{p'}^c)^*=\Rcal_{p}^c:Y_{p,c}^1\to h_p^c \quad \mbox{and} \quad (v_{p'}^d)^*=\Rcal_{p}^d:Z_p\to h_p^d$$
are bounded projections. 
\qd

The free Rosenthal inequalities are a crucial tool to prove the similar results for the space $h_p$. 

\begin{prop}\label{vextendhp}
Let $1\leq p <\infty$. 
\begin{enumerate}
\item[(i)] The map $v$ extends to bounded map from $h_p$ into $X_p^1$, which is injective for $1<p<\infty$.
\item[(ii)] The map $\Rcal$ extends to a bounded projection from $X_p^1$ onto $h_p$ for $1<p<\infty$. 
\end{enumerate}
\end{prop}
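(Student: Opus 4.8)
The plan is to reduce everything to the free Rosenthal/Voiculescu inequalities of Theorem \ref{freeRos}, exploiting the fact that the single map $v$ already embeds the three conditioned pieces into the three terms appearing on the right-hand side of those inequalities. Concretely, for $x\in\M$ the element $v(x)=\sum_{n=0}^N \rho_n\circ\pi_{n,2}(d_n(x))$ lies in $\Sigma_1$, and by Lemma \ref{visometry} together with its row analogue one has, term by term, $\|v(x)\|_{Z_p}=\|x\|_{h_p^d}$, $\|v(x)\|_{Y_{p,c}^1}=\|x\|_{h_p^c}$ and $\|v(x)\|_{Y_{p,r}^1}=\|x\|_{h_p^r}$, where for the last two one uses that freeness gives $\E_\M(v(x)^*v(x))=\sum_n \E_{n-1}|d_n(x)|^2$ and the symmetric statement for $v(x)v(x)^*$. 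Since the filtration is finite, all the sums involved are finite, so no convergence issues arise. I would treat the two regimes $p\geq 2$ and $p<2$ separately, matching the intersection form of $h_p$ with Theorem \ref{freeRos} i) and the sum form of $h_p$ with Theorem \ref{freeRos} ii).

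For (i) with $2\leq p<\infty$, Theorem \ref{freeRos} i) reads $\|v(x)\|_{X_p^1}\simeq \|x\|_{h_p^d}+\|x\|_{h_p^c}+\|x\|_{h_p^r}\simeq\|x\|_{h_p}$, so that $v$ is an isomorphic embedding of $h_p$ into $X_p^1$, in particular bounded and injective. For (i) with $1\leq p<2$ I would argue by exhibiting a good decomposition: given $x\in\M$ and a near-optimal splitting $x=a+b+c$ with $a\in h_p^d$, $b\in h_p^c$, $c\in h_p^r$ realized in $L_p(\M)$, set $d_n=\rho_n\pi_{n,2}(d_n(a))$, $c_n=\rho_n\pi_{n,2}(d_n(b))$ and $r_n=\rho_n\pi_{n,2}(d_n(c))$. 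Then $\sum_n(d_n+c_n+r_n)=v(x)$ by linearity, and feeding this decomposition into Theorem \ref{freeRos} ii), while computing the three terms exactly as in Lemma \ref{visometry} (using Lemma \ref{tangentdilation} for $\E_\M(c_n^*c_n)=\E_{n-1}|d_n(b)|^2$ and the analogous identity for the row term), yields $\|v(x)\|_{X_p^1}\leq C\big(\|a\|_{h_p^d}+\|b\|_{h_p^c}+\|c\|_{h_p^r}\big)$. Taking the infimum over decompositions gives $\|v(x)\|_{X_p^1}\leq C\|x\|_{h_p}$, and boundedness follows by density of $\M$ in $h_p$.

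For (ii) I would dualize the same dichotomy at the level of $\Rcal$. When $2\leq p<\infty$, Theorem \ref{freeRos} i) shows $X_p^1$ embeds boundedly into each of $Z_p$, $Y_{p,c}^1$ and $Y_{p,r}^1$; since $\Rcal$ is a single formula producing one element of $L_p(\M)$, Proposition \ref{Rbdd} (and its row version) simultaneously places $\Rcal(y)$ in $h_p^d$, $h_p^c$ and $h_p^r$ with norms controlled by $\|y\|_{X_p^1}$, i.e. $\Rcal(y)\in h_p$ with $\|\Rcal(y)\|_{h_p}\leq C\|y\|_{X_p^1}$. When $1<p<2$ I would instead pick, for $y\in\Sigma_1$, a near-optimal free-Rosenthal decomposition $y=\sum_n(d_n+c_n+r_n)$ from Theorem \ref{freeRos} ii) and write $\Rcal(y)=\Rcal(\sum_n d_n)+\Rcal(\sum_n c_n)+\Rcal(\sum_n r_n)$, the three summands lying in $h_p^d$, $h_p^c$ and $h_p^r$ by Proposition \ref{Rbdd}, whence $\Rcal(y)\in h_p$ with the right bound. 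In both regimes $\Rcal$ is a projection onto $h_p$ because a direct computation (using $\phi_{n,2}\circ\pi_{n,2}=\mathrm{id}_{\M_n}$ and $\E_{n-1}(d_n(x))=0$, i.e. the adjoint relation \eqref{vRadjoint}) gives $\Rcal\circ v=\mathrm{id}$ on $\M$; this identity also upgrades (i) to injectivity of $v$ for $1<p<2$.

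The routine but genuinely delicate point, and the one I expect to be the main obstacle, is the exact bookkeeping in the $1\leq p<2$ case of (i): one must check that a decomposition of $x$ in $L_p(\M)$ can be transported through $v$ to a legitimate decomposition of $v(x)$ in $\Sigma_1$, and that the conditional-expectation identities $\E_\M(|\rho_n\pi_{n,2}(d_n(b))|^2)=\E_{n-1}|d_n(b)|^2$ persist at the $L_p$-level rather than only on $\M$. Everything else amounts to a clean matching of the three terms of Theorem \ref{freeRos} with the three conditioned square-function norms.
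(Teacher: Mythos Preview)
Your argument for (i) is exactly the paper's: match the three terms of Theorem~\ref{freeRos} against $h_p^d$, $h_p^c$, $h_p^r$ via Lemma~\ref{visometry}, transport a decomposition of $x$ through $v$ for $1\le p<2$, and recover the reverse inequality (hence injectivity) for $1<p<2$ by applying $\Rcal$ to a free-Rosenthal decomposition of $v(x)$ together with Proposition~\ref{Rbdd}. The concern you raise about transporting $L_p$-level decompositions is handled just as you suggest, since $\rho_n\circ\pi_{n,2}$ is trace-preserving and hence extends isometrically to $L_p$, carrying the identity $\E_\M|\rho_n\pi_{n,2}(y)|^2=\E_{n-1}|y|^2$ along by continuity.

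The one point where you diverge from the paper is (ii). You prove the boundedness of $\Rcal:X_p^1\to h_p$ directly: for $p\ge 2$ by reading $X_p^1\hookrightarrow Z_p\cap Y_{p,c}^1\cap Y_{p,r}^1$ off Theorem~\ref{freeRos}~i) and invoking Proposition~\ref{Rbdd} (plus its row version) on each piece, and for $1<p<2$ by pushing a free-Rosenthal decomposition of a general $y\in\Sigma_1$ through $\Rcal$. The paper instead deduces (ii) in one line by duality: having shown $v:h_{p'}\to X_{p'}^1$ is an isomorphic embedding, it uses $(X_{p'}^1)^*=X_p^1$ and $(h_{p'})^*=h_p$ to identify $\Rcal=v^*$ as a bounded projection. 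Your route is self-contained and avoids invoking $(h_{p'})^*=h_p$; the paper's route is shorter but leans on that duality. Both are correct.
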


\begin{proof}
Let $x\in \M$. 
We will show that $\|v(x)\|_p\approx \|x\|_{h_p}$ for $1< p <\infty$. 
We first consider the case $2\leq p <\infty$. Then Theorem \ref{freeRos} (i) yields
$$\|v(x)\|_p \approx 
\max\{\|v(x)\|_{Z_p},\|v(x)\|_{L_p^c(\N,\E_\M)},\|v(x)\|_{L_p^r(\N,\E_\M)}\}.$$
Then by Lemma \ref{visometry} we deduce
$$\|v(x)\|_p \approx 
\max\{\|x\|_{h_p^d},\|x\|_{h_p^c},\|x\|_{h_p^r}\}=\|x\|_{h_p}.$$
We now consider $1\leq p <2$. 
In that case Theorem \ref{freeRos} (ii) gives 
$$\|v(x)\|_p\approx 
\inf_{v(x)=d+c+r} \|d\|_{Z_p}+\|c\|_{L_p^c(\N,\E_\M)}+\|r\|_{L_p^r(\N,\E_\M)},$$
where the infimum runs over all the decompositions $v(x)=d+c+r$ with $d,c,r \in \Sigma_1$. 
Note that any decomposition $x=D+C+R$ of $x$ with $D\in h_p^d, C\in h_p^c$ and $R\in h_p^r$ 
yields a decomposition $v(x)=v(D)+v(C)+v(R)$. 
Hence Lemma \ref{visometry} gives 
$$ \|v(x)\|_p
\leq C \big(\|v(D)\|_{Z_p}+\|v(C)\|_{L_p^c(\N,\E_\M)}+\|v(R)\|_{L_p^r(\N,\E_\M)}\big)
=C \big(\|D\|_{h_p^d}+\|C\|_{h_p^c}+\|R\|_{h_p^r}\big).$$
Taking the infimum over all the decompositions $x=D+C+R$ we get
$$\|v(x)\|_p\leq C \|x\|_{h_p}.$$
Conversely, for any decomposition $v(x)=d+c+r$ with $d,c,r \in \Sigma_1$ we can write
$$x=\Rcal(v(x))= \Rcal(d)+ \Rcal(c)+ \Rcal(r).$$
Then Proposition \ref{Rbdd} implies for $1<p<2$
$$\|x\|_{h_p}
\leq \|\Rcal(d)\|_{h_p^d}+\|\Rcal(c)\|_{h_p^c}+\|\Rcal(r)\|_{h_p^r}
\leq C_p\big(\|d\|_{Z_p}+\|c\|_{L_p^c(\N,\E_\M)}+\|r\|_{L_p^r(\N,\E_\M)}\big).$$
Taking the infimum over all the decompositions $v(x)=d+c+r$ we get
$$\|x\|_{h_p}\leq C_p \|v(x)\|_{p}.$$
This ends the proof of (i). 
We deduce (ii) by duality, by using the fact that for $1<p<\infty$, $(X_p^1)^*=X_{p'}^1$ and $(h_p)^*=h_{p'}$. 
\qd


\subsubsection{Complementation of $\h_p^c$ in the continuous case}

We now extend this construction to the continuous setting. 
For any finite partition $\s$ of $[0,1]$ we set
$$\AA_0(\s)=\M, \; \AA_t(\s)=\M \ast_{\M_{t^-(\s)}} \M_t \mbox{ for } 0<t\in \s
\quad  \mbox{and} \quad \N(\s)=\ast_{\M,t\in \s} \AA_t(\s),$$
where we amalgamate over the first copy of $\M$ in $\AA_t(\s)$. 
We denote by $\rho_\s:\M\to \N(\s)$ the $*$-homomorphism which sends $\M$ to the amalgamated copy, 
and by $\E_\M^\s:\N(\s)\to \M$ the associated conditional expectation. 
We equip $\N(\s)$ with the finite normal faithful trace $\tr_\s=\tau\circ \E_\M^\s$.  
We consider the ultraproduct von Neumann algebra
$$\tilde{\N}_\U=\Big(\prodd_\U \N(\s)_*\Big)^*$$ 
and the associated finite von Neumann algebra
$$\N_\U= \tilde{\N}_\U f_\U,$$
where $f_\U$ denotes the support projection of the trace $\tr_\U=(\tr_\s)^\bullet$.  
Since we may extend the $*$-homomorphism $\rho_\s$ to an isometry $\rho_\s:L_p(\M)\to L_p(\N(\s))$, 
the ultraproduct map $\rho_\U=(\rho_\s)^\bullet$ is the natural inclusion 
$$\rho_\U:L_1(\M_\U)\to L_1(\N_\U).$$
Taking the adjoint we obtain a normal faithful conditional expectation
$$(\rho_\U)^*=\E_{\M_\U}:\left\{\begin{array}{ccc}
\N_\U &\longrightarrow  &\M_\U\\
(x_\s)^\bullet&\longmapsto &(\E_\M^\s(x_\s))^\bullet
\end{array}\right..$$
Hence we may consider the $L_p$ $\M_\U$-module $L_p^c(\N_\U,\E_{\M_\U})$.

\begin{lemma}\label{embeddinghpc}
Let $1\leq p<\infty$. 
Then $\h_p^c$ embeds isometrically into $L_p^c(\N_\U,\E_{\M_\U})$. 
\end{lemma}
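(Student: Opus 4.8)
The plan is to transfer the discrete embedding of Lemma \ref{visometry}(i) to the continuous setting by passing to the ultraproduct, and then to cut down to the regularized corner using the $p$-equiintegrability of the conditioned brackets. First I would assemble the componentwise data: for each finite partition $\s$, Lemma \ref{visometry}(i) gives an isometry $v_\s:h_p^c(\s)\to L_p^c(\N(\s);\E_\M^\s)$, defined on $\M$ by $v_\s(x)=\sum_{t\in\s}\rho_t^\s\circ\pi_{t,2}^\s(d_t^\s(x))$, together with the identity $\E_\M^\s(v_\s(x)^*v_\s(x))=\sum_{t\in\s}\E_{t^-(\s)}|d_t^\s(x)|^2=\langle x,x\rangle_\s$. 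I would then set, for $x\in\M$, $v_\U(x)=(v_\s(x))^\bullet$ in the ultraproduct $\prodd_\U L_p^c(\N(\s);\E_\M^\s)$, which is a right $L_p$ $\tilde{\M}_\U$-module for the componentwise bracket $\langle(\xi_\s)^\bullet,(\eta_\s)^\bullet\rangle=(\E_\M^\s(\xi_\s^*\eta_\s))^\bullet\in L_{p/2}(\tilde{\M}_\U)$ and the action induced by $\rho_\U$. The identity above yields $\langle v_\U(x),v_\U(x)\rangle=(\langle x,x\rangle_\s)^\bullet$, so that $\|v_\U(x)\|=\big(\lim_{\s,\U}\|\langle x,x\rangle_\s\|_{p/2}\big)^{1/2}=\lim_{\s,\U}\|x\|_{h_p^c(\s)}=\|x\|_{\h_p^c}$. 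Thus $v_\U$ is isometric for the $\h_p^c$-norm and, by density of $\M$ in $\h_p^c$, extends to an isometric embedding of $\h_p^c$ into $\prodd_\U L_p^c(\N(\s);\E_\M^\s)$.

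Next I would show that the image of $v_\U$ lands in the regularized module $L_p^c(\N_\U;\E_{\M_\U})$. The crucial point is that the bracket $(\langle x,x\rangle_\s)^\bullet$ belongs to $L_{p/2}(\M_\U)$: by \eqref{estimateh_pc} the family $(\langle x,x\rangle_\s)_\s$ is uniformly bounded in $L_{p/2}(\M)$ and, since $\kappa_{\tp}\|x\|_{\tp}\leq\kappa_{\tp}\|x\|_\infty$, also in $L_{\tp/2}(\M)$ for every $\tp>\max(p,2)$; hence by Lemma \ref{L_p(NU)} it lies in $L_{\tp/2}(\M_\U)\subset L_{p/2}(\M_\U)$. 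I would then invoke the $L_p$-module analogue of Lemma \ref{Kpc}: with $e_\U$ the central support projection of $\tau_\U$ in $\tilde{\M}_\U$, an element $\xi$ of the ultraproduct module satisfies $\xi=\xi\cdot e_\U$ if and only if $\langle\xi,\xi\rangle=\langle\xi,\xi\rangle e_\U$, i.e.\ if and only if $\langle\xi,\xi\rangle\in L_{p/2}(\M_\U)$; the computation of Lemma \ref{Kpc} carries over verbatim since $e_\U$ is central. Applying this to $v_\U(x)$ gives $v_\U(x)=v_\U(x)\cdot e_\U\in L_p^c(\N_\U;\E_{\M_\U})$. As the bracket is unchanged under this compression, the norm of $v_\U(x)$ computed in $L_p^c(\N_\U;\E_{\M_\U})$ still equals $\|x\|_{\h_p^c}$, and the isometric embedding follows by density of $\M$ in $\h_p^c$.

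I expect the main obstacle to be the clean identification of the target $L_p^c(\N_\U;\E_{\M_\U})$ with the $e_\U$-corner of $\prodd_\U L_p^c(\N(\s);\E_\M^\s)$, and the compatibility of the ultraproduct bracket $(\E_\M^\s(\cdot))^\bullet$ with the conditional expectation $\E_{\M_\U}=(\rho_\U)^*$. One must verify that $\prodd_\U L_p^c(\N(\s);\E_\M^\s)$ is genuinely a right $L_p$ $\tilde{\M}_\U$-module whose $L_{p/2}(\tilde{\M}_\U)$-valued inner product restricts to the $\E_{\M_\U}$-bracket on the finite algebra $\N_\U=\tilde{\N}_\U f_\U$, and that the module-level regularization by the central projection $e_\U\in\tilde{\M}_\U$ (selecting the elements with $\M_\U$-valued bracket) agrees with the algebra-level passage from $\tilde{\N}_\U$ to $\N_\U$. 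This is precisely the conditioned counterpart of the transition from $\tilde{K}_p^c(\U)$ to $K_p^c(\U)$ established in Lemma \ref{Kpc}, and once it is in place the remaining verifications are exactly the routine norm identities recorded above.
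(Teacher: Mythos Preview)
Your strategy is correct and starts exactly as the paper does: define $v_\U(x)=(v_\s(x))^\bullet$ on $\M$ and use the bracket identity $\E_\M^\s(v_\s(x)^*v_\s(x))=\langle x,x\rangle_\s$ from Lemma~\ref{visometry} to read off the norm. The difference lies in how you place $v_\U(x)$ in the regularized target. You work at the module level, showing $(\langle x,x\rangle_\s)^\bullet\in L_{p/2}(\M_\U)$ via Lemma~\ref{L_p(NU)} and then appealing to a Lemma~\ref{Kpc}-type criterion with the projection $e_\U\in\tilde{\M}_\U$; as you acknowledge, this leaves you with the task of identifying the $e_\U$-corner of $\prodd_\U L_p^c(\N(\s);\E_\M^\s)$ with $L_p^c(\N_\U;\E_{\M_\U})$, which involves reconciling the $\tilde{\M}_\U$-module regularization with the passage $\tilde{\N}_\U\to\N_\U$ governed by the different projection $f_\U$.

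The paper bypasses this identification entirely by working at the algebra level. Using Proposition~\ref{vextendhp} (the free Rosenthal inequalities) together with Theorem~\ref{Bdiscr}, one has uniform bounds $\|v_\s(x)\|_{L_p(\N(\s))}\leq C_p\kappa_p\|x\|_\infty$ for every $1<p<\infty$. Hence $v_\U(x)\in L_p(\tilde{\N}_\U)$ for all $1<p<\infty$, and Lemma~\ref{L_p(NU)} applied to $\N$ (not to $\M$) gives $v_\U(x)\in L_p(\N_\U)$ for all $1\leq p<\infty$. Since $L_p^c(\N_\U;\E_{\M_\U})$ is built from $\N_\U$ (or $L_p(\N_\U)$), the element $v_\U(x)$ sits there by definition, and one computes $\E_{\M_\U}(v_\U(x)^*v_\U(x))=(\E_\M^\s(v_\s(x)^*v_\s(x)))^\bullet$ directly, recovering $\|x\|_{\h_p^c}$. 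In short: your route relies only on the conditioned bracket bounds \eqref{estimateh_pc} but pays for it with a module-identification step, while the paper invests the free Rosenthal inequality to land straight in $L_p(\N_\U)$ and avoids that step altogether.
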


\begin{proof}
For each $\s$, we denote by $v_\s$ the map defined in Lemma \ref{visometry} for the finite filtration $(\M_t)_{t\in \s}$. 
For $x\in \M$ we define
$$v_\U(x)=(v_\s(x))^\bullet.$$
By Proposition \ref{vextendhp} and the noncommutative Burkholder-Rosenthal inequalities (Theorem \ref{Bdiscr}), 
for all $1< p<\infty$ we have
$$\|v_\s(x)\|_{L_p(\N(\s))}\leq C_p \|x\|_{h_p(\s)}\leq C_p\kappa_p\|x\|_p\leq C_p\kappa_p\|x\|_\infty.$$
This means that $v_\U(x)\in L_p(\tilde{\N}_\U)$ for all $1<p<\infty$. 
Lemma \ref{L_p(NU)} implies that $v_\U(x)\in L_p(\N_\U)$ for all $1\leq p<\infty$. 
By Lemma \ref{visometry} we get for $1\leq p<\infty$
\begin{align*}
\|v_\U(x)\|_{L_p^c(\N_\U,\E_{\M_\U})}
&=\|\E_{\M_\U}(v_\U(x)^*v_\U(x))\|_{L_{p/2}(\M_\U)}^{1/2}
=\|(\E_\M^\s(v_\s(x)^*v_\s(x)))^\bullet\|_{L_{p/2}(\M_\U)}^{1/2}\\
&=\lim_{\s,\U}\|v_\s(x)\|_{L_p^c(\N(\s),\E_\M^\s)}
=\lim_{\s,\U}\|x\|_{h_p^c(\s)}=\|x\|_{\h_p^c}.
\end{align*}
This proves that $v_\U$ extends to an isometry from $\h_p^c$ into $L_p^c(\N_\U,\E_{\M_\U})$ for $1\leq p<\infty$. 
\qd

\begin{prop}\label{complementhpc}
Let $1<p<\infty$. 
Then $\h_p^c$ is complemented in $L_p^c(\N_\U,\E_{\M_\U})$.
\end{prop}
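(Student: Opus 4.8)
The plan is to transport the discrete complementation to the continuous setting by taking an ultraproduct of the discrete projections, and then to check that the resulting bounded idempotent preserves the regularized spaces.

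First, for each partition $\s$ I would assemble a single bounded projection onto the copy of $h_p^c(\s)$ sitting inside $L_p^c(\N(\s),\E_\M^\s)$. Combining the contractive projection $\PP_1^\s:L_p^c(\N(\s),\E_\M^\s)\to Y_{p,c}^1(\s)$ from Proposition \ref{XpYpcompl} (ii), the bounded projection $\Rcal_\s^c:Y_{p,c}^1(\s)\to h_p^c(\s)$ from Proposition \ref{Rbdd} (i), and the isometry $v_\s:h_p^c(\s)\to Y_{p,c}^1(\s)$ of Lemma \ref{visometry}, I set $\Pi_\s=v_\s\circ\Rcal_\s^c\circ\PP_1^\s$. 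Since $\PP_1^\s v_\s=v_\s$ (because $v_\s$ takes values in $\Sigma_1(\s)$) and $\Rcal_\s^c v_\s=\mathrm{id}$ (from the proof of Proposition \ref{vextendhp}), the map $\Pi_\s$ is a bounded idempotent on $L_p^c(\N(\s),\E_\M^\s)$ with range $v_\s(h_p^c(\s))$, and $\|\Pi_\s\|\le\|\Rcal_\s^c\|\le C_p$ with $C_p$ independent of $\s$ (the relevant free Rosenthal constants being independent of the number of levels). Taking the ultraproduct $\Pi_\U=(\Pi_\s)^\bullet$ then produces a bounded idempotent of norm $\le C_p$ on $\prodd_\U L_p^c(\N(\s),\E_\M^\s)$, with range $\prodd_\U v_\s(h_p^c(\s))$. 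Writing $R_\s=\Rcal_\s^c\circ\PP_1^\s$ and $\tilde v_\U=(v_\s)^\bullet$, we have $\Pi_\U=\tilde v_\U\circ(R_\s)^\bullet$, and $\tilde v_\U$ is an isometric embedding satisfying $\tilde v_\U\circ i_\U=v_\U$, where $i_\U:\h_p^c\to\prodd_\U h_p^c(\s)$ is the canonical isometry $x\mapsto(x)^\bullet$ and $v_\U$ is the embedding of Lemma \ref{embeddinghpc}. In particular, since $\Pi_\s v_\s=v_\s$, the idempotent $\Pi_\U$ fixes $v_\U(\h_p^c)$ pointwise, by density of $\M$.

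It then remains to prove that $\Pi_\U$ maps the regularized space $L_p^c(\N_\U,\E_{\M_\U})$ into $v_\U(\h_p^c)$; granting this, $P:=\Pi_\U|_{L_p^c(\N_\U,\E_{\M_\U})}$ is the sought bounded projection onto $v_\U(\h_p^c)\cong\h_p^c$. To establish it I would argue by higher integrability, exactly as for the spaces $K_p^c(\U)$ in Section \ref{Kp}. Because $\M_\U$ is finite, the subspace $\bigcup_{q>p}I_{q,p}\big(L_q^c(\N_\U,\E_{\M_\U})\big)$ is dense in $L_p^c(\N_\U,\E_{\M_\U})$ (the $L_p$-module analogue of Lemma \ref{densityKpc}). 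For $\eta\in L_q^c(\N_\U,\E_{\M_\U})$ with $q>p$, the uniform boundedness of $\Pi_\s$ on the $L_q$-modules gives that $\big(R_\s\eta_\s\big)^\bullet\in\prodd_\U h_q^c(\s)$ has uniformly bounded $h_q^c(\s)$-norm, so that the inner products $\langle R_\s\eta_\s,R_\s\eta_\s\rangle_{h_q^c(\s)}$ are bounded in $L_{q/2}(\M)$. By the intrinsic identification of the regularized part of $\prodd_\U h_p^c(\s)$ with $i_\U(\h_p^c)$ — the conditioned counterpart of Lemma \ref{L_p(NU)} and Theorem \ref{LpMUpequi}, valid since $q>p$ — this forces $\big(R_\s\eta_\s\big)^\bullet\in i_\U(\h_p^c)$, whence $\Pi_\U\eta=\tilde v_\U\big((R_\s\eta_\s)^\bullet\big)\in\tilde v_\U(i_\U(\h_p^c))=v_\U(\h_p^c)$. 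Passing to the limit and using the continuity of $\Pi_\U$ together with the closedness of $v_\U(\h_p^c)$ yields $\Pi_\U\big(L_p^c(\N_\U,\E_{\M_\U})\big)\subseteq v_\U(\h_p^c)$, as desired.

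The main obstacle is precisely this last point: identifying the image of the regularized space under $\Pi_\U$ with $v_\U(\h_p^c)$. The subtlety is that $\Pi_\s$ is built from the conditional expectations $\E_{t^-(\s)}$ and is \emph{not} modular over $\M$, so one cannot invoke the central projection $e_\U$ to cut the ultraproduct down to $\M_\U$ as in the general module theory; the passage to the regularized subspace must instead be controlled through the $p/2$-equiintegrability characterizations of Theorem \ref{LpMUpequi} and Lemma \ref{pequiintegr}, using the contractivity of $\PP_1^\s$ in the module inner product and the dual Doob control of $\Rcal_\s^c$ furnished by Proposition \ref{Rbdd}. Once the range statement is secured for some exponent larger than $p$, the finiteness of $\M_\U$ permits the descent to all $1<p<\infty$ via the density and higher-integrability argument above, which completes the proof.
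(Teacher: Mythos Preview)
Your proposal has a genuine gap at the identification step. You assert that the regularized part of $\prodd_\U h_p^c(\s)$ coincides with $i_\U(\h_p^c)$, citing a ``conditioned counterpart'' of Lemma~\ref{L_p(NU)} and Theorem~\ref{LpMUpequi}. But those results identify the regularized part of $\prodd_\U L_p(\M)$ with $L_p(\M_\U)$, \emph{not} with the image of the diagonal embedding $L_p(\M)\hookrightarrow L_p(\M_\U)$. The same distinction applies here: the regularized subspace $\big(\prodd_\U h_p^c(\s)\big)\cdot e_\U$ is a genuine $L_p$ $\M_\U$-module, strictly larger than $i_\U(\h_p^c)$; it contains every class $(y_\s)^\bullet$ with $p/2$-equiintegrable inner products, not merely those equivalent to constant families. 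For instance, any uniformly bounded family $(y_\s)_\s\subset\M$ lands in the regularized part, whether or not the $y_\s$ converge in any sense. So your higher-integrability bound on $(R_\s\eta_\s)_\s$ in $h_q^c(\s)$ does place $(R_\s\eta_\s)^\bullet$ in the regularized part, but does not force it to equal $(z)^\bullet$ for a single $z\in\h_p^c$.

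The paper's route is to abandon the ultraproduct range and project down to $\M$ via a weak limit. One first checks that for $x=(x_\s)^\bullet\in\N_\U$ the elements $\Rcal^\s\circ\PP_1^\s(x_\s)$ are uniformly bounded in $L_r(\M)$ for every $2\le r<\infty$ (combining Propositions~\ref{vextendhp} and~\ref{XpYpcompl}(i) with $\|x_\s\|_{\N(\s)}$ bounded), and then sets $\Rcal_\U(x)=\w L_{\max(2,p)}\mbox{-}\lim_{\s,\U}\Rcal^\s\circ\PP_1^\s(x_\s)$. The key ingredient that controls $\|\Rcal_\U(x)\|_{\h_p^c}$ is the monotonicity/convexity Lemma~\ref{convexityhpc}: for $1<p<2$ one approximates the weak $L_2$-limit by convex combinations, bounds the error via $\|\cdot\|_{\h_p^c}\le\|\cdot\|_2$, and applies the convexity inequality; for $2\le p<\infty$ the monotonicity in $\s$ shows the family is bounded in $h_p^c(\s_0)$ for each fixed $\s_0$, whence the weak limit in $h_p^c(\s_0)$ exists and coincides with the $L_p$-limit. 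This weak-limit-plus-monotonicity passage is precisely what substitutes for the false identification in your sketch; there is no purely module-theoretic or equiintegrability argument that circumvents it.
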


\begin{proof}
Let $x=(x_\s)^\bullet \in \N_\U$ be such that $\|x\|_{L_p^c(\N_\U,\E_{\M_\U})}\leq 1$. 
This means that 
\begin{equation}\label{normxLpc}
\|\E_{\M_\U}(x^*x)\|_{L_{p/2}(\M_\U)}^{1/2}
=\|(\E_\M^\s(x_\s^*x_\s))^\bullet\|_{L_{p/2}(\M_\U)}^{1/2}
=\lim_{\s,\U}\|x_\s\|_{L_p^c(\N(\s),\E_\M^\s)}\leq 1.
\end{equation}
Observe that for all $2\leq p <\infty$, we have by Proposition \ref{vextendhp} and Proposition \ref{XpYpcompl}
\begin{align*}
\|\Rcal^\s\circ \P_1^\s(x_\s)\|_{L_p(\M)}
&\leq \eta_p \|\Rcal^\s\circ \P_1^\s(x_\s)\|_{h_p(\s)}
\leq  \eta_p C_p \|\P_1^\s(x_\s)\|_{L_p(\N(\s))}\\
&\leq 4 \eta_p C_p \|x_\s\|_{L_p(\N(\s))}
\leq 4 \eta_p C_p \|x_\s\|_{\N(\s)}.
\end{align*}
Hence the family $(\Rcal^\s\circ \P_1^\s(x_\s))_\s$ is uniformly bounded in $L_p(\M)$ for all $2\leq p <\infty$. 
For $1<p<\infty$, we may consider
$$\Rcal_\U(x)=\w L_{\max(2,p)}\mbox{-}\lim_{\s,\U}\Rcal^\s\circ \P_1^\s(x_\s).$$
It remains to estimate $\|\Rcal_\U(x)\|_{\h_p^c}$. 
Proposition \ref{Rbdd} (i) and Proposition \ref{XpYpcompl} (ii) yield for each $\s$
$$\|\Rcal^\s\circ \P_1^\s(x_\s)\|_{h_p^c(\s)}
\leq C_p\|\P_1^\s(x_\s)\|_{L_p^c(\N(\s),\E_\M^\s)}
\leq C_p\|x_\s\|_{L_p^c(\N(\s),\E_\M^\s)}.$$
Taking the limit in $\s$, \eqref{normxLpc} gives 
\begin{equation}\label{RcircPsigma}
\lim_{\s,\U}\|\Rcal^\s\circ \P_1^\s(x_\s)\|_{h_p^c(\s)} \leq C_p.
\end{equation}
Let $1<p<2$ and $\eps>0$. 
We may find a sequence of positive numbers $(\alpha_m)_{m=1}^M$ such that $\sum_m \alpha_m=1$ and finite partitions $\s^1,\cdots,\s^M$ satisfying
$$\| \Rcal_\U(x) -\sum_m\alpha_m \Rcal^{\s^m}\circ \P_1^{\s^m}(x_{\s^m})\|_2\leq \eps 
 \quad \mbox{and}  \quad
\|\Rcal^{\s^m}\circ \P_1^{\s^m}(x_{\s^m})\|_{h_p^c(\s^m)}\leq C_p+\eps.$$
Since $\|z\|_{\h_p^c}\leq \|z\|_2$, by Lemma \ref{convexityhpc} (i) we get
\begin{align*}
\|\Rcal_\U(x)\|_{\h_p^c}
&\leq \Big\|\Rcal_\U(x) -\sum_m\alpha_m \Rcal^{\s^m}\circ \P_1^{\s^m}(x_{\s^m})\Big\|_{\h_p^c}
+\Big\| \sum_m\alpha_m \Rcal^{\s^m}\circ \P_1^{\s^m}(x_{\s^m})\Big\|_{\h_p^c}\\
&\leq\eps
+\Big\| \sum_m\alpha_m \Rcal^{\s^m}\circ \P_1^{\s^m}(x_{\s^m})\Big\|_{\h_p^c}\\
&\leq \eps 
+2^{1/p}\sum_m\alpha_m \|\Rcal^{\s^m}\circ \P_1^{\s^m}(x_{\s^m})\|_{h_p^c(\s^m)}\\
&\leq \eps + 2^{1/p}(C_p+\eps).
\end{align*}
Sending $\eps$ to $0$ we obtain 
$$\|\Rcal_\U(x)\|_{\h_p^c}\leq 2^{1/p}C_p\|x\|_{L_p^c(\N_\U,\E_{\M_\U})}.$$
We now consider $2\leq p<\infty$ and fix a partition $\s_0$. 
By Lemma \ref{convexityhpc} (ii) we have for all $\s \supset \s_0$
\begin{equation}\label{RcircPsigma2}
\|\Rcal^\s\circ \P_1^\s(x_\s)\|_{h_p^c(\s_0)}\leq {\delta'}_{p/2}^{1/2}\|\Rcal^\s\circ \P_1^\s(x_\s)\|_{h_p^c(\s)}.
\end{equation}
Thus \eqref{RcircPsigma} implies that the family $(\Rcal^\s\circ \P_1^\s(x_\s))_{\s \supset \s_0}$ is uniformly bounded in the reflexive space $h_p^c(\s_0)$. 
We deduce that the weak$^*$-limit of the $\Rcal^\s\circ \P_1^\s(x_\s)$'s exists in $h_p^c(\s_0)$, 
and coincides with the weak$^*$-limit in $L_p$:
$$\Rcal_\U(x)= \w h_p^c(\s_0)\mbox{-}\lim_{\s\supset \s_0,\U}\Rcal^\s\circ \P_1^\s(x_\s).$$
By using \eqref{RcircPsigma2} and \eqref{RcircPsigma} we get
$$\|\Rcal_\U(x)\|_{h_p^c(\s_0)}
\leq \lim_{\s\supset \s_0,\U}\|\Rcal^\s\circ \P_1^\s(x_\s)\|_{h_p^c(\s_0)}
\leq {\delta'}_{p/2}^{1/2}C_p.$$
Since this holds true for all partition $\s_0$, by taking the limit we obtain
$$\|\Rcal_\U(x)\|_{\h_p^c}\leq {\delta'}_{p/2}^{1/2}C_p\|x\|_{L_p^c(\N_\U,\E_{\M_\U})}.$$
This ends the proof of the Proposition. 
\qd

We deduce from Proposition \ref{dualintpolLpc(M,E)} the corresponding duality and interpolation results for the spaces $\h_p^c$.

\begin{cor}\label{dualhpc}
Let $1<p<\infty$. 
\begin{enumerate}
\item[(i)] Let $\frac{1}{p}+\frac{1}{p'}=1$. Then $$(\h_p^c)^*=\h_{p'}^c \quad \mbox{with equivalent norms.}$$
\item[(ii)]  Let $1<p_1,p_2<\infty$ and $0<\theta<1$ be such that
$\frac{1}{p}=\frac{1-\theta}{p_1}+\frac{\theta}{p_2}$. Then
 $$\h_p^c  = [\h_{p_1}^c,\h_{p_2}^c]_{\theta}  \quad \mbox{with equivalent norms}  .$$
\end{enumerate}
\end{cor}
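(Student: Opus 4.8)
The plan is to transfer the duality and interpolation of the $L_p$-module $L_p^c(\N_\U,\E_{\M_\U})$ (Proposition \ref{dualintpolLpc(M,E)}, or equivalently Corollary \ref{Lpmodule}) to its complemented subspace $\h_p^c$. By Lemma \ref{embeddinghpc} the map $v_\U$ embeds $\h_p^c$ isometrically into $L_p^c(\N_\U,\E_{\M_\U})$ for every $1\le p<\infty$, and by Proposition \ref{complementhpc} the map $\Rcal_\U$ is a bounded projection with $\Rcal_\U\circ v_\U=\mathrm{id}_{\h_p^c}$ (indeed $\Rcal^\s\circ\P_1^\s\circ v_\s=\Rcal^\s\circ v_\s=\mathrm{id}$ at each finite level, since $v_\s$ already takes values in the degree-one part $Y_{p,c}^1$, and one passes to the limit). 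First I would record that, since $v_\U$ and $\Rcal_\U$ are given by the same formulas on the common dense domain $\M$ (resp. $L_{\max(2,p)}(\M)$), they are morphisms of the compatible couples involved and realize $\h_p^c$ as a retract of the scale $\bigl(L_p^c(\N_\U,\E_{\M_\U})\bigr)_{1\le p\le\infty}$.

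For (i) I would first compute the relevant duality bracket. For $x,y\in\M$ the module pairing unwinds, via the inner-product formula of Lemma \ref{visometry} and the trace preservation of $\E_\M^\s$, to $(v_\U(x)\,|\,v_\U(y))=\lim_{\s,\U}\tau\bigl(\sum_{t\in\s}\E_{t^-(\s)}(d_t^\s(x)^*d_t^\s(y))\bigr)=\tau(x^*y)$, so the pairing induced on $\h_p^c\times\h_{p'}^c$ is nothing but the anti-linear trace bracket $(x|y)=\tau(x^*y)$. The Cauchy--Schwarz inequality for the $L_{p/2}(\M_\U)$-valued inner product gives immediately $|\tau(x^*y)|\le\|x\|_{\h_p^c}\|y\|_{\h_{p'}^c}$, hence $\h_{p'}^c\subset(\h_p^c)^*$ contractively. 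For the converse, given $\varphi\in(\h_p^c)^*$ one extends $\varphi\circ\Rcal_\U$ through $(L_p^c(\N_\U,\E_{\M_\U}))^*=L_{p'}^c(\N_\U,\E_{\M_\U})$ (Proposition \ref{dualintpolLpc(M,E)}(i)) to an element $\eta$, and sets $y=\Rcal_\U(\eta)\in\h_{p'}^c$; that $\varphi=\tau(\cdot^*\,y)$ amounts to the identity $P_{p'}=(P_p)^*$ for the projections $P_p=v_\U\Rcal_\U$ under the module duality. This last point is where the work lies, and I would reduce it to the discrete level, where $(v_p^c)^*=\Rcal_{p'}^c$ (Proposition \ref{Rbdd}) and $\P_1^\s$ is self-dual (Proposition \ref{XpYpcompl}), so that $v_\s\Rcal^\s\P_1^\s$ is a self-adjoint projection; one then transports this through the identification of the duality of the ultraproduct with the componentwise one.

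For (ii) I would use the retract structure directly. By Proposition \ref{dualintpolLpc(M,E)}(iii) the spaces $L_p^c(\N_\U,\E_{\M_\U})$ form a complex interpolation scale, so $v_\U$ maps both $A:=[\h_{p_1}^c,\h_{p_2}^c]_\theta$ and $B:=\h_p^c$ boundedly into $Y_p:=[L_{p_1}^c(\N_\U,\E_{\M_\U}),L_{p_2}^c(\N_\U,\E_{\M_\U})]_\theta=L_p^c(\N_\U,\E_{\M_\U})$, and $\Rcal_\U$ maps $Y_p$ boundedly back, with $\Rcal_\U v_\U=\mathrm{id}$ on each. Thus $v_\U$ is an isomorphic embedding of both $A$ and $B$; since $\M$ is dense in each of $A$ and $B$ and $v_\U$ has the same values on $\M$, both images coincide with the closed subspace $\overline{v_\U(\M)}$ of $Y_p$. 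The resulting isomorphism $A\cong\overline{v_\U(\M)}\cong B$ is the identity on $\M$, whence $A=B$ with equivalent norms.

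The main obstacle, as indicated, is the duality step: one must check that the projection $\Rcal_\U$ (defined through a weak limit rather than a plain ultraproduct of the maps $v_\s\Rcal^\s\P_1^\s$) is compatible across the scale in the precise sense $P_{p'}=(P_p)^*$. Once the self-duality of the discrete projections is lifted to the ultraproduct---using that the duality $(L_p^c(\N_\U,\E_{\M_\U}))^*=L_{p'}^c(\N_\U,\E_{\M_\U})$ restricts to the componentwise brackets---both (i) and (ii) follow formally from Proposition \ref{dualintpolLpc(M,E)}.
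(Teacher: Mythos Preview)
Your approach is exactly the paper's: transfer duality and interpolation from $L_p^c(\N_\U,\E_{\M_\U})$ to its retract $\h_p^c$ via $v_\U$ and $\Rcal_\U$, invoking Lemma~\ref{embeddinghpc}, Proposition~\ref{complementhpc} and Proposition~\ref{dualintpolLpc(M,E)}. The paper records only this one-line deduction; you have simply written out the standard retract argument in detail.

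The one point you flag as an ``obstacle''---that the projection $v_\U\Rcal_\U$ be self-adjoint for the module duality---is a genuine verification but not a difficulty. It suffices to check $(v_\U)_p^*=(\Rcal_\U)_{p'}$ on the dense subset $\N_\U\subset L_{p'}^c(\N_\U,\E_{\M_\U})$, and there the bracket is computed componentwise: for $\eta=(\eta_\s)^\bullet\in\N_\U$ and $x\in\M$ one has $(\eta\,|\,v_\U(x))=\lim_{\s,\U}\tr_\s(\eta_\s^*v_\s(x))$, while $\tau((\Rcal_\U\eta)^*x)=\lim_{\s,\U}\tau((\Rcal^\s\P_1^\s\eta_\s)^*x)$ by the very definition of $\Rcal_\U$ as a weak limit. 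The two limits agree term by term because $v_\s(x)\in Y^1_{p,c}$ (so $\P_1^\s$ is transparent by orthogonality, Proposition~\ref{XpYpcompl}(ii)) and $(v_{p}^c)^*=\Rcal_{p'}^c$ on $Y^1_{p',c}$ (equation~\eqref{vRadjoint} in the proof of Proposition~\ref{Rbdd}). So no extra lifting is needed beyond unwinding definitions.
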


\subsection{Injectivity results}

By using Corollary \ref{dualhpc} (i), 
it is now easy to prove that the conditioned Hardy spaces defined above are well intermediate spaces between $L_2(\M)$ and $L_p(\M)$ for $1<p<\infty$ as expected. 

\begin{prop}\label{injhpc}
Let $1 < p <\infty$. Then
$$L_{\max(p,2)}(\M)  \subset \h_p^c \subset L_{\min(p,2)}(\M),$$
i.e., $\h_p^c$ embeds into $L_{\min(p,2)}(\M)$. 
\end{prop}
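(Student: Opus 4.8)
The statement has two parts: the left inclusion $L_{\max(p,2)}(\M)\subset \h_p^c$ and the embedding $\h_p^c\subset L_{\min(p,2)}(\M)$. The plan is to dispatch the left inclusion immediately from the estimates \eqref{estimatehath_pc} together with the density of $L_{\max(p,2)}(\M)$ in $\h_p^c$, and to devote the real work to the injectivity of the natural map into $L_{\min(p,2)}(\M)$, which I would obtain by a duality argument based on Corollary \ref{dualhpc} (i), exactly as announced before the statement.

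First I would record that \eqref{estimatehath_pc} gives, on $\M$, the bound $\|x\|_{\h_p^c}\le\|x\|_2$ for $1<p<2$ and $\|x\|_{\h_p^c}\le\kappa_p\|x\|_p$ for $2\le p<\infty$; since $L_{\max(p,2)}(\M)$ is dense in $\h_p^c$, this yields the bounded inclusion $L_{\max(p,2)}(\M)\subset\h_p^c$. Conversely, the reverse estimates in \eqref{estimatehath_pc}, namely $\|x\|_p\le\eta_p\|x\|_{\h_p^c}$ for $1<p<2$ and $\|x\|_2\le\|x\|_{\h_p^c}$ for $2\le p<\infty$, show that the identity on $\M$ extends by density to a bounded map
$$ j_p:\h_p^c\longrightarrow L_{\min(p,2)}(\M). $$
The whole point is to prove that $j_p$ is injective.

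For injectivity I would use that $\h_p^c$ is reflexive (the conditioned analogue of Lemma \ref{reflexivityHpc}) and that $(\h_p^c)^*=\h_{p'}^c$ by Corollary \ref{dualhpc} (i), the duality being implemented on the dense subspace $\M$ by the anti-linear bracket $(x|y)=\tau(x^*y)$. Let $\xi\in\h_p^c$ with $j_p(\xi)=0$, and write $\xi=\h_p^c\mbox{-}\lim_n x_n$ with $x_n\in\M$. For any $y\in\M$, continuity of the functional $(\cdot|y)$ on $\h_p^c$ gives $(\xi|y)=\lim_n(x_n|y)=\lim_n\tau(x_n^*y)$. If $1<p<2$, then $\min(p,2)=p$ and $j_p(\xi)=0$ means $x_n\to 0$ in $L_p(\M)$; since $y\in\M\subset L_{p'}(\M)$, Hölder's inequality gives $|\tau(x_n^*y)|\le\|x_n\|_p\|y\|_{p'}\to 0$. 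If $2\le p<\infty$, then $\min(p,2)=2$ and $j_p(\xi)=0$ means $x_n\to 0$ in $L_2(\M)$; since $y\in\M\subset L_2(\M)$, we get $|\tau(x_n^*y)|\le\|x_n\|_2\|y\|_2\to0$. In either case $(\xi|y)=0$ for every $y$ in the dense subspace $\M$ of $\h_{p'}^c$, so $\xi=0$ by reflexivity. Hence $j_p$ is an embedding, which is the asserted inclusion $\h_p^c\subset L_{\min(p,2)}(\M)$.

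The main (and only mildly delicate) point is the identification of the duality bracket of Corollary \ref{dualhpc} (i) with $\tau(x^*y)$ on $\M\times\M$; once this is granted, the argument is purely formal, the two cases differing only in which $L_r$-convergence is supplied by the hypothesis $j_p(\xi)=0$ and the matching Hölder pairing. I would therefore state explicitly that this pairing coincides on $\M$ with the anti-linear bracket, consistently with the convention fixed in the introduction and with the construction of Corollary \ref{dualhpc} through the complemented embedding of $\h_p^c$ into $L_p^c(\N_\U,\E_{\M_\U})$.
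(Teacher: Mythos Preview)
Your argument is correct and coincides with what the paper intends: immediately before the proposition the paper writes ``By using Corollary \ref{dualhpc} (i), it is now easy to prove\ldots'' and leaves the details implicit, and your duality computation (pairing $\xi$ against $y\in\M\subset\h_{p'}^c$ and using that the bracket equals $\tau(x^*y)$ on $\M\times\M$ via the embedding $v_\U$) is precisely the intended verification. One small remark: the appeal to reflexivity at the end is not needed---once $(\xi|y)=0$ for all $y$ in the norm-dense subset $\M$ of $(\h_p^c)^*=\h_{p'}^c$, the canonical embedding $\h_p^c\hookrightarrow(\h_p^c)^{**}$ alone gives $\xi=0$.

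The only difference with the paper is that, after stating the proposition, the paper records an \emph{alternative} route for $2\le p<\infty$ that bypasses Corollary \ref{dualhpc} entirely: since the $h_p^c(\sigma)$-norms are \emph{increasing} in $\sigma$ in this range (Lemma \ref{convexityhpc} (ii)), the concrete space $\{x\in L_2(\M):\|x\|_{\h_p^c}<\infty\}$ is complete (Lemma \ref{injhpc2}), and $\h_p^c$ embeds into it directly. This is exactly parallel to the $\H_p^c$ argument for $1\le p<2$ (Lemma \ref{injHpc}), the reversal of the monotonicity swapping the ranges. The paper stresses that for $1<p<2$ no such shortcut is available and the complementation machinery behind Corollary \ref{dualhpc} is genuinely required---your proof uses exactly that input.
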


Actually, the injectivity for $2\leq p <\infty$ can be proved directly as a consequence of the monotonicity Lemma \ref{convexityhpc}. 
Indeed, since the monotonicity in the conditioned case is inverse to that of $\H_p^c$, 
the conditioned analogue of Lemma \ref{injHpc} concerns the case $2\leq p<\infty$.

\begin{lemma}\label{injhpc2}
Let $2 \leq p <\infty$. 
Then the space $\{x\in L_2(\M) : \|x\|_{\h_p^c}<\infty \}$ is complete with respect to the norm $\|\cdot\|_{\h_p^c}$. 
\end{lemma}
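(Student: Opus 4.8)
The plan is to mirror the proof of Lemma \ref{injHpc}, replacing the role of $L_p(\M)$ by $L_2(\M)$ and the increasing monotonicity of the $H_p^c(\s)$-norms by the reversed monotonicity of the $h_p^c(\s)$-norms for $2\le p<\infty$ recorded in Lemma \ref{convexityhpc} (ii). Let $(x_n)_{n\ge 1}$ be a Cauchy sequence in $\{x\in L_2(\M):\|x\|_{\h_p^c}<\infty\}$ for the norm $\|\cdot\|_{\h_p^c}$. Since \eqref{estimatehath_pc} gives $\|z\|_2\le \|z\|_{\h_p^c}$ for $2\le p<\infty$, the sequence $(x_n)_n$ is also Cauchy in $L_2(\M)$ and hence converges in $L_2(\M)$ to some $x\in L_2(\M)$. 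It then remains to show that $x_n\to x$ for the $\h_p^c$-norm, as this forces $x$ to lie in the space and proves completeness.

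The main obstacle is that, contrary to the $\H_p^c$-case, the discrete norm $\|\cdot\|_{h_p^c(\s)}$ is not equivalent to an $L_q$-norm on a finite filtration: the Burkholder-Rosenthal Theorem \ref{Bdiscr} only identifies $h_p^c(\s)$ as one of the three pieces of $L_p(\M)$, so from \eqref{estimateh_pc} we retain merely the one-sided bound $\|z\|_2\le \|z\|_{h_p^c(\s)}$, and $L_2$-convergence cannot be transferred directly into $h_p^c(\s)$-convergence. I would circumvent this as follows. Fix a finite partition $\s$. By the monotonicity Lemma \ref{convexityhpc} (ii), for all $m,n$
$$\|x_m-x_n\|_{h_p^c(\s)}\le {\delta'}_{p/2}^{1/2}\,\|x_m-x_n\|_{\h_p^c},$$
so $(x_n)_n$ is $\|\cdot\|_{h_p^c(\s)}$-Cauchy, with all these norms finite. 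Next I would observe that the finite conditioned bracket is continuous from $L_2(\M)$ into $L_1(\M)$: if $y_m\to y$ in $L_2(\M)$ then $d_t^\s(y_m)\to d_t^\s(y)$ in $L_2(\M)$, whence $\E_{t^-(\s)}|d_t^\s(y_m)|^2\to \E_{t^-(\s)}|d_t^\s(y)|^2$ in $L_1(\M)$ by Cauchy-Schwarz and the contractivity of $\E_{t^-(\s)}$ on $L_1(\M)$, so $\langle y_m,y_m\rangle_\s\to\langle y,y\rangle_\s$ in $L_1(\M)$. Applying this to $y_m=x_m-x_n$ (with $n$ fixed and $m\to\infty$), together with the lower semicontinuity of $\|\cdot\|_{p/2}$ under convergence in $L_1(\M)$ (the noncommutative Fatou property), yields
$$\|x-x_n\|_{h_p^c(\s)}^2=\|\langle x-x_n,x-x_n\rangle_\s\|_{p/2}\le \liminf_{m\to\infty}\|x_m-x_n\|_{h_p^c(\s)}^2.$$

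Combining the two displays gives, for every $\s$, the estimate $\|x-x_n\|_{h_p^c(\s)}\le {\delta'}_{p/2}^{1/2}\liminf_{m}\|x_m-x_n\|_{\h_p^c}$. Given $\eps>0$, the Cauchy property furnishes some $n_0$, independent of $\s$, with $\liminf_m\|x_m-x_n\|_{\h_p^c}\le\eps$ for $n\ge n_0$; hence $\sup_\s\|x-x_n\|_{h_p^c(\s)}\le {\delta'}_{p/2}^{1/2}\eps$. Finally the right-hand inequality of Lemma \ref{convexityhpc} (ii), namely $\|z\|_{\h_p^c}\le\sup_\s\|z\|_{h_p^c(\s)}$, upgrades this to $\|x-x_n\|_{\h_p^c}\le{\delta'}_{p/2}^{1/2}\eps$ for $n\ge n_0$. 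Thus $x_n\to x$ in the $\h_p^c$-norm and $x$ belongs to the space, which is therefore complete. As an alternative to the Fatou step, one could instead invoke the completeness of the Banach space $h_p^c(\s)$ to pass the $\|\cdot\|_{h_p^c(\s)}$-Cauchy sequence to a limit $z_\s$, and then use the contractive embedding $h_p^c(\s)\hookrightarrow L_2(\M)$ coming from \eqref{estimateh_pc} to identify $z_\s$ with the $L_2$-limit $x$; the remaining estimate is then identical.
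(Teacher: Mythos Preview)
Your proposal is correct and follows essentially the same route as the paper. The paper's proof simply says to adapt Lemma \ref{injHpc} using the monotonicity of the $h_p^c(\s)$-norms from Lemma \ref{convexityhpc} (ii) together with the completeness of each discrete $h_p^c(\s)$; your ``alternative'' at the end (pass the $h_p^c(\s)$-Cauchy sequence to its limit $z_\s$ in the complete space $h_p^c(\s)$ and identify $z_\s=x$ via the embedding $h_p^c(\s)\hookrightarrow L_2(\M)$ from \eqref{estimateh_pc}) is exactly that argument, and your primary Fatou-based variant is an equivalent way to justify the same identification step.
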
 
 
\begin{proof} 
Recall that in the conditioned case, by Lemma \ref{convexityhpc} 
the norms $\|\cdot\|_{h_p^c(\s)}$ are increasing in $\s$ (up to a constant) for $2\leq p<\infty$. 
Then the completeness of each discrete $h_p^c(\s)$-space yields the result as in the proof of Lemma \ref{injHpc}.
\qd  

It then directly follows that $\h_p^c$ embeds into $L_2(\M)$ for $2 \leq p <\infty$. 
Moreover, by simply using the discrete $h_p^c(\s)-h_{p'}^c(\s)$ duality, 
we can prove the conditioned analogue of Lemma \ref{dualHpc-direct} with the same argument. 

\begin{lemma}\label{dualhpc-direct}
Let $2\leq p <\infty$. Then 
$$(\h_{p'}^c)^*=\{x\in L_2(\M)  :  \|x\|_{\h_p^c}<\infty \}\quad \mbox{with equivalent norms.}$$
\end{lemma}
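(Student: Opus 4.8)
The plan is to follow the strategy of Lemma~\ref{dualHpc-direct}, keeping in mind that here the indices are reversed ($2\leq p<\infty$, so that $1<p'\leq 2$) and, crucially, that the monotonicity of the conditioned norms runs in the opposite direction to that of the $\H_p^c$-norms. Throughout I would work on the dense subspace $\M$ and combine the discrete conditioned duality $(h_{p'}^c(\s))^*=h_p^c(\s)$ of Corollary~\ref{discrdualhpc} with the two-sided estimates of Lemma~\ref{convexityhpc}.

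For the inclusion $\{x\in L_2(\M):\|x\|_{\h_p^c}<\infty\}\subset(\h_{p'}^c)^*$, I would fix such an $x$ and $y\in\M$. For each finite partition $\s$, the orthogonality of the conditioned differences and the trace-invariance of $\E_{t^-(\s)}$ give $\tau(x^*y)=\sum_{t\in\s}\tau(\E_{t^-(\s)}(d_t^\s(x)^*d_t^\s(y)))$, and applying the Cauchy--Schwarz/Hölder inequality through the embedding $u$ of subsection~\ref{discretehpc} (or directly via the $L_{p/2}$-valued inner product) yields $|\tau(x^*y)|\leq\|x\|_{h_p^c(\s)}\|y\|_{h_{p'}^c(\s)}$. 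Since the left-hand side does not depend on $\s$, I would take the limit along $\U$ and obtain $|\tau(x^*y)|\leq\|x\|_{\h_p^c}\|y\|_{\h_{p'}^c}$. By density of $\M$ in $\h_{p'}^c$ this shows $x\in(\h_{p'}^c)^*$ with $\|x\|_{(\h_{p'}^c)^*}\leq\|x\|_{\h_p^c}$.

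For the converse, let $\varphi\in(\h_{p'}^c)^*$ be of norm at most one. Since $1<p'<2$, we have $\|\cdot\|_{\h_{p'}^c}\leq\|\cdot\|_2$ by \eqref{estimatehath_pc}, and $L_2(\M)$ is dense in $\h_{p'}^c$; hence the restriction of $\varphi$ to $L_2(\M)$ is $\|\cdot\|_2$-bounded and is represented by a unique $x\in L_2(\M)$ with $\varphi(y)=\tau(x^*y)$ for $y\in L_2(\M)$. It remains to bound $\|x\|_{\h_p^c}$. Fixing a partition $\s$, I would use $(h_{p'}^c(\s))^*=h_p^c(\s)$ together with the density of $\M$ in $h_{p'}^c(\s)$ and the contractive inclusion $h_p^c(\s)\subset L_2(\M)$ to identify $x$ with an element of $h_p^c(\s)$ satisfying $\|x\|_{h_p^c(\s)}\leq C\sup\{|\tau(x^*y)|:y\in\M,\ \|y\|_{h_{p'}^c(\s)}\leq 1\}$, where $C$ comes from Corollary~\ref{discrdualhpc}. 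For such $y$ the key estimate is $|\tau(x^*y)|=|\varphi(y)|\leq\|y\|_{\h_{p'}^c}\leq 2^{1/p'}\|y\|_{h_{p'}^c(\s)}\leq 2^{1/p'}$, the middle inequality being exactly Lemma~\ref{convexityhpc}(i) for $1\leq p'<2$ (which bounds $\|y\|_{\h_{p'}^c}$ by $2^{1/p'}\inf_\s\|y\|_{h_{p'}^c(\s)}$). This gives $\|x\|_{h_p^c(\s)}\leq 2^{1/p'}C$ uniformly in $\s$, and taking the supremum over $\s$ while invoking Lemma~\ref{convexityhpc}(ii) for $2\leq p<\infty$ (which gives $\|x\|_{\h_p^c}\leq\sup_\s\|x\|_{h_p^c(\s)}$) yields $\|x\|_{\h_p^c}\leq 2^{1/p'}C<\infty$. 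Finally, $y\mapsto\tau(x^*y)$ is $\h_{p'}^c$-continuous by the first part and agrees with $\varphi$ on the dense set $L_2(\M)$, so the two functionals coincide on $\h_{p'}^c$; thus $\varphi$ is represented by $x\in\{x\in L_2(\M):\|x\|_{\h_p^c}<\infty\}$ with the required norm control.

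The only genuinely delicate point — and the sole place where the argument departs from the $\H_p^c$-case — is that the two halves of Lemma~\ref{convexityhpc} must be fed into the two opposite directions of the duality: the \emph{decreasing} estimate (i) is what converts the abstract bound $|\varphi(y)|\leq\|y\|_{\h_{p'}^c}$ into a partition-wise bound of order $\|y\|_{h_{p'}^c(\s)}$, while the \emph{increasing} estimate (ii) is what allows the uniform partition-wise bounds on $x$ to reassemble into a finite $\h_p^c$-norm. Everything else, namely the discrete Cauchy--Schwarz inequality and the passage to the ultrafilter limit, is routine and proceeds as in the unconditioned setting.
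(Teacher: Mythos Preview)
Your proof is correct and follows essentially the same approach as the paper, which simply states that the conditioned analogue of Lemma~\ref{dualHpc-direct} is proved ``with the same argument'' using the discrete $h_p^c(\s)$--$h_{p'}^c(\s)$ duality. You have correctly identified and handled the reversal of indices (here $2\leq p<\infty$, so $1<p'\leq 2$) and the corresponding swap in the roles of the two monotonicity estimates of Lemma~\ref{convexityhpc}: part~(i) converts $\|y\|_{\h_{p'}^c}$ into a partition-wise bound, while part~(ii) reassembles the uniform partition-wise bounds on $x$ into a finite $\h_p^c$-norm.
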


Then, combining Lemma \ref{dualhpc-direct} with assertion (i) of Corollary \ref{dualhpc} we obtain  

\begin{cor}\label{hpXp}
Let $2\leq p<\infty$. Then $\h_{p}^c=\{x\in L_2(\M)  : \|x\|_{\h_p^c}<\infty \}$.
\end{cor}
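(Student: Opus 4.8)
The plan is to mimic the proof of Corollary \ref{HpXp} in the unconditioned setting, combining the two available descriptions of the dual of $\h_{p'}^c$. Fix $2\leq p<\infty$ and let $p'$ be its conjugate, so that $1<p'\leq 2$. Since $(p')'=p$, I would first apply Corollary \ref{dualhpc} (i) to the index $p'$, which yields
$$(\h_{p'}^c)^*=\h_p^c \quad \mbox{with equivalent norms.}$$
On the other hand, Lemma \ref{dualhpc-direct} provides the concrete description
$$(\h_{p'}^c)^*=\{x\in L_2(\M) : \|x\|_{\h_p^c}<\infty\} \quad \mbox{with equivalent norms.}$$

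The key step is then to check that these two identifications are realized through the same duality bracket, namely the anti-linear pairing $(x|y)=\tau(x^*y)$. This is indeed the case: in both statements the functional associated to an element $x$ acts on the dense subspace $L_2(\M)\subset \h_{p'}^c$ by $y\mapsto \tau(x^*y)$, and both $\h_p^c$ and $\{x\in L_2(\M) : \|x\|_{\h_p^c}<\infty\}$ are viewed as subspaces of $L_2(\M)$ (using Lemma \ref{injhpc2} for the former, and the very definition for the latter, which is complete by that same Lemma). Unwinding the definition of the bracket between $\h_p^c$ and $\h_{p'}^c$ on the dense subalgebra $\M$, exactly as in Remark \ref{dualitybracket} for the $\H_p^c$-spaces, shows that the two pairings coincide.

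Consequently the two representations of $(\h_{p'}^c)^*$ coincide as subspaces of $L_2(\M)$, and I conclude that $\h_p^c=\{x\in L_2(\M) : \|x\|_{\h_p^c}<\infty\}$ with equivalent norms. The only delicate point — hence the main, though minor, obstacle — is precisely this compatibility of the two dual pairings; once it is verified, the statement follows immediately by transitivity from the two duality results.
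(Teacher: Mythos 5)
Your proposal is correct and follows essentially the same route as the paper, which obtains the corollary precisely by combining Lemma \ref{dualhpc-direct} with assertion (i) of Corollary \ref{dualhpc} applied to the conjugate index $p'$. Your additional verification that the two identifications of $(\h_{p'}^c)^*$ use the same anti-linear pairing $\tau(x^*y)$ on the dense subspace $L_2(\M)$ is a sensible point of care that the paper leaves implicit.
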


However, the injectivity in the case $1<p<2$ is highly non-trivial and we really need the complementation result stated in subsection \ref{sectcomplhpc} 
to prove it. 
This approach does not include the case $p=1$, and at the time of this writing we do not know if the natural map from $\h_1^c$ to $L_1(\M)$ is injective (see Problem \ref{h1csubsetL1}). 
For the sequel we need to introduce another candidate for the continuous analogue of the conditioned Hardy space $h_1^c$, which is embedded in $L_1(\M)$. 
We denote by 
$$\varphi:\h_1^c \to L_1(\M)$$
the natural map defined by $\varphi(x)=x$ for $x\in \M$, and set
$$\lh_1^c=\varphi(\h_1^c)\subset L_1(\M).$$ 
Since $\varphi$ is bounded, $\lh_1^c$ equipped with the norm
$$\|x\|_{\lh_1^c}=\inf_{x=\varphi(y)} \|y\|_{\h_1^c}$$
is a Banach space. 
Moreover, note that $L_2(\M)$ is still dense in $\lh_1^c$.

\re\label{hpcregular}
Considering $\h_p^c$ as a subspace of $L_{\min(p,2)}(\M)$ for $1<p<\infty$ thanks to Proposition \ref{injhpc}, we can write
$$\h_q^c\subset \h_p^c \quad \mbox{for } 1<p\leq q <\infty 
\quad \mbox{and} \quad 
 \h_q^c\subset \lh_1^c \quad \mbox{for } 1< q <\infty  .$$
Moreover, for $1<q<\infty$, the commuting diagram
$$\xymatrix{
    \h_q^c \ar@{^{(}->}[d]^{v_\U} \ar@{->}[r]     & {\h}_1^c \ar@{^{(}->}^{v_\U}[d]  \\
    L_q^c(\N_\U,\E_{\M_\U}) \ar@{^{(}->}[r] & L_1^c(\N_\U,\E_{\M_\U})
  }$$
implies that we may also consider
$$ \h_q^c\subset \h_1^c \quad \mbox{for } 1< q <\infty  .$$
\mar

\subsection{Fefferman-Stein duality}

This subsection deals with the analogue of the Fefferman-Stein duality for the conditioned Hardy spaces. 
First observe that in the discrete case, the space $L_p^cmo$ is simpler than the space $L_p^cMO$ for $2<p\leq \infty$. 
Indeed, recall that for a finite partition $\s$ and $x \in L_2(\M)$ we have 
$$\|x\|_{L_p^cMO(\s)}=\|{\sup_{t\in \s}}^+ \E_t|x-x_{t^-(\s)}|^2\|_{p/2}^{1/2}
\quad \mbox{and} \quad
\|x\|_{L_p^cmo(\s)}=\max\big(\|\E_0(x)\|_p, \|{\sup_{t\in \s}}^+ \E_t|x-x_{t}|^2\|_{p/2}^{1/2}\big).$$
The crucial point is that the index ``$t^-(\s)$", which depends on the partition $\s$, does not appear in the definition of $L_p^cmo(\s)$. 
Hence it is natural to introduce the following definition of $L_p^c\mo$ in the continuous setting.

\begin{defi}
Let $2< p\leq \infty$. We define 
$$L_{p}^c\mo=\{x\in L_2(\M) : \|x\|_{L_{p}^c\mo}<\infty\}$$
where
$$\|x\|_{L_{p}^c\mo}=\max\big(\|\E_0(x)\|_p , \|{\sup_{0\leq t \leq 1}}^+ \E_t|x-x_t|^2\|_{p/2}^{1/2}\big).$$
For $p=\infty$ we denote this space by $\bmo^c$.
\end{defi}

Recall that for a family $(x_t)_{0\leq t\leq 1}$ in $L_q(\M)$, $1\leq q \leq \infty$, we define
$$\|{\sup_{0\leq t \leq 1}}^+ x_t\|_q=\|(x_t)_{0\leq t\leq 1}\|_{L_q(\M;\ell_\infty([0,1]))}=\inf \|a\|_{2q}\sup_t\|y_t\|_\infty\|b\|_{2q},$$
where the infimum runs over all factorizations $x_t=ay_tb$ with $a,b \in L_{2q}(\M)$ and $(y_t)\in \ell_\infty(L_\infty([0,1]))$. 
The space $L_p^c\mo$ obviously does not depend on $\U$. 
Note that by Proposition $2.1$ of \cite{jx-erg} we have
$$\sup_{\sigma}\|{\sup_{t\in \si}}^+ \E_{t}|x-x_t|^2\|_{p/2}^{1/2}=\|{\sup_{0\leq t \leq 1}}^+ \E_t|x-x_t|^2\|_{p/2}^{1/2},$$
thus we obtain
\begin{equation}\label{monotonicityLpcmo}
\|x\|_{L_{p}^c\mo}=\sup_{\sigma}\|x\|_{L_{p}^cmo(\sigma)}.
\end{equation}
Since by definition $\|\cdot\|_{L_{p}^c mo(\sigma)}$ is increasing in $\s$,
for $2< p\leq \infty$ we may write
$$\|x\|_{L_{p}^c\mo}= \lim_{\s,\U}\|x\|_{L_{p}^c mo(\sigma)}$$
for every ultrafilter $\U$. This ensures that we define well a complete space. 

The discrete Fefferman-Stein duality in the conditioned case easily implies the following continuous analogue.

\begin{theorem}\label{th:dualityhp}
Let $1\leq p <2$. 
Then 
$$(\h_p^c)^*=L_{p'}^c\mo \quad \mbox{with equivalent norms}.$$
Moreover,
\begin{equation}\label{eq:dualhp}
\nu_{p}^{-1}\|x\|_{L_{p'}^c\mo}\leq \|x\|_{(\h_p^c)^*}\leq \sqrt{2}\|x\|_{L_{p'}^c\mo}.
\end{equation}
\end{theorem}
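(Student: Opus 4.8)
The plan is to follow the proof of the Fefferman--Stein duality for $\H_p^c$ (Theorem \ref{fsduality}) while exploiting the concrete description of $L_{p'}^c\mo$ recorded in \eqref{monotonicityLpcmo}, namely $\|x\|_{L_{p'}^c\mo}=\sup_\s\|x\|_{L_{p'}^cmo(\s)}=\lim_{\s,\U}\|x\|_{L_{p'}^cmo(\s)}$. As usual I would establish the two inclusions separately, and for the converse distinguish the reflexive range $1<p<2$ from the endpoint $p=1$.

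For the inclusion $L_{p'}^c\mo\subset(\h_p^c)^*$ with constant $\sqrt2$, I would take $x\in L_{p'}^c\mo$ and $y\in\M$. Since $\|x\|_{L_{p'}^cmo(\s)}\leq\|x\|_{L_{p'}^c\mo}$ for every partition, the discrete Fefferman--Stein duality (Theorem \ref{fsdualityhpcdiscr}) gives $|\tau(x^*y)|\leq\sqrt2\,\|x\|_{L_{p'}^cmo(\s)}\|y\|_{h_p^c(\s)}$ for each $\s$; passing to the limit along $\U$ and using \eqref{monotonicityLpcmo} together with $\|y\|_{\h_p^c}=\lim_{\s,\U}\|y\|_{h_p^c(\s)}$ yields $|\tau(x^*y)|\leq\sqrt2\,\|x\|_{L_{p'}^c\mo}\|y\|_{\h_p^c}$. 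The density of $\M$ in $\h_p^c$ then produces the functional and the right-hand estimate of \eqref{eq:dualhp}.

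For the converse I would embed $\h_p^c$ isometrically into $\prodd_\U h_p^c(\s)$ via $i_\U(x)=(x)^\bullet$ (isometric because $\|x\|_{\h_p^c}=\lim_{\s,\U}\|x\|_{h_p^c(\s)}$) and extend a norm-one $\varphi\in(\h_p^c)^*$ to $\prodd_\U h_p^c(\s)$ by Hahn--Banach. When $1<p<2$ the space $\prodd_\U h_p^c(\s)$ is reflexive, so Lemma \ref{dualultrapdct} and the discrete duality identify its dual with $\prodd_\U L_{p'}^cmo(\s)$; this represents $\varphi$ by some $z=(z_\s)^\bullet$ with $\lim_{\s,\U}\|z_\s\|_{L_{p'}^cmo(\s)}$ controlled by the discrete constant $\nu_p$. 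Setting $x=\w L_2\mbox{-}\lim_{\s,\U}z_\s$ (the family being uniformly bounded in $L_2(\M)$, the $L_2$-norm being dominated by the $L_{p'}^cmo(\s)$-norm) one gets $\varphi(y)=\tau(x^*y)$ for $y\in\M$. To see that $x\in L_{p'}^c\mo$ with the correct norm, I would fix a partition $\s_0$ and use that $x\mapsto\|x\|_{L_{p'}^cmo(\s_0)}$ is lower semicontinuous for the weak-$L_2$ topology — immediate from its concrete definition as a supremum of weak-$L_2$-continuous expressions — so that $\|x\|_{L_{p'}^cmo(\s_0)}\leq\lim_{\s,\U}\|z_\s\|_{L_{p'}^cmo(\s_0)}$; since the $L_{p'}^cmo(\s)$-norm is increasing in $\s$ (Lemma \ref{convexityhpc}), for $\s\supset\s_0$ this is bounded by $\lim_{\s,\U}\|z_\s\|_{L_{p'}^cmo(\s)}$, and taking the supremum over $\s_0$ together with \eqref{monotonicityLpcmo} controls $\|x\|_{L_{p'}^c\mo}$. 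For $p=1$ the algebra $\prodd_\U h_1^c(\s)$ is no longer reflexive, so I would instead invoke Lemma \ref{le:ultrapdct} to write $\varphi$ as a weak$^*$-limit of elements $z^\lambda=(z^\lambda_\s)^\bullet$ in the unit ball of $\prodd_\U(h_1^c(\s))^*=\prodd_\U bmo^c(\s)$, set $x^\lambda=\w L_2\mbox{-}\lim_{\s,\U}z^\lambda_\s\in\bmo^c$ and then $x=\w L_2\mbox{-}\lim_\lambda x^\lambda$, concluding $\varphi(y)=\tau(x^*y)$ with $x\in\bmo^c$.

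The main obstacle is precisely this identification of the representing element: I must show that the weak-$L_2$ limit $x$ lands in $L_{p'}^c\mo$ with control of $\|x\|_{L_{p'}^c\mo}$. The decisive simplification over the $\BMO^c$ case is that, because the conditioned $\mo$-seminorms are indexed by $t$ rather than by $t^-(\s)$, the space $L_{p'}^c\mo$ is an honest subspace of $L_2(\M)$ whose ball is weak-$L_2$ closed by the lower semicontinuity above; consequently no passage to an absolutely convex weak-$L_2$ closure (as in the definition of $\BMO^c$) is needed, and the double limit at $p=1$ closes directly. The remaining care is the bookkeeping of the constants $\nu_p$ and $\sqrt2$, which are simply inherited from Theorem \ref{fsdualityhpcdiscr}.
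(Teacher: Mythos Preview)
Your argument is correct, but it takes a longer route than the paper's. The paper does \emph{not} follow the ultraproduct proof of Theorem~\ref{fsduality}; it follows instead the direct argument of Lemma~\ref{dualHpc-direct}. Since $L_2(\M)$ is dense in $\h_p^c$ and $\|y\|_{\h_p^c}\le\|y\|_2$, any $\varphi\in(\h_p^c)^*$ of norm $\le 1$ is already represented by some $x\in L_2(\M)$ via $\varphi(y)=\tau(x^*y)$. One then estimates $\|x\|_{L_{p'}^cmo(\s)}$ directly for each fixed $\s$: by the discrete duality $\|x\|_{L_{p'}^cmo(\s)}\le\nu_p\sup\{|\tau(x^*y)|:\|y\|_{h_p^c(\s)}\le1\}$, and for such $y$ one has $\|y\|_{\h_p^c}\le 2^{1/p}\|y\|_{h_p^c(\s)}$ by Lemma~\ref{convexityhpc}(i), whence $|\tau(x^*y)|=|\varphi(y)|\le 2^{1/p}$. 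Taking the supremum over $\s$ via \eqref{monotonicityLpcmo} finishes the proof, and this works verbatim for $p=1$ without any weak$^*$-density or double-limit argument.

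The simplification you correctly identify in your last paragraph---that $L_{p'}^c\mo$ is a concrete subspace of $L_2(\M)$ with $\|\cdot\|_{L_{p'}^c\mo}=\sup_\s\|\cdot\|_{L_{p'}^cmo(\s)}$---is precisely what makes the ultraproduct machinery unnecessary here: once $\varphi$ is represented by a single $x\in L_2(\M)$, there is nothing left to pass to the limit. Your approach still works (modulo the small point that the lower semicontinuity of $\|\cdot\|_{L_{p'}^cmo(\s_0)}$ for weak $L_2$ is most cleanly seen through the dual norm $\|\cdot\|_{(h_p^c(\s_0))^*}$ rather than the square-function expression, and that the monotonicity of $L_{p'}^cmo(\s)$ in $\s$ is immediate from the definition rather than from Lemma~\ref{convexityhpc}), but it is considerably more elaborate than needed.
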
 

\begin{proof}
The proof is similar to that of Lemma \ref{dualHpc-direct}, by using the discrete $h_p^c(\s)-L_{p'}^cmo(\s)$ duality. 
This argument can also be adapted for $p=1$. 
\qd

Moreover, we deduce from Proposition \ref{hpcLpcmo} that for $2\leq p<\infty$,
$$L_p^c\mo = \{x\in L_2(\M)  : \|x\|_{\h_p^c}<\infty \} \quad \mbox{with equivalent norms}.$$
Hence Corollary \ref{hpXp} yields

\begin{cor}\label{Lpmo}
Let $2< p<\infty$. Then 
$$L_p^c\mo=\h_p^c \quad \mbox{with equivalent norms}.$$
\end{cor}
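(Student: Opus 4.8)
The plan is to obtain the statement as an immediate consequence of the discrete identification in Proposition \ref{hpcLpcmo} combined with Corollary \ref{hpXp}, after checking that the two norms are comparable on $L_2(\M)$ with constants that do not depend on the partition. Concretely, I would first establish that for $2<p<\infty$ and every $x\in L_2(\M)$ one has the chain of equivalences
$$\|x\|_{\h_p^c}\simeq \sup_\s \|x\|_{h_p^c(\s)}\simeq \sup_\s \|x\|_{L_p^cmo(\s)}=\|x\|_{L_p^c\mo},$$
where the first equivalence is the monotonicity statement of Lemma \ref{convexityhpc} (ii) (which for $2\le p<\infty$ gives ${\delta'}_{p/2}^{-1/2}\sup_\s\|x\|_{h_p^c(\s)}\le \|x\|_{\h_p^c}\le \sup_\s\|x\|_{h_p^c(\s)}$), the second is the discrete conditioned Fefferman--Stein duality of Proposition \ref{hpcLpcmo} applied to each finite filtration $(\M_t)_{t\in\s}$, and the final equality is \eqref{monotonicityLpcmo}.

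The only point requiring genuine care is the uniformity in $\s$ of the middle equivalence. The identification $\|x\|_{h_p^c(\s)}\simeq \|x\|_{L_p^cmo(\s)}$ in Proposition \ref{hpcLpcmo} is deduced from the discrete Burkholder--Rosenthal inequalities (Theorem \ref{Bdiscr}) and the discrete conditioned Fefferman--Stein duality (Theorem \ref{fsdualityhpcdiscr}), whose constants depend only on $p$ and not on the underlying finite filtration. This is precisely what lets one pass the equivalence through the supremum over all finite partitions $\s$. With this in hand, the displayed chain shows that $L_p^c\mo$ and $\{x\in L_2(\M):\|x\|_{\h_p^c}<\infty\}$ coincide as sets and carry equivalent norms, since finiteness of $\|x\|_{L_p^c\mo}$ is then equivalent to finiteness of $\|x\|_{\h_p^c}$.

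Finally I would invoke Corollary \ref{hpXp}, which for $2\le p<\infty$ identifies $\h_p^c$ with $\{x\in L_2(\M):\|x\|_{\h_p^c}<\infty\}$. Combining this with the identification of the previous paragraph yields $L_p^c\mo=\h_p^c$ with equivalent norms, as claimed. I do not expect any real obstacle at this stage: the substantive work has already been carried out in the monotonicity Lemma \ref{convexityhpc}, in the free-amalgamated-product complementation behind Corollary \ref{hpXp}, and in the transfer of the discrete duality; the present corollary merely assembles these ingredients, the mildest subtlety being the bookkeeping of $p$-dependent but $\s$-independent constants.
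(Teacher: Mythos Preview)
Your proposal is correct and follows exactly the paper's approach: the paper deduces from Proposition \ref{hpcLpcmo} that $L_p^c\mo = \{x\in L_2(\M):\|x\|_{\h_p^c}<\infty\}$ (your chain of equivalences via Lemma \ref{convexityhpc}(ii) and \eqref{monotonicityLpcmo} is precisely how this deduction goes) and then invokes Corollary \ref{hpXp}. You have simply spelled out in detail what the paper compresses into a single sentence.
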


As a consequence of Theorem \ref{th:dualityhp} we can characterize the space $L_{p}^c\mo$ similarly to the definition of $L_p^c\MO$.

\begin{lemma}\label{le:lim}
Let $2<p\leq \infty$.
Then 
\begin{enumerate}
\item[(i)] The unit ball of $L_{p}^c\mo$ is equivalent to 
$$\mathrm{B}_p=\{x=\w L_2 \mbox{-}\lim_{\s,\U} x_\s : \lim_{\s,\U} \|x_\s\|_{L_p^cmo(\s)}\leq 1\}.$$
More precisely, we have 
$$B_{L_{p}^c\mo}\subset \mathrm{B}_p \subset \sqrt{2}\nu_pB_{L_{p}^c\mo}.$$
\item[(ii)] Let $(x_\lambda)_\lambda$ be a sequence in $L_2(\M)$ such that $\|x_\lambda\|_{L_{p}^c\mo}\leq 1$ for all $\lambda$ and
$x=\w L_2 \mbox{-}\lim_\lambda x_\lambda $.
Then $x\in L_{p}^c\mo$ with $\|x\|_{L_{p}^c\mo} \leq \sqrt{2}\nu_{p}$.
\end{enumerate}
\end{lemma}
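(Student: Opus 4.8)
The plan is to deduce both assertions directly from the conditioned Fefferman-Stein duality of Theorem \ref{th:dualityhp}, combined with the monotonicity identity \eqref{monotonicityLpcmo}. The latter is precisely what makes the conditioned case much simpler than the $L_p^c\MO$ situation of subsection \ref{sectFSc}: since $\|\cdot\|_{L_p^cmo(\s)}$ is increasing in $\s$ and $\|x\|_{L_p^c\mo}=\sup_\s\|x\|_{L_p^cmo(\s)}=\lim_{\s,\U}\|x\|_{L_p^cmo(\s)}$, no convexification or $L_2$-closure is needed in the description of the unit ball.

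First I would settle the easy inclusion $B_{L_p^c\mo}\subset \mathrm{B}_p$ of (i). Given $x$ with $\|x\|_{L_p^c\mo}\leq 1$, the identity \eqref{monotonicityLpcmo} yields $\lim_{\s,\U}\|x\|_{L_p^cmo(\s)}\leq 1$; choosing the constant family $x_\s=x$, for which $x=\w L_2\mbox{-}\lim_{\s,\U}x_\s$ trivially, shows $x\in \mathrm{B}_p$.

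For the reverse inclusion $\mathrm{B}_p\subset \sqrt{2}\nu_p B_{L_p^c\mo}$ I would argue by duality. Let $x=\w L_2\mbox{-}\lim_{\s,\U}x_\s$ with $\lim_{\s,\U}\|x_\s\|_{L_p^cmo(\s)}\leq 1$. For $y\in L_2(\M)$ with $\|y\|_{\h_{p'}^c}\leq 1$, the discrete conditioned Fefferman-Stein duality (Theorem \ref{fsdualityhpcdiscr}) applied to each finite partition gives $|\tau(x_\s^*y)|\leq \sqrt{2}\,\|x_\s\|_{L_p^cmo(\s)}\,\|y\|_{h_{p'}^c(\s)}$. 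Since $x_\s\to x$ weakly in $L_2(\M)$ and $y\in L_2(\M)$, passing to the limit along $\U$ gives $|\tau(x^*y)|\leq \sqrt{2}\,(\lim_{\s,\U}\|x_\s\|_{L_p^cmo(\s)})(\lim_{\s,\U}\|y\|_{h_{p'}^c(\s)})\leq \sqrt{2}\,\|y\|_{\h_{p'}^c}$, the last limit being $\|y\|_{\h_{p'}^c}$ by definition. By the density of $L_2(\M)$ in $\h_{p'}^c$ this means $x$ induces a functional on $\h_{p'}^c$ of norm at most $\sqrt{2}$, so Theorem \ref{th:dualityhp} (used with $1\leq p'<2$, the endpoint $p'=1$ furnishing $\bmo^c$ when $p=\infty$) yields $x\in L_p^c\mo$ with $\|x\|_{L_p^c\mo}\leq \nu_p\|x\|_{(\h_{p'}^c)^*}\leq \sqrt{2}\nu_p$. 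Assertion (ii) then follows by the same scheme: for $y\in L_2(\M)$ with $\|y\|_{\h_{p'}^c}\leq 1$ I bound $|\tau(x_\lambda^*y)|\leq \sqrt{2}\|x_\lambda\|_{L_p^c\mo}\leq \sqrt{2}$ using \eqref{eq:dualhp}, take $\limsup_\lambda$ to get $|\tau(x^*y)|\leq \sqrt{2}$, and invoke Theorem \ref{th:dualityhp} once more.

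The step requiring care is the identification: I must check that the bounded functional $y\mapsto\tau(x^*y)$ on $\h_{p'}^c$ is represented, under the isomorphism $(\h_{p'}^c)^*=L_p^c\mo$, by this very element $x\in L_2(\M)$, so that the weak-$L_2$ limit genuinely lands in $L_p^c\mo$ with the correct norm rather than merely defining an abstract dual element; this rests on the compatibility of the duality bracket with the trace pairing on $L_2(\M)$. The endpoint $p=\infty$ is the other delicate point, as it depends on the $p'=1$ instance of the duality.
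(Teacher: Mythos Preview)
Your proof is correct and follows essentially the same route as the paper's: the easy inclusion via the constant family and \eqref{monotonicityLpcmo}, then for the reverse inclusion and for (ii) the duality argument via Theorem \ref{th:dualityhp}, testing against $y\in L_2(\M)$ with $\|y\|_{\h_{p'}^c}\leq 1$, applying the discrete Fefferman--Stein inequality at each $\s$, and passing to the limit. Your remarks on the identification of the functional with $x$ itself and on the endpoint $p=\infty$ are apt but do not go beyond what the paper's argument (and its reference to Corollary \ref{LpcMOclosed}) already covers.
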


\begin{proof}
It is clear that 
$B_{L_{p}^c\mo}\subset \mathrm{B}_p.$
Conversely, let $x=\w L_2 \mbox{-}\lim_{\s,\U} x_\s$ be such that $\lim_{\s,\U} \|x_\s\|_{L_p^cmo(\s)}\leq 1$. 
By Theorem \ref{th:dualityhp} 
and the density of $L_2(\M)$ in $\h_{p'}^c$ we can write
$$\|x\|_{L_{p}^c\mo} \leq \nu_{p} \sup_{y\in L_2(\M), \|y\|_{\h_{p'}^c}\leq 1} |\tau(x^*y)|.$$
Note that for all $y\in L_2(\M), \|y\|_{\h_{p'}^c}\leq 1$ we have
\begin{align*}
|\tau(x^*y)|
&\leq \lim_{\s,\U} |\tau(x_\s^*y)|\leq \sqrt{2}\lim_{\s,\U} \big(\|x_\s\|_{L_{p}^cmo(\s)}\|y\|_{h_{p'}^c(\s)}\big)\\
&=\sqrt{2}\big(\lim_{\s,\U} \|x_\s\|_{L_{p}^cmo(\s)}\big)\big(\lim_{\s,\U}\|y\|_{h_{p'}^c(\s)}\big)
\leq \sqrt{2}.
\end{align*}
Thus $x\in \sqrt{2}\nu_p B_{L_{p}^c\mo}$, and this proves (i). 
The proof of (ii) is similar to that of Corollary \ref{LpcMOclosed}.
\qd

We end this subsection with the description of the dual space of $\lh_1^c$. 

\begin{theorem}\label{fsdualLh1}
We have $(\lh_1^c)^*=\lbmo^c$ with equivalent norms, where
\begin{align*}
\lbmo^c=
&\{x\in L_2(\M) : \|x\|_{\bmo^c}<\infty \mbox{ and } \lim_n \tau(x^*y_n)=0\\
&\forall \mbox{ sequence } (y_n)_n \subset \M \mbox{ such that } (y_n)_n \mbox{ converges in } \h_1^c 
\mbox{ and } y_n \to 0 \mbox{ in } L_1 \}.
\end{align*}
\end{theorem}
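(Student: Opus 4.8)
The plan is to realize $\lh_1^c$ as a quotient of $\h_1^c$ and then compute its dual through the annihilator of the kernel, feeding in the conditioned Fefferman--Stein duality $(\h_1^c)^*=\bmo^c$ already contained in Theorem \ref{th:dualityhp}. First I would observe that, by its very definition, $\lh_1^c=\varphi(\h_1^c)$ equipped with the norm $\|x\|_{\lh_1^c}=\inf_{x=\varphi(y)}\|y\|_{\h_1^c}$ is nothing but the quotient Banach space $\h_1^c/\ker\varphi$; the kernel $\ker\varphi$ is closed because $\varphi:\h_1^c\to L_1(\M)$ is bounded. The standard duality for quotients then yields an isometric identification
$$(\lh_1^c)^*=(\h_1^c/\ker\varphi)^*=(\ker\varphi)^\perp\subset (\h_1^c)^*.$$
Combining this with the case $p=1$ of Theorem \ref{th:dualityhp} (with $L_\infty^c\mo=\bmo^c$), which gives $(\h_1^c)^*=\bmo^c$ with equivalent norms, reduces the whole problem to describing the annihilator $(\ker\varphi)^\perp$ concretely as a subspace of $\bmo^c$.

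The second step is to make $\ker\varphi$ explicit. Since $\M$ is dense in $\h_1^c$ and $\varphi$ restricts to the identity on $\M$, an element $y\in\h_1^c$ lies in $\ker\varphi$ precisely when it is the $\h_1^c$-limit of a sequence $(y_n)\subset\M$ whose image $\varphi(y)=L_1\text{-}\lim_n y_n$ vanishes, i.e. $y_n\to 0$ in $L_1(\M)$. For $x\in\bmo^c=(\h_1^c)^*$, which in particular lies in $L_2(\M)$ by the definition of $\bmo^c$, the associated functional acts on such a $y$ by $\langle x,y\rangle=\lim_n\tau(x^*y_n)$, by continuity of the functional together with the fact that the duality bracket is $\tau(x^*\cdot)$ on $\M$. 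Hence $x\in(\ker\varphi)^\perp$ if and only if $\lim_n\tau(x^*y_n)=0$ for every sequence $(y_n)\subset\M$ that converges in $\h_1^c$ and satisfies $y_n\to 0$ in $L_1$, which is exactly the defining condition of $\lbmo^c$. This identifies $(\ker\varphi)^\perp$ with $\lbmo^c$ as sets, and the norm equivalence is inherited from $(\h_1^c)^*\simeq\bmo^c$ (with the constants $\nu_1$ and $\sqrt2$ of Theorem \ref{th:dualityhp}).

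The routine verifications are that the value $\langle x,y\rangle$ does not depend on the sequence representing $y$ (it is the evaluation of a fixed functional) and that everything is done with the anti-linear bracket $(x|y)=\tau(x^*y)$ used throughout. The main subtlety, and the reason the auxiliary vanishing condition cannot be dropped, is that we do not know whether $\varphi$ is injective (see Problem \ref{h1csubsetL1}): if it were, $\ker\varphi$ would be trivial and one would simply obtain $(\lh_1^c)^*=\bmo^c$. Since injectivity is open, $(\ker\varphi)^\perp$ may be a proper subspace of $\bmo^c$, and the extra clause in the definition of $\lbmo^c$ is precisely what records the constraint that $\langle x,\cdot\rangle$ annihilates $\ker\varphi$.
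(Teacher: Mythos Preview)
Your proof is correct and follows essentially the same route as the paper: realize $\lh_1^c$ as the quotient $\h_1^c/\ker\varphi$, identify its dual with the annihilator $(\ker\varphi)^\perp$ inside $(\h_1^c)^*=\bmo^c$ via Theorem~\ref{th:dualityhp}, and then describe $(\ker\varphi)^\perp$ explicitly using the density of $\M$ in $\h_1^c$. Your additional remark linking the annihilator condition to the open injectivity problem (Problem~\ref{h1csubsetL1}) is apt and matches the spirit of the paper's subsequent discussion.
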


\begin{proof}
By definition, $\lh_1^c$ is isomorphic to the quotient space $\h_1^c / \ker\varphi$. 
Hence 
$$(\lh_1^c)^*=(\ker \varphi)^\perp \subset (\h_1^c)^*=\bmo^c.$$
This means that 
$$(\lh_1^c)^*=\{x \in \bmo^c : (x|y)_{\bmo^c,\h_1^c}=0 , \forall y \in \ker \varphi\}.$$
By definition, an element $y\in \ker \varphi$ is the limit in $\h_1^c$ of a sequence $(y_n)_n \subset \M$ such that 
$\varphi(y)=L_1\mbox{-} \lim_n \varphi(y_n)=L_1\mbox{-}\lim_n y_n =0$. 
In that case we have $(x|y)_{\bmo^c,\h_1^c}=\lim_n (x|y_n)_{\bmo^c,\h_1^c}=\lim_n \tau(x^*y_n)$, and this ends the proof. 
\qd

\re
Observe that since by definition the space $\lh_1^c$ embeds into $L_1(\M)$, then $L_\infty(\M)$ is weak-$*$ dense in $\lbmo^c$ by Theorem \ref{fsdualLh1}. 
\mar

\subsection{Interpolation}

The end of this section is devoted to the continuous analogue of Theorem \ref{inth1cbmoc-discr}.

\begin{theorem}\label{inthpcbmoc}
Let $1<p<\infty$. Then
$$\h_p^c=[\bmo^c,\h^c_1]_{\frac{1}{p}}\quad \mbox{with equivalent norms}.$$
\end{theorem}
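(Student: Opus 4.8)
The plan is to mirror the proof of Theorem~\ref{intH1cBMOc}, substituting the conditioned tools of this section for the column ones: the spaces $L_p^c(\N_\U,\E_{\M_\U})$ play the role of $K_p^c(\U)$, and $\bmo^c$ the role of $\BMO^c$. Since $\bmo^c\subset L_2(\M)\subset\h_1^c$, the couple $[\bmo^c,\h_1^c]$ is compatible, with $L_2(\M)$ a dense subspace of $\h_1^c$. I would first establish the two one-sided inclusions $[\h_1^c,L_2(\M)]_{\theta}\subset\h_p^c$ for $1<p<2$ with $\frac1p=1-\frac\theta2$, and $[\bmo^c,L_2(\M)]_{2/p}\subset\h_p^c$ for $2<p<\infty$, and then upgrade them to equalities and glue the two resulting half-scales at the common point $L_2(\M)=\h_2^c$.

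For the first inclusion I would use that $\h_p^c$ embeds isometrically into $L_p^c(\N_\U,\E_{\M_\U})$ via $v_\U$ for $1\le p<\infty$ (Lemma~\ref{embeddinghpc}) and is complemented there, by a retraction $\Rcal_\U$ with $\Rcal_\U\circ v_\U=\mathrm{id}$, for $1<p<\infty$ (Proposition~\ref{complementhpc}). Since the spaces $L_p^c(\N_\U,\E_{\M_\U})$ form an interpolation scale (Proposition~\ref{dualintpolLpc(M,E)}(iii)), interpolating the consistent embeddings $v_\U\colon\h_1^c\to L_1^c(\N_\U,\E_{\M_\U})$ and $v_\U\colon L_2(\M)\to L_2^c(\N_\U,\E_{\M_\U})$ gives a bounded map $[\h_1^c,L_2(\M)]_{\theta}\to L_p^c(\N_\U,\E_{\M_\U})$, and applying $\Rcal_\U$ yields $[\h_1^c,L_2(\M)]_{\theta}\subset\h_p^c$ for $1<p<2$.

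For the second inclusion the conditioned setting is in fact simpler than the column one, requiring no intermediate space in the role of $\tilde{\BMO}^c$. Indeed, by \eqref{monotonicityLpcmo} the norms $\|\cdot\|_{bmo^c(\s)}$ increase with $\s$ and $\|x\|_{\bmo^c}=\lim_{\s,\U}\|x\|_{bmo^c(\s)}$, so $i_\U\colon x\mapsto(x)^\bullet$ embeds $\bmo^c$ \emph{isometrically} into $\prodd_\U bmo^c(\s)$, the retraction $\phi\colon(x_\s)^\bullet\mapsto\w L_2\mbox{-}\lim_{\s,\U}x_\s$ satisfying $\phi\circ i_\U=\mathrm{id}$. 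Given $x$ with $\|x\|_{[\bmo^c,L_2(\M)]_{2/p}}\le1$, composing an admissible analytic function with $i_\U$ shows $i_\U(x)\in[\prodd_\U bmo^c(\s),\prodd_\U L_2(\M)]_{2/p}\subset\prodd_\U[bmo^c(\s),L_2(\M)]_{2/p}=\prodd_\U L_p^cmo(\s)$, where the last identity is the discrete reiteration $[bmo^c(\s),L_2(\M)]_{2/p}=L_p^cmo(\s)$ coming from Theorem~\ref{inth1cbmoc-discr} (using $h_2^c(\s)=L_2(\M)$) together with Proposition~\ref{hpcLpcmo}, with constants uniform in $\s$. Applying $\phi$ and invoking $\phi(\prodd_\U L_p^cmo(\s))=L_p^c\mo=\h_p^c$ (Lemma~\ref{le:lim} and Corollary~\ref{Lpmo}) produces $x\in\h_p^c$ with control of norms, hence $[\bmo^c,L_2(\M)]_{2/p}\subset\h_p^c$.

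To finish I would pass to equalities and then merge. Dualizing the first inclusion, whose range is dense ($L_2(\M)$ being dense in both endpoints), and using $(\h_p^c)^*=\h_{p'}^c$ (Corollary~\ref{dualhpc}(i)), $(\h_1^c)^*=\bmo^c$ (Theorem~\ref{th:dualityhp}), and the complex-interpolation duality $([\h_1^c,L_2(\M)]_{\theta})^*=[\bmo^c,L_2(\M)]_{\theta}$ (valid as $L_2(\M)$ is reflexive and dense in $\h_1^c$), I obtain $\h_q^c\subset[\bmo^c,L_2(\M)]_{2/q}$ for $q>2$; combined with the second inclusion this gives the equality $\h_q^c=[\bmo^c,L_2(\M)]_{2/q}$ for $2<q<\infty$. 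Feeding this back, the dual of the first inclusion becomes an isomorphism $\h_{p'}^c=(\h_p^c)^*\to([\h_1^c,L_2(\M)]_{\theta})^*=[\bmo^c,L_2(\M)]_{\theta}=\h_{p'}^c$, and since $\h_p^c$ is reflexive for $1<p<\infty$ this forces $[\h_1^c,L_2(\M)]_{\theta}=\h_p^c$ for $1<p<2$ as well. At this point the two half-scales meet at $L_2(\M)=\h_2^c$, and I would glue them into $[\bmo^c,\h_1^c]$ with Wolff's reiteration theorem: one application, fed by $\h_a^c=[\bmo^c,L_2(\M)]_{2/a}$ and the interior identity $L_2(\M)=[\h_a^c,\h_b^c]_{\psi'}$ from Corollary~\ref{dualhpc}(ii), yields $L_2(\M)=[\bmo^c,\h_b^c]_{\varphi}$; a second application, fed by this identity and $\h_b^c=[L_2(\M),\h_1^c]_{\psi}$, yields $L_2(\M)=[\bmo^c,\h_1^c]_{1/2}$ and $\h_b^c=[\bmo^c,\h_1^c]_{\theta_1}$, after which the reiteration theorem and Corollary~\ref{dualhpc}(ii) propagate $[\bmo^c,\h_1^c]_{1/p}=\h_p^c$ over the whole range $1<p<\infty$. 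The main obstacle is exactly this last gluing: the only available identity reaching $\h_1^c$ is the couple $[\h_1^c,L_2(\M)]$, so $\h_1^c$ can only be coupled with $L_2(\M)$, which forces the argument through two nested applications of Wolff's theorem and a careful verification of its density hypotheses (all supplied by the common dense subspace $L_2(\M)$); keeping the trace pairings consistent throughout the duality steps is the other delicate point.
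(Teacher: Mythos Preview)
Your proposal is correct and uses essentially the same ingredients as the paper: the embedding/complementation of $\h_p^c$ in $L_p^c(\N_\U,\E_{\M_\U})$ for one inclusion, the monotonicity \eqref{monotonicityLpcmo} together with the discrete interpolation for the other, and then duality plus Wolff's theorem to assemble the pieces. The paper organizes the argument slightly differently---it first proves the intermediate identity $\h_q^c=[\h_1^c,\h_p^c]_\theta$ for $1<q<p\le 2$ (both inclusions at once, the reverse coming from $[\bmo^c,L_p^c\mo]_{p/q}\subset L_q^c\mo$) and then invokes Wolff---whereas you mirror the proof of Theorem~\ref{intH1cBMOc} more literally by pivoting at $L_2(\M)$; but this is a cosmetic difference and your two-stage Wolff gluing is fine.
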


\begin{proof}
Observe that by Remark \ref{hpcregular}, we may write $\bmo^c\subset L_2(\M)\subset \h_1^c$. 
This ensures that the couple $[\bmo^c,\h^c_1]$ is compatible. 
As in \cite{bcpy}, we first show that Corollary \ref{dualhpc} (ii) still holds true for $p_1=1$, i.e.,
\begin{equation}\label{inthpc}
\h_q^c  = [\h_{1}^c,\h_{p}^c]_{\theta}  \quad  \mbox{with equivalent norms} 
\end{equation}
for $1<p<\infty, 0<\theta<1$ and $1-\theta+\frac{\theta}{p}=\frac{1}{q}$.
Then, as in the proof of \cite[Theorem $4.1$]{bcpy}, 
we will deduce the required interpolation result by using duality (Theorem \ref{th:dualityhp} and Corollary \ref{dualhpc} (i)) and Wolff's theorem. 
Note that it suffices to prove \eqref{inthpc} for $1<q<p\leq 2$. 
Indeed, Corollary \ref{dualhpc} (ii) combined with an application of Wolff's theorem will yield \eqref{inthpc} for $1<p<\infty$. 
The inclusion $[\h_{1}^c,\h_{p}^c]_{\theta}\subset \h_q^c$ follows easily from Lemma \ref{embeddinghpc} and Proposition \ref{complementhpc}. 
Let $x\in [\h_{1}^c,\h_{p}^c]_{\theta}$ be of norm $<1$. 
Then there exists a function $f\in \F(\h_1^c,\h_p^c)$ such that $f(\theta)=x$ and 
$$\|f\|_{\F(\h_1^c,\h_p^c)}=\max(\sup_t \|f(it)\|_{\h_1^c}, \sup_t \|f(1+it)\|_{\h_p^c}) <1.$$
Since $v_\U$ is isometric by Lemma \ref{embeddinghpc}, we deduce that the function $v_\U \circ f\in \F( L_1^c(\N_\U,\E_{\M_\U}), L_p^c(\N_\U,\E_{\M_\U}))$ 
with $\|v_\U \circ f\|_{\F}=\|f\|_{\F}$. 
Hence $v_\U\circ f(\theta)=v_\U(x) \in [L_1^c(\N_\U,\E_{\M_\U}), L_p^c(\N_\U,\E_{\M_\U})]_\theta$ with norm $<1$. 
Proposition \ref{dualintpolLpc(M,E)} (iii) implies that $v_\U(x)\in L_q^c(\N_\U,\E_{\M_\U})$ for $1-\theta+\frac{\theta}{p}=\frac{1}{q}$. 
Then $x=\Rcal_\U\circ v_\U(x) \in \h_q^c$ by Proposition \ref{complementhpc}. 
Observe that this argument still works for $1<p<\infty$. 
However, we need the restriction to the case $1<p\leq 2$ to prove the reverse inequality by duality. 
We will show that
\begin{equation}\label{intLpcmo}
[\bmo^c,L_p^c\mo]_{\frac{p}{q}}\subset L_q^c\mo \quad  \mbox{with equivalent norms,} 
\end{equation}
for $2\leq p<q<\infty$, and Theorem \ref{th:dualityhp} will yield the remaining inclusion by duality (since $L_p^c\mo$ is reflexive). 
This comes directly from the discrete result and the monotonicity property \eqref{monotonicityLpcmo}. 
Let $x\in [\bmo^c,L_p^c\mo]_{\frac{p}{q}}$ be of norm $<1$. 
Then there exists $f\in \F(\bmo^c,L_p^c\mo)$ such that $f(\frac{p}{q})=x$ and
$\|f\|_{\F(\bmo^c,L_p^c\mo)}<1$. 
By \eqref{monotonicityLpcmo}, we deduce that $f\in \F(bmo^c(\s),L_p^cmo(\s))$ with norm $<1$ for each $\s$. 
Hence the discrete interpolation result gives that $x \in L_q^cmo(\s)$ for each $\s$ with 
$$\|x\|_{L_q^cmo(\s)}\leq C_q \|f\|_{\F(bmo^c(\s),L_p^cmo(\s))} \leq  C_q\|f\|_{\F(\bmo^c,L_p^c\mo)}\leq C_q.$$ 
Taking the supremum over $\s$ we obtain that $x\in L_q^c\mo$ with 
$$\|x\|_{L_q^c\mo}\leq C_q \|x\|_{ [\bmo^c,L_p^c\mo]_{\frac{p}{q}}}.$$
This ends the proof of \eqref{intLpcmo} and the Theorem follows.
\qd

\section{Davis and Burkholder-Rosenthal inequalities}\label{sectD-BR}

We continue our investigation of the Hardy spaces of noncommutative martingales in the continuous setting by studying some decompositions 
of $\H_p^c$ and $\H_p$ involving the conditioned Hardy space $\h_p^c$. 
By considering the adjoint in Section \ref{secthp} we may define the row conditioned Hardy space $\h_p^r$ and obtain the analoguous results. 
After recalling the noncommutative Davis inequalities in the discrete case, 
we will discuss three variants of this decomposition in the case $1\leq p<2$. 
The first one is a regular version of the Davis decomposition involving another diagonal space $h_p^{1_c}$ instead of $h_p^d$. 
The second version, presented in subsection \ref{subsectDRadiscr}, is a Davis decomposition in Randrianantoanina's style 
with simultaneous control of $\h_p$ and $L_2$ norms for $1<p<2$. 
The last variant is a mixed version of the two first ones, i.e., a Davis decomposition in Randrianantoanina's style involving the diagonal space $h_p^{1_c}$. 
Then we will turn to the continuous setting and define the analogue of the diagonal spaces. 
We will extend the three versions of Davis' decomposition to the continuous case for $1<p<2$, 
and, as usual, deduce the inequalities for $2<p<\infty$ by duality. 
However, we will meet some difficulty to describe the dual space of our continuous analogue of the diagonal space. 
Hence the continuous analogue of the Davis and Burkholder-Rosenthal inequalities for $2<p<\infty$ stated in Theorem \ref{Dp>2} is slightly different from the expected result. 

\subsection{The discrete case}\label{subsectDdiscr}

We first recall the analogue of the Davis decomposition for noncommutative martingales in the discrete case, 
then we discuss three stronger versions of this decomposition which will be useful for extending it to the continuous setting. 
Let $(\M_n)_{n\geq 0}$ be a discrete filtration. 

Observe that by combining the noncommutative Burkholder-Gundy inequalities (Theorem \ref{BGdiscr}) with the noncommutative Burkholder-Rosenthal inequalities (Theorem \ref{Bdiscr}) we get 
$$H_p=h_p \quad \mbox{with equivalent norms} \quad \mbox{for } 1<p<\infty.$$
By a dual approach, it was proved in \cite{jm-riesz} and \cite{Per} that this equality still holds true for $p=1$, i.e., 
\begin{equation}\label{H1h1discr}
H_1=h_1 \quad \mbox{with equivalent norms}.
\end{equation}
Moreover, we can show a column version of this result.

\begin{theorem}\label{Davishddisc}
Let $1\leq p <\infty$. Then the discrete spaces satisfy
$$H_p^c=\left\{\begin{array}{cl}
h_p^d+h_p^c& \quad \mbox{for} \quad 1\leq p<2 \\
h_p^d\cap h_p^c& \quad \mbox{for}\quad 2\leq p<\infty
\end{array}\right.\quad \mbox{with equivalent norms}.$$ 
\end{theorem}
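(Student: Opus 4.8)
The plan is to prove the two inclusions separately in each range: $p=2$ is trivial (there every space coincides with $L_2(\M)$), I would then record the elementary embeddings which give one inclusion for free, prove the genuine Davis decomposition for $1\le p<2$, and finally deduce the range $2<p<\infty$ by duality.

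First I would record the easy embeddings. For $1\le p<2$ the space $h_p^d$ embeds into $H_p^c$ because $\|\cdot\|_{p/2}$ is a $p/2$-norm, so
$$\|x\|_{H_p^c}^p=\Big\|\sum_n|d_n(x)|^2\Big\|_{p/2}^{p/2}\le \sum_n\||d_n(x)|^2\|_{p/2}^{p/2}=\sum_n\|d_n(x)\|_p^p=\|x\|_{h_p^d}^p,$$
while $h_p^c\hookrightarrow H_p^c$ follows from the reverse noncommutative Doob inequality (Theorem $7.1$ of \cite{JX}) applied to the positive sequence $(|d_n(x)|^2)_n$ at index $p/2<1$, which yields $\|S_c(x)\|_p\le C_p\|s_c(x)\|_p$. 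Together these give the bounded inclusion $h_p^d+h_p^c\subset H_p^c$ for $1\le p<2$. Dually, for $2\le p<\infty$ the noncommutative dual Doob inequality gives $\|s_c(x)\|_p\le {\delta'}_{p/2}^{1/2}\|S_c(x)\|_p$, i.e. $H_p^c\hookrightarrow h_p^c$, and the mixed-norm diagonal estimate $(\sum_n\|d_n(x)\|_p^p)^{1/p}\le\|S_c(x)\|_p$ gives $H_p^c\hookrightarrow h_p^d$; hence $H_p^c\subset h_p^c\cap h_p^d$ boundedly. In both ranges one inclusion is thus elementary, and only the opposite one carries content.

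The heart of the matter is the Davis decomposition for $1\le p<2$: given $x\in H_p^c$ I must produce $x=d+c$ with $d\in h_p^d$, $c\in h_p^c$ and $\|d\|_{h_p^d}+\|c\|_{h_p^c}\le C_p\|x\|_{H_p^c}$. This is where the classical stopping-time argument is unavailable, so I would instead use the noncommutative construction of \cite{jm-riesz, Per}: one splits each martingale difference according to a predictable comparison of $|d_n(x)|^2$ with its conditioned version, placing the diagonal remainder into $h_p^d$ and the conditioned part into $h_p^c$. I would carry this out first for $1<p<2$ and, separately, at the endpoint $p=1$, where the argument is most delicate and ultimately rests on the $BMO^c$-versus-$bmo^c$ identity, namely the dual statement that $BMO^c$ equals the intersection of $bmo^c$ with the diagonal space $\{\sup_n\|d_n(\cdot)\|_\infty<\infty\}$, together with the $L_2$-density needed to pass from the predual level back to $H_1^c$; this endpoint is precisely the content of \eqref{H1h1discr} and its column refinement.

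Finally, for $2<p<\infty$ I would deduce $H_p^c=h_p^c\cap h_p^d$ from the case $1<p'<2$ by duality. Using the anti-linear duality bracket, Corollary \ref{discrdual} gives $(H_{p'}^c)^*=H_p^c$, Corollary \ref{discrdualhpc} gives $(h_{p'}^c)^*=h_p^c$, and the analogous $\ell_{p'}(L_{p'}(\M))$-duality gives $(h_{p'}^d)^*=h_p^d$; since $h_{p'}^d\cap h_{p'}^c$ is dense in each summand, the general identity $(X+Y)^*=X^*\cap Y^*$ turns $H_{p'}^c=h_{p'}^d+h_{p'}^c$ into $H_p^c=h_p^d\cap h_p^c$ with equivalent norms. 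The main obstacle throughout is the endpoint $p=1$ decomposition, which does not come from duality (there is no reflexivity at $p'=\infty$) and must be established by hand; reconciling the two competing descriptions of $BMO^c$ and justifying the weak-$*$ density step there are the genuinely delicate points.
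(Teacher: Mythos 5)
Your peripheral steps are fine: the embeddings $h_p^d\subset H_p^c$ (via the $p/2$-triangle inequality) and $h_p^c\subset H_p^c$ (via the reverse dual Doob inequality of \cite{JX}) for $1\le p<2$, the reverse embeddings for $p\ge 2$, and the deduction of the case $2<p<\infty$ from $1<p'<2$ through $(H_{p'}^c)^*=H_p^c$, $(h_{p'}^c)^*=h_p^c$, $(h_{p'}^d)^*=h_p^d$ and $(X+Y)^*=X^*\cap Y^*$ all hold and match the intended argument. The genuine gap is the central claim, $H_p^c\subset h_p^d+h_p^c$ for $1\le p<2$. For $1<p<2$ you propose to ``split each martingale difference according to a predictable comparison of $|d_n(x)|^2$ with its conditioned version,'' citing \cite{jm-riesz,Per}; but no such construction appears in those references --- both prove the decomposition by a \emph{dual} approach --- and as described it cannot be made to work: $|d_n(x)|^2$ and $\E_{n-1}|d_n(x)|^2$ are in general incomparable positive operators, and the absence of stopping times means a ``predictable comparison'' has no noncommutative meaning. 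This obstruction is precisely why the Davis decomposition was first obtained dually; a constructive proof came only later (see \cite{RandX} and Proposition \ref{BRDexpldicr-hp1c}, where the splitting uses weights $w_n=\big(\sum_{k\le n}|d_k(x)|^2\big)^{\frac12-\frac p4}$ built from the running square function, not a comparison with the conditioned square function).

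Moreover, the dual identity you reserve for the endpoint $p=1$ is in fact the whole proof, uniformly in $1\le p<2$: one identifies $(h_p^d+h_p^c)^*=h_{p'}^d\cap L_{p'}^cmo$ (using Theorem \ref{fsdualityhpcdiscr}, valid down to $p=1$) and shows $h_{p'}^d\cap L_{p'}^cmo\subset L_{p'}^cMO=(H_p^c)^*$ (Theorem \ref{FSdiscr}); since both dual norms are computed against the same dense set of finite martingales, Hahn--Banach yields the equivalence of $\|\cdot\|_{H_p^c}$ and $\|\cdot\|_{h_p^d+h_p^c}$ there. The required dual estimate is exactly \eqref{decLpcMO}: from $\E_n\big(\sum_{k\ge n}|d_k(x)|^2\big)=|d_n(x)|^2+\E_n\big(\sum_{k>n}|d_k(x)|^2\big)$ and the triangle inequality in $L_{p'/2}(\M;\ell_\infty)$ one gets $\|x\|_{L_{p'}^cMO}\lesssim\max\big(\|{\sup_n}^+|d_n(x)|^2\|_{p'/2}^{1/2},\|x\|_{L_{p'}^cmo}\big)$, and the diagonal term is dominated by $\|x\|_{h_{p'}^d}$ because $|d_n(x)|^2\le\big(\sum_k|d_k(x)|^{p'}\big)^{2/p'}$ by operator monotonicity of $t\mapsto t^{2/p'}$. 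So the repair is not a new construction but running your $p=1$ argument at every $1\le p<2$; as written, your proposal leaves the theorem's central inclusion unproved.
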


\subsubsection{The ''regular`` version of the discrete Davis decomposition}\label{subsectDregdiscr}

The Davis decomposition stated in Theorem \ref{Davishddisc} can be refined to get a stronger decomposition, 
involving another diagonal space called $h_p^{1_c}$. 
The regularity properties satisfied by this space make it a good tool for the sequel. 

To see how we may refine Theorem \ref{Davishddisc}, we briefly recall the strategy of its proof. 
We first show the decomposition for $1\leq p <2$, then the case $2<p<\infty$ is deduced by duality. 
For $1\leq p <2$ the inclusion $h_p^d+h_p^c \subset H_p^c$ is easy, and the reverse inclusion is proved by a dual approach. 
More precisely, we can show that  
$$(h_p^d+h_p^c)^*= h_{p'}^d \cap L_{p'}^c mo\subset L_{p'}^cMO =(H_p^c)^*.$$
A close look at the dual spaces yields a stronger decomposition. 
Indeed, observe that  for $2< p' \leq \infty$ and $x\in L_2(\M)$, by the triangle inequality in $L_{p'/2}(\M; \ell_\infty)$ we can write 
$$\Big\|{\sup_{n\geq 0}}^+\E_n\Big(\sum_{k\geq n} |d_k(x)|^2\Big)\Big\|_{p'/2}
\simeq \|{\sup_{n\geq 0}}^+|d_n(x)|^2\|_{p'/2} + \Big\|{\sup_{n\geq 0}}^+\E_n\Big(\sum_{k > n} |d_k(x)|^2\Big)\Big\|_{p'/2}
.$$
Hence we get 
\begin{equation}\label{decLpcMO}
\|x\|_{L_{p'}^cMO} \simeq \max \big( \|(d_n(x))_n\|_{L_{p'}(\M;\ell_\infty^c)} ,  \|x\|_{L_{p'}^cmo}\big).
\end{equation}
Recall that for  $2< p' \leq \infty$, $L_{p'}(\M;\ell_\infty^c)$ is defined in \cite{JD,M} as
the space of all sequences $x=(x_n)_{n\geq 0}$ in $L_{p'}(\M)$ such that
$$\|(x_n)_{n\geq 0}\|_{L_{p'}(\M;\ell_\infty^c)}=\|(|x_n|^2)_{n\geq 0}\|_{L_{p'/2}(\M;\ell_\infty)}^{1/2}
=\|{\sup_{n\geq 0}}^+|x_n|^2\|_{p'/2}^{1/2}<\infty.$$
Note that a sequence $x=(x_n)_{n\geq 0}$ in $L_{p'}(\M)$ belongs to $L_{p'}(\M;\ell_\infty^c)$ if and only if there exist 
$a \in L_{p'}(\M)$ and $y=(y_n)_{n\geq 0} \subset L_\infty(\M)$ such that $x_n=y_n a$ for all $n \geq 0$. 
Moreover, 
$$\|x\|_{L_{p'}(\M;\ell_\infty^c)}=\inf\{\sup_{n\geq 0}\|y_n\|_\infty \|a\|_{p'}\},$$
where the infimum runs over all factorizations as above.

Inspired by the duality between $L_p(\M;\ell_1)$ and $L_{p'}(\M;\ell_\infty)$ proved in \cite{JD}, 
we define its predual space $L_p(\M;\ell_1^c)$ as follows. 
Let $1\leq p < 2$ and $\frac1p=\frac12+\frac1q$. 
A sequence $x=(x_n)_{n\geq 0}$ is in $L_p(\M;\ell_1^c)$ if there exist $b_{k,n} \in L_2(\M)$ and $a_{k,n}\in L_q(\M)$ such that 
\begin{equation}\label{decLp1}
x_n = \sum_{k\geq 0} b_{k,n}^*a_{k,n} 
\end{equation}
for all $n\geq 0$ and 
$$\sum_{k,n\geq 0} |b_{k,n}|^2 \in L_1(\M), \quad \sum_{k,n\geq 0} |a_{k,n}|^2 \in L_{q/2}(\M).$$
We equip $L_p(\M;\ell_1^c)$ with the norm
$$\|x\|_{L_p(\M;\ell_1^c)}=\inf\Big\{\Big(\sum_{k,n\geq 0} \|b_{k,n}\|_2^2 \Big)^{1/2}
\Big\|\Big(\sum_{k,n\geq 0} |a_{k,n}|^2\Big)^{1/2}\Big\|_q\Big\},$$
where the infimum is taken over all factorizations \eqref{decLp1}. 
In fact this space can be described in an easier way.

\begin{lemma}\label{ballLp1}
 Let $1\leq p< 2$ and $\frac1p=\frac12+\frac1q$. 
Then the unit ball of $L_p(\M;\ell_1^c)$ is the set of all sequences $(b_na_n)_{n\geq 0}$ such that 
 \begin{equation}\label{l1ataom}
\Big(\sum_{n\geq 0} \|b_n\|_2^2\Big)^{1/2} \Big\|\Big(\sum_{n\geq 0} |a_n|^2\Big)^{1/2}\Big\|_q \leq 1.
\end{equation}
\end{lemma}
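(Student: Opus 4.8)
The plan is to prove the lemma by showing that the two descriptions define the \emph{same} norm, so that the unit balls coincide. Writing $\|x\|_{\mathrm{sg}}$ for the infimum of $\big(\sum_{n}\|b_n\|_2^2\big)^{1/2}\big\|\big(\sum_{n}|a_n|^2\big)^{1/2}\big\|_q$ over all single-index factorizations $x_n=b_n a_n$ with $b_n\in L_2(\M)$, $a_n\in L_q(\M)$, I will establish $\|x\|_{L_p(\M;\ell_1^c)}=\|x\|_{\mathrm{sg}}$. One inequality is immediate: a single-index factorization is a special double-index one, obtained by taking $b_{k,n}=\delta_{k,n}b_n^*$ and $a_{k,n}=\delta_{k,n}a_n$, which gives $\sum_k b_{k,n}^*a_{k,n}=b_na_n$ and leaves $\sum_{k,n}\|b_{k,n}\|_2^2=\sum_n\|b_n\|_2^2$ and $\sum_{k,n}|a_{k,n}|^2=\sum_n|a_n|^2$ unchanged; hence $\|x\|_{L_p(\M;\ell_1^c)}\le \|x\|_{\mathrm{sg}}$. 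The real content is the reverse inequality, i.e. collapsing an arbitrary double-index factorization into a single-index one without increasing the norm.

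To do this I fix a factorization $x_n=\sum_{k\ge0}b_{k,n}^*a_{k,n}$ and, for each $n$, regard the families $B_n=\sum_k e_{k,0}\ten b_{k,n}$ and $A_n=\sum_k e_{k,0}\ten a_{k,n}$ as column vectors in $L_2(\M;\ell_2^c)$ and $L_q(\M;\ell_2^c)$ respectively, so that $x_n=\langle B_n,A_n\rangle$ in the sense of the $L_p(\M)$-valued bracket. The key step is the polar decomposition in the column module $L_q(\M;\ell_2^c)$: setting $\alpha_n=\langle A_n,A_n\rangle^{1/2}=\big(\sum_k a_{k,n}^*a_{k,n}\big)^{1/2}\in L_q(\M)$, there is a column contraction $W_n=\sum_k e_{k,0}\ten w_{k,n}$ with $\langle W_n,W_n\rangle\le 1$ and $A_n=W_n\,\alpha_n$. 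By right $\M$-modularity of the bracket this gives
$$x_n=\langle B_n,A_n\rangle=\langle B_n,W_n\alpha_n\rangle=\langle B_n,W_n\rangle\,\alpha_n,$$
so that $b_n:=\langle B_n,W_n\rangle=\sum_k b_{k,n}^*w_{k,n}\in L_2(\M)$ and $a_n:=\alpha_n$ furnish the desired single-index factorization $x_n=b_na_n$.

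It then remains to control the two factors. The $L_q$-factor is unchanged, since $\sum_n|a_n|^2=\sum_n\langle A_n,A_n\rangle=\sum_{k,n}|a_{k,n}|^2$. For the $L_2$-factor I will use the Cauchy--Schwarz inequality for the module inner product together with $\langle W_n,W_n\rangle\le 1$ (equivalently, viewing $b_n=\sum_k b_{k,n}^*w_{k,n}$ as a row-times-column product and applying Hölder), which yields $b_nb_n^*\le\|\langle W_n,W_n\rangle\|_\infty\,\langle B_n,B_n\rangle\le\langle B_n,B_n\rangle$ and hence $\|b_n\|_2^2\le\tau\langle B_n,B_n\rangle=\sum_k\|b_{k,n}\|_2^2$. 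Summing gives $\sum_n\|b_n\|_2^2\le\sum_{k,n}\|b_{k,n}\|_2^2$, and multiplying the two estimates shows that the single-index product is bounded by the double-index one; taking the infimum over double-index factorizations gives $\|x\|_{\mathrm{sg}}\le\|x\|_{L_p(\M;\ell_1^c)}$.

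The main obstacle is precisely this collapsing of the index $k$: a priori the $a_{k,n}$ share no common right factor, and it is the module polar decomposition that produces the single element $\alpha_n=\langle A_n,A_n\rangle^{1/2}$ through which they all factor simultaneously, the contraction $W_n$ absorbing the correction into a term the module Cauchy--Schwarz keeps under control. Once the norms are shown to agree, the identification of the unit ball of $L_p(\M;\ell_1^c)$ with the set of sequences $(b_na_n)$ satisfying \eqref{l1ataom} follows, the two inclusions being exactly the diagonal embedding (for the set into the ball) and the construction above, which turns any double-index factorization with product $\le1$ into an honest single-index one meeting \eqref{l1ataom}; the attainment implicit in the unit-ball formulation is handled by the usual weak-compactness argument on the factorizing families.
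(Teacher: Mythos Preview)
Your proof is correct and follows essentially the same route as the paper: both collapse a double-index factorization $x_n=\sum_k b_{k,n}^*a_{k,n}$ by setting $a_n=\big(\sum_k|a_{k,n}|^2\big)^{1/2}$ and $b_n=\sum_k b_{k,n}^* v_{k,n}$ with $a_{k,n}=v_{k,n}a_n$, then bound $\|b_n\|_2$ via Cauchy--Schwarz using $\sum_k|v_{k,n}|^2\le 1$. Your module polar decomposition $A_n=W_n\alpha_n$ is exactly the paper's concrete step $v_{k,n}=a_{k,n}a_n^{-1}$ (the paper first approximates to make $a_n$ invertible, whereas your polar decomposition handles this uniformly); and you correctly flag the attainment issue for the closed unit ball, which the paper leaves implicit.
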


\begin{proof}
It is clear that a sequence $(b_na_n)_{n\geq 0}$ satisfying \eqref{l1ataom} is in the unit ball of $L_p(\M;\ell_1^c)$. 
Conversely, let $x=(x_n)_{n\geq 0}$ be such that $x_n = \sum_{k\geq 0} b_{k,n}^*a_{k,n}$ with 
$$\Big(\sum_{k,n\geq 0} \|b_{k,n}\|_2^2 \Big)^{1/2}
\Big\|\Big(\sum_{k,n\geq 0} |a_{k,n}|^2\Big)^{1/2}\Big\|_q\leq 1.$$ 
We first set 
$a'_n= \Big(\sum_{k\geq 0} |a_{k,n}|^2\Big)^{1/2}$. 
By approximation, we may assume that the $a'_n$'s are invertible. 
Then considering 
$$v_{k,n}=a_{k,n}{a'}_n^{-1}
\quad \mbox{and} \quad 
b'_n=\sum_{k\geq 0} b_{k,n}^*v_{k,n},$$
we can write 
$ x_n =b'_na'_n$ for all $n\geq 0$. 
Moreover, 
$$\Big\|\Big(\sum_{n\geq 0} |a'_n|^2\Big)^{1/2}\Big\|_q =\Big\|\Big(\sum_{n,k\geq 0 } |a_{k,n}|^2\Big)^{1/2}\Big\|_q$$ 
and since $\sum_{k\geq 0} |v_{k,n}|^2= 1$ we get 
$$\begin{array}{ccl}
  \displaystyle\sum_{n\geq 0}\|b'_n\|_2^2 &= &\displaystyle\sum_{n\geq 0}\Big\|\displaystyle\sum_{k\geq 0} b_{k,n}^*v_{k,n}\Big\|_2^2\\
  &\leq&\displaystyle\sum_{n\geq 0}\Big\|\Big(\displaystyle\sum_{k\geq 0} b_{k,n}^*b_{k,n}\Big)^{1/2}\Big\|_2^2
\Big\|\Big(\displaystyle\sum_{k\geq 0} v_{k,n}^*v_{k,n}\Big)^{1/2}\Big\|_\infty^2\\
  &=&\displaystyle\sum_{k,n\geq 0}\|b_{k,n}\|_2^2.
\end{array}$$
Hence $(a'_n)$ and $(b'_n)$ satisfy \eqref{l1ataom}. 
\qd

\re
This implies that we have a bounded map
$$\left\{\begin{array}{ccc}
L_p(\M;\ell_1^c)&\longrightarrow &L_p(\M;\ell_2^c)\\
(b_na_n)_{n\geq 0}&\longmapsto & \displaystyle\sum_{n\geq 0} e_{n,0} \ten b_na_n
\end{array}\right..$$
Indeed, we can write
$$\sum_n e_{n,0} \ten b_na_n=\Big(\sum_n e_{n,n} \ten b_n\Big)\Big(\sum_n e_{n,0} \ten a_n\Big)$$
and the H\"{o}lder inequality gives for $\frac1p=\frac12+\frac1q$
$$\Big\|\sum_n e_{n,0} \ten b_na_n\Big\|_p \leq \Big\|\sum_n e_{n,n} \ten b_n\Big\|_2\Big\|\sum_n e_{n,0} \ten a_n\Big\|_q
=\Big(\sum_n \|b_n\|_2^2\Big)^{1/2} \Big\|\Big(\sum_n |a_n|^2\Big)^{1/2}\Big\|_q .$$
\mar

We can now state the following duality. 

\begin{prop}\label{dualLp1}
 Let $1\leq p <2$. 
Then 
$$(L_p(\M;\ell_1^c))^*=L_{p'}(\M;\ell_\infty^c)\quad \mbox{isometrically}.$$
\end{prop}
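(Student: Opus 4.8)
The plan is to realize the duality through the bilinear pairing $\langle y,x\rangle=\sum_{n\ge0}\tau(y_n^*x_n)$ for $y=(y_n)\in L_{p'}(\M;\ell_\infty^c)$ and $x=(x_n)\in L_p(\M;\ell_1^c)$, and to establish the two matching inequalities: that every such $y$ induces a functional of norm at most $\|y\|_{L_{p'}(\M;\ell_\infty^c)}$, and that every bounded functional arises this way with exact control of the norm. Throughout I keep the relation $\frac1p=\frac12+\frac1q$, so that $\frac1{q'}=\frac12+\frac1{p'}$ and, as a numerical identity used crucially below, $(p'/2)'=q/2$.

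First I would prove the contractive embedding $L_{p'}(\M;\ell_\infty^c)\hookrightarrow(L_p(\M;\ell_1^c))^*$. Fix $x$ in the unit ball; by Lemma \ref{ballLp1} write $x_n=b_na_n$ with $(\sum_n\|b_n\|_2^2)^{1/2}\,\|(\sum_n|a_n|^2)^{1/2}\|_q\le1$, and factorize $y_n=z_nc$ with $z_n\in\M$ and $c\in L_{p'}(\M)$. Then $\langle y,x\rangle=\sum_n\tau\big((b_n^*y_n)^*a_n\big)$, and Hölder applied to the column vectors $V=\sum_n e_{n,0}\otimes b_n^*y_n$ and $A=\sum_n e_{n,0}\otimes a_n$ yields $|\langle y,x\rangle|\le\|(\sum_n|b_n^*y_n|^2)^{1/2}\|_{q'}\,\|(\sum_n|a_n|^2)^{1/2}\|_q$. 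The first factor equals $\|c^*Kc\|_{q'/2}^{1/2}$ with $K=\sum_n z_n^*b_nb_n^*z_n\ge0$, and since $\|c^*Kc\|_{q'/2}^{1/2}=\|K^{1/2}c\|_{q'}\le\|K^{1/2}\|_2\,\|c\|_{p'}=\tau(K)^{1/2}\|c\|_{p'}$ while $\tau(K)\le\sup_n\|z_n\|_\infty^2\sum_n\|b_n\|_2^2$, I would conclude $|\langle y,x\rangle|\le\|y\|_{L_{p'}(\M;\ell_\infty^c)}$.

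The substance of the argument is the reverse inclusion. Given $\varphi$ of norm $\le1$, testing on sequences supported at a single index $n$ (where the $L_p(\M;\ell_1^c)$-norm agrees with the $L_p(\M)$-norm by Hölder) produces $y_n\in L_{p'}(\M)$ with $\varphi(x)=\sum_n\tau(y_n^*x_n)$ on finitely supported $x$. To bound $\|(y_n)\|_{L_{p'}(\M;\ell_\infty^c)}=\|(|y_n|^2)\|_{L_{p'/2}(\M;\ell_\infty)}^{1/2}$ I would invoke the positive-sequence description of the $L_{p'/2}(\M;\ell_\infty)$-norm from \cite{JD,jx-erg}: it is the supremum of $\sum_n\tau(|y_n|^2w_n)$ over finitely supported $w_n\ge0$ with $\|\sum_n w_n\|_{q/2}\le1$ (here $(p'/2)'=q/2$). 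For such a test sequence set $x_n=y_nw_n$; then $a_n=w_n^{1/2}$ and $b_n=y_nw_n^{1/2}$ give a factorization $x_n=b_na_n$ with $\|(\sum_n|a_n|^2)^{1/2}\|_q=\|\sum_n w_n\|_{q/2}^{1/2}\le1$ and $\sum_n\|b_n\|_2^2=\sum_n\tau(|y_n|^2w_n)=:\rho$. Hence $\|x\|_{L_p(\M;\ell_1^c)}\le\rho^{1/2}$, while $\varphi(x)=\rho$, so $\rho\le\|\varphi\|\,\rho^{1/2}$ and $\rho\le\|\varphi\|^2$. Taking the supremum over $(w_n)$ gives $\|(y_n)\|_{L_{p'}(\M;\ell_\infty^c)}\le\|\varphi\|$, and combined with the first step the duality is isometric.

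The main obstacle is precisely this second inclusion, and within it two delicate points: choosing the optimal test element $x_n=y_nw_n$ so that the induced factorization feeds exactly the quantity $\sum_n\tau(|y_n|^2w_n)$ both into the value $\varphi(x)$ and into the norm estimate, and justifying the passage from the single-index functionals to the global formula $\varphi(x)=\sum_n\tau(y_n^*x_n)$ by density of finitely supported sequences in $L_p(\M;\ell_1^c)$. Restricting the test sequences $(w_n)$ to be finitely supported keeps every manipulation inside honest $L_r$-spaces and sidesteps convergence questions, so that the only external input required is the $L_{p'/2}(\M;\ell_\infty)$ duality formula already recorded earlier in the paper.
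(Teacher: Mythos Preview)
Your argument is correct and follows the same overall skeleton as the paper's proof: the pairing $\sum_n\tau(y_n^*x_n)$, the factorization from Lemma~\ref{ballLp1} for the embedding direction, and the positive--sequence formula for $\|{\sup_n}^+|y_n|^2\|_{p'/2}$ for the reverse direction.

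The execution differs in two minor but pleasant ways. For the embedding $L_{p'}(\M;\ell_\infty^c)\hookrightarrow(L_p(\M;\ell_1^c))^*$, the paper groups the trace as $\tau((y_na_n^*)^*b_n)$, applies Cauchy--Schwarz to split off $(\sum_n\|b_n\|_2^2)^{1/2}$, and then invokes the $L_{p'/2}(\M;\ell_\infty)$--$L_{q/2}(\M;\ell_1)$ duality to bound $\sum_n\tau(|y_n|^2|a_n|^2)$; you instead group as $\tau((b_n^*y_n)^*a_n)$, apply H\"older with exponents $q',q$ on the column vectors, and exploit the factorization $y_n=z_nc$ directly. Your route is a touch more elementary in that it never calls on the $\ell_\infty$--$\ell_1$ duality, only on the definition of $L_{p'}(\M;\ell_\infty^c)$. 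For the reverse inclusion, the paper inserts an $L_2$--duality step, writing $(\sum_n\|y_nc_n^{1/2}\|_2^2)^{1/2}=\sup\{\sum_n\tau((y_nc_n^{1/2})^*b_n):\sum_n\|b_n\|_2^2\le1\}$ before recognizing the right side as $\varphi(x)$; your choice $x_n=y_nw_n$ is exactly the optimal $b_n$ (unnormalized), and your self--referential inequality $\rho\le\|\varphi\|\,\rho^{1/2}$ packages the same computation more compactly. The density of finitely supported sequences, which you flag, is handled in the paper by the observation that $\ell_1(L_p(\M))\subset L_p(\M;\ell_1^c)$ contractively and densely.
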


\begin{proof}
 Let $x$ be in the unit ball of $L_p(\M;\ell_1^c)$ and $y\in L_{p'}(\M;\ell_\infty^c)$. 
By Lemma \ref{ballLp1}, for all $n\geq 0$ we can decompose $x_n=b_na_n$ where $(b_n)$ and $(a_n)$ satisfy \eqref{l1ataom}. 
Then we deduce from the Cauchy-Schwarz inequality and the duality between $L_s(\M;\ell_1)$ and $L_{p'/2}(\M;\ell_\infty)$ that
\begin{align*}
 \sum_{n\geq 0} \tau(y_n^*x_n)
&=\sum_{n\geq 0} \tau(y_n^*b_na_n)=\sum_{n\geq 0} \tau((y_na_n^*)^*b_n)\\
&\leq \Big(\sum_{n\geq 0}\|y_na_n^*\|_2^2\Big)^{1/2} \Big(\sum_{n\geq 0}\|b_n\|_2^2\Big)^{1/2}
=\Big(\sum_{n\geq 0}\tau(|y_n|^2|a_n|^2)\Big)^{1/2} \Big(\sum_{n\geq 0}\|b_n\|_2^2\Big)^{1/2}\\
&\leq \|{\sup_n}^+ |y_n|^2\|_{p'/2}^{1/2} \Big\|\sum_n |a_n|^2\Big\|_s^{1/2} \Big(\sum_{n\geq 0}\|b_n\|_2^2\Big)^{1/2},
\end{align*}
where $s$ denotes the conjugate index of $\frac{p'}{2}$. 
An easy calculation gives $s=\frac{q}{2}$, 
and this yields the contractive inclusion $L_{p'}(\M;\ell_\infty^c) \subset (L_p(\M;\ell_1^c))^*$. 

Conversely, let $\varphi$ be a norm one functional on $L_p(\M;\ell_1^c)$. 
We observe that 
\begin{equation}\label{l1Lpembed}
\ell_1(L_p(\M)) \subset L_p(\M;\ell_1^c) \quad \mbox{contractively}.
\end{equation}
Indeed, for a finite sequence $x=(x_n)_{n=0}^N$ we can write
$$x=\sum_{i=1}^N x^i,$$
where $x^i=(x^i_n)_{n\geq 0}$ with $x^i_n=\delta_{n,i}x_i$. 
By setting  
$$b^i_n=\delta_{n,i} u_i|x_i|^{p/2} 
\quad \mbox{and} \quad
a^i_n=\delta_{n,i} |x_i|^{p/q},$$
where $\frac{1}{p}=\frac{1}{2}+\frac{1}{q}$ and 
$x_i=u_i|x_i|$ denotes the polar decomposition of $x_i$, 
we obtain that 
$$\|x^i\|_{L_p(\M;\ell_1^c)}
\leq \Big(\sum_{n\geq 0}\|b_n^i\|_2^2\Big)^{1/2}\Big\|\Big(\sum_{n\geq 0}|a_n^i|^2\Big)^{1/2}\Big\|_q
=\|u_i|x_i|^{p/2}\|_2\||x_i|^{p/q}\|q
\leq \|x_i\|_p,$$
for all $1\leq i \leq N$. 
Then 
$$\|x\|_{L_p(\M;\ell_1^c)}
\leq \sum_{i=1}^N \|x^i\|_{L_p(\M;\ell_1^c)}
\leq \sum_{i=1}^N \|x_i\|_p = \|x\|_{\ell_1(L_p(\M))}.$$
Since the family of finite sequences is dense in $\ell_1(L_p(\M))$, 
this shows \eqref{l1Lpembed}. 
Moreover, the density of the family of finite sequences in $L_p(\M;\ell_1^c)$ implies that $\ell_1(L_p(\M))$ is dense in  $L_p(\M;\ell_1^c)$. 
Hence there exists a sequence $y=(y_n)$ in $L_{p'}(\M)$ 
such that $\varphi(x)=\sum_n\tau(y_n^*x_n)$ for all $x=(x_n)$ in $\ell_1(L_p(\M))$. 
Then
\begin{align*}
 \|y\|_{L_{p'}(\M;\ell_\infty^c)}
 &= \|{\sup_n}^+|y_n|^2\|_{p'/2}^{1/2}\\
 &= \sup\Big\{\Big(\displaystyle\sum_n \tau\Big(|y_n|^2c_n\Big)\Big)^{1/2} : 
 c_n \in L_{q/2}^+(N), 
\Big\|\displaystyle\sum_n c_n\Big\|_{q/2}\leq 1\Big\}\\
 &= \sup\Big\{\Big(\displaystyle\sum_{n} \|y_nc_n^{1/2}\|_2^2\Big)^{1/2} : 
  c_n \in L_{q/2}^+(N), 
\Big\|\displaystyle\sum_n c_n\Big\|_{q/2}\leq 1 \Big\}\\
 &=\sup\Big\{\displaystyle\sum_{n} \tau((y_nc_n^{1/2})^*b_n) : 
  c_n \in L_{q/2}^+(N), 
\Big\|\displaystyle\sum_n c_n\Big\|_{q/2}\leq 1 , 
 \displaystyle\sum_{n}\|b_{n}\|_2^2\leq 1\Big\} \\
 &= \sup\Big\{ \varphi(x) : x_n=b_nc_n^{1/2},  c_n \in L_{q/2}^+(N), 
\Big\|\displaystyle\Big(\sum_n |c_n^{1/2}|^2\Big)^{1/2}\Big\|_{q}\leq 1 , 
 \displaystyle\sum_{n}\|b_{n}\|_2^2\leq 1\Big\} \\
&\leq 1.
 \end{align*}
Thus $y \in L_{p'}(\M;\ell_\infty^c)$. 
By density, 
the functional $\varphi$ is uniquely determined by the sequence $(y_n)$ and the duality is proved. 
\qd

Let $h_p^{1_c}$ (resp. $h_{p'}^{\infty_c}$) be the subspace of $L_p(\M;\ell_1^c)$ (resp. $L_{p'}(\M;\ell_\infty^c)$) 
consisting of all martingale difference sequences.

\begin{lemma}\label{hp1comp}
Let $1\leq p \leq \infty$. Then the discrete spaces satisfy
\begin{enumerate}
 \item[(i)] For $1\le p< 2$, $h_p^{1_c}$ is a complemented subspace of $L_p(\M;\ell_1^c)$.
 \item[(ii)] For $2<p\leq \infty$, $h_{p}^{\infty_c}$ is a complemented subspace of $L_{p}(\M;\ell_\infty^c)$.
\end{enumerate}
\end{lemma}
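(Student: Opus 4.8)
The plan is to exhibit a single bounded projection, the \emph{diagonal martingale difference projection}, and to deduce the two assertions from one another by duality. For a sequence $x=(x_n)_{n\ge 0}$ set
$$D(x)=(d_n(x_n))_{n\ge 0}, \qquad d_n(x_n)=\E_n(x_n)-\E_{n-1}(x_n),$$
with the convention $\E_{-1}=\E_0$, so that $d_0(x_0)=\E_0(x_0)$. If $(x_n)_n$ is already a martingale difference sequence, i.e. $x_n\in L_p(\M_n)$ with $\E_{n-1}(x_n)=0$, then $D(x)=x$; conversely each $d_n(x_n)$ lies in $L_p(\M_n)$ and satisfies $\E_{n-1}(d_n(x_n))=0$. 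Hence, as soon as $D$ is bounded on the ambient space, it is a bounded projection onto the martingale difference sequences, that is onto $h_p^{1_c}$ (resp. $h_{p'}^{\infty_c}$). First I would prove (ii), and then obtain (i) by passing to the pre-adjoint.

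For (ii), fix $2<p\le \infty$ and $x=(x_n)_n\in L_p(\M;\ell_\infty^c)$. Since $\E_{n-1}$ is a unital completely positive map, the Kadison--Schwarz inequality gives $|\E_{n-1}(x_n)|^2\le \E_{n-1}(|x_n|^2)$ for every $n$, and the same holds with $\E_n$ in place of $\E_{n-1}$. Using the monotonicity of ${\sup}^+$ on positive sequences together with the noncommutative Doob maximal inequality recalled after Theorem \ref{BGdiscr}, in its version for a varying positive sequence $(a_n)$ (discussed below), I would estimate
$$\|(\E_{n-1}(x_n))_n\|_{L_p(\M;\ell_\infty^c)}^2=\big\|{\sup_n}^+|\E_{n-1}(x_n)|^2\big\|_{p/2}\le \big\|{\sup_n}^+\E_{n-1}(|x_n|^2)\big\|_{p/2}\le \delta_{p/2}\,\|(x_n)_n\|_{L_p(\M;\ell_\infty^c)}^2,$$
and likewise for $(\E_n(x_n))_n$. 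Note $p/2>1$, so $\delta_{p/2}$ is finite, and at the endpoint $p=\infty$ the inequality holds with constant $1$, since ${\sup}^+$ then reduces to the supremum of the operator norms. Consequently $D=(\E_n(\cdot))_n-(\E_{n-1}(\cdot))_n$ is bounded on $L_p(\M;\ell_\infty^c)$, which proves (ii).

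For (i), fix $1\le p<2$, so that $2<p'\le\infty$. A direct computation with the duality bracket $\langle (x_n)_n,(y_n)_n\rangle=\sum_n\tau(y_n^*x_n)$ of Proposition \ref{dualLp1} shows that $D$ is formally self-adjoint: using that each $\E_n$ is trace preserving and self-adjoint on $L_2(\M)$, one obtains $\sum_n\tau(y_n^*d_n(x_n))=\sum_n\tau(d_n(y_n)^*x_n)$, i.e. $\langle D(x),y\rangle=\langle x,D(y)\rangle$ for finitely supported sequences. Since, by Proposition \ref{dualLp1}, $L_{p'}(\M;\ell_\infty^c)$ is the dual of $L_p(\M;\ell_1^c)$ and therefore a norming space for it, for every finitely supported $x$ I would write
$$\|D(x)\|_{L_p(\M;\ell_1^c)}=\sup_{\|y\|_{L_{p'}(\M;\ell_\infty^c)}\le 1}|\langle D(x),y\rangle|=\sup_{\|y\|\le1}|\langle x,D(y)\rangle|\le \|D\|_{L_{p'}(\M;\ell_\infty^c)}\,\|x\|_{L_p(\M;\ell_1^c)},$$
where $\|D\|_{L_{p'}(\M;\ell_\infty^c)}$ is the norm obtained in (ii). By density of the finitely supported sequences, $D$ then extends to a bounded projection of $L_p(\M;\ell_1^c)$ onto $h_p^{1_c}$, which proves (i).

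The main obstacle is the maximal inequality used in (ii): one must pass from the classical fixed-element Doob inequality $\|{\sup_n}^+\E_n(a)\|_{p/2}\le\delta_{p/2}\|a\|_{p/2}$ to the version for a varying positive sequence $(a_n)$. I would handle this through the domination description of the norm of a positive sequence in $L_{p/2}(\M;\ell_\infty)$, namely $\|{\sup_n}^+a_n\|_{p/2}=\inf\{\|a\|_{p/2}:a\ge a_n\ \forall n\}$: dominating all the $|x_n|^2$ by a single positive $a$ reduces the estimate to the fixed-element inequality applied to $a$, after which $\E_{n-1}(a_n)\le \E_{n-1}(a)$ and the monotonicity of ${\sup}^+$ close the argument. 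The remaining points---well-definedness of $\E_n$ on $L_p(\M)$ for $p>2$, convergence of the pairing, and the behaviour at the endpoints $p\in\{1,\infty\}$---are routine.
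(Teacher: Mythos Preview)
Your argument is correct. For (ii) you do exactly what the paper does: Kadison--Schwarz plus the Doob maximal inequality, and your handling of the varying-sequence version via the domination description of the $L_{p/2}(\M;\ell_\infty)$-norm is the standard reduction.

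For (i), however, you take a genuinely different route. The paper proves boundedness of the Stein projection on $L_p(\M;\ell_1^c)$ \emph{directly}: starting from the atomic description $x_n=b_na_n$ of Lemma \ref{ballLp1}, it writes $\E_n(x_n)=u_n(b_n^*)^*u_n(a_n)$ using the isometric module maps $u_n$ of \cite{JD}, and then controls the two factors separately via trace preservation and the dual Doob inequality $\|\sum_n \E_n|a_n|^2\|_{q/2}\le \delta'_{q/2}\|\sum_n|a_n|^2\|_{q/2}$. You instead observe that $D$ is formally self-adjoint for the pairing of Proposition \ref{dualLp1} and pull back the bound from (ii). This is cleaner and avoids the $u_n$ machinery entirely; it also yields the same constant, since $\delta'_{q/2}=\delta_{(q/2)'}=\delta_{p'/2}$ when $\tfrac1p=\tfrac12+\tfrac1q$. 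The paper's direct argument has the minor advantage of being self-contained (independent of the duality $L_p(\M;\ell_1^c)^*=L_{p'}(\M;\ell_\infty^c)$), but since that duality is already established before the lemma, your approach is a legitimate and arguably more transparent alternative.
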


\begin{proof}
We first show that the Stein projection 
$$\D ((x_n)_{n\geq 0} ) = (d_n(x_n))_{n\geq 0} $$
is bounded on $L_p(\M;\ell_1^c)$ for $1\le p< 2$.
Let $(x_n)_n$ be in the unit ball of $L_p(\M;\ell_1^c)$ and let $x_n=b_na_n$ be the decomposition of $x_n$ given by Lemma \ref{ballLp1}. 
Then for each $n$ we can write
$$\E_n(x_n) = u_{n}(b_n^*)^*u_n(a_n)=\sum_{n,k} u_{n}(b_n^*)(k)^*u_n(a_n)(k),$$
where $ u_{n}(b_n^*)(k) \in L_2(\M)$ and $u_n(a_n)(k)\in L_q(\M)$. 
On the one hand, the trace preserving property of the conditional expectation gives
 $$ \sum_{n,k} \|u_{n}(b_n^*)(k)\|_2^2
 = \sum_n \tau(\E_n(b_n^*b_n)) = \sum_n \|b_n\|_2^2 .$$
On the other hand, since we have $2\leq q <\infty$ for $1\leq p<2$, the dual form of the Doob inequality yields
 $$ \Big\|\sum_{n,k} |u_n(a_n)(k)|^2 \Big\|_{q/2}
 = \Big\|\sum_n \E_n|a_n|^2\Big\|_{q/2} \leq \delta'_{q/2}
 \Big\|\sum_n |a_n|^2\Big\|_{q/2} .$$
Hence $(\E_n(x_n))_n \in L_p(\M;\ell_1^c)$ with $\|(\E_n(x_n) )_n\|_{L_p(\M;\ell_1^c)}\leq {\delta'}_{q/2}^{1/2}$,
where $\delta'_{q/2}\approx q^2$ as $q\to \infty$, $p\to 2$. 
This shows that $h_p^{1_c}$ is  $2{\delta'}_{q/2}^{1/2}$-complemented in $L_p(\M;\ell_1^c)$ for $1\leq p<2$. 

For the second assertion, the noncommutative Doob inequality and the fact that $|\E_n(x_n)|^2\leq \E_n|x_n|^2$ immediately imply that 
$h_{p}^{\infty_c}$ is $2\delta_{p/2}^{1/2}$-complemented in $L_{p}(\M;\ell_\infty^c)$.
\qd

Combining Proposition \ref{dualLp1} with Lemma \ref{hp1comp} we get the duality between $h_p^{1_c}$ and $h_{p'}^{\infty_c}$.

\begin{cor}\label{dualhp1c}
 Let $1\leq p <2$. Then the discrete spaces satisfy
$$(h_p^{1_c})^*=h_{p'}^{\infty_c} \quad \mbox{ with equivalent norms}.$$
\end{cor}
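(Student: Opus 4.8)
The plan is to deduce this duality from the isometric identification $(L_p(\M;\ell_1^c))^*=L_{p'}(\M;\ell_\infty^c)$ of Proposition \ref{dualLp1}, together with the complementation results of Lemma \ref{hp1comp}, via the standard principle that the dual of a complemented subspace is isomorphic to the range of the adjoint projection in the dual space. Recall from the proof of Lemma \ref{hp1comp} that the Stein projection
$$\D\big((x_n)_{n\geq 0}\big)=(d_n(x_n))_{n\geq 0}$$
is bounded on $L_p(\M;\ell_1^c)$ for $1\leq p<2$, with range exactly $h_p^{1_c}$. Thus $\D$ is a bounded projection of $L_p(\M;\ell_1^c)$ onto $h_p^{1_c}$, so that $(h_p^{1_c})^*$ identifies, up to isomorphism, with the range of the adjoint map $\D^*$ acting on $(L_p(\M;\ell_1^c))^*=L_{p'}(\M;\ell_\infty^c)$.

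First I would compute $\D^*$ explicitly. Using the duality bracket $\langle (x_n)_n,(y_n)_n\rangle=\sum_n\tau(y_n^*x_n)$ and the fact that each martingale difference operator $d_n=\E_n-\E_{n-1}$ (with $d_0=\E_0$) is self-adjoint for the trace, being a difference of trace-preserving conditional expectations, I obtain for $(x_n)_n\in L_p(\M;\ell_1^c)$ and $(y_n)_n\in L_{p'}(\M;\ell_\infty^c)$
$$\langle \D((x_n)_n),(y_n)_n\rangle=\sum_n\tau(y_n^*d_n(x_n))=\sum_n\tau(d_n(y_n)^*x_n)=\langle (x_n)_n,\D((y_n)_n)\rangle.$$
This shows that $\D^*$ is again the Stein projection, now acting on $L_{p'}(\M;\ell_\infty^c)$. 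By Lemma \ref{hp1comp} (ii) this projection is bounded for $2<p'\leq \infty$, and its range is precisely $h_{p'}^{\infty_c}$.

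Combining these two facts, $(h_p^{1_c})^*$ is isomorphic to the range of $\D^*$, which is $h_{p'}^{\infty_c}$. Concretely, a functional $\varphi\in (h_p^{1_c})^*$ is represented by the martingale-difference sequence $(y_n)_n=\D^*(\tilde\varphi)\in h_{p'}^{\infty_c}$ obtained from any Hahn-Banach extension $\tilde\varphi\in L_{p'}(\M;\ell_\infty^c)$ of $\varphi$, and conversely every such sequence defines an element of $(h_p^{1_c})^*$ by restriction of the bracket. The norms are equivalent rather than isometric because $\D$ is only bounded, with norm controlled by $2{\delta'}_{q/2}^{1/2}$ from Lemma \ref{hp1comp}, so the constants in the equivalence are governed by $\|\D\|$ and $\|\D^*\|=\|\D\|$. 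The only point requiring care is the interchange of the trace with the (possibly infinite) sum in the adjoint computation; I would justify it by first working with finite sequences, which are dense in both $L_p(\M;\ell_1^c)$ and $h_p^{1_c}$, and then passing to the limit using the boundedness of all the maps involved. This density step, rather than any deep estimate, is the main technical obstacle.
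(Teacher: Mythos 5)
Your proposal is correct and follows exactly the route the paper intends: its one-line proof ("Combining Proposition \ref{dualLp1} with Lemma \ref{hp1comp}") is precisely the complemented-subspace duality argument you spell out, with the Stein projection $\D$ self-adjoint for the trace pairing so that $\D^*$ is again the Stein projection with range $h_{p'}^{\infty_c}$. The details you add (the adjoint computation on finite sequences and the density passage) are the standard justification the paper leaves implicit.
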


Then \eqref{decLpcMO} means that for $1\leq p <2$, we have by Corollary \ref{dualhp1c}
$$(H_p^c)^*=L_{p'}^cMO=h_{p'}^{\infty_c} \cap L_{p'}^c mo  = (h_p^{1_c} + h_p^c)^*.$$
This yields the following stronger Davis decomposition.

\begin{theorem}\label{Davish1cdisc}
Let $1\leq p <\infty$. Then the discrete spaces satisfy
$$
H_p^c=\left\{\begin{array}{cl}
h_p^{1_c}+h_p^c& \quad \mbox{for} \quad 1\leq p<2 \\
h_p^{\infty_c}\cap h_p^c& \quad \mbox{for}\quad 2\leq p<\infty
\end{array}\right.
\quad \mbox{with equivalent norms}.$$
\end{theorem}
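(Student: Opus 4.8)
The statement is the duality counterpart of the weak Davis decomposition in Theorem \ref{Davishddisc}, so the plan is to prove the case $1\le p<2$ by a Fefferman--Stein type duality argument and then deduce the case $2<p<\infty$ by one further dualization; the value $p=2$ is trivial since all spaces involved reduce to $L_2(\M)$. For $1\le p<2$ I would first record the easy inclusion $h_p^{1_c}+h_p^c\subset H_p^c$: the inclusion $h_p^c\subset H_p^c$ is already contained in Theorem \ref{Davishddisc}, while $h_p^{1_c}\subset H_p^c$ follows from the bounded map $L_p(\M;\ell_1^c)\to L_p(\M;\ell_2^c)$ recorded in the Remark after Lemma \ref{ballLp1}, since on martingale difference sequences this map sends $(d_n(x))_n$ to $\sum_n e_{n,0}\ten d_n(x)$, whose $L_p(\M;\ell_2^c)$-norm is exactly $\|x\|_{H_p^c}$. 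This gives $\|x\|_{H_p^c}\le C\|x\|_{h_p^{1_c}+h_p^c}$.

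For the reverse estimate I would pass to duals. Since $\M$ is dense in both $h_p^{1_c}$ and $h_p^c$ and lies in their intersection, Lemma \ref{dualboxplus}(i) identifies $(h_p^{1_c}+h_p^c)^*$ with the compatible functionals in $(h_p^{1_c})^*$ and $(h_p^c)^*$. Invoking $(h_p^{1_c})^*=h_{p'}^{\infty_c}$ (Corollary \ref{dualhp1c}) and $(h_p^c)^*=L_{p'}^cmo$ (Theorem \ref{fsdualityhpcdiscr}), this dual becomes $h_{p'}^{\infty_c}\cap L_{p'}^cmo$. The key structural input is the splitting \eqref{decLpcMO}, which gives $h_{p'}^{\infty_c}\cap L_{p'}^cmo=L_{p'}^cMO$ with equivalent norms, and by the discrete Fefferman--Stein duality (Theorem \ref{FSdiscr}) the latter is $(H_p^c)^*$. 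Tracking the pairings, which on finite martingales are all given by $\tau(\cdot^*\cdot)$, shows that the adjoint $j^*$ of the inclusion $j:h_p^{1_c}+h_p^c\hookrightarrow H_p^c$ is, under these identifications, an isomorphism of the two dual spaces. Hence $j^*$ is surjective, so $j$ is bounded below; combined with the easy inclusion this yields $H_p^c=h_p^{1_c}+h_p^c$ with equivalent norms for $1\le p<2$, including $p=1$, where no reflexivity is required.

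For $2<p<\infty$ I would simply dualize the case just proved at the conjugate index $1<p'<2$. Taking duals of the norm equivalence $H_{p'}^c=h_{p'}^{1_c}+h_{p'}^c$ and using $(H_{p'}^c)^*=H_p^c$ (Corollary \ref{discrdual}), $(h_{p'}^{1_c})^*=h_p^{\infty_c}$ (Corollary \ref{dualhp1c}), and $(h_{p'}^c)^*=h_p^c$ (Corollary \ref{discrdualhpc}), together with the density of $\M$ in $h_{p'}^{1_c}$ and $h_{p'}^c$ so that the dual of the sum is the intersection of the duals, yields $H_p^c=h_p^{\infty_c}\cap h_p^c$ with equivalent norms.

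The main obstacle will be the bookkeeping in the middle step: verifying that the chain of abstract isomorphisms $(h_p^{1_c}+h_p^c)^*\cong h_{p'}^{\infty_c}\cap L_{p'}^cmo\cong L_{p'}^cMO\cong (H_p^c)^*$ is implemented precisely by the restriction map $j^*$, i.e.\ that all four duals are realized as the same concrete functionals via the $L_2$-pairing on finite martingales. Once this compatibility is checked, the equivalence of norms is automatic from \eqref{decLpcMO}, and the genuine analytic content is entirely absorbed into that estimate and into Corollary \ref{dualhp1c}.
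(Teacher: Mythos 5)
Your proposal is correct and follows essentially the same route as the paper: the easy inclusion via the bounded map $L_p(\M;\ell_1^c)\to L_p(\M;\ell_2^c)$, then the identification $(h_p^{1_c}+h_p^c)^*=h_{p'}^{\infty_c}\cap L_{p'}^cmo=L_{p'}^cMO=(H_p^c)^*$ using Corollary \ref{dualhp1c}, Theorem \ref{fsdualityhpcdiscr}, the splitting \eqref{decLpcMO} and Theorem \ref{FSdiscr}, with the case $2\leq p<\infty$ obtained by one further dualization. The compatibility of all the pairings through $\tau(x^*y)$, which you flag as the main bookkeeping point, is exactly what the paper relies on implicitly.
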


\re\label{hp1reg} 
\begin{enumerate}
\item Observe that by interpolation between the cases $p=1$ and $p=2$ we have a contractive inclusion 
$L_p(\M;\ell_1^c)\subset \ell_p(L_p(\M))$ for $1\leq p \leq 2$. Thus, considering the martingale difference sequences, we get 
$$h_p^{1_c}\subset h_p^d \quad \mbox{contractively for } 1\leq p<2.$$ 
Hence the decomposition of Theorem \ref{Davish1cdisc} is stronger than the usual decomposition 
stated in Theorem \ref{Davishddisc}. 
\item The advantage of working with the spaces $h_p^{1_c}$ is that,
since $\M$ is finite, they satisfy the following regularity property
$$h_{\tp}^{1_c}\subset h_{p}^{1_c} \quad \mbox{contractively  for } 1\leq p \leq \tp <2 ,$$
whereas the $h_p^d$ spaces \emph{do not}.  However we loose the reflexivity property.
\end{enumerate}
\mar

\subsubsection{The version of the discrete Davis decomposition in Randrianantoanina's style}\label{subsectDRadiscr}

As for the Burkholder-Gundy inequalities in Section \ref{sectBG}, 
we will need a result due to Randrianantoanina to apply duality in the continuous setting. 
In \cite{ran-cond}, Randrianantoanina proves the following Burkholder-Rosenthal decomposition at the $L_2$-level, with simultaneous control of norms. 

\begin{theorem}\label{RaBRdiscr} 
Let $1<p<2$ and $x\in L_2(\M)$. 
Then there exist $a,b,c \in L_2(\M)$ such that 
\begin{enumerate}
\item[(i)] $x=a+b+c$,
\item[(ii)]$ \|a\|_{h_p^d}+\|b\|_{h_p^c}+\|c\|_{h_p^r} \leq C(p) \|x\|_p $,
\item[(iii)] $\max\{\|a\|_2,\|b\|_2,\|c\|_2\} \leq f(p,\|x\|_p,\|x\|_2)$.
\end{enumerate}
Here $C(p)\leq C(p-1)^{-1}$ as $p\to 1$. 
\end{theorem}

\begin{proof} 
The proof is similar to that of Theorem \ref{RaBGdiscr}.  
Let $x\in L_2(\M)$, $1<p<2$ and $0<\theta<1$ be such that $\frac1p=1-\theta +\frac{\theta}{2}$. 
As in the proof of Theorem \ref{RaBGdiscr} we may write
\begin{equation}\label{dec0'}
x = \sum_{\nu \in \Z} u_\nu
\end{equation}
with
\begin{equation}\label{Jmethod'}
\Big(\sum_{\nu \in \Z} (2^{-\nu \theta} \max\{\|u_\nu\|_1,2^\nu\|u_\nu\|_2\})^p\Big)^{1/p}\leq C(p)\|x\|_p
\end{equation}
and
\begin{equation}\label{L2estimate'}
\sum_{\nu \in \Z} \|u_\nu\|_2\leq f(p,\|x\|_p,\|x\|_2).
\end{equation}
We apply Randrianantoanina's decomposition to this sequence $(u_\nu)_\nu$.  
For each $\nu \in \Z$, by Theorem $3.1$ of \cite{ran-cond}, we may find an absolute constant $K>0$ and three adapted sequences $a^{(\nu)}, b^{(\nu)}$ and $c^{(\nu)}$ such that 
$$d_n(u_\nu)=a_n^{(\nu)}+ b_n^{(\nu)}+c_n^{(\nu)}, \quad \forall n\geq 0$$
and
$$
\|a^{(\nu)}\|_{L_2(\M;\ell_2^c)}+\|b^{(\nu)}\|_{L_2(\M;\ell_2^c)}+\|c^{(\nu)}\|_{L_2(\M;\ell_2^r)}
\leq K \|u_\nu\|_2,$$
\begin{align*}
\Big\|\sum_{n\geq 0}e_{n,n}\ten a_n^{(\nu)}\Big\|_{L_{1,\infty}(\B(\ell_2)\oten \M)}
+\Big\|\Big(\sum_{n\geq 0}\E_{n-1}|b_n^{(\nu)}|^2\Big)^{1/2}\Big\|_{1,\infty} 
+\Big\|\Big(\sum_{n\geq 0}\E_{n-1}|(c_n^{(\nu)})^*|^2\Big)^{1/2}\Big\|_{1,\infty} \\
\leq K \|u_\nu\|_1.
\end{align*}
Then we set
$$a_n=\sum_{\nu\in\Z} a_n^{(\nu)},\quad b_n=\sum_{\nu\in\Z} b_n^{(\nu)} \quad  \mbox{and} \quad c_n=\sum_{\nu\in\Z} c_n^{(\nu)},$$
and obtain three adapted sequences $a=(a_n)_n, b=(b_n)_n$ and $c=(c_n)_n$.  
Using the fact that for any semifinite von Neumann algebra $\Nc$ we have
$$L_p(\Nc)=[L_{1,\infty}(\Nc), L_2(\Nc)]_{\theta,p;J},$$
and \eqref{Jmethod}, we can show that
$$\Big(\sum_{n\geq 0} \|a_n\|_p^p\Big)^{1/p} + \Big\|\Big(\sum_{n\geq 0}\E_{n-1}|b_n|^2\Big)^{1/2}\Big\|_p
+\Big\|\Big(\sum_{n\geq 0}\E_{n-1}|c_n^*|^2\Big)^{1/2}\Big\|_p\leq C(p-1)^{-1}\|x\|_p.$$
Applying the Stein projection $\D$ to the sequences $a,b$ and $c$ we obtain three martingales. 
We set 
$$a'=\D(a),\quad b'=\D(b) \quad  \mbox{and} \quad c'=\D(c).$$
Then we have 
$$d_n(x)=d_n(a')+d_n(b')+d_n(c') \quad \forall n\geq 0.$$
Moreover, since any conditional expectation $\E$ is a contractive projection in $L_p(\M)$ 
and satisfies $\E(y)^*\E(y)\leq \E(y^*y)$, we get 
$$ \|a'\|_{h_p^d}+\|b'\|_{h_p^c}+\|c'\|_{h_p^r} \leq C'(p-1)^{-1} \|x\|_p.$$
It remains to prove the $L_2$-estimate (iii). 
This comes from \eqref{L2estimate'} by writing
$$\|a'\|_2=\|\D(a)\|_2\leq 2 \|a\|_2 \leq 2 \sum_{\nu\in \Z} \|a^{(\nu)}\|_{L_2(\M;\ell_2^c)} 
\leq 2K\sum_{\nu\in \Z} \|u_{\nu}\|_2 \leq 2Kf(p,\|x\|_p,\|x\|_2).$$
The estimates for $b'$ and $c'$ are similar.
\qd

We can derive a column version of Theorem \ref{RaBRdiscr}, which is the following version of the Davis decomposition at the $L_2$-level. 

\begin{cor}\label{RaDdiscr}
Let $(\M_n)_{n=0}^m$ be a finite filtration of $\M$. 
Let $1<p<2$ and $x\in L_2(\M)$. 
Then there exist $a,b \in L_2(\M)$ such that 
\begin{enumerate}
\item[(i)] $x=a+b$,
\item[(ii)]$ \|a\|_{h_p^d}+\|b\|_{h_p^c}\leq C(p) \|x\|_{H_p^c} $,
\item[(iii)] $\max\{\|a\|_2,\|b\|_2\} \leq f(p,\|x\|_{H_p^c},\|x\|_2)$,
\end{enumerate}
where $C(p)\leq C(p-1)^{-1}$ as $p\to 1$. 
\end{cor}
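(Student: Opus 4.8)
The plan is to run the argument of Theorem~\ref{RaBRdiscr} inside an amplified algebra in which the column norm $\|x\|_{H_p^c}$ becomes an honest $L_p$-norm. I set $\tilde\N=B(\ell_2^{m+1})\oten\M$ with the (non-normalized) trace $\tr\oten\tau$ and filtration $\tilde\N_k=B(\ell_2^{k+1})\oten\M_k$, and associate to $x$ the column vector $\hat x=\sum_{n=0}^m e_{n,0}\oten d_n(x)$. Then $\hat x$ is an $L_2(\tilde\N)$-martingale with $d_k(\hat x)=e_{k,0}\oten d_k(x)$ and $|\hat x|^2=e_{0,0}\oten[x,x]$, so that $\|\hat x\|_p=\|x\|_{H_p^c}$, $\|\hat x\|_2=\|x\|_2$, and $\mu_t(\hat x)=\mu_t(S)$ with $S=[x,x]^{1/2}$. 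The key structural observation is that for any \emph{column-supported} martingale in $\tilde\N$ the row conditioned square function vanishes: its differences have the form $e_{k,0}\oten(\cdot)$ and $\tilde\E_{k-1}(e_{k,k}\oten\cdot)=0$. This is precisely why a purely diagonal/column decomposition of $\hat x$ will suffice and no row term arises.

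First I would reproduce the $J$-method decomposition from the proof of Theorem~\ref{RaBGdiscr}, with $S$ in place of $x$. Taking $e_\nu=\1(\mu_{4^\nu}(S)<S\le\mu_{4^{\nu-1}}(S))$, truncating at a suitable $\nu_0=\nu_0(p,\|x\|_{H_p^c},\|x\|_2)$ to get projections $\tilde e_\nu$, and setting $\hat u_\nu=\hat x\,(e_{0,0}\oten\tilde e_\nu)$, one has $|\hat u_\nu|^2=e_{0,0}\oten\tilde e_\nu S^2\tilde e_\nu$, whence $\|\hat u_\nu\|_1=\|S\tilde e_\nu\|_1\le\mu_{4^{\nu-1}}(S)4^\nu$ and $\|\hat u_\nu\|_2=\|S\tilde e_\nu\|_2\le\mu_{4^{\nu-1}}(S)2^\nu$. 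Thus $\hat x=\sum_\nu\hat u_\nu$ satisfies the same $J$-functional estimate (with $C(p)\le C(p-1)^{-1}$) and the $L_2$-summability bound $\sum_\nu\|\hat u_\nu\|_2\le f(p,\|x\|_{H_p^c},\|x\|_2)$ as in Theorem~\ref{RaBGdiscr}; the identity $\sum_\nu\hat u_\nu=\hat x$ uses $d_n(x)s(S)=d_n(x)$.

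Next I would apply the column form of Randrianantoanina's weak-type Davis decomposition (the diagonal/column version of Theorem~$3.1$ of \cite{ran-cond}) to each column-supported martingale $(\tilde\E_n\hat u_\nu)_n$, yielding adapted column-supported sequences with $d_n(\hat u_\nu)=\hat\alpha_n^{(\nu)}+\hat\beta_n^{(\nu)}$, with $L_2$-control by $\|\hat u_\nu\|_2$ and $L_{1,\infty}$-control of the diagonal square function of $\hat\alpha^{(\nu)}$ and of the column conditioned square function of $\hat\beta^{(\nu)}$ by $\|\hat u_\nu\|_1$; right-compressing by $e_{0,0}\oten\1$ keeps the pieces column-supported. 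Summing over $\nu$, applying the Stein projection $\D$, and reiterating through $L_p=[L_{1,\infty},L_2]_{\theta,p;J}$ (in $\ell_p(L_p)$ for the diagonal part and in the conditioned column space for the other) produces, exactly as in Theorem~\ref{RaBRdiscr}, martingales $\hat a,\hat b$ in $\tilde\N$ with $\hat a+\hat b=\hat x$, $\|\hat a\|_{h_p^d(\tilde\N)}+\|\hat b\|_{h_p^c(\tilde\N)}\le C(p)\|x\|_{H_p^c}$ and $\max\{\|\hat a\|_2,\|\hat b\|_2\}\le f$. Finally I would transport back along the morphism $R$ sending a column-supported $\tilde\N$-martingale to the underlying $\M$-martingale obtained by reading off its differences: a direct computation shows $R$ is contractive from the column-supported part of $h_p^d(\tilde\N)$ into $h_p^d(\M)$ and of $h_p^c(\tilde\N)$ into $h_p^c(\M)$, is $L_2$-bounded, and satisfies $R(\hat x)=x$. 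Then $a=R(\hat a)$ and $b=R(\hat b)$ are the desired elements.

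The hard part will be the analytic input of the third paragraph: invoking, or re-deriving, Randrianantoanina's weak-type decomposition in its \emph{column} form, with simultaneous $L_2$-control and with preservation of column support, so that no row term is generated and the recovery map $R$ applies. Everything else—the singular-number bookkeeping, the boundedness of $\D$ and of $R$, and the reiteration via $L_p=[L_{1,\infty},L_2]_{\theta,p;J}$—runs in parallel to the proofs of Theorems~\ref{RaBGdiscr} and~\ref{RaBRdiscr}.
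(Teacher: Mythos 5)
Your amplification setup coincides with the paper's (the paper applies Theorem \ref{RaBRdiscr} to $y=\sum_{n}e_{n,0}\ten d_n(x)$ in $\Nc=B(\ell_2^{m+1})\oten\M$, with the same singular-number and $J$-method bookkeeping), but the proposal breaks down at its central structural claim. You assert that the row conditioned square function of a column-supported martingale vanishes, so that ``no row term arises,'' and you then plan to invoke a two-term (diagonal\,$+$\,column) version of Randrianantoanina's Theorem $3.1$ of \cite{ran-cond}. Neither point survives scrutiny. For \emph{any} legitimate filtration, the conditional expectations are faithful and trace preserving, so $\tau\big(\E_{k-1}(d_kd_k^*)\big)=\|d_k\|_2^2$; hence $\E_{k-1}(d_kd_k^*)=0$ forces $d_k=0$, and the row conditioned square function of a nonzero martingale can never vanish. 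Concretely, with the unital filtration $\Nc_n=B(\ell_2^{m+1})\oten\M_n$ one gets $\E_{k-1}(e_{k,k}\ten z)=e_{k,k}\ten\E_{k-1}(z)\neq 0$. Your vanishing computation only reflects that the growing-corner ``filtration'' $B(\ell_2^{k+1})\oten\M_k$ consists of non-unital subalgebras whose compressions are not (trace-preserving) conditional expectations; there is no martingale structure there, so Stein's projection, the $J$-method transfer, and above all Theorem $3.1$ of \cite{ran-cond} are unavailable. Finally, the ``column form'' of Randrianantoanina's decomposition that you defer to as ``the hard part'' does not exist as a citable result: at the weak-type level it is essentially the statement being proved, so the proposal is circular exactly where the work has to happen.

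The paper's proof supplies the idea your plan is missing: it applies the full three-term decomposition $y=a+b+c$ in $(\Nc,\tau_\Nc)$ (rescaling carefully between the normalized trace $\tau_\Nc$ and $\tr_\Nc$, a point you also skip), compresses by $e=\sum_n e_{n,n}\ten 1$ and $f=e_{0,0}\ten 1$ using $y=eyf$ and the module property to retain the norm bounds, and then shows the row piece is harmless \emph{because of} the column support rather than pretending it is absent: reading off entries, $\gamma=\sum_n d_n(c_{n,0})$ satisfies
\begin{equation*}
\|\gamma\|_{h_p^d}=\Big\|\mathrm{Diag}\Big(\sum_{k,n}e_{k,n}\ten d_n(c_{k,0})\Big)\Big\|_{L_p(\Nc,\tr_\Nc)}
\leq C\,\|ecf\|_{h_p^r(\Nc,\tr_\Nc)},
\end{equation*}
by boundedness of the diagonal projection. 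That is, the row term of the amplified decomposition is absorbed into the diagonal space $h_p^d$ of $\M$, and the final decomposition is $x=(\alpha+\gamma)+\beta$. Replacing your vanishing claim and the hypothetical column version of \cite{ran-cond} by this absorption argument is precisely what is needed to close the gap.
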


\begin{proof}
We apply Theorem \ref{RaBRdiscr} to the element
$$y=\sum_{n=0}^m e_{n,0}\ten d_n(x).$$
Here we consider the finite von Neumann algebra $\Nc=\B(\ell_2^{m+1})\oten \M$ equipped with the filtration $\Nc_n=\B(\ell_2^{m+1})\oten \M_n$. 
We have to be careful with the trace we consider on $\Nc$. 
The natural trace on $\Nc$ is $\tr_\Nc=\tr \ten \tau$, where $\tr$ denotes the usual trace on  $\B(\ell_2^{m+1})$. 
This trace is finite, but not normalized. Since Theorem $3.1$ of \cite{ran-cond} have been proved for a normalized trace, 
we will also need to consider the normalized trace $\tau_\Nc=\frac{\tr}{m+1}\ten \tau$. 
Observe that 
$$\|y\|_{L_2(\Nc,\tr_\Nc)}=\|x\|_2 \quad \mbox{and} \quad 
\|y\|_{L_p(\Nc,\tr_\Nc)}=\|x\|_{H_p^c}.$$ 
As in the proof of Theorem \ref{RaBG}, we can find a sequence $(u_\nu)_\nu$ such that $y = \sum_{\nu \in \Z} u_\nu$ with
\begin{equation}\label{Jmethody}
\Big(\sum_{\nu \in \Z} (2^{-\nu \theta} \max\{\|u_\nu\|_{L_1(\Nc,\tr_\Nc)},2^\nu\|u_\nu\|_{L_2(\Nc,\tr_\Nc)}\})^p\Big)^{1/p}
\leq C(p)\|y\|_{L_p(\Nc,\tr_\Nc)}=C(p)\|x\|_{H_p^c}
\end{equation}
and 
\begin{equation}\label{L2estimatey}
\sum_{\nu \in \Z} \|u_\nu\|_{L_2(\Nc,\tr_\Nc)}\leq f(p,\|y\|_{L_p(\Nc,\tr_\Nc)},\|y\|_{L_2(\Nc,\tr_\Nc)})
=f(p,\|x\|_{H_p^c},\|x\|_{2}).
\end{equation}
Applying Theorem $3.1$ of \cite{ran-cond} in $(\Nc,\tau_\Nc)$ for each $\nu \in \Z$, 
we may find an absolute constant $K>0$ and three adapted sequences $a^{(\nu)}, b^{(\nu)}$ and $c^{(\nu)}$ such that 
$$d_n(u_\nu)=a_n^{(\nu)}+ b_n^{(\nu)}+c_n^{(\nu)}, \quad \forall n\geq 0$$
and
\begin{equation}\label{L2estimatetauN}
\|a^{(\nu)}\|_{L_2(\Nc,\tau_{\Nc};\ell_2^c)}+\|b^{(\nu)}\|_{L_2(\Nc,\tau_\Nc;\ell_2^c)}+\|c^{(\nu)}\|_{L_2(\Nc,\tau_\Nc;\ell_2^r)}
\leq K \|u_\nu\|_{L_2(\Nc,\tau_\Nc)},
\end{equation}
\begin{equation}\label{L1inftyestimatetauN}
\begin{array}{clc}
\Big\|\displaystyle\sum_{n\geq 0}e_{n,n}\ten a_n^{(\nu)}\Big\|_{L_{1,\infty}(\B(\ell_2)\oten \Nc,\tr\ten \tau_\Nc)}
&+\Big\|\Big(\displaystyle\sum_{n\geq 0}\E_{n-1}|b_n^{(\nu)}|^2\Big)^{1/2}\Big\|_{L_{1,\infty}( \Nc,\tau_\Nc)} \\
+\Big\|\Big(\displaystyle\sum_{n\geq 0}\E_{n-1}|(c_n^{(\nu)})^*|^2\Big)^{1/2}\Big\|_{L_{1,\infty}( \Nc,\tau_\Nc)} 
&\leq K \|u_\nu\|_{L_1(\Nc,\tau_\Nc)}.
\end{array}
\end{equation}
We would like to obtain the same estimates with respect to the trace $\tr_\Nc$ to use the interpolation argument and \eqref{Jmethody}. 
Note that for $z\in L_1(\Nc)$, we have 
$$\|z\|_{L_1(\Nc,\tr_\Nc)}=(m+1)\|z\|_{L_1(\Nc,\tau_\Nc)} \quad,\quad  \|z\|_{L_{1,\infty}(\Nc,\tr_\Nc)}=(m+1)\|z\|_{L_{1,\infty}(\Nc,\tau_\Nc)}$$
and for $z\in L_2(\Nc)$ we have
$$\|z\|_{L_2(\Nc,\tr_\Nc)}=\sqrt{m+1}\|z\|_{L_2(\Nc,\tau_\Nc)}.$$
Hence multiplying \eqref{L2estimatetauN} and \eqref{L1inftyestimatetauN} by $\sqrt{m+1}$ and $(m+1)$ respectively, 
we get the same estimates with respect to the trace $\tr_\Nc$. 
Thus we may control the J-functionals for $a^{(\nu)}, b^{(\nu)}$ and $c^{(\nu)}$ in $(L_{1,\infty}(\Nc,\tr_\Nc),L_2(\Nc,\tr_\Nc))$ 
by the J-functional of $u_\nu$ in $(L_1(\Nc,\tr_\Nc),L_2(\Nc,\tr_\Nc))$, which is bounded by $C(p)\|x\|_{H_p^c}$ by \eqref{Jmethody}. 
Then applying the Stein projection we get three elements $a,b,c$ in $L_2(\Nc)$ such that 
$$y=a+b+c$$
and
$$\|a\|_{h_p^d(\Nc,\tr_\Nc)}+\|b\|_{h_p^c(\Nc,\tr_\Nc)}+\|c\|_{h_p^r(\Nc,\tr_\Nc)} \leq C(p)\|x\|_{H_p^c},$$
$$\max\{\|a\|_{L_2(\Nc,\tr_\Nc)},\|b\|_{L_2(\Nc,\tr_\Nc)},\|c\|_{L_2(\Nc,\tr_\Nc)}\} \leq f(p,\|x\|_{H_p^c},\|x\|_{2}).$$ 
Now we deduce a decomposition of $x$ satisfying (ii) and (iii) as follows. 
We consider the following projections in $\Nc$
$$e=\sum_{n\geq 0} e_{n,n}\ten 1 \quad \mbox{and} \quad f=e_{0,0}\ten 1.$$
Since $y$ has a column structure we have $y=eyf$, hence $y=eaf+ebf+ecf$.  
Writing 
$$a=\sum_{k,n\geq 0}e_{k,n}\ten a_{k,n}\quad , \quad 
b=\sum_{k,n\geq 0}e_{k,n}\ten b_{k,n}\quad \mbox{and} \quad
c=\sum_{k,n\geq 0}e_{k,n}\ten c_{k,n},$$
we have
$$eaf=\sum_{n\geq 0}e_{n,0}\ten a_{n,0}\quad , \quad 
ebf=\sum_{n\geq 0}e_{n,0}\ten b_{n,0}\quad \mbox{and} \quad
ecf=\sum_{n\geq 0}e_{n,0}\ten c_{n,0}.$$
Since $d_n(y)=e_{n,0}\ten d_n(x)$ we get 
$$d_n(x)=d_n(a_{n,0})+d_n(b_{n,0})+d_n(c_{n,0})\quad \forall n\geq 0.$$
Finally we set
$$\alpha=\sum_{n\geq 0}d_n(a_{n,0})
\quad , \quad 
\beta=\sum_{n\geq 0}d_n(b_{n,0})
\quad , \quad
\gamma=\sum_{n\geq 0}d_n(c_{n,0}),$$
and obtain three elements in $L_2(\M)$ such that $x=\alpha+\beta+\gamma$. 
It is clear that $\alpha, \beta$ and $\gamma$ verify the $L_2$-estimate (iii). 
Note that here we want a decomposition of $x$ in two elements. 
We will show that $\alpha \in h_p^d$, $\beta\in h_p^c$ and that the third element $\gamma$ is in the diagonal space $h_p^d$. 
Let us first observe that since $e,f \in \Nc_0=\B(\ell_2^{m+1})\oten \M_0$, we deduce from the module property that
\begin{equation}\label{eaf}
\|eaf\|_{h_p^d(\Nc,\tr_\Nc)}+\|ebf\|_{h_p^c(\Nc,\tr_\Nc)}+\|ecf\|_{h_p^r(\Nc,\tr_\Nc)} \leq C(p)\|x\|_{H_p^c}.
\end{equation}
Indeed, the estimate of the first term comes from the fact that $e$ and $f$ are projections, and for the second term we write
\begin{align*}
\E_{n-1}|d_n(ebf)|^2
&=\E_{n-1}|ed_n(b)f|^2
=\E_{n-1}(fd_n(b)^*ed_n(b)f)\\
&=f\E_{n-1}(d_n(b)^*ed_n(b))f\leq f\E_{n-1}|d_n(b)|^2f.
\end{align*}
Then $\|ebf\|_{h_p^c(\Nc,\tr_\Nc)}\leq \|b\|_{h_p^c(\Nc,\tr_\Nc)}$. The third term is similar. 
For the term $\alpha$ we have
\begin{align*}
\|\alpha\|_{h_p^d}
&=\Big(\sum_n \|d_n(a_{n,0})\|_p^p\Big)^{1/p}
=\Big(\sum_n \|(|d_n(a_{n,0})|^2)^{1/2}\|_p^p\Big)^{1/p}\\
&\leq \Big(\sum_n \Big\|\Big(\sum_{k} |d_n(a_{k,0})|^2\Big)^{1/2}\Big\|_p^p\Big)^{1/p} 
=\Big(\sum_n \Big\|\sum_{k} e_{k,0}\ten d_n(a_{k,0})\Big\|_{L_p(\Nc,\tr_\Nc)}^p\Big)^{1/p}\\
&= \Big(\sum_n \|d_n(eaf)\|_{L_p(\Nc,\tr_\Nc)}^p\Big)^{1/p}
=\|eaf\|_{h_p^d(\Nc,\tr_\Nc)}.
\end{align*}
We proceed similarly for the term $\beta$
\begin{align*}
\|\beta\|_{h_p^c}
&=\Big\|\Big(\sum_{n} \E_{n-1}|d_n(b_{n,0})|^2\Big)^{1/2}\Big\|_p
\leq \Big\|\Big(\sum_{n,k} \E_{n-1}|d_n(b_{k,0})|^2\Big)^{1/2}\Big\|_p\\
&=\Big\|\Big(\sum_{n} \E_{n-1}\Big|\sum_{k} e_{k,0}\ten d_n(b_{k,0})\Big|^2\Big)^{1/2}\Big\|_{L_p(\Nc,\tr_\Nc)}\\
&=\Big\|\Big(\sum_{n} \E_{n-1}|d_n(ebf)|^2\Big)^{1/2}\Big\|_{L_p(\Nc,\tr_\Nc)}
=\|ebf\|_{h_p^c(\Nc,\tr_\Nc)}.
\end{align*}
Finally for the term $\gamma$ we write 
\begin{align*}
\|\gamma\|_{h_p^d}
&=\Big(\sum_n \|d_n(c_{n,0})\|_p^p\Big)^{1/p}
=\Big\|\sum_n e_{n,n}\ten d_n(c_{n,0})\Big\|_{L_p(\Nc,\tr_\Nc)}\\
&=\Big\|\mathrm{Diag}\Big(\sum_{k,n} e_{k,n}\ten d_n(c_{k,0})\Big)\Big\|_{L_p(\Nc,\tr_\Nc)},
\end{align*}
where $\mathrm{Diag}$ denotes the diagonal projection in $\Nc$. 
Since the diagonal projection is bounded on $L_p(\Nc,\tr_\Nc)$, it remains to estimate 
\begin{align*}
\Big\|\sum_{k,n} e_{k,n}\ten d_n(c_{k,0})\Big\|_{L_p(\Nc,\tr_\Nc)}
&=\Big\|\sum_{k,n} e_{0,n}\ten e_{k,0}\ten d_n(c_{k,0})\Big\|_{L_p(\B(\ell_2^{m+1})\oten \Nc,\tr \ten \tr_\Nc)}\\
&=\Big\|\sum_{k,n} e_{0,n}\ten d_n(ecf)\Big\|_{L_p(\B(\ell_2^{m+1})\oten \Nc,\tr \ten \tr_\Nc)}\\
&=\|ecf\|_{h_p^r(\Nc,\tr_\Nc)} .
\end{align*}
Then, using  \eqref{eaf}, we deduce (ii) and the Theorem follows for the decomposition 
$$x=(\alpha+\gamma) + \beta.$$
\qd

As in Corollary \ref{BGboxplusdiscr}, Corollary \ref{RaDdiscr} can be translated by using the $\boxplus$-sum as follows.

\begin{cor}\label{BRDboxplusdiscr}
Let $1<p<2$. Then the discrete spaces satisfy
$$H_p^c=h_p^d \boxplus h_p^c\quad 
\mbox{with equivalent norms.}$$ 
\end{cor}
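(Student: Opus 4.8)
The plan is to realize Corollary \ref{BRDboxplusdiscr} as the exact translation of the $L_2$-level Davis decomposition from Corollary \ref{RaDdiscr} into the language of the $\boxplus$-sum, following the same scheme used to deduce Corollary \ref{BGboxplusdiscr} from Theorem \ref{RaBGdiscr}. First I would set up the four-space framework of subsection \ref{sectsums}: take $A_0=L_2(\M)$, $X=h_p^d$, $Y=h_p^c$ and $A_1=H_p^c$ (so that the sum is taken inside $H_p^c$). The density hypothesis \eqref{density} must be verified, namely that $L_2(\M)\cap h_p^d$ is dense in $h_p^d$ and $L_2(\M)$ is dense in $h_p^c$; both hold because $\M\subset L_2(\M)$ is dense in each of the discrete Hardy spaces for $1\le p<\infty$, as recalled in subsection \ref{discretehpc}.

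Once the framework is in place, by the density of $L_2(\M)$ it suffices to compare the norm $\|\cdot\|_{H_p^c}$ with the norm
$$\|x\|_{h_p^d\boxplus h_p^c}=\inf_{\substack{x=a+b,\\ a,b\in L_2(\M)}}\|a\|_{h_p^d}+\|b\|_{h_p^c}$$
on $L_2(\M)$. The inequality $\|x\|_{H_p^c}\leq C\|x\|_{h_p^d\boxplus h_p^c}$ is the easy direction: for any $L_2$-decomposition $x=a+b$ one has $\|x\|_{H_p^c}\leq\|a\|_{H_p^c}+\|b\|_{H_p^c}$ up to the $p/2$-triangle constant, and each of these is controlled by $\|a\|_{h_p^d}$ and $\|b\|_{h_p^c}$ respectively via the inclusions $h_p^d\subset H_p^c$ and $h_p^c\subset H_p^c$ that underlie the Davis decomposition of Theorem \ref{Davishddisc}. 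The reverse inequality $\|x\|_{h_p^d\boxplus h_p^c}\leq C(p)\|x\|_{H_p^c}$ is exactly the content of Corollary \ref{RaDdiscr}: given $x\in L_2(\M)$, that result produces $a,b\in L_2(\M)$ with $x=a+b$ and $\|a\|_{h_p^d}+\|b\|_{h_p^c}\leq C(p)\|x\|_{H_p^c}$, which is precisely an admissible $\boxplus$-decomposition. The $L_2$-membership of $a$ and $b$ guaranteed by part (iii) of Corollary \ref{RaDdiscr} is what makes these legitimate competitors in the infimum defining the $\boxplus$-norm, so no further work is needed there.

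The delicate point, and the one I would flag as the main obstacle, is the role of the space $A_1$ in which the ordinary sum lives. Corollary \ref{RaDdiscr} is stated for a \emph{finite} filtration $(\M_n)_{n=0}^m$, whereas Corollary \ref{BRDboxplusdiscr} is phrased for the discrete spaces in general; I would therefore make explicit that the statement is applied for each finite partition $\s$ (i.e.\ to the finite filtration $(\M_t)_{t\in\s}$), so that the constant $C(p)\leq C(p-1)^{-1}$ is uniform in $\s$ and survives the later passage to the continuous setting. One must also be careful that the $\boxplus$-sum depends only on $A_0=L_2(\M)$ and not on the ambient space $A_1$ (as noted in the remark following Lemma \ref{dualboxplus}), so the completion is unambiguous even though the usual sum $h_p^d+h_p^c$ and the $\boxplus$-sum need not coincide a priori. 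Assembling these pieces, the two inequalities give $H_p^c=h_p^d\boxplus h_p^c$ with equivalent norms, completing the proof.
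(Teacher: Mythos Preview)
Your proposal is correct and follows exactly the scheme the paper intends: the authors simply state that Corollary \ref{RaDdiscr} ``can be translated by using the $\boxplus$-sum'' in the same way Corollary \ref{BGboxplusdiscr} was deduced from Theorem \ref{RaBGdiscr}, and your write-up makes that translation explicit with the correct four-space framework and the two inequalities coming from Theorem \ref{Davishddisc} and Corollary \ref{RaDdiscr} respectively. Your remark on the finite-filtration hypothesis of Corollary \ref{RaDdiscr} is apt, since in the paper this corollary is indeed only ever applied partition by partition.
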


\subsubsection{The ''mixed`` version of the discrete Davis decomposition}\label{subsectDmixdiscr}

It is natural to wonder whether the Davis decomposition involving the regular diagonal space $h_p^{1_c}$ established in the subsection \ref{subsectDregdiscr} 
can be done with a simultaneous control of norms, in the spirit of Randrianantoanina's decompositions. 
In term of $\boxplus$-sum, we can easily establish that

\begin{theorem}\label{BRDboxplusdiscr-hp1c}
Let $1\leq p<2$. Then the discrete spaces satisfy
$$H_p^c=h_p^{1_c} \boxplus h_p^c\quad 
\mbox{with equivalent norms.}$$ 
\end{theorem}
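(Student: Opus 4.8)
The plan is to take the ordinary-sum decomposition $H_p^c = h_p^{1_c}+h_p^c$ already granted by Theorem \ref{Davish1cdisc} and to upgrade it to the $\boxplus$-sum by showing that the two sums coincide. By Lemma \ref{quot1} there is a surjective quotient map $q : h_p^{1_c}\boxplus h_p^c \to h_p^{1_c}+h_p^c = H_p^c$, and by \eqref{eqsums} it is contractive; so one direction of the norm equivalence, namely $\|x\|_{H_p^c}\lesssim \|x\|_{h_p^{1_c}\boxplus h_p^c}$, comes for free. By Lemma \ref{sumeq} it then remains to see that $q$ is injective, and I would establish this by a duality computation rather than by trying to produce an explicit $L_2$-decomposition into the \emph{regular} diagonal space (which is the delicate ingredient already hidden inside Theorem \ref{Davish1cdisc} and cannot be extracted cheaply from the $h_p^d$-level decomposition of Corollary \ref{RaDdiscr}, since $h_p^{1_c}\subsetneq h_p^d$).

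Concretely, take $A_0 = L_2(\M)$, $X = h_p^{1_c}$, $Y = h_p^c$. The common dense core is the space of finite $L_\infty$-martingales, which lies in $A_0\cap X = L_2(\M)\cap h_p^{1_c}$ and is dense both in $h_p^{1_c}$ (by definition of the latter) and in $h_p^c$ (using the contractive inclusion $L_2(\M)\subset h_p^c$ valid for $p<2$). Hence Lemma \ref{dualboxplus}(ii) collapses: a functional on $h_p^{1_c}\boxplus h_p^c$ is given by a single element $z$ that is simultaneously bounded on $h_p^{1_c}$ and on $h_p^c$. Invoking the discrete duals $(h_p^{1_c})^* = h_{p'}^{\infty_c}$ (Corollary \ref{dualhp1c}) and $(h_p^c)^* = L_{p'}^cmo$ (Theorem \ref{fsdualityhpcdiscr}), this yields
\[
(h_p^{1_c}\boxplus h_p^c)^* = h_{p'}^{\infty_c}\cap L_{p'}^cmo .
\]

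The engine is now the elementary splitting \eqref{decLpcMO}, which says exactly that $\|x\|_{L_{p'}^cMO}\simeq \max(\|x\|_{h_{p'}^{\infty_c}},\|x\|_{L_{p'}^cmo})$, i.e. $h_{p'}^{\infty_c}\cap L_{p'}^cmo = L_{p'}^cMO$. Therefore $(h_p^{1_c}\boxplus h_p^c)^* = L_{p'}^cMO = (H_p^c)^*$ by Theorem \ref{FSdiscr}. The same computation applied to Lemma \ref{dualboxplus}(i) gives $(h_p^{1_c}+h_p^c)^* = L_{p'}^cMO$, and the two identifications are compatible with $q^*$ because both dualities are implemented by the trace pairing $\tau(z^*\,\cdot)$ and $q$ is the identity on $L_2(\M)$. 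Thus $q^* : (H_p^c)^*\to (h_p^{1_c}\boxplus h_p^c)^*$ is onto; since $q$ is a metric surjection, $q^*$ is an isometry onto $(\ker q)^\perp$, and surjectivity of $q^*$ forces $\ker q = \{0\}$. An injective quotient map is an isomorphism, so $h_p^{1_c}\boxplus h_p^c = h_p^{1_c}+h_p^c = H_p^c$ with equivalent norms, for all $1\le p<2$ (including $p=1$, where $p'=\infty$ and the above reads $bmo^c$ and $BMO^c$).

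The step I expect to require the most care is the collapse in Lemma \ref{dualboxplus}: one must check that a pair $(x^*,y^*)$ agreeing on $L_2(\M)\cap h_p^{1_c}$ really is a single $L_2$-element bounded on both $h_p^{1_c}$ and $h_p^c$, i.e. that finite $L_\infty$-martingales form a dense core in both spaces simultaneously and that the two concrete dual realizations glue to one element. Equivalently, in the language of the Remark following Lemma \ref{sumeq}, the entire argument amounts to verifying $L^\perp = L_0^\perp$ inside $(h_p^{1_c}\oplus_1 h_p^c)^*$, both annihilators being $L_{p'}^cMO$; once this identification is set up, the remainder is bookkeeping.
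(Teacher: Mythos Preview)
Your proposal is correct and follows essentially the same route as the paper's own proof: both arguments use Lemma \ref{dualboxplus}(ii) to describe $(h_p^{1_c}\boxplus h_p^c)^*$, observe that the $h_p^c$-component $x_2$ of any functional already lies in $L_2(\M)$ (since $(h_p^c)^*=L_{p'}^cmo\subset L_2(\M)$), use density of $L_2(\M)\cap h_p^{1_c}$ in $h_p^{1_c}$ to transfer the $h_p^{1_c}$-bound from $x_1$ to $x_2$, and then identify the resulting intersection $h_{p'}^{\infty_c}\cap L_{p'}^cmo$ with $L_{p'}^cMO=(h_p^{1_c}+h_p^c)^*$ via \eqref{decLpcMO}, forcing $q$ to be injective. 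The ``collapse'' you flag as the delicate step is exactly the point the paper handles explicitly, and in the same way you anticipate.
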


\begin{proof}
We first look at the dual spaces and claim that if $x=(x_1,x_2)\in (h_p^{1_c}\boxplus h_p^c)^*$, then  
\begin{equation}\label{x2LpcMO}
x_2 \in (h_p^{1_c})^*\cap (h_p^c)^*=L_{p'}^cMO=(H_p^c)^*=(h_p^{1_c}+ h_p^c)^*.
\end{equation}
Then we can deduce that the quotient map $q:h_p^{1_c}\boxplus h_p^c \to h_p^{1_c}+ h_p^c$ is injective. 
Hence the two sums $h_p^{1_c}\boxplus h_p^c$ and $h_p^{1_c}+ h_p^c$ coincide isometrically, 
and the result follows from Theorem \ref{Davish1cdisc} with the same constant in the equivalence of the norms. 
To see \eqref{x2LpcMO}, we consider $x=(x_1,x_2)\in (h_p^{1_c}\boxplus h_p^c)^*$. 
Then by Lemma \ref{dualboxplus} we have $x_1\in (h_p^{1_c})^*$, 
$x_2 \in (h_p^c)^*=L_{p'}^cmo \subset L_2(\M)$ and $\langle x_1,y\rangle =\langle x_2,y\rangle$ for all $y\in L_2(\M)\cap h_p^{1_c}$. 
Hence for $y\in L_2(\M)\cap h_p^{1_c}$ we have 
$$|\langle x_2,y\rangle|=|\langle x_1,y\rangle|\leq \|x_1\|_{(h_p^{1_c})^*}\|y\|_{h_p^{1_c}}.$$
By density of $L_2(\M)\cap h_p^{1_c}$ in $ h_p^{1_c}$ we conclude that $x_2 \in (h_p^{1_c})^*$ and \eqref{x2LpcMO} follows. 
\qd

The continuous case will be more complicated, and we need to introduce some notations and prove some preliminary results in the discrete case 
to extend Theorem \ref{BRDboxplusdiscr-hp1c} to the continuous setting later in subsection \ref{subsectDmixed}. 
We can view $H_p^c$ as a subset of the conditioned column space $L_p^{\cond}(\M;\ell_2^c)$ introduced in \cite{JD}. 
Recall that for $1\leq p <\infty$ and any finite sequence $x=(x_n)_{n\geq 0}$ in $\M$, we set
$$\|x\|_{L_p^{\cond}(\M; \ell_2^c)}=\Big\|\Big(\sum_{n\geq 0}\E_n|x_n|^2\Big)^{1/2}\Big\|_p.$$
Then $\|\cdot\|_{L_p^{\cond}(\M; \ell_2^c)}$ defines a norm on the family of finite sequences of $\M$. 
We denote by $L_p^{\cond}(\M; \ell_2^c)$ the corresponding completion, 
and $H_p^c$ clearly embeds isometrically into $L_p^{\cond}(\M; \ell_2^c)$.
The $L_p^{\cond}(\M; \ell_2^c)$-norm can be characterized in an atomic way.

\begin{lemma}\label{char-Lpl2c}
Let $1\leq p <2$, $\frac{1}{p}=\frac{1}{2}+\frac{1}{q}$ and $x=(x_n)_{n\geq 0}$ be a finite sequence of $\M$. Then 
\begin{align*}\Big(\frac{p}{2}\Big)^{1/2}\inf_{\substack{x_n=b_nw_n \\ w_n \in L_q^+(\M_n)}}
\Big( \sum_{n\geq 0}\|b_n\|_2^2\Big)^{1/2}\|{\sup_n}^+ w_n^2\|_{q/2}^{1/2}
&\leq \|x\|_{L_p^{\cond}(\M; \ell_2^c)}\\
&\leq \inf_{\substack{x_n=b_nw_n \\ w_n \in L_q^+(\M_n) }}
\Big( \sum_{n\geq 0}\|b_n\|_2^2\Big)^{1/2}\|{\sup_n}^+ w_n^2\|_{q/2}^{1/2}.
\end{align*}
\end{lemma}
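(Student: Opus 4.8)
Let me think about what we need to prove here. We have a finite sequence $x = (x_n)_{n \geq 0}$ in $\M$, with $1 \leq p < 2$ and $\frac{1}{p} = \frac{1}{2} + \frac{1}{q}$. The conditioned column norm is
$$\|x\|_{L_p^{\cond}(\M;\ell_2^c)} = \Big\|\Big(\sum_n \E_n|x_n|^2\Big)^{1/2}\Big\|_p.$$

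We want to show this is equivalent (with constants $(p/2)^{1/2}$ and $1$) to the atomic quantity
$$\inf_{x_n = b_n w_n, \, w_n \in L_q^+(\M_n)} \Big(\sum_n \|b_n\|_2^2\Big)^{1/2} \|\sup_n^+ w_n^2\|_{q/2}^{1/2}.$$

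The upper bound (the "$\leq$" giving that the norm is bounded by the atomic infimum) should be the easier direction. Given a factorization $x_n = b_n w_n$ with $w_n \in L_q^+(\M_n)$, I compute $\E_n|x_n|^2 = \E_n(w_n^* b_n^* b_n w_n) = w_n \E_n(|b_n|^2) w_n$ since $w_n$ is $\M_n$-measurable and positive. Then I need to bound $\|(\sum_n w_n \E_n(|b_n|^2) w_n)^{1/2}\|_p$.

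**Key computation for the upper bound.**

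The plan is to use a tracial/Cauchy-Schwarz argument. Writing $c_n = \E_n(|b_n|^2) \geq 0$, I want to estimate $\|\sum_n w_n c_n w_n\|_{p/2}$. I would test against a positive $\varphi \in L_{(p/2)'}(\M)$ of norm one and write
$$\tau\Big(\varphi \sum_n w_n c_n w_n\Big) = \sum_n \tau(w_n \varphi w_n \, c_n).$$
Then I bound $\tau(w_n \varphi w_n c_n) \leq \|w_n \varphi w_n\|_\infty$-type estimates combined with $\sum_n \tau(c_n) = \sum_n \|b_n\|_2^2$, using the fact that $(p/2)' = q/2$ and that $\|\sup_n^+ w_n^2\|_{q/2}$ controls the operator-valued sup. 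The duality formula for $\|\sup_n^+\|$ recalled in the excerpt (the variational expression $\sup\{\sum_n \tau(x_n y_n)\}$) is exactly what lets me pass from the pointwise factorization to the $L_{q/2}$ norm of the sup. This should yield the constant $1$ on the upper side.

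**The lower bound via factorization.**

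The harder direction is producing, from a bounded norm $\|x\|_{L_p^{\cond}(\M;\ell_2^c)}$, an explicit atomic factorization losing only the factor $(p/2)^{1/2}$. Here the natural move is to set $w = (\sum_n \E_n|x_n|^2)^{1/2}$, so that $\|x\|_{L_p^{\cond}} = \|w\|_p$, and then try to ``divide'' each $x_n$ by an $\M_n$-measurable positive piece. The difficulty is that $w$ itself is \emph{not} $\M_n$-measurable, so I cannot directly take $w_n = w$; I need $\M_n$-adapted weights. I would instead define $w_n = (\E_n|x_n|^2)^{1/2} \in L_q^+(\M_n)$ (after an approximation making these invertible, as in the proof of Lemma~\ref{ballLp1}) and set $b_n = x_n w_n^{-1}$, so that $x_n = b_n w_n$ and $\|b_n\|_2^2 = \tau(w_n^{-1}|x_n|^2 w_n^{-1})$... but this requires care since $\E_n|x_n|^2 = |x_n|^2$ need not hold. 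The cleaner approach is to factor through $w$ globally: write $x_n = (x_n w^{-1}) w$ formally, but replace $w$ by its conditioned pieces via a Hölder/interpolation estimate $\sum_n \|b_n\|_2^2 = \sum_n \tau(\E_n|x_n|^2 \cdot w^{-1}\cdots)$, and identify $\|\sup_n^+ w_n^2\|_{q/2}$ with $\|w\|_p^2$ up to the constant $(2/p)$. I expect the constant $(p/2)^{1/2}$ to emerge precisely from the gap between $\|(\sum_n w_n^2)^{1/2}\|$ and $\|\sup_n^+ w_n^2\|^{1/2}$, controlled by a reverse Doob / $\ell_2$-versus-$\ell_\infty$ estimate at exponent $p/2 < 1$. \textbf{The main obstacle} is this bookkeeping on the lower bound: ensuring the chosen $\M_n$-measurable weights $w_n$ simultaneously reconstruct $x_n$, keep $\sum_n\|b_n\|_2^2$ under control, and let $\|\sup_n^+ w_n^2\|_{q/2}$ be compared to $\|w\|_p^2$ — this is where the sharp constant $(p/2)^{1/2}$ is genuinely used and where an approximation-to-invertibility argument is needed to make the division legitimate.
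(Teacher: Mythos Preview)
Your upper bound sketch is on the right track and matches the paper's argument: once $w_n\in L_q^+(\M_n)$ you have $\E_n|x_n|^2=w_n\E_n(|b_n|^2)w_n$, and the paper finishes by picking $w\in L_{q/2}^+$ with $w_n^2\le w$, writing $w_n=v_nw^{1/2}$ with $\|v_n\|_\infty\le 1$, and applying H\"older directly (rather than your duality test against $\varphi$); either way the constant is $1$.

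The lower bound, however, has a genuine gap: neither of your two candidate factorizations works, and you have not identified the construction that does. Taking $w_n=(\E_n|x_n|^2)^{1/2}$ gives $\M_n$-measurable weights but there is no reason $\|\sup_n^+ w_n^2\|_{q/2}$ should be controlled by $\|x\|_{L_p^{\cond}}^2$ --- these individual pieces carry no monotone or sup structure. Taking the global $w=(\sum_n\E_n|x_n|^2)^{1/2}$ fails adaptedness, as you note. The paper's key idea, which you are missing, is to use the \emph{partial sums}
\[
B_n=\Big(\sum_{0\le k\le n}\E_k|x_k|^2\Big)^{1/2}\in L_p^+(\M_n),
\]
and set $w_n=B_n^{1-p/2}$, $b_n=x_nB_n^{p/2-1}$. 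These $w_n$ are $\M_n$-measurable \emph{and} increasing; operator monotonicity of $t\mapsto t^{1-p/2}$ (since $0<1-p/2\le 1/2$) then gives $w_n^2\le B^{2-p}$, whence $\|\sup_n^+ w_n^2\|_{q/2}^{1/2}\le\|B\|_p^{p/q}$. The constant $(p/2)^{1/2}$ comes not from any $\ell_2$-versus-$\ell_\infty$ comparison but from the scalar-type differentiation inequality
\[
\tau(B_n^p-B_{n-1}^p)\ge \tfrac{p}{2}\,\tau\big(B_n^{p-2}(B_n^2-B_{n-1}^2)\big)=\tfrac{p}{2}\,\tau\big(B_n^{p-2}\E_n|x_n|^2\big),
\]
which telescopes to give $\sum_n\|b_n\|_2^2\le (2/p)\|B\|_p^p$. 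This ``increment trick'' is the missing ingredient.
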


\begin{proof}
Recall that $\|{\sup_n}^+ w_n^2\|_{q/2}=\inf\big\{\|w\|_{q/2} : w_n^2\leq w, \forall n\geq 0\big\}$. 
We first consider a decomposition $x_n=b_nw_n$ such that $w_n \in L_q^+(\M_n)$ for all $n\geq 0$. 
Let $w\in L_{q/2}^+(\M)$ be such that $w_n^2\leq w$ for all $n\geq 0$. 
Then we may write $w_n=v_nw^{1/2}$ for all $n$, with $\|v_n\|_{\infty}\leq 1$. We obtain
\begin{align*}
&\|x\|^2_{L_p^{\cond}(\M; \ell_2^c)}\\
=&\Big\|\sum_{n\geq 0} \E_n(w_n|b_n|^2w_n)\Big\|_{p/2}
=\Big\|\sum_{n\geq 0} w_n\E_n(|b_n|^2)w_n\Big\|_{p/2}
=\Big\|w^{1/2}\Big(\sum_{n\geq 0} v_n^*\E_n|b_n|^2v_n\Big)w^{1/2}\Big\|_{p/2}\\
\leq & \Big\|\sum_{n\geq 0} v_n^*\E_n|b_n|^2v_n\Big\|_{1}\|w^{1/2}\|_{q}^2
=\Big(\sum_{n\geq 0} \tau \big((\E_n|b_n|^2)^{1/2}v_nv_n^*(\E_n|b_n|^2)^{1/2}\big)\Big)\|w\|_{q/2}\\
\leq & \Big(\sum_{n\geq 0}\|b_n\|_2^2\Big) \|w\|_{q/2}.
\end{align*}
Thus taking the infimum yields the second inequality. 
For the first one we consider a finite sequence $x=(x_n)_{n\geq 0}$ in $\M$. 
By approximation, we may assume that $B=\Big(\sum_{n\geq 0}\E_n|x_n|^2\Big)^{1/2}$ is invertible. 
For $n\geq 0$ we set $B_n=\Big(\displaystyle\sum_{0\leq k\leq n}\E_k|x_k|^2\Big)^{1/2} \in L_{p}^+(\M_n)$. 
Following \cite{bcpy}, we can show that 
\begin{equation}\label{estbcpy}
\tau(B_n^p-B_{n-1}^p)\geq \frac{p}{2}\tau\big(B_n^{p-2}(B_n^2-B_{n-1}^2)\big)
=\frac{p}{2}\tau\big(B_n^{p-2}\E_n|x_n|^2\big).
\end{equation}
Setting $w_n=B_n^{1-\frac{p}{2}}$ and $b_n=x_n B_n^{\frac{p}{2}-1}$ we get $x_n=b_nw_n$ with $w_n \in L_q^+(\M_n)$. 
Moreover, since $0<1-\frac{p}{2}\leq \frac{1}{2}$ and 
$ B_n^2\leq B^2$, we have 
$w_n^2 \leq B^{2-p}$. 
We deduce that 
$$\|{\sup_n}^+ w_n^2\|_{q/2}^{1/2}\leq \|B^{2-p}\|_{q/2}^{1/2}=\|B\|_p^{p/q}=\|x\|_{L_p^{\cond}(\M; \ell_2^c)}^{p/q}.$$
The other estimates comes from \eqref{estbcpy}
\begin{align*}
\Big(\sum_{n\geq 0}\|b_n\|_2^2\Big)^{1/2}
&= \Big(\sum_{n\geq 0}\tau\big( B_n^{\frac{p}{2}-1}|x_n|^2 B_n^{\frac{p}{2}-1}\big)\Big)^{1/2}
=\Big(\sum_{n\geq 0}\tau\big(B_n^{p-2}\E_n|x_n|^2\big)\Big)^{1/2}\\
&\leq \Big(\frac{2}{p}\Big)^{1/2}  \Big(\sum_{n\geq 0}\tau(B_n^p-B_{n-1}^p)\Big)^{1/2}
=\Big(\frac{2}{p}\Big)^{1/2} \|x\|_{L_p^{\cond}(\M; \ell_2^c)}^{p/2}.
\end{align*}
This proves the first inequality. 
\qd

We will give an explicit decomposition of $H_p^c=h_p^{1_c}+h_p^c$ by using this characterization. 
To establish the control of the norms, for technical reasons we need to recall the definition of the space $L_p(\M;\ell_1)$ introduced in \cite{JD}. 
For $1\leq p\leq 2$, a sequence $x=(x_n)_{n\geq 0}$ belongs to $L_p(\M;\ell_1)$ if there are $b_{k,n},a_{k,n}\in L_{2p}(\M)$ such that 
$x_n=\sum_{k\geq 0} b_{k,n}^*a_{k,n}$ for all $n$ and 
$$\sum_{k,n\geq 0} b_{k,n}^*b_{k,n} \in L_p(\M)\quad ,\quad \sum_{k,n\geq 0} a_{k,n}^*a_{k,n} \in L_p(\M).$$ 
Then $L_p(\M;\ell_1)$ is a Banach space when equipped with the norm 
$$\|x\|_{L_p(\M;\ell_1)} 
=\inf \Big\{\Big\|\sum_{k,n\geq 0} b_{k,n}^*b_{k,n} \Big\|_p^{1/2} \Big\|\sum_{k,n\geq 0} a_{k,n}^*a_{k,n} \Big\|_p^{1/2}\Big\},$$
where the infimum is taken over all $(b_{k,n})$ and $(a_{k,n})$ as above. 
Recall that for a positive sequence $x=(x_n)_{n\geq 0}$ we have 
$$\|x\|_{L_p(\M;\ell_1)} =\Big\|\sum_{n\geq 0} x_n \Big\|_p.$$ 
We will use the following inclusion. 

\begin{lemma}\label{Lql1subLql2c}
Let $1\leq q\leq \infty$. Then 
$$L_q(\M;\ell_1)
\subset L_q(\M;\ell_2^c) \quad \mbox{ contractively}.$$
\end{lemma}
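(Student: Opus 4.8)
The plan is to reduce everything to the single nontrivial inequality $\|x\|_{L_q(\M;\ell_2^c)}\le \|x\|_{L_q(\M;\ell_1)}$ and to prove it by feeding an arbitrary factorization from the definition of the $L_q(\M;\ell_1)$-norm into one Hölder product in an amplified algebra. Fix a finite sequence $x=(x_n)_n$ and a decomposition $x_n=\sum_k b_{k,n}^*a_{k,n}$ with $b_{k,n},a_{k,n}\in L_{2q}(\M)$. First I would form the column $\mathbf{a}=\sum_{k,n}e_{(k,n),0}\ten a_{k,n}$ over the index set $\nz^2$, together with the positive operator $B$ on $\ell_2(\nz^2)\oten\M$ which is block-diagonal in $n$, the $n$-th block being the matrix with $(k,l)$-entry $b_{k,n}b_{l,n}^*$. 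A direct bookkeeping then gives the identity $\sum_n x_n^*x_n=\mathbf{a}^*B\mathbf{a}$, since in the $n$-th block $\sum_{k,l}a_{k,n}^*b_{k,n}b_{l,n}^*a_{l,n}=\big(\sum_k b_{k,n}^*a_{k,n}\big)^*\big(\sum_l b_{l,n}^*a_{l,n}\big)=x_n^*x_n$.

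Next I would record the two norm computations. Since $\mathbf{a}$ is a column, $\|\mathbf{a}\|_{2q}^2=\big\|\sum_{k,n}a_{k,n}^*a_{k,n}\big\|_q$. Writing $B=\bigoplus_n \mathbf{b}_n^*\mathbf{b}_n$ with $\mathbf{b}_n=\sum_k e_{0,k}\ten b_{k,n}^*$ a row, the block $\mathbf{b}_n^*\mathbf{b}_n$ shares its nonzero spectrum with $\mathbf{b}_n\mathbf{b}_n^*=\sum_k b_{k,n}^*b_{k,n}=:D_n\in L_q(\M)$, so the block-diagonal structure yields $\|B\|_q^q=\sum_n\|D_n\|_q^q$. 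The three-term Hölder inequality $\|\mathbf{a}^*B\mathbf{a}\|_{q/2}\le\|\mathbf{a}\|_{2q}\,\|B\|_q\,\|\mathbf{a}\|_{2q}$ (with $\tfrac2q=\tfrac1{2q}+\tfrac1q+\tfrac1{2q}$) then gives
$$\|x\|_{L_q(\M;\ell_2^c)}^2=\Big\|\sum_n x_n^*x_n\Big\|_{q/2}\le\Big(\sum_n\|D_n\|_q^q\Big)^{1/q}\Big\|\sum_{k,n}a_{k,n}^*a_{k,n}\Big\|_q.$$

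The key remaining estimate is the superadditivity $\sum_n\|D_n\|_q^q\le\big\|\sum_n D_n\big\|_q^q$, i.e. $\sum_n\tau(D_n^q)\le\tau\big((\sum_n D_n)^q\big)$ for positive $D_n$: it upgrades the bound to $\big\|\sum_{k,n}b_{k,n}^*b_{k,n}\big\|_q\,\big\|\sum_{k,n}a_{k,n}^*a_{k,n}\big\|_q$, and taking the infimum over all factorizations finishes the proof. For $1\le q\le2$ this is easy: with $S=\sum_n D_n\ge D_n$, operator monotonicity of $t\mapsto t^{q-1}$ gives $D_n^{q-1}\le S^{q-1}$, whence $\tau(D_n^q)=\tau(D_n^{1/2}D_n^{q-1}D_n^{1/2})\le\tau(D_n S^{q-1})$, and summing gives $\sum_n\tau(D_n^q)\le\tau(S\cdot S^{q-1})=\tau(S^q)$. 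The case $q=\infty$ is immediate since $D_n\le S$ forces $\sup_n\|D_n\|_\infty\le\|S\|_\infty$. The hard part is precisely this estimate for $2<q<\infty$, where $t\mapsto t^{q-1}$ is no longer operator monotone and the argument above breaks down. I expect to bypass it by interpolation rather than by a direct trace inequality: both families $(L_q(\M;\ell_1))_q$ and $(L_q(\M;\ell_2^c))_q$ form complex interpolation scales (\cite{JD}), so the contractive inclusions already established at $q=2$ and $q=\infty$ interpolate to a contractive inclusion for every $2<q<\infty$, giving the statement in the full range $1\le q\le\infty$.
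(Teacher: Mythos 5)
Your proof is correct, but it is organized differently from the paper's. The paper's argument is a pure two-endpoint interpolation: it notes that both scales interpolate, disposes of $q=1$ as ``clear'', and for $q=\infty$ rewrites an arbitrary factorization $x_n=\sum_k b_{k,n}^*a_{k,n}$ as $x_n=b_na_n$ with $a_n=\big(\sum_k a_{k,n}^*a_{k,n}\big)^{1/2}$ and $b_n$ a contraction, so that $\sum_n x_n^*x_n\le\sum_n a_n^*a_n\le 1$; everything in between is then obtained by interpolating between $q=1$ and $q=\infty$. You instead prove the inequality directly on the whole range $1\le q\le 2$ (and at $q=\infty$) via the amplification $\sum_n x_n^*x_n=\mathbf{a}^*B\mathbf{a}$, the three-term H\"older inequality, and the trace superadditivity $\sum_n\tau(D_n^q)\le\tau\big(\big(\sum_n D_n\big)^q\big)$ established by operator monotonicity of $t\mapsto t^{q-1}$, and you invoke interpolation only on $(2,\infty)$. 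Both routes lean on the same standard fact that the two families form compatible interpolation scales, so your appeal to it is no weaker than the paper's. What your approach buys is an interpolation-free, constant-one proof on $[1,2]$ together with a uniform computational scheme; what it costs is length, and the range where you still need interpolation, $2<q<\infty$, is in fact exactly where the paper later applies the lemma (there the exponent satisfies $\frac1p=\frac12+\frac1q$ with $p<2$, hence lies in $(2,\infty]$). One further remark: the superadditivity you single out as ``the hard part'' for $q>2$ is actually true for all $q\ge 1$ --- it is McCarthy's inequality (a special case of Rotfel'd-type superadditivity of $\tau(f(\cdot))$ for convex $f$ with $f(0)=0$, valid for semifinite traces) --- so citing it would have made your entire proof interpolation-free; but it does require an argument other than operator monotonicity when $q>2$, and your fallback to interpolation there is perfectly legitimate.
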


\begin{proof}
Since the spaces $L_q(\M;\ell_1)$ and $L_q(\M;\ell_2^c)$ interpolate, it suffices to prove the result for $q=1$ and $q=\infty$. 
The case $q=1$ is clear.
For $q=\infty$, let $x_n=\sum_{k\geq 0} b_{k,n}^*a_{k,n}$ be such that 
$$\Big\|\sum_{k,n\geq 0} b_{k,n}^*b_{k,n} \Big\|_\infty\leq 1
\and \Big\|\sum_{k,n\geq 0} a_{k,n}^*a_{k,n} \Big\|_\infty\leq 1.$$
We set $a_n=\Big(\sum_k a_{k,n}^*a_{k,n}\Big)^{1/2}$. By approximation, we may assume that the $a_n$'s are invertible. 
Then considering $v_{k,n}=a_{k,n}a_n^{-1}$ and $b_n=\sum_k b_{k,n}^*v_{k,n}$, 
we can write $x_n=b_na_n$. 
Note that $\sum_{k} v_{k,n}^*v_{k,n}=1$ and 
$$\|b_n\|_{\infty}\leq \Big\|\sum_{k\geq 0} b_{k,n}^*b_{k,n}\Big\|_{\infty}^{1/2}\Big\|\sum_{k\geq 0} v_{k,n}^*v_{k,n}\Big\|_{\infty}^{1/2}\leq 1.$$ 
Then
$$\sum_{n\geq 0} x_n^*x_n =\sum_{n\geq 0} a_n^*b_n^*b_na_n 
\leq \sum_{n\geq 0} a_n^*a_n=\sum_{k,n\geq 0} a_{k,n}^*a_{k,n} \leq 1,$$
which proves the result for $q=\infty$. 
\qd

We can now establish the decomposition of an element $x\in L_2(\M)$ in $h_p^{1_c}+h_p^{c}$. 
However, in this case we cannot get directly such a decomposition in $L_2(\M)$ with a simultaneous control of $h_p$ and $L_2$ norms, 
but we are able to approximate $x$ with elements for which we have such a simultaneous control of norms. 

\begin{prop}\label{BRDexpldicr-hp1c}
Let $1\leq p<2$, $p<p_0<\frac{4}{4-p}$ and $x\in L_2(\M)$. 
Then there exist two families $(a_T)_{T\geq 0}$ and $(b_T)_{T\geq 0}$ in $L_2(\M)$ such that  
\begin{enumerate}
\item[(i)] $x=\displaystyle\lim_{T\to \infty} a_T+b_T $ in $H_{p_0}^c$,
\item[(ii)] $\|a_T\|_{h_p^{1_c}}+\|b_T\|_{h_p^{c}}\leq C(p)\|x\|_{H_p^c}$ for all $T\geq 0$,
\item[(iii)] $\max\{\|a_T\|_2,\|b_T\|_2\}\leq g(p,\|x\|_{H_p^c},T)$ for all $T\geq 0$.
\end{enumerate} 
\end{prop}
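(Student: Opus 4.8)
The plan is to prove this by applying the atomic characterization of the conditioned column norm (Lemma \ref{char-Lpl2c}) to produce the regular diagonal part, and then truncating the column atoms via spectral projections to simultaneously control the $L_2$-norms. First I would write $x\in L_2(\M)$ as the martingale with difference sequence $(d_n(x))_n$ and view $(d_n(x))_n$ as a finite sequence inside $L_p^{\cond}(\M;\ell_2^c)$, so that $\|x\|_{H_p^c}=\|(d_n(x))_n\|_{L_p^{\cond}(\M;\ell_2^c)}$. Applying the explicit construction in the proof of Lemma \ref{char-Lpl2c} with $B_n=\big(\sum_{k\le n}\E_k|d_k(x)|^2\big)^{1/2}$, I obtain a factorization $d_n(x)=b_nw_n$ with $w_n=B_n^{1-p/2}\in L_q^+(\M_n)$, $b_n=d_n(x)B_n^{p/2-1}$, satisfying
$$\Big(\sum_n\|b_n\|_2^2\Big)^{1/2}\le \Big(\frac2p\Big)^{1/2}\|x\|_{H_p^c}^{p/2}\quad\text{and}\quad \|{\sup_n}^+ w_n^2\|_{q/2}^{1/2}\le \|x\|_{H_p^c}^{p/q}.$$
This is the natural splitting of $H_p^c$ into the regular diagonal space $h_p^{1_c}$ (coming from the $b_nw_n$ atomic structure, via Lemma \ref{ballLp1}) and the conditioned column space $h_p^c$, reproving Theorem \ref{Davish1cdisc} at the level of atoms.

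Next I would introduce the truncation parameter $T$. Since $w_n\in L_q^+(\M_n)$ with $q>2$ (because $1\le p<2$), I would cut by spectral projections of the dominating element $w=B^{2-p}$ where $B=\big(\sum_n\E_n|d_n(x)|^2\big)^{1/2}$: set $e_T=\1(B\le T)$ and define the truncated atoms using $e_T$. Concretely, the diagonal part $a_T$ will be built by applying the Stein projection $\D$ to the sequence $(b_nw_n e_T)_n$ and the column part $b_T$ from the complementary piece; the key computation is that multiplying by $e_T$ keeps everything in $L_2(\M)$ with a norm bound depending on $T$ and $\|x\|_{H_p^c}$ (giving (iii)), while leaving the $h_p^{1_c}$ and $h_p^c$ norms controlled by $C(p)\|x\|_{H_p^c}$ uniformly in $T$ (giving (ii)). I would use Lemma \ref{Lql1subLql2c} to pass from the $\ell_1$-type estimates produced by the atoms to the $\ell_2^c$-estimates needed for the $h_p^{1_c}$ and $h_p^c$ norms, exploiting that $p<p_0<\frac{4}{4-p}$ guarantees the relevant exponents stay in the admissible range $1\le \cdot \le 2$.

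For assertion (i), I would show $a_T+b_T\to x$ in $H_{p_0}^c$ as $T\to\infty$. The point is that $a_T+b_T$ differs from $x$ only through the tail $\1(B>T)$, and since $x\in L_2(\M)\subset H_{p_0}^c$ for $p_0<2$, the spectral truncation $e_T\to \1=s(B)$ strongly; the uniform $L_p$ and $L_2$ control then forces convergence in the intermediate $H_{p_0}^c$-norm by an interpolation/equiintegrability argument, where the constraint $p_0<\frac{4}{4-p}$ is exactly what makes the relevant singular-number series summable. I would verify that the error $x-(a_T+b_T)$ has $H_{p_0}^c$-norm tending to $0$ by estimating $\|(d_n(x)\1(B>T))_n\|_{L_{p_0}^{\cond}(\M;\ell_2^c)}$ and using dominated convergence for the trace.

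The main obstacle I anticipate is the simultaneous control in (ii) and (iii): the factorization from Lemma \ref{char-Lpl2c} naturally lives at the $L_p$-level, and truncating by $e_T$ must be done so that the diagonal atoms remain genuinely in $h_p^{1_c}$ (not merely in $h_p^d$) while the $L_2$-estimate (iii) stays finite for each fixed $T$. The delicate point is that $e_T$ is the spectral projection of the \emph{global} element $B$ rather than an adapted one, so after applying the Stein projection $\D$ one must check that the module/adaptedness properties are preserved and that the inequality $|\E_n(y_n)|^2\le \E_n|y_n|^2$ together with the noncommutative Doob inequality still yields the $h_p^c$-bound with constant $C(p)\le C(p-1)^{-1}$. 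Handling this carefully — keeping the $q=\frac{2p}{2-p}$ bookkeeping consistent and ensuring the truncation does not destroy the atomic structure certified by Lemma \ref{ballLp1} — is where the real work lies; the rest is routine once the truncated factorization is set up correctly.
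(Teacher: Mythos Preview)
Your proposal has a genuine structural gap: the factorization $d_n(x)=b_nw_n$ from Lemma~\ref{char-Lpl2c} by itself does \emph{not} produce a split into $h_p^{1_c}+h_p^c$. What it gives you is $(\sum_n\|b_n\|_2^2)^{1/2}$ and $\|{\sup_n}^+w_n^2\|_{q/2}^{1/2}$ controlled, but membership in $h_p^{1_c}$ via Lemma~\ref{ballLp1} requires the $\ell_2^c$-quantity $\|(\sum_n|a_n|^2)^{1/2}\|_q$, not a supremum. So your proposed $a_T=\D((b_nw_ne_T)_n)$ has no reason to lie in $h_p^{1_c}$ with the bound in (ii), and your ``complementary piece'' $b_T$ (coming from $1-e_T$) is a tail that vanishes as $T\to\infty$ rather than a genuine $h_p^c$-component.

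The missing idea is the telescoping split $w_n=(w_n-w_{n-1})+w_{n-1}$. The paper sets
\[
a_T=\sum_n d_n\big(b_n(w_n-w_{n-1})\E_n(e_T)\big),\qquad
b_T=\sum_n d_n\big(b_nw_{n-1}\E_{n-1}(e_T)\big).
\]
The increments $w_n-w_{n-1}\ge 0$ telescope to $B^{1-p/2}$, and Lemma~\ref{Lql1subLql2c} converts this $L_q(\M;\ell_1)$-control into the $L_q(\M;\ell_2^c)$-bound needed for $h_p^{1_c}$. For $b_T$, the crucial point is that $w_{n-1}\in L_q^+(\M_{n-1})$, so $\E_{n-1}|b_nw_{n-1}\cdot|^2$ factors through $w_{n-1}$ and one lands in $h_p^c$. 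Note also that the truncation is not by a single $\1(B\le T)$: the paper uses $e_T=e_T^{(1)}\wedge e_T^{(2)}$ with $e_T^{(1)}=\1((\sum_n(w_n-w_{n-1})^2)^{1/2}\le T)$ and $e_T^{(2)}=\1(B^{1-p/2}\le T)$, and then applies the \emph{adapted} projections $\E_n(e_T)$, $\E_{n-1}(e_T)$ so that Stein's inequality and the conditional-expectation contractivity can be invoked. Finally, the role of $p_0<\tfrac{4}{4-p}$ is not an ``admissible range $1\le\cdot\le2$'' issue: setting $s=\tfrac{4}{2-p}$, the condition is exactly $q_0<s$, which allows the H\"older splitting $\tfrac{1}{q_0}=\tfrac{1}{s}+\tfrac{1}{r_0}$ and produces the explicit decay $\|x-(a_T+b_T)\|_{H_{p_0}^c}\le C(p,p_0)T^{-q/r_0}\|x\|_{H_p^c}^{p(1/2+1/r_0)}\|x\|_2^{2/s}$ via the trace bound $\tau(1-e_T)\le 2T^{-q}\|x\|_{H_p^c}^p$.
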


\begin{proof}
Let $x\in L_2(\M)$. Following the proof of Lemma \ref{char-Lpl2c}, we set 
$$B=\Big(\sum_{n\geq 0}|d_n(x)|^2\Big)^{1/2}, \quad 
B_n=\Big(\sum_{0\leq k\leq n}|d_k(x)|^2\Big)^{1/2}
\and w_n=B_n^{1-\frac{p}{2}}.$$
By approximation, we may assume that $B$ is invertible and set $b_n=d_n(x) B_n^{\frac{p}{2}-1}$ so that $d_n(x)=b_nw_n$ for all $n\geq 0$ and
$$\Big(\sum_{n\geq 0}\|b_n\|_2^2\Big)^{1/2}
\leq \Big(\frac{2}{p}\Big)^{1/2} \|x\|_{H_p^c}^{p/2}
\and 
\|{\sup_n}^+ w_n^2\|_{q/2}^{1/2}\leq \|x\|_{H_p^c}^{p/q}.$$
Fix $T>0$. We consider the spectral projections 
$$e_T^{(1)}=\1\Big(\Big(\sum_{n\geq 0} (w_n-w_{n-1})^2\Big)^{1/2}\leq T\Big) , 
e_T^{(2)}=\1\Big(B^{1-\frac{p}{2}}\leq T\Big) \and e_T=e_T^{(1)}\wedge e_T^{(2)}.$$ 
We set
$$a_T=\sum_{n\geq 0} d_n\big(b_n(w_n-w_{n-1})\E_n(e_T)\big) \and 
b_T=\sum_{n\geq 0} d_n\big(b_n w_{n-1}\E_{n-1}(e_T)\big) .$$
We first check that $a_T$ and $b_T$ satisfy the estimates (ii) and (iii). 
Since $h_p^{1_c}$ is complemented in $L_p(\M;\ell_1^c)$ by Lemma \ref{hp1comp}, we have for $\frac{1}{p}=\frac{1}{2}+\frac{1}{q}$
\begin{align*}
\|a_T\|_{h_p^{1_c}} 
&\leq C_p \|(b_n(w_n-w_{n-1})\E_n(e_T))_n\|_{L_p(\M;\ell_1^c)}\\
&\leq C_p \Big(\sum_{n\geq 0}\|b_n\|_2^2\Big)^{1/2} \Big\|\Big(\sum_{n\geq 0} |(w_n-w_{n-1})\E_n(e_T)|^2\Big)^{1/2}\Big\|_q.
\end{align*}
Since $w_n \in L_q^+(\M_n)$ and $w_{n-1}\leq w_n$ we have by Stein's inequality 
\begin{align*}
&\Big\|\Big(\sum_{n\geq 0} |(w_n-w_{n-1})\E_n(e_T)|^2\Big)^{1/2}\Big\|_q
=\Big\|\Big(\sum_{n\geq 0} |\E_n((w_n-w_{n-1})e_T)|^2\Big)^{1/2}\Big\|_q \\
\leq & \gamma_q \Big\|\Big(\sum_{n\geq 0} |(w_n-w_{n-1})e_T|^2\Big)^{1/2}\Big\|_q
=\gamma_q \Big\|e_T\Big(\sum_{n\geq 0} (w_n-w_{n-1})^2\Big)e_T\Big\|_{q/2}^{1/2} \\
\leq & \gamma_q \Big\|\sum_{n\geq 0} (w_n-w_{n-1})^2\Big\|_{q/2}^{1/2} 
\leq \gamma_q \Big\|\sum_{n\geq 0} (w_n-w_{n-1})\Big\|_{q} 
=\gamma_q \|B^{1-\frac{p}{2}}\|_{q}=\gamma_q \|x\|_{H_p^c}^{p/q},
\end{align*}
where the last inequality comes from Lemma \ref{Lql1subLql2c}. 
We deduce that 
$\|a_T\|_{h_p^{1_c}} \leq C_p\Big(\frac{2}{p}\Big)^{1/2}\gamma_q  \|x\|_{H_p^c}$. 
For estimating $b_T$, we will use the well-known fact 
\begin{align}\label{En-1(dn)}
\E_{n-1}|\E_n(a_n)-\E_{n-1}(a_n)|^2
&=\E_{n-1}|\E_n(a_n)|^2-|\E_{n-1}(a_n)|^2\\\nonumber
&\leq \E_{n-1}|\E_n(a_n)|^2
\leq \E_{n-1}(\E_n|a_n|^2)=\E_{n-1}|a_n|^2
\end{align}
to write
$$\|b_T\|_{h_p^{c}}\leq \Big\|\Big(\sum_{n\geq 0} \E_{n-1} |b_n w_{n-1}\E_{n-1}(e_T)|^2\Big)^{1/2}\Big\|_p.$$
Then by the same argument than the one we used in the first part of the proof of Lemma \ref{char-Lpl2c} we obtain
\begin{align*}
\|b_T\|_{h_p^{c}}
&\leq \Big(\sum_{n\geq 0}\|b_n\|_2^2\Big)^{1/2}\|{\sup_n}^+ |w_{n-1}\E_{n-1}(e_T)|^2\|_{q/2}^{1/2}\\
& \leq \Big(\sum_{n\geq 0}\|b_n\|_2^2\Big)^{1/2}\|{\sup_n}^+ w_{n-1}^2\|_{q/2}^{1/2}
\leq \Big(\frac{2}{p}\Big)^{1/2} \|x\|_{H_p^c}.
\end{align*}
This proves (ii). 
We now turn to the estimate of the $L_2$-norms. 
By definition of $e_T$ we can write
\begin{align*}
\|(w_n-w_{n-1})\E_n(e_T)\|_\infty
&=\|\E_n((w_n-w_{n-1})e_T)\|_\infty \\
& \leq \|(w_n-w_{n-1})e_T\|_\infty 
=\|e_T(w_n-w_{n-1})^2e_T\|_\infty^{1/2}\\
&\leq \|e_T^{(1)}(w_n-w_{n-1})^2e_T^{(1)}\|_\infty^{1/2}
\leq T
\end{align*}
and
\begin{align*}
\|w_{n-1}\E_{n-1}(e_T)\|_\infty
&=\|\E_{n-1}(w_{n-1}e_T)\|_\infty
\leq \|w_{n-1}e_T\|_\infty 
=\|e_Tw_{n-1}^2e_T\|_\infty^{1/2}\\
&\leq \|e_T^{(2)}w_{n-1}^2e_T^{(2)}\|_\infty^{1/2}
\leq \|e_T^{(2)}B^{2-p}e_T^{(2)}\|_\infty^{1/2}
\leq T.
\end{align*}
Thus
$$\|a_T\|_2 \leq 2 \Big(\sum_{n\geq 0}\|b_n\|_2^2\Big)^{1/2}\sup_n \|(w_n-w_{n-1})\E_n(e_T)\|_\infty
\leq 2\Big(\frac{2}{p}\Big)^{1/2} \|x\|_{H_p^c}^{p/2}T$$
and 
$$\|b_T\|_2 \leq 2 \Big(\sum_{n\geq 0}\|b_n\|_2^2\Big)^{1/2}\sup_n \|w_{n-1}\E_{n-1}(e_T)\|_\infty
\leq 2\Big(\frac{2}{p}\Big)^{1/2} \|x\|_{H_p^c}^{p/2}T. $$
We obtain (iii) with $g(p,\|x\|_{H_p^c},T)=2\Big(\frac{2}{p}\Big)^{1/2}\|x\|_{H_p^c}^{p/2}T$. 
It remains to prove the convergence (i) in $H_{p_0}^c$. 
We set
$$y_T=\sum_{n\geq 0} d_n(b_n(w_n-w_{n-1})\E_n(1-e_T)) \and 
z_T=\sum_{n\geq 0} d_n(b_n w_{n-1}\E_{n-1}(1-e_T)) .$$
Then $x-(a_T+b_T)=y_T+z_T$ and Theorem \ref{Davish1cdisc} implies
\begin{equation}\label{limT}
\|x-(a_T+b_T)\|_{H_{p_0}^c}\leq C(p_0) \big(\|y_T\|_{h_{p_0}^{1_c}}+\|z_T\|_{h_{p_0}^{c}}\big).
\end{equation}
Observe that
\begin{equation}\label{1-e}
\tau(1-e_T)\leq 2T^{-q}\|x\|_{H_p^c}^p.
\end{equation}
Indeed, since $1-e_T=1-(e_T^{(1)}\wedge e_T^{(2)})=(1-e_T^{(1)})\vee (1- e_T^{(2)})$, we have 
$$\tau(1-e_T)\leq \tau(1-e_T^{(1)})+\tau(1-e_T^{(2)}).$$
Moreover, Lemma \ref{Lql1subLql2c} yields 
\begin{align*}
\tau(1-e_T^{(1)})
&=\tau\Big(\1\Big(\Big(\sum_{n\geq 0} (w_n-w_{n-1})^2\Big)^{1/2}> T\Big)\Big) 
\leq T^{-q}\tau\Big(\Big(\sum_{n\geq 0} (w_n-w_{n-1})^2\Big)^{q/2}\Big)\\
&\leq T^{-q}\Big\|\sum_{n\geq 0} (w_n-w_{n-1})\Big\|_q^q=T^{-q}\|x\|_{H_p^c}^p,
\end{align*}
and
$$\tau(1-e_T^{(2)})=\tau\Big(\1\Big(B^{1-\frac{p}{2}}> T\Big)\Big) 
\leq T^{-q}\tau\big(B^{q(1-\frac{p}{2})}\big)
=T^{-q}\|x\|_{H_p^c}^p.
$$
This proves \eqref{1-e}. 
As for $a_T$ we can write for $\frac{1}{p_0}=\frac{1}{2}+\frac{1}{q_0}$
$$\|y_T\|_{h_{p_0}^{1_c}}
\leq C_p \Big(\sum_{n\geq 0}\|b_n\|_2^2\Big)^{1/2} \Big\|\Big(\sum_{n\geq 0} |(w_n-w_{n-1})\E_n(1-e_T)|^2\Big)^{1/2}\Big\|_{q_0}.$$
Let $s=\frac{4}{2-p}$. Since $p_0 <\frac{4}{4-p}$ we have $q_0<s$. 
Thus we can consider $q_0<r_0<\infty$ such that $\frac{1}{q_0}=\frac{1}{s}+\frac{1}{r_0}$. 
By Stein's inequality we have 
\begin{align*}
\Big\|\Big(\sum_{n\geq 0} |(w_n-w_{n-1})\E_n(1-e_T)|^2\Big)^{1/2}\Big\|_{q_0}
&=\Big\|\Big(\sum_{n\geq 0} |\E_n((w_n-w_{n-1})(1-e_T))|^2\Big)^{1/2}\Big\|_{q_0}\\
&\leq \gamma_{q_0}\Big\|\Big(\sum_{n\geq 0} |(w_n-w_{n-1})(1-e_T)|^2\Big)^{1/2}\Big\|_{q_0} \\
&=\gamma_{q_0}\Big\|(1-e_T)\Big(\sum_{n\geq 0} (w_n-w_{n-1})^2\Big)(1-e_T)\Big\|_{q_0/2}^{1/2}\\
&\leq \gamma_{q_0}\|1-e_T\|_{r_0} \Big\|\sum_{n\geq 0} (w_n-w_{n-1})^2\Big\|_{s/2}^{1/2}.
\end{align*}
Lemma \ref{Lql1subLql2c} implies
$$\Big\|\sum_{n\geq 0} (w_n-w_{n-1})^2\Big\|_{s/2}^{1/2}
\leq \Big\|\sum_{n\geq 0} (w_n-w_{n-1})\Big\|_{s} =\|B^{1-\frac{p}{2}}\|_s=\|x\|_2^{2/s}.$$
By using \eqref{1-e} we obtain 
$$\|y_T\|_{h_{p_0}^{1_c}}\leq C_p\gamma_{q_0}\Big(\frac{2}{p}\Big)^{1/2}2^{1/r_0}T^{-q/r_0}\|x\|_{H_p^c}^{p(\frac{1}{2}+\frac{1}{r_0})}\|x\|_2^{2/s}.$$
We estimate $z_T$ as we did for $b_T$ by  
$$\|z_T\|_{h_{p_0}^{c}}
\leq \Big(\sum_{n\geq 0}\|b_n\|_2^2\Big)^{1/2}\|{\sup_n}^+ |w_{n-1}\E_{n-1}(1-e_T)|^2\|_{q_0/2}^{1/2}.$$
Since $(1-e_T)w_{n-1}^2(1-e_T)\leq (1-e_T)B^{2-p}(1-e_T)$, we have 
$$ |\E_{n-1}(w_{n-1}(1-e_T))|^2 \leq \E_{n-1}|w_{n-1}(1-e_T)|^2\leq  \E_{n-1}|B^{1-\frac{p}{2}}(1-e_T)|^2$$ 
and the noncommutative Doob inequality gives
\begin{align*}
\|{\sup_n}^+ |w_{n-1}\E_{n-1}(1-e_T)|^2\|_{q_0/2} 
&=\|{\sup_n}^+ |\E_{n-1}(w_{n-1}(1-e_T))|^2\|_{q_0/2}\\
&\leq \|{\sup_n}^+ \E_{n-1}|B^{1-\frac{p}{2}}(1-e_T)|^2\|_{q_0/2}\\
&\leq \delta_{q_0/2}\||B^{1-\frac{p}{2}}(1-e_T)|^2\|_{q_0/2}
=\delta_{q_0/2}\|(1-e_T)B^{2-p}(1-e_T)\|_{q_0/2}\\
&\leq \|1-e_T\|_{r_0}^2\|B^{2-p}\|_{s/2}
\leq 2^{2/r_0}T^{-2q/r_0}\|x\|_{H_p^c}^{2p/r_0}\|x\|_2^{4/s}.
\end{align*}
Thus 
$$\|z_T\|_{h_{p_0}^{c}}\leq  \Big(\frac{2}{p}\Big)^{1/2}2^{1/r_0}T^{-q/r_0}\|x\|_{H_p^c}^{p(\frac{1}{2}+\frac{1}{r_0})}\|x\|_2^{2/s}.$$
By \eqref{limT}, we obtain that 
$$\|x-(a_T+b_T)\|_{H_{p_0}^c}
\leq C(p,p_0)T^{-q/r_0}\|x\|_{H_p^c}^{p(\frac{1}{2}+\frac{1}{r_0})}\|x\|_2^{2/s},$$
and taking the limit as $T$ tends to $\infty$ yields (i).
\qd

\re
It is important to note that for all $T\geq 0$ we obtained a uniform bound
$$\|x-(a_T+b_T)\|_{H_{p_0}^c} \leq C(p,p_0) T^{-q/r_0}\|x\|_{H_p^c}^{p(\frac{1}{2}+\frac{1}{r_0})}\|x\|_2^{2/s},$$
where $s=\frac{4}{2-p}$ and $\frac{1}{r_0}=\frac{1}{p_0}-\frac{1}{2}-\frac{1}{s}$. 
\mar

Observe that we may deduce from the proof of Proposition \ref{BRDexpldicr-hp1c} an explicit decomposition of $H_p^c=h_p^{1_c}+h_p^c$. 
This gives a constructive proof of Theorem \ref{Davish1cdisc}. 
Indeed, for $x\in L_2(\M)$ we can set 
$$x^{1_c}=\sum_{n\geq 0} d_n(b_n(w_n-w_{n-1}))
\and 
x^{c}=\sum_{n\geq 0} d_n(b_nw_{n-1}),$$
where $w_n=\Big(\sum_{0\leq k\leq n}  |d_k(x)|^2 \Big)^{\frac{1}{2}-\frac{p}{4}}$ and $b_n=d_n(x)w_n^{-1}$ 
(here we assume that $\sum_{ n}  |d_n(x)|^2$ is invertible). 
Then $x=x^{1_c}+x^c$ and it follows from the proof of Proposition \ref{BRDexpldicr-hp1c} that 
$$\|x^{1_c}\|_{h_p^{1_c}}+\|x^c\|_{h_p^{c}}\leq C(p)\|x\|_{H_p^c}.$$
In fact, this explicit decomposition can be done at the level of the column $L_p$ spaces. 
More precisely, we  can define the space $L_p^{\cond \mbox{-}}(\M; \ell_2^c)$ by setting 
$$\|x\|_{L_p^{\cond \mbox{-}}(\M; \ell_2^c)}=\Big\|\Big(\sum_{n\geq 0}\E_{n-1}|x_n|^2\Big)^{1/2}\Big\|_p$$
for $1\leq p <\infty$ and $x=(x_n)_{n\geq 0}$ a finite sequence in $\M$. 
Then we might prove constructively that 
\begin{equation}\label{decLpcond}
L_p^{\cond}(\M; \ell_2^c)=L_p(\M; \ell_1^c)+L_p^{\cond\mbox{-}}(\M; \ell_2^c) \quad \mbox{ with equivalent norms for } 1\leq p \leq 2.
\end{equation} 
Even if we will not use it in this paper, it is worth mentioning that \eqref{decLpcond} implies 
$$H_p^c \mbox{ is complemented in } L_p^{\cond}(\M; \ell_2^c) \mbox{ for } 1\leq p <\infty.$$
Indeed, this is easy to see for $1<p<\infty$ by Stein's inequality. 
For $p=1$, this follows from \eqref{decLpcond} and from the fact that $h_1^c$ is complemented in $L_1^{\cond\mbox{-}}(\M; \ell_2^c)$ (by \eqref{En-1(dn)}) 
and $h_1^{1_c}$ is complemented in $L_1(\M; \ell_1^c)$ (by Lemma \ref{hp1comp}).

\subsection{Definition of diagonal spaces for $1\leq p <2$ and basic properties}\label{subsectdiag}

We fix an ultrafilter $\U$. 
For $x\in \M$ and $1\leq p < 2$, whenever the limits exist, we define 
$$\|x\|_{\h_p^{d}}= \lim_{\si,\U} \|x\|_{h_p^d(\si)}
\quad \mbox{and} \quad 
\|x\|_{\h_p^{1_c}}= \lim_{\si,\U} \|x\|_{h_p^{1_c}(\si)}.
$$
Observe that by interpolation between the cases $p=1$ and $p=2$ and Remark \ref{hp1reg} we have 
$$\frac12 \|x\|_p\leq  \|x\|_{\h_p^{d}} \leq  \|x\|_{\h_p^{1_c}}.$$
Hence $\|\cdot\|_{\h_p^{d}}$ and $\|\cdot\|_{\h_p^{1_c}}$ define two norms for $1\leq p <2$. \\
The discrete diagonal norms also satisfy some monotonicity properties. 

\begin{lemma}\label{monotondiag}
Let $1\leq p <  2$, $x\in \M$ and $\s \subset \s'$. Then  
\begin{enumerate}
  \item[(i)] 
$\|x\|_{h_p^d(\s)}\leq 2 \|x\|_{h_p^d(\s')}.$ 
Hence 
$$ \|x\|_{\h_p^{d}} \leq \sup_\s \|x\|_{h_p^d(\s)}\leq 2 \|x\|_{\h_p^{d}}.$$
 \item[(ii)] 
$\|x\|_{h_p^{1_c}(\s)}\leq \|x\|_{h_p^{1_c}(\s')}.$ 
Hence 
$$ \|x\|_{\h_p^{1_c}} = \sup_\s \|x\|_{h_p^{1_c}(\s)}.$$
\end{enumerate}
\end{lemma}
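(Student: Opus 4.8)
I need to prove two monotonicity properties for the diagonal norms $\h_p^d$ and $\h_p^{1_c}$ in the range $1 \leq p < 2$: if $\s \subset \s'$ are finite partitions of $[0,1]$, then $\|x\|_{h_p^d(\s)} \leq 2\|x\|_{h_p^d(\s')}$ and $\|x\|_{h_p^{1_c}(\s)} \leq \|x\|_{h_p^{1_c}(\s')}$. Both of these are statements about how the discrete diagonal norms behave under refinement of the partition, and then I take the limit along $\U$ to get the consequences for the continuous norms.

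Let me think about what these norms actually are. The diagonal norm $\|x\|_{h_p^d(\s)} = (\sum_{t \in \s} \|d_t^\s(x)\|_p^p)^{1/p}$ is the $\ell_p(L_p)$-norm of the martingale difference sequence relative to $\s$. The key structural fact is that when we pass from $\s$ to a finer partition $\s'$, each difference $d_t^\s(x)$ splits as a sum of differences over the subinterval: $d_t^\s(x) = \sum_{s \in I_t} d_s^{\s'}(x)$ where $I_t$ is the collection of $s \in \s'$ lying in the interval associated to $t$. This is the same decomposition used repeatedly earlier (e.g. in Lemma \ref{convexityhpc} via the identity \eqref{eq:cond}).

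**Part (i): the $h_p^d$ case.** For $1 \leq p < 2$ the function $t \mapsto t^{p/2}$ is subadditive, so $\|\cdot\|_{p/2}$ is a $p/2$-norm. I want to compare $(\sum_{t \in \s} \|d_t^\s(x)\|_p^p)^{1/p}$ with the analogous sum over $\s'$. Since $d_t^\s(x) = \sum_{s \in I_t} d_s^{\s'}(x)$ and these differences over distinct intervals are orthogonal, I would write $\|d_t^\s(x)\|_p^p = \||d_t^\s(x)|^2\|_{p/2}^{p/2} = \|\sum_{s \in I_t}|d_s^{\s'}(x)|^2\|_{p/2}^{p/2}$. Using that $\|\cdot\|_{p/2}$ is a $p/2$-norm gives $\|\sum_{s \in I_t}|d_s^{\s'}(x)|^2\|_{p/2}^{p/2} \leq \sum_{s \in I_t}\||d_s^{\s'}(x)|^2\|_{p/2}^{p/2} = \sum_{s \in I_t}\|d_s^{\s'}(x)\|_p^p$. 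Summing over $t \in \s$ and recalling that $\bigcup_{t} I_t = \s'$ yields directly $\|x\|_{h_p^d(\s)}^p \leq \|x\|_{h_p^d(\s')}^p$, giving constant $1$, which is even better than the claimed $2$. The only subtlety is the $t=0$ boundary term and the convention $\E_{0^-}=\E_0$; I expect the constant $2$ in the statement is a safe margin to absorb the treatment of the first block, so I would present the clean argument and note the boundary term is harmless.

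**Part (ii) and conclusions.** For $h_p^{1_c}$, recall $h_p^{1_c}(\s)$ is the subspace of $L_p(\M;\ell_1^c(\s))$ consisting of martingale difference sequences relative to $\s$, and by Lemma \ref{ballLp1} its norm is computed atomically via factorizations $d_t^\s(x) = b_t a_t$. The cleanest route is to use the embedding $v$ or the definition directly: I would show that the finer partition only increases the $L_p(\M;\ell_1^c)$-norm because refining splits each atom $d_t^\s(x) = \sum_{s \in I_t} d_s^{\s'}(x)$ into more atoms, and the $\ell_1^c$-structure is monotone under such splitting (grouping coordinates together can only decrease the column $\ell_1$-norm, so ungrouping increases it). Concretely, given an admissible factorization realizing $\|x\|_{h_p^{1_c}(\s')}$, I restrict/aggregate it to produce a factorization of the coarser differences whose cost is no larger, giving $\|x\|_{h_p^{1_c}(\s)} \leq \|x\|_{h_p^{1_c}(\s')}$ with constant $1$. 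Once both discrete monotonicity statements are in hand, the consequences for $\h_p^d$ and $\h_p^{1_c}$ follow immediately: fixing $\s$ and taking the limit over $\s' \supset \s$ along $\U$ (using that $U_\s \in \U$) gives $\|x\|_{h_p^d(\s)} \leq 2\|x\|_{\h_p^d}$ and $\|x\|_{h_p^{1_c}(\s)} \leq \|x\|_{\h_p^{1_c}}$, whence $\sup_\s \|x\|_{h_p^d(\s)} \leq 2\|x\|_{\h_p^d}$ and $\sup_\s \|x\|_{h_p^{1_c}(\s)} = \|x\|_{\h_p^{1_c}}$; the reverse inequalities $\|x\|_{\h_p^d} \leq \sup_\s\|x\|_{h_p^d(\s)}$ etc. are trivial since the limit is dominated by the supremum.

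**Main obstacle.** The routine inequalities (part (i) and the two limit passages) are straightforward. The delicate point is part (ii): verifying that the atomic $L_p(\M;\ell_1^c)$-norm genuinely behaves monotonically under splitting of a martingale difference. I must be careful that when $d_t^\s(x)$ is split into $\sum_{s \in I_t} d_s^{\s'}(x)$, an optimal factorization for the refined sequence can be assembled into a valid factorization for the coarse one without increasing the product $(\sum \|b\|_2^2)^{1/2}\|(\sum|a|^2)^{1/2}\|_q$ — this is exactly the content that grouping $\ell_1^c$-coordinates is norm-decreasing, which I would verify directly from the definition of $L_p(\M;\ell_1^c)$ via \eqref{decLp1} rather than from Lemma \ref{ballLp1}, since the multi-index factorization makes the aggregation transparent.
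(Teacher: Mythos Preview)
Your argument for part (i) contains a genuine error. You write
\[
\|d_t^\s(x)\|_p^p = \||d_t^\s(x)|^2\|_{p/2}^{p/2} = \Big\|\sum_{s \in I_t}|d_s^{\s'}(x)|^2\Big\|_{p/2}^{p/2},
\]
but the second equality asserts $|d_t^\s(x)|^2 = \sum_{s \in I_t}|d_s^{\s'}(x)|^2$ as operators, which is false. Orthogonality of martingale differences means only that the cross terms $d_s^{\s'}(x)^*d_{s'}^{\s'}(x)$ have zero trace (equivalently, zero conditional expectation at the earlier level), not that they vanish. Expanding $\big|\sum_s d_s^{\s'}(x)\big|^2$ produces these cross terms, and they survive in the $L_{p/2}$-norm for $p\neq 2$. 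This is why your argument appears to give constant $1$ while the paper states $2$: the improvement is illusory, not a matter of boundary conventions.

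The paper's proof of (i) avoids this trap entirely. It bounds $\|d_t^\s(x)\|_p = \big\|\sum_{s\in I_t} d_s^{\s'}(x)\big\|_p$ directly by $2\big(\sum_{s\in I_t}\|d_s^{\s'}(x)\|_p^p\big)^{1/p}$ via interpolation between $p=1$ (triangle inequality) and $p=2$ (genuine $L_2$-orthogonality). Concretely, the operator $(a_s)_{s\in I_t}\mapsto \sum_s d_s^{\s'}(a_s)$ has norm at most $2$ from $\ell_1(L_1)$ to $L_1$ and at most $1$ from $\ell_2(L_2)$ to $L_2$, so complex interpolation gives norm at most $2$ from $\ell_p(L_p)$ to $L_p$; applying it to $(d_s^{\s'}(x))_s$ yields the claim. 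Summing over $t\in\s$ then gives $\|x\|_{h_p^d(\s)}\leq 2\|x\|_{h_p^d(\s')}$.

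Your treatment of part (ii) is essentially the same as the paper's: one shows that the aggregation map $\Sigma:(x_s)_{s\in\s'}\mapsto \big(\sum_{s\in I_t}x_s\big)_{t\in\s}$ is contractive from $L_p(\M;\ell_1^c(\s'))$ to $L_p(\M;\ell_1^c(\s))$, by feeding a factorization $x_s=b_sa_s$ from Lemma~\ref{ballLp1} for the fine sequence directly into the multi-index form \eqref{decLp1} for the coarse sequence with the same cost. Your plan to use \eqref{decLp1} rather than Lemma~\ref{ballLp1} is a cosmetic variation. The passage to the continuous norms is handled exactly as you describe.
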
 

\begin{proof} 
Let $\si\subset \si'$. By interpolation between the cases $p=1$ and $p=2$ we have for $1\leq p \leq 2$ and $t\in \s$ 
 $$ \|d_t^\s(x)\|_p =\Big\|\sum_{s \in I_t} d_s^{\s'}(x)\Big\|_p \leq 2 \Big( \sum_{s\in I_t} \|d_s^{\s'}(x)\|_p^p\Big)^{\frac1p},$$
where $I_t$ denotes the collection of $s\in \s'$ such that $t^-(\s)\leq s^-(\s')<s\leq t$. 
Thus
 $$ \|x\|_{h_p^d(\si)}\leq 2 \|x\|_{h_p^d(\si')} .$$
For (ii), we show that for $\si\subset \si'$ we have a contractive map
 $$
 \Si: \left\{\begin{array}{ccc}
L_p(\M;\ell_1^c(\si'))&\longrightarrow &L_p(\M;\ell_1^c(\si)) \\
(x_s)_{s\in\s'}&\longmapsto & (x_t)_{t\in\s}=\Big(\displaystyle\sum_{s\in I_t} x_{s}\Big)_{t\in\s}
\end{array}\right..
$$
Since for $x\in \M$ we have 
$\Si((d_s^{\s'}(x))_{s\in \s'})=(d_t^{\s}(x))_{t\in \s}$, this will yield the required result for $\h_p^{1_c}$.
Let $x=(x_s)_{s\in\s'}$ be in the unit ball of $L_p(\M;\ell_1^c(\si'))$, then by Lemma \ref{ballLp1} we may write 
$x_s= b_s a_s$ for all $s\in \s'$ with 
$$\Big(\sum_{s\in \si'}\|b_{s}\|_2^2\Big)^{1/2}\Big\|\Big(\sum_{s\in \si'}|a_s|^2\Big)^{1/2}\Big\|_q\le 1,$$ 
where $\frac1p=\frac12+\frac1q$. 
Then $\Si(x)=\Big(\displaystyle\sum_{s\in I_t} b_s a_s \Big)_{t\in\s}$ is of the form \eqref{decLp1} with
$$\Big(\sum_{t\in \si,s\in I_t }\|b_{s}^*\|_2^2\Big)^{1/2}\Big\|\Big(\sum_{t\in \si,s\in I_t }|a_s|^2\Big)^{1/2}\Big\|_q
=\Big(\sum_{s\in \si'}\|b_{s}\|_2^2\Big)^{1/2}\Big\|\Big(\sum_{s\in \si'}|a_s|^2\Big)^{1/2}\Big\|_q \le 1.$$ 
Hence $\Si(x)$ is in the unit ball of $L_p(\M;\ell_1^c(\si))$.
\qd

\begin{cor}
Let $1\leq p < 2$. Then the norms $\|\cdot\|_{\h_p^{d}}$ and $\|\cdot\|_{\h_p^{1_c}}$ do not depend on the choice of the ultrafilter $\U$, up to a constant.
\end{cor}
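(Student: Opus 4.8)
The plan is to read off both statements directly from the monotonicity recorded in Lemma \ref{monotondiag}, so that no genuinely new estimate is required; the only point to make precise is how an ultrafilter limit of a (quasi-)monotone net relates to the supremum over all partitions, which is manifestly independent of $\U$. First I would recall that the chosen ultrafilter refines the net of partitions, i.e. $U_\s=\{\s':\s\subset\s'\}\in\U$ for every finite partition $\s$, since this is exactly what lets one replace ultrafilter limits by ordinary net limits for monotone nets.

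I would then treat the regular diagonal norm $\|\cdot\|_{\h_p^{1_c}}$, where the monotonicity is genuine. By Lemma \ref{monotondiag}(ii) the net $(\|x\|_{h_p^{1_c}(\s)})_\s$, indexed by partitions directed under inclusion, is nondecreasing under refinement, so for any $\eps>0$ one selects $\s_0$ with $\|x\|_{h_p^{1_c}(\s_0)}>\sup_\s\|x\|_{h_p^{1_c}(\s)}-\eps$ and observes that $U_{\s_0}$ is contained in the set where $\|x\|_{h_p^{1_c}(\s')}$ is within $\eps$ of the supremum. As $U_{\s_0}\in\U$, this gives $\|x\|_{\h_p^{1_c}}=\sup_\s\|x\|_{h_p^{1_c}(\s)}$, a quantity in which $\U$ does not appear at all, so independence follows here with constant $1$.

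For the diagonal norm $\|\cdot\|_{\h_p^d}$ the monotonicity in Lemma \ref{monotondiag}(i) holds only up to the factor $2$, so instead of an exact identity I would establish a two-sided sandwich valid for every ultrafilter. Fixing an arbitrary $\U$, the trivial bound $\|x\|_{h_p^d(\s)}\leq\sup_\s\|x\|_{h_p^d(\s)}$ passes to the $\U$-limit to give $\|x\|_{\h_p^d}\leq\sup_\s\|x\|_{h_p^d(\s)}$; conversely, for a fixed $\s_0$ the inequality $\|x\|_{h_p^d(\s_0)}\leq 2\|x\|_{h_p^d(\s)}$ holds for all $\s\supset\s_0$, and since $U_{\s_0}\in\U$ one takes the limit along $\U$ to get $\|x\|_{h_p^d(\s_0)}\leq 2\|x\|_{\h_p^d}$, whence $\sup_\s\|x\|_{h_p^d(\s)}\leq 2\|x\|_{\h_p^d}$ after a supremum over $\s_0$. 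This yields $\tfrac12\sup_\s\|x\|_{h_p^d(\s)}\leq\|x\|_{\h_p^d}\leq\sup_\s\|x\|_{h_p^d(\s)}$ for every $\U$, and since the middle term of this sandwich is $\U$-independent, two ultrafilters $\U_1,\U_2$ give $\|x\|_{\h_p^d,\U_1}\leq 2\|x\|_{\h_p^d,\U_2}$ and symmetrically, i.e. independence up to the constant $2$.

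There is essentially no obstacle beyond the bookkeeping needed to interchange ultrafilter limits with suprema, so I would expect the write-up to be short. I would only add the remark that the same sandwich (and the exact equality for $\h_p^{1_c}$) shows that the defining limit exists for one ultrafilter precisely when it exists for all, so that the qualifier ``whenever the limits exist'' used just above the statement is itself unambiguous and the entire content of the corollary is the monotonicity already proved in Lemma \ref{monotondiag}.
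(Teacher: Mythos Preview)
Your proposal is correct and takes essentially the same approach as the paper: the sandwich inequalities you derive are precisely those already recorded in the ``Hence'' clauses of Lemma \ref{monotondiag}, and the paper states the corollary with no further proof, treating it as an immediate consequence of that monotonicity. Your write-up just makes explicit the (straightforward) passage from ``ultrafilter limit is sandwiched by an $\U$-free supremum'' to ``norm is $\U$-independent up to a constant,'' which the paper leaves to the reader.
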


\begin{defi}
Let $1\leq p < 2$. We define
$$\tilde{\h}_p^d=\{x\in L_p(\M)  :  \|x\|_{\h_p^d}<\infty \}\quad \mbox{and} \quad 
\tilde{\h}_p^{1_c}=\{x\in L_p(\M)  :  \|x\|_{\h_p^{1_c}}<\infty \}.$$
\end{defi}

Adapting the proof of Proposition \ref{injHpc} we can show that these define two Banach spaces. 
By Remark \ref{hp1reg} (1) we have 
$$\tilde{\h}_p^{1_c}\subset \tilde{\h}_p^d \quad \mbox{contractively for } 1\leq p<2.$$ 
For technical reasons these spaces are too large. Hence we need to introduce their regularized versions as follows. 
Note that by the regularity property of the $h_p^{1_c}(\s)$-spaces stated in Remark \ref{hp1reg} and the fact that $\tilde{\h}_p^{1_c}$ 
is a subspace of $L_p(\M)$, we have 
$$\tilde{\h}_{\tp}^{1_c}\subset \tilde{\h}_{p}^{1_c} \quad \mbox{contractively for } 1\leq p \leq \tp <2 .$$

\begin{defi}
Let $1\leq p < 2$. We define
$$\h_p^d=\overline{L_2(\M) \cap \tilde{\h}_p^d}^{\|\cdot\|_{\h_p^d}}
\quad \mbox{and} \quad 
\h_p^{1_c}=\overline{\bigcup_{\tp>p}\tilde{\h}_{\tp}^{1_c}}^{\|\cdot\|_{\h_p^{1_c}}}.$$
\end{defi}

\re\label{defhpd} 
\begin{enumerate}
\item At this point  it is not obvious that the set $L_2(\M)\cap \tilde{\h}_p^d$ is non trivial. We will show later that this definition
of $\h_p^d$ actually makes sense. 
\item Note that for $1\leq p \leq 2$ we have bounded inclusions
$$\h_p^d \subset \tilde{\h}_p^d \subset L_p(\M)
\quad \mbox{and} \quad
\h_p^{1_c} \subset \tilde{\h}_p^{1_c} \subset L_p(\M).$$
Since by Proposition \ref{injHpc2} we have an injective map $\H_p^c \hookrightarrow L_p(\M)$, 
this implies that the natural bounded maps 
$$\h_p^d \hookrightarrow \H_p^c
\quad \mbox{and} \quad
\h_p^{1_c} \hookrightarrow \H_p^c$$
are injective. 
Similarly, since Proposition \ref{injhpc} implies that for $1<p<2$ the natural map $\h_p^c \hookrightarrow L_p(\M)$ is injective, 
we deduce that the map
$$\h_p^c \hookrightarrow \H_p^c$$
is injective for $1<p<2$. 
For $p=1$ we have $\lh_1^c \hookrightarrow \H_1^c$. 
Hence in what follows we will consider the spaces $\h_p^d, \h_p^{1_c}$ and $\h_p^c$ as subspaces of $\H_p^c$ for $1<p<2$ 
and $\h_1^d, \h_1^{1_c}$ and $\lh_1^c$ as subspaces of $\H_1^c$.  
\end{enumerate}
\mar

\subsection{Davis decomposition for $1\leq p <2$}

Equipped with the diagonal spaces $\h_p^d$ and $\h_p^{1_c}$ defined in subsection \ref{subsectdiag} , 
we can now extend the three versions of the Davis decomposition presented in subsection \ref{subsectDdiscr} to the continuous setting. 
Since we will consider the weak limit of the discrete case, we will need the following Lemma. 

\begin{lemma}\label{limdiag}
Let $1\leq p <2$.
\begin{enumerate}
\item[(i)]
Let $p\leq \tp \leq 2$ be such that $\tp >1$ and $(x_\s)_{\s}$ be an uniformly bounded family in $L_{\tp}(\M)$. 
Then
$$\|\w L_{\tp} \mbox{-} \lim_{\s,\U} x_\s\|_{\h_p^{1_c}}
\leq \lim_{\s,\U} \|x_{\s}\|_{h_p^{1_c}(\s)}.$$
\item[(ii)]
Let $(x_\s)_{\s}$ be an uniformly bounded family in $L_2(\M)$. 
Then
$$\|\w L_2 \mbox{-} \lim_{\s,\U} x_\s\|_{\h_p^{d}}
\leq 2 \lim_{\s,\U} \|x_{\s}\|_{h_p^{d}(\s)}.$$
\item[(iii)]
Let $(x_\s)_{\s}$ be an uniformly bounded family in $L_2(\M)$. 
Then
$$\|\w L_2 \mbox{-} \lim_{\s,\U} x_\s\|_{\h_p^{c}}
\leq 2^{1/p} \lim_{\s,\U} \|x_{\s}\|_{h_p^{c}(\s)}.$$
\end{enumerate}
\end{lemma}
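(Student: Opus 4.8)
The plan is to deduce all three inequalities from one principle: if $x=\w\lim_{\s,\U}x_\s$ weakly in a normed space $X$, then $\|x\|_X\le\lim_{\s,\U}\|x_\s\|_X$ (test $x$ against the unit ball of $X^*$, and recall that $\lim_{\s,\U}$ of a bounded scalar family exists). Writing $x$ for the relevant weak limit, I would combine this exact lower semicontinuity with the monotonicity estimates of Lemma \ref{monotondiag} and Lemma \ref{convexityhpc}. The point is that the three spaces force two genuinely different implementations, dictated by the direction of the corresponding monotonicity: for the diagonal spaces $\h_p^d,\h_p^{1_c}$ the coarse-partition norm is controlled by the fine one, while for the conditioned space $\h_p^c$ it is the opposite.

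For (ii) and (i) I would argue at a fixed \emph{coarse} partition. Fix $\s_0\in\PP_{\fin}([0,1])$. First I would observe that, since $\s_0$ is finite, the inclusions $L_2(\M)\to h_p^d(\s_0)$ and $L_{\tp}(\M)\to h_p^{1_c}(\s_0)$ are bounded: indeed $\|y\|_{h_p^d(\s_0)}\le 2|\s_0|^{1/p}\|y\|_2$, and $\|y\|_{h_p^{1_c}(\s_0)}\le\sum_{t\in\s_0}\|d_t^{\s_0}(y)\|_p\le 2|\s_0|\,\|y\|_{\tp}$ by the contractive inclusion $\ell_1(L_p(\M))\subset L_p(\M;\ell_1^c)$ together with $p\le\tp$. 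Being bounded and linear, these inclusions are weak-to-weak continuous, so $x$ is also the weak limit of $(x_\s)$ \emph{inside} $h_p^d(\s_0)$ (resp. $h_p^{1_c}(\s_0)$), and the exact weak lower semicontinuity of the Banach norm gives $\|x\|_{h_p^d(\s_0)}\le\lim_{\s,\U}\|x_\s\|_{h_p^d(\s_0)}$ (resp. the same for $h_p^{1_c}$), with constant $1$. Since $U_{\s_0}\in\U$ I may restrict to $\s\supset\s_0$ and apply the favourable monotonicity of Lemma \ref{monotondiag}, namely $\|x_\s\|_{h_p^d(\s_0)}\le 2\|x_\s\|_{h_p^d(\s)}$ and $\|x_\s\|_{h_p^{1_c}(\s_0)}\le\|x_\s\|_{h_p^{1_c}(\s)}$; taking $\lim_{\s,\U}$ yields $\|x\|_{h_p^d(\s_0)}\le 2\lim_{\s,\U}\|x_\s\|_{h_p^d(\s)}$ and $\|x\|_{h_p^{1_c}(\s_0)}\le\lim_{\s,\U}\|x_\s\|_{h_p^{1_c}(\s)}$, uniformly in $\s_0$. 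Because $\|x\|_{\h_p^d}=\lim_{\s_0,\U}\|x\|_{h_p^d(\s_0)}$ and $\|x\|_{\h_p^{1_c}}=\sup_{\s_0}\|x\|_{h_p^{1_c}(\s_0)}$ (Lemma \ref{monotondiag}), this is exactly (ii) and (i).

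For (iii) the fixed-partition scheme breaks down, since Lemma \ref{convexityhpc}(i) only gives $\|x_\s\|_{h_p^c(\s)}\le 2^{1/p}\|x_\s\|_{h_p^c(\s_0)}$ for $\s_0\subset\s$, i.e. the coarse norm dominates, which is the wrong direction for bounding $\|x_\s\|_{h_p^c(\s_0)}$ from above. Instead I would work directly in the ambient space $\h_p^c$. As $\|y\|_{h_p^c(\s)}\le\|y\|_2$ for every $\s$, one has $\|y\|_{\h_p^c}\le\|y\|_2$, so the identity on $\M$ extends to a contraction $\iota:L_2(\M)\to\h_p^c$; being bounded it is weak-to-weak continuous, whence $\iota(x)=\w\lim_{\s,\U}\iota(x_\s)$ in $\h_p^c$ and, by weak lower semicontinuity of the Banach-space norm, $\|x\|_{\h_p^c}\le\lim_{\s,\U}\|x_\s\|_{\h_p^c}$. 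Finally Lemma \ref{convexityhpc}(i) gives $\|x_\s\|_{\h_p^c}\le 2^{1/p}\inf_{\s'}\|x_\s\|_{h_p^c(\s')}\le 2^{1/p}\|x_\s\|_{h_p^c(\s)}$, and taking $\lim_{\s,\U}$ yields (iii). This argument uses no reflexivity, so it covers the whole range $1\le p<2$, in particular $p=1$.

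The hard part will be recognising and justifying this dichotomy. The delicate feature is that one \emph{cannot} run the clean ambient argument of (iii) for the diagonal spaces, because the global inclusions $L_2(\M)\to\h_p^d$ and $L_{\tp}(\M)\to\h_p^{1_c}$ are unbounded: their constants blow up with $|\s|$ (this is why the coarse-partition route, which only needs boundedness for each \emph{fixed} $\s_0$, is forced). Conversely, it is precisely the contractivity $\|\cdot\|_{\h_p^c}\le\|\cdot\|_2$ that makes the ambient route available for the conditioned space. Keeping straight which of the two tools is legitimate for which space, and in particular checking that the fixed-$\s_0$ inclusions are bounded (so that weak limits transfer and the constant-$1$ lower semicontinuity of the target norm applies), is the crux; the non-reflexive case $p=1$ of (i) is the most technical, but is handled uniformly by the boundedness argument above rather than by any duality.
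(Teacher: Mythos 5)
Your proof is correct and follows essentially the same route as the paper's: fix a coarse partition $\s_0$, control $\|x\|_{h_p^{1_c}(\s_0)}$ (resp. $\|x\|_{h_p^d(\s_0)}$) by $\lim_{\s,\U}\|x_\s\|$ at that fixed partition using the bounded inclusion of $L_{\tp}(\M)$ (resp. $L_2(\M)$) into the finite-partition space, and then invoke the monotonicity of Lemma \ref{monotondiag} over $\s\supset\s_0$; the paper implements the first step via Mazur-type norm approximation of the weak limit by convex combinations of the $x_{\s^m}$, whereas you use weak-to-weak continuity plus weak lower semicontinuity of the norm, which is an equivalent mechanism. The only slight imprecision is your closing remark about a ``non-reflexive case $p=1$ of (i)'': since $\tp>1$ is assumed there, the ambient space $L_{\tp}(\M)$ is always reflexive, so no such case arises.
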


\begin{proof}
We first consider assertion (i) and set $x=\w L_{\tp} \mbox{-} \lim_{\s,\U} x_\s$. 
We fix a partition $\s_0$ and $\eps>0$.
We can find a sequence of positive numbers $(\alpha_m)_{m=1}^M$ such that 
$\sum_m \al_m=1$, and partitions $\si^1,...,\si^M$ containing $\si_0$  such that
 $$ \Big\|x-\sum_m \al_m x_{\si^m}\Big\|_{\tp}<\eps $$
and
$$
\|x_{\si^m}\|_{h_{p}^{1_c}(\si^m)}\leq (1+\eps) \lim_{\s,\U} \|x_\s\|_{h_{p}^{1_c}(\s)}
\quad \mbox{for all } m=1,\cdots ,M.$$
We write
\begin{align*}
 \|x\|_{h_{p}^{1_c}(\si_0)}
&\leq \Big\|x-\sum_m \al_m x_{\si^m}\Big\|_{h_{p}^{1_c}(\si_0)}+ \Big\|\sum_m \al_m x_{\si^m}\Big\|_{h_{p}^{1_c}(\si_0)} \\
&\leq 2\eps |\si_0| + \sum_m \al_m \|x_{\si^m}\|_{h_{p}^{1_c}(\si_0)}.
\end{align*}
The last inequality comes from the fact that for $1\leq p <2, z\in L_p(\M)$ and $\s_0$ a finite partition we have
\begin{equation}\label{Lpinhp1}
 \|z\|_{h_p^{1_c}(\si_0)}\leq 2|\s_0|\|z\|_p\leq 2|\s_0|\|z\|_{\tp}.
\end{equation}
Indeed, by the triangle inequality in $h_p^{1_c}(\si_0)$ we have 
$\|z\|_{h_p^{1_c}(\si_0)}\leq \sum_{t \in \s_0}\|d_{t}^{\s_0}(z)\|_{h_p^{1_c}(\si_0)}.$
We can write $\big(\delta_{s,t}d_{t}^{\s_0}(z))_{s\in \s_0}=(b_{s}a_{s}\big)_{s\in \s_0}$
with 
$$b_{s}=\delta_{s,t}v_{t}|d_{t}^{\s_0}(z)|^{p/2}\quad \mbox{and} \quad a_{s}=\delta_{s,t}|d_{t}^{\s_0}(z)|^{p/q},$$ 
where $d_{t}^{\s_0}(z)=v_{t}|d_{t}^{\s_0}(z)|$ is the polar decomposition of $d_{t}^{\s_0}(z)$ 
and $\frac{1}{p}=\frac{1}{2}+\frac{1}{q}$.
Then we obtain
$$ \|d_{t}^{\s_0}(z)\|_{h_p^{1_c}(\si_0)}\leq\|v_{t}|d_{t}^{\s_0}(z)|^{p/2}\|_2\||d_{t}^{\s_0}(z)|^{p/q}\|_q
\leq \|d_{t}^{\s_0}(z)\|_p\leq 2\|z\|_p$$
and \eqref{Lpinhp1} follows.
Since $\s_0 \subset \s^m$ we get by Lemma \ref{monotondiag}
 $$ \|x\|_{h_{p}^{1_c}(\si_0)}\leq 2|\si_0|\eps+ \sum_m \al_m
 \|x_{\s^m}\|_{h_{p}^{1_c}(\si^m)}
\leq 2|\si_0|\eps + (1+\eps)\lim_{\s,\U} \|x_\s\|_{h_{p}^{1_c}(\s)} .$$
Sending $\eps$ to $0$ and taking the supremum over $\s_0$ yields (i). 
Assertion (ii) follows similarly from the fact that the H\"{o}lder inequality in $\ell_p(\s;L_p(\M))$ gives for $z\in L_2(\M)$ and a finite partition $\s$
$$\|z\|_{h_p^d(\s)}=\|(d_t^\s(z))_{t\in \s}\|_{\ell_p(\s;L_p(\M))}\leq |\s|^{1/q}\|z\|_2$$
for $1\leq p<2$ and $\frac{1}{p}=\frac{1}{2}+\frac{1}{q}$. 
The last point may be proved with the same kind of argument, by using the fact that $\|z\|_{\h_p^c}\leq \|z\|_2$ and Lemma \ref{convexityhpc}. 
\qd

\subsubsection{The ''regular`` version of the Davis decomposition} 

The continuous analogue of Theorem \ref{Davish1cdisc} for $1\leq p<2$ is

\begin{theorem}\label{Davishp1c}
Let $1\leq p<2$. Then 
\begin{enumerate}
\item[(i)]$\H_p^c=\h_p^{1_c}+\h_p^c$ for $1<p<2$, 
\item[(ii)]$\H_1^c=\h_1^{1_c}+\lh_1^c$,
\end{enumerate}
with equivalent norms.
\end{theorem}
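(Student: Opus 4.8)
The plan is to run the scheme that has worked throughout the paper for the regime $1\le p<2$: prove the easy inclusion directly, and obtain the reverse one by applying the discrete Davis decomposition (Theorem \ref{Davish1cdisc}) to every finite partition $\s$ and passing to a weak limit along $\U$, the resulting pieces being controlled by the lower semi-continuity estimates of Lemma \ref{limdiag}. By density of $L_2(\M)$ in $\H_p^c$ (and in $\H_1^c$) it suffices, for each $x\in L_2(\M)$, to produce a decomposition $x=a+b$ with $a\in\h_p^{1_c}$, $b\in\h_p^c$ (resp. $b\in\lh_1^c$) and $\|a\|_{\h_p^{1_c}}+\|b\|_{\h_p^c}\le C(p)\|x\|_{\H_p^c}$, together with the matching boundedness of the sum map. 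For the easy inclusion, recall from Remark \ref{defhpd} that the natural maps $\h_p^{1_c}\hookrightarrow\H_p^c$ and $\h_p^c\hookrightarrow\H_p^c$ are bounded and injective for $1<p<2$ (and $\h_1^{1_c}\hookrightarrow\H_1^c$, $\lh_1^c\hookrightarrow\H_1^c$ for $p=1$); since by Propositions \ref{injHpc2} and \ref{injhpc} all of these spaces sit injectively in $L_{\min(p,2)}(\M)=L_p(\M)$, the sum $\h_p^{1_c}+\h_p^c$ is well defined there and maps boundedly into $\H_p^c$, giving $\|x\|_{\H_p^c}\le C(p)\|x\|_{\h_p^{1_c}+\h_p^c}$.

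For the reverse inclusion with $1<p<2$, fix $x\in L_2(\M)$. For each finite partition $\s$, Theorem \ref{Davish1cdisc} splits $x=a_\s+b_\s$ with $a_\s,b_\s\in L_2(\M)$ and $\|a_\s\|_{h_p^{1_c}(\s)}+\|b_\s\|_{h_p^c(\s)}\le C(p)\|x\|_{H_p^c(\s)}\le C(p)\|x\|_2$, the last bound coming from \eqref{estimateH_pc}. Because $h_p^{1_c}(\s)$ and $h_p^c(\s)$ embed into $L_p(\M)$, the families $(a_\s)$ and $(b_\s)$ are uniformly bounded in the reflexive space $L_p(\M)$, so the weak limits $a=\w L_p\mbox{-}\lim_{\s,\U}a_\s$ and $b=\w L_p\mbox{-}\lim_{\s,\U}b_\s$ exist and satisfy $x=a+b$. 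The diagonal piece is then handled by Lemma \ref{limdiag}(i) applied with $\tp=p$ (legitimate since $p>1$), which yields $\|a\|_{\h_p^{1_c}}\le\lim_{\s,\U}\|a_\s\|_{h_p^{1_c}(\s)}\le C(p)\|x\|_{\H_p^c}$; the column piece is handled by the lower semi-continuity of the $\h_p^c$-norm in Lemma \ref{limdiag}(iii) (the weak $L_p$ version, obtained by the same convexity argument as the $L_2$ statement, using that $\|\cdot\|_{h_p^c(\s_0)}$ is equivalent to $\|\cdot\|_p$ on each finite partition), giving $\|b\|_{\h_p^c}\le C(p)\|x\|_{\H_p^c}$.

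The main obstacle is precisely that Lemma \ref{limdiag}(i) only places $a$ in the oversized space $\tilde{\h}_p^{1_c}$, whereas the statement demands membership in the \emph{regularized} space $\h_p^{1_c}=\overline{\bigcup_{\tp>p}\tilde{\h}_{\tp}^{1_c}}$. To cross this gap I would exploit the regularity inclusions $\tilde{\h}_{\tp}^{1_c}\subset\tilde{\h}_p^{1_c}$ for $p<\tp<2$ recorded in Remark \ref{hp1reg}, combined with the truncation device from the proof of Proposition \ref{BRDexpldicr-hp1c}: performing the discrete decomposition through the explicit column formula and cutting the diagonal factor by the spectral projections $e_T$ produces approximants $a_{\s,T}$ that are bounded in $h_{\tp}^{1_c}(\s)$ (so that their weak limits lie in $\tilde{\h}_{\tp}^{1_c}\subset\h_p^{1_c}$) and converge to $a_\s$ in a higher $H_{p_0}^c$-norm uniformly in $\s$. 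Passing to the double limit in $T$ and in $\s$ then exhibits $a$ as a $\|\cdot\|_{\h_p^{1_c}}$-limit of elements of $\bigcup_{\tp>p}\tilde{\h}_{\tp}^{1_c}$, hence $a\in\h_p^{1_c}$, while the uniform control keeps the norm bounded by $C(p)\|x\|_{\H_p^c}$. This is the genuinely delicate point, entirely parallel to the way the regularized diagonal space was engineered in subsection \ref{subsectdiag}.

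Finally, the case $p=1$ I would obtain by letting $p\downarrow 1$ rather than by a direct weak limit, since $L_1(\M)$ is not reflexive and the families $(a_\s),(b_\s)$ need not be uniformly integrable. For $x\in L_2(\M)$ and $p\in(1,2)$ close to $1$, the decomposition just constructed gives $x=a^{(p)}+b^{(p)}$ with $a^{(p)}\in\h_p^{1_c}$ and $b^{(p)}\in\h_p^c$, uniformly bounded by $C\|x\|_{\H_1^c}$ (using that the $\H_p^c$-norms of $x$ stay comparable to $\|x\|_{\H_1^c}$ as $p\to 1$). By the regularity inclusions one has $a^{(p)}\in\h_1^{1_c}$, and by Remark \ref{hpcregular} one has $b^{(p)}\in\h_p^c\subset\h_1^c$, so $\varphi(b^{(p)})\in\lh_1^c$. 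The remaining difficulty is to extract a limit as $p\downarrow 1$ landing in $\h_1^{1_c}+\lh_1^c$ with the norm bound preserved; I would resolve it by transporting $b^{(p)}$ through the isometric embedding $v_\U$ into the $L_1$ $\M_\U$-module $L_1^c(\N_\U,\E_{\M_\U})$ of Lemma \ref{embeddinghpc}, where weak-$*$ compactness is available, and then projecting back, while the diagonal terms are controlled directly in $\h_1^{1_c}$. Combined with the easy inclusion this yields $\H_1^c=\h_1^{1_c}+\lh_1^c$ with equivalent norms.
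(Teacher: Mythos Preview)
Your overall strategy is sound, but you are working much harder than necessary because you missed the key simplification the paper uses: apply the discrete Davis decomposition at a slightly \emph{higher} index $\tp>p$ rather than at $p$ itself. For $x\in\M$ with $\|x\|_{\H_p^c}<1$, Lemma~\ref{normHpc} provides $\tp\in(p,2)$ with $\|x\|_{\H_{\tp}^c}<1$. Running Theorem~\ref{Davish1cdisc} at index $\tp$ on every $\s$ yields $x=a_\s+b_\s$ with $\|a_\s\|_{h_{\tp}^{1_c}(\s)}+\|b_\s\|_{h_{\tp}^c(\s)}\le C(\tp)\|x\|_{H_{\tp}^c(\s)}$. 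Now $(a_\s)$ is bounded in the reflexive space $L_{\tp}(\M)$, and its weak limit $a$ lies in $\th_{\tp}^{1_c}$ by Lemma~\ref{limdiag}(i); since $\h_p^{1_c}$ is \emph{defined} as the closure of $\bigcup_{\tp>p}\th_{\tp}^{1_c}$, one gets $a\in\h_p^{1_c}$ immediately --- no truncation device is needed. For the column piece, the monotonicity estimate $\|b_\s\|_{\h_{\tp}^c}\le 2^{1/\tp}\|b_\s\|_{h_{\tp}^c(\s)}$ shows $(b_\s)$ is bounded in the reflexive space $\h_{\tp}^c$ itself, so one takes the weak limit directly there and obtains $b\in\h_{\tp}^c\subset\h_p^c$ (and $\subset\lh_1^c$ when $p=1$) by Remark~\ref{hpcregular}. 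In particular $p=1$ requires no separate limiting procedure.

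Two of your steps contain genuine gaps. First, the claim that $\|\cdot\|_{h_p^c(\s_0)}$ is equivalent to $\|\cdot\|_p$ for a finite partition is false: for the two-step filtration $\M_0=\C\subset\M_1=\M$ one computes $\|x\|_{h_p^c(\s_0)}=\|x\|_2$. Unlike $H_p^c(\s_0)$, the conditioned space $h_p^c(\s_0)$ is \emph{not} $L_p$ with an equivalent norm; this blocks your proposed extension of Lemma~\ref{limdiag}(iii) to weak $L_p$-limits, and the correct route is reflexivity of $\h_{\tp}^c$ as above. Second, your $p=1$ argument invokes ``weak-$*$ compactness'' in the $L_1$-module $L_1^c(\N_\U,\E_{\M_\U})$ followed by ``projecting back'', but this space is not a dual (so bounded sets are not weak-$*$ compact), and the complementation of Proposition~\ref{complementhpc} is established only for $1<p<\infty$. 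Both obstacles disappear once one works at $\tp>1$ from the start.
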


\begin{proof}
Let $1\leq p <2$ and $x\in \M$ be such that $\|x\|_{\H_p^c}<1$. 
By Lemma \ref{normHpc} there exists $1\leq p <\tp  <2$ such that $\|x\|_{\H_{\tp}^c}<1$. 
We apply Theorem \ref{Davish1cdisc} to each partition $\s$ and $\tp$ 
and get a decomposition $x=a_\s+b_\s$ with $a_\s\in h_{\tp}^{1_c}(\s), b_\s \in h_{\tp}^c(\s)$ and 
$$\|a_\s\|_{h_{\tp}^{1_c}(\s)}+\|b_\s\|_{h_{\tp}^c(\s)}\leq C(\tp) \|x\|_{H_{\tp}^c(\s)}.$$
Here $C(\tp)$ denotes the constant in the equivalence $H_{\tp}^c(\s)=h_{\tp}^{1_c}(\s)+h_{\tp}^{c}(\s)$. 
Hence it does not depend on $\s$ and is bounded as $\tp \to 1$. 
For each $\s$ we have 
$$\|a_\s\|_{\tp}\leq 2 \|a_\s\|_{h_{\tp}^{1_c}(\s)} \leq 2C(\tp) \|x\|_{H_{\tp}^c(\s)}.$$ 
Thus the family $(a_\s)_\s$ is uniformly bounded in the reflexive space $L_ {\tp}(\M)$ and we can consider 
$$a=\w L_{\tp} \mbox{-} \lim_{\s,\U} a_\s \in L_{\tp}(\M).$$
By Lemma \ref{limdiag} (i) we obtain   
$$
\|a\|_{\h_{\tp}^{1_c}}\leq \lim_{\s,\U} \|a_\s\|_{h_{\tp}^{1_c}(\s)}.
$$
Then we deduce that $a\in \th_{\tp}^{1_c} \subset \h_p^{1_c}$. 
We now turn to the $b$-terms. 
Since the $h_{\tp}^c(\s)$-norms are decreasing in $\s$ by Lemma \ref{convexityhpc}, 
for each $\s$ we have 
\begin{equation}\label{estimate-b}
b_\s \in h_{\tp}^c(\s)\subset \h_{\tp}^c \quad \mbox{with} \quad 
\|b_\s\|_{\h_{\tp}^c}\leq 2^{1/\tp}\|b_\s\|_{h_{\tp}^c(\s)}.
\end{equation}
Indeed, by the density of $L_2(\M)$ in $ h_{\tp}^c(\s)$ there exists a sequence $(b_\s^n)_n$ in $L_2(\M)$ which converges in $ h_{\tp}^c(\s)$ to $b_\s$. 
By Lemma \ref{convexityhpc}, $(b_\s^n)_n$ is also a Cauchy sequence in $\h_{\tp}^c$, hence converges in $\h_{\tp}^c$ to $b_\s'$. 
We get two operators $b_\s$ and $b_\s'$ in $L_{\tp}(\M)$ thanks to Proposition \ref{injhpc}, and we can easily check that 
$\tau(y^*b_\s)=\tau(y^*b_\s')$ for all $y\in L_{(\tp)'}(\M)$. Then $b_\s=b_\s' \in \h_{\tp}^c$ with 
$$\|b_\s\|_{\h_{\tp}^c}=\lim_n \|b_\s^n\|_{\h_{\tp}^c}
\leq 2^{1/\tp}\lim_n \|b_\s^n\|_{h_{\tp}^c(\s)}= 2^{1/\tp} \|b_\s\|_{h_{\tp}^c(\s)}.$$
Hence the family $(b_\s)_\s$ is uniformly bounded in the reflexive space $\h_{\tp}^c$ and we can consider 
$$b=\w \h_{\tp}^c \mbox{-} \lim_{\s,\U} b_\s \in \h_{\tp}^c.$$ 
Moreover we have 
$$\|b\|_{\h_{\tp}^c}\leq \lim_{\s,\U}\|b_\s\|_{\h_{\tp}^c}\leq 2^{1/\tp}\lim_{\s,\U}\|b_\s\|_{h_{\tp}^c(\s)}.$$
Since the family $(b_\s)_\s$ is also uniformly bounded in the reflexive space $L_{\tp}(\M)$, 
the weak-limit of the $b_\s$'s in $L_{\tp}(\M)$ exists and coincide with $b$ for $ \h_{\tp}^c\subset L_{\tp}(\M)$. 
Then we obtain $x=a+b$ with $a\in \h_p^{1_c}$ and $b\in \h_{\tp}^c \subset \h_p^c$ for $1<p<2$, $b\in \h_{\tp}^c \subset \lh_1^c$ by Remark \ref{hpcregular}. 
The above estimates give 
\begin{align*}
\|a\|_{\h_p^{1_c}}+\|b\|_{\h_p^{c}}
&\leq\|a\|_{\h_{\tp}^{1_c}}+\|b\|_{\h_{\tp}^{c}} \\
&\leq \lim_{\s,\U} \|a_\s\|_{h_{\tp}^{1_c}(\s)} +2^{1/\tp}\lim_{\s,\U}\|b_\s\|_{h_{\tp}^c(\s)}\\
& \leq 2^{1/\tp}C(\tp) \lim_{\s,\U}\|x\|_{H_{\tp}^c(\s)}
\leq 2^{1/\tp}C(\tp). 
\end{align*}
Since $2^{1/\tp}C(\tp)$ is bounded as $\tp\to 1$ we may obtain a bound independant of the choice of $\tp$, 
say $\sup_{p<\tp<1+p/2} 2^{1/\tp}C(\tp)$. 
This concludes the proof of the Theorem. 
\qd

We can now deduce the continuous analogue of Theorem \ref{Davishddisc}, i.e., the Davis decomposition involving the space $\h_p^d$. 
To do this we need to extend Remark \ref{hp1reg} (1) to the continuous setting. 
This is not trivial, it comes from the following density result based on the notion of $p$-equiintegrability. 

\begin{lemma}\label{densityhp1c}
 Let $1\leq p <2$. Then $L_2(\M)\cap\h_p^{1_c}$ is dense in $\h_p^{1_c}$.
\end{lemma}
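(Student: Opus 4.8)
The plan is to establish density of $L_2(\M)\cap \h_p^{1_c}$ in $\h_p^{1_c}$ by exploiting the regularity built into the definition $\h_p^{1_c}=\overline{\bigcup_{\tp>p}\tilde{\h}_{\tp}^{1_c}}^{\|\cdot\|_{\h_p^{1_c}}}$ together with a $p$-equiintegrability truncation argument at the level of the ultraproduct column spaces $K_p^c(\U)$. First I would reduce the problem: since elements of $\bigcup_{\tp>p}\tilde{\h}_{\tp}^{1_c}$ are dense in $\h_p^{1_c}$ by definition, it suffices to approximate a fixed $x\in \tilde{\h}_{\tp}^{1_c}$ (for some $\tp>p$) in the $\|\cdot\|_{\h_p^{1_c}}$-norm by elements lying in $L_2(\M)$. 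The key point is that such an $x$ carries extra integrability coming from $\tp>p$, which should let me cut off $x$ at a high spectral threshold and land in $L_2(\M)$.

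The main step is the truncation. For $x\in \tilde{\h}_{\tp}^{1_c}$ I would consider the family $(d_t^\s(x))_{t\in\s}$ and, following the atomic description from Lemma \ref{ballLp1}, write each partition-level sequence using a factorization $d_t^\s(x)=b_t a_t$ with control of $\big(\sum_t\|b_t\|_2^2\big)^{1/2}$ and $\big\|(\sum_t|a_t|^2)^{1/2}\big\|_q$, where $\frac1p=\frac12+\frac1q$. The idea is then to truncate via spectral projections of the form $\1\big((\sum_t|a_t|^2)^{1/2}\leq T\big)$, exactly as in the proof of Proposition \ref{BRDexpldicr-hp1c}, to produce truncated elements $x_T$ that lie in $L_2(\M)$ (because the $a$-part becomes bounded while the $b$-part stays in $L_2$) and that approximate $x$ in $\h_p^{1_c}$-norm. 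The uniform integrability furnished by the hypothesis $\tp>p$ — quantified through Lemma \ref{pequiintegr} applied to the inner product bracket in $K_p^c(\U)$, or directly through the $L_{\tp}$-bound on $x$ — controls the tail $\|x-x_T\|_{\h_p^{1_c}}$ and forces it to zero as $T\to\infty$. Concretely I expect the estimate for the tail to take the shape of a H\"older split between a $\|\1(\cdots>T)\|_{r}$ factor (tending to $0$) and a fixed $L_{s}$-norm of the square function, mirroring the bound displayed in the remark following Proposition \ref{BRDexpldicr-hp1c}.

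To make the ultrafilter limits interact correctly with truncation, I would carry out the truncation at each partition $\s$ to get $x_{T,\s}\in L_2(\M)$, estimate $\|x-x_{T,\s}\|_{h_p^{1_c}(\s)}$ uniformly in $\s$ using Stein's inequality and Lemma \ref{Lql1subLql2c}, and then take the weak-$L_{\tp}$ limit along $\U$, applying Lemma \ref{limdiag} (i) to pass the $\h_p^{1_c}$-norm bound through the limit. The output $x_T=\w L_{\tp}\text{-}\lim_{\s,\U} x_{T,\s}$ would then be the desired $L_2(\M)$ approximant, with $\|x-x_T\|_{\h_p^{1_c}}\leq \lim_{\s,\U}\|x-x_{T,\s}\|_{h_p^{1_c}(\s)}$ bounded by a quantity going to $0$ with $T$.

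The main obstacle I anticipate is verifying that the truncated objects genuinely lie in $L_2(\M)$ with a uniform-in-$\s$ control of the approximation, while simultaneously keeping the $\h_p^{1_c}$-norm of the tail small; the tension is that the natural truncation uses the partition-dependent factorization from Lemma \ref{ballLp1}, so the threshold $T$ and the resulting bounds must be arranged to be compatible across all $\s$ before taking the ultraproduct limit. Handling this uniformity — essentially checking that the constant $C(p)$ and the decay rate in $T$ do not degenerate as the partition refines — is where the real care is needed, but the mechanism is already present in the discrete construction of Proposition \ref{BRDexpldicr-hp1c} and only needs to be transported through the weak limit.
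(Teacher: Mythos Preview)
Your proposal is correct and follows essentially the same approach as the paper: reduce to $x\in\tilde{\h}_{\tp}^{1_c}$ for some $\tp>p$, factorize $d_t^\s(x)=b_\s(t)a_\s(t)$ via Lemma~\ref{ballLp1}, truncate the $a$-part by the spectral projection $\1\big(\sum_t|a_\s(t)|^2\le T\big)$, take a weak limit of the truncated elements to produce the $L_2$-approximant, and control the tail in $\h_p^{1_c}$-norm using the extra integrability $\tq>q$. The only minor discrepancies are that the paper takes the weak limit in $L_2$ rather than $L_{\tp}$ (immaterial, since the truncated family is uniformly bounded in $L_2$), and that Stein's inequality, Lemma~\ref{Lql1subLql2c}, and the $K_p^c(\U)$ machinery are not actually needed here --- the tail bound follows directly from the elementary spectral estimate $\|A\,\1(A>T)\|_{q/2}\le T^{1-\tq/q}\|A\|_{\tq/2}^{\tq/q}$.
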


\begin{proof}
Let $x\in \h_p^{1_c}$ and $\eps>0$. 
By definition it suffices to consider $x\in \th^{1_c}_{\tp}$ for some $1\leq p<\tp<2$. 
We suppose that $\|x\|_{\h^{1_c}_{\tp}}<C$. 
Let $\tq>q$ be such that $\frac{1}{\tp}=\frac{1}{2}+\frac{1}{\tq}$ and $\frac{1}{p}=\frac{1}{2}+\frac{1}{q}$.
By Lemma \ref{monotondiag}, for each $\s$ we can decompose $d_t^\s(x)=b_\s(t)a_\s(t)$ with
$$\Big(\sum_{t\in \s}\|b_\s(t)\|_2^2\Big)^{1/2}\Big\|\Big(\sum_{t\in \s}|a_\s(t)|^2\Big)^{1/2}\Big\|_{\tq} <C.$$
We may assume that 
$$
\Big(\sum_{t\in \s}\|b_\s(t)\|_2^2\Big)^{1/2}<1
\quad\mbox{and}\quad 
\Big\|\Big(\sum_{t\in \s}|a_\s(t)|^2\Big)^{1/2}\Big\|_{\tq} <C.$$
We set 
$$\tilde{a}_\s(t)=a_\s(t)\1\Big(\sum_{t\in \s}|a_\s(t)|^2\leq T \Big)$$
with
\begin{equation}\label{T}
T \geq \eps^{\frac{2q}{q-\tq}}C^{\frac{2\tq}{\tq-q}}.
\end{equation}
Then we have
\begin{equation}\label{atildeinfty}
\Big\|\sum_{t\in \s}|\tilde{a}_\s(t)|^2\Big\|_\infty
=\Big\|\Big(\sum_{t\in \s}|a_\s(t)|^2\Big)\1\Big(\sum_{t\in \s}|a_\s(t)|^2\leq T \Big)\Big\|_\infty \leq T
\end{equation}
and
\begin{equation}\label{a-atilde}
\begin{array}{cl}
\Big\|\Big(\displaystyle\sum_{t\in \s}|a_\s(t)-\tilde{a}_\s(t)|^2\Big)^{1/2}\Big\|_{q}
&=\Big\|\Big(\displaystyle\sum_{t\in \s}|a_\s(t)|^2\Big)\1\Big(\displaystyle\sum_{t\in \s}|a_\s(t)|^2> T \Big)\Big\|_{q/2}^{1/2} \\
&\leq \Big\|\Big(\displaystyle\sum_{t\in \s}|a_\s(t)|^2\Big)T^{1-\frac{\tq}{q}}
\Big(\displaystyle\sum_{t\in \s}|a_\s(t)|^2\Big)^{\frac{\tq}{q}-1}\Big\|_{q/2}^{1/2}\\
&= T^{\frac{q-\tq}{2q}}\Big\|\displaystyle\sum_{t\in \s}|a_\s(t)|^2\Big\|_{\tq/2}^{\frac{\tq}{2q}}\\
&\leq T^{\frac{q-\tq}{2q}}C^{\frac{\tq}{q}}<\eps.
\end{array}
\end{equation}
We set 
$$y_\s=\sum_{t\in \s} d_t^\s(b_\s(t)\tilde{a}_\s(t)).$$
By \eqref{atildeinfty} and the H\"{o}lder inequality in $L_2(\M;\ell_2^c(\s))$ we get for each $\s$
$$\|y_\s\|_2
\leq 2\Big(\sum_{t\in \s}\|b_\s(t)\tilde{a}_\s(t)\|_2^2\Big)^{1/2}\leq 
2\Big(\sum_{t\in \s}\|b_\s(t)\|_2^2\Big)^{1/2}\Big\|\Big(\sum_{t\in \s}|\tilde{a}_\s(t)|^2\Big)^{1/2}\Big\|_\infty
\leq 2T^{1/2}.$$
Hence the family $(y_\s)_\s$ is uniformly bounded in $L_2(\M)$, and we can consider 
$$y=\w L_2 \mbox{-} \lim_{\s,\U} y_\s \in L_2(\M).$$
Lemma \ref{limdiag} implies 
$$\|y\|_{\h_{\tp}^{1_c}}\leq \lim_{\s,\U} \|y_\s\|_{h_{\tp}^{1_c}(\s)}.$$
By the definition of $y_\s$ and \eqref{atildeinfty} we get 
\begin{align*}
\|y_\s\|_{h_{\tp}^{1_c}(\s)}
& \leq \Big(\sum_{t\in \s}\|b_\s(t)\|_2^2\Big)^{1/2}\Big\|\Big(\sum_{t\in \s}|\tilde{a}_\s(t)|^2\Big)^{1/2}\Big\|_{\tq}\\
& \leq \Big(\sum_{t\in \s}\|b_\s(t)\|_2^2\Big)^{1/2}\Big\|\Big(\sum_{t\in \s}|\tilde{a}_\s(t)|^2\Big)^{1/2}\Big\|_{\infty}
\leq T^{1/2},
\end{align*}
and we deduce that $y\in \th_{\tp}^{1_c}\subset \h_p^{1_c}$. 
We may adapt the proof of Lemma \ref{limdiag} (i) to show that 
\begin{equation}\label{limx-yhp1c}
\|x-y\|_{\h_p^{1_c}}\leq \lim_{\s,\U} \|x-y_\s\|_{h_{p}^{1_c}(\s)}.
\end{equation}
Indeed, for a fixed partition $\s_0$ and $\delta>0$ 
we can find a sequence of positive numbers $(\alpha_m)_{m=1}^M$ such that 
$\sum_m \al_m=1$, and partitions $\si^1,...,\si^M$ containing $\si_0$  such that
 $$ \Big\|y-\sum_m \al_m y_{\si^m}\Big\|_{2}<\delta $$
and
$$
\|x-y_{\si^m}\|_{h_{p}^{1_c}(\si^m)}\leq (1+\delta) \lim_{\s,\U} \|x-y_\s\|_{h_{p}^{1_c}(\s)}
\quad \mbox{for all } m=1,\cdots ,M.$$
Lemma \ref{monotondiag} and \eqref{Lpinhp1} give
\begin{align*}
 \|x-y\|_{h_{p}^{1_c}(\si_0)}
&\leq  \Big\|x-\sum_m \al_m y_{\si^m}\Big\|_{h_{p}^{1_c}(\si_0)} +\Big\|\sum_m \al_m y_{\si^m}-y\Big\|_{h_{p}^{1_c}(\si_0)}\\
&\leq  \Big\|\sum_m \al_m (x-y_{\si^m})\Big\|_{h_{p}^{1_c}(\si_0)} + 2 |\si_0|\Big\|y-\sum_m \al_m y_{\si^m}\Big\|_{p}\\
&\leq  \sum_m \al_m \|x-y_{\si^m}\|_{h_{p}^{1_c}(\si_0)} + 2|\si_0|\Big\|y-\sum_m \al_m y_{\si^m}\Big\|_{2} \\
&\leq  \sum_m \al_m \|x-y_{\si^m}\|_{h_{p}^{1_c}(\si^m)} + 2\delta |\si_0|\\
& \leq (1+\delta) \lim_{\s,\U} \|x-y_\s\|_{h_{p}^{1_c}(\s)}+ 2\delta |\si_0| .
\end{align*}
Sending $\delta$ to $0$ and taking the supremum over $\s_0$ we obtain \eqref{limx-yhp1c}. 
For each $\s$ we have by \eqref{a-atilde}
\begin{align*}
\|x-y_\s\|_{h_{p}^{1_c}(\s)}
&=\Big\|\sum_{t\in\s}b_\s(t)(a_\s(t)-\tilde{a}_\s(t))\Big\|_{h_{p}^{1_c}(\s)}\\
&\leq \Big(\sum_{t\in \s}\|b_\s(t)\|_2^2\Big)^{1/2}\Big\|\Big(\sum_{t\in \s}|a_\s(t)-\tilde{a}_\s(t)|^2\Big)^{1/2}\Big\|_{q}\\
&< \eps.
\end{align*}
Hence $\|x-y\|_{\h_{p}^{1_c}} < \eps$ and this ends the proof of the Lemma. 
\qd

We can now define by density a contractive map from $\h_p^{1_c}$ to $\h_p^{d}$, 
which is clearly injective for $\h_p^{1_c}$ and $\h_p^{d}$ are subspaces of $L_p(\M)$. 

\begin{cor}
Let $1\leq p< 2$. Then we have a contractive inclusion
$$\h_p^{1_c}\subset \h_p^d .$$
\end{cor}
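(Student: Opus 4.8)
The plan is to build the inclusion first on a dense subspace and then extend it by continuity, the density statement of Lemma \ref{densityhp1c} being the only nontrivial input. First I would show that every $x\in L_2(\M)\cap\h_p^{1_c}$ already lies in $\h_p^d$, with $\|x\|_{\h_p^d}\leq\|x\|_{\h_p^{1_c}}$. By Remark \ref{defhpd} (2) the inclusion $\h_p^{1_c}\subset\tilde{\h}_p^{1_c}$ is contractive, so $x\in\tilde{\h}_p^{1_c}$ and $\|x\|_{\tilde{\h}_p^{1_c}}\leq\|x\|_{\h_p^{1_c}}$. The contractive inclusion $\tilde{\h}_p^{1_c}\subset\tilde{\h}_p^d$ recorded just before the definition of the diagonal spaces (itself a consequence of Remark \ref{hp1reg} (1), i.e.\ of $\|\cdot\|_{h_p^d(\s)}\leq\|\cdot\|_{h_p^{1_c}(\s)}$ at each finite partition followed by the limit along $\U$) then gives $x\in\tilde{\h}_p^d$ with $\|x\|_{\h_p^d}\leq\|x\|_{\h_p^{1_c}}$. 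Since $x\in L_2(\M)$ as well, $x$ lies in $L_2(\M)\cap\tilde{\h}_p^d$, which is precisely the subspace whose $\|\cdot\|_{\h_p^d}$-closure defines $\h_p^d$; hence $x\in\h_p^d$ with the wanted norm bound.

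This defines a contractive map $j\colon L_2(\M)\cap\h_p^{1_c}\to\h_p^d$ by $j(x)=x$. By Lemma \ref{densityhp1c} the domain is dense in $\h_p^{1_c}$, and $\h_p^d$ is complete, so $j$ extends uniquely to a contraction $J\colon\h_p^{1_c}\to\h_p^d$. To see that $J$ is the natural inclusion, and in particular injective, I would use that both $\h_p^{1_c}$ and $\h_p^d$ embed boundedly into $L_p(\M)$ by Remark \ref{defhpd} (2): the composite $\h_p^{1_c}\xrightarrow{J}\h_p^d\hookrightarrow L_p(\M)$ coincides with the bounded inclusion $\h_p^{1_c}\hookrightarrow L_p(\M)$ on the dense subset $L_2(\M)\cap\h_p^{1_c}$, hence on all of $\h_p^{1_c}$ by continuity. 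Since $\h_p^{1_c}\hookrightarrow L_p(\M)$ is injective, so is $J$, which yields the contractive inclusion $\h_p^{1_c}\subset\h_p^d$.

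All the substance is already contained in Lemma \ref{densityhp1c} (whose proof rests on the $p$-equiintegrability cutoff) and in the finite-level contractive inclusion $h_p^{1_c}(\s)\subset h_p^d(\s)$, so there is no genuine obstacle at this final step. The one point requiring care is that the generators of $\h_p^d$ are required to lie in $L_2(\M)$ rather than merely in $\tilde{\h}_p^d$; this is exactly why the density lemma is formulated for $L_2(\M)\cap\h_p^{1_c}$, and why that statement and the contractive inclusion $\tilde{\h}_p^{1_c}\subset\tilde{\h}_p^d$ must be combined.
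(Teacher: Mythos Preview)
Your proof is correct and follows exactly the paper's approach: the paper's entire argument is the sentence preceding the Corollary, namely that Lemma \ref{densityhp1c} allows one to define by density a contractive map $\h_p^{1_c}\to\h_p^d$, which is injective because both spaces are subspaces of $L_p(\M)$. You have simply written out in full the details the paper leaves implicit, including the use of $\tilde{\h}_p^{1_c}\subset\tilde{\h}_p^d$ and the injectivity argument via $L_p(\M)$.
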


We deduce from Theorem \ref{Davishp1c} and Remark \ref{defhpd} (2) the desired Davis decomposition. 

\begin{theorem}\label{Davishpd}
Let $1\leq p<2$. Then 
\begin{enumerate}
\item[(i)]$\H_p^c=\h_p^{d}+\h_p^c$ for $1<p<2$, 
\item[(ii)]$\H_1^c=\h_1^{d}+\lh_1^c$,
\end{enumerate}
with equivalent norms.
\end{theorem}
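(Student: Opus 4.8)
The plan is to deduce Theorem~\ref{Davishpd} directly from Theorem~\ref{Davishp1c} together with the contractive inclusion $\h_p^{1_c}\subset \h_p^d$ recorded in the preceding Corollary and the injectivity of the diagonal spaces into $\H_p^c$ established in Remark~\ref{defhpd}~(2). The point is that Theorem~\ref{Davishp1c} already gives the finer decomposition $\H_p^c=\h_p^{1_c}+\h_p^c$ (resp. $\H_1^c=\h_1^{1_c}+\lh_1^c$) with equivalent norms, and $\h_p^{1_c}$ is nothing but a smaller space sitting inside $\h_p^d$. Replacing $\h_p^{1_c}$ by the larger $\h_p^d$ can only shrink the sum norm, so the decomposition persists.

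More precisely, first I would treat the case $1<p<2$. For the inclusion $\h_p^d+\h_p^c\subset \H_p^c$, I would observe that by Remark~\ref{defhpd}~(2) both natural maps $\h_p^d\hookrightarrow \H_p^c$ and $\h_p^c\hookrightarrow\H_p^c$ are bounded and injective, so that for any decomposition $x=a+b$ with $a\in\h_p^d$, $b\in\h_p^c$ one has $\|x\|_{\H_p^c}\leq \|a\|_{\h_p^d}+\|b\|_{\h_p^c}$, whence $\h_p^d+\h_p^c\subset \H_p^c$ boundedly. For the reverse inclusion, let $x\in \M$ with $\|x\|_{\H_p^c}<1$. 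Theorem~\ref{Davishp1c}~(i) produces $a\in\h_p^{1_c}$ and $b\in\h_p^c$ with $x=a+b$ and
$$
\|a\|_{\h_p^{1_c}}+\|b\|_{\h_p^c}\leq C_p.
$$
Applying the contractive inclusion $\h_p^{1_c}\subset \h_p^d$ from the preceding Corollary we get $a\in\h_p^d$ with $\|a\|_{\h_p^d}\leq \|a\|_{\h_p^{1_c}}$, so that
$$
\|x\|_{\h_p^d+\h_p^c}\leq \|a\|_{\h_p^d}+\|b\|_{\h_p^c}\leq \|a\|_{\h_p^{1_c}}+\|b\|_{\h_p^c}\leq C_p.
$$
By the density of $\M$ in $\H_p^c$ this yields $\H_p^c\subset \h_p^d+\h_p^c$ with equivalent norms, which is assertion~(i).

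For assertion~(ii), the case $p=1$, the argument is identical: I would apply Theorem~\ref{Davishp1c}~(ii) to obtain $x=a+b$ with $a\in\h_1^{1_c}$, $b\in\lh_1^c$ and $\|a\|_{\h_1^{1_c}}+\|b\|_{\lh_1^c}\leq C_1$, then use $\h_1^{1_c}\subset \h_1^d$ to relocate $a$ into the diagonal space, keeping $b$ in $\lh_1^c$. The coincidence of the two sums (as subspaces of $\H_1^c$, via the injectivity noted in Remark~\ref{defhpd}~(2) and the convention there that $\h_1^d,\h_1^{1_c},\lh_1^c\subset \H_1^c$) ensures the decomposition makes sense at the level of $\H_1^c$. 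Since there is genuinely no new hard analysis here — the whole content has been absorbed into Theorem~\ref{Davishp1c}, the density Lemma~\ref{densityhp1c}, and the inclusion $\h_p^{1_c}\subset \h_p^d$ — the only point requiring care is the bookkeeping of the ambient spaces: one must check that the decomposition obtained for $x\in\M$ via the $\h_p^{1_c}$-valued piece really lands in $\h_p^d$ as a \emph{subspace of} $\H_p^c$ and not merely abstractly, which is exactly what the injectivity of $\h_p^d\hookrightarrow\H_p^c$ (equivalently, the injectivity of $\h_p^d\hookrightarrow L_p(\M)$ combined with Proposition~\ref{injHpc2}) guarantees. This compatibility of the various injections is the mild obstacle to watch; everything else is a one-line comparison of norms.
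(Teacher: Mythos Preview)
Your proposal is correct and takes essentially the same approach as the paper: the paper simply states that Theorem~\ref{Davishpd} is deduced from Theorem~\ref{Davishp1c} together with Remark~\ref{defhpd}~(2), which is precisely what you do (with the contractive inclusion $\h_p^{1_c}\subset \h_p^d$ from the preceding Corollary as the bridge). Your write-up is in fact more detailed than the paper's one-line justification.
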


\subsubsection{The version of the Davis decomposition in Randrianantoanina's style}

The continuous analogue of Corollary \ref{RaDdiscr} is stated as follows 

\begin{prop}\label{RaD}
Let $1<p<2$ and $x\in L_2(\M)$. 
Then there exist $a,b \in L_2(\M)$ such that 
\begin{enumerate}
\item[(i)] $x=a+b$,
\item[(ii)]$ \|a\|_{\h_p^d}+\|b\|_{\h_p^c}\leq C(p) \|x\|_{\H_p^c} $,
\item[(iii)] $\max\{\|a\|_2,\|b\|_2\} \leq f(p,\|x\|_{\H_p^c},\|x\|_2)$,
\end{enumerate}
where $C(p)\leq C(p-1)^{-1}$ as $p\to 1$. 
\end{prop}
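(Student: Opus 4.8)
The plan is to follow the proof of Proposition \ref{RaBG} almost verbatim, replacing the continuous Burkholder--Gundy decomposition by the discrete Davis decomposition of Corollary \ref{RaDdiscr} and controlling the weak $L_2$-limits of the diagonal and conditioned column parts by means of Lemma \ref{limdiag}. First I would fix $x\in L_2(\M)$ and $1<p<2$ and, for each finite partition $\s\in\PP_{\fin}([0,1])$, apply Corollary \ref{RaDdiscr} to the finite filtration $(\M_t)_{t\in\s}$. This yields $a_\s,b_\s\in L_2(\M)$ with $x=a_\s+b_\s$,
$$\|a_\s\|_{h_p^d(\s)}+\|b_\s\|_{h_p^c(\s)}\leq C(p)\|x\|_{H_p^c(\s)}\quad\mbox{and}\quad \max\{\|a_\s\|_2,\|b_\s\|_2\}\leq f(p,\|x\|_{H_p^c(\s)},\|x\|_2).$$
By \eqref{estimateH_pcdiscr} one has $\|x\|_{H_p^c(\s)}\leq\|x\|_2$ uniformly in $\s$, and $f$ is increasing in its second argument, so both families $(a_\s)_\s$ and $(b_\s)_\s$ are uniformly bounded in $L_2(\M)$. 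I may therefore pass to weak $L_2$-limits along $\U$ and set $a=\w L_2\mbox{-}\lim_{\s,\U}a_\s$ and $b=\w L_2\mbox{-}\lim_{\s,\U}b_\s$, which lie in $L_2(\M)$ and satisfy $x=a+b$, establishing (i).

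For the simultaneous $L_2$-control (iii), I would use weak lower semicontinuity of $\|\cdot\|_2$ together with the continuity of $f$ in its second argument and the identity $\lim_{\s,\U}\|x\|_{H_p^c(\s)}=\|x\|_{\H_p^c}$, obtaining $\|a\|_2\leq\lim_{\s,\U}f(p,\|x\|_{H_p^c(\s)},\|x\|_2)=f(p,\|x\|_{\H_p^c},\|x\|_2)$ and likewise for $b$. For the Hardy-norm control (ii), I invoke Lemma \ref{limdiag}: part (ii) gives $\|a\|_{\h_p^d}\leq 2\lim_{\s,\U}\|a_\s\|_{h_p^d(\s)}$ and part (iii) gives $\|b\|_{\h_p^c}\leq 2^{1/p}\lim_{\s,\U}\|b_\s\|_{h_p^c(\s)}$. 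Since limits along the ultrafilter are additive, the discrete estimate above yields $\lim_{\s,\U}\|a_\s\|_{h_p^d(\s)}+\lim_{\s,\U}\|b_\s\|_{h_p^c(\s)}\leq C(p)\|x\|_{\H_p^c}$, whence $\|a\|_{\h_p^d}+\|b\|_{\h_p^c}\leq 2C(p)\|x\|_{\H_p^c}$, which is (ii) up to the constant. Membership in the right spaces is then automatic: $a\in L_2(\M)$ with $\|a\|_{\h_p^d}<\infty$ lies in $L_2(\M)\cap\tilde{\h}_p^d\subset\h_p^d$, while $b\in L_2(\M)\subset\h_p^c$ (using Proposition \ref{injhpc} to view $\h_p^c$ inside $L_p(\M)$). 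The blow-up $C(p)\leq C(p-1)^{-1}$ as $p\to 1$ is inherited directly from Corollary \ref{RaDdiscr}.

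The main obstacle is not the hard analysis — the weak-type $(1,1)$ estimates, the $J$-method real interpolation, and Randrianantoanina's construction are all already packaged inside Corollary \ref{RaDdiscr}, so the continuous statement is essentially a limiting argument. The delicate point I expect to require the most care is the transfer of the diagonal estimate through the weak $L_2$-limit. Unlike the $\H_p^c$-case, the diagonal norms are only monotone \emph{up to a constant} (Lemma \ref{monotondiag}(i)), and $\h_p^d$ is defined as a $\|\cdot\|_{\h_p^d}$-closure of $L_2(\M)\cap\tilde{\h}_p^d$; one must therefore verify carefully both that the hypotheses of Lemma \ref{limdiag}(ii) apply to the uniformly $L_2$-bounded family $(a_\s)_\s$ and that the resulting weak limit genuinely lands in $\h_p^d$ rather than merely in the larger space $\tilde{\h}_p^d$. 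Once this is secured, the remainder of the argument parallels the proof of Proposition \ref{RaBG}.
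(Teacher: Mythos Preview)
Your proposal is correct and follows essentially the same route as the paper: apply Corollary \ref{RaDdiscr} to each finite partition, pass to weak $L_2$-limits along $\U$, and transfer the discrete estimates to the continuous norms via Lemma \ref{limdiag}(ii)--(iii). The paper's proof is identical in structure, and your remark that $a\in L_2(\M)\cap\tilde{\h}_p^d\subset\h_p^d$ is exactly the observation needed to place the limit in the right space.
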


\begin{proof}
We again use the limit argument detailed in the previous proofs of decompositions in the continuous setting. 
We start by applying Corollary \ref{RaDdiscr} to $x\in L_2(\M)$ and obtain a decomposition $x=a_\s+b_\s$ with 
\begin{equation}\label{eqRaDdiscr}
 \|a_\s\|_{h_p^d(\s)}+\|b_\s\|_{h_p^c(\s)} \leq C(p) \|x\|_{H_p^c(\s)} \quad \mbox{and} \quad 
\max\{\|a_\s\|_2,\|b_\s\|_2\} \leq f(p,\|x\|_{H_p^c(\s)},\|x\|_2).
\end{equation}
Hence the families $(a_\s)_\s$ and $(b_\s)_\s$ are uniformly bounded in $L_2$, and we can consider 
$$a=\w L_2 \mbox-\lim_{\s,\U} a_\s \quad \mbox{and} \quad b=\w L_2 \mbox-\lim_{\s,\U} b_\s.$$
We obtain $x=a+b$ where $a,b \in L_2(\M)$ satisfy 
$$\|a\|_2\leq  \lim_{\s,\U} \|a_\s\|_2 \leq \lim_{\s,\U} f(p,\|x\|_{H_p^c(\s)},\|x\|_2)=f(p,\|x\|_{\H_p^c},\|x\|_2)$$
and similarly
$$\|b\|_2 \leq f(p,\|x\|_{\H_p^c},\|x\|_2).$$
Lemma \ref{limdiag} (ii) and (iii) give 
$$
\|a\|_{\h_p^d}\leq 2\lim_{\s,\U} \|a_\s\|_{h_p^d(\s)}
\and
\|b\|_{\h_p^c}\leq 2^{1/p}\lim_{\s,\U} \|b_\s\|_{h_p^c(\s)}.
$$
Combining with \eqref{eqRaDdiscr} we get
$$\|a\|_{\h_p^d}+\|b\|_{\h_p^c}
\leq 2(\lim_{\s,\U} \|a_\s\|_{h_p^d(\s)}+ \lim_{\s,\U} \|b_\s\|_{h_p^c(\s)})
\leq 2C(p) \lim_{\s,\U} \|x\|_{H_p^c(\s)}
\leq 2 C(p)\|x\|_{\H_p^c}.$$
\qd

\begin{cor}\label{Hpcboxplus}
Let $1<p<2$. Then 
$$\H_p^c=\h_p^d\boxplus \h_p^c = \h_p^d + \h_p^c \quad \mbox{with equivalent norms}.$$
Moreover, the constant remains bounded as $p\to 1$. 
\end{cor}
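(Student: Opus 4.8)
The plan is to mimic the proof of Corollary~\ref{Hpboxplus}, reading Proposition~\ref{RaD} as a $\boxplus$-sum statement and then invoking the ordinary Davis decomposition of Theorem~\ref{Davishpd} to identify the two sums. Throughout I would take $A_0=L_2(\M)$, $X=\h_p^d$, $Y=\h_p^c$ and $A_1=L_p(\M)$, and first record that the hypotheses needed to form $X\boxplus_{A_0}Y$ hold. The density requirement \eqref{density} is satisfied because $L_2(\M)\cap\h_p^d$ is dense in $\h_p^d$ by the very definition of $\h_p^d$, and $L_2(\M)$ is dense in $\h_p^c$; moreover both $\h_p^d$ and $\h_p^c$ embed injectively into $L_p(\M)$ for $1<p<2$ by Remark~\ref{defhpd}(2) together with Propositions~\ref{injHpc2} and~\ref{injhpc}.

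First I would establish $\H_p^c=\h_p^d\boxplus\h_p^c$ with equivalent norms. For the bound $\|x\|_{\h_p^d\boxplus\h_p^c}\leq C(p)\|x\|_{\H_p^c}$ on $x\in L_2(\M)$, the crucial feature is that the $\boxplus$-norm is an infimum over decompositions $x=a+b$ with $a\in L_2(\M)\cap\h_p^d$ and $b\in L_2(\M)$: the ordinary Davis decomposition produces pieces in $\h_p^d$ and $\h_p^c$ that need not lie in $L_2(\M)$, whereas Proposition~\ref{RaD} furnishes exactly a decomposition \emph{within} $L_2(\M)$ with simultaneous control of the $\h_p^d$- and $\h_p^c$-norms. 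This is precisely the reason the stronger Randrianantoanina-type statement is needed here. The reverse bound is immediate from \eqref{eqsums} and Theorem~\ref{Davishpd}: indeed $\|x\|_{\h_p^d+\h_p^c}\leq\|x\|_{\h_p^d\boxplus\h_p^c}$, while $\|x\|_{\H_p^c}$ is controlled by $\|x\|_{\h_p^d+\h_p^c}$ up to the (bounded) Davis constant.

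Having both $\H_p^c=\h_p^d\boxplus\h_p^c$ and $\H_p^c=\h_p^d+\h_p^c$ with equivalent norms (the latter being Theorem~\ref{Davishpd}), I would deduce by transitivity that $\h_p^d+\h_p^c=\h_p^d\boxplus\h_p^c$ with equivalent norms, and then appeal to Lemma~\ref{sumeq} to upgrade this to an isometric identification; equivalently, since $\h_p^d\boxplus\h_p^c\cong\H_p^c$ embeds injectively into $A_1=L_p(\M)$ by Proposition~\ref{injHpc2}, condition (iii) of Lemma~\ref{sumeq} applies. Finally, for the claim that the constant stays bounded as $p\to 1$: once the two sum norms coincide, $\|\cdot\|_{\h_p^d\boxplus\h_p^c}$ equals $\|\cdot\|_{\h_p^d+\h_p^c}$, so the constant in the equivalence $\H_p^c\simeq\h_p^d\boxplus\h_p^c$ is the one in the Davis decomposition $\H_p^c\simeq\h_p^d+\h_p^c$, which remains bounded as $p\to 1$. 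Thus Randrianantoanina's blowing-up constant $C(p)\leq C(p-1)^{-1}$ enters only to trivialize the ``virtual kernel'' of the $\boxplus$-sum and does not affect the final norm equivalence. I expect the main obstacle to be conceptual rather than computational: recognizing that the injectivity of the $\boxplus$-sum into $L_p(\M)$ is exactly what Proposition~\ref{RaD} buys, and that this is logically separate from the bounded constant governing the norm equivalence.
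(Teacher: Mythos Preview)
Your proposal is correct and follows essentially the same argument as the paper: reinterpret Proposition~\ref{RaD} as the equivalence $\H_p^c=\h_p^d\boxplus\h_p^c$, combine with Theorem~\ref{Davishpd} to get $\h_p^d+\h_p^c=\h_p^d\boxplus\h_p^c$ with equivalent norms, upgrade to an isometry via Lemma~\ref{sumeq}, and conclude that the governing constant is the bounded Davis constant rather than Randrianantoanina's $C(p)$. Your added commentary on checking the density hypotheses and on the role of $C(p)$ as merely ensuring injectivity is accurate and matches the paper's logic.
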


\begin{proof}
On the one hand, 
if we consider $A_0=L_2(\M), X=\h_p^d, Y=\h_p^c$ and $A_1=L_p(\M)$, 
then we may translate Proposition \ref{RaD} in terms of $\boxplus$-sum as follows  
\begin{equation}\label{Dboxplus}
\H_p^c=\h_p^d\boxplus \h_p^c  \quad \mbox{with equivalent norms}.
\end{equation}
But this holds with a constant $C(p)$ which does not remain bounded as $p\to 1$. 
On the other hand, we know by Theorem \ref{Davishpd} that 
\begin{equation}\label{D+}
\H_p^c=\h_p^d+ \h_p^c  \quad \mbox{with equivalent norms},
\end{equation}
where the constant remains bounded as $p\to 1$. 
We deduce that $\h_p^d+ \h_p^c =\h_p^d\boxplus \h_p^c $ with equivalent norms for $1<p<2$. 
Hence the two sums coincide isometrically by Lemma \ref{sumeq}. 
This means that the constant in \eqref{Dboxplus} is the same than the one in \eqref{D+}, 
hence remains bounded as $p\to 1$.
\qd


\subsubsection{The ''mixed`` version of the Davis decomposition}\label{subsectDmixed}

Lemma \ref{densityhp1c} allows us to define the sum $\h_p^{1_c}\boxplus \h_p^c$, and we may extend Theorem \ref{BRDboxplusdiscr-hp1c} to the continuous setting. 

\begin{theorem}\label{BRDboxplus-hp1c}
Let $1\leq p<2$. Then 
$$\H_p^c=\h_p^{1_c} \boxplus \h_p^c\quad 
\mbox{with equivalent norms.}$$ 
\end{theorem}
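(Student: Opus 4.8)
The plan is to follow the dual strategy used for the discrete Theorem~\ref{BRDboxplusdiscr-hp1c}. By Lemma~\ref{sumeq} it suffices to prove that the canonical quotient map $q:\h_p^{1_c}\boxplus\h_p^c\to\h_p^{1_c}+\h_p^c$ (a surjective quotient map by Lemma~\ref{quot1}) is injective, and for this I would show that its adjoint $q^*$ is onto, i.e. that the two sums have the same dual space. Here $A_0=L_2(\M)$, $X=\h_p^{1_c}$, $Y=\h_p^c$, and the density hypotheses needed to form $\h_p^{1_c}\boxplus\h_p^c$ hold by Lemma~\ref{densityhp1c} (density of $L_2(\M)\cap\h_p^{1_c}$ in $\h_p^{1_c}$) and the density of $L_2(\M)$ in $\h_p^c$; for $1<p<2$ one also has $\h_p^{1_c},\h_p^c\subset L_p(\M)$ by Remark~\ref{defhpd}(2), so the usual sum is formed in $A_1=L_p(\M)$.

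For $1<p<2$ I would fix $\psi=(x_1,x_2)\in(\h_p^{1_c}\boxplus\h_p^c)^*$. By Lemma~\ref{dualboxplus}(ii) we may take $x_1\in(\h_p^{1_c})^*$ and $x_2\in(\h_p^c)^*$ agreeing on $L_2(\M)\cap\h_p^{1_c}$; Theorem~\ref{th:dualityhp} identifies $(\h_p^c)^*=L_{p'}^c\mo\subset L_2(\M)$, so $x_2$ is an honest operator in $L_2(\M)$. The key step transfers the boundedness of $x_1$ to $x_2$: for $y\in L_2(\M)\cap\h_p^{1_c}$,
$$|\langle x_2,y\rangle|=|\langle x_1,y\rangle|\le\|x_1\|_{(\h_p^{1_c})^*}\,\|y\|_{\h_p^{1_c}},$$
so by Lemma~\ref{densityhp1c} the functional $x_2$ extends boundedly to $\h_p^{1_c}$. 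Since $x_1$ and $x_2$ then agree on a dense subspace of $\h_p^{1_c}$, they coincide there; denote the common element by $z\in(\h_p^{1_c})^*\cap(\h_p^c)^*$.

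Next I would verify that $z$ represents a functional on $\H_p^c=\h_p^{1_c}+\h_p^c$ (the equality being Theorem~\ref{Davishp1c}(i)). By Lemma~\ref{dualboxplus}(i) this only requires that the two pairings induced by $z$ agree on $\h_p^{1_c}\cap\h_p^c$, which holds since both extend the trace bracket $\tau(z^*\cdot)$ from $L_2(\M)$-dense subspaces, exactly as in Remark~\ref{dualitybracket}. Hence $\psi=(z,z)=q^*(z)$ with $z\in(\H_p^c)^*$, so $q^*$ is surjective; therefore $q$ is bounded below, and being a quotient map it is an isomorphism. Lemma~\ref{sumeq} together with Theorem~\ref{Davishp1c}(i) then gives $\H_p^c=\h_p^{1_c}\boxplus\h_p^c$ with equivalent norms.

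The main obstacle is the case $p=1$, where $\h_1^c$ does not embed into $L_1(\M)$, so the usual sum at our disposal is $\H_1^c=\h_1^{1_c}+\lh_1^c$ (Theorem~\ref{Davishp1c}(ii)) rather than $\h_1^{1_c}+\h_1^c$. The dual computation still begins cleanly, since $(\h_1^c)^*=\bmo^c\subset L_2(\M)$ (Theorem~\ref{th:dualityhp} at $p=1$) and Lemma~\ref{densityhp1c} holds for $p=1$, so the density step again yields $z\in(\h_1^{1_c})^*\cap\bmo^c$. The delicate point is to identify this intersection with $(\H_1^c)^*=\BMO^c$ (Theorem~\ref{fsduality}), passing through $\lh_1^c$ and its dual $\lbmo^c$ (Theorem~\ref{fsdualLh1}); because $(\h_1^c)^*$ is no longer reflexive, I expect to replace the plain density argument by a weak-$*$ approximation in the spirit of Lemma~\ref{le:ultrapdct}, just as in the $p=1$ part of the Fefferman--Stein duality. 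Once the equality of the two duals is secured, Lemma~\ref{sumeq} closes the case as before.
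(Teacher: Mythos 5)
Your overall dual strategy (show the quotient map $q$ of Lemma \ref{quot1} is injective by showing the duals of the two sums coincide) is the same as the paper's, but the step that is supposed to do the analytic work is circular. After Lemma \ref{dualboxplus} gives $x_1\in(\h_p^{1_c})^*$ and $x_2\in(\h_p^c)^*=L_{p'}^c\mo$ agreeing on $L_2(\M)\cap\h_p^{1_c}$, your ``transfer of boundedness'' produces nothing new: since $x_1$ is already bounded on $\h_p^{1_c}$ and equals $x_2$ on the dense subspace $L_2(\M)\cap\h_p^{1_c}$, the bounded extension $z$ you construct is just $x_1$ itself. What you actually need, via Lemma \ref{dualboxplus}(i), is that $x_1$ and $x_2$ agree on the intersection $\h_p^{1_c}\cap\h_p^c$ (taken in $L_p(\M)$), and you justify this by saying both extend the trace bracket from $L_2$-dense subspaces. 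But the two separate densities (Lemma \ref{densityhp1c} in $\h_p^{1_c}$, and $L_2(\M)$ in $\h_p^c$) do not allow you to approximate an element of the intersection \emph{simultaneously} in both norms; what is required is density of $L_2(\M)\cap\h_p^{1_c}$ in $\h_p^{1_c}\cap\h_p^c$, which, by the remark following Lemma \ref{sumeq}, is \emph{equivalent} to the equality $\h_p^{1_c}\boxplus\h_p^c=\h_p^{1_c}+\h_p^c$ you are trying to prove. Remark \ref{dualitybracket} concerns when the $\H_p^c$--$\H_{p'}^c$ bracket can be written as a trace and does not address this ``virtual kernel'' issue at all.

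This is also why the discrete proof of Theorem \ref{BRDboxplusdiscr-hp1c} does not transfer formally: there, the conclusion $x_2\in(h_p^{1_c})^*\cap(h_p^c)^*=L_{p'}^cMO$ rested on the concrete identification $(h_p^{1_c})^*=h_{p'}^{\infty_c}$ (Corollary \ref{dualhp1c}) together with the norm splitting \eqref{decLpcMO}, and neither has a continuous analogue: the paper never identifies $(\h_p^{1_c})^*$, and $L_{p'}^c\MO$ is defined through limits of discrete norms rather than through a diagonal-plus-conditioned decomposition. The paper's proof instead establishes directly that $x_2\in L_{p'}^c\MO=(\H_p^c)^*$ with controlled norm, and this is where the hard analysis lives: one truncates $x_2$ by the spectral projections $f_R(\s)$ of its difference sequences to produce $x_2^R$, whose discrete diagonal parts are bounded by $R$ so that $x_2^R\in L_{p'}^cMO(\s)$ uniformly; one pairs $x_2^R$ against the approximate $L_2$-level decomposition $y\approx a_T+b_T$ of Proposition \ref{BRDexpl-hp1c} (which exists precisely for this theorem, and is needed because Theorem \ref{Davishp1c} does not produce pieces lying in $L_2(\M)$); one uses that the truncation acts boundedly on $\h_p^{1_c}$ and on $h_{p'}^c(\s)$ to get bounds uniform in $R$; and finally one lets $R\to\infty$ using the weak $L_2$-closure of $L_{p'}^c\MO$-balls (Corollary \ref{LpcMOclosed}). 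None of this machinery appears in your proposal, and without it (or a genuinely new idea replacing it) the argument does not close; your sketch for $p=1$ inherits the same gap.
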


We first need the continuous analogue of Proposition \ref{BRDexpldicr-hp1c}. 

\begin{prop}\label{BRDexpl-hp1c}
Let $1 \leq p<2$, $p<p_0<\frac{4}{4-p}$ and $x\in L_2(\M)$. 
Then there exist two families $(a_T)_{T\geq 0}$ and $(b_T)_{T\geq 0}$ in $L_2(\M)$ such that  
\begin{enumerate}
\item[(i)] $x=\displaystyle\lim_{T\to \infty} a_T+b_T $ in $\H_{p_0}^c$,
\item[(ii)] $\|a_T\|_{\h_p^{1_c}}+\|b_T\|_{\h_p^{c}}\leq C(p)\|x\|_{\H_p^c}$ for all $T\geq 0$,
\item[(iii)] $\max\{\|a_T\|_2,\|b_T\|_2\}\leq C(p,\|x\|_{\H_p^c},T)$ for all $T\geq 0$.
\end{enumerate} 
\end{prop}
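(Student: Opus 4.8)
The plan is to transfer the discrete construction of Proposition \ref{BRDexpldicr-hp1c} to the continuous setting via the now-familiar weak-limit-along-$\U$ argument, exactly as in the proofs of Theorem \ref{Davishp1c} and Proposition \ref{RaD}. For each finite partition $\s$ we apply Proposition \ref{BRDexpldicr-hp1c} to $x\in L_2(\M)$ relative to the discrete filtration $(\M_t)_{t\in\s}$, obtaining, for each threshold $T\geq 0$, two families $a_T^\s, b_T^\s\in L_2(\M)$ with the three discrete conclusions: the convergence estimate $\|x-(a_T^\s+b_T^\s)\|_{H_{p_0}^c(\s)}\leq C(p,p_0)T^{-q/r_0}\|x\|_{H_p^c(\s)}^{p(1/2+1/r_0)}\|x\|_2^{2/s}$ from the Remark following Proposition \ref{BRDexpldicr-hp1c}, the norm control $\|a_T^\s\|_{h_p^{1_c}(\s)}+\|b_T^\s\|_{h_p^c(\s)}\leq C(p)\|x\|_{H_p^c(\s)}$, and the $L_2$-control $\max\{\|a_T^\s\|_2,\|b_T^\s\|_2\}\leq g(p,\|x\|_{H_p^c(\s)},T)$.

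\textbf{Key steps.} First, I would fix $T$ and note that the uniform $L_2$-bound (iii), together with $\|x\|_{H_p^c(\s)}\leq \|x\|_2$ from \eqref{estimateH_pcdiscr}, makes $(a_T^\s)_\s$ and $(b_T^\s)_\s$ uniformly bounded families in $L_2(\M)$. I then set
$$a_T=\w L_2\mbox{-}\lim_{\s,\U} a_T^\s \quad \mbox{and}\quad b_T=\w L_2\mbox{-}\lim_{\s,\U} b_T^\s,$$
so that $x=a_T+b_T$ in $L_2(\M)$, and (iii) passes to the limit with $C(p,\|x\|_{\H_p^c},T)=g(p,\|x\|_{\H_p^c},T)$ since $\|x\|_{H_p^c(\s)}\to\|x\|_{\H_p^c}$ along $\U$. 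Next, for (ii), I invoke Lemma \ref{limdiag}: applying (i) of that Lemma to $(a_T^\s)_\s$ (with $\tp=2$) yields $\|a_T\|_{\h_p^{1_c}}\leq\lim_{\s,\U}\|a_T^\s\|_{h_p^{1_c}(\s)}$, and (iii) of that Lemma gives $\|b_T\|_{\h_p^c}\leq 2^{1/p}\lim_{\s,\U}\|b_T^\s\|_{h_p^c(\s)}$; combined with the discrete bound (ii) and $\|x\|_{H_p^c(\s)}\to\|x\|_{\H_p^c}$ this produces the required estimate with a constant $C(p)$ bounded as $p\to 1$. Finally, for the convergence (i), I would establish a version of Lemma \ref{limdiag} for the $\H_{p_0}^c$-norm, namely $\|\w L_2\mbox{-}\lim_{\s,\U} z_\s\|_{\H_{p_0}^c}\lesssim\lim_{\s,\U}\|z_\s\|_{H_{p_0}^c(\s)}$ for $L_2$-bounded families, applied to $z_\s=x-(a_T^\s+b_T^\s)$; the uniform discrete convergence estimate then gives
$$\|x-(a_T+b_T)\|_{\H_{p_0}^c}\leq C(p,p_0)\,T^{-q/r_0}\|x\|_{\H_p^c}^{p(1/2+1/r_0)}\|x\|_2^{2/s},$$
which tends to $0$ as $T\to\infty$, yielding (i).

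\textbf{Main obstacle.} The delicate point is the convergence statement (i): the discrete estimate controls $\|x-(a_T^\s+b_T^\s)\|_{H_{p_0}^c(\s)}$ for each $\s$, but I need to control the \emph{continuous} norm $\|x-(a_T+b_T)\|_{\H_{p_0}^c}$ of the weak limit, and passing a weak $L_2$-limit inside a Hardy norm requires lower semicontinuity of the type furnished by Lemma \ref{convexityHpc} and the convex-combination/monotonicity machinery. I would establish the needed $\H_{p_0}^c$ semicontinuity lemma by the same approximation-by-convex-combinations argument used in the proof of Lemma \ref{limdiag}(iii) and Proposition \ref{RaD}, exploiting that $p_0>p>1$ so $\H_{p_0}^c$ is reflexive and that the monotonicity constants from Lemma \ref{convexityHpc} are uniform in $\s$. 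Once this semicontinuity is in hand, everything else is a routine transcription of the discrete estimates through $\lim_{\s,\U}$, and the uniformity of the exponents $q,r_0,s$ (which depend only on $p,p_0$, not on $\s$) guarantees the limit estimate survives.
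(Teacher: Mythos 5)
Your proposal is correct and takes essentially the same route as the paper's proof: apply the discrete Proposition \ref{BRDexpldicr-hp1c} (with the uniform bound from its following Remark) to each partition $\s$, take weak $L_2$-limits along $\U$ to define $a_T$ and $b_T$, obtain (iii) by lower semicontinuity, (ii) from Lemma \ref{limdiag}, and (i) from the semicontinuity of the $\H_{p_0}^c$-norm under weak $L_2$-limits via the monotonicity Lemma \ref{convexityHpc}. The step you single out as the main obstacle is exactly what the paper compresses into ``by using Lemma \ref{convexityHpc} we can easily show,'' and the convex-combination argument you sketch for it is the intended one.
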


\begin{proof}
Let $x\in L_2(\M)$. 
By Proposition \ref{BRDexpldicr-hp1c}, for $T\geq 0$ fixed and each $\s$ we can find $a_T(\s), b_T(\s) \in L_2(\M)$ such that 
\begin{itemize}
\item $\|x-( a_T(\s)+b_T(\s))\|_{H_{p_0}^c(\s)}\leq C(p,p_0) T^{-q/r_0}\|x\|_{H_p^c(\s)}^{p(\frac{1}{2}+\frac{1}{r_0})}\|x\|_2^{2/s},$
\item $\|a_T(\s)\|_{h_p^{1_c}(\s)}+\|b_T(\s)\|_{h_p^{c}(\s)}\leq C(p)\|x\|_{H_p^c(\s)}$,
\item $\max\{\|a_T(\s)\|_2,\|b_T(\s)\|_2\}\leq g(p,\|x\|_{H_p^c(\s)}, T)\|x\|_{H_p^c(\s)}$,
\end{itemize} 
where $s=\frac{4}{2-p}$ and $\frac{1}{r_0}=\frac{1}{p_0}-\frac{1}{2}-\frac{1}{s}$. 
Since $(a_T(\s))_\s$ and $(b_T(\s))_\s$ are uniformly bounded in $L_2(\M)$, we can consider  
$$a_T=\w L_2 \mbox{-}\lim_{\s,\U} a_T(\s)
\and
b_T=\w L_2 \mbox{-}\lim_{\s,\U} b_T(\s).$$
Then the point (iii) is clear, and (ii)  follows directly from Lemma \ref{limdiag}. 
Since $x-( a_T+b_T)=\w L_2 \mbox{-}\lim_{\s,\U}(x-( a_T(\s)+b_T(\s)))$, by using Lemma \ref{convexityHpc} we can easily show that 
$$\|x-( a_T+b_T)\|_{\H_{p_0}^c}\leq \beta_p  \lim_{\s,\U} \|x-( a_T(\s)+b_T(\s))\|_{H_{p_0}^c(\s)}
\leq  C(p,p_0) T^{-q/r_0}\|x\|_{\H_p^c}^{p(\frac{1}{2}+\frac{1}{r_0})}\|x\|_2^{2/s}.$$
This gives (i) and ends the proof of the Proposition. 
\qd

\begin{proof}[Proof of Theorem \ref{BRDboxplus-hp1c}]
It suffices to prove that if $x=(x_1,x_2)\in (\h_p^{1_c}\boxplus \h_p^c)^*$, 
then 
\begin{equation}\label{x2Lpcmo}
x_2 \in L_{p'}^c\MO \quad \mbox{ with } \quad \|x_2\|_{L_{p'}^c\MO}\leq C(p)\|x\|_{(\h_p^{1_c}\boxplus \h_p^c)^*}.
\end{equation}
We will conclude by using the fact that $L_{p'}^c\MO=(\H_p^c)^*=(\h_p^{1_c}+ \h_p^c)^*$ by Theorem \ref{fsduality} and Theorem \ref{Davishp1c}. 
Let $x=(x_1,x_2)\in (\h_p^{1_c}\boxplus \h_p^c)^*$. 
Then by Lemma \ref{dualboxplus} we have $x_1\in (\h_p^{1_c})^*$, 
$x_2 \in (\h_p^c)^*=L_{p'}^c\mo \subset L_2(\M)$ and $\langle x_1,y\rangle =\langle x_2,y\rangle$ for all $y\in L_2(\M)\cap \h_p^{1_c}$. 
Furthermore 
$$\|x\|_{(\h_p^{1_c}\boxplus \h_p^c)^*}=\max\{\|x_1\|_{(\h_p^{1_c})^*} , \|x_2\|_{(\h_p^{c})^*}\}.$$
For $R\geq 0$ and $\s$ fixed we consider the projection 
$$f_R(\s)=\1\Big(\Big|\sum_{t\in \s} e_{t,t}\ten d_t^\s(x_2)^*\Big|\leq R\Big)\in B(\ell_2(\s))\overline{\ten} \M.$$ 
Then $f_R(\s)=\sum_{t\in\s} e_{t,t}\ten f_R^t(\s)$, where $f_R^t(\s)=\1(|d_t^\s(x_2)^*|\leq R)\in \M$. 
We set
$$x_2^R(\s)=\sum_{t\in \s} d_t^\s(f_R^t(\s)d_t^\s(x_2)).$$
Since $(x_2^R(\s))_\s$ is uniformly bounded in $L_2(\M)$, we can define
$$x_2^R=\w L_2 \mbox{-}\lim_{\s,\U} x_2^R(\s).$$
We will show that
\begin{enumerate}
\item[(i)] $x_2^R \in L_{p'}^c\MO$ with $\|x_2^R\|_{L_{p'}^c\MO}\leq C(p)\|x\|_{(\h_p^{1_c}\boxplus \h_p^c)^*}$ for all $R\geq 0$,
\item[(ii)] $x_2=\w L_2 \mbox{-}\displaystyle\lim_{R\to \infty} x_2^R$.  
\end{enumerate}  
Since $L_{p'}^c\MO=(\H_p^c)^*$ by Theorem \ref{fsduality} and $L_2(\M)$ is dense in $\H_p^c$, we will deduce \eqref{x2Lpcmo}. 
Let $p<p_0<\frac{4}{4-p}$. 
On the one hand, by \eqref{En-1(dn)} we get for each $\s$ 
\begin{align}\label{x2Rhpc}
\|x_2^R(\s)\|_{h_{p'}^c(\s)}
&=\Big\|\sum_{t\in \s} \E_{t^-(\s)}|d_t^\s(f_R^t(\s)d_t^\s(x_2))|^2\Big\|_{p'/2}^{1/2} \\ \nonumber
&\leq\Big\|\sum_{t\in \s} \E_{t^-(\s)}|f_R^t(\s)d_t^\s(x_2)|^2\Big\|_{p'/2}^{1/2} 
\leq \|x_2\|_{h_{p'}^c(\s)}. 
\end{align}
Proposition \ref{hpcLpcmo} implies $\|x_2^R(\s)\|_{L_{p'}^cmo(\s)}\leq C(p')\|x_2\|_{L_{p'}^cmo(\s)}$. 
On the other hand, by definition of $f_R(\s)$ we can write
\begin{align*}
\|x_2^R(\s)\|_{h_{p'}^d(\s)}
&\leq 2 \Big(\sum_{t\in \s} \|f_R^t(\s)d_t^\s(x_2)\|_{p'}^{p'}\Big)^{1/p'}
 = 2 \Big(\sum_{t\in \s} \|f_R^t(\s)|d_t^\s(x_2)^*|^2f_R^t(\s)\|_{p'/2}^{p'/2}\Big)^{1/p'}\\
&=\Big\| f_R(\s) \Big(\sum_{t\in \s} e_{t,t}\ten |d_t^\s(x_2)^*|^2\Big)f_R(\s)\Big\|_{L_{p'/2}(B(\ell_2(\s)\oten \M)}^{1/2}
\leq R.
\end{align*}
Thus $x_2^R(\s)\in h_{p'}^d(\s) \cap L_{p'}^cmo(\s) =L_{p'}^cMO(\s)$, and we can control its $L_{p'}^cMO(\s)$-norm uniformly in $\s$. 
We deduce that $x_2^R \in L_{p'}^c\MO$. 
Now we want to estimate its $L_{p'}^c\MO$-norm. 
Let $p<p_0<\frac{4}{4-p}$ and $y\in  L_2(\M)$ be such that $\|y\|_{\H_p^c}\leq 1$. 
By Proposition \ref{BRDexpl-hp1c} we can approximate $y$ in $\H_{p_0}^c$ by a family $a_T+b_T$ such that 
$a_T,b_T \in L_2(\M)$ and 
$\|a_T\|_{\h_p^{1_c}}+\|b_T\|_{\h_p^{c}}\leq C(p)\|x\|_{\H_p^c}$ for all $T\geq 0$. 
Since $x_2^R \in L_{p'}^c\MO \subset L_{p_0'}^c\MO=(\H_{p_0}^c)^*$ and $x_2^R, a_T, b_T \in L_2(\M)$, we can write
$$\langle x_2^R,y\rangle 
=\lim_{T\to \infty} \langle x_2^R,a_T+b_T\rangle 
=\lim_{T\to \infty} \langle x_2^R,a_T\rangle+\langle x_2^R,b_T\rangle .$$
By \eqref{x2Rhpc} we clearly have 
$$|\langle x_2^R,b_T\rangle |
\leq \|x_2^R\|_{(\h_p^{c})^*}\|b_T\|_{\h_p^{c}}
\leq C_p\|x_2\|_{(\h_p^{c})^*}\|b_T\|_{\h_p^{c}}.$$
Observe that for $z\in L_2(\M)$ we have 
$\langle x_2^R,z\rangle=\langle x_2,z^R\rangle$, where 
$$z^R=\w L_2 \mbox{-}\lim_{\s,\U} z^R(\s)
\and
z^R(\s)=\sum_{t\in \s} d_t^\s(f_R^t(\s)d_t^\s(z)).
$$
If in addition $z \in L_2(\M)\cap \h_p^{1_c}$, then $z^R \in L_2(\M)\cap \h_p^{1_c}$ with 
\begin{equation}\label{zR}
\|z^R\|_{\h_p^{1_c}}\leq C_p \|z\|_{\h_p^{1_c}}.
\end{equation}
Indeed, for $\s$ fixed, let $\eps>0$ and $d_t^\s(z)=b_{\s}(t)a_{\s}(t)$ be a decomposition such that 
$$\Big(\sum_{t\in \s} \|b_{\s}(t)\|_2^2\Big)^{1/2} \Big\|\Big(\sum_{t\in \s}|a_{\s}(t)|^2\Big)^{1/2}\Big\|_q 
\leq \|z\|_{h_p^{1_c}(\s)}+\eps,$$ 
where $\frac{1}{p}=\frac{1}{2}+\frac{1}{q}$. 
Then $f_R^t(\s)d_t^\s(z)=f_R^t(\s)b_{\s}(t)a_{\s}(t)$ and 
\begin{align*}
\|z^R(\s)\|_{h_p^{1_c}}
&\leq C_p\|(f_R^t(\s)d_t^\s(z))_n\|_{L_p(\M;\ell_1^c(\s))}
\leq C_p\Big(\sum_{t\in \s} \|f_R^t(\s)b_\s(t)\|_2^2\Big)^{1/2} \Big\|\Big(\sum_{t\in \s}|a_{\s}(t)|^2\Big)^{1/2}\Big\|_q\\
&\leq C_p(\|z\|_{h_p^{1_c}(\s)}+\eps).
\end{align*}
Taking the limit in $\s$ and as $\eps \to 0$ we get \eqref{zR}. 
Then applying this to $z=a_T\in L_2(\M)\cap \h_p^{1_c}$ we get 
$$|\langle x_2^R,a_T\rangle |=|\langle x_2,a_T^R\rangle |
=|\langle x_1,a_T^R\rangle |
\leq \|x_1\|_{(\h_p^{1_c})^*}\|a_T^R\|_{\h_p^{1_c}}
\leq C_p\|x_1\|_{(\h_p^{1_c})^*}\|a_T\|_{\h_p^{1_c}}.
$$
Finally we obtain
\begin{align*}
|\langle x_2^R,y\rangle |
&\leq C_p \lim_{T\to \infty}\big(\|a_T\|_{\h_p^{1_c}}+\|b_T\|_{\h_p^{c}}\big)
\max\{\|x_1\|_{(\h_p^{1_c})^*} , \|x_2\|_{(\h_p^{c})^*}\}\\
&\leq C'_p \|y\|_{\H_p^c}
\max\{\|x_1\|_{(\h_p^{1_c})^*}, \|x_2\|_{(\h_p^{c})^*}\}.
\end{align*}
Hence by density of $L_2(\M)$ in $\H_p^c$ we deduce (i). 
It remains to prove the convergence (ii). 
We start by proving that $x_2=\w L_r \mbox{-}\lim_{R\to \infty} x_2^R$ for all $r>2$. 
Since the family $(x_2^R)_R$ is uniformly bounded in $L_2(\M)$, the weak limit exists in $L_2$, and necessarily coincides with the weak limit in $L_r$. 
Hence we will deduce (ii). 
Let $r>2$ and $y\in L_r(\M)$. 
We prove that for $\s$ fixed,
\begin{equation}\label{cvx2Rs}  
|\langle x_2-x_2^R(\s),y\rangle|\leq  C_r R^{-2/u}\|x_2\|_{2}^{1+2/u}\|y\|_r,
\end{equation}
where $\frac{1}{2}=\frac{1}{r}+\frac{1}{u}$. 
We will conclude that 
$$|\langle x_2-x_2^R,y\rangle|\leq \lim_{\s,\U}|\langle x_2-x_2^R(\s),y\rangle|\leq  C_r R^{-2/u}\|x_2\|_{2}^{1+2/u}\|y\|_r,$$
which trivially tends to $0$ as $R$ goes to $\infty$. 
To prove \eqref{cvx2Rs} we use 
$$\tr \circ \tau(1-f_R(\s))\leq R^{-2}
\tr \circ \tau \Big(\sum_{t\in \s} e_{t,t}\ten |d_t^\s(x_2)^*|^2\Big)=R^{-2}\|x_2\|_2^2.$$
We write 
\begin{align*}
|\langle x_2-x_2^R(\s),y\rangle|
&= \sum_{t\in \s} \tau \big(d_t^\s(x_2)^*d_t^\s((1-f_R^t(\s))d_t^\s(y))\big)\\
&\leq 2\|x_2\|_2 \Big(\sum_{t\in \s} \|(1-f_R^t(\s))d_t^\s(y)\|_2^2\Big)^{1/2} \\
&=2\|x_2\|_2 \Big\|(1-f_R(\s))\Big(\sum_{t\in \s} e_{t,t}\ten |d_t^\s(y)^*|^2\Big)(1-f_R(\s))\Big\|_{L_1(\B(\ell_2(\s))\overline{\ten}\M)}^{1/2} \\
&\leq  2\|x_2\|_2\|1-f_R(\s)\|_{{L_{u}(\B(\ell_2(\s))\overline{\ten}\M)}}
\Big\|\sum_{t\in \s} e_{t,t}\ten |d_t^\s(y)^*|^2\Big\|_{L_{r/2}(\B(\ell_2(\s))\overline{\ten}\M)}^{1/2} \\
&\leq 2R^{-2/u}\|x_2\|_2^{1+2/u}\|y\|_{h_r^d(\s)} 
\leq 2C_r R^{-2/u}\|x_2\|_2^{1+2/u}\|y\|_{r},
\end{align*}
where the last inequality comes from the continuous inclusion $L_r(\M) \subset h_r^d(\s)$. 
\qd

Since $\h_p^{1_c}\subset \h_p^d$ contractively, we can deduce from Theorem \ref{BRDboxplus-hp1c} a new proof of Corollary \ref{Hpcboxplus} which allows us to extend it to the case $p=1$. 

\begin{cor}\label{DboxplusH1c}
We have 
$$\H_1^c=\h_1^d \boxplus \h_1^c= \h_1^d + \lh_1^c \quad \mbox{with equivalent norms.}$$
\end{cor}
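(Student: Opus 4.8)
The statement asserts that $\H_1^c = \h_1^d \boxplus \h_1^c = \h_1^d + \lh_1^c$ with equivalent norms, as a limiting case $p=1$ of Corollary \ref{Hpcboxplus}. My strategy is to deduce both equalities from the already-established mixed decomposition Theorem \ref{BRDboxplus-hp1c} (valid for $1\le p<2$, so in particular for $p=1$), together with the contractive inclusion $\h_1^{1_c}\subset \h_1^d$ and Lemma \ref{sumeq}. The key point is that the $\boxplus$-sum is compatible with passing from the smaller diagonal space $\h_1^{1_c}$ to the larger one $\h_1^d$, and that the resulting abstract completion has trivial kernel when injected into $L_1(\M)$.

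\textbf{Proof.}
By Theorem \ref{BRDboxplus-hp1c} applied with $p=1$ we have
$$\H_1^c=\h_1^{1_c}\boxplus \h_1^c \quad \mbox{with equivalent norms},$$
where the $\boxplus$-sum is taken with $A_0=L_2(\M)$, $X=\h_1^{1_c}$, $Y=\lh_1^c$ and $A_1=L_1(\M)$. Recall that by the Corollary following Lemma \ref{densityhp1c} we have a contractive inclusion $\h_1^{1_c}\subset \h_1^d$, and that by Remark \ref{defhpd} (2) the spaces $\h_1^d$ and $\lh_1^c$ both embed into $L_1(\M)$. Since $L_2(\M)$ is dense in $\h_1^d$ and in $\lh_1^c$, the density hypothesis \eqref{density} is satisfied, so the sum $\h_1^d\boxplus \lh_1^c$ is well-defined. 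The contractive inclusion $\h_1^{1_c}\subset \h_1^d$ yields, for $z\in L_2(\M)$,
$$\|z\|_{\h_1^d\boxplus \lh_1^c}\leq \|z\|_{\h_1^{1_c}\boxplus \lh_1^c},$$
because any decomposition $z=x+y$ with $x\in L_2(\M)\cap \h_1^{1_c}$, $y\in L_2(\M)$ satisfies $\|x\|_{\h_1^d}\leq \|x\|_{\h_1^{1_c}}$. Hence the identity map extends to a contraction $\H_1^c=\h_1^{1_c}\boxplus \lh_1^c \to \h_1^d\boxplus \lh_1^c$.

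\textbf{Identifying the two sums.}
It remains to check that $\h_1^d\boxplus \lh_1^c=\h_1^d+\lh_1^c$ with equivalent norms; by Lemma \ref{sumeq} this is equivalent to showing that $\h_1^d\boxplus \lh_1^c$ embeds injectively into $A_1=L_1(\M)$, i.e. that the quotient map $q$ of Lemma \ref{quot1} has trivial kernel. For this I invoke Theorem \ref{Davishpd} (ii), which gives $\H_1^c=\h_1^d+\lh_1^c$ with equivalent norms, the relevant constant remaining bounded. Consider the commuting diagram of contractions
$$\H_1^c=\h_1^{1_c}\boxplus \lh_1^c \longrightarrow \h_1^d\boxplus \lh_1^c \stackrel{q}{\longrightarrow} \h_1^d+\lh_1^c=\H_1^c,$$
whose composition is, on $L_2(\M)$, the identity map of $\H_1^c$. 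Since $L_2(\M)$ is dense in $\H_1^c$, this composition is the identity, so the first arrow is a bounded isomorphism onto $\h_1^d+\lh_1^c$; in particular $q$ is injective, and by Lemma \ref{sumeq} the two sums $\h_1^d\boxplus \lh_1^c$ and $\h_1^d+\lh_1^c$ coincide with equivalent norms. Combining, we obtain $\H_1^c=\h_1^d\boxplus \lh_1^c=\h_1^d+\lh_1^c$ with equivalent norms, as claimed.

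\textbf{The main obstacle.} The delicate point is the triviality of the kernel of $q$, i.e. that the abstract $\boxplus$-completion carries no ``virtual'' elements beyond $\h_1^d+\lh_1^c\subset L_1(\M)$; this is exactly the subtlety flagged in the introduction, and here it is resolved \emph{not} by a stopping-time or hard-analysis argument but by leaning on the already-proven genuine sum decomposition Theorem \ref{Davishpd} (ii), which pins down $\H_1^c$ as an honest subspace of $L_1(\M)$. The only mild care needed is to verify that all the identifications are compatible on the common dense subspace $L_2(\M)$ and that the bounded inclusion $\h_1^{1_c}\subset \h_1^d$ interacts correctly with the infima defining the two $\boxplus$-norms.
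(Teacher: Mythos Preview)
Your argument is essentially the paper's: both factor the identity of $\H_1^c$ through a chain of contractions starting from $\H_1^c=\h_1^{1_c}\boxplus \h_1^c$ (Theorem \ref{BRDboxplus-hp1c}) and ending at $\h_1^d+\lh_1^c=\H_1^c$ (Theorem \ref{Davishpd} (ii)), then conclude by density of $L_2(\M)$. One small slip to fix: Theorem \ref{BRDboxplus-hp1c} yields $\h_1^{1_c}\boxplus \h_1^c$, not $\h_1^{1_c}\boxplus \lh_1^c$ as you write when specifying $Y=\lh_1^c$; since it is not known whether $\h_1^c=\lh_1^c$, you need an extra contraction $\h_1^{1_c}\boxplus \h_1^c\to \h_1^{1_c}\boxplus \lh_1^c$ (or $\h_1^{d}\boxplus \h_1^c\to \h_1^{d}\boxplus \lh_1^c$, the paper's $\varphi_2$) coming from the quotient map $\h_1^c\to \lh_1^c$ --- once this arrow is inserted, your chain and the paper's coincide.
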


\begin{proof}
We consider the following bounded maps
$$\H_1^c=\h_1^{1_c}\boxplus \h_1^c
\stackrel{\varphi_1}{\longrightarrow } \h_1^{d}\boxplus \h_1^c
\stackrel{\varphi_2}{\longrightarrow} \h_1^{d}\boxplus \lh_1^c
\stackrel{\varphi_3}{\longrightarrow} \h_1^{d}+ \lh_1^c=\H_1^c.$$
Here the first equality comes from Theorem \ref{BRDboxplus-hp1c}, the last one from Theorem \ref{Davishpd} (ii), 
the map $\varphi_1$ comes from the contractive inclusion $\h_1^{1_c}\subset \h_1^d$, $\varphi_2$ from the quotient map $\h_1^c\to \lh_1^c$ and 
$\varphi_3$ is the quotient map described in Lemma \ref{quot1}. 
Since this composed map coincides with the identity on $L_2(\M)$, the result follows by density. 
\qd

\re
In fact Corollary \ref{DboxplusH1c} could be proved directly. Indeed, in the very recent paper \cite{RandX}, Randrianantoanina and Xu give a constructive proof of Corollary \ref{RaDdiscr} for $p=1$ in the discrete setting. 
Then Proposition \ref{RaD} can be easily extended to the case $p=1$ with slight modifications, and Corollary \ref{DboxplusH1c} follows directly. 
\mar 

Corollary \ref{DboxplusH1c} leads naturally to the definition

\begin{defi}
We define 
$$\h_1=\h_1^d \boxplus \h_1^c \boxplus \h_1^r.$$
\end{defi}

Eventually, combining Corollary \ref{DboxplusH1c} and Theorem \ref{BRDboxplus-hp1c} with Proposition \ref{boxplusH1}, 
we obtain a continuous analogue of \eqref{H1h1discr}. 

\begin{theorem}\label{H1-h1}
We have 
$$\H_1=\h_1= \h_1^{1_c} \boxplus\h_1^{1_r} \boxplus \h_1^c \boxplus \h_1^r
\quad \mbox{with equivalent norms}.$$
\end{theorem}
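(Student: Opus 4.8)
The goal is to prove the equivalence of norms
\[
\H_1=\h_1= \h_1^{1_c} \boxplus\h_1^{1_r} \boxplus \h_1^c \boxplus \h_1^r.
\]
The plan is to assemble the already-established decompositions into a chain of bounded maps whose composition is the identity on the dense subspace $L_2(\M)$, exactly as in the proof of Corollary \ref{DboxplusH1c}. The two key inputs are Proposition \ref{boxplusH1}, which gives $\H_1=\H_1^c\boxplus \H_1^r$ isometrically, and Theorem \ref{BRDboxplus-hp1c} together with Corollary \ref{DboxplusH1c}, which describe the column part $\H_1^c$ in two ways: as $\h_1^{1_c}\boxplus \h_1^c$ and as $\h_1^d\boxplus \lh_1^c$. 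The row analogues hold by taking adjoints. The identity $\h_1=\h_1^d\boxplus\h_1^c\boxplus\h_1^r$ is the definition, so the content of the statement is really the coincidence of this triple $\boxplus$-sum with both $\H_1$ and with the refined four-term sum involving the regular diagonal spaces $\h_1^{1_c},\h_1^{1_r}$.

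First I would treat $\H_1=\h_1$. Starting from Proposition \ref{boxplusH1}, substitute the column and row Davis decompositions from Corollary \ref{DboxplusH1c}:
\[
\H_1=\H_1^c\boxplus \H_1^r=(\h_1^d\boxplus \lh_1^c)\boxplus(\h_1^d\boxplus \lh_1^r).
\]
I would then argue, using the associativity of the $\boxplus$-construction over the common base $A_0=L_2(\M)$ and the contractive inclusions $\lh_1^c\subset \h_1^c$, $\lh_1^r\subset \h_1^r$ coming from the quotient maps $\h_1^c\to\lh_1^c$, that the resulting space maps boundedly onto $\h_1^d\boxplus \h_1^c\boxplus \h_1^r=\h_1$ while the reverse bound follows from the same decompositions read in the other direction. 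As in Corollary \ref{DboxplusH1c}, one checks that the composite bounded map agrees with the identity on $L_2(\M)$; since $L_2(\M)$ is dense in each $\boxplus$-sum by the density assumption \eqref{density}, the maps are mutually inverse up to equivalent norms, giving $\H_1=\h_1$.

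Next I would refine $\h_1^d$ to $\h_1^{1_c}$ (and $\h_1^r$ to $\h_1^{1_r}$). Here Theorem \ref{BRDboxplus-hp1c} for $p=1$ gives $\H_1^c=\h_1^{1_c}\boxplus \h_1^c$, and combining this with $\H_1^c=\h_1^d\boxplus\lh_1^c$ from Corollary \ref{DboxplusH1c} shows the two column sums coincide isometrically via Lemma \ref{sumeq}, after verifying the quotient map between them is injective. Feeding these column and row refinements back into $\H_1=\H_1^c\boxplus\H_1^r$ from Proposition \ref{boxplusH1} yields
\[
\H_1=(\h_1^{1_c}\boxplus \h_1^c)\boxplus(\h_1^{1_r}\boxplus \h_1^r)
=\h_1^{1_c}\boxplus\h_1^{1_r}\boxplus \h_1^c \boxplus \h_1^r,
\]
again by associativity of $\boxplus$ over $L_2(\M)$ and the density of $L_2(\M)$.

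The main obstacle I anticipate is the careful handling of the $\boxplus$-sums: unlike ordinary sums, $X\boxplus_{A_0}Y$ is a completion of a quotient and its kernel need not be trivial (this is precisely the subtlety emphasized in the introduction and in Lemma \ref{sumeq}). Thus the associativity and regrouping of iterated $\boxplus$-sums must be justified at the level of the dense subspace $A_0=L_2(\M)$ and then passed to completions, rather than taken for granted; and each identification of two $\boxplus$-sums must go through the injectivity-of-quotient-map criterion of Lemma \ref{sumeq}, supplied here by the discrete-to-continuous duality identities $(\H_1^c)^*=\bmo^c$ and the Fefferman--Stein dualities already proven. Once these injectivity checks are in place, every step reduces to composing bounded maps that restrict to the identity on $L_2(\M)$ and invoking density, so no further hard analysis is required.
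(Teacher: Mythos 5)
Your proposal follows essentially the same route as the paper: the paper's proof of Theorem \ref{H1-h1} is exactly the combination of Proposition \ref{boxplusH1}, Theorem \ref{BRDboxplus-hp1c} (with its row analogue), and Corollary \ref{DboxplusH1c}, regrouped using associativity of $\boxplus$ over the dense subspace $L_2(\M)$, which is what you do. Two cosmetic slips only: $\lh_1^c$ is a \emph{quotient} of $\h_1^c$ (via the map $\varphi$), not a subspace, though you correctly invoke the quotient map in the same breath; and the relevant duality is $(\H_1^c)^*=\BMO^c$, not $\bmo^c$ --- neither affects the argument.
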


\re
The decomposition given by Theorem \ref{H1-h1} yields an ``atomic" characterization of the space $\H_1$ which could be useful for applications. 
Indeed, this provides a nice way for proving that an operator $x\in L_2(\M)$ is in $\BMO$. 
It suffices to test $x$ against the ``infinitesimal atoms" given by the definition of $\h_1^{1_c}$, $\h_1^{1_r}$ 
and the discrete atoms of $\h_1^c, \h_1^r$ introduced in \cite{bcpy} (since the $\h_1^c$ and $\h_1^r$-norms are infimum by Lemma \ref{convexityhpc}). 
\mar

\subsection{Davis inequalities for $2\leq p <\infty$}

We now want to extend Theorem \ref{Davishddisc} to the continuous setting for $2\leq p <\infty$. 
As we did in subsection \ref{BGp>2} for proving the noncommutative Burkholder-Gundy inequalities for $2\leq p <\infty$, we will use a dual approach. 
This is why we need, as in this latter case, the version of the Davis decomposition in  Randrianantoanina's style proved in Proposition \ref{RaD}. 
Moreover, we need to discuss the dual space of the diagonal space $\h_p^d$ for $1\leq p<2$. 
That is a very delicate point, and actually we won't describe this dual. 
However, we define a smaller space $\J_p^d$ for $2< p \leq \infty$ which will play the role of the diagonal space in the Davis inequalities. 

\begin{defi}
Let $2<p\leq \infty$. 
We define the space $\J_p^d$ as the space whose closed unit ball is given by the absolute convex set
$$B_{\J_p^d}= \overline{\{x \in L_2(\M) :  \lim_{\s,\U} \|x\|_{h_p^d(\s)}\leq 1,
\|x\|_{2}\leq 1\}}^{\|\cdot\|_2}.$$
Then the norm in $\J_p^d$ is given by 
$$\|x\|_{\J_p^d}=\inf\{ C \geq 0  :  x \in CB_{\J_p^d}\}.$$
\end{defi}

Lemma \ref{banach} ensures that this defines well a Banach space. 
We may naturally introduce the seminorm
$$\|x\|_{\h_p^{d}}=\lim_{\s,\U} \|x\|_{h_p^d(\s)}$$
for $2<p\leq \infty$ and $x\in L_p(\M)$. 
By interpolation between the cases $p=2$ and $p=\infty$, we have
$$ \|x\|_{\h_p^{d}}\leq 2 \|x\|_p.$$
In this situation we also have some monotonicity properties. 

\begin{lemma}\label{monotondiag2}
Let $2<p\leq \infty$, $x\in L_p(\M)$ and $\s \subset \s'$. Then  
$$\|x\|_{h_p^d(\s')}\leq 2 \|x\|_{h_p^d(\s)}.$$
Hence 
$$\frac12 \|x\|_{\h_p^{d}}\leq \inf_\s \|x\|_{h_p^d(\s)} \leq \|x\|_{\h_p^{d}} .$$
\end{lemma}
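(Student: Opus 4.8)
The plan is to first establish the pointwise inequality $\|x\|_{h_p^d(\s')}\le 2\|x\|_{h_p^d(\s)}$ and then pass to the ultralimit. Recall that $\|x\|_{h_p^d(\s)}^p=\sum_{t\in\s}\|d_t^\s(x)\|_p^p$. Since $\s\subset\s'$, each term splits as $d_t^\s(x)=\sum_{s\in I_t}d_s^{\s'}(x)$, where $I_t$ is the collection of $s\in\s'$ with $t^-(\s)\le s^-(\s')<s\le t$, and the blocks $I_t$ partition $\s'$. Grouping the defining $\ell_p^p$-sum of $\|x\|_{h_p^d(\s')}$ according to these blocks, the claim reduces to showing, for each fixed $t\in\s$,
$$\Big(\sum_{s\in I_t}\|d_s^{\s'}(x)\|_p^p\Big)^{1/p}\le 2\,\|d_t^\s(x)\|_p .$$

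First I would observe that, writing $D=d_t^\s(x)$, the tower property of the conditional expectations gives $d_s^{\s'}(x)=d_s^{\s'}(D)$ for every $s\in I_t$ (indeed $\E_s(D)=\E_s(x)-\E_{t^-(\s)}(x)$ and $\E_{s^-(\s')}(D)=\E_{s^-(\s')}(x)-\E_{t^-(\s)}(x)$, using $t^-(\s)\le s^-(\s')<s\le t$). Thus the $d_s^{\s'}(x)$, $s\in I_t$, are exactly the martingale differences of the single operator $D$ relative to the filtration induced by $\s'$ on $[t^-(\s),t]$, and the inequality becomes the diagonal inclusion $\|D\|_{h_p^d(I_t)}\le 2\|D\|_p$ for $2\le p\le\infty$. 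I would prove this by interpolating the fixed linear map $T\colon D\mapsto(d_s^{\s'}(D))_{s\in I_t}$: by orthogonality of martingale differences and contractivity of conditional expectations, $T$ has norm $\le1$ as a map $L_2(\M)\to\ell_2(I_t;L_2(\M))$, while the contractivity of conditional expectations on $\M$ gives $\|d_s^{\s'}(D)\|_\infty\le2\|D\|_\infty$, so $T$ has norm $\le2$ as a map $\M\to\ell_\infty(I_t;\M)$. Since $I_t$ is finite, $[\ell_\infty(I_t;\M),\ell_2(I_t;L_2(\M))]_{2/p}=\ell_p(I_t;L_p(\M))$ and $[\M,L_2(\M)]_{2/p}=L_p(\M)$, so complex interpolation yields $\|T\|_{L_p\to\ell_p(L_p)}\le2^{1-2/p}\le2$, which is exactly the desired bound with a constant independent of $\s'$ and of $t$. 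Summing these estimates over $t\in\s$ gives $\|x\|_{h_p^d(\s')}\le2\|x\|_{h_p^d(\s)}$.

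For the ``Hence'' part, the pointwise inequality says that the net $(\|x\|_{h_p^d(\s)})_\s$ is increasing in $\s$ up to the factor $2$. Using that $U_{\s_0}=\{\s':\s_0\subset\s'\}\in\U$ for every $\s_0$, I would argue as follows. On the one hand, for any $\s_0$ every $\s\in U_{\s_0}$ satisfies $\|x\|_{h_p^d(\s)}\le2\|x\|_{h_p^d(\s_0)}$, so $\|x\|_{\h_p^d}=\lim_{\s,\U}\|x\|_{h_p^d(\s)}\le2\|x\|_{h_p^d(\s_0)}$; taking the infimum over $\s_0$ gives $\|x\|_{\h_p^d}\le2\inf_\s\|x\|_{h_p^d(\s)}$. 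On the other hand, for each $\eps>0$ the set where $\|x\|_{h_p^d(\s)}<\|x\|_{\h_p^d}+\eps$ lies in $\U$, hence is nonempty, so $\inf_\s\|x\|_{h_p^d(\s)}\le\|x\|_{\h_p^d}$. Together these give the asserted two-sided estimate. The only points requiring care are the identity $d_s^{\s'}(x)=d_s^{\s'}(d_t^\s(x))$ that localizes the problem, and checking that the interpolation constant is genuinely independent of the cardinalities of the partitions; both are straightforward once the two endpoint bounds are in place, so I do not expect a serious obstacle here.
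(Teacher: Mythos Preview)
Your proof is correct and is essentially the approach the paper has in mind: the paper simply says ``The proof is similar to that of Lemma~\ref{monotondiag}, hence we omit the details,'' and Lemma~\ref{monotondiag}(i) proves the dual inequality $\|d_t^\s(x)\|_p\le 2\big(\sum_{s\in I_t}\|d_s^{\s'}(x)\|_p^p\big)^{1/p}$ by interpolating the difference map between $p=1$ and $p=2$; your argument is exactly the corresponding interpolation between $p=2$ and $p=\infty$ applied block-wise and summed over $t\in\s$. The localization $d_s^{\s'}(x)=d_s^{\s'}(d_t^\s(x))$ and the ultrafilter passage for the ``Hence'' part are likewise the intended steps.
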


\begin{proof}
The proof is similar to that of Lemma \ref{monotondiag}, hence we omit the details. 
\qd

As a direct consequence, we see that the seminorm $\|\cdot\|_{\h_p^{d}}$ and the space $\J_p^d$ 
do not depend on the choice of the ultrafilter $\U$, up to a constant. 
We can now state our continuous version of the Davis inequalities for $2\leq p <\infty$. 	

\begin{theorem}\label{Dp>2}
Let $2\leq p<\infty$. Then
$$\H_p^c=\J_{p}^d\cap \h_p^c \quad \mbox{with equivalent norms}.$$
Moreover, the constant remains bounded as $p\to \infty$.
\end{theorem}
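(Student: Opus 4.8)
The plan is to prove the two inclusions separately, reducing everything to a norm equivalence on the dense subspace $L_2(\M)$ (recall that for $2\le p<\infty$ all three spaces $\H_p^c$, $\h_p^c$, $\J_p^d$ sit inside $L_2(\M)$, by Proposition \ref{injHpc2}, Lemma \ref{injhpc2} and the very definition of $\J_p^d$). The inclusion $\H_p^c\subset \J_p^d\cap\h_p^c$ is the soft one: for $y\in L_2(\M)$ the discrete Davis inequality (Theorem \ref{Davishddisc}, in the sharp form coming from \eqref{decLpcMO}) gives, for every finite partition $\s$, both $\|y\|_{h_p^c(\s)}\le C\|y\|_{H_p^c(\s)}$ and $\|y\|_{h_p^d(\s)}\le C\|y\|_{H_p^c(\s)}$; letting $\s\to\U$ yields $\|y\|_{\h_p^c}\le C\|y\|_{\H_p^c}$ and $\lim_{\s,\U}\|y\|_{h_p^d(\s)}\le C\|y\|_{\H_p^c}$, and since moreover $\|y\|_2\le\|y\|_{\H_p^c}$ by \eqref{estimateH_pc}, the element $y/(C\|y\|_{\H_p^c})$ lies in the generating set of $B_{\J_p^d}$, so $\|y\|_{\J_p^d}\le C\|y\|_{\H_p^c}$.

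For the reverse inclusion $\J_p^d\cap\h_p^c\subset\H_p^c$ I would argue by duality, exploiting the reflexivity of $\H_{p'}^c$ and the identity $(\H_{p'}^c)^*=\H_p^c$ (Corollary \ref{dual-intpolHpc}(i)). Fix $y\in\J_p^d\cap\h_p^c\subset L_2(\M)$; it suffices to bound $|\tau(y^*x)|$ uniformly over $x\in L_2(\M)$ with $\|x\|_{\H_{p'}^c}\le 1$, since $L_2(\M)$ is dense in $\H_{p'}^c$ and the bracket is $\tau(y^*x)$ whenever $x\in L_{\max(p',2)}(\M)=L_2(\M)$ (Remark \ref{dualitybracket}). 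The crucial input is that, because $1<p'\le 2$, the continuous Davis decomposition can be performed \emph{at the $L_2$-level}: by Corollary \ref{Hpcboxplus} for $1<p'<2$ and Corollary \ref{DboxplusH1c} for $p'=1$ one has $\H_{p'}^c=\h_{p'}^d\boxplus\h_{p'}^c=\h_{p'}^d+\h_{p'}^c$, so that $x$ splits as $x=a+b$ with $a\in L_2(\M)\cap\h_{p'}^d$, $b\in L_2(\M)\cap\h_{p'}^c$ and $\|a\|_{\h_{p'}^d}+\|b\|_{\h_{p'}^c}\le C\|x\|_{\H_{p'}^c}$, the constant staying bounded as $p'\to 1$.

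It then remains to estimate $\tau(y^*a)$ and $\tau(y^*b)$. For the column term, $(\h_{p'}^c)^*=L_p^c\mo=\h_p^c$ (Theorem \ref{th:dualityhp} together with Corollary \ref{Lpmo}) gives $|\tau(y^*b)|\le C\|y\|_{\h_p^c}\|b\|_{\h_{p'}^c}$, the bracket being $\tau(y^*b)$ by the conditioned analogue of Remark \ref{dualitybracket} since $b\in L_2(\M)$. For the diagonal term I would use the concrete definition of $\J_p^d$ rather than any duality formula for $(\h_{p'}^d)^*$, which is not available: writing $y$ as an $L_2$-limit $y=\lim_n y_n$ with each $y_n$ in the generating set of $\|y\|_{\J_p^d}B_{\J_p^d}$, the elementary discrete duality $|\sum_{t\in\s}\tau(d_t^\s(y_n)^*d_t^\s(a))|\le\|y_n\|_{h_p^d(\s)}\|a\|_{h_{p'}^d(\s)}$, combined with the orthogonality identity $\tau(y_n^*a)=\sum_{t\in\s}\tau(d_t^\s(y_n)^*d_t^\s(a))$ and the multiplicativity of the ultralimit, yields $|\tau(y_n^*a)|\le\|y\|_{\J_p^d}\|a\|_{\h_{p'}^d}$; passing to the limit in $n$ (legitimate because $a\in L_2(\M)$) gives $|\tau(y^*a)|\le\|y\|_{\J_p^d}\|a\|_{\h_{p'}^d}$. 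Summing the two estimates and optimizing over the decomposition produces $|\tau(y^*x)|\le C\max(\|y\|_{\J_p^d},\|y\|_{\h_p^c})$, hence $y\in(\H_{p'}^c)^*=\H_p^c$ with the desired control.

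The main obstacle, and the reason $\J_p^d$ is introduced in place of the naive $\ell_p$-diagonal space, is precisely that the dual of $\h_{p'}^d$ cannot be identified; this is circumvented by the two features highlighted above — the $L_2$-closure definition of $\J_p^d$ and the possibility, guaranteed by the coincidence $\boxplus=+$ (Lemma \ref{sumeq}), of splitting $x$ inside $L_2(\M)$ — which together let the discrete diagonal duality survive both limit procedures. Finally, to obtain that the equivalence constant stays bounded as $p\to\infty$, I would avoid the degenerate constant in $(\H_{p'}^c)^*=\H_p^c$ and instead route the last identification through the Fefferman--Stein description $(\H_{p'}^c)^*=L_p^c\MO=\H_p^c$ (Theorem \ref{fsduality} and Corollary \ref{LpcMOHpc}), whose constant $\lambda_{p'}$ remains bounded as $p'\to 1$; every other ingredient — the $L_2$-level Davis decomposition and the conditioned Fefferman--Stein duality, with constant $\nu_{p'}$ — is likewise uniformly bounded as $p'\to 1$, which yields the claimed uniformity.
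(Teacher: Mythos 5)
Your argument is correct and follows essentially the same route as the paper's direct proof: the easy inclusion from the discrete Davis inequalities plus ultralimits, and the reverse inclusion by duality against $\H_{p'}^c$, using the $L_2$-level Davis decomposition (Proposition \ref{RaD}/Corollary \ref{Hpcboxplus}) so that both pieces of the test element stay in $L_2(\M)$, and handling the diagonal pairing through the concrete $L_2$-closure definition of $\J_p^d$ together with the elementary discrete $h_p^d$--$h_{p'}^d$ duality. Your extra care in routing the final identification through the Fefferman--Stein duality to keep the constant bounded as $p\to\infty$ is a sensible refinement of a point the paper treats more briefly.
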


\begin{proof}
We clearly have a continuous inclusion $\H_p^c\subset \h_p^c$ and a contractive inclusion $\H_p^c\subset \J_p^d$. 
Indeed, let $x\in \M$ be such that $\|x\|_{\H_p^c}\leq 1$. 
Then 
$$\lim_{\s,\U} \|x\|_{\h_p^d(\s)} \leq \lim_{\s,\U} \|x\|_{\H_p^c(\s)} \leq 1 
\quad \mbox{and} \quad 
\|x\|_2\leq \|x\|_{\H_p^c}\leq 1.$$
This means that $x\in B_{\J_p^d}$. 
Conversely, let $x\in \J_{p}^d\cap \h_p^c$ be of norm $\leq 1$. 
We can write
$$x=L_2\mbox{-} \lim_n x_n = \h_p^c\mbox{-} \lim_n x'_n,$$
where the sequence $(x_n)_n$ satisfies $\|x_n\|_{\h_p^d}\leq 1, \|x_n\|_2\leq 1$ for all $n$, 
and  $(x'_n)_n$ is a sequence in $L_p(\M)$. 
Recall that by Corollary \ref{dual-intpolHpc} and Lemma \ref{dualHpc-direct} we have 
$$(\H_p^c)^*=\H_{p'}^c=\{x\in L_{p'}(\M) : \|x\|_{\H_p^c}<\infty\}$$
with equivalent norms. 
Hence by the density of $L_2(\M)$ in $\H_{p'}^c$ it suffices to estimate 
$|\tau(x^*y)|$ for $y\in L_2(\M), \|y\|_{\H_{p'}^c}\leq 1$. 
By Proposition \ref{RaD} we may decompose $y=a+b$ with $a,b \in L_2(\M)$ and 
$$\|a\|_{\h_p^d}+\|b\|_{\h_p^c}\leq C(p).$$
Then
$$|\tau(x^*y)|
\leq |\tau(x^*a)|+|\tau(x^*b)| 
 \leq \lim_n |\tau(x_n^*a)|+\lim_n|\tau((x'_n)^*b)| .$$
For each $\s$ we have
$$|\tau(x_n^*a)| \leq \|x_n\|_{h_p^d(\s)}\|a\|_{h_{p'}^d(\s)}
\quad \mbox{and} \quad
|\tau((x'_n)^*b)|\leq \|x'_n\|_{h_p^c(\s)}\|b\|_{h_{p'}^c(\s)}.$$
Taking the limit over $\s$ yields 
$$|\tau(x_n^*a)| \leq \|x_n\|_{\h_p^d}\|a\|_{\h_{p'}^d}\leq \|a\|_{\h_{p'}^d}
\quad \mbox{and} \quad
|\tau((x'_n)^*b)|\leq \|x'_n\|_{\h_p^c}\|b\|_{\h_{p'}^c}. $$
Hence we get
\begin{align*}
|\tau(x^*y)|
&\leq \|a\|_{\h_{p'}^d}+ \lim_n \|x'_n\|_{\h_p^c}\|b\|_{\h_{p'}^c}
=\|a\|_{\h_{p'}^d}+ \|x\|_{\h_p^c}\|b\|_{\h_{p'}^c}\\
&\leq \|a\|_{\h_p^d}+\|b\|_{\h_p^c}\leq C(p).
\end{align*}
Moreover the constant $C(p)$ remains bounded as $p\to 1$ thanks to Corollary \ref{Hpcboxplus}. 
\qd

We presented above a direct proof of Theorem \ref{Dp>2}, but this does not explain where does the space $\J_p^d$ come from. 
This is why we detail below the whole argument, which highlights the construction of the space $\J_p^d$. 
Moreover, we will use this construction in the sequel. 


The delicate point here is to describe the dual space of the diagonal space $\h_p^d$ for $1<p<2$. 
Since we are only interested in the dual of the sum $\h_p^d\boxplus \h_p^c$, 
the key trick is to replace $\h_p^d$ in this sum by a nicer space, without changing the $\boxplus$-sum. 
We first observe that since $L_2(\M)$ is dense in $\h_p^c$, we have

\begin{lemma}\label{obshpc}
Let $1\leq p <2$. Then 
\begin{enumerate}
\item[(i)] $\h_p^c=L_2(\M)\boxplus \h_p^c$ for $1<p<2$,
\item[(ii)] $\lh_1^c=L_2(\M)\boxplus \lh_1^c$ 
\end{enumerate}
isometrically.
\end{lemma}

\begin{proof}
For $1<p<2$, we consider
$$A_0=L_2(\M),  X=L_2(\M), Y=\h_p^c  \mbox{ and }  A_1=L_p(\M).$$
By the density of $L_2(\M)$ in $\h_p^c$ it suffices to see that $\|x\|_{\h_p^c}=\| x\|_{L_2(\M)\boxplus \h_p^c }$ 
for all $x\in L_2(\M)$. 
Let $x\in L_2(\M)$. It is clear that $\| x\|_{L_2(\M)\boxplus \h_p^c }\leq \|x\|_{\h_p^c}$. 
Conversely, we assume $\| x\|_{L_2(\M)\boxplus \h_p^c }<1$. Then there exist $a,b \in L_2(\M)$ such that 
$$x=a+b \quad \mbox{and} \quad \|a\|_2+\|b\|_{\h_p^c}<1.$$
By the H\"{o}lder inequality we get
$$\|x\|_{\h_p^c}\leq \|a\|_{\h_p^c}+\|b\|_{\h_p^c} \leq \|a\|_{2}+\|b\|_{\h_p^c} <1.$$ 
Since $L_2(\M)$ is dense in $\lh_1^c$ and $\lh_1^c$ embeds into $L_1(\M)$, the proof for $p=1$ is similar. 
\qd

The idea is to add the space $L_2(\M)$ to $\h_p^d$ to obtain a new larger diagonal space, in which $L_2(\M)$ will be dense, 
and which will preserve the $\boxplus$-sum with $\h_p^c$. 
Hence we introduce the following space, which will play the role of $\h_p^d$ in the sequel. 

\begin{defi}
Let $1\leq p <2$. We define 
$$\K_p^d=\h_p^d\boxplus L_2(\M),$$
i.e., $\K_p^d$ is the completion of $L_2(\M)$ with respect to the norm
$$\|x\|_{\K_p^d}=\inf_{x=a+b, a \in L_2(\M)\cap \h_p^d, b \in L_2(\M)} \|a\|_{\h_p^d}+\|b\|_2.$$
\end{defi}

Note that in this application we consider
$$A_0=L_2(\M),  X=\h_p^d, Y=L_2(\M)  (\mbox{and }  A_1=L_p(\M)).$$ 
By the definition of $\h_p^d$, these spaces satisfy the density assumption \eqref{density} (moreover $X$ and $Y$ embed continuously into $A_1$). 
By working a little bit more we can prove that the space $\K_p^d$ embeds into $L_p(\M)$. 
The discrete analogue of $\K_p^d$ is the space $K_p^d(\s)=h_p^d(\s)\boxplus L_2(\M)$, defined as the completion of $L_2(\M)$ with respect to the norm
$$\|x\|_{K_p^d(\s)}=\inf_{x=a+b, a \in L_2(\M), b \in L_2(\M)} \|a\|_{h_p^d(\s)}+\|b\|_2.$$
Observe that since we consider finite partitions, the norm $\|\cdot\|_{h_p^d(\s)}$ is equivalent to the norm $\|\cdot\|_p$ for $1\leq p <2$. 
Hence for a finite partition $\s$, $K_p^d(\s)$ is $L_p(\M)$ equipped with the norm $\|\cdot\|_{K_p^d(\s)}$. 
 
\begin{lemma}\label{eqKpd}
Let $1\leq p <2$ and $x\in L_2(\M)$. Then 
$$ \frac12 \|x\|_{\K_p^d}\leq  \lim_{\s,\U}\|x\|_{K_p^d(\s)}\leq  \|x\|_{\K_p^d}.$$
Moreover the map $i_\U:x\in L_2(\M) \mapsto (x)^\bullet $ extends to a contractive injective map
$$i_\U:K_p^d\to \prodd_\U K_p^d(\s).$$
\end{lemma}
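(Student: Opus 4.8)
The plan is to prove the two-sided norm comparison first, since both the contractivity and the injectivity of $i_\U$ will follow from it; I work with $x\in L_2(\M)$ and extend by density, using that $L_2(\M)$ is dense in $\K_p^d$. The inequality $\lim_{\s,\U}\|x\|_{K_p^d(\s)}\leq\|x\|_{\K_p^d}$ is immediate: any decomposition $x=a+b$ with $a\in L_2(\M)\cap\h_p^d$ and $b\in L_2(\M)$ is also admissible for each discrete norm $K_p^d(\s)$, whence $\|x\|_{K_p^d(\s)}\leq\|a\|_{h_p^d(\s)}+\|b\|_2$; passing to the limit along $\U$, recalling $\|a\|_{\h_p^d}=\lim_{\s,\U}\|a\|_{h_p^d(\s)}$, and taking the infimum over decompositions gives the bound. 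Since $\|i_\U(x)\|_{\prodd_\U K_p^d(\s)}=\lim_{\s,\U}\|x\|_{K_p^d(\s)}$, this already shows that $i_\U$ is contractive on $(L_2(\M),\|\cdot\|_{\K_p^d})$ and hence extends by density to a contraction $i_\U:\K_p^d\to\prodd_\U K_p^d(\s)$.

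The main work is the reverse bound $\tfrac12\|x\|_{\K_p^d}\leq\lim_{\s,\U}\|x\|_{K_p^d(\s)}$. Fix $\eps>0$ and, for each $\s$, pick $x=a_\s+b_\s$ with $a_\s,b_\s\in L_2(\M)$ and $\|a_\s\|_{h_p^d(\s)}+\|b_\s\|_2\leq\|x\|_{K_p^d(\s)}+\eps$. The trivial decomposition gives $\|x\|_{K_p^d(\s)}\leq\|x\|_2$, so $\|b_\s\|_2$ and $\|a_\s\|_2\leq\|x\|_2+\|b_\s\|_2$ are bounded uniformly in $\s$; by reflexivity of $L_2(\M)$ I may form $a=\w L_2\mbox{-}\lim_{\s,\U}a_\s$ and $b=\w L_2\mbox{-}\lim_{\s,\U}b_\s$, and linearity of the weak limit yields $x=a+b$. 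Weak lower semicontinuity of the $L_2$-norm gives $\|b\|_2\leq\lim_{\s,\U}\|b_\s\|_2$, while the crucial ingredient is Lemma \ref{limdiag} (ii), which controls the diagonal norm of a weak $L_2$-limit: $\|a\|_{\h_p^d}\leq 2\lim_{\s,\U}\|a_\s\|_{h_p^d(\s)}$. In particular $a\in L_2(\M)$ has finite $\h_p^d$-norm, so $a\in L_2(\M)\cap\tilde{\h}_p^d$ is an admissible diagonal component and $x=a+b$ is an admissible decomposition for $\|\cdot\|_{\K_p^d}$; therefore
\begin{align*}
\|x\|_{\K_p^d}\leq\|a\|_{\h_p^d}+\|b\|_2
&\leq 2\lim_{\s,\U}\|a_\s\|_{h_p^d(\s)}+\lim_{\s,\U}\|b_\s\|_2\\
&\leq 2\lim_{\s,\U}\big(\|a_\s\|_{h_p^d(\s)}+\|b_\s\|_2\big)\leq 2\big(\lim_{\s,\U}\|x\|_{K_p^d(\s)}+\eps\big),
\end{align*}
and letting $\eps\to0$ finishes the estimate.

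Injectivity of $i_\U$ is then automatic: for $x\in L_2(\M)$ the reverse bound reads $\tfrac12\|x\|_{\K_p^d}\leq\|i_\U(x)\|_{\prodd_\U K_p^d(\s)}$, and both sides are continuous for $\|\cdot\|_{\K_p^d}$ (the right-hand side because $i_\U$ is the continuous extension constructed above), so the inequality persists on all of $\K_p^d$; hence $i_\U(x)=0$ forces $x=0$. I expect the one genuine difficulty to lie in the reverse bound, precisely in confirming that the weak limit $a$ is a bona fide diagonal component of finite $\h_p^d$-norm rather than merely an abstract element of the completion --- this is exactly where the description $\h_p^d=\overline{L_2(\M)\cap\tilde{\h}_p^d}$ and the norm-control of Lemma \ref{limdiag} (ii) intervene, and it is this factor $2$ that produces the stated constant $\tfrac12$. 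A secondary point to verify carefully is the uniform $L_2$-boundedness of $(a_\s)_\s$ and $(b_\s)_\s$ needed for the weak limits along $\U$ to exist, which rests on the finiteness of $\M$ and the elementary bound $\|x\|_{K_p^d(\s)}\leq\|x\|_2$.
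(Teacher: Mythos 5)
Your proof is correct and takes essentially the same route as the paper: the easy inequality by testing an arbitrary admissible decomposition against each discrete norm, and the reverse bound by extracting weak $L_2$-limits along $\U$ from near-optimal decompositions $x=a_\s+b_\s$ (uniformly $L_2$-bounded since $\|x\|_{K_p^d(\s)}\leq\|x\|_2$) and invoking Lemma \ref{limdiag} (ii), which is exactly where the factor $2$ comes from. The only difference is cosmetic: the paper normalizes $\lim_{\s,\U}\|x\|_{K_p^d(\s)}<1$ instead of carrying an $\eps$, and you additionally spell out the contractivity and injectivity of $i_\U$, which the paper's proof leaves implicit.
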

 
\begin{proof}
Let $x\in L_2(\M)$. 
It is obvious that 
$$\lim_{\s,\U}\|x\|_{K_p^d(\s)}\leq  \|x\|_{\K_p^d}.$$
Conversely, we assume $\lim_{\s,\U}\|x\|_{K_p^d(\s)}<1$. We may suppose that $\|x\|_{K_p^d(\s)}<1$ for all $\s$. 
Then for each $\s$ there exist $a(\s),b(\s) \in L_2(\M)$ such that 
$$x=a(\s)+b(\s) \quad \mbox{and} \quad \|a(\s)\|_{h_p^d(\s)}+\|b(\s)\|_2<1.$$
Note that
$$\|a(\s)\|_2=\|x-b(\s)\|_2 \leq \|x\|_2+1.$$
Hence the families $(a(\s))_\s$ and $(b(\s))_\s$ are uniformly bounded in $L_2(\M)$, and we can consider 
$$a=\w L_2 \mbox{-}\lim_{\s,\U} a(\s) \quad \mbox{and} \quad b=\w L_2 \mbox{-}\lim_{\s,\U} b(\s).$$
Then we may write
$$x=a+b,$$
where $a \in L_2(\M)\cap \h_p^d, b \in L_2(\M)$ satisfy by Lemma \ref{limdiag} (ii)
$$\|a\|_{\h_p^d}+\|b\|_{2}
\leq 2 \lim_{\s,\U} \big( \|a(\s)\|_{h_p^d(\s)}+\|b(\s)\|_{2}\big)\leq 2.$$
We obtain
$$ \|x\|_{\K_p^d}\leq  2\lim_{\s,\U}\|x\|_{K_p^d(\s)}.$$
\qd

Note that by Lemma \ref{monotondiag} we have
$$\|x\|_{K_p^d(\s)}\leq 2\|x\|_{K_p^d(\s')}$$
for $\s \subset \s'$ and $x\in L_2(\M)$. Hence 
$$\lim_{\s,\U}\|x\|_{K_p^d(\s)}\leq \sup_\s \|x\|_{K_p^d(\s)} \leq 2\lim_{\s,\U}\|x\|_{K_p^d(\s)}.$$
This means that the norm $\|\cdot\|_{\K_p^d}$ is equivalent to $\sup_\s \|\cdot\|_{K_p^d(\s)}$. 
Thus adapting the proof of Proposition \ref{injHpc} and using Lemma \ref{eqKpd} we can show that

\begin{lemma}\label{injKpd}
Let $1\leq p<2$. Then 
\begin{enumerate}
\item[(i)] $\{x\in L_p(\M) : \|x\|_{\K_p^d}<\infty \}$ is complete.
\item[(ii)] $\K_p^d$ embeds injectively into $L_p(\M)$. 
\end{enumerate}
\end{lemma}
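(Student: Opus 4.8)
The plan is to transpose, almost verbatim, the argument of Lemma~\ref{injHpc} together with its use in Proposition~\ref{injHpc2}, replacing $\H_p^c$ by $\K_p^d$ and $H_p^c(\s)$ by $K_p^d(\s)$, and invoking throughout the equivalence $\|\cdot\|_{\K_p^d}\simeq \sup_\s \|\cdot\|_{K_p^d(\s)}$ recorded just before the statement, together with Lemma~\ref{eqKpd} and the monotonicity Lemma~\ref{monotondiag}.

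For (i), I would first record a lower bound $\|x\|_p\leq C_p \|x\|_{\K_p^d}$ holding on the whole set, with a constant independent of the partition. Given a decomposition $x=a+b$ with $a\in L_2(\M)\cap\h_p^d$ and $b\in L_2(\M)$, the finiteness of $\M$ and the noncommutative Burkholder-Rosenthal inequalities (Theorem~\ref{Bdiscr}) give, for each $\s$, $\|x\|_p\leq \|a\|_p+\|b\|_p\leq \eta_p\|a\|_{h_p^d(\s)}+\|b\|_2$, since $\|a\|_{h_p(\s)}\leq \|a\|_{h_p^d(\s)}$ and $\|b\|_p\leq \|b\|_2$; taking the infimum over such decompositions yields $\|x\|_p\leq \max(\eta_p,1)\|x\|_{K_p^d(\s)}$, and as $\|x\|_{K_p^d(\s)}\leq \sup_\s\|x\|_{K_p^d(\s)}\simeq \|x\|_{\K_p^d}$ the claim follows, with $\eta_p$ bounded as $p\to 1$. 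Now let $(x_n)_n$ be a Cauchy sequence in $\{x\in L_p(\M):\|x\|_{\K_p^d}<\infty\}$ for $\|\cdot\|_{\K_p^d}$. By the bound just obtained it is Cauchy in $L_p(\M)$, hence converges in $L_p$ to some $x\in L_p(\M)$. For a fixed finite partition $\s$ the norm $\|\cdot\|_{K_p^d(\s)}$ is equivalent to $\|\cdot\|_p$, so $x_n\to x$ in $K_p^d(\s)$.

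Using Lemma~\ref{monotondiag} to bound $\|y\|_{K_p^d(\s)}\leq C\|y\|_{\K_p^d}$ uniformly in $\s$, the Cauchy property provides, for $n$ large enough, $\|x-x_n\|_{K_p^d(\s)}=\lim_m\|x_m-x_n\|_{K_p^d(\s)}\leq C\lim_m\|x_m-x_n\|_{\K_p^d}<\varepsilon$, with a threshold $n_0$ independent of $\s$. Taking the supremum (equivalently the limit along $\U$) over $\s$ then gives $\|x-x_n\|_{\K_p^d}\leq C'\varepsilon$, so $x_n\to x$ in $\K_p^d$-norm and $\|x\|_{\K_p^d}<\infty$; this establishes the completeness asserted in (i). For (ii), with (i) in hand the set $\{x\in L_p(\M):\|x\|_{\K_p^d}<\infty\}$ is a Banach space contained in $L_p(\M)$, into which $L_2(\M)$ embeds isometrically for the $\K_p^d$-norms. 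Since $\K_p^d$ is by definition the completion of $L_2(\M)$ under $\|\cdot\|_{\K_p^d}$, it is identified isometrically with the closure of $L_2(\M)$ in this Banach space, which is a subspace of $L_p(\M)$; hence the natural map extending the identity on $L_2(\M)$ sends $\K_p^d$ injectively into $L_p(\M)$. Alternatively this injectivity can be read off directly from the injectivity of $i_\U:\K_p^d\to\prodd_\U K_p^d(\s)$ provided by Lemma~\ref{eqKpd}.

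The step I expect to be the main obstacle is purely quantitative: one must guarantee that all the equivalence constants relating $\|\cdot\|_{\K_p^d}$, $\sup_\s\|\cdot\|_{K_p^d(\s)}$, $\lim_{\s,\U}\|\cdot\|_{K_p^d(\s)}$ and $\|\cdot\|_p$ are independent of the partition $\s$, so that a single threshold $n_0$ works simultaneously for every $\s$ when passing to the supremum over partitions. This is exactly what Lemma~\ref{eqKpd} and the partition-uniform monotonicity of Lemma~\ref{monotondiag} are designed to supply, and it is what makes the proof scheme of Lemma~\ref{injHpc} transfer; no analytic input beyond Burkholder-Rosenthal is needed.
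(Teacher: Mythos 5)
Your proof is correct and coincides with the paper's own argument: the paper establishes this lemma precisely by adapting the completeness proof of Lemma~\ref{injHpc} and the density/embedding argument of Proposition~\ref{injHpc2}, with Lemma~\ref{eqKpd} and the partition-uniform monotonicity of the $K_p^d(\s)$-norms supplying the constants independent of $\s$, exactly as you do. Two cosmetic caveats: the identification of $\K_p^d$ with the closure of $L_2(\M)$ in $\{x\in L_p(\M):\|x\|_{\K_p^d}<\infty\}$ is isomorphic rather than isometric (the infimum and $\sup_\s$ descriptions of the norm differ by a factor $2$), and your closing alternative is weaker than stated, since injectivity of $i_\U$ into $\prodd_\U K_p^d(\s)$ does not by itself give injectivity of the natural map into $L_p(\M)$ --- but your main argument never uses it.
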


Observe that by Lemma \ref{sumeq}, we deduce that in fact $\K_p^d=\h_p^d+L_2(\M)$ isometrically. 
We can now consider 
$$A_0=L_2(\M),  X=\K_p^d, Y=\h_p^c  (\mbox{and }  A_1=L_p(\M)).$$ 
The associativity of $\boxplus$ combined with Lemma \ref{obshpc} yield that 
$\K_p^d$ preserves the $\boxplus$-sum with $\h_p^c$ in the following sense. 

\begin{lemma}\label{boxplusKpd}
Let $1\leq p <2$. Then 
\begin{enumerate}
\item[(i)] $\h_p^d\boxplus \h_p^c=\K_p^d \boxplus \h_p^c $ for $1<p<2$,
\item[(ii)] $\h_1^d\boxplus \lh_1^c=\K_1^d \boxplus \lh_1^c $
\end{enumerate}
isometrically.
\end{lemma}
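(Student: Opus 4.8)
The plan is to prove both identities by comparing the two quotient norms directly on the common dense core $L_2(\M)$ and then passing to completions. Since $\K_p^d$, $\h_p^d$, $\h_p^c$ (respectively $\lh_1^c$) all embed into $L_p(\M)$ by Lemma \ref{injKpd}, Remark \ref{defhpd} and Proposition \ref{injhpc}, every decomposition below lives in the single ambient space $L_p(\M)$, so the manipulations are meaningful. First I would unfold the definitions of the $\boxplus$-sums, all taken with respect to the core $A_0=L_2(\M)$. Using that $L_2(\M)\subset\K_p^d$ contractively (so $A_0\cap\K_p^d=L_2(\M)$), one has for $z\in L_2(\M)$
\[
\|z\|_{\K_p^d\boxplus\h_p^c}=\inf\{\|a\|_{\K_p^d}+\|b\|_{\h_p^c} : z=a+b,\ a,b\in L_2(\M)\},
\]
while
\[
\|z\|_{\h_p^d\boxplus\h_p^c}=\inf\{\|u\|_{\h_p^d}+\|v\|_{\h_p^c} : z=u+v,\ u\in L_2(\M)\cap\h_p^d,\ v\in L_2(\M)\}.
\]
Inserting $\|a\|_{\K_p^d}=\inf\{\|a_1\|_{\h_p^d}+\|a_2\|_2\}$ over $a=a_1+a_2$ with $a_1\in L_2(\M)\cap\h_p^d$, $a_2\in L_2(\M)$, and merging the two nested infima, I reduce $\|z\|_{\K_p^d\boxplus\h_p^c}$ to a single infimum over triple decompositions $z=a_1+a_2+b$ of the quantity $\|a_1\|_{\h_p^d}+\|a_2\|_2+\|b\|_{\h_p^c}$.

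The inequality $\|z\|_{\K_p^d\boxplus\h_p^c}\le\|z\|_{\h_p^d\boxplus\h_p^c}$ is immediate, since a decomposition $z=u+v$ yields the triple decomposition $z=u+0+v$. For the reverse inequality I would absorb the purely $L_2$ middle term into the column part, which is precisely the instance of associativity of $\boxplus$ that is needed. Given a triple decomposition $z=a_1+a_2+b$, set $u=a_1\in L_2(\M)\cap\h_p^d$ and $v=a_2+b\in L_2(\M)$. Using the triangle inequality in $\h_p^c$ together with the contractive inclusion $L_2(\M)\hookrightarrow\h_p^c$ (the estimate $\|a_2\|_{\h_p^c}\le\|a_2\|_2$ from \eqref{estimatehath_pc}, reflecting the finiteness of $\M$ as in Lemma \ref{obshpc}), I obtain $\|v\|_{\h_p^c}\le\|a_2\|_2+\|b\|_{\h_p^c}$, hence $\|u\|_{\h_p^d}+\|v\|_{\h_p^c}\le\|a_1\|_{\h_p^d}+\|a_2\|_2+\|b\|_{\h_p^c}$. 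Taking the infimum gives $\|z\|_{\h_p^d\boxplus\h_p^c}\le\|z\|_{\K_p^d\boxplus\h_p^c}$, so the two norms agree on the dense core $L_2(\M)$ and the completions coincide isometrically, proving (i).

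For (ii) I would run the identical argument with $\lh_1^c$ replacing $\h_p^c$: the only input that changes is the contractive inclusion $L_2(\M)\hookrightarrow\lh_1^c$, valid since $\|a_2\|_{\lh_1^c}\le\|a_2\|_{\h_1^c}\le\|a_2\|_2$, together with the identity $\lh_1^c=L_2(\M)\boxplus\lh_1^c$ of Lemma \ref{obshpc}(ii). I expect the only genuinely delicate point to be bookkeeping: ensuring all four spaces are compared inside $L_p(\M)$ so that the sum $a_2+b$ is unambiguous and the merging of the nested infima is legitimate, which is exactly where the density hypotheses \eqref{density} and the injectivity results of Lemma \ref{injKpd} and Proposition \ref{injhpc} enter. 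Equivalently, the conclusion follows formally from associativity of $\boxplus_{A_0}$ for the fixed core $A_0=L_2(\M)$ combined with Lemma \ref{obshpc}; the explicit infimum computation above is simply the verification of that instance.
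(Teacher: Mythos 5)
Your proof is correct and follows essentially the same route as the paper: the paper simply writes $\h_p^d\boxplus \h_p^c=\h_p^d\boxplus (L_2(\M)\boxplus \h_p^c)=(\h_p^d\boxplus L_2(\M))\boxplus \h_p^c=\K_p^d\boxplus \h_p^c$, citing associativity of $\boxplus$ and Lemma \ref{obshpc}. Your explicit computation with nested infima on the dense core $L_2(\M)$, using the contraction $\|a_2\|_{\h_p^c}\leq\|a_2\|_2$ (respectively $\|a_2\|_{\lh_1^c}\leq\|a_2\|_2$), is exactly the verification of that associativity step which the paper leaves implicit.
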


\begin{proof}
By associativity, Lemma \ref{obshpc} gives for $1<p<2$ 
$$\h_p^d\boxplus (L_2(\M) \boxplus \h_p^c)
=(\h_p^d\boxplus L_2(\M)) \boxplus \h_p^c
=\K_p^d \boxplus \h_p^c.$$
The proof for $p=1$ is the same. 
\qd

At this point we have our new candidate $\K_p^d$ for the diagonal space. 
Indeed, interchanging $\h_p^d$ to $\K_p^d$ does not affect the $\boxplus$-sum with $\h_p^c$. 
Moreover $L_2(\M)$ is dense in $\K_p^d$, and this will help us for describing its dual space as the space $\J_p^d$ introduced previously. 
We first need to give another description of $\J_p^d$. 
In the discrete case, for a finite partition $\s$ and $2<p\leq \infty$ we define   
$J_p^d(\s)$ as the space $L_p(\M)$ equipped with the norm
$$\|x\|_{J_p^d(\s)}=\max(\|x\|_{h_p^d(\s)},\|x\|_2).$$
By Lemma \ref{monotondiag2}, it is clear that for $2<p\leq \infty, x\in L_p(\M)$ and $\s\subset \s'$ we have 
$$\|x\|_{J_p^d(\s')}\leq 2\|x\|_{J_p^d(\s)}.$$
For $1\leq p<2$, the discrete duality $h_p^d(\s)$-$h_{p'}^d(\s)$ implies
$$(K_p^d(\s))^*=J_{p'}^d(\s)\quad \mbox{with equivalent norms}.$$
Moreover, 
$$\frac12 \|x\|_{J_{p'}^d(\s)}\leq \|x\|_{(K_p^d(\s))^*}\leq \|x\|_{J_{p'}^d(\s)}.$$
Observe that the space $\J_p^d$ may be characterized similarly to the space $L_p^c\MO$ as follows. 

\begin{lemma}\label{ballJhpd}
Let $2<p\leq \infty$. 
\begin{enumerate}
\item[(i)] For $2<p<\infty$, the unit ball of $\J_p^d$ is equivalent to 
$$\mathbb{B}_p=\{x=\w L_2 \mbox{-} \lim_{\s,\U} x_\s  :  \lim_{\s,\U} \|x_\s\|_{J_p^d(\s)}\leq 1\}.$$   
\item[(ii)] The unit ball of $J_\infty^d$ is equivalent to 
$$\mathbb{B}_\infty=\overline{\{x=\w L_2\mbox{-} \lim_{\si,\U} x_\si \mbox{ in } L_2  :  \lim_{\s,\U} \|x_{\si}\|_{J_\infty^d(\s)} \leq 1\}}^{\|\cdot\|_2}.$$
\end{enumerate}
\end{lemma}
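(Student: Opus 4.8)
The plan is to prove both cases by establishing, for an absolute constant, the two inclusions that express the equivalence of $\mathbb{B}_p$ with the unit ball $B_{\J_p^d}$, following the scheme of Proposition \ref{ballLpcMO}. Throughout I would write $G_p=\{x\in L_2(\M):\lim_{\s,\U}\|x\|_{h_p^d(\s)}\leq 1,\ \|x\|_2\leq 1\}$, so that by definition $B_{\J_p^d}=\overline{G_p}^{\,\|\cdot\|_2}$, and recall $\|x\|_{J_p^d(\s)}=\max(\|x\|_{h_p^d(\s)},\|x\|_2)$. The only structural input on partitions is the monotonicity $\|x\|_{h_p^d(\s')}\leq 2\|x\|_{h_p^d(\s)}$ for $\s\subset\s'$ from Lemma \ref{monotondiag2}, together with two elementary ultrafilter facts: $\{\s:\|y\|_{h_p^d(\s)}\leq c\}\in\U$ whenever $\lim_{\s,\U}\|y\|_{h_p^d(\s)}<c$, and $\{\s\in\PP_{\fin}([0,1]):\#\s\geq N\}\in\U$ for every $N$ (since it contains $U_{\s_0}$ for any $\s_0$ with $\#\s_0\geq N$).

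For $\mathbb{B}_p\subset 2B_{\J_p^d}$ I would argue as in Proposition \ref{ballLpcMO}. Let $x=\w L_2\mbox{-}\lim_{\s,\U}x_\s$ with $\lim_{\s,\U}\|x_\s\|_{J_p^d(\s)}\leq 1$ and fix $\eps>0$; then $A_\eps=\{\s:\|x_\s\|_{J_p^d(\s)}\leq 1+\eps\}\in\U$, and since $A_\eps\in\U$ the weak limit is unchanged upon restricting the family to $A_\eps$, so $x$ lies in the weak, hence (by Mazur) the norm, closed convex hull of $\{x_\s:\s\in A_\eps\}$. For a convex combination $y=\sum_m\alpha_m x_{\s^m}$ with each $\s^m\in A_\eps$, the triangle inequality in the $h_p^d(\s)$-norm and Lemma \ref{monotondiag2} give $\lim_{\s,\U}\|x_{\s^m}\|_{h_p^d(\s)}\leq 2\|x_{\s^m}\|_{h_p^d(\s^m)}\leq 2(1+\eps)$, whence $\lim_{\s,\U}\|y\|_{h_p^d(\s)}\leq 2(1+\eps)$, while $\|y\|_2\leq 1+\eps$. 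Thus $y\in 2(1+\eps)G_p\subset 2(1+\eps)B_{\J_p^d}$, and letting such $y$ converge to $x$ in $L_2$ and using that $B_{\J_p^d}$ is $L_2$-closed (then $\eps\to0$) yields $x\in 2B_{\J_p^d}$.

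The reverse inclusion $B_{\J_p^d}=\overline{G_p}^{\,\|\cdot\|_2}\subset 2\mathbb{B}_p$ is the delicate point, and I expect it to be the main obstacle: for $2<p<\infty$ the set $\mathbb{B}_p$ is not defined as an $L_2$-closure, so from a merely $L_2$-convergent sequence I must manufacture a single representing family whose $J_p^d(\s)$-norms remain controlled along $\U$. I plan to do this by a diagonalization along the ultrafilter rather than by any duality. Given $x\in\overline{G_p}$, choose $x_n\in G_p$ with $x_n\to x$ in $L_2$ and set $A_n=\{\s:\|x_n\|_{h_p^d(\s)}\leq 1+\tfrac1n\}\in\U$. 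Defining $n(\s)=\max\{n\leq\#\s:\s\in A_n\}$ (and $x_\s=0$ when this set is empty) and $x_\s=x_{n(\s)}$ otherwise, one has $\s\in A_{n(\s)}$, so $\|x_\s\|_{h_p^d(\s)}\leq 2$ and $\|x_\s\|_2\leq 1$, giving $\lim_{\s,\U}\|x_\s\|_{J_p^d(\s)}\leq 2$. Moreover $\{\s:n(\s)\geq N\}\supseteq\{\#\s\geq N\}\cap A_N\in\U$ forces $n(\s)\to\infty$ along $\U$, so from $x_n\to x$ one checks that $\w L_2\mbox{-}\lim_{\s,\U}x_\s=x$; hence $x\in 2\mathbb{B}_p$. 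Together with the previous paragraph this proves (i), with constants independent of $p$.

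For the case $p=\infty$ in (ii) the two closures are aligned, which makes the argument slightly cleaner. Constant families show $G_\infty\subset\mathbb{B}_\infty$, and since $\mathbb{B}_\infty$ is $L_2$-closed by definition we get $B_{J_\infty^d}=\overline{G_\infty}^{\,\|\cdot\|_2}\subset\mathbb{B}_\infty$; conversely the convex-combination argument of the second paragraph applies verbatim for $p=\infty$ (Lemma \ref{monotondiag2} and the triangle inequality in $h_\infty^d(\s)$ both hold), showing that the un-closed set $\{x=\w L_2\mbox{-}\lim_{\s,\U}x_\s:\lim_{\s,\U}\|x_\s\|_{J_\infty^d(\s)}\leq 1\}$ sits inside $2B_{J_\infty^d}$, and taking $L_2$-closures gives $\mathbb{B}_\infty\subset 2B_{J_\infty^d}$. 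That $\J_p^d$ and $J_\infty^d$ are genuine Banach spaces (so all gauges are well defined) is provided by Lemma \ref{banach}. The crux throughout is the diagonalization step, where the $L_2$-weak limit and the $h_p^d(\s)$-control must be reconciled simultaneously; the two membership facts about $\U$ recorded in the first paragraph are precisely what make it go through.
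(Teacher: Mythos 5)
Your proof is correct, and its first half (the inclusion $\mathbb{B}_p\subset 2B_{\J_p^d}$ via restriction to a set of $\U$, Mazur's theorem, and the monotonicity of the discrete norms from Lemma \ref{monotondiag2}) is exactly the adaptation of Proposition \ref{ballLpcMO} that the paper's one-line proof invokes. Where you genuinely diverge is in the reverse inclusion for $2<p<\infty$. In Proposition \ref{ballLpcMO} the corresponding direction, namely that the $L_2$-closure of the constant families lands back inside the weak-limit set, was obtained from Corollary \ref{LpcMOclosed}, i.e. from the duality $(\H_{p'}^c)^*=L_{p}^c\MO$; the analogous duality $(\K_{p'}^d)^*=\J_{p}^d$ is only established afterwards (Lemma \ref{dualKpd}) and itself uses the present lemma, so that route is unavailable here and the paper's cross-reference cannot be applied verbatim when $2<p<\infty$ (for $p=\infty$ the issue disappears since $\mathbb{B}_\infty$ carries an $L_2$-closure by definition, as you observe). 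Your diagonalization along $\U$ --- setting $x_\s=x_{n(\s)}$ with $n(\s)=\max\{n\le \#\s : \s\in A_n\}$ and using that $\{\s:\#\s\ge N\}\cap A_N\in\U$ forces $n(\s)\to\infty$ along $\U$ --- is a clean, duality-free substitute that manufactures from an $L_2$-convergent sequence of constant families a single admissible family with $\lim_{\s,\U}\|x_\s\|_{J_p^d(\s)}\le 2$ and $\w L_2$-limit $x$. This is precisely the detail needed to make the paper's terse argument rigorous, at the harmless price of an absolute constant $2$ in place of the $\sqrt{2}\lambda_p$-type constant appearing in Proposition \ref{ballLpcMO}.
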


\begin{proof}
Since the discrete $J_p^d(\s)$-norms are decreasing in $\s$ (up to a constant $2$), we may adapt the proof of Proposition \ref{ballLpcMO} 
and obtain that $\mathbb{B}_p$ is equivalent to 
$$\overline{\{x \in L_2(\M) :  \lim_{\s,\U} \|x\|_{J_p^d(\s)}\leq 1 \}}^{\|\cdot\|_2}.$$
Moreover, it is clear that for $x\in L_p(\M)$ 
$$ \lim_{\s,\U} \|x\|_{J_p^d(\s)}\simeq_2 \max(\|x\|_{\h_p^d},\|x\|_2) .$$
We obtain that $\mathbb{B}_p$ is equivalent to $B_{\J_p^d}$ for $2<p\leq \infty$. 
\qd

This characterization describes the dual space of $\K_p^d$. 

\begin{lemma}\label{dualKpd}
Let $1\leq p <2$. Then 
$$(\K_p^d)^*=\J_{p'}^d \quad \mbox{with equivalent norms}.$$
\end{lemma}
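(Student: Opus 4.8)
The claim is that for $1\leq p<2$, we have $(\K_p^d)^* = \J_{p'}^d$ with equivalent norms, where $\K_p^d = \h_p^d \boxplus L_2(\M)$ and $\J_{p'}^d$ (with $2<p'\leq\infty$) is the Banach space whose unit ball is the $\|\cdot\|_2$-closure of $\{x\in L_2(\M): \lim_{\s,\U}\|x\|_{h_{p'}^d(\s)}\leq 1,\ \|x\|_2\leq 1\}$.

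**The plan.** My strategy is to transfer the discrete duality $(K_p^d(\s))^* = J_{p'}^d(\s)$ to the continuous setting via the ultraproduct embedding $i_\U$, exactly as in the proof of the Fefferman--Stein duality (Theorem~\ref{fsduality}). First I would recall from Lemma~\ref{eqKpd} that $i_\U: \K_p^d \to \prodd_\U K_p^d(\s)$ is a contractive injective map and that $\|\cdot\|_{\K_p^d} \simeq \sup_\s\|\cdot\|_{K_p^d(\s)} \simeq \lim_{\s,\U}\|\cdot\|_{K_p^d(\s)}$, so $i_\U$ is in fact an isomorphic embedding onto its range. This lets me view $\K_p^d$ as (isomorphic to) a subspace of the ultraproduct $\prodd_\U K_p^d(\s)$ on which $L_2(\M)$ is dense.

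**Proving the two inclusions.** For $\J_{p'}^d \subset (\K_p^d)^*$: given $x\in \J_{p'}^d$, by the characterization in Lemma~\ref{ballJhpd} I may write $x = \w L_2\mbox{-}\lim_{\s,\U} x_\s$ with $\lim_{\s,\U}\|x_\s\|_{J_{p'}^d(\s)}$ controlling $\|x\|_{\J_{p'}^d}$. For $y\in L_2(\M)$, the discrete duality $(K_p^d(\s))^*=J_{p'}^d(\s)$ gives $|\tau(x_\s^* y)| \leq \|x_\s\|_{J_{p'}^d(\s)}\|y\|_{K_p^d(\s)}$; passing to the limit along $\U$ and using $\lim_{\s,\U}\|y\|_{K_p^d(\s)}\simeq \|y\|_{\K_p^d}$ yields $|\tau(x^* y)| \leq C \|x\|_{\J_{p'}^d}\|y\|_{\K_p^d}$. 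Density of $L_2(\M)$ in $\K_p^d$ then gives $x\in(\K_p^d)^*$ with the right bound. For the reverse inclusion $(\K_p^d)^* \subset \J_{p'}^d$: given a functional $\varphi$ of norm $\leq 1$, I extend it via Hahn--Banach along $i_\U$ to a functional on $\prodd_\U K_p^d(\s)$, then invoke the ultraproduct duality lemmas. For $1<p<2$ the spaces are reflexive so Lemma~\ref{dualultrapdct} gives $(\prodd_\U K_p^d(\s))^* = \prodd_\U (K_p^d(\s))^* \cong \prodd_\U J_{p'}^d(\s)$; the representing element $z=(z_\s)^\bullet$ then yields $x = \w L_2\mbox{-}\lim_{\s,\U} z_\s \in \J_{p'}^d$ by Lemma~\ref{ballJhpd}. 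For the endpoint $p=1$ (so $p'=\infty$, $J_\infty^d$), I would instead use Lemma~\ref{le:ultrapdct} (weak$^*$-density of the ball of the ultraproduct of duals) together with the convexity of $B_{J_\infty^d}$, running the same approximation-by-convex-combinations argument as in the $p=1$ case of Theorem~\ref{fsduality}.

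**The main obstacle.** The delicate point is the endpoint $p=1$, where $\prodd_\U K_1^d(\s)$ is not reflexive and I cannot directly identify its dual with $\prodd_\U J_\infty^d(\s)$. Here I must carefully run the weak$^*$-density argument: the representing functionals $z^\lambda=(z_\s^\lambda)^\bullet$ live only in a weak$^*$-dense subset of the unit ball, and I need to take $x^\lambda = \w L_2\mbox{-}\lim_{\s,\U} z_\s^\lambda$, check uniform $L_2$-boundedness (using $\|z_\s^\lambda\|_2 \lesssim \|z_\s^\lambda\|_{J_\infty^d(\s)}$), then form $x = \w L_2\mbox{-}\lim_\lambda x^\lambda$ and use that $B_{J_\infty^d}$ is $\|\cdot\|_2$-closed and convex to conclude $x\in C\cdot B_{J_\infty^d}$. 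Verifying $\varphi(y)=\tau(x^*y)$ for $y\in L_2(\M)$ and that the limits commute as arranged by the construction of $\U$ is the bookkeeping that requires care, but the template from Theorem~\ref{fsduality} applies essentially verbatim.
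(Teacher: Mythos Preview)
Your proposal is correct and follows essentially the same approach as the paper: embed $\K_p^d$ into $\prodd_\U K_p^d(\s)$ via Lemma~\ref{eqKpd}, obtain the easy inclusion from the discrete duality $(K_p^d(\s))^* = J_{p'}^d(\s)$ together with density of $L_2(\M)$, and run the Hahn--Banach/ultraproduct argument of Theorem~\ref{fsduality} for the reverse inclusion. The paper stresses that the two ingredients $\|x\|_2 \leq \|x\|_{J_{p'}^d(\s)}$ and density of $L_2(\M)$ in $\K_p^d$ make the weak-$L_2$ limit argument (your $p=1$ template) work uniformly for all $1\leq p<2$, so one does not need to invoke reflexivity of $\prodd_\U K_p^d(\s)$ separately.
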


\begin{proof}
The proof is similar to that of Theorem \ref{fsduality}. 
Indeed the description of the space $\J_{p'}^d$ given in Lemma \ref{ballJhpd} is similar to that of the space $L_{p'}^c\MO$. 
The contractive inclusion $ \J_{p'}^d  \subset (\K_p^d)^*$ follows easily from the discrete duality $(K_p^d(\s))^*=J_{p'}^d(\s)$ 
and the density of $L_2(\M)$ in $\K_p^d$. 
For the reverse inclusion, recall that by Lemma \ref{eqKpd} the space $\K_p^d$ embeds into $\prodd_\U K_p^d(\s)$, 
and $ \|x\|_{\K_p^d}\leq  2\lim_{\s,\U}\|x\|_{K_p^d(\s)}$. 
Hence by the Hahn-Banach Theorem we may extend a linear functional on $\K_p^d$ of norm less than one 
to a linear functional on $\prodd_\U K_p^d(\s)$ of norm less than two. 
Then we use the same argument as in the proof of Theorem \ref{fsduality}. 
The crucial point here is that 
\begin{equation}\label{propKpd}
L_2(\M) \mbox{ is dense in } \K_p^d \quad \mbox{and} \quad \|x\|_2\leq \|x\|_{J_p^d(\s)}.
\end{equation}
\qd

\re
The same argument does not work if the observation \eqref{propKpd} is not verified. 
This explains why we cannot easily describe similarly the dual space of $\h_p^d$ for $1\leq p <2$, and justifies the introduction of the spaces $\K_p^d$. 
\mar

We obtain another proof of Theorem \ref{Dp>2}.

\begin{proof}[Proof of Theorem \ref{Dp>2}]
Combining Corollary \ref{dual-intpolHpc} (i) with Corollary \ref{Hpcboxplus} and Lemma \ref{boxplusKpd} and  we get for $2<p< \infty$
$$\H_p^c=(\H_{p'}^c)^*=(\h_{p'}^d \boxplus \h_{p'}^c)^*
=(\K_{p'}^d \boxplus \h_{p'}^c)^*.$$
Then Lemma \ref{dualboxplus}, Lemma \ref{dualKpd} and Corollary \ref{dualhpc} (i) yield
$$\H_p^c=(\K_{p'}^d)^* \cap (\h_{p'}^c)^*=\J_{p}^d \cap \h_{p}^c.$$
\qd

\re
This argument can be extended to the case $p=\infty, p'=1$. Then by duality Corollary \ref{DboxplusH1c} implies that 
$$\BMO^c=\J_{\infty}^d\cap \bmo^c \quad \mbox{with equivalent norms}.$$
\mar

\subsection{Burkholder-Rosenthal inequalities}

We may now extend the noncommutative Burkholder-Rosenthal inequalities recalled in Theorem \ref{Bdiscr} to the continuous setting. 
We introduce the conditioned Hardy space $\h_p$ as follows. 

\begin{defi}
Let $1 < p<\infty$. We define
$$\h_p=\left\{\begin{array}{ll}
\h_p^d+\h_p^c+\h_p^r& \quad \mbox{for} \quad 1< p< 2 \\
\J_p^d\cap \h_p^c\cap \h_p^r& \quad \mbox{for}\quad 2\leq p<\infty
\end{array}\right.,$$
where the sum is taken in $L_p(\M)$ and the intersection in $L_2(\M)$.  
\end{defi}

Combining the Davis inequalities (Theorem \ref{Davishpd} and Theorem \ref{Dp>2}) with the Burkholder-Gundy inequalities (Theorem \ref{BG1} and Theorem \ref{BG2}) 
we get

\begin{theorem}
Let $1<p<\infty$. Then 
$$L_p(\M)=\h_p \quad \mbox{with equivalent norms}.$$
\end{theorem}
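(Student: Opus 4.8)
The plan is to deduce this statement directly from the two families of inequalities already established in the continuous setting, namely the Burkholder--Gundy inequalities (Theorem \ref{BG1} and Theorem \ref{BG2}) and the Davis inequalities (Theorem \ref{Davishpd} and Theorem \ref{Dp>2}), treating the ranges $1<p<2$ and $2\leq p<\infty$ separately. Throughout I would use that the row versions of the Davis inequalities hold by passing to adjoints, together with the observation that the diagonal spaces are adjoint invariant: since $\|x^*\|_{\h_p^d}=\|x\|_{\h_p^d}$ (the $L_p$-norm being adjoint invariant) and similarly for $\J_p^d$, the same diagonal space occurs in the column and in the row decompositions. I would also rely on the injectivity results (Proposition \ref{injHpc2} and Proposition \ref{injhpc}), which guarantee that all the spaces in sight embed into a common ambient space --- $L_p(\M)$ for $1<p<2$ and $L_2(\M)$ for $2\leq p<\infty$ --- so that the sums and intersections below are taken in a consistent space and the substitutions are legitimate.

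For $1<p<2$, first I would invoke Theorem \ref{BG1} to write $L_p(\M)=\H_p^c+\H_p^r$ with the sum taken in $L_p(\M)$. Then, applying Theorem \ref{Davishpd}(i) to the column space and its adjoint analogue to the row space, I would replace $\H_p^c$ by $\h_p^d+\h_p^c$ and $\H_p^r$ by $\h_p^d+\h_p^r$. By associativity and commutativity of the sum of subspaces of $L_p(\M)$, and since $\h_p^d+\h_p^d=\h_p^d$ with equivalent norm, this yields
$$L_p(\M)=(\h_p^d+\h_p^c)+(\h_p^d+\h_p^r)=\h_p^d+\h_p^c+\h_p^r=\h_p.$$
Chaining the norm equivalences provided by the two theorems gives the equivalence of norms for the continuous analogue of \eqref{BI}.

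For $2\leq p<\infty$, I would argue dually. Theorem \ref{BG2} (with the case $p=2$ handled by the collapse $\H_2^c=\H_2^r=L_2(\M)$ coming from \eqref{estimateH_pc}) gives $L_p(\M)=\H_p^c\cap \H_p^r$, the intersection being taken in $L_2(\M)$. Applying Theorem \ref{Dp>2} and its row counterpart, I would substitute $\H_p^c=\J_p^d\cap \h_p^c$ and $\H_p^r=\J_p^d\cap \h_p^r$, so that
$$L_p(\M)=(\J_p^d\cap \h_p^c)\cap(\J_p^d\cap \h_p^r)=\J_p^d\cap \h_p^c\cap \h_p^r=\h_p.$$
Here the intersection norm $\max(\|\cdot\|_{\H_p^c},\|\cdot\|_{\H_p^r})$ is comparable to $\max(\|\cdot\|_{\J_p^d},\|\cdot\|_{\h_p^c},\|\cdot\|_{\h_p^r})$ by the two Davis equivalences, giving the desired equivalence of norms.

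The point worth stressing is that there is no genuine obstacle left: the statement is a corollary, and all the analytic difficulty was absorbed into the continuous Burkholder--Gundy and Davis decompositions. The only care required is bookkeeping --- verifying that the column and row decompositions are glued inside the same ambient space (ensured by the injectivity results) and that the diagonal space entering both is literally the same, which is exactly the adjoint invariance noted above. Once these are in place, the chains of equivalent norms compose to give $L_p(\M)=\h_p$ for all $1<p<\infty$.
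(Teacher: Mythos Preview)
Your proposal is correct and matches the paper's approach exactly: the paper states this theorem as an immediate consequence of combining the Davis inequalities (Theorem \ref{Davishpd} and Theorem \ref{Dp>2}) with the Burkholder--Gundy inequalities (Theorem \ref{BG1} and Theorem \ref{BG2}), without further detail. The bookkeeping you spell out---adjoint invariance of the diagonal spaces and the common ambient space guaranteed by the injectivity results---is precisely what is implicitly being used.
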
 

\begin{cor}
Let $1<p<2$. Then 
$$\h_p=\h_p^d\boxplus\h_p^c \boxplus \h_p^r
=\h_p^{1_c}\boxplus \h_p^{1_r}\boxplus\h_p^c \boxplus \h_p^r
 \quad \mbox{isometrically}.$$
\end{cor}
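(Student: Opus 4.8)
The plan is to reduce both asserted identities to the column and row versions of the Davis and Burkholder--Gundy decompositions already at hand, and to push everything through the formal calculus of the $\boxplus$-sum over the fixed space $A_0=L_2(\M)$. First I would record the three structural properties of $\boxplus$ that will be used repeatedly. It is commutative, since its defining norm $\inf_{x=a+b}\|a\|_X+\|b\|_Y$ is symmetric in the two summands; it is associative, exactly as already exploited in Lemma \ref{boxplusKpd}; and it is idempotent, $X\boxplus X=X$ isometrically, the only nontrivial inequality being $\|x\|_X\leq\|a\|_X+\|b\|_X$ for any decomposition $x=a+b$, which is just the triangle inequality in $X$. I would also invoke the finitely-many-summands analogues of Lemma \ref{quot1} and Lemma \ref{sumeq}: the $\boxplus$-sum $X_1\boxplus\cdots\boxplus X_n$ maps onto the usual sum $X_1+\cdots+X_n\subset A_1$ by a surjective quotient map equal to the identity on $A_0$, and this map is injective — hence, being a bijective quotient map, isometric — as soon as the two sums agree as subsets of $A_1=L_p(\M)$ with equivalent norms. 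These are the verbatim multi-summand versions of the statements in the excerpt.

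Next I would produce the row counterparts of the two column Davis decompositions by taking adjoints. Since $\h_p^d$ is self-adjoint (the norm $\|x\|_{h_p^d(\s)}=(\sum_{t\in\s}\|d_t^\s(x)\|_p^p)^{1/p}$ is unchanged under $x\mapsto x^*$), Corollary \ref{Hpcboxplus} and Theorem \ref{BRDboxplus-hp1c} give, for $1<p<2$, the isometric identities $\H_p^r=\h_p^d\boxplus\h_p^r$ and $\H_p^r=\h_p^{1_r}\boxplus\h_p^r$.

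For the first equality I would substitute the column and row Davis decompositions into the Burkholder--Gundy splitting $\H_p=\H_p^c\boxplus\H_p^r$ of Corollary \ref{Hpboxplus}. Using associativity, commutativity and idempotency,
\[
\H_p=(\h_p^d\boxplus\h_p^c)\boxplus(\h_p^d\boxplus\h_p^r)=\h_p^d\boxplus\h_p^c\boxplus\h_p^d\boxplus\h_p^r=\h_p^d\boxplus\h_p^c\boxplus\h_p^r,
\]
all identifications being isometric. Since $\H_p=L_p(\M)$ with equivalent norms by Theorem \ref{BG1}, the $\boxplus$-sum $\h_p^d\boxplus\h_p^c\boxplus\h_p^r$ coincides with $L_p(\M)$ with equivalent norms. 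On the other hand $\h_p=\h_p^d+\h_p^c+\h_p^r$ (its definition) also equals $L_p(\M)$ with equivalent norms by the continuous Burkholder--Rosenthal theorem. As these are the usual sum and the $\boxplus$-sum of the \emph{same} three spaces over $A_0=L_2(\M)$ and $A_1=L_p(\M)$, the multi-summand Lemma \ref{sumeq} yields $\h_p=\h_p^d\boxplus\h_p^c\boxplus\h_p^r$ isometrically.

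For the second equality I would avoid comparing sums of different spaces and argue purely formally. From Corollary \ref{Hpcboxplus} and Theorem \ref{BRDboxplus-hp1c} both sides equal $\H_p^c$, so $\h_p^d\boxplus\h_p^c=\h_p^{1_c}\boxplus\h_p^c$ isometrically, and likewise $\h_p^d\boxplus\h_p^r=\h_p^{1_r}\boxplus\h_p^r$. Then, inserting a redundant copy of $\h_p^d$ by idempotency and regrouping,
\begin{align*}
\h_p^d\boxplus\h_p^c\boxplus\h_p^r
&=\h_p^d\boxplus\h_p^d\boxplus\h_p^c\boxplus\h_p^r
=(\h_p^d\boxplus\h_p^c)\boxplus(\h_p^d\boxplus\h_p^r)\\
&=(\h_p^{1_c}\boxplus\h_p^c)\boxplus(\h_p^{1_r}\boxplus\h_p^r)
=\h_p^{1_c}\boxplus\h_p^{1_r}\boxplus\h_p^c\boxplus\h_p^r,
\end{align*}
again isometrically, which together with the first equality completes the proof. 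The only genuinely delicate point is the passage from equivalence of norms to isometry in the multi-summand Lemma \ref{sumeq}: one must verify that the bijective quotient map between the two realisations is automatically isometric, exactly as in the two-summand case treated in the excerpt; everything else is the formal algebra of $\boxplus$ together with the already-proved column/row decompositions.
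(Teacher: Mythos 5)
Your proof is correct and follows essentially the same route as the paper: both arguments identify the $\boxplus$-sums with the corresponding ordinary sums (all equal to $L_p(\M)$ with equivalent norms) by combining the column/row $\boxplus$-Davis decompositions (Corollary \ref{Hpcboxplus}, Theorem \ref{BRDboxplus-hp1c}) with the $\boxplus$-Burkholder--Gundy splitting (Corollary \ref{Hpboxplus}), and then upgrade equivalence of norms to an isometry via Lemma \ref{sumeq}. The one caveat is that your appeals to commutativity and idempotency of $\boxplus$ must be read with the symmetric multi-summand convention (decompositions $z=\sum_i x_i$ with $x_i\in L_2(\M)\cap X_i$), since under the paper's asymmetric two-term definition $\h_p^d$ cannot occupy the second slot because $L_2(\M)\not\subset\h_p^d$; with that convention your formal regroupings are exactly the manipulations left implicit in the paper's one-line proof.
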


\begin{proof}
Combining Corollary \ref{Hpcboxplus} with Corollary \ref{Hpboxplus} we obtain that the two sums   
$\h_p^d+\h_p^c+\h_p^r$ and $\h_p^d\boxplus\h_p^c \boxplus \h_p^r$ coincide with equivalent norms. 
Hence they coincide isometrically by Lemma \ref{sumeq}. 
The second equality follows similarly from Theorem \ref{BRDboxplus-hp1c}. 
\qd

\section*{Appendix}

We end this paper with some problems which are still open at the time of this writing. 
They concern the more difficult case $p=1$. 
For $1<p<2$, Corollary \ref{HpXp} gives a nice description of the space $\H_p^c$. 
However, we do not know if this characterization still holds true for $p=1$. 

\begin{pb}
Do we have $\H_{1}^c=\{x\in L_1(\M)  :  \|x\|_{\H_1^c}<\infty \}$ ?
\end{pb}

On the dual side, by Remark \ref{normLpcMO} (iii) we know that the $L_p^c\MO$-norm is the limit of the discrete $L_p^cMO$-norms for $2<p<\infty$. 
For $p=\infty$, we only established one estimate in Corollary \ref{normBMOc} (iii).

\begin{pb}
For $x\in \M$, do we have  
$$\|x\|_{\BMO^c} \simeq  \lim_{\si,\U} \|x\|_{BMO^c(\si)} ?$$
\end{pb}

The two last problems concern the delicate point of injectivity of the spaces. 
The first one concerns the space $\H_1$ defined in paragraph \ref{subsectH1}. 

\begin{pb}
Does $\H_1$ embed injectively into $L_1(\M)$ ? Or, equivalently, do we have $\H_1=\H_1^c+\H_1^r$ ?
\end{pb}

A way of solving this problem could be by finding a ``Randrianantoanina's type" explicit decomposition in $L_2(\M)$ of the discrete space $H_1$ with a simultaneous control of the norms. 
The second injectivity question deals with the column conditional Hardy space $\h_1^c$ studied in Section \ref{secthp}. 

\begin{pb}\label{h1csubsetL1}
Does $\h_1^c$ embed injectively into $L_1(\M)$ ? Or, equivalently, do we have $\h_1^c=\lh_1^c$ ?
\end{pb}

Observe that these two last problems are somehow related. Indeed, 
for $x\in \M$ we can consider 
$$\|x\|_{\th_1}=\lim_{\s,\U} \|x\|_{h_1(\s)}.$$
This defines a norm on $\M$, and we denote by $\th_1$ the corresponding completion. 
With the notations of Section \ref{secthp}, we have seen in the proof of Lemma \ref{embeddinghpc} that for $x\in \M$ we have $v_\U(x)=(v_\s(x))^\bullet \in L_1(\N_\U)$. 
Moreover, Proposition \ref{vextendhp} (i) yields
$$\|v_\U(x)\|_{L_1(\N_\U)}
=\lim_{\s,\U}\|v_\s(x)\|_{L_1(\N(\s))}
\leq C \lim_{\s,\U}\|x\|_{h_1(\s)}=C\|x\|_{\th_1}.$$
This means that $v_\U$ extends to a bounded map from $\th_1$ to $L_1(\N_\U)$. 
Since $L_1(\N_\U,\E_{\M_\U})$ embeds into $L_1(\N_\U)$ by Remark \ref{embeddingL1c(M,E)}, 
the following commuting diagram shows that the natural map $\psi:\h_1^c\to \tilde{\h}_1$ is injective:
 $$\xymatrix{
    \h_1^c \ar@{^{(}->}[d]^{v_\U} \ar@{->}[r]^{\psi}     & \tilde{\h}_1 \ar@{->}^{v_\U}[d]  \\
    L_1^c(\N_\U,\E_{\M_\U}) \ar@{^{(}->}[r] & L_1(\N_\U)
  }$$
Moreover, \eqref{H1h1discr} implies that 
$$\th_1=\tilde{\H}_1 \quad \mbox{with equivalent norms,} $$
where $\tilde{\H}_1$ denotes the completion of $\M$ with respect to the norm 
$$\|x\|_{\tilde{\H}_1}=\lim_{\s,\U} \|x\|_{H_1(\s)}.$$
Hence the problem of the injectivity of $\h_1^c$ into $L_1(\M)$ is related to the problem of the injectivity of $\tilde{\H}_1$ into $L_1(\M)$. 
More precisely, the commuting diagram
 $$\xymatrix{
    \h_1^c \ar@{->}_{\varphi}[rd]  \ar@{^{(}->}[r]^{\psi} & \tilde{\H}_1 \ar@{->}^{\tilde{\varphi}}[d]  \\
    & L_1(\M)
  }$$
means that if $\tilde{\varphi}$ is injective then $\varphi$ is also injective.

\backmatter

\bibliographystyle{plain}



\end{document}